\documentclass{article}

\usepackage{amsmath, amsthm, amssymb, amsfonts}
\usepackage{thmtools}
\usepackage{graphicx}
\usepackage{setspace}
\usepackage{geometry}
\usepackage{theoremref}
\usepackage{float}
\usepackage{hyperref}
\usepackage[utf8]{inputenc}
\usepackage[english]{babel}
\usepackage{textcomp}
\usepackage{newunicodechar}
\newunicodechar{−}{-}
\usepackage{framed}
\usepackage[dvipsnames]{xcolor}
\usepackage{tcolorbox}
\usepackage{mathtools}

\colorlet{LightGray}{White!90!Periwinkle}
\colorlet{LightOrange}{Orange!15}
\colorlet{LightGreen}{Green!15}
\colorlet{LightBlue}{Blue!15}

\newcommand{\HRule}[1]{\rule{\linewidth}{#1}}

\makeatletter
\g@addto@macro\bfseries{\boldmath}
\makeatother

\newcommand{\C}{\mathbb{C}}
\newcommand{\Dy}{\mathcal{D}}

\newcommand{\N}{\mathcal{N}}
\newcommand{\T}{\mathbb{T}}

\newcommand{\Ka}{\mathcal{K}}
\newcommand{\conj}[1]{\overline{#1}}
\newcommand{\D}{\mathbb{D}}

\newcommand{\Po}{\mathcal{P}}

\newcommand{\cD}{\conj{\mathbb{D}}}

\newcommand{\dist}[2]{\text{dist}( #1, #2 ) }

\newcommand{\hil}{\mathcal{H}}

\newcommand{\R}{\mathbb{R}}

\renewcommand{\Dy}{\mathcal{D}}

\renewcommand\Re{\operatorname{Re}}
\renewcommand\Im{\operatorname{Im}}
\newcommand{\supp}[1]{\text{supp}({#1})}

\makeatletter
\newcommand{\proofpart}[2]{%
    \par
  \addvspace{\medskipamount}%
  \noindent\emph{Step #1: #2}\par\nobreak
  \addvspace{\smallskipamount}%
  \@afterheading
}
\makeatother
\DeclarePairedDelimiter{\abs}{\lvert}{\rvert}%
\DeclarePairedDelimiter{\norm}{\lVert}{\rVert}%

\newtheorem{thm}{Theorem}[section]
\newtheorem{lemma}[thm]{Lemma}

\newtheorem{cor}[thm]{Corollary}
\newtheorem{prop}[thm]{Proposition}

\theoremstyle{definition}
\newtheorem{example}[thm]{Example}
\theoremstyle{definition}

\newtheorem{princ}[thm]{Principle}

\begin{document}


\title{ \normalsize \textsc{}
		\\ [2.0cm]
		\HRule{1.5pt} \\
		\LARGE \textbf{\uppercase{The Uncertainty principle in harmonic analysis}
		\HRule{2.0pt} \\ [0.6cm] \LARGE{Lecture Notes on Selected Topics} \vspace*{10\baselineskip}}
		}
\date{}
\author{\textbf{Author} \\ 
		Adem Limani \\
		Centre for Mathematical Sciences \\
        Lund University\\
        Sweden\\
        Spring 2026}

\maketitle
\newpage

\section*{Foreword}

These lecture notes grew out of a PhD course on the many manifestations of the uncertainty principle in harmonic analysis, given at the Centre for Mathematical Sciences at Lund University in the spring semester of 2026. The subject is vast and touches not only harmonic analysis, but also complex analysis and operator theory.

The material presented here reflects a selection of topics from this broad landscape, principally guided by the author’s mathematical taste. The notes are therefore not intended to provide a comprehensive account, nor do they aim to cover the most recent developments. Rather, they document a personal journey toward understanding some of the deeper aspects of the subject. It is my hope that further reading, and future interactions with experts in the field will lead to greater insights and, in time, improvements of these notes. 

To make myself clear, I do not proclaim myself as an expert in the topic, but rather a curious explorer with intentions of expanding his knowledge in the field, and hopefully, bringing more ambitions people on board to this journey.

The idea for this course was initiated and strongly encouraged by my former doctoral supervisor, Sandra Pott. I am grateful to her for suggesting me to embark on this adventure,  and for presenting me with this opportunity to broadening my perspectives in mathematics.

I am also grateful to Bartosz Malman for his friendship and mentorship over the years. Several of the ideas and perspectives reflected in these notes have grown out of our discussions and joint work, and it has been a pleasure to include some of his recent contributions to the topic, in these notes.

I also thank Alexandru Aleman and Erik Wahl\'en for helpful discussions and advice on preparing lecture notes. I have also benefited greatly from stimulating conversations with Alex Bergman and Eskil Rydhe on several of the topics included here.

I would also like to sincerely thank Tomas Persson, for encouraging me to lecture the course despite my notes not being fully prepared at the time. In fact, his encouragement was a decisive motivation for me to finally complete them.

Finally, I am deeply grateful to my family and friends for their constant support. The final parts of these notes were written shortly after the birth of my first child. Despite the associated challenges with becoming a parent, the love and encouragement from my family made it possible to carry this work through. Therefore, I dedicate these notes to my beloved wife and son, Fitore \& Albin.

\newpage

\newpage

\tableofcontents
\newpage

\section{Preface}

\subsection{Principles of uncertainty}

What is meant by a principle of uncertainty? For some, it  may call to mind quantum mechanics, where the limitations on the simultaneous measurement of position and momentum is expressed. More broadly, one may consider systems equipped with complementary pairs of variables, such as time and frequency or position and scale, which behave like conjugate axes and consequently resist simultaneous sharp localization.
\medskip
In analysis, this phenomenon appears through the Fourier transform, where it is reflected in precise mathematical statements relating the localization of a function to that of its transform. Despite these varied manifestations, the common structural theme that emerges is that an uncertainty principle expresses a fundamental trade-off between the simultaneous precision of two complementary descriptions of a system.
\medskip
In what follows, I will outline a (meta)-philosophical viewpoint that, that I personally find reveals a deeper unity between the physical and mathematical aspects of the subject. Let us therefore widen our horizon and consider the following abstract modelistic framework.
\medskip
Let $X$ and $Y$ be sets, and consider a transformation
\[
T: X \to Y
\]
which transports elements of $X$ into $Y$ while preserving some structure, perhaps linearity, continuity, or something else. For instance, one may interpret $T$ as encoding an observation of elements in $X$, hence it is important that $T$ is not too wild, so that information about $x \in X$ is still reflected in $T(x)$.
\medskip
A fundamental problem is to understand how these mappings act on certain elements, so suppose we have a subset $X_0 \subset X$ of elements with some desirable properties, and likewise subset $Y_0 \subset Y$ on the target space of $T$. A natural question that often arises is
\begin{equation}\label{EQ:COMP}
\text{Is} \, \, T(X_0) \cap Y_0  \, \, \text{ non-trivial?}
\end{equation}
Roughly speaking, this abstract model attempts to capture a class of problems of the form: 
\\
\emph{"Can we find objects from $X_0$, and whose $T$-transform also enjoys properties from $Y_0$?"}. \\ 

This leads to the study of \emph{incompatible} pairs $(X_0,Y_0)$, that is, pairs of sets for which the above fails. When the compatibility fails, the obstruction is often reflects a deeper limitation, that certain properties in $X_0$ via the transformation $T$ act as an impediment for certain properties in $Y_0$. When these properties admit quantitative descriptions, such incompatibility gives rise to what one may refer to as an \emph{uncertainty principle}:
\\
\emph{One cannot simultaneously impose the constraints defining $X_0$ and $Y_0$ beyond a certain threshold.}
\\
In some sense, one may interpret that uncertainty principles arise from uniqueness phenomena associated with specials subsets of incompatible pairs.
\medskip
In mathematical analysis, the sets $X$ and $Y$ typically carry some linear and topological structures, and thus one is often led to consider the following refinement the compatibility question:
\[
\text{Is} \, \, T(X_0) \cap Y_0 \quad \text{dense in } Y_0 \quad (\text{or in } Y).
\]
This brings in an additional approximation dimension to the problem, which often lies in the heart of harmonic analysis.
\medskip
The most prominent setting in which such questions arise is when $T$ is the classical Fourier transform, acting on various spaces of functions. Its central role does not only stem from its rich mathematical structure, but also from its interpretation in describing the fabrics of reality, as in quantum mechanics and signal processing. For instance, $X_0$ could be space of compactly supported continuous functions on the real line, and $Y_0$ could be functions which vanish of certain prescribed on certain prescribed set, or perhaps growth at a certain rate at infinity. At this level, the theory already displays a remarkable variety of uncertainty phenomena. 

\medskip 

These notes are devoted to exploring several classical manifestations of the uncertainty principle, primarily on the real line $\R$ and on subintervals of it, often identified with the unit circle $\T \cong [0,2\pi)$. 

We shall not attempt to systematically track which results extend to higher dimensions and which fail there. Typically, such questions are either trivial or extremely difficult, and thus often lie close to the research frontier.

Formally, the Fourier transform of a complex-valued function $f$ on $\R$ is defined by
\[
\widehat{f}(\xi) = \int_{\R} f(x) e^{-ix\xi} \, dx \qquad \xi \in \R,
\]
interpreted in an appropriate sense depending on the regularity of $f$. On the unit circle, that is, for $2\pi$-periodic functions, the Fourier coefficients are given by
\[
\widehat{f}(n) := \int_{\T} f(\zeta)\, \zeta^{-n} \, dm(\zeta) \qquad n \in \mathbb{Z},
\]
where $\zeta = e^{it}$ and $dm(e^{it}) = \frac{dt}{2\pi}$ denotes normalized arc-length measure on $[0,2\pi)$.

\medskip

A guiding principle throughout is that structural properties of a function are reflected, often in subtle and indirect ways, in the behavior of its Fourier transform, and conversely. A fundamental manifestation of this principle is provided by Parseval's identity
\[
\int_{\T} |f(\zeta)|^2 \, dm(\zeta)= \sum_{n\in \mathbb{Z}} |\widehat{f}(n)|^2
\]
and its real-line counterpart, Plancherel's theorem,
\[
\int_{\R} |f(x)|^2 \, dx = \frac{1}{2\pi} \int_{\R} |\widehat{f}(\xi)|^2 \, d\xi.
\]
These identities intrinsically arise from the orthogonality of the exponential functions and reveal the underlying Hilbert space geometry of Fourier analysis. It is precisely this geometric structure that makes the theory fruitful, and once one leaves this Hilbert space realm of, many of these results fail or require substantial modification, often leading to delicate threshold phenomena.

One immediate consequence is that a measure whose Fourier coefficients are square summable, or a function whose Fourier transform lies in $L^2$, cannot be supported on a set of Lebesgue measure zero. This statement is essentially sharp, and its borderline nature will be illustrated by the Ivashev--Musatov theorem in Section~6.

Another recurring theme is that sparsity of Fourier support forces spatial spread. Roughly speaking, a function whose Fourier transform is supported on a thin set must itself be widely spread. This principle appears in many forms, beginning with the work of Paley and Zygmund on lacunary series, which we discuss in Section~2.

In a different direction, the Paley--Wiener theorem shows that the Fourier transform of a compactly supported function extends to an entire function and, in particular, cannot vanish too frequently. This provides a complementary manifestation of the uncertainty principle, where strong spatial localization imposes rigidity in the frequency domain.

We also encounter the classical Heisenberg uncertainty principle, which quantifies the impossibility of simultaneous concentration in both position and frequency.

\medskip

A further key phenomenon is the role of logarithmic integrability. For instance, a classical Theorem by Jensen implies that if $f \in L^1(\T)$ satisfies $\widehat{f}(n) = 0$ for all $n < 0$, then
\[
\int_{\T} \log |f(\zeta)| \, dm(\zeta) > -\infty.
\]
Thus, such a function cannot vanish on a set of positive measure, and moreover its magnitude must be sufficiently distributed so as to be logarithmically integrable. The real-line analogue is determined by the logarithmic integral
\[
\int_{\R} \frac{\log |f(x)|}{1+x^2} dx >- \infty.
\]
These logarithmic integral conditions plays a central role in the theory of Hardy spaces, but its influence extends much further. They will reappear in our study of polynomial approximation, moment problems, and various questions of uniqueness type

\medskip

Finally, we briefly allude to nonlinear analogues of the Fourier transform, such as the map
\[
\mathcal{K}_\mu(f)(z) := 
\frac{\int_{\T} \frac{f(\zeta)\, d\mu(\zeta)}{1 - \overline{\zeta} z}}
{\int_{\T} \frac{d\mu(\zeta)}{1 - \overline{\zeta} z}} \qquad z\in \C \setminus \supp{\mu},
\]
which arises naturally in complex function theory. While we shall not pursue this direction in depth, it serves as a reminder that many of the principles discussed here persist, in altered form, beyond the linear setting.

\subsection{Organization}

These notes should be viewed as a collection of selected topics illustrating, from different angles, manifestations of the uncertainty principle:

\emph{$f$ and its Fourier transform $\widehat{f}$ cannot both be small simultaneously.}

\medskip

The material is divided into seven sections, each consisting of roughly 5--7 subsections. The final subsection of each section is devoted to further results and is intended to guide the reader toward the research frontier.

The sections are, to a large extent, independent of one another, and the notes are not meant to be read in a strictly chronological order. This structure is intended to accommodate PhD students and researchers with limited time, allowing for selective reading.

\medskip

In Sections 2 and 3 we work on the unit circle $\T$, with an emphasis on Fourier coefficients.

\medskip

Section 2 begins with a brief introduction to classical representation and uniqueness problems for Fourier series. We then turn to problems of translates, focusing on results of Wiener and Beurling in the settings of $\ell^1$ and $\ell^2$. This is followed by a discussion of local-to-global properties of lacunary series and Riesz products, which are subsequently used to obtain coarse classification results for Fourier coefficients of measures on $\T$.

\medskip

Section 3 is devoted to the role of the logarithmic integral. We study the Riesz brothers' theorem and its converses, and then proceed to Szeg\"o's theorem. We conclude the section by considering extensions to the unit disc developed by Khrushchev.

\medskip

Section 4 concerns uniqueness problems for the Fourier transform on $L^2(\R)$. We begin with Heisenberg's uncertainty principle and continue with the Paley--Wiener theorem on functions with compactly supported Fourier transform. The final part of the section treats deeper results on measures with spectral gaps, including Pollard's theorem, the Amrein--Berthier theorem, and the Logvinenko--Sereda theorem on Fourier uniqueness pairs.

\medskip

Section 5 studies the role of the logarithmic integral on the real line. The main theme is Bernstein's problem on weighted polynomial approximation and its equivalent reformulations in terms of moment problems and quasi-analyticity. We also discuss a unilateral version due to Beurling and prove the classical theorem of Cartwright--Levinson.

\medskip

Sections 6 and 7 are devoted to more specialized topics related to uniform Fourier majorants for functions with small support.

Section 6 treats the Ivashev--Musatov theorem and some of its consequences. The presentation is largely based on the work of T. W. K\"orner in \cite{korner1986theorem3}, where one can extract a more streamlined proof that avoids oscillatory integral techniques, and requires less regularity assumptions, in contrast to the conventional approach in \cite{havinbook}.

\medskip

Section 7 is devoted to the Beurling--Malliavin multiplier theorem. We present two different proofs of this result and discuss some of its consequences. The first proof follows the classical survey in \cite{mashreghi2006beurling} by V.P. Havin, J. Mashreghi and F. Nazarov, while the second is based on a more recent approach suggested from the work of A. Cohen in \cite{cohen2025fractal}.

\medskip

These notes are largely, but not exclusively, inspired by the following classical books, which treat different aspects of the subject:

\begin{enumerate}
    \item[]{\cite{havinbook}} V. P. Havin and B. J\"oricke, \emph{The Uncertainty Principle in Harmonic Analysis},
    \item[]{\cite{katznelson2004introduction}} Y. Katznelson, \emph{An Introduction to Harmonic Analysis},
    \item[]{\cite{garnett}} J. B. Garnett, \emph{Bounded Analytic Functions}.
\end{enumerate}

We also mention further treatments of the subject in the work of P. Koosis \cite{koosis,koosis1998logarithmic,KoosisLogint2}, as well as in the book of J.-P. Kahane and R. Salem \cite{kahane1963ensembles}. Additional, more specialized references appear throughout the notes.

\subsection{Notations}

We begin by fixing some standard notation that will be used throughout.

\medskip

For $1 \leq p \leq \infty$, we denote by $L^p(\T)$ the usual Lebesgue spaces on $\T$, equipped with normalized arc-length measure $dm$. On the real line, we write $L^p(\R)$ for the Lebesgue spaces with respect to Lebesgue measure $dx$. In case we consider Lebesgue spaces wrt to a different positive measure $\mu$, we simple write $L^p(\mu)$.

Strictly speaking, elements of $L^p$ are equivalence classes of functions. In practice, we will often identify such classes with a representative, since functions that differ only on a set of measure zero are indistinguishable for our purposes.

\medskip

The Banach spaces of complex finite Borel measures on $\T$ and $\R$, equipped with the total variation norm, will be denoted by $M(\T)$ and $M(\R)$, respectively. If $E$ is a compact subset of either $\T$ or $\R$, we write $\mu \in M(E)$ to indicate that $\mu$ is supported in $E$. Here, the support of $\mu$ is the smallest closed set $E$ such that $|\mu|(I) = 0$ for every interval or arc $I$ disjoint from $E$. This notion agrees with the usual definition of support in distribution theory.

\medskip

For a compact set $E$, we denote by $C(E)$ the space of continuous functions on $E$. The spaces of $k$-times continuously differentiable functions are denoted by $C^k$, smooth functions by $C^\infty$, and compactly supported smooth functions by $C_0^\infty$.

\medskip

We will freely use standard duality results from functional analysis, such as the duality between $L^p$ and $L^q$, and the Riesz representation theorem identifying $C(E)^*$ with $M(E)$. The reader is assumed to be familiar with these facts.

\medskip

Finally, we adopt the notation $A \lesssim B$ to mean that $A \leq c B$ for some constant $c > 0$. If both $A \lesssim B$ and $B \lesssim A$ hold, we write $A \asymp B$. When carrying out estimates, the implicit constants involved may vary from line to line, but we shall find it convenient to simple denote by the same symbol $c$ or $C$. From time to time, we shall also make use of the standard big-O and little-o notation.

\medskip

Further notation will be introduced as needed throughout these notes.

\section{Uniqueness problems on Fourier series}
\subsection{Classical representation problems on Fourier series}

We shall here gather some classical results on Fourier series. A formal Fourier series of an integrable complex-valued integrable function $f\in L^1(\T)$ on the unit-circle $\T\cong [0,2\pi)$ is defined as 
\[
f(\zeta) \sim \sum_{n\in \mathbb{Z}} \widehat{f}(n) \zeta^n \qquad \zeta=e^{it},
\]
where the Fourier coefficients $\{\widehat{f}(n)\}_{n\in \mathbb{Z}}$ are complex numbers defined by
\[
\widehat{f}(n) := \int_{\T} f(\zeta) \zeta^{-n} dm(\zeta) , \qquad n\in \mathbb{Z}.
\]
Here $dm(e^{it})= \frac{dt}{2\pi}$ denotes the normalized Lebesgue measure. 

The concept of Fourier series originates from the influential work of J. Fourier in the early 1820's, motivated by his analytic theory of heat flow, studying the heat equation. His fundamental insight was that complicated functions (or "signals") could be decomposed into superpositions of simple oscillatory modes, and that the evolution of each mode could be analyzed independently. This principle of superposition lies at the heart of harmonic analysis, where we attempt to decompose complicated objects into simpler ones, study them individually, and the reassemble the pieces.

\medskip

In the theory of Fourier series, the following natural and fundamental problems arise:

\begin{itemize}
\item[] (\emph{Representation and convergence}) Which reasonable (periodic) functions $f$ can be represented as Fourier series, and in what sense does the Fourier series converge to $f$?
\item[] (\emph{Uniqueness}) Do the coefficients $\{\widehat{f}(n)\}_{n\in \mathbb{Z}}$ uniquely determine $f$?
\end{itemize}

A large part of Fourier analysis is devoted to understanding these questions in various capacities. Typically, one is interesting in this questions for different classes of functions $f$.

Before turning to convergence issues, let us record some basic algebraic properties of Fourier coefficients, which reflect how some simple operations on functions translate into transformations of their frequencies. 

For $f,g \in L^1(\T)$, the following relations hold:

\begin{itemize}
\item Linearity: $\widehat{\alpha f+\beta g}(n) = \alpha\widehat{f}(n) + \beta\widehat{g}(n)$, where $\alpha,\beta \in \C$ are numbers.

\item Translation: If $f_{t_0}(e^{it}) := f(e^{i(t-t_0)})$, then
\[
\widehat{f_{t_0}}(n) = e^{-int_0}\widehat{f}(n).
\]

\item Modulation: If $f(e^{it}) \mapsto e^{int_0}f(e^{it})$, then
\[
\widehat{\,e^{int_0}f\,}(n) = \widehat{f}(n-n_0).
\]

\item Convolution: If $f*g$ denotes convolution on $\T$, defined by
\[
(f*g)(e^{it}) := \int_{\T} f(e^{i(t-\tau)})g(e^{i\tau})\,dm(e^{i\tau}),
\]
then
\[
\widehat{f*g}(n) = \widehat{f}(n)\widehat{g}(n).
\]

\item Differentiation: If $f$ is differentiable, then
\[
\widehat{f'}(n) = in\,\widehat{f}(n).
\]
\end{itemize}

A convenient setting of consider Fourier series on is $L^2(\T)$, the Hilbert space of square-integrable functions on $\T$ wrt $dm$:
\[
\norm{f}^2_{L^2} = \int_{\T} \abs{f(\zeta)}^2 dm(\zeta) < \infty.
\]
One can easily verify that the family of monomials $\{\zeta^n\}_{n\in \mathbb{Z}}$ forms an orthonormal system:
\[
\int_{\T} \zeta^n \conj{\zeta^k} dm(\zeta) = \begin{cases}
    1 \qquad n=k, \\
    0 \qquad n\neq k.
\end{cases}
\]
Using Weierstrass approximation Theorem (see \thref{THM:WEITHM} below), one can also show the this system is complete in $L^2(\T)$, hence $\{\zeta^n\}_{n\in \mathbb{Z}}$ actually forms an orthonormal basis in $L^2(\T)$. We therefore obtain the beautiful Parseval identity, which intimately relates the mass of $f$ with total mass of all its frequencies:
\[
\int_{\T} |f(\zeta)|^2 dm(\zeta) = \sum_{n\in \mathbb{Z}} \abs{\widehat{f}(n)}^2.
\]
From this, it follows that Fourier partial sums 
\[
S_N(f)(\zeta) := \sum_{|n|\leq N} \widehat{f}(n) \zeta^n, \qquad \zeta \in \T, 
\]
converge to $f$ in the norm of $L^2(\T)$:
\[
\norm{f-S_N(f)}^2_{L^2}:= \sum_{|n|>N} \abs{\widehat{f}(n)}^2 \to 0, \qquad N\to \infty.
\]
The engine behind this is all attributed to orthogonality, and thus crucially relies on the underlying Hilbert space structure. In general, the theory becomes much more complicated once we leave this classical roam. Using Hilbert transforms, one can also show that
\[
\int_{\T} |S_N(f)-f|^p dm \to 0, \qquad N\to \infty,
\]
for any $p>1$. For instance, see \cite{katznelson2004introduction}.

We now consider the question of pointwise convergence of Fourier series. A simple and classical framework is the so-called Wiener algebra $A(\T)$ of continuous functions $f$ on $\T$ satisfying
\[
\norm{f}_{A} := \sum_{n\in \mathbb{Z}} \abs{\widehat{f}(n)} < \infty.
\]
This condition and the Weierstrass M-test ensures that the Fourier partials sum $S_N(f)$ converge to $f$ uniformly on $\T$. Following these discussions, one is naturally lead to the belief that the Fourier partial sums of a continuous function always converge. However, this is false.

\begin{thm}[du Bois--Reymond, 1870's] For any point $\zeta_0 \in \T$, there exists a continuous function $f$ on $\T$, such that
\[
\sup_{N\geq 1} \abs{S_N(f)(\zeta_0)} = + \infty.
\]
In particular, the Fourier series of a continuous function on $\T$ need not to converge at every point on $\T$.
\end{thm}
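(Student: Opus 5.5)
By rotation invariance it suffices to take $\zeta_0 = 1$. If $f$ works at $1$, then the translate $f_{t_0}$ works at $e^{it_0}$, since $S_N(f_{t_0}) = (S_N f)_{t_0}$ by the translation rule for Fourier coefficients. At $\zeta = 1$ the partial sums are given by the Dirichlet kernel,
\[
S_N(f)(1) = \int_{\T} f(\zeta)\, D_N(\conj{\zeta})\, dm(\zeta), \qquad D_N(e^{it}) = \sum_{|n|\le N} e^{int} = \frac{\sin((N+\tfrac12)t)}{\sin(t/2)}.
\]
So $f \mapsto S_N(f)(1)$ is a bounded linear functional on $C(\T)$, and its norm is $\norm{D_N}_{L^1(\T)}$. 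Equality holds because we may approximate $\operatorname{sgn} D_N$ by continuous functions of modulus at most one, using Lusin's theorem or simply smoothing near the finitely many zeros of $D_N$.

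The first key step is to show that the Lebesgue constants blow up: $\norm{D_N}_{L^1} \gtrsim \log N$. Use $|\sin(t/2)| \le |t|/2$ and split $[0,\pi]$ into the intervals $[k\pi/(N+\frac12), (k+1)\pi/(N+\frac12)]$. On the $k$-th interval, $|\sin((N+\frac12)t)|$ has integral of order $1/N$, while $1/|t| \gtrsim N/(k+1)$. Summing over $k \le N$ gives a harmonic sum of order $\log N$.

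The second step is to conclude with the uniform boundedness principle (Banach--Steinhaus) on the Banach space $C(\T)$. If $\sup_N |S_N(f)(1)| < \infty$ held for every $f \in C(\T)$, the functionals $\Lambda_N f = S_N(f)(1)$ would be uniformly bounded in norm. This contradicts $\norm{\Lambda_N} = \norm{D_N}_{L^1} \to \infty$. Hence some continuous $f$ has $\sup_N |S_N(f)(1)| = +\infty$. In fact the set of such $f$ is a dense $G_\delta$.

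If a constructive proof closer to du Bois--Reymond's spirit is preferred, an alternative is Fejér's construction. Take blocks $P_k(t) = \sum_{1\le |n|\le n_k} \frac{e^{int}}{n}\operatorname{sgn}(n)$, which are uniformly bounded trigonometric polynomials, suitably modulated so that a one-sided partial sum of order $\log n_k$ appears at $t = 0$. Then set $f = \sum_k k^{-2} e^{i m_k t} P_k$ with rapidly separated frequencies $m_k$. This series converges uniformly, while $S_N(f)(1)$ picks up roughly $k^{-2}\log n_k$ for a suitable $N$; choosing $n_k = 2^{k^3}$ makes this unbounded.

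The main obstacle is the lower bound on $\norm{D_N}_{L^1}$; it is elementary but is the heart of the matter. In the functional-analytic route, the only subtle point beyond that is the identification of the functional norm with $\norm{D_N}_{L^1}$. I will favour that route for brevity.
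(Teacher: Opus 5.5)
Your chosen argument is exactly the paper's proof: Banach--Steinhaus on $C(\T)$, identifying the norm of $f\mapsto S_N(f)(1)$ with $\norm{D_N}_{L^1(\T)}$ by approximating $\operatorname{sgn} D_N$, and the $\log N$ lower bound. You also usefully supply the lower-bound computation that the paper only calls straightforward.

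There is one slip in your optional constructive sketch. As written, $\sum_{1\le |n|\le n_k}\operatorname{sgn}(n)\,e^{int}/n = 2\sum_{n=1}^{n_k}\cos(nt)/n$ equals $2\sum_{n=1}^{n_k}1/n$ at $t=0$, so it is not uniformly bounded. Dropping the factor $\operatorname{sgn}(n)$ gives the bounded Fej\'er blocks $2i\sum_{n=1}^{n_k}\sin(nt)/n$, and with these the construction works.
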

In initial proof was constructive, but in modern days, one can give a simple proof based on the Banach--Steinhaus principle of uniform boundedness in functional analysis.
\begin{proof}
Fix a point $\zeta_0 \in \T$, and assume that for any function $f\in C(\T)$ the Fourier partial sums
\[
S_N(f)(\zeta_0) := \sum_{|n|\leq N} \widehat{f}(n) \zeta_0^n \\ ,N=1,2,3,\dots
\]
converges. Regarding the family $(S_N)_N$ as bounded linear functionals on the Banach space $C(\T)$ of continuous functions on $\T$, we must then have 
\[
\sup_{N\geq 0} \abs{S_N(f)(\zeta_0)} \leq C(f),
\]
for all $f\in C(\T)$, hence by the Banach--Steinhaus principle of uniform boundedness:
\[
\sup_{N\geq 1} \norm{S_N}_{C(\T)^*} < \infty.
\]
Realizing the Fourier partial sums as  convolution operators 
\[
S_N(f)(e^{it}) = \int_{\T} f(e^{i\tau}) D_N(e^{i(t-\tau)}) d\tau, \qquad e^{it} \in \T, \qquad N=1,2,3,\dots
\]
where $D_N$ denotes the \emph{Dirichlet kernel}
\[
D_N(e^{it}) := \sum_{|n|\leq N} e^{int} = \frac{\sin((N+1/2)t)}{\sin(t/2)}, \qquad e^{it} \in \T, \qquad N=1,2,3,\dots
\]
By the Riesz representation theorem, each functional \(S_N(\cdot)(\zeta_0)\)
may be identified with integration against the signed measure whose density is
the translated Dirichlet kernel. In particular,
\[
\norm{S_N}_{C(\T)^*}
=
\norm{D_N(\zeta_0)\,dm}_{M(\T)}
=
\int_{\T}\abs{D_N}\,dm.
\]
Indeed, this follows by approximating $\text{sgn}D_N$ with functions of unit-norm in $C(\T)$. Now the desired contradiction arises from the fact that a straightforward computation gives
\[
\int_{\T} \abs{D_N} \, dm \asymp \log N.
\]
In conclusion, there must exist a function $f\in C(\T)$, whose Fourier partials sums are unbounded at $\zeta_0 \in \T$. 
\end{proof}

Despite this fact, there is fortunately a condition on $f$, which is just slightly stronger than mere continuity at a point, but on the upside, it is a local condition, which ensures pointwise convergence of the Fourier series. 

\begin{prop}(Dini's criterion) Let $f\in L^1(\T)$ with the property that for some $t_0\in [0,2\pi)$:
\[
\int_0^{2\pi} \frac{\abs{f(e^{it})-f(e^{it_0})}}{|t-t_0|} dt < \infty.
\]
Then 
\[
S_N(f)(e^{it_0}) \to f(e^{it_0}), \qquad N\to \infty.
\]
    
\end{prop}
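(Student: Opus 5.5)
We shall reduce the statement to the Riemann--Lebesgue lemma via the Dirichlet kernel representation already used in the proof of the du Bois--Reymond theorem. After a translation we may assume $t_0=0$, and by replacing $f$ with $f-f(1)$ we may assume $f(1)=0$. This is legitimate because $S_N$ reproduces constants, since $\int_{\T} D_N\,dm=1$. It then suffices to show $S_N(f)(1)\to 0$. With the normalization $dm=dt/2\pi$ we write
\[
S_N(f)(1)=\frac{1}{2\pi}\int_{-\pi}^{\pi} f(e^{it})\,\frac{\sin((N+1/2)t)}{\sin(t/2)}\,dt .
\]
Here we regard the integral over $[-\pi,\pi)$ and read the hypothesis with $|t|$ as the distance to $t_0$ on the circle. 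Near $t=2\pi$ the quantity $|t-t_0|$ in the stated integral is comparable to the circular distance, so this reading is harmless.

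The key step is to set $g(t):=f(e^{it})/\sin(t/2)$. We observe that $|\sin(t/2)|\geq |t|/\pi$ for $|t|\leq\pi$, so that
\[
\int_{-\pi}^{\pi}|g(t)|\,dt\leq \pi\int_{-\pi}^{\pi}\frac{|f(e^{it})-f(1)|}{|t|}\,dt<\infty
\]
by the Dini hypothesis. Thus $g\in L^1([-\pi,\pi])$. Expanding $\sin((N+1/2)t)=\sin(Nt)\cos(t/2)+\cos(Nt)\sin(t/2)$, we get
\[
S_N(f)(1)=\frac{1}{2\pi}\int_{-\pi}^{\pi}\big(g(t)\cos(t/2)\big)\sin(Nt)\,dt+\frac{1}{2\pi}\int_{-\pi}^{\pi} f(e^{it})\cos(Nt)\,dt .
\]
Both $g(t)\cos(t/2)$ and $f$ are integrable. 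Hence each term is a linear combination of Fourier coefficients $\widehat{h}(\pm N)$ of an $L^1(\T)$ function, and both tend to zero.

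The remaining ingredient, and the only real obstacle since it has not been stated earlier in the notes, is the Riemann--Lebesgue lemma: $\widehat{h}(n)\to 0$ for $h\in L^1(\T)$. We shall prove it quickly. For trigonometric polynomials it is trivial. Trigonometric polynomials are dense in $L^1(\T)$, because continuous functions are dense in $L^1$ and Weierstrass approximation (\thref{THM:WEITHM}) then applies. Finally, $|\widehat{h}(n)-\widehat{p}(n)|\leq\|h-p\|_{L^1}$, which gives the general case. With this, $S_N(f)(e^{it_0})\to f(e^{it_0})$ follows.
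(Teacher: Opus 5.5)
Your argument follows the paper's proof: reduce to $t_0=0$ and $f(1)=0$, write $S_N(f)(1)$ against the Dirichlet kernel, use the Dini hypothesis to see that $f(e^{it})/\sin(t/2)$ is integrable, and conclude with the Riemann--Lebesgue lemma. The splitting $\sin((N+\tfrac12)t)=\sin(Nt)\cos(t/2)+\cos(Nt)\sin(t/2)$ only makes explicit the reduction to integer frequencies that the paper leaves tacit. Your density proof of Riemann--Lebesgue replaces the paper's later integration-by-parts proof.

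One correction concerns your remark that reading the hypothesis with the circular distance is harmless. It is harmless for $t_0\in(0,2\pi)$, where the two weights are comparable with constants depending on $t_0$. It is false for $t_0=0$: there the stated integral controls $f$ near $1$ from one side only, and the statement as literally written fails. For example, $f(e^{it})=1_{[\pi,2\pi)}(t)$ has $f(1)=0$ and $\int_0^{2\pi}|f(e^{it})|\,t^{-1}\,dt=\log 2<\infty$, yet $S_N(f)(1)=\widehat{f}(0)=\tfrac12$ for every $N$. So the circular reading is not a convention but the necessary form of the hypothesis. The paper's own proof assumes it too, both in its ``without loss of generality $t_0=0$'' and in its claim that $(f(e^{it})-f(1))/\sin(t/2)$ is integrable on $[0,2\pi]$, which needs control near $t=2\pi$.
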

\begin{proof}
Without loss of generality, one may take $t_0=0$. Using the explicit expression for the Dirichlet kernel, and the fact that $\int_{\T}D_N dm =1$, one can write
\[
S_N(f)(1) -f(1) = \int_{0}^{2\pi} \sin((N+1/2)t) \frac{f(e^{it})-f(1)}{\sin(t/2)} \frac{dt}{2\pi}.
\]
Now the assumption on $f$ ensures that the function
\[
\frac{f(e^{it})-f(1)}{\sin(t/2)}
\]
is integrable, hence the claim follows from the Riemann--Lebesgue \thref{LEM:RIELEB}, which is proved later. 
\end{proof}

In particular, it follows that the Fourier series of a H\"older continuous function $f$ on $\T$ converges to $f$ at every point in $\T$. 

Intrinsically, the convergence issues associated with the convergence of Fourier partials sums are mainly caused by the fact that the Dirichlet kernels fail be uniformly bounded in norm of $L^1(\T)$.

Now despite the counterexample of du Bois--Reymond, it was still believed for a long time that even if the Fourier series may diverge somewhere, these points must be rather exceptional, and the Fourier series of an integrable function only fails to converge at a set of Lebesgue measure zero. Then came A. Kolmogorov, and produced the wildest possible counter-example.

\begin{thm}[Kolmogorov, 1926] There exists a function $f\in L^1(\T)$ whose Fourier series diverges at every point in $\T$.
\end{thm}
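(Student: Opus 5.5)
The plan is to build $f$ as a lacunary sum of trigonometric polynomials. Each polynomial has $L^1$-norm at most one, but some partial sum of it is large at \emph{every} point of $\T$, with a uniform bound on the index needed. This is the approach in \cite{katznelson2004introduction}. The key object is the probability measure $\mu_N=\frac1N\sum_{j=1}^N \delta_{x_j}$, where the points $x_j$ are small perturbations of $2\pi j/N$. The perturbations are chosen so that $x_1,\dots,x_N,\pi$ are linearly independent over $\mathbb{Q}$. For this measure
\[
S_n(\mu_N)(e^{it})=\frac1N\sum_{j=1}^N D_n(e^{i(t-x_j)})=\frac1N\sum_{j=1}^N \frac{\sin((n+\tfrac12)(t-x_j))}{\sin((t-x_j)/2)}.
\]

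\textbf{Step 1 (the main obstacle).} I would show that for every $t$ with $t\neq x_j$ for all $j$, there is $n$ with $\abs{S_n(\mu_N)(e^{it})}\ge c\log N$. The idea is to use Kronecker's theorem to pick $n$ so that each $\sin((n+\frac12)(t-x_j))$ is close to $\operatorname{sgn}\sin((t-x_j)/2)$. The sum then becomes comparable to $\frac1N\sum_j \abs{\sin((t-x_j)/2)}^{-1}$, which is $\gtrsim \log N$ because the $x_j$ are nearly equidistributed, so the distances $|t-x_j|$ behave like $k/N$.

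The delicate point is that Kronecker's theorem needs the numbers $t-x_j$, together with $\pi$, to be independent over $\mathbb{Q}$, and this fails for some $t$. I would handle it as Katznelson does. Split the $j$'s into a few groups: for fixed $t$, at most one linear relation can be "forced", so we may discard a small subfamily of indices on which independence fails. Alternatively, only approximate alignment of the signs is needed, and that is available on a dense set of $n$ for all $t$ simultaneously outside an exceptional set. This is the step I expect to cost the most care.

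The points $t=x_j$ themselves are handled differently. There $D_n(0)=2n+1$, so the single term $\frac1N D_n(0)$ blows up as $n\to\infty$ while the other terms stay bounded by $C_N$, which gives the required lower bound directly.

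\textbf{Step 2 (uniformity and smoothing).} Each set $U_n=\{t:\abs{S_n(\mu_N)(e^{it})}>c\log N\}$ is open, and by Step 1 the sets $U_n$ cover $\T$. By compactness a finite number of them already cover, so there is $n_N$ with $\max_{n\le n_N}\abs{S_n(\mu_N)}>c\log N$ everywhere on $\T$.

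Next replace $\mu_N$ by its Fejér means $\sigma_K\mu_N$. These are trigonometric polynomials with $\norm{\sigma_K\mu_N}_{L^1}\le 1$, and their Fourier coefficients up to order $n_N$ converge to those of $\mu_N$. Hence for $K$ large we get a polynomial $P_N$ satisfying $\norm{P_N}_{L^1}\le1$ and $\max_{n\le n_N}\abs{S_n(P_N)}>\frac c2\log N$ on all of $\T$. Writing $S_n=S_n^{+}-S^{+}_{-n-1}$ in terms of one-sided partial sums $S_k^{+}(P)=\sum_{m\le k}\widehat P(m)\zeta^m$, some one-sided partial sum of $P_N$ exceeds $\frac c4\log N$ at each point.

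\textbf{Step 3 (gluing).} Choose $N_j$ with $\log N_j=j^3$ and weights $a_j=j^{-2}$. Set
\[
f=\sum_j a_j\,\zeta^{m_j}P_{N_j},
\]
where the modulations $m_j$ grow fast enough that the spectra of the blocks are disjoint, positive, and ordered. Since $\sum_j a_j<\infty$, this series converges in $L^1(\T)$, so $f\in L^1(\T)$.

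Because the blocks are separated, the difference of two suitable partial sums $S_{M}(f)-S_{M'}(f)$ of $f$ equals exactly $a_j$ times a one-sided partial sum of $P_{N_j}$, modulated by $\zeta^{m_j}$. The modulation has modulus one, so it does not affect absolute values. By Step 2, at every point this difference can be made $\ge a_j\frac c4 j^3\to\infty$ by choosing the indices appropriately.

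If the Fourier series of $f$ converged at some point, its partial sums would be Cauchy there, and all such differences would tend to zero. This is a contradiction. Hence the Fourier series of $f$ diverges at every point of $\T$.
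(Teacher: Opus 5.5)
The notes state this theorem without proof; they only refer to Katznelson, where the almost-everywhere version is treated. So your argument has to stand on its own.

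Steps 2 and 3 are sound. The open sets $U_n$ and compactness give a uniform $n_N$. Fejér means converge coefficientwise, so $S_n(\sigma_K\mu_N)\to S_n(\mu_N)$ uniformly for $n\le n_N$. The splitting $S_n=S_n^+-S^+_{-n-1}$ is correct. The block differences $a_j\zeta^{m_j}S_k^+(P_{N_j})$ have size $\gtrsim a_j\log N_j=j\to\infty$.

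The gap is in Step 1, at the countably many $t\notin\{x_j\}$ where Kronecker's hypothesis fails. This is exactly what separates Kolmogorov's everywhere theorem from the a.e.\ one, where such a countable set is simply thrown away. Your alternative (good $n$ for all $t$ outside an exceptional set) only yields a.e.\ divergence. Discarding the indices on which independence fails is also not free. The discarded term $\frac1N D_n(e^{i(t-x_{j_1})})$ has uncontrolled sign, and its size can reach $\bigl(N\abs{\sin((t-x_{j_1})/2)}\bigr)^{-1}$, which is unbounded as $t\to x_{j_1}$. If $x_{j_1}$ is the point nearest to $t$, this term can cancel the whole $\log N$ gain. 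You then lose the uniformity of $c$ in $t$ that the compactness step in Step 2 needs.

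Here is how to close it. Fix $t\notin\{x_j\}$ and set $y_j=t-x_j$.

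First, the relations form a space of dimension at most one. Suppose $q_0\pi+\sum_{j=1}^N q_jy_j=0$ with rational coefficients not all zero. Then $\sum_{j=1}^N q_j\neq0$, since otherwise this would be a nontrivial relation among $x_1,\dots,x_N,\pi$. So the relations form a $\mathbb{Q}$-space of dimension at most one. Deleting any single index $j_1$ with $q_{j_1}\neq0$ therefore makes $\{y_j\}_{j\neq j_1}\cup\{\pi\}$ independent.

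Case 1: at least two of $q_1,\dots,q_N$ are nonzero. Delete one whose point is not the one nearest to $t$. If the perturbations are at most $\pi/(4N)$, all but one of the $x_j$ lie at distance $\ge 3\pi/(4N)$ from $t$. The deleted term is then bounded by an absolute constant. Kronecker aligns the signs of all remaining terms, including the nearest one, giving $S_n(\mu_N)(e^{it})\ge c\log N-O(1)$.

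Case 2: only $q_{j_1}\neq0$. Then $y_{j_1}=2\pi p/q$ is a rational multiple of $2\pi$. For $n\equiv 0 \pmod q$ one has $\sin((n+\tfrac12)y_{j_1})=\sin(y_{j_1}/2)$, that is, $D_n(e^{iy_{j_1}})=1$. Apply Kronecker along $n=qm$. This is legitimate because $\{qy_j\}_{j\neq j_1}\cup\{\pi\}$ is still independent over $\mathbb{Q}$.

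With this, Step 1 holds at every point of $\T$ with $c$ independent of $t$, and the rest of your proof goes through.
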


Kolmogorov's sophisticated counter example shattered much of the remaining general belief about the convergence problem of Fourier series. However, N. Lusin still conjectured that convergence at $dm$-a.e point should hold for functions in $L^2(\T)$. Surprisingly, this was later confirmed in the celebrated work of  L. Carleson in \cite{carleson1966convergence}.

\begin{thm} [Carleson, 1965] For any  $f\in L^2(\T)$ the partial sums satisfy
\[
S_N(f)(\zeta) \to f(\zeta), \qquad N\to \infty,
\]
pointwise at $dm$-a.e $\zeta \in \T$.
\end{thm}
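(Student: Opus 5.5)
The natural strategy is the standard reduction to a maximal inequality. We introduce the Carleson maximal operator
\[
\mathcal{C}f(\zeta) := \sup_{N\geq 1} \abs{S_N(f)(\zeta)}, \qquad \zeta \in \T,
\]
and aim to prove the weak-type estimate
\[
m\big(\{\zeta\in\T : \mathcal{C}f(\zeta) > \lambda\}\big) \leq \frac{C}{\lambda^2}\norm{f}_{L^2}^2, \qquad \lambda>0,
\]
with $C$ an absolute constant. Granting this, the theorem follows by density. Trigonometric polynomials $P$ are dense in $L^2(\T)$, by Parseval and the completeness of $\{\zeta^n\}$, and for them $S_N(P)=P$ once $N$ exceeds the degree. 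Hence for $f\in L^2(\T)$ and $\varepsilon>0$ we choose $P$ with $\norm{f-P}_{L^2}<\varepsilon$ and bound
\[
\limsup_{N}\abs{S_N f - f} \leq \mathcal{C}(f-P) + \abs{f-P}.
\]
Chebyshev's inequality together with the weak-type bound then shows that the set where the left-hand side exceeds $\lambda$ has measure $\lesssim \varepsilon^2/\lambda^2$. Letting $\varepsilon\to 0$ and then $\lambda\to 0$ along a sequence gives a.e.\ convergence.

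To prove the maximal estimate, we first linearize it. We choose a measurable function $N(\zeta)$ and study the operator $f\mapsto S_{N(\zeta)}f(\zeta)$. Writing $D_N$ as a modulated conjugate-function kernel, we have
\[
S_N f(\zeta) = \zeta^{-N}\widetilde{H}(\zeta^{N}f)(\zeta)+\text{(bounded terms)},
\]
so it suffices to control $\sup_N \abs{H(M_N f)}$, where $H$ is the Hilbert transform (whose $L^2$-boundedness is classical and was alluded to above) and $M_N$ denotes modulation by $\zeta^N$. Following Lacey--Thiele, we would discretize this operator in time-frequency space. The relevant objects are tiles $P=I_P\times\omega_P$, dyadic arcs with $\abs{I_P}\abs{\omega_P}=1$, together with wave packets $\varphi_P$ adapted to them. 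The linearized operator is then a sum over tiles of the form
\[
\sum_P \langle f,\varphi_P\rangle\,\varphi_P(\zeta)\,\mathbf{1}_{\omega_{P,\text{upper}}}(N(\zeta)).
\]

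The combinatorial core comes next. Tiles are organized into \emph{trees}, and we attach two quantities to each collection: a \emph{mass}, which measures how much of the set $\{N(\zeta)\in\omega_P\}$ lies in $I_P$, and an \emph{energy}, which measures $\sum \abs{\langle f,\varphi_P\rangle}^2$ normalized over trees. Three estimates are then needed. The first is a \emph{tree lemma}, bounding the contribution of a single tree by $\abs{I_T}$ times mass times energy; this is essentially a Calder\'on--Zygmund estimate for $H$ along a single frequency path. The second is a mass decomposition, splitting the tiles into pieces of mass $\sim 2^{-2k}$, each covered by trees with $\sum\abs{I_T}\lesssim 2^{2k}\abs{E}$. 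The third is an energy decomposition, splitting into pieces of energy $\sim 2^{-k}$ with $\sum\abs{I_T}\lesssim 2^{2k}\norm{f}_2^2$. This energy decomposition uses the almost-orthogonality of strongly disjoint trees, proved via a $TT^*$ argument. Summing the two geometric series against each other, and testing the operator against $\mathbf{1}_E$ for sets $E$ of finite measure, gives a restricted weak-type bound, which is enough here since we only need weak $L^2$.

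The main obstacle is the energy lemma, namely the Bessel-type inequality for strongly disjoint trees. The difficulty is that wave packets from different trees are not orthogonal, so the interaction terms $\langle\varphi_P,\varphi_{P'}\rangle$ have to be controlled by the geometric separation built into the selection algorithm. My first attempt would be to select trees greedily by top frequency, taking the rightmost frequency first. I would then bound the off-diagonal terms in $TT^*$ using the rapid decay of the wave packets in $\abs{I_P}^{-1}\dist{I_P}{I_{P'}}$, together with the fact that strong disjointness forces the spatial supports of overlapping-frequency tiles to be separated. The remaining parts of the argument are routine Schwartz-tail estimates.
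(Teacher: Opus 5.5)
Your first paragraph is exactly the reduction the paper records in the ``Further results'' subsection of Section 2. A weak-type $(2,2)$ bound for $\sup_N\abs{S_N f}$, together with a dense class on which $S_Nf\to f$ trivially, gives a.e.\ convergence on all of $L^2(\T)$. Your Chebyshev splitting with trigonometric polynomials is correct.

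The paper stops there. It treats the maximal inequality as a black box and simply attributes the theorem to Carleson. The only a.e.\ result for $S_N$ it actually proves is the Borel--Cantelli ``mild'' version, which avoids maximal operators but is tied to $\sum(1+\abs{n})\abs{\widehat f(n)}^2<\infty$. Everything after your first paragraph is therefore a genuinely different and far more ambitious route: the Lacey--Thiele time--frequency proof. Its architecture as you describe it is right. You have a tree lemma, a mass lemma with count $\mu^{-1}\abs{E}$, and an energy lemma with count $\Delta^{-2}\norm{f}_{L^2}^2$. The thresholds should be coupled as $\mu=\delta^2$, $\Delta=\delta\norm{f}_{L^2}\abs{E}^{-1/2}$, and the trivial bound $\text{mass}\lesssim 1$ sums the large-$\delta$ range. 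Carried out, this would supply what the paper only cites. As written, though, it is a roadmap rather than a proof. The model-sum representation is only named; it is an \emph{average} over shifted, dilated and modulated grids, set up on $\T\times\mathbb{Z}$ or transferred from $\R$. All three lemmas are likewise only named.

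There are three points to correct.
\begin{itemize}
\item Since $D_N(t)=\sin(Nt)\cot(t/2)+\cos(Nt)$, $S_Nf$ contains \emph{both} $\zeta^{N}\widetilde{(\zeta^{-N}f)}$ and $\zeta^{-N}\widetilde{(\zeta^{N}f)}$. Only the $\cos(Nt)$ part is bounded (by $\norm{f}_{L^1}$), so the supremum must run over all $N\in\mathbb{Z}$.
\item Your greedy order in the energy lemma is reversed. With the indicator on the upper half, the packets sit on the lower halves, so a lacunary tree has its top $\xi_T$ above all its packet frequencies. The strong disjointness you need says: if $P\in T$, $P'\in T'\neq T$ and $\omega_P$ lies in the lower half of $\omega_{P'}$, then $I_{P'}\cap I_T=\emptyset$. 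In this situation $\xi_T\in\omega_P$ while $\xi_{T'}$ lies in the upper half of $\omega_{P'}$, so $\xi_T<\xi_{T'}$. If $I_{P'}$ met $I_T$, then $P'$ would lie in the maximal tree with top $(I_T,\xi_T)$, which is removed when $T$ is chosen. That yields a contradiction only if $T$ is chosen before $T'$. Hence you must select the \emph{lowest} top frequency first; rightmost-first allows exactly these bad pairs.
\item The off-diagonal terms of $TT^*$ are not controlled by Schwartz tails and strong disjointness alone. You also need the a priori bound $\abs{\langle f,\varphi_P\rangle}\lesssim\Delta\abs{I_P}^{1/2}$ for every remaining tile. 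It holds because a single tile is itself a lacunary tree and the collection has energy $\lesssim\Delta$. This bound is what dominates the cross terms by $\Delta^2\sum_T\abs{I_T}$ and closes $\Delta^2\sum_T\abs{I_T}\lesssim\norm{f}_{L^2}\bigl(\Delta^2\sum_T\abs{I_T}\bigr)^{1/2}$.
\end{itemize}
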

Not long after, Carleson's Theorem was extended to all $f\in L^p(\T)$ for $p>1$, by R. Hunt in 1967. A complementary result showing the sharpness of Carleson Theorem was given by J-P. Kahane and Y. Katznelson in \cite{KahaneKatznelson}

\begin{thm}[Kahane--Katznelson] For any compact set $E \subset \T$ of Lebesgue measure zero, there exists an $f\in C(\T)$ such that 
\[
\sup_{N\geq 1} \abs{S_N(f)(\zeta)} = +\infty, \qquad \zeta \in E.
\]
\end{thm}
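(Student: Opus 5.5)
The plan is to follow the classical Kahane--Katznelson strategy. First we build, for each $M>0$, a trigonometric polynomial $P$ with $\norm{P}_{C(\T)} \leq 1$ whose partial sums are large on all of $E$: for every $\zeta\in E$ there should be some $N$ with $\abs{S_N(P)(\zeta)} \geq M$. Then we glue such polynomials together with rapidly decaying weights and widely separated frequency blocks.

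\emph{Step 1: Fejér-type localized mass.} Since $m(E)=0$, we cover $E$ by finitely many small arcs $I_1,\dots,I_K$ whose total length $\sum_j |I_j| = \delta$ is as small as we like. Let $\zeta_j$ be the centre of $I_j$ and consider the positive measure $\mu = \frac{1}{K}\sum_j \delta_{\zeta_j}$, smoothed by a Fejér kernel of width comparable to $|I_j|$. This produces a positive trigonometric polynomial $h \geq 0$ with $\int_\T h\,dm = 1$ and with $h$ large, roughly $\asymp 1/\delta$, on $\bigcup I_j$. The key device is to pass to an analytic-type function $F = \exp(-\lambda(h+i\tilde h))$, or equivalently to use $\log$ of a function whose real part is $-\lambda h$. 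Here $\tilde h$ is the conjugate function, and we truncate to a polynomial. Then $\abs{F}\leq 1$ on $\T$, and $F$ is small on $E$, while $1-F$ has mean value close to $1$. The Dirichlet partial sums of $1-F$ at $\zeta\in E$ then behave like the partial sums of $\frac{1}{K}\sum_j D_N(\zeta\conj{\zeta_j})$-type expressions. The aim is to show that at each $\zeta \in E$ some partial sum is $\gtrsim \log(1/\delta)$. This is the same mechanism that appears in the proof of du Bois--Reymond: the Dirichlet kernels have $L^1$ norm $\asymp \log N$. Since $\delta$ is arbitrary, we can achieve any prescribed $M$.

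\emph{Step 2: Gluing.} Choose polynomials $P_k$ from Step 1 with $\norm{P_k}_\infty\leq 1$ and $\sup_N \abs{S_N(P_k)}\geq 4^k$ on $E$. Multiply by modulations $\zeta^{n_k}$ with $n_k$ increasing so fast that the spectra of the $\zeta^{n_k}P_k$ are disjoint and separated. Set
\[
f=\sum_k 2^{-k}\zeta^{n_k}P_k .
\]
This series converges uniformly, so $f\in C(\T)$. Because the spectral blocks are separated, a partial sum of $f$ that is taken inside the $k$-th block equals the full earlier blocks, plus a partial sum of that block (a difference of two Dirichlet partial sums of $P_k$ after modulation). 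The earlier blocks are bounded by $\sum_{j<k} 2^{-j}\deg$-independent quantities. So at each $\zeta\in E$ we get $\sup_N\abs{S_N f(\zeta)}\gtrsim 2^{-k}4^k - C \to\infty$. Here we use that $\abs{S_a P - S_b P}$ large at some pair of indices follows from $S_N P$ being large, because $S_{-1}=0$.

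\emph{Main obstacle.} The hard part is Step 1. We need a \emph{uniformly bounded} polynomial whose partial sums blow up \emph{simultaneously at every point} of $E$, not just at one point, which is all that Banach--Steinhaus gives. The boundedness $\abs{F}\le 1$ comes from the exponential of $-\lambda(h+i\tilde h)$. The delicate estimate is the lower bound on $\abs{S_N(F)(\zeta)}$ for suitable $N=N(\zeta)$. To get it, we will first try a Riesz-product construction: take $h$ to be a product of Fejér kernels at geometrically separated scales. Each scale then contributes an independent unit of partial-sum growth at points of $E$, and these contributions add up to $\asymp \log(1/\delta)$.
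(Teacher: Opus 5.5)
The notes do not prove this theorem (they only cite Katznelson, Ch.~II, \S3), so your proposal has to stand on its own. It has a genuine gap: Step~2 is routine, but Step~1 is the whole theorem, and the mechanism you propose for it does not work.

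The heuristic that $S_N(1-F)(\zeta)$ behaves like $\frac1K\sum_j D_N(\zeta\conj{\zeta_j})$ has no basis. The latter are partial sums of a discrete measure, of size $\asymp N/K$ at the $\zeta_j$, whereas $\abs{1-F}\le 2$. In fact the exponential works against you.

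Test it in the model case $E=\{1\}$, with $h=P_r$ the Poisson kernel and $1-r=\delta$ (your heuristic does not depend on which kernel is used). Then $h+i\widetilde h=\frac{1+r\zeta}{1-r\zeta}$, so $F(\zeta)=S_\lambda(r\zeta)$, where $S_\lambda(z)=\exp\bigl(-\lambda\frac{1+z}{1-z}\bigr)$ is the atomic singular inner function and $S_\lambda(e^{it})=e^{-i\lambda\cot(t/2)}$. The $1/t$-singularity of $\widetilde h$ has become a rapidly rotating phase (a chirp), not a jump. Consequently
\[
S_N(S_\lambda)(1)=\frac1\pi\int_0^\pi\cos\bigl(\lambda\cot(t/2)\bigr)\,D_N(t)\,dt
\]
is bounded uniformly in $N$ and $\lambda$, by a routine stationary-phase estimate. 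By Abel summation, $\abs{S_N(F)(1)}\le\sup_M\abs{S_M(S_\lambda)(1)}$. Hence $\sup_N\abs{S_N(1-F)(1)}=O(1)$ uniformly in $\lambda$ and $\delta$, not $\gtrsim\log(1/\delta)$. The Riesz-product fallback is not an argument either: nothing explains why partial sums of such a product should be large on $E$.

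The missing idea is to take a \emph{logarithm} rather than an exponential. It converts the $1/t$-singularity into a bounded argument with a jump, whose conjugate function is a large log-modulus. A single partial sum of a modulated polynomial then recovers that conjugate function.

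\emph{Construction.} Since $m(E)=0$, pick a real trigonometric polynomial $h\ge 0$ with $\int_\T h\,dm\le 1$ and $h\ge\Lambda$ on $E$. Then $\Phi=h+i\widetilde h$ is an analytic polynomial with $\Re\Phi\ge0$ on $\cD$. So $L=\log(1+\Phi)$ is analytic near $\cD$, with $\abs{\Im L}<\pi/2$, $\Re L\ge\log(1+\Lambda)$ on $E$, and $\Re L(0)\le\log 2$. Let $T$ be a Taylor polynomial of $L$ of degree $d$ with $\sup_{\cD}\abs{T-L}<1$, and set $Q=\Im T$. Then $\abs{Q}\le 3$ on $\T$ and $\widetilde Q=\Re T(0)-\Re T$, so $\abs{\widetilde Q}\ge\log(1+\Lambda)-3$ on $E$. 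Since $\zeta^dQ$ has spectrum in $[0,2d]$,
\[
S_d(\zeta^dQ)=\zeta^d\sum_{m\le 0}\widehat Q(m)\zeta^m=\tfrac12\,\zeta^d\bigl(Q-i\widetilde Q+\widehat Q(0)\bigr).
\]
So $P=\frac13\zeta^dQ$ satisfies $\norm{P}_{L^\infty}\le 1$ and $\abs{S_d(P)}\ge\frac16\bigl(\log(1+\Lambda)-3\bigr)$ at every point of $E$, with one and the same index $d$. This resolves exactly the obstacle you identified.

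\emph{Gluing.} Your Step~2 then works. Take $\abs{S_{d_k}(P_k)}\ge k^3$ on $E$, $f=\sum_k k^{-2}\zeta^{n_k}P_k$, and $n_k>n_{k-1}+2d_{k-1}$. Then $S_{n_k+d_k}(f)=\sum_{j<k}j^{-2}\zeta^{n_j}P_j+k^{-2}\zeta^{n_k}S_{d_k}(P_k)$, hence $\abs{S_{n_k+d_k}(f)}\ge k-\pi^2/6$ on $E$.

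As written, your Step~2 also slips on one point. A partial sum of $f$ cut inside the $k$-th block is a one-sided truncation $A_M=\sum_{m\le M}\widehat{P_k}(m)\zeta^m$, not a difference of symmetric partial sums of $P_k$. From $\abs{S_N(P_k)}$ large you only get, via $S_N(P_k)=A_N-A_{-N-1}$, a one-sided truncation of at least half that size. With the construction above this point is moot, since the large sum is already one-sided.
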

A simpler proof of this result, and an $a.e$ version of Kolmogorov's Theorem is contained in \cite[Ch. II, section 3]{katznelson2004introduction}. Let us now include a short and elementary proof of Carleson's Theorem for functions of finite Dirichlet integral.

\begin{prop}[Mild Carleson] Let $f\in L^2(\T)$ with the additional property that
\begin{equation}\label{EQ:DIRSUM}
\sum_{n} (1+|n|)|\widehat{f}(n)|^2 < \infty.
\end{equation}
Then the Fourier partial sums $S_N(f)$ converge to $f$ at $dm$-a.e on $\T$.
\end{prop}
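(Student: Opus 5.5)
Our approach is to reduce a.e. convergence of the partial sums to convergence along a lacunary subsequence, combined with a maximal-type control of the oscillation between consecutive lacunary indices. The finite Dirichlet integral \eqref{EQ:DIRSUM} is what makes both pieces summable.

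\emph{Step 1: convergence along $N_k = 2^k$.} Since $f\in L^2(\T)$, Parseval gives
\[
\norm{f - S_{2^k}(f)}_{L^2}^2 = \sum_{|n|>2^k} |\widehat{f}(n)|^2 \leq 2^{-k}\sum_{|n|>2^k}|n||\widehat{f}(n)|^2 .
\]
Hence
\[
\sum_k \norm{f-S_{2^k}(f)}^2_{L^2} \leq \sum_n |\widehat{f}(n)|^2 |n| \sum_{k: 2^k<|n|} 2^{-k} \lesssim \sum_n |\widehat f(n)|^2 <\infty .
\]
By monotone convergence, $\sum_k |f-S_{2^k}f|^2 \in L^1(\T)$, so $S_{2^k}(f)\to f$ a.e. (Even the plain $L^2$ hypothesis suffices here; the extra weight is not yet needed.)

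\emph{Step 2: control of the blocks.} For $2^k< N\le 2^{k+1}$ we have $S_N f - S_{2^k} f = \sum_{2^k<|n|\le N}\widehat f(n)\zeta^n$. We set
\[
M_k(\zeta) := \max_{2^k<N\le 2^{k+1}} \abs{S_N f(\zeta)-S_{2^k}f(\zeta)}.
\]
The crude bound $M_k \le \sum_{2^k<|n|\le 2^{k+1}}|\widehat f(n)|$ followed by Cauchy--Schwarz gives
\[
M_k^2 \le 2^{k+2}\sum_{2^k<|n|\le 2^{k+1}}|\widehat f(n)|^2 \lesssim \sum_{2^k<|n|\le 2^{k+1}}|n||\widehat f(n)|^2 .
\]
This is summable in $k$ by \eqref{EQ:DIRSUM}, but it only shows $M_k$ is bounded, not that $M_k\to0$. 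In fact $\sum_k M_k^2<\infty$ pointwise uniformly, so $M_k\to 0$ uniformly on $\T$. Indeed, the block sums are tails of a convergent series, so $\sup_\zeta M_k\to0$.

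\emph{Step 3: conclusion.} For $2^k<N\le 2^{k+1}$,
\[
|S_Nf-f|\le |S_{2^k}f-f|+M_k .
\]
The first term tends to $0$ a.e. by Step 1 and the second tends to $0$ uniformly by Step 2, so $S_N(f)\to f$ a.e.

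The step I expected to be the main obstacle was Step 2, controlling the intermediate partial sums. I first tried a Rademacher--Menshov-type dyadic maximal inequality, which loses a $\log^2$ factor. That turns out to be unnecessary: the $(1+|n|)$ weight lets the blunt $\ell^1$ estimate over each dyadic block succeed via Cauchy--Schwarz, since the block length $2^k$ is exactly compensated by $|n|\approx 2^k$.
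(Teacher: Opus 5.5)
Your proof is correct, but it takes a different route from the paper's. The paper handles the whole sequence at once. It applies Chebyshev's inequality to the sets $\{|S_n(f)-f|>\varepsilon_n\}$ for every $n$, with thresholds $\varepsilon_n\downarrow 0$ chosen slowly enough that $\sum_k |k||\widehat{f}(k)|^2\varepsilon_{|k|}^{-2}<\infty$, and then concludes with Borel--Cantelli. The weight $(1+|n|)$ enters there because summing $\|S_n(f)-f\|_{L^2}^2$ over \emph{all} $n$ produces exactly $\sum_k|k||\widehat{f}(k)|^2$.

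You instead separate an almost-everywhere statement along the lacunary indices $2^k$ from a deterministic estimate between consecutive dyadic indices. This avoids choosing auxiliary thresholds and shows where the hypothesis is actually used. The lacunary step only needs $\sum_n \log(2+|n|)|\widehat{f}(n)|^2<\infty$. The full weight is consumed by the crude $\ell^1$/Cauchy--Schwarz bound on each dyadic block, and in return you get \emph{uniform}, not merely a.e., control of the intermediate partial sums. The paper's argument is shorter and needs no decomposition.

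Two slips in the write-up should be fixed, though neither affects validity.

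In Step 1 the final inequality is wrong. Since $\sum_{k:2^k<|n|}2^{-k}\le 2$, what you get is $\sum_k\|f-S_{2^k}(f)\|_{L^2}^2\le 2\sum_n|n||\widehat{f}(n)|^2$, not $\lesssim\sum_n|\widehat{f}(n)|^2$. Counting exactly gives $\sum_{n\neq 0}\lceil\log_2|n|\rceil\,|\widehat{f}(n)|^2$, which diverges for some $f\in L^2(\T)$, e.g.\ $|\widehat{f}(n)|^2=1/(|n|\log^2|n|)$ for $|n|\geq 2$. So your parenthetical remark that plain $L^2$ suffices is not supported by this argument. The claim itself is true (Kolmogorov's theorem on lacunary partial sums), but its proof goes through the Fej\'er means.

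In Step 2, delete the sentence saying the bound ``only shows $M_k$ is bounded''. You have $\sup_{\zeta\in\T}M_k(\zeta)^2\lesssim b_k:=\sum_{2^k<|n|\le 2^{k+1}}|n||\widehat{f}(n)|^2$ with $\sum_k b_k<\infty$. This gives $\sup_{\zeta\in\T}M_k(\zeta)\to 0$ immediately, which is all Step 3 needs.
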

\begin{proof}
Let $(\varepsilon_n)_n$ be a decreasing sequence of positive numbers tending to zero, and consider the sets
\[
A_n := \left\{\zeta \in \T: |S_n(f)(\zeta)-f(\zeta)|> \varepsilon_n \right\}, \qquad n=1,2,3,\dots
\]
Note that by Chebychev's inequality, we have
\[
m(A_n) \leq \frac{\norm{S_n-f}^2_{L^2}}{\varepsilon_n^2} = \frac{1}{\varepsilon_n^{2}} \sum_{|k|>n} \abs{\widehat{f}(k)}^2, \qquad n=1,2,3,\dots
\]
Using this, and the monotonicity of $\varepsilon_n$, we get
\[
\sum_n m(A_n) \leq \sum_{k} \, \abs{\widehat{f}(k)}^2 \sum_{1\leq n<|k|} \frac{1}{\varepsilon_n^2} \leq \sum_{k} \, \abs{\widehat{f}(k)}^2 |k| \frac{1}{\varepsilon^2_k}.
\]
Note that the assumption on Fourier coefficients of $f$ enables us to choose $\varepsilon_k \downarrow 0$ slow enough so that the right side sum converges. Invoking the elementary Borel--Cantelli lemma, we conclude that the set of points belonging to infinitely many of the $A_n$'s has Lebesgue measure zero, hence for $dm$-a.e $\zeta \in \T$, there exists an integer $N=N(\zeta)>0$ such that
\[
\abs{S_N(f)(\zeta)-f(\zeta)} \leq \varepsilon_N.
\]
This shows that $S_N(f) \to f$ pointwise $dm$-a.e on $\T$.
\end{proof}

The label finite Dirichlet integral comes from the fact that the summability condition in \eqref{EQ:DIRSUM} is equivalent to
\[
\int_{|z|<1} \abs{\nabla P(f)}^2 dA< \infty,
\]
where $P$ denotes the (harmonic) Poisson extension of $f$ to the unit-disc. The intriguing thing here is that functions having finite Dirichlet integral may still be very wild on $\T$. We mention yet another milder version of Carleson's Theorem, which is also elementary (though not as simple as the one above), but requires the stronger assumption that the Fourier coefficients are $\ell^p$-summable: 
\[
\sum_{n\in \mathbb{Z}} \abs{\widehat{f}(n)}^p < \infty,
\]
for some $0<p<2$. This is contained in the B. Simon \cite[P. ? ]{BarrySimonHarmonic}.

As previously mentioned, most of the convergence issues arise due to the fact that the Dirichlet kernels fail to have an asymptotically bounded $L^1$-integral. To compensate for their lack of integrability, analyst have long attempted to rectify the kernels by considering averages of them. Recall that if a sequence of complex numbers $s_n \to L$, then so does its arithmetic average:
\[
\frac{1}{n}\sum_{k=1}^n s_k \to L, \qquad n\to \infty.
\]
This means that averages can only improve the state of convergence, and one is therefore naturally lead to consider arithmetic averages of the of the Fourier partial sums:
\[
\sigma_N(f)(\zeta) := \frac{1}{N} \sum_{n=1}^N S_n(f)(\zeta) = \sum_{|k|\leq N} \left( 1- \frac{|k|}{n} \right)\widehat{f}(n)\zeta^n \qquad \zeta \in \T.
\]
These are referred to as the Cesar\'o means and/or Fej\'er means of $f$. One may also express these means as convolution integrals
\[
\sigma_N(f)(e^{it}) = \int_{\T} f(e^{i\tau}) \mathcal{F}_N(e^{i(t-\tau)}) \frac{d\tau}{2\pi}, \qquad e^{it} \in \T,
\]
where $\mathcal{F}_N$ are the Fej\'er kernels:
\[
\mathcal{F}_N(e^{it}) = \sum_{|n|\leq N} \left( 1- \frac{|n|}{N} \right) e^{int} = \frac{1}{N} \frac{\sin^2(Nt/2)}{\sin^2(t/2)}, \qquad e^{it} \in \T.
\]
It turns out the family of Fej\'er kernels are much better behaved than the Dirichlet kernels, and satisfy the following conditions:
\begin{enumerate}
    \item[(i)] $\int_{\T}\mathcal{F}_N dm=1$,
    \item[(ii)] $\sup_{N\geq 1}\int_{\T} \abs{\mathcal{F}_N}dm< \infty$,

    \item[(iii)] For any $\delta>0$:
    \[
    \lim_{N\to \infty} \int_{\delta\leq |t|\leq \pi} \abs{\mathcal{F}_N(e^{it})} dt =0.
    \]
\end{enumerate}
These properties of the Fej\'er kernels ensure  that the sequence of measures $\mathcal{F}_N dm$ are approximates of the identity, that is, they converge in weak-star to the Dirac delta measure $d\delta_1$ in the space $M(\T)$. The problem with the Dirichlet kernel is that it violates $(ii)$, hence it leaves any ball in the norm of $M(\T)$. 

With these properties at hand, we can show that the Fej\'er means $\sigma_N(f)$ of a continuous function $f$ on $\T$ converges uniformly to $f$. Indeed, one may estimate as follows:
\begin{multline*}
\abs{\sigma_N(f)(e^{it})-f(e^{it})} \leq \int_{0}^{2\pi} \abs{\mathcal{F}_N(e^{i\tau}) }\abs{f(e^{i(\tau-t)}) -f(e^{i\tau})} \frac{d\tau}{2\pi} \\
\leq \sup_{|\tau|\leq \delta} \abs{f(e^{i(\tau-t)}) -f(e^{i\tau})} + 2\norm{f}_{L^\infty} \int_{\delta \leq|\tau|\leq \pi}\abs{\mathcal{F}_N(e^{i\tau}) } \frac{d\tau}{2\pi}.
\end{multline*}
It remains to take supremum over $e^{it} \in \T$, send $N\to \infty$ first, and then $\delta \to 0+$. As a consequence, we retrieve the following classical result on uniform approximation with trigonometric polynomials.
\begin{thm}[Weierstrass]\thlabel{THM:WEITHM} The trigonometric polynomials are dense in $C(\T)$.
\end{thm}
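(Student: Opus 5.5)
The density of trigonometric polynomials in $C(\T)$ follows directly from the Fej\'er argument just given. Fix $f \in C(\T)$ and $\varepsilon>0$. Each Fej\'er mean
\[
\sigma_N(f)(\zeta) = \sum_{|n|\leq N}\left(1-\frac{|n|}{N}\right)\widehat{f}(n)\zeta^n
\]
is a finite linear combination of the monomials $\zeta^n$, $|n|\le N$, so it is a trigonometric polynomial. It therefore suffices to show that $\norm{\sigma_N(f)-f}_{L^\infty}\to 0$ as $N\to\infty$.

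For that I would carry out the estimate displayed above in three steps.

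\emph{Step 1: subtracting $f$.} Using property (i), $\int_\T \mathcal{F}_N\,dm = 1$, write
\[
\sigma_N(f)(e^{it})-f(e^{it}) = \int_\T \mathcal{F}_N(e^{i\tau})\bigl(f(e^{i(t-\tau)})-f(e^{it})\bigr)\,dm(e^{i\tau}).
\]

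\emph{Step 2: splitting the integral.} Divide it into the regions $|\tau|<\delta$ and $\delta\le|\tau|\le\pi$.
\begin{itemize}
\item Near the origin, bound the integrand by $\omega_f(\delta)\,|\mathcal{F}_N|$, where $\omega_f(\delta):=\sup_{|s|\leq \delta,\, t}|f(e^{i(t-s)})-f(e^{it})|$ is the modulus of continuity. By (ii) this part is at most $C\,\omega_f(\delta)$ with $C=\sup_N\norm{\mathcal{F}_N}_{L^1}$; in fact $C=1$, since $\mathcal{F}_N\ge 0$.
\item Away from the origin, bound the integrand by $2\norm{f}_{L^\infty}|\mathcal{F}_N|$, and by (iii) this part tends to $0$ as $N\to\infty$ for each fixed $\delta$.
\end{itemize}
Both bounds are uniform in $t$.

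\emph{Step 3: choosing parameters.} Since $\T$ is compact, $f$ is uniformly continuous, so $\omega_f(\delta)\to 0$ as $\delta\to0$. Choose $\delta$ with $C\omega_f(\delta)<\varepsilon/2$, and then $N$ so large that the tail term is below $\varepsilon/2$. This gives $\norm{\sigma_N(f)-f}_{L^\infty}<\varepsilon$.

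The only step with real content is verifying properties (i)--(iii) of the Fej\'er kernel.
\begin{itemize}
\item Property (i) is immediate from the Fourier expansion, since only the $n=0$ term has nonzero integral.
\item For (ii), the closed form $\mathcal{F}_N(e^{it})=\frac1N\frac{\sin^2(Nt/2)}{\sin^2(t/2)}$ shows $\mathcal{F}_N\ge0$, so (ii) follows from (i) with constant $1$.
\item For (iii), on $\delta\le|t|\le\pi$ we have $\sin^2(t/2)\ge\sin^2(\delta/2)$, hence $\mathcal{F}_N\le \frac{1}{N\sin^2(\delta/2)}\to0$ uniformly there.
\end{itemize}
The closed form itself comes from summing $D_0+\dots+D_{N-1}$ using $\sin(t/2)\sin((k+1/2)t)=\tfrac12(\cos kt-\cos(k+1)t)$, which telescopes to $\tfrac{1}{2}(1-\cos Nt)=\sin^2(Nt/2)$. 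Here one must check that the averaging convention matches the coefficients $1-|n|/N$, i.e.\ that the average runs over $S_0,\dots,S_{N-1}$; this is the main point requiring care. The text's definition averages over $S_1,\dots,S_N$, so I would fix the indexing to $S_0,\dots,S_{N-1}$, which does not affect the argument.
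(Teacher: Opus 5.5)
Your proof is correct and follows essentially the same route as the notes: the Fej\'er means are trigonometric polynomials, and you get $\sigma_N(f)\to f$ uniformly from properties (i)--(iii) of $\mathcal{F}_N$ and the uniform continuity of $f$, by splitting the convolution integral into the regions $|\tau|<\delta$ and $\delta\le|\tau|\le\pi$. Your remark on the indexing is also right, since the kernel $\sum_{|n|\le N}(1-|n|/N)e^{int}$ comes from averaging $S_0,\dots,S_{N-1}$ rather than $S_1,\dots,S_N$ as the notes write, and your integrand $f(e^{i(t-\tau)})-f(e^{it})$ is the correct form of the one displayed there.
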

One can also show that the Fej\'er means converge in $L^p(\T)$ for all $p\geq 1$:
\[
\norm{ \sigma_N(f) - f }^p_{L^p} = \int_{\T} \abs{\sigma_N(f) - f }^p dm \to 0, \qquad N\to \infty.
\]
Indeed, this follows from a standard $3\varepsilon$-argument:
\[
\norm{\sigma_N(f)-f}_{L^p} \leq \norm{g-f}_{L^p} + \norm{g-\sigma_N(g)}_{L^\infty} + \norm{\sigma_N(f-g)}_{L^p}.
\]
Note that the last term can be estimated independent of $N$, using Minkowski's integral inequality and translation invariance of $L^p$-norms:
\[
\norm{\sigma_N(f-g)}_{L^p} \leq \sup_{N\geq1} \norm{\mathcal{F}_N}_{L^1} \norm{f-g}_{L^p}.
\]
Sending $N\to \infty$, we obtain 
\[
\norm{\sigma_N(f)-f}_{L^p} \leq C\norm{g-f}_{L^p}, 
\]
for any $g\in C(\T)$. Since $C(\T)$ is dense in $L^p(\T)$ by classical measure theory, we obtain the desired conclusion.

Another important summation method for Fourier series are the Abel--Poisson means, which rest on the following observation: if $c_n \to L$ then
\[
(1-r^2)\sum_{n=0}^\infty r^{2n} c_n \to L, \qquad |r|\to 1-.
\]
One is then naturally lead to study the following geometric averages:
\[
A_r(f)(e^{it}) = (1-r^2) \sum_{n=0}^\infty S_n(f)(e^{it}) r^{2n} = \sum_{n=0}^\infty \widehat{f}(n) r^{2|n|}e^{int}, \qquad |r|<1.
\]
Similar to the Fej\'er means, one may also express the Abel--Poisson means in integral form:
\[
A_r(f)(e^{it}) = \int_{\T}  \frac{1-r^2}{1-2r\cos(t-\tau)+r^2} f(e^{i\tau})\frac{d\tau}{2\pi}, \qquad |r|<1.
\]
One can also verify that the Abel--Poisson means satisfy the properties $(i)-(iii)$ similar to the Fej\'er means, thus it follows that the Abel means $A_r(f)$ converge to $f$ in $L^p(\T)$ for all $p\geq 1$, and if $f\in C(\T)$, then they also converge to $f$ uniformly on $\T$. 

In contrast with the Fej\'er means, which were trigonometric polynomials, the Abel means instead give rise to harmonic functions on the unit-disc $\D=\{z\in \C: |z|<1\}$:
\[
P(f)(re^{it}) := A_r(f)(e^{it}), \qquad re^{it} \in \D.
\]
This is most easily seen by the identity of the Poisson kernel
\[
P_{re^{it}}(e^{i\tau}) := \frac{1-r^2}{1-2r\cos(t-\tau)+r^2} = \frac{1-r^2}{|1-re^{i(t-\tau)}|^2} = \Re \frac{1+re^{i(t-\tau)}}{1-re^{i(t-\tau)}}, \qquad e^{it},e^{i\tau} \in \T.
\]
The harmonic function $P(f)$ is then referred to as the (harmonic) Poisson extension of $f$ to the unit-disc $\D$, which solves the Dirichlet problem
\[
\begin{cases}
    \Delta u = 0 \qquad \text{on} \, \, \D, \\
    u= f \, \, \, \, \qquad \text{on} \, \, \T.
\end{cases}
\]
Here, the equality $u=f$ needs to be interpreted appropriately, depending on the regularity of the given boundary data $f$. If $f \in L^1(\T, dm)$, then its Poisson integral $P(f)$ admits non-tangential boundary values almost everywhere on $\T$. More precisely, for $dm$-almost every $\zeta \in \T$, we have
\[
P(f)(z) \to f(\zeta)
\]
as $z \to \zeta$ within any fixed cone with vertex at $\zeta$ contained in $\D$. This result is one of the fundamental cornerstones of Hardy space theory. See \cite[Ch. II]{garnett}

An important transformation complementary to the Poisson extension is the harmonic conjugate transform, explicitly given by 
\[
Q(f)(re^{it}) := \int_{\T} \frac{2r \sin(t-\tau)}{|1-re^{i(t-\tau)}|^2} f(e^{i\tau}) \frac{d\tau}{2\pi}, \qquad re^{it} \in \D,
\]
where $f\in L^1(\T)$. Expanding the integral kernel into a geometric series, we can re-write it as the Fourier series
\[
Q(f)(re^{it}) = -i \sum_{n\neq 0} \text{sgn}(n) r^{2n} e^{int} \qquad re^{it} \in \D.
\]
Similarly, one can show that $Q(f)(z)$ has non-tangential boundary values $dm$-a.e on $\T$, and it converges to the periodic \emph{Hilbert transform} 
\[
\widetilde{f}(e^{it}) := \text{P.V} \int_{\T} \cot\left(\frac{t-\tau}{2} \right) f(e^{i\tau}) \frac{d\tau}{2\pi}, \qquad e^{it} \in \T.
\]
This is the periodic analogue of the classical Hilbert transform in $\R$
\[
H(f)(x) := \text{P.V} \int_{\R} \frac{f(t)}{t-x} \frac{dt}{\pi}, \qquad x \in \R,
\]
which is arguably one of the most important objects in classical harmonic analysis.

Together, the Poisson extension and the conjugate transform both give rise to an analytic extension of function $f\in L^1(\T)$ to the unit-disc $\D$ via the so-called Herglotz transform:
\[
\int_{\T} \frac{1+\conj{\zeta}z}{1-\conj{\zeta}z} f(\zeta) dm(\zeta) = P(f)(z) + i Q(f)(z), \qquad z\in \D.
\]
The reader may quickly realize that the above expression is essentially just a linear transformation of the Cauchy integral extension of $f$, given by
\[
\Ka(f)(z) := \int_{\T} \frac{f(\zeta)}{1- \conj{\zeta}z} dm(\zeta), \qquad z \in \D.
\]
These discussions naturally lead to the so-called Hardy spaces, and we defer some of their details to the appendix, and the reader can find a more complete treatment in \cite{garnett}. 

\subsection{Classical uniqueness problems of Fourier series}

In the previous subsection, we mainly discussed results on various representations involving Fourier series, and the problems of convergence associated with such representations. Here, attention to questions of uniqueness are consider, and guided by the following questions:

\emph{1.) Can we uniquely reconstruct a signal $f$ from its Fourier coefficients $\{\widehat{f}(n)\}_{n\in \mathbb{Z}}$?}

\emph{2.) If the Fourier series of $f$ exhibits a certain behavior on a subset of $\T$, does $f$ necessarily exhibit the same behavior there?}

It turns out that question $1.)$ admits a very satisfactory answer, which largely explains the central role of Fourier analysis.

\begin{cor}
Any $\mu \in M(\T)$ is completely determined by its Fourier coefficients. In other words, the map
\[
\mu \mapsto \{\widehat{\mu}(n)\}_{n\in \mathbb{Z}}
\]
is injective.
\end{cor}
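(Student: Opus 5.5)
The plan is to combine linearity with Weierstrass' theorem (\thref{THM:WEITHM}) and the Riesz representation $C(\T)^*\cong M(\T)$. Since $\mu\mapsto\{\widehat{\mu}(n)\}_{n\in\mathbb{Z}}$ is linear, with
\[
\widehat{\mu}(n)=\int_{\T}\conj{\zeta}^{\,n}\,d\mu(\zeta),
\]
injectivity amounts to showing that $\widehat{\mu}(n)=0$ for all $n\in\mathbb{Z}$ forces $\mu=0$.

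So assume every Fourier coefficient of $\mu$ vanishes. Then $\int_{\T} p\,d\mu=0$ for every trigonometric polynomial $p$, because $p$ is a finite linear combination of monomials $\zeta^{k}=\conj{\zeta}^{\,-k}$, each of which integrates to $\widehat{\mu}(-k)=0$. Now take any $f\in C(\T)$. By \thref{THM:WEITHM} (concretely, the Fej\'er means $\sigma_N(f)\to f$ uniformly) we have
\[
\Big|\int_{\T} f\,d\mu\Big|=\Big|\int_{\T}(f-\sigma_N(f))\,d\mu\Big|\le \norm{f-\sigma_N(f)}_{L^\infty}\,\norm{\mu}_{M(\T)}\to 0.
\]
Hence $\int f\,d\mu=0$ for all $f\in C(\T)$. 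That is, $\mu$ annihilates $C(\T)$ as a functional.

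The main step is converting "the functional vanishes" into "the measure is zero". For this I would invoke the uniqueness part of the Riesz representation theorem: the map $\mu\mapsto(f\mapsto\int f\,d\mu)$ is an isometric isomorphism $M(\T)\to C(\T)^*$. So the zero functional corresponds only to $\mu=0$, and in fact $\norm{\mu}=\sup_{\norm{f}_\infty\le1}|\int f\,d\mu|=0$.

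If one prefers a hands-on argument, it suffices to show $\mu(I)=0$ for every closed arc $I$. To do this, approximate $1_I$ by continuous functions $0\le f_k\le 1$ that decrease pointwise to $1_I$. Dominated convergence with respect to $|\mu|$ then gives $\mu(I)=\lim_k\int f_k\,d\mu=0$. Arcs generate the Borel $\sigma$-algebra and form a $\pi$-system, so uniqueness of extension (a Dynkin argument, applied to the real and imaginary parts, each split via the Jordan decomposition) yields $\mu=0$. I do not expect any genuine obstacle; the only point needing care is this last measure-theoretic passage, which is exactly what the Riesz representation theorem packages.
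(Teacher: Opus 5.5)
Your argument is correct and is the paper's proof: reduce by linearity to the kernel, use Weierstrass density of trigonometric polynomials in $C(\T)$ to see that $\mu$ annihilates $C(\T)$, and conclude $\mu=0$ from the duality $C(\T)^*\cong M(\T)$. One small fix to your optional hands-on variant: closed arcs on $\T$ do not form a $\pi$-system, since two long arcs can intersect in two disjoint pieces; using closed arcs of length less than $\pi$ (points included) together with $\emptyset$ repairs this.
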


\begin{proof}
By linearity, it suffices to consider $\mu \in M(\T)$ such that
\[
\widehat{\mu}(n)= \int_{\T} \zeta^{-n} \, d\mu(\zeta) = 0, \qquad n\in \mathbb{Z}.
\]
By the Weierstrass approximation theorem, trigonometric polynomials are dense in $C(\T)$. Hence $\mu$ annihilates all continuous functions on $\T$, and therefore $\mu \equiv 0$ by the duality $C(\T)^* \cong M(\T)$.
\end{proof}

In fact, the Weierstrass approximation theorem can be iterated to show that trigonometric polynomials are dense in the Banach space $C^k(\T)$ of $k$-times continuously differentiable functions on $\T$, equipped with the norm
\[
\norm{f}_{C^k} := \sum_{j=0}^k \sup_{\zeta \in \T} |f^{(j)}(\zeta)|.
\]
Indeed, once $f \in C(\T)$ can be approximated uniformly by trigonometric polynomials $T_n$, a primitive of $T_n$ (which is again a trigonometric polynomial) approximates a primitive of $f$ uniformly. Iterating this argument yields the desired density.

As a consequence, any distribution $S$ on $\T$ is uniquely determined by its Fourier coefficients
\[
\widehat{S}(n) := S\big(e^{it} \mapsto e^{-int}\big), \qquad n\in \mathbb{Z}.
\]

We now turn to question $2.)$. A first indication of the answer is provided by the following localization principle.

\begin{thm}[Riemann's localization principle]
If $f \in L^1(\T)$ vanishes in a neighborhood of a point $\zeta_0 \in \T$, then
\[
S_N(f)(\zeta_0) \to 0, \qquad N \to \infty.
\]
\end{thm}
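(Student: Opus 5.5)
The natural route is to reduce the statement to Dini's criterion, which was proved above using the Riemann--Lebesgue lemma. By rotation invariance (the translation rule for Fourier coefficients), we may assume $\zeta_0 = 1$, that is, $t_0 = 0$. The hypothesis then provides some $\delta \in (0,\pi)$ such that $f(e^{it}) = 0$ for almost every $t$ with $|t| < \delta$, where $t$ is taken in $(-\pi,\pi]$. In particular, we can declare $f(1) = 0$, since changing $f$ at a single point does not affect its Fourier coefficients.

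Next we verify the Dini condition at $t_0 = 0$. Integrating over $(-\pi,\pi]$, the integrand $\abs{f(e^{it}) - f(1)}/|t| = \abs{f(e^{it})}/|t|$ vanishes for $|t| < \delta$. On the remaining region $\delta \le |t| \le \pi$ it is bounded by $\delta^{-1}\abs{f(e^{it})}$. Hence
\[
\int_{-\pi}^{\pi} \frac{\abs{f(e^{it}) - f(1)}}{|t|}\, dt \le \frac{2\pi}{\delta}\norm{f}_{L^1} < \infty .
\]
Dini's criterion then gives $S_N(f)(1) \to f(1) = 0$, which is the claim.

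The only step needing care is the passage between the interval $[0,2\pi)$ used in the statement of Dini's criterion and the symmetric interval $(-\pi,\pi]$. If we insist on $[0,2\pi)$ with $t_0 = 0$, the point $t$ near $2\pi$ also corresponds to points near $\zeta_0$. This causes no trouble, because $f$ vanishes near $t = 2\pi$ as well, so the weight $1/|t|$ is bounded there.

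More intrinsically, one can bypass this bookkeeping by repeating the proof of Dini's criterion directly. Write
\[
S_N(f)(1) = \int_{\delta \le |t| \le \pi} \sin\bigl((N+1/2)t\bigr)\, \frac{f(e^{it})}{\sin(t/2)}\, \frac{dt}{2\pi}.
\]
Here $1/\sin(t/2)$ is bounded by $1/\sin(\delta/2)$ on the region of integration, so the function $f(e^{it})/\sin(t/2)$ is integrable. Expanding the sine into two exponentials and applying the Riemann--Lebesgue lemma then yields the limit zero. This version makes transparent that only the $L^1$ norm of $f$ away from $\zeta_0$ matters, which is the substance of the localization principle.
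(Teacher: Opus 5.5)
Your proposal is correct and follows the paper, which only remarks that the localization principle is an immediate consequence of Dini's criterion; your bound $2\pi\delta^{-1}\norm{f}_{L^1}$ on the Dini integral, after setting $f(1)=0$, supplies exactly the detail the paper omits. Your second variant, which reruns the Dini argument directly using that $1/\sin(t/2)$ is bounded on $\delta\le|t|\le\pi$, is the same proof written out in full, and it avoids the paper's slightly loose use of $|t-t_0|$ on $[0,2\pi)$ in its statement of Dini's criterion.
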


The proof is an immediate consequence of Dini's criterion. An immediate consequence is that if two functions $f,g \in L^1(\T)$ agree on an arc, then their Fourier series have essentially the same behavior on any smaller subarc. 

We mention a classical uniqueness problem which has had a profound influence on the development of real analysis. The problem was to understand which compact sets $E \subset \T$ enjoy the following property: whenever $(a(n))_{n\in\mathbb{Z}}$ is a bounded sequence such that the associated Fourier series satisfies
\[
S_N(\zeta):= \sum_{|n|\leq N} a(n)\zeta^n \to 0, \qquad N\to \infty, \quad \zeta \in \T \setminus E,
\]
then necessarily $a(n)=0$ for all $n$. 

Such sets are called \emph{sets of uniqueness}. Note that the boundedness is necessary for the Fourier series to converge anywhere on $\T$.

This problem came about from the work of B. Riemann, and was considered by G. Cantor in the 1870s. Cantor proved that every closed countable set is a set of uniqueness. Upon developing the Lebesgue integral, H. Lebesgue established the following results, which are today regarded as fundamental in Fourier analysis.

\medskip

\noindent\textbf{Riemann--Lebesgue Lemma:} If $f\in L^1(\T)$, then $\widehat{f}(n)\to 0$ as $|n|\to \infty$.

\smallskip

\noindent\textbf{Cantor--Lebesgue Lemma:} If the partial sums $\sum_{|n|\leq N} a_n \zeta^n$ converge on a set of positive Lebesgue measure, then $a_n \to 0$.

\medskip

We shall prove the Riemann--Lebesgue lemma in the next subsection. The Cantor--Lebesgue lemma can be deduced from it and is left as an exercise.

Lebesgue's theory suggested that sets of measure zero should also be negligible for convergence of Fourier series, and hence candidates for sets of uniqueness. However, this intuition was disproved by D. Menshov, who constructed a compact set of Lebesgue measure zero which is not a set of uniqueness. Phrased in modern terms, his result can be formulated as follows:

\begin{thm}[Menshov]
Let $\mu\in M(\T)$ be non-trivial with $\widehat{\mu}(n) \to 0$ as $|n|\to \infty$. Then
\[
S_N(\mu)(\zeta) := \sum_{|n|\leq N} \widehat{\mu}(n)\zeta^n \to 0, \qquad N \to \infty, \quad \zeta \in \T \setminus \supp \mu.
\]
In particular, any set containing $\supp \mu$ cannot be a set of uniqueness.
\end{thm}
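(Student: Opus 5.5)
The idea is to localize with a smooth cutoff and then use the decay $\widehat{\mu}(n)\to 0$ to show that multiplying $\mu$ by a smooth function commutes with taking partial sums, up to an error that vanishes (Rajchman's formal multiplication).

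\emph{Setup.} Fix $\zeta_0 \in \T\setminus\supp{\mu}$. Since $\supp{\mu}$ is closed, there is an open arc $J\ni\zeta_0$ whose closure misses $\supp{\mu}$. Choose $\varphi\in C^\infty(\T)$ with $\varphi\equiv 0$ on a neighborhood of $\zeta_0$ and $\varphi\equiv 1$ on a neighborhood of $\supp{\mu}$. Then $\varphi\mu=\mu$ as measures. Because $\varphi$ is smooth, integrating by parts repeatedly (the differentiation rule $\widehat{f'}(n)=in\widehat{f}(n)$) gives $|\widehat{\varphi}(k)|=O(|k|^{-3})$. In particular
\[
\sum_k (1+|k|)\,|\widehat{\varphi}(k)|<\infty,
\]
and $\varphi(\zeta)=\sum_k\widehat{\varphi}(k)\zeta^k$ converges absolutely and uniformly. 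Inserting this series into $\int \varphi\,\zeta^{-n}\,d\mu$ yields the convolution identity
\[
\widehat{\varphi\mu}(n)=\sum_k \widehat{\varphi}(k)\,\widehat{\mu}(n-k).
\]

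\emph{Key lemma (the main step).} For every $\zeta\in\T$,
\[
S_N(\varphi\mu)(\zeta)-\varphi(\zeta)\,S_N(\mu)(\zeta)\to 0 \qquad (N\to\infty).
\]
Granting this, we apply it at $\zeta=\zeta_0$. Since $\varphi(\zeta_0)=0$ and $\varphi\mu=\mu$, it gives $S_N(\mu)(\zeta_0)=S_N(\varphi\mu)(\zeta_0)\to 0$, which is the claim. The final remark about sets of uniqueness then follows at once. The sequence $a(n)=\widehat{\mu}(n)$ is bounded (it tends to $0$) and not identically zero, by the uniqueness corollary for $M(\T)$, yet its partial sums tend to $0$ off $\supp{\mu}$. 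Hence no set containing $\supp{\mu}$ is a set of uniqueness.

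\emph{Proof of the lemma.} Substitute $m=n-k$ in the double sum for $S_N(\varphi\mu)(\zeta)$. This gives
\[
S_N(\varphi\mu)(\zeta)-\varphi(\zeta)S_N(\mu)(\zeta)=\sum_k \widehat{\varphi}(k)\zeta^k\Big(\sum_{|m+k|\le N}-\sum_{|m|\le N}\Big)\widehat{\mu}(m)\zeta^m .
\]
The rearrangement is justified because all sums are absolutely convergent: the inner sums are finite and $\sum_k|\widehat{\varphi}(k)|<\infty$. For fixed $k$, the bracket runs over the symmetric difference of the windows $[-N-k,\,N-k]$ and $[-N,\,N]$. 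This set has at most $2|k|$ elements, and each of its elements satisfies $|m|\ge N-|k|$. We split the outer sum at $|k|=N/2$.
\begin{itemize}
\item For $|k|\le N/2$, the bracket is at most $2|k|\sup_{|m|\ge N/2}|\widehat{\mu}(m)|$. The contribution of these terms is therefore at most
\[
2\Big(\sum_k|k||\widehat{\varphi}(k)|\Big)\sup_{|m|\ge N/2}|\widehat{\mu}(m)|,
\]
which tends to $0$ by the hypothesis $\widehat{\mu}(n)\to 0$.
\item For $|k|>N/2$, the bracket is at most $2|k|\,\norm{\mu}_{M(\T)}$. The contribution of these terms is therefore at most
\[
2\norm{\mu}\sum_{|k|>N/2}|k||\widehat{\varphi}(k)|,
\]
a tail of a convergent series, so it also tends to $0$.
\end{itemize}

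I expect the lemma to be the only real obstacle, and specifically the bookkeeping in the symmetric-difference count. It is exactly there that the decay of $\widehat{\mu}$ must be combined with the weighted summability $\sum|k||\widehat{\varphi}(k)|<\infty$. Without $\widehat{\mu}(n)\to 0$ the first bound fails, which is consistent with the need for this hypothesis (compare $\mu=\delta_1$, whose partial sums do not tend to $0$ off $\{1\}$).
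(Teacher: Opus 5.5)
Your proof is correct and complete. The following steps all check out:
\begin{itemize}
\item the convolution identity $\widehat{\varphi\mu}(n)=\sum_k\widehat{\varphi}(k)\widehat{\mu}(n-k)$;
\item the interchange of the finite sum over $n$ with the absolutely convergent sum over $k$;
\item the count of at most $2|k|$ indices in the symmetric difference of the two windows, each satisfying $|m|\ge N-|k|$ (this is vacuous when $|k|>N$, so no case is lost);
\item the split at $|k|=N/2$.
\end{itemize}
Your two bounds do not depend on $\zeta$, so you actually prove $S_N(\varphi\mu)-\varphi\, S_N(\mu)\to 0$ uniformly on $\T$. Choosing $\varphi$ to vanish near an arbitrary compact $K\subset\T\setminus\supp{\mu}$ then gives $S_N(\mu)\to 0$ uniformly on compact subsets of $\T\setminus\supp{\mu}$.

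There is no argument in the notes to compare yours against. The theorem is stated without proof, and it is used only afterwards, with $\mu=1_E\,dm$ and the Riemann--Lebesgue lemma, to show that sets of positive measure are not sets of uniqueness.

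What you wrote is the classical Rajchman formal-multiplication argument. The nearest relative in the notes is the proof of the Rajchman--Milicer-Gruzewska theorem. It uses the same convolution formula $\widehat{T\mu}(n)=\sum_k\widehat{T}(k)\widehat{\mu}(n-k)$, but only for trigonometric polynomials $T$ and only to transfer decay of the coefficients, not to localize partial sums. Your version needs the weighted summability $\sum_k|k|\,|\widehat{\varphi}(k)|<\infty$ for a smooth cutoff, and that is exactly what controls the boundary effects of the partial-sum windows.

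The statement as formulated is really Rajchman's localization principle. Menshov's actual contribution is the existence of a nontrivial $\mu$ with $\widehat{\mu}(n)\to 0$ and $m(\supp{\mu})=0$. The statement does not assert this and your proof does not need it, but it is what gives the final sentence content for sets of measure zero.
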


Here $\supp \mu$ denotes the smallest compact set $E \subset \T$ such that $|\mu|(I)=0$ for every open arc $I \subset \T \setminus E$. This definition also coincides with the usual notion of support for distributions.

Combining the Riemann--Lebesgue lemma with Menshov's theorem, we see that no set of positive Lebesgue measure can be a set of uniqueness.

This naturally leads to the study of sets that support singular measures $\mu$ whose Fourier coefficients vanish at infinity. Such measures were systematically investigated by N. Bari and A. Rajchman, who independently constructed examples supported on uncountable Cantor-type sets. Further contributions were made by Ivashev-Musatov, Kahane, Rajchman, Rudin, Piatetski-Shapiro, Salem, and Zygmund. A detailed historical account can be found in the survey of T. W. K\"orner \cite{korner1992sets}.

\subsection{Some classical problems on translates}

A recurring theme in harmonic analysis is the interplay between operators in the physical domain and their counterparts on the Fourier side. A simple but fundamental model is the relation between pointwise multiplication and translation of Fourier coefficients.

Given $f\in L^1(\T)$, recall that multiplication by the monomial $\zeta^n$ acts as a translation operator on the Fourier coefficients:
\[
\widehat{\,\zeta^n f\,}(m)=\widehat{f}(m-n), \qquad m,n\in\mathbb Z.
\]
Thus the multiplication operator $M_\zeta f(\zeta)=\zeta f(\zeta)$ induces translations on the Fourier side, and its integer powers satisfy
\[
M_\zeta^n f(\zeta)=\zeta^n f(\zeta), \qquad n\in\mathbb Z,
\]
where $M_\zeta^{-1}f(\zeta)=\overline{\zeta}\,f(\zeta)$. More generally, if $T$ is a trigonometric polynomial
\[
T(\zeta)=\sum_n \widehat{T}(n)\zeta^n,\qquad \zeta \in \T,
\]
then multiplication by $T$ can be expressed as
\[
T(\zeta)f(\zeta)=\sum_n \widehat{T}(n)\,M_\zeta^n f(\zeta), \qquad \zeta \in \T.
\]

In this subsection, we shall study subspaces of functions that are invariant under translations. To describe the general problem, let $X$ be a Banach space of functions in $L^1(\T)$ for which the multiplication operators $M_{\zeta}, M_{\conj{\zeta}}:X \to X$ act continuously. The problem is to describe the closed subspaces $\mathcal{V}$ of $X$ which are invariant under translations:
\[
M_{\zeta} \, \mathcal{V} \subseteq \mathcal{V}, \qquad M_{\conj{\zeta}} \,\mathcal{V} \subseteq \mathcal{V}.
\]
Phrased equivalently, we would like to classify the closed subspace $\mathcal{V}$ of $X$ satisfying
\[
M_{\zeta}\,\mathcal{V}= \mathcal{V}.
\]
In particular, we shall consider when a single function $f \in X$ generates a dense invariant subspace under this action, that is, when the linear span of the set
\[
\left\{\zeta^n f(\zeta) : \, \, n=0, \pm1, \pm2, \dots \right\} \qquad \text{is dense in} \, \, X.
\]
These problems become particularly interesting in spaces $X$ defined in terms of conditions involving Fourier coefficients, or by restrictions on their Fourier support.

We shall primarily be concerned with the problem in the most classical framework of the Hilbert space $L^2(\T)$, normed by
\[
\int_{\T} \abs{f(\zeta)}^2 \, dm(\zeta) = \sum_{n\in \mathbb{Z}} \, \abs{\widehat{f}(n)}^2 < \infty.
\]

Our first result is a classical theorem of N. Wiener, which provides a complete classification of closed subspaces of $L^2(\T)$ that are invariant under translation.

\begin{thm}[Wiener]\thlabel{THM:WIENER1} Let $\mathcal{V}$ be a translation invariant closed subspace of $L^2(\T)$. Then there exists a unique Lebesgue measurable subset $S \subset \T$ (modulo null-sets) such that
\[
\mathcal{V} = L^2(S) := \{f \in L^2(\T): f= 0 \, \text{dm-a.e off} \, \, S \}.
\]
\end{thm}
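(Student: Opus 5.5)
The plan is to use the orthogonal projection $P$ of $L^2(\T)$ onto $\mathcal{V}$ and show that it is multiplication by an indicator function. Since $\mathcal{V}$ is invariant under both $M_\zeta$ and $M_{\conj{\zeta}}=M_\zeta^{-1}=M_\zeta^*$, its orthogonal complement is invariant as well. Hence $P$ commutes with $M_\zeta$ and $M_{\conj\zeta}$, and therefore with multiplication by every trigonometric polynomial $T$:
\[
P(Tf)=T\,P(f), \qquad f\in L^2(\T).
\]

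Set $\varphi := P(1)\in L^2(\T)$, where $1$ is the constant function. For any trigonometric polynomial $T$ we then have $P(T)=T\varphi$.

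The first key step is to show $\varphi\in L^\infty$. Since $\norm{P}\le 1$, we get
\[
\norm{T\varphi}_{L^2}\le \norm{T}_{L^2}
\]
for all trigonometric polynomials $T$. By Weierstrass' theorem (\thref{THM:WEITHM}) and Fej\'er means, this inequality extends to all $g\in L^2(\T)$, giving $\int |g|^2(|\varphi|^2-1)\,dm\le 0$. Testing with $g=1_{\{|\varphi|>1\}}$ shows $|\varphi|\le1$ a.e.

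Multiplication by $\varphi$ is therefore a bounded operator. It agrees with $P$ on the trigonometric polynomials, which are dense in $L^2(\T)$ (a consequence of Weierstrass' theorem, as noted earlier). By continuity, $Pf=\varphi f$ for all $f\in L^2(\T)$.

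Next we use the projection identities. From $P^2=P$ we get $\varphi^2 f=\varphi f$ for all $f$; taking $f=1$ gives $\varphi^2=\varphi$ a.e. Hence $\varphi=1_S$ with $S:=\{\varphi=1\}$, a Lebesgue measurable set determined modulo null sets. (One could also invoke $P=P^*$ to see that $\varphi$ is real, but idempotence already forces $\varphi\in\{0,1\}$ a.e.)

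This identifies the subspace:
\[
\mathcal{V}=\operatorname{ran}P=\{1_S f: f\in L^2(\T)\}=L^2(S).
\]
The last equality holds because $f\in L^2(S)$ satisfies $f=1_Sf$, and conversely every $1_S f$ vanishes off $S$.

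For uniqueness, suppose $L^2(S)=L^2(S')$. Then $1_S\in L^2(S')$, so $1_S$ vanishes a.e. off $S'$, which means $m(S\setminus S')=0$. By symmetry $m(S'\setminus S)=0$ as well.

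The main obstacle is the passage from the commutation relation on trigonometric polynomials to the identity $P=M_\varphi$ on all of $L^2(\T)$. This requires first establishing the a priori bound $\varphi\in L^\infty$, so that $M_\varphi$ is bounded. The density/testing argument above handles it: approximate $1_{\{|\varphi|>1\}}$ in $L^2$ by trigonometric polynomials $T_n$, pass to an a.e. convergent subsequence, and apply Fatou's lemma to $\int|T_n\varphi|^2\,dm$.
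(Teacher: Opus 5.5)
Your argument is correct, but it differs from the paper's in where the work goes. The paper also starts from $f=\Pi(1)$, but it never identifies $\Pi$ as a multiplication operator. Since $1-f\in\mathcal V^\perp$ is orthogonal to every $\zeta^n f\in\mathcal V$, all Fourier coefficients of $f\,\overline{(1-f)}\in L^1(\T)$ vanish, so $f=|f|^2$ and hence $f=1_S$. The paper then obtains $L^2(S)\subseteq\mathcal V$ from closedness and translation invariance. For the reverse inclusion it shows that any $g\in\mathcal V$ with $g\perp L^2(S)$ vanishes on $S$. Such a $g$ also vanishes off $S$, because each $g\zeta^{-n}\in\mathcal V$ is orthogonal to $1_{\T\setminus S}=1-f\in\mathcal V^\perp$.

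You instead use that $\mathcal V$ reduces $M_\zeta$ to get $P=M_\varphi$. This costs you the a priori bound $|\varphi|\le 1$ (your Fatou argument) before you can extend from trigonometric polynomials to all of $L^2(\T)$; idempotence then gives $\varphi=1_S$. The paper's route is more economical up front: the identity $f=|f|^2$ bounds $f$ for free, and no extension step is needed. Yours proves the stronger structural statement that the projection onto $\mathcal V$ is multiplication by $1_S$. This is the familiar fact that operators commuting with $M_\zeta$ on $L^2(\T)$ are $L^\infty$-multipliers. After it, $\mathcal V=\operatorname{ran}P=L^2(S)$ and uniqueness are immediate, with no separate density or annihilator argument.
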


It is worth to mention that Wiener's Theorem also holds in the reflexive range of Lebesgue spaces $L^p(\T)$ with $1<p<\infty$, but the proof needs to be slightly modified.

\begin{proof}
Let $\mathcal{V}$ be a non-trivial closed translation invariant subspace of $L^2$, and $\Pi$ denote the orthogonal projection from $L^2$ onto $\mathcal{V}$. Set $f = \Pi(1)$ and note that $1-f \in \mathcal{V}^\perp$, hence 
\[
\int_{\T} \conj{(1-f(\zeta))}f(\zeta) \zeta^n dm(\zeta) = 0, \qquad n=0,\pm 1, \pm 2,\dots
\]
But since the polynomials are dense in $L^2$, we conclude that $f(1-\conj{f})=0$ $dm$-a.e on $\T$, hence $f=|f|^2$ and therefore $f=1_S$ for some Lebesgue measurable subset $S\subset \T$, which is uniquely determined up to sets of Lebesgue measure zero. Since $\mathcal{V}$ is closed, we clearly have $L^2(S) \subseteq \mathcal{V}$, hence it remains to prove that $L^2(S)$ is dense in $\mathcal{V}$. Let $g\in \mathcal{V}$ with the property that $g \perp L^2(S)$, which we may express as: 
\[
\int_{S} g(\zeta) \zeta^{-n} dm(\zeta) =0, \qquad n=0,\pm 1, \pm 2, \dots
\]
Then it follows that $1_S g =0$ $dm$-a.e. But since $1_{\T \setminus S} = 1- f \in \mathcal{V}^\perp$, we also have that 
\[
\int_{\T \setminus S} g(\zeta) \zeta^{-n} dm(\zeta) = 0, \qquad n=0,\pm 1, \pm 2, \dots
\]
hence $g=0$ $dm$-a.e on $\T \setminus S$, and we conclude that $g= 0$ $dm$-a.e on $\T$. This proves that $L^2(S)= \mathcal{V}$, hence completing the proof.
\end{proof}

We now record the following corollary, which provides a complete answer to our question in the classical setting of $L^2(\T)$.

\begin{cor} Let $f\in L^2(\T)$. Then the linear span of translates of $f$:
\[
\left\{f(\zeta) \zeta^n: \, \, n=0,\pm 1, \pm 2, \dots \right\}
\]
is dense in $L^2(\T)$ if and only if $f\neq 0$ $dm$-a.e on $\T$.
\end{cor}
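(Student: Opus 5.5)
The plan is to deduce the corollary directly from Wiener's Theorem (\thref{THM:WIENER1}). Let $\mathcal{V}$ denote the closure in $L^2(\T)$ of the linear span of $\{\zeta^n f(\zeta)\}_{n\in\mathbb{Z}}$. Multiplication by $\zeta$ and $\conj{\zeta}$ are isometries of $L^2(\T)$. They map the span of the translates of $f$ onto itself, and hence also preserve its closure. So $\mathcal{V}$ is a closed translation invariant subspace, and Wiener's Theorem gives a measurable set $S\subset\T$, unique modulo null sets, with $\mathcal{V}=L^2(S)$. The question thus reduces to deciding when $S=\T$ modulo null sets.

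For the necessity direction, suppose the span is dense, so $\mathcal{V}=L^2(\T)$. Let $Z:=\{\zeta\in\T: f(\zeta)=0\}$. Every generator $\zeta^n f$ vanishes $dm$-a.e.\ on $Z$. Vanishing on $Z$ is preserved under finite linear combinations and under $L^2$-limits, since a subsequence converges a.e. Hence every element of $\mathcal{V}$ vanishes a.e.\ on $Z$. Applying this to the constant function $1\in\mathcal{V}$ forces $m(Z)=0$, that is, $f\neq0$ $dm$-a.e.

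For sufficiency, assume $f\neq 0$ $dm$-a.e. Since $f\in\mathcal{V}=L^2(S)$, the function $f$ vanishes a.e.\ off $S$. Therefore $\T\setminus S\subseteq Z$ up to a null set, so $m(\T\setminus S)=0$ and $\mathcal{V}=L^2(\T)$. Alternatively, one can argue without Wiener's Theorem: if $g\perp \zeta^n f$ for all $n$, then all Fourier coefficients of $f\conj{g}\in L^1(\T)$ vanish. The injectivity of the Fourier coefficient map on $M(\T)$, which is the earlier corollary, then gives $f\conj{g}=0$ a.e., so $g=0$ a.e.

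No step is a genuine obstacle. The only point needing care is the necessity direction: we must check that the property ``vanishes a.e.\ on $Z$'' survives passage to the closure, and this is handled by taking an a.e.\ convergent subsequence.
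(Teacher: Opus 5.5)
Your proof is correct and matches the paper's intent. The paper records this statement as an immediate consequence of Wiener's theorem on closed translation invariant subspaces of $L^2(\T)$ without writing out details, and your reduction $\mathcal{V}=L^2(S)$, together with the observations that $f\in L^2(S)$ forces $m(\T\setminus S)=0$ and that $1\in\mathcal{V}$ forces $m(\{f=0\})=0$, supplies exactly those details; your alternative argument via the vanishing Fourier coefficients of $f\conj{g}$ is the same orthogonality device the paper uses inside its proof of Wiener's theorem.
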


Next, we consider a unilateral version of Wiener’s theorem on translates in $L^2(\T)$, namely the structure of closed subspaces $\mathcal{V}$ invariant under the multiplication operator $M_\zeta$:
\[
M_\zeta \, \mathcal{V} \subseteq \mathcal{V}.
\]
A fundamental example of such a subspace is given by the celebrated Hardy space
\[
H^2(\T):= \{ f\in L^2(\T): \widehat{f}(n)=0 \text{ for } n<0 \}.
\]

Since this space plays a crucial role, we will devote a few paragraphs explaining some of its features. First, one can identify elements in $H^2(\T)$ with analytic functions in the unit-disc $\D$ having square summable Taylor coefficients centered at the origin. To explain this connection, fix $f \in H^2(\T)$ and consider its Poisson extension
\[
P(f)(z) := \int_{\T} \frac{1-|z|^2}{|\zeta-z|^2}f(\zeta)\, dm(\zeta), \qquad z\in \D.
\]
Using the identity
\[
\frac{1-|z|^2}{|\zeta-z|^2} = \frac{1}{1-\overline{\zeta}z} + \frac{1}{1-\zeta\overline{z}} -1, \qquad \zeta \in \T, \qquad z\in \D,
\]
the assumption on $H^2(\T)$, and expanding into geometric series, one obtains
\[
P(f)(z) = \sum_{n\geq 0} \widehat{f}(n)z^n, \qquad z\in \D.
\]
This shows that $P(f)$ is analytic in $\D$, and by Parseval's identity its Taylor coefficients are square-summable. Conversely, every analytic function in $\D$ with square-summable Taylor coefficients arises as the Poisson extension of a unique element $f \in H^2(\T)$. Thus, $H^2(\T)$ may be identified isometrically with the Hilbert space of analytic functions on $\D$ with square-summable Taylor coefficients, commonly denoted $H^2(\D)$. 

In a similar way, one defines $H^\infty(\D)$ as the space of bounded analytic functions in $\D$. These are precisely the Poisson extensions of elements in
\[
H^\infty(\T) := \{f\in L^\infty(\T): \widehat{f}(n)=0, \, \, n<0 \}.
\]
In what follows, we allow the lax convention of not seriously distinguishing between these different realizations of the Hardy spaces, allowing the context to clarify the interpretation, and we often use the designation $H^2$, and $H^\infty$.

A key structural feature of $H^2$ is the inner--outer factorization: every nonzero $f \in H^2$ admits a unique (up to multiplication by unimodular constants) decomposition
\[
f(\zeta) = \Theta(\zeta) F(\zeta), \qquad \zeta \in \T
\]
where $\Theta \in H^\infty(\T)$ is inner, meaning $|\Theta(\zeta)|=1$ for $dm$-a.e $\zeta \in \T$, and $F\in H^2(\T)$ is outer, meaning that
\[
\log |\widehat{F}(0)| = \int_{\T} \log |F| dm >- \infty,
\]
and is therefore uniquely determined by its boundary modulus $|F|=|f|$. Roughly speaking, $\Theta$ is the argument of an $H^2$-function, which captures the zero set and phase of $f$, while $F$ encodes its magnitude: $|f|=|F|$ $dm$-a.e on $\T$. These factorization have fairly explicit expressions viewed as elements in $H^2(\D)$. These details are deferred to the appendix, and the reader is referred to \cite[Ch. II]{garnett} for a thorough treatment of Hardy space.
\medskip

The following celebrated structural Theorem by A. Beurling, gives a function theoretical characterization of all closed subspace of $H^2$ which are invariant under $M_\zeta$.

\begin{thm}[Beurling, 1949] Let $\mathcal{V}$ be closed $M_\zeta$-invariant subspace of $H^2(\T)$. Then there exists a unique inner function $\Theta$ (up to a unimodular constant), such that 
\[
\mathcal{V} = \Theta H^2 :=\{ \Theta f: f\in H^2 \}.
\]
    
\end{thm}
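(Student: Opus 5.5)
We follow the strategy of the proof of Wiener's theorem (\thref{THM:WIENER1}) above, replacing the projection of the constant function by a projection onto a ``wandering subspace''. We assume $\mathcal{V}\neq\{0\}$; the zero subspace is excluded, or corresponds to the degenerate choice $\Theta\equiv 0$.

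First, we will show that $\zeta\mathcal{V}$ is a proper closed subspace of $\mathcal{V}$. It is closed because $M_\zeta$ is an isometry on $H^2$. To see that it is proper, pick $g\in\mathcal{V}$ with $g\neq 0$, and let $k\geq 0$ be the smallest index with $\widehat{g}(k)\neq 0$. Every element of $\zeta\mathcal{V}$ has vanishing Fourier coefficients up to index $k$, and any element in the closed subspace $\zeta\mathcal{V}$ lies in $\zeta^{k+1}H^2$ when all elements do. Hence $g\notin\zeta\mathcal{V}$.

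We may therefore pick $\Theta\in\mathcal{V}\ominus\zeta\mathcal{V}$ with $\norm{\Theta}_{L^2}=1$. Since $\zeta^n\Theta\in\zeta\mathcal{V}$ for $n\geq1$, and $\Theta\perp\zeta\mathcal{V}$, we get
\[
\int_{\T}|\Theta|^2\conj{\zeta}^n\,dm=\langle\Theta,\zeta^n\Theta\rangle=0,\qquad n\geq 1 .
\]
Taking complex conjugates gives the same for $n\leq -1$. So $|\Theta|^2\in L^1(\T)$ has all Fourier coefficients zero except the zeroth one, which equals $1$. By uniqueness of Fourier coefficients (the Corollary in the preceding subsection), $|\Theta|=1$ $dm$-a.e. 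Since $\Theta\in H^2$ with unimodular boundary values, it lies in $H^\infty(\T)$ and is inner.

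Next, $\Theta H^2\subseteq\mathcal{V}$. Indeed, $\Theta p\in\mathcal{V}$ for every analytic polynomial $p$, because $\mathcal{V}$ is $M_\zeta$-invariant. Multiplication by $\Theta$ is an isometry on $L^2$, so $\Theta H^2$ is closed, and density of the analytic polynomials in $H^2$ gives the inclusion.

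The main step is the reverse inclusion. Take $h\in\mathcal{V}$ with $h\perp\Theta H^2$, that is, $\int_{\T}h\conj{\Theta}\,\conj{\zeta}^n\,dm=0$ for $n\geq0$. We also need the vanishing for $n<0$. Since $\zeta^{m}h\in\zeta\mathcal{V}$ for $m\geq1$, and $\Theta\perp\zeta\mathcal{V}$, we get $\int_{\T}\zeta^m h\conj{\Theta}\,dm=0$ for $m\geq1$. Thus all Fourier coefficients of $h\conj{\Theta}\in L^1(\T)$ vanish, so $h\conj{\Theta}=0$, and $|\Theta|=1$ then forces $h=0$.

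Finally, uniqueness. Suppose $\Theta_1H^2=\Theta_2H^2$. Then $\Theta_1=\Theta_2 f$ and $\Theta_2=\Theta_1 g$ with $f,g\in H^2$. Both $f=\Theta_1\conj{\Theta_2}$ and $g=\conj{f}$ are unimodular on $\T$, and both lie in $H^2$. So $f$ has vanishing negative Fourier coefficients, and $\conj{f}$ does as well, which means $f$ also has vanishing positive coefficients. Hence $f$ is a unimodular constant.

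The expected obstacle is the reverse inclusion: one must exploit that $\Theta$ is orthogonal to all of $\zeta\mathcal{V}$, not only to $\zeta\Theta H^2$. This is what yields the vanishing of the negative Fourier coefficients of $h\conj{\Theta}$.
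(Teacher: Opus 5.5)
Apart from one step, your argument is the paper's. You take a unit vector $\Theta\in\mathcal{V}\ominus\zeta\mathcal{V}$ and get unimodularity from $\Theta\perp\zeta^n\Theta$. You get the reverse inclusion by combining $h\perp\Theta H^2$ (the nonnegative coefficients of $h\conj{\Theta}$) with $\Theta\perp\zeta\mathcal{V}$ (the negative coefficients). Your uniqueness argument is also correct, and it covers something the paper's proof does not address.

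The step that fails as written is the properness of $\zeta\mathcal{V}$. You take an \emph{arbitrary} nonzero $g\in\mathcal{V}$ and let $k$ be the smallest index with $\widehat{g}(k)\neq 0$. You then assert that every element of $\zeta\mathcal{V}$ has vanishing coefficients up to index $k$. But the vanishing order of one element says nothing about the others. For $\mathcal{V}=H^2$ and $g=\zeta^5$ you get $k=5$, yet $\zeta\in\zeta\mathcal{V}$. In fact $g=\zeta\cdot\zeta^4\in\zeta\mathcal{V}$, so the conclusion $g\notin\zeta\mathcal{V}$ is false for this choice.

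The repair is to minimize over the whole subspace. Let $k$ be the least integer $j\geq 0$ such that $\widehat{h}(j)\neq 0$ for some $h\in\mathcal{V}$, and take $g$ attaining it. Then $\widehat{h}(j)=0$ for all $h\in\mathcal{V}$ and all $j<k$, hence $\zeta\mathcal{V}\subseteq\zeta^{k+1}H^2$. Since $\widehat{g}(k)\neq 0$, this gives $g\notin\zeta\mathcal{V}$.

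Equivalently, argue as the paper does. If $\zeta\mathcal{V}=\mathcal{V}$, then $\mathcal{V}=\zeta^n\mathcal{V}\subseteq\zeta^nH^2$ for every $n\geq 1$. So every element of $\mathcal{V}$ has all Fourier coefficients zero, and $\mathcal{V}=\{0\}$.
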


\begin{proof}
It follows that $M_\zeta \mathcal{V}$ is a closed subspace of $\mathcal{V}$, and hence we have the orthogonal decomposition
\[
\mathcal{V} = M_\zeta \mathcal{V} \oplus (M_\zeta \mathcal{V})^\perp.
\]
Suppose, for contradiction, that $(M_\zeta \mathcal{V})^\perp = \{0\}$, or equivalently that $M_\zeta \mathcal{V} = \mathcal{V}$. Then for any $f \in \mathcal{V}$ with $\|f\|_{H^2} = 1$ and any integer $n \ge 1$, there exists $g_n \in \mathcal{V}$ with $\|g_n\|_{H^2} = 1$ such that
\[
f(z) = z^n g_n(z), \qquad z \in \mathbb{D}.
\]
Iterating, this forces $f$ to vanish to arbitrarily high order at the origin, which is impossible unless $f \equiv 0$. This contradiction shows that $M_\zeta \mathcal{V} \subsetneq \mathcal{V}$. Consequently, there exists a nonzero element $f \in (M_\zeta \mathcal{V})^\perp$. By definition, it follows that
\[
\int_{\T} \lvert f(\zeta)\rvert^2 \zeta^{-n} dm(\zeta) = \int_{\T} f(\zeta) \conj{f(\zeta) \zeta^n} dm(\zeta) = 0, \qquad n=1,2, 3, \dots.
\]
Taking complex conjugates, we conclude that the non-zero Fourier coefficients of $|f|^2$ are zero, hence $|f|=\widehat{f}(0)\neq 0$ $dm$-a.e. Set $\Theta = f/\widehat{f}(0)$, which by definition is an inner function. Since $\mathcal{V}$ is closed it easily follows that $\Theta H^2 \subseteq \mathcal{V}$, hence it remains only to verify that $\Theta H^2$ is dense in $\mathcal{V}$. Let $g\in \mathcal{V}$ with the property that 
\[
\int_{\T} \Theta(\zeta)\zeta^{n} \conj{g(\zeta)} dm(\zeta)= 0, \qquad n=0,1,2,\dots
\]
Recall that since $\Theta \in (M_\zeta \mathcal{V})^\perp$ by definition, we also have that 
\[
\int_{\T} \Theta(\zeta) \conj{g(\zeta) \zeta^n} dm(\zeta) = 0, \qquad n=1,2,3, \dots.
\]
But these two conditions readily imply that $\Theta \conj{g} =0$, and since $\Theta$ is unimodular, we conclude that $g=0$ $dm$-a.e on $\T$. It follows that $\mathcal{V} = \Theta H^2$.
\end{proof}

We record the following consequence of Beurling's Theorem, which will appear useful at a later stage. 

\begin{cor}\thlabel{COR:CYCOUTER} For $f\in H^2$, the linear span of $\{f(\zeta)\zeta^n: \, \, n=0,1,2,\dots \}$ is dense in $H^2$, if and only if $f$ is outer.
\end{cor}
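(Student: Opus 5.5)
Let $[f]$ denote the closed linear span of $\{f\zeta^n: n\geq 0\}$ in $H^2$. It is a closed $M_\zeta$-invariant subspace. If $f\neq 0$, Beurling's Theorem gives an inner function $\Theta$ with $[f]=\Theta H^2$. The corollary therefore reduces to one claim: the inner function attached to $[f]$ is a unimodular constant exactly when $f$ is outer. (If $f\equiv 0$, then $[f]=\{0\}$ and $f$ is not outer, so that case is trivial.) I will use the inner--outer factorization $f=\Theta_f F$ recalled above as the main tool.

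\emph{Necessity.} Suppose $[f]=H^2$ and write $f=\Theta_f F$. Every element $pf$, with $p$ an analytic polynomial, equals $\Theta_f (pF)$ and so lies in $\Theta_f H^2$. Multiplication by the unimodular function $\Theta_f$ is an isometry on $L^2(\T)$, so $\Theta_f H^2$ is closed. Hence $H^2=[f]\subseteq \Theta_f H^2$, and in particular $1=\Theta_f h$ for some $h\in H^2$. Since $|\Theta_f|=1$ a.e., $h=\conj{\Theta_f}$ belongs to $H^2$. Then $\Theta_f$ and $\conj{\Theta_f}$ both have vanishing negative Fourier coefficients, so $\Theta_f$ is constant. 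Thus $f$ is a constant multiple of $F$, i.e. outer.

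\emph{Sufficiency.} This is the main step. Suppose $f$ is outer, and Beurling's Theorem gives $[f]=\Theta H^2$, so $f=\Theta g$ with $g\in H^2$. Factor $g=\Theta_g G$. Then $f=(\Theta\Theta_g)G$ is an inner--outer factorization of $f$, and by uniqueness of the factorization $\Theta\Theta_g$ is a unimodular constant. From $|\Theta|=1$ we get $\Theta=c\,\conj{\Theta_g}$, so both $\Theta$ and $\conj{\Theta}$ lie in $H^\infty$, and $\Theta$ is constant by the same Fourier coefficient argument as in the necessity part. Hence $[f]=H^2$.

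This step depends on the uniqueness of the inner--outer factorization, which in turn rests on the fact that a product of inner functions is outer only if it is constant. That is exactly where the defining property $\log|\widehat F(0)|=\int_\T\log|F|\,dm$ of outer functions enters. If one prefers to avoid quoting uniqueness, the direct argument is as follows. We have $|g|=|f|$ a.e. and $\log|\widehat{g}(0)|\le\int\log|g|\,dm=\log|\widehat{f}(0)|$ by Jensen's inequality. On the other hand, $\widehat{f}(0)=\widehat{\Theta}(0)\widehat{g}(0)$ and $|\widehat\Theta(0)|\le 1$, so $|\widehat f(0)|\le|\widehat g(0)|$. Combining the two gives $|\widehat{\Theta}(0)|=1$. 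Since $\Theta$ is inner, $\int_\T|\Theta|^2\,dm=1$, so by Parseval all other Fourier coefficients of $\Theta$ vanish and $\Theta$ is constant. I expect the delicate point to be justifying this Jensen inequality for $H^2$ functions, which I will take from the appendix or \cite{garnett}.
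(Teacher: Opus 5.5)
Your proof is correct and follows the paper's route: Beurling's theorem combined with uniqueness of the inner--outer factorization. You write out the details the paper leaves implicit, including the necessity direction, which the paper does not spell out. Your Jensen-based alternative for the uniqueness step is also sound, since $\widehat{f}(0)\neq 0$ for outer $f$ makes the division legitimate, and the paper's Jensen lemma for $H^1(\T)\supseteq H^2$ already supplies the inequality you were worried about.
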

\begin{proof}
This is an immediate consequence of Beurling's Theorem and the uniqueness of the inner--outer factorization in $H^2$. If $f$ is outer, then smallest $M_\zeta$-invariant subspace containing $f$ cannot be of the form $\Theta H^2$, unless $\Theta$ is a unimodular constant.
\end{proof}

Using Beurling's Theorem as a stepping stone, we may now complete the characterization of all closed subspace of $L^2(\T)$, which are invariant under $M_\zeta$. 

\begin{cor}[Beurling--Wiener] Let $\mathcal{V}$ be a closed $M_{\zeta}$-invariant subspace of $L^2(\T)$. If $\mathcal{V}= M_\zeta \mathcal{V}$, then there exists a unique Lebesgue measure subset $S \subset \T$, such that 
\[
\mathcal{V}:= L^2(S).
\]
If $M_\zeta \mathcal{V} \subsetneq \mathcal{V}$, then there exists a unique unimodular function $U \in L^\infty(\T)$ (i.e $|U|=1$ $dm$-a.e on $\T$) such that 
\[
\mathcal{V} = U H^2:= \{ U\, f: f\in H^2(\T) \}.
\]
\end{cor}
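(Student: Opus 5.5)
Both cases will be handled by mimicking the proofs of Wiener's and Beurling's theorems given above, working directly in $L^2(\T)$ and using orthogonal projections and the decomposition $\mathcal{V} = M_\zeta\mathcal{V} \oplus (\mathcal{V}\ominus M_\zeta\mathcal{V})$.

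\emph{Case $M_\zeta\mathcal{V}=\mathcal{V}$.} Since $M_\zeta$ is unitary on $L^2(\T)$ with inverse $M_{\conj{\zeta}}$, applying $M_{\conj{\zeta}}$ to $M_\zeta\mathcal{V}=\mathcal{V}$ gives $M_{\conj{\zeta}}\mathcal{V}=\mathcal{V}$. Hence $\mathcal{V}$ is invariant under both $M_\zeta$ and $M_{\conj{\zeta}}$, i.e.\ translation invariant in the sense of \thref{THM:WIENER1}. Wiener's theorem then gives $\mathcal{V}=L^2(S)$ with $S$ unique modulo null sets. This case is immediate.

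\emph{Case $M_\zeta\mathcal{V}\subsetneq\mathcal{V}$.} Here $M_\zeta\mathcal{V}$ is closed, because $M_\zeta$ is an isometry. So we may pick a unit vector $U\in\mathcal{V}\ominus M_\zeta\mathcal{V}$. For $n\ge1$ we have $\zeta^nU\in M_\zeta\mathcal{V}$, so $U\perp \zeta^nU$, i.e.
\[
\int_\T |U|^2\zeta^{-n}\,dm=0,\qquad n\ge 1.
\]
Conjugating gives the same for $n\le -1$, so $|U|^2$ is constant, equal to $\norm{U}^2_{L^2}=1$. Thus $U$ is unimodular, and since $\mathcal{V}$ is closed and $M_\zeta$-invariant, $UH^2\subseteq\mathcal{V}$.

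For the reverse inclusion I would first show that $\mathcal{V}\ominus M_\zeta\mathcal{V}$ is one-dimensional. Let $U_1,U_2$ be orthogonal unit vectors in it. The computation above shows $U_1\perp\zeta^nU_2$ for $n\ge1$, and symmetrically $U_2\perp\zeta^nU_1$ for $n \geq 1$; together with $U_1\perp U_2$ this yields $\int U_1\conj{U_2}\zeta^{-n}\,dm=0$ for all $n\in\mathbb Z$. Hence $U_1\conj{U_2}=0$ a.e., which is impossible since both are unimodular.

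Now let $g\in\mathcal{V}$ with $g\perp UH^2$. Two facts are needed.

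\begin{enumerate}
\item[(a)] $g\perp \zeta^n U$ for $n\ge 0$. This is the hypothesis $g \perp UH^2$.
\item[(b)] $g\perp \conj{\zeta}^{\,n}U$ for $n\ge 1$, i.e.\ $\zeta^n g\perp U$. This holds because $\zeta^n g\in M_\zeta\mathcal{V}$ and $U\perp M_\zeta\mathcal{V}$.
\end{enumerate}

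Together these give $g\conj{U}=0$ a.e., so $g=0$, and therefore $\mathcal{V}=UH^2$.

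For uniqueness, suppose $UH^2=U'H^2$. Then $\conj{U}U'\in H^2$ and $U\conj{U'}\in H^2$. These are unimodular functions that are conjugates of each other, so both lie in $H^2$ and in $\conj{H^2}$, hence are constant. Thus $U$ is unique up to a unimodular constant, which is how the uniqueness in the statement should be read.

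The only step needing care is the density argument, in particular justifying (b), namely that $\zeta^n g\in M_\zeta\mathcal{V}$ for every $n\ge1$. This follows from the invariance $M_\zeta^{\,n-1}\mathcal{V}\subseteq\mathcal{V}$, which places $\zeta^{n}g=M_\zeta(\zeta^{n-1}g)$ in $M_\zeta\mathcal{V}$. Unlike Beurling's theorem, no vanishing-at-the-origin argument is needed to produce a nonzero wandering vector: its existence is exactly the hypothesis $M_\zeta\mathcal{V}\neq\mathcal{V}$.
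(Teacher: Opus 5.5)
Your proof is correct and follows the paper's route: Wiener's theorem in the first case, and in the second case a unimodular wandering vector $U\in\mathcal{V}\ominus M_\zeta\mathcal{V}$ together with the same two-sided orthogonality argument as in Beurling's theorem, which shows that $UH^2$ is dense in $\mathcal{V}$. You write out this density step, which the paper omits. The one-dimensionality of $\mathcal{V}\ominus M_\zeta\mathcal{V}$ is not actually used in your density argument, so that step can be dropped. You are also right that $U$ is unique only up to a unimodular constant, which is the correct reading of the uniqueness claim.
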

\begin{proof}
The first case $\mathcal{V}= M_\zeta \mathcal{V}$ asserts that $\mathcal{V}$ is, in fact, translation invariant, and hence is just a re-statement of Wiener's Theorem. 

Moving forward, we thus assume that $M_\zeta \mathcal{V} \subsetneq \mathcal{V}$. Arguing as in the proof of Beurling's Theorem, we can pick a non-trivial element $f\in ( M_\zeta \mathcal{V})^\perp$, and deduce that $f= cU$, where $c\neq 0$ constant and $U$ is a unimodular function on $\T$. It is easy to see that $UH^2 \subseteq \mathcal{V}$, hence it only remains to show that $UH^2$ is dense in $\mathcal{V}$. However, that remaining part of the proof is again similar to Beurling's Theorem, we omit the details.
\end{proof}

We can now describe all functions in $L^2(\T)$ which are cyclic wrt to $M_\zeta$ in $L^2(\T)$. To this end, we make use of the following fact: if $f\in L^2(\T)$ has integrable logarithm:
\[
\int_{\T} \log |f| dm>- \infty,
\]
then there exists a function $F \in H^2$ such that $|F|=|f|$ on $\T$, and thus $f= UF$, for some unimodular $U \in L^\infty(\T)$.

\begin{thm} Let $f\in L^2(\T)$. Then the linear span of the set
\[
\{ \zeta^n f(\zeta): n=0,1,2,\dots \}
\]
is dense in $L^2(\T)$ if and only if $f\neq 0$ $dm$-a.e on $\T$ and $\int_{\T} \log |f|dm = -\infty$.
\end{thm}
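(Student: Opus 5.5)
Let $\mathcal{V}_f$ denote the closed linear span of $\{\zeta^n f: n\geq 0\}$ in $L^2(\T)$. This is a closed $M_\zeta$-invariant subspace, so the Beurling--Wiener corollary applies. The plan is to decide which of its two alternatives occurs in terms of $\log|f|$.

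\emph{Necessity.} Assume $\mathcal{V}_f = L^2(\T)$.
\begin{enumerate}
\item[(a)] Every element of $\mathcal{V}_f$ vanishes $dm$-a.e.\ on the set $Z=\{f=0\}$, since polynomial multiples $pf$ do and $L^2$-limits preserve this (pass to an a.e.\ convergent subsequence). As $1\in\mathcal{V}_f$, we get $m(Z)=0$.
\item[(b)] Suppose instead that $\int_{\T}\log|f|\,dm>-\infty$. By the fact recalled just before the statement, $f=UF$ with $U$ unimodular and $F\in H^2$. Then $\zeta^n f = U\zeta^n F\in UH^2$ for all $n\ge 0$.
\item[(c)] The set $UH^2$ is closed, since multiplication by $U$ is an isometry of $L^2$. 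Hence $\mathcal{V}_f\subseteq UH^2$.
\item[(d)] But $UH^2\neq L^2(\T)$: the function $U\conj{\zeta}$ is orthogonal to $UH^2$, because $\langle Uh, U\conj\zeta\rangle=\langle h,\conj\zeta\rangle = \widehat{h}(-1)=0$ for $h\in H^2$. This contradicts $\mathcal{V}_f=L^2(\T)$, so $\int_{\T}\log|f|\,dm=-\infty$.
\end{enumerate}

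\emph{Sufficiency.} Assume $f\neq0$ a.e.\ and $\int_{\T}\log|f|\,dm=-\infty$.
\begin{enumerate}
\item[(a)] If $M_\zeta\mathcal{V}_f=\mathcal{V}_f$, Wiener's theorem gives $\mathcal{V}_f=L^2(S)$ for some measurable $S$. Since $f\in L^2(S)$ and $f\neq 0$ a.e., $S=\T$ up to a null set, and we are done.
\item[(b)] Otherwise, the Beurling--Wiener corollary gives $\mathcal{V}_f=UH^2$ with $U$ unimodular. Then $f=UF$ for some $F\in H^2$ with $F\not\equiv0$, and $|F|=|f|$ a.e.
\item[(c)] By Jensen's inequality / the Hardy space fact recorded earlier (a nonzero $H^2$ function has integrable logarithm; this follows from the inner--outer factorization, since $|F|$ equals the modulus of its outer factor), $\int_{\T}\log|f|\,dm=\int_{\T}\log|F|\,dm>-\infty$. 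This contradicts the hypothesis, so case (b) cannot occur.
\end{enumerate}

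\emph{Main obstacle.} The only non-formal input is the step "nonzero $F\in H^2$ implies $\log|F|\in L^1$". Here I would invoke the inner--outer factorization stated earlier: $F=\Theta G$ with $G$ outer and $\int_{\T}\log|G|\,dm=\log|\widehat{G}(0)|>-\infty$. Since $\log^+|F|\le |F|^2$ is integrable, this gives $\log|F|\in L^1$. The remaining steps are routine applications of the Beurling--Wiener and Wiener results.
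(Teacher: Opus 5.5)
Your proof is correct and is essentially the paper's argument. For necessity you trap the invariant subspace inside the proper subspaces $L^2(\T\setminus Z)$ and a unimodular multiple of $H^2$. For sufficiency you exclude both non-trivial alternatives of the Beurling--Wiener dichotomy, using that nonzero $H^2$ functions have integrable logarithm; this is most directly obtained from Jensen's lemma after dividing out a power of $\zeta$, as in Step 4 of the F.\ and M.\ Riesz proof, rather than from the factorization. Your version is also more careful than the paper's: the paper writes $[f]=[F]\subseteq H^2$ where it should read $[f]\subseteq \conj{U}H^2$, and your orthogonal vector $U\conj{\zeta}$ makes the properness of $UH^2$ explicit.
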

\begin{proof} We denote by $[f]$ be the smallest closed $M_\zeta$-invariant subspace in $L^2(\T)$ containing $f$. We first argue that the conditions on $f$ are necessary. If $f=0$ on a subset $S$ of positive Lebesgue measure in $\T$, then it easily follows that $[f]\subseteq L^2(\T \setminus S)$, which is a proper subspace of $L^2(\T)$, hence $f$ cannot be cyclic. On the other hand, if 
\[
\int_{\T} \log |f| dm >- \infty,
\]
then an outer function $F \in H^2(\T)$ can be constructed so that $F=Uf$ for some unimodular function $U\in L^\infty(\T)$. But then $[f]=[F] \subseteq H^2(\T)$, which again is a proper subspace of $L^2(\T)$. 

Conversely, assume that $f \in L^2(\T)$ with $f\neq 0$ $dm$-a.e on $\T$, and $\log |f| \notin L^1(\T)$. Now the logarithmic divergence of $f$ implies that $[f]$ cannot take the form $UH^2(\T)$ for some unimodular function $U$, since non-trivial elements in $H^2(\T)$ have integrable logarithms. Furthermore, since $f$ vanishes almost nowhere on $\T$, $[f]$ can neither be of the form $L^2(S)$ for some measurable subset $S\subset \T$ which does not have full Lebesgue measure in $\T$. The Beurling--Wiener Theorem then ensures that $[f]= L^2(\T)$, which proves the claim.
\end{proof}

\subsection{Ideals in the Wiener algebra}
Moving forward, we shall now consider the problem of translations in the context of the Wiener algebra $A(\T)$, which consists of functions $f\in C(\T)$ with absolutely summable Fourier coefficients:
\[
\norm{f}_{A} := \sum_{n\in \mathbb{Z}} \abs{\widehat{f}(n)} < \infty. 
\]
In other words, the space $A(\T)$ is isometrically isomorphic with the sequence space $\ell^1$ via Fourier coefficients. The designation "algebra" attached to the space $A(\T)$ arises from the fact that $A(\T)$ is closed under pointwise multiplication, as demonstrated by the estimate
\[
\norm{f\cdot g}_{A} = \sum_{n} \Big \lvert \sum_k \widehat{f}(k)\widehat{g}(n-k) \Big \rvert \leq \norm{f}_{A} \norm{g}_{A}.
\]
In fact, the Wiener algebra is one of most prominent example in which Harmonic analysis meets the theory of (commutative) Banach algebras. In this more algebraic setting, closed translation invariant subspace of $A(\T)$ are most commonly referred to as closed ideals. Examples of closed ideals are given by
\begin{equation}\label{EQ:MAXIDEALSWIENER}
\mathcal{I}_E := \{f\in A(\T): f=0 \,\, \text{on} \, \, E \} \qquad E \subseteq \T \, \, \text{compact}.
\end{equation}

A complete characterization of all closed ideals in $A(\T)$, in a similar spirit to Beurling's Theorem, is unfortunately not available here. However, one can still obtain a result which is somewhat close to complete, as we shall see in \thref{}

Let us primarily attempt the simpler task of classifying the 
so-called \emph{maximal ideals} in $A(\T)$. Recall that an ideal $\mathcal{M}$ is said to be maximal, if whenever $\mathcal{I}$ is an ideal with 
\[
\mathcal{M} \subseteq \mathcal{I} \subseteq A(\T)
\]
then wither $\mathcal{I} =\mathcal{M}$ or $\mathcal{I} =A(\T)$. Note that maximal ideal must necessarily be closed. The principal result in this regime shows that all maximal ideals are of a fairly simple form.

\begin{thm}[Maximal ideals] \thlabel{THM:MAXIDWIENER} Every maximal ideal in $A(\T)$ is of the form 
\[
\mathcal{I}_{\zeta_0} = \left\{f\in A(\T): f(\zeta_0)=0 \right\} \qquad \zeta_0 \in \T.
\]
\end{thm}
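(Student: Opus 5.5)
The plan has two parts. First I show that each $\mathcal{I}_{\zeta_0}$ is maximal. Then, the main part, I show that every proper ideal $\mathcal{I}$ of $A(\T)$ is contained in some $\mathcal{I}_{\zeta_0}$; maximality then forces $\mathcal{I} = \mathcal{I}_{\zeta_0}$.

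\textbf{Each $\mathcal{I}_{\zeta_0}$ is maximal.} The set $\mathcal{I}_{\zeta_0}$ is the kernel of the evaluation map $f\mapsto f(\zeta_0)$. This map is a nonzero multiplicative linear functional on $A(\T)$, continuous since $|f(\zeta_0)|\le \norm{f}_A$. Its kernel therefore has codimension one, and any strictly larger ideal contains an element with $f(\zeta_0)\neq 0$. Writing $1 = f(\zeta_0)^{-1}\big(f - (f-f(\zeta_0))\big)$ and using that $f - f(\zeta_0)\in\mathcal{I}_{\zeta_0}$ shows that $1$ lies in the larger ideal, which is then all of $A(\T)$.

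\textbf{Every proper ideal lies in some $\mathcal{I}_{\zeta_0}$.} Suppose, for contradiction, that for every $\zeta\in\T$ the ideal $\mathcal{I}$ contains some $f_\zeta$ with $f_\zeta(\zeta)\neq 0$.
\begin{itemize}
\item By continuity, $f_\zeta$ is nonvanishing on an open arc around $\zeta$. Compactness of $\T$ gives finitely many $f_1,\dots,f_k\in\mathcal{I}$ with no common zero.
\item Since $\conj{f_j}\in A(\T)$ (its Fourier coefficients are $\conj{\widehat{f_j}(-n)}$) and $\mathcal{I}$ is an ideal, the function $g := \sum_j f_j\conj{f_j} = \sum_j |f_j|^2$ lies in $\mathcal{I}$, and $g>0$ everywhere on $\T$.
\item It remains to prove Wiener's lemma: if $g\in A(\T)$ vanishes nowhere, then $1/g\in A(\T)$. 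Then $1 = g\cdot(1/g)\in\mathcal{I}$, contradicting properness.
\end{itemize}
Properness alone suffices here; closedness of $\mathcal{I}$ is never used.

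\textbf{Proof of Wiener's lemma.} This is the main obstacle, and I would avoid Gelfand theory and argue directly with a localized Neumann series.
\begin{enumerate}
\item Let $\delta = \min_\T |g| > 0$. The task is to approximate $g$, near each point $\zeta_0$, by the constant $g(\zeta_0)$ in a small $A$-norm.
\item Take a trapezoidal (de la Vallée Poussin type) bump $\phi_h$, equal to $1$ on an arc of length $h$ around $\zeta_0$ and supported on the arc of length $2h$. Its $A$-norm is bounded independently of $h$: $\phi_h$ is a difference of two Fejér-type triangle functions, whose Fourier coefficients are nonnegative and sum to the value at the peak.
\item The key estimate is
\[
\norm{\phi_h\,(g-g(\zeta_0))}_A \to 0, \qquad h\to 0.
\]
To prove it, split $g = P + r$ with $P$ a trigonometric polynomial and $\norm{r}_A<\varepsilon$. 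The polynomial part is Lipschitz, and a direct estimate of $\norm{\phi_h(P-P(\zeta_0))}_A$ (for instance via the $L^2$ norm of the derivative and Cauchy--Schwarz with weights $n$) gives a bound $O(h^{1/2})$. The tail contributes at most $C\norm{r}_A \le C\varepsilon$.
\item With this estimate, $\psi_{\zeta_0} := g(\zeta_0) + \phi_h(g - g(\zeta_0))$ agrees with $g$ near $\zeta_0$. Its inverse
\[
\frac{1}{g(\zeta_0)}\sum_{n\ge 0}\left(-\frac{\phi_h(g-g(\zeta_0))}{g(\zeta_0)}\right)^n
\]
converges in $A(\T)$ once the $A$-norm of the ratio is below $1$, which the key estimate guarantees for $h$ small.
\item Cover $\T$ by finitely many such arcs and choose a partition of unity $\{\chi_j\}\subset A(\T)$ (piecewise-linear trapezoids) subordinate to the inner arcs. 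Then
\[
\frac1g=\sum_j \chi_j\,\psi_{\zeta_j}^{-1}
\]
as functions, and each term lies in $A(\T)$ because $A(\T)$ is an algebra.
\end{enumerate}
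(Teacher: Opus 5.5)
Your proposal is correct. Its outer structure matches the paper's:
\begin{itemize}
\item the same identity $1 = f/f(\zeta_0) + (f(\zeta_0)-f)/f(\zeta_0)$ shows that $\mathcal{I}_{\zeta_0}$ is maximal;
\item the same compactness argument produces $g=\sum_j |f_j|^2 \in \mathcal{I}$ with no zeros on $\T$.
\end{itemize}
Your ending, $1 = g\cdot g^{-1}\in\mathcal{I}$, is slightly cleaner than the paper's appeal to cyclicity. Cyclicity tacitly uses that maximal ideals are closed, whereas your step needs only properness.

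The genuine difference is in how you prove Wiener's $1/f$-lemma. The paper follows Newman's global argument:
\begin{itemize}
\item approximate $f$ by a single trigonometric polynomial $P_\varepsilon$ with $\|f-P_\varepsilon\|_A\le\varepsilon$;
\item expand $1/f$ as a Neumann series about $P_\varepsilon$;
\item bound $\|P_\varepsilon^{-n}\|_A$ through the embedding $W^{1,\infty}(\T)\hookrightarrow A(\T)$ of \thref{LEM:A1W1inf}. This costs only a factor $nM(\varepsilon)(1-\varepsilon)^{-n-1}$, which is beaten by $\varepsilon^n$ once $\varepsilon<1/2$.
\end{itemize}
You instead give Wiener's original localization proof:
\begin{itemize}
\item build local inverses by a Neumann series about the constant $g(\zeta_0)$;
\item this rests on two facts: de la Vall\'ee Poussin trapezoids satisfy $\|\phi_h\|_A\le 3$ uniformly in $h$, and $\|\phi_h(g-g(\zeta_0))\|_A = O_P(h^{1/2}) + O(\|r\|_A)$;
\item glue the local inverses with a Lipschitz partition of unity.
\end{itemize}
I checked the small-oscillation bound. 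With $u=\phi_h(P-P(\zeta_0))$, the estimate $\|u\|_A\le \|u\|_{L^1}+C\|u'\|_{L^2}$ holds, and $|u'|\lesssim \|P'\|_\infty$ on a set of measure $O(h)$, so your $O(h^{1/2})$ is right. The tail is bounded by $3\cdot 2\|r\|_A$, as you say.

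Newman's route is shorter and avoids both the arc-by-arc estimates and the partition of unity. Your route proves something more structural: membership in $A(\T)$, and invertibility, are local properties. This local-to-global principle extends naturally to non-compact settings such as $L^1(\R)$ in Wiener's Tauberian theorem, where no single global polynomial approximant with a derivative bound is readily available.
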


In modern times, this result is typically proved using Banach algebra techniques, notably the so-called Gelfand transform. But we shall here remain within the realm of harmonic analysis. 

The proof of this result relies on classical result of Wiener, which typically goes under the name of Wiener's $1/f$-Theorem, and neatly characterizes all cyclic elements on $A(\T)$.

\begin{thm}[Wiener]\thlabel{THM:WIEN1/f} For a function $f\in A(\T)$, the following conditions are all equivalent:
\begin{enumerate}
    \item[(i)] $f\neq 0$ on $\T$. 
    \item[(ii)] $f$ is cyclic in $A(\T)$, that is, the closed linear span of
    \[
    \{f(\zeta) \zeta^n: n=0,\pm 1, \pm 2, \dots \}
    \]
    is dense in $A(\T)$.
    \item[(iii)] $f$ is invertible in $A(\T)$, that is, $1/f \in A(\T)$.
\end{enumerate}
\end{thm}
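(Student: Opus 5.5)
We prove the cycle of implications (iii) $\Rightarrow$ (ii) $\Rightarrow$ (i) $\Rightarrow$ (iii). The first two are soft; the real content is (i) $\Rightarrow$ (iii), which we attack by hand through a local-inversion argument and a partition of unity, avoiding Gelfand theory.

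\emph{(iii) $\Rightarrow$ (ii).} If $g=1/f\in A(\T)$, truncate its Fourier series to trigonometric polynomials $T_N=S_N(g)$. Since $\norm{g-T_N}_A\to 0$ and multiplication is continuous on $A(\T)$, we get $\norm{1-fT_N}_A\le \norm{f}_A\norm{g-T_N}_A\to0$. Each $fT_N$ lies in the linear span of the translates $f\zeta^n$. Hence $1$ lies in the closed span. Because multiplication by $\zeta^{\pm1}$ is an isometry of $A(\T)$, every $\zeta^n$ lies in the closed span too, and trigonometric polynomials are dense in $A(\T)$ (partial sums converge in $\ell^1$).

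\emph{(ii) $\Rightarrow$ (i).} If $f(\zeta_0)=0$, then every element of the span vanishes at $\zeta_0$. Point evaluation is continuous on $A(\T)$, since $\abs{h(\zeta_0)}\le\norm{h}_A$, so the closure also vanishes at $\zeta_0$. This closure therefore misses the constant $1$, and $f$ is not cyclic.

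\emph{(i) $\Rightarrow$ (iii)} is the main obstacle. The plan is to invert $f$ locally by a Neumann series and then patch the local inverses together.

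\textbf{Step 1: local inversion.} Fix $\zeta_0=e^{it_0}$ and write $f=f(\zeta_0)+(f-f(\zeta_0))$. Choose a bump $\tau_\delta$ in $A(\T)$ with $\tau_\delta=1$ on the arc $\abs{t-t_0}<\delta$, support in $\abs{t-t_0}<2\delta$, and $\sup_\delta\norm{\tau_\delta}_A\le C$. A trapezoid built from Fej\'er-type triangle functions works. For the triangle $\Delta_\delta$ of height $1$ and half-width $\delta$, one has $\widehat{\Delta_\delta}\ge0$, so $\norm{\Delta_\delta}_A=\Delta_\delta(1)=1$, and the trapezoid is $2\Delta_{2\delta}-\Delta_\delta$. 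The key estimate is
\[
\norm{(f-f(\zeta_0))\tau_\delta}_A\to 0,\qquad \delta\to0 .
\]
To prove it, first take $f$ a trigonometric polynomial $P$. Then $P-P(\zeta_0)$ is smooth and vanishes at $\zeta_0$, so on the support of $\tau_\delta$ it is $O(\delta)$ with derivative $O(1)$. Bound the $A$-norm by the $L^2$ norms of the function and its derivative, using Cauchy--Schwarz in the form $\norm{h}_A\le \abs{\widehat h(0)}+C\norm{h}_{L^2}^{1/2}\norm{h'}_{L^2}^{1/2}$. For general $f$, split $f=P+(f-P)$ with $\norm{f-P}_A<\varepsilon$ and use $\norm{\tau_\delta}_A\le C$.

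With this estimate, $g_0:=f(\zeta_0)+(f-f(\zeta_0))\tau_\delta$ satisfies $\norm{g_0-f(\zeta_0)}_A<\abs{f(\zeta_0)}$ for small $\delta$. A Neumann series in the Banach algebra $A(\T)$ then gives $1/g_0\in A(\T)$. Moreover $g_0=f$ on the arc $\abs{t-t_0}<\delta$.

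\textbf{Step 2: patching.} By compactness, cover $\T$ by finitely many such arcs $I_j$ with local inverses $h_j\in A(\T)$, where $h_jf=1$ on $I_j$. Take a partition of unity $\sum_j\psi_j=1$ with $\psi_j\in A(\T)$ supported in $I_j$; smooth functions lie in $A(\T)$, so this is easy. Then $g=\sum_j\psi_jh_j\in A(\T)$ and $fg=\sum_j\psi_j=1$, which proves (iii).

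The delicate point is the uniform control of $\norm{\tau_\delta}_A$ together with the smallness estimate in Step 1; everything else is formal.
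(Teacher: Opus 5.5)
Your proof is correct. The implications (iii)$\Rightarrow$(ii) and (ii)$\Rightarrow$(i) are essentially the paper's; you phrase the latter through continuity of point evaluation, $\abs{h(\zeta_0)}\le\norm{h}_A$, rather than uniform approximation, which amounts to the same thing. For (i)$\Rightarrow$(iii), however, you take a genuinely different route.

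The paper follows D. J. Newman. After normalizing $\abs{f}\ge 1$, it approximates $f$ \emph{globally} by a single trigonometric polynomial $P_\varepsilon$ with $\norm{f-P_\varepsilon}_A\le\varepsilon$. It then sums the Neumann series $1/f=P_\varepsilon^{-1}\sum_{n\ge0}\bigl((P_\varepsilon-f)/P_\varepsilon\bigr)^n$. The only analytic input is the embedding $\norm{h}_A\le C\norm{h}_{W^{1,\infty}}$ applied to $P_\varepsilon^{-n}$. Its $W^{1,\infty}$-norm grows only like $n(1-\varepsilon)^{-n-1}$, so it is beaten by $\varepsilon^n$.

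You instead use Wiener's original localization scheme. You run a Neumann series around the \emph{constant} $f(\zeta_0)$ after cutting off with a trapezoid $\tau_\delta$ of uniformly bounded $A$-norm, and then glue the local inverses with a partition of unity. Your smallness estimate $\norm{(f-f(\zeta_0))\tau_\delta}_A\to0$ genuinely needs an $L^2$-type bound that rewards small support. The Cauchy--Schwarz inequality $\norm{h}_A\lesssim\norm{h}_{L^2}+\norm{h'}_{L^2}$ inside the paper's lemma already gives $O(\delta^{1/2})$, and your interpolated form gives $O(\delta)$. The sup-norm version used in Newman's argument would not suffice, since $\norm{h'}_{L^\infty}$ stays of order one as $\delta\to0$.

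As for what each buys: Newman's argument is shorter and needs no bump functions or patching. Yours is longer but shows that invertibility in $A(\T)$ is a local property. That is the mechanism that carries over to settings such as Wiener's Tauberian theorem on $L^1(\R)$.

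A few cosmetic points, none of which affect the argument:
\begin{itemize}
\item $\widehat{\Delta_\delta}\ge0$ holds for the copy centred at $\zeta=1$; translation does not change the $A$-norm.
\item The trapezoid is supported in the closed arc $\abs{t-t_0}\le 2\delta$.
\item Each $\psi_j$ should be chosen with $\supp{\psi_j}$ inside the open arc where $h_jf=1$.
\end{itemize}
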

We remark that the principal part of this Theorem is the statement $(i) \implies (iii)$, and modern proofs of this result typically appears in the literature of commutative Banach algebras, which originates from the profound work of I. Gelfand. For instance, see \cite[Ch. VIII]{katznelson2004introduction}. However, we shall below outline a simple, short and most importantly, a self-contained proof by D. J. Newman in \cite{newman1975simple}. It essentially only requires the following simple lemma.

\begin{lemma}\thlabel{LEM:A1W1inf} Let $W^{1,\infty}(\T)$ denote the Sobolev space of essentially bounded functions $f$ on $\T$ whose distributional derivative is also essentially bounded on $\T$, equipped with the norm:
\[
\norm{f}_{W^{1,\infty}} := \norm{f}_{L^\infty} + \norm{f'}_{L^\infty}.
\]
Then there exists a numerical constant $C>0$, such that the following norm inequalities hold:
\[
\norm{f}_{L^\infty} \leq \norm{f}_{A} \leq C\norm{f}_{W^{1,\infty}} \qquad \forall f \in W^{1,\infty}(\T).
\]
\end{lemma}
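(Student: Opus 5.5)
The lower bound is immediate. For $f\in A(\T)$ the Fourier series converges absolutely and uniformly, and by uniqueness of Fourier coefficients its sum is $f$. Hence for every $\zeta\in\T$ we have $|f(\zeta)|\le\sum_n|\widehat f(n)|=\norm{f}_A$, and so $\norm{f}_{L^\infty}\le\norm{f}_A$.

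For the upper bound, the plan is to split off the zeroth coefficient and control the rest with Cauchy--Schwarz plus Parseval applied to $f'$. Since $f\in W^{1,\infty}(\T)$, the distributional derivative $f'$ lies in $L^\infty(\T)\subset L^2(\T)$. Its Fourier coefficients satisfy $\widehat{f'}(n)=in\widehat f(n)$. To justify this for distributional derivatives, pair $f'$ against the test function $e^{-int}$ and integrate by parts in the distributional sense; alternatively, note that $f$ is Lipschitz, hence absolutely continuous, and the classical integration by parts applies.

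Then
\[
\sum_{n\neq0}|\widehat f(n)| = \sum_{n\ne0}\frac{1}{|n|}\,|n\widehat f(n)| \le \Big(\sum_{n\ne0}\frac{1}{n^2}\Big)^{1/2}\Big(\sum_{n}|\widehat{f'}(n)|^2\Big)^{1/2} = \Big(\frac{\pi^2}{3}\Big)^{1/2}\norm{f'}_{L^2}.
\]
The last step uses Parseval. Since $dm$ is a probability measure, $\norm{f'}_{L^2}\le\norm{f'}_{L^\infty}$. Likewise $|\widehat f(0)|\le\norm{f}_{L^1}\le\norm f_{L^\infty}$.

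Adding the two pieces gives $\norm f_A\le\norm f_{L^\infty}+\frac{\pi}{\sqrt3}\norm{f'}_{L^\infty}\le C\norm f_{W^{1,\infty}}$ with $C=\max(1,\pi/\sqrt3)$. In particular the right-hand side is finite, so $f\in A(\T)$, and the first inequality then also applies to $f$.

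The only point needing care is the coefficient identity for the distributional derivative. I would handle it by noting that an $L^\infty$ distributional derivative makes $f$ (after modification on a null set) Lipschitz, so that $f(e^{it})-f(1)=\int_0^t f'$. Then $\widehat{f'}(n)=in\widehat f(n)$ follows by Fubini, with no boundary term because $f$ is periodic.
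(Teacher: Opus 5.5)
Your proposal is correct and follows the paper's route: Cauchy--Schwarz against a square-summable weight, then Parseval applied to $f$ and $f'$, then $L^2\le L^\infty$ on the probability space $(\T,dm)$. The only differences are cosmetic: you isolate $\widehat f(0)$ where the paper uses the weight $(1+|n|)$, and you supply the details the paper omits, namely the lower bound and the identity $\widehat{f'}(n)=in\widehat f(n)$ for the distributional derivative.
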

\begin{proof}
The proof is simple and principally based on the following observation that follows from Cauchy--Schwarz inequality
\[
\sum_{n} \abs{\widehat{f}(n)} \leq \left( \sum_n (1+|n|)^2 \abs{\widehat{f}(n)}^2 \right)^{1/2} \cdot \left( \sum_n \frac{1}{(1+|n|)^2} \right)^{1/2} \leq C \left( \int_{\T} \abs{f}^2 + \abs{f'}^2 dm \right)^{1/2}.
\]
We omit the simple details.
\end{proof}

\begin{proof}[Proof of \thref{THM:WIEN1/f}]
\proofpart{1}{$(iii)$ implies $(ii)$:}
Assume $1/f \in A(\T)$ and approximate $1/f$ by trigonometric polynomials in the norm of $A(\T)$. Now since $A(\T)$ is a Banach algebra closed under pointwise multiplication, which corresponds to convolution on the Fourier side, multiplying by $f$ we see that polynomial multiples of $f$ approximate $1$ in the $A(\T)$--norm. Now since the polynomials are dense, we conclude that $f$ is indeed cyclic.
\proofpart{2}{$(ii) \implies (i)$} Arguing by contraposition, we note that if $f(\zeta_0)=0$, then the product $fT$ with $T$ being a trigonometric polynomial can never uniformly approximate a function $g$ with $g(\zeta_0)\neq 0$ on $\T$. 
\proofpart{3}{$(i)\implies (iii)$} Below, we shall outline a neat argument by D. J. Newman. Suppose $f\neq 0$ on $\T$ and note that by means of re-scaling $f$, we may assume that $\abs{f(\zeta)}\geq 1$ on $\T$. Fix a number $0<\varepsilon<1$ to be specified later, and pick a trigonometric polynomials $P_\varepsilon$ such that $\norm{f-P_\varepsilon}_{A}\leq \varepsilon$. Now using the following simple algebraic manipulation, it remains to show that the formal series expansion
\[
\frac{1}{f} = \frac{1}{P_{\varepsilon}} \frac{1}{1- \frac{P_\varepsilon - f}{P_\varepsilon}} = \frac{1}{P_{\varepsilon}} \sum_{n=0}^\infty \left(\frac{P_{\varepsilon} - f}{P_{\varepsilon}}\right)^{n},
\]
actually converges in the $A(\T)$-norm. First, we should at least verify that $1/P_{\varepsilon} \in C(\T)$, but this readily follows from the normalization of $f$:
\[
\varepsilon \geq \norm{f-P_\varepsilon}_{A} \geq \norm{f-P_\varepsilon}_{L^\infty}\geq \abs{f(\zeta)}- \abs{P_\varepsilon(\zeta)} \geq 1- \abs{P_\varepsilon(\zeta)}, \qquad \zeta \in \T,
\]
and thus $\norm{1/P_{\varepsilon}}_{L^\infty}\leq (1-\varepsilon)^{-1}$. Furthermore, if $M(\varepsilon):= \norm{P'_\varepsilon}_{L^\infty}$, then 
\[
\Big \lvert \left(\frac{1}{P^n_\varepsilon}\right)' \Big \rvert = n \frac{\abs{P'_\varepsilon}}{\abs{P_\varepsilon}^{n+1}} \leq M(\varepsilon) n (1-\varepsilon)^{-n-1}, 
\]
and hence we obtain the following Sobolev estimate of $1/P^{n}_\varepsilon$:
\[
\norm{1/P^n_{\varepsilon}}_{W^{1,\infty}} \leq (1-\varepsilon)^{-n} + M(\varepsilon) n (1-\varepsilon)^{-n-1} \leq 2M(\varepsilon) n (1-\varepsilon)^{-n-1}, \qquad n=0,1,2,\dots.
\]
With this estimate, using that $A(\T)$ is a Banach algebra, in conjunction with \thref{LEM:A1W1inf}, we obtain the following estimate of the terms in the formal series expression of $1/f$:
\begin{multline*}
\Big \lVert \left(\frac{P_{\varepsilon} - f}{P_{\varepsilon}}\right)^{n} \Big \rVert_{A} \leq \norm{(P_{\varepsilon} - f)^n}_{A} \cdot \norm{1/P^n_{\varepsilon}}_{A} \leq  \\ 
\norm{P_{\varepsilon} - f}^n_{A} \cdot \norm{1/P^n_{\varepsilon}}_{W^{1,\infty}} \leq 2M(\varepsilon)  \frac{n\varepsilon^n}{(1-\varepsilon)^{n+1}}, \qquad n=0,1,2,\dots
\end{multline*}
which is decays geometrically if $0<\varepsilon<1/2$. The proof is complete.
\end{proof}

We mention the following remarkable extension of Wieners $1/f$-Theorem, which was originally established by P. Levy.

\begin{thm}[Levy--Wiener] If $f\in A(\T)$ and $F$ is an analytic function in a neighborhood of the range of $f$, then the composition $F(f)$ belongs to $A(\T)$.
\end{thm}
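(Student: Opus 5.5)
The plan is to combine the Cauchy integral formula with Wiener's $1/f$-Theorem (\thref{THM:WIEN1/f}), used in its resolvent form. Let $K=f(\T)\subset \C$, a compact set, and let $F$ be analytic on an open set $\Omega \supset K$. Choose a cycle $\Gamma$ in $\Omega\setminus K$, a finite union of piecewise smooth closed curves, with winding number one about every point of $K$ and zero about every point of $\C\setminus\Omega$. Then for every $\zeta\in\T$
\[
F(f(\zeta)) = \frac{1}{2\pi i}\int_{\Gamma} \frac{F(w)}{w-f(\zeta)}\, dw .
\]
The aim is to interpret the right-hand side as a Bochner, or Riemann, integral of $A(\T)$-valued functions. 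Then $F(f)\in A(\T)$, since $A(\T)$ is a Banach space.

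\textbf{Step 1.} For each $w\in\Gamma$, the function $w-f$ belongs to $A(\T)$ and never vanishes on $\T$, because $w\notin K$. By \thref{THM:WIEN1/f}, $(i)\Rightarrow(iii)$, the resolvent $R(w):=(w-f)^{-1}$ therefore lies in $A(\T)$.

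\textbf{Step 2.} Next I would show that $w\mapsto R(w)$ is continuous from $\Gamma$ into $A(\T)$. This is the standard Neumann series argument in a Banach algebra. For $w_0\in\Gamma$ and $|w-w_0|\,\norm{R(w_0)}_A<1$,
\[
R(w) = R(w_0)\sum_{n\ge 0} \bigl((w_0-w)R(w_0)\bigr)^n ,
\]
and this series converges in $A(\T)$ because $\norm{gh}_A\le \norm{g}_A\norm{h}_A$. It gives $\norm{R(w)-R(w_0)}_A\to 0$ as $w\to w_0$. Consequently $w\mapsto F(w)R(w)$ is a continuous $A(\T)$-valued function on the compact set $\Gamma$, and the integral $G:=\frac{1}{2\pi i}\int_\Gamma F(w)R(w)\,dw$ exists in $A(\T)$ as a limit of Riemann sums.

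\textbf{Step 3.} It remains to identify $G$ with $F\circ f$. Point evaluation at $\zeta\in\T$ is a bounded linear functional on $A(\T)$, since $\norm{\cdot}_{L^\infty}\le\norm{\cdot}_A$. Evaluation therefore commutes with the integral, which gives $G(\zeta)=\frac{1}{2\pi i}\int_\Gamma F(w)(w-f(\zeta))^{-1}dw=F(f(\zeta))$ by the scalar Cauchy formula. Alternatively, one can compare the Fourier coefficients of both sides using the functionals $g\mapsto\widehat g(n)$.

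\textbf{Main obstacle.} The analytic substance lies entirely in Step 1, which is Wiener's theorem and is already available. The point needing the most care is the existence of the cycle $\Gamma$ when $\Omega$ is not simply connected and $K$ need not be connected in a nice way. I would handle this with the standard construction: cover $K$ by a fine grid of closed squares of side less than $\tfrac{1}{2}\dist{K}{\C\setminus\Omega}$, and take as $\Gamma$ the sum of the positively oriented boundary edges of the squares meeting $K$ that are not shared by two such squares. This is the usual proof of the global Cauchy formula for compact sets, and it yields the required winding numbers.
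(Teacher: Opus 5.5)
Your proof is correct, but it takes a different route from the paper. The paper's sketch reruns Newman's argument. You choose a trigonometric polynomial $P_\varepsilon$ with $\norm{f-P_\varepsilon}_A\le\varepsilon$ and write $F(f)=\sum_{n\ge0}\frac{F^{(n)}(P_\varepsilon)}{n!}(f-P_\varepsilon)^n$. Cauchy estimates on discs of radius $r$, comparable to $\dist{f(\T)}{\C\setminus\Omega}$ and centred at the points $P_\varepsilon(\zeta)$, are combined with \thref{LEM:A1W1inf}. This gives $\norm{F^{(n)}(P_\varepsilon)/n!}_A\le C\,(n+1)\,r^{-n-1}$, with $C$ depending on $F$ and $\norm{P_\varepsilon'}_{L^\infty}$, so the series converges geometrically in $A(\T)$ once $\varepsilon<r$, and pointwise to $F(f)$.

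That route never invokes Wiener's theorem; the case $F(w)=1/w$ reproduces exactly Newman's series for $1/f$. It needs no contour and no vector-valued integration, and it is self-contained given the Sobolev embedding lemma.

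Your argument instead takes \thref{THM:WIEN1/f} as its only analytic input and builds the Riesz--Dunford holomorphic functional calculus in $A(\T)$. Wiener identifies the invertibility region of $w-f$ with $w\notin f(\T)$, the Neumann series gives continuity of the resolvent, and the global Cauchy formula over your grid cycle transports the scalar identity into $A(\T)$. What this buys is structure and generality. It shows that the Levy--Wiener theorem is a formal consequence of the $1/f$-theorem, and the same proof works verbatim in any Banach algebra of continuous functions in which nonvanishing elements are invertible. The price is the extra machinery: the cycle construction, which you handle correctly, and Banach-valued Riemann integrals.
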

\begin{proof}[Sketch of proof:]
One can imitate Newman's argument in the proof of Wiener's $1/f$-Theorem, now using the Taylor series expansion
\[
F(f)= F(P_\varepsilon + f-P_\varepsilon)  = \sum_{n=0}^\infty \frac{F^{(n)}(P_\varepsilon)}{n!} \left( f-P_\varepsilon\right)^n,
\]
where the trigonometric polynomial $P_\varepsilon$ are chosen so that $\norm{f-P_\varepsilon}_{A}$ is substantially smaller than the radius of convergence of $F$. We leave the details to the interested reader.
\end{proof}

With Wiener's $1/f$-Theorem at our disposal, we proceed to characterize all maximal ideals in the Wiener algebra.

\begin{proof}[Proof of \thref{THM:MAXIDWIENER}]
Let $\mathcal{M}$ be a maximal ideal, and consider the common zero set of $\mathcal{M}$ as the compact set
\[
E:= \bigcap_{f\in \mathcal{M}} f^{-1}(\{0\}).
\]
Observe that $\mathcal{M}\subseteq \mathcal{I}_E$. We now divide the proof in two different cases:

\proofpart{1}{$E$ is non-trivial:} Assume that the common zero set $E$ of $\mathcal{M}$ is non-trivial. Then $\mathcal{I}_E \subsetneq A(\T)$, hence by maximality of $\mathcal{M}$, we must have $\mathcal{M}= \mathcal{I}_E$. It therefore only remains to show that $E$ is a singleton. Note that for any $\zeta_0 \in E$, we have the inclusions
\[
\mathcal{I}_E \subseteq \mathcal{I}_{\zeta_0} \subsetneq A(\T).
\]
Once we show that $\mathcal{I}_{\zeta_0}$ is a maximal ideal, it readily follows that 
\[
\mathcal{M}= \mathcal{I}_E = \mathcal{I}_{\zeta_0},
\]
hence $E=\{\zeta_0\}$. To this end, assume that $\mathcal{J} \supsetneq \mathcal{I}_{\zeta_0}$ is a closed ideal, and pick a non-trivial $f\in \mathcal{J} \setminus \mathcal{I}_{\zeta_0}$, which necessarily satisfies $f(\zeta_0)\neq 0$. But since $f-f(\zeta_0) \in \mathcal{I}_{\zeta_0}\subset \mathcal{J}$, it follows that 
\[
1 = \frac{f}{f(\zeta_0)} + \frac{f(\zeta_0)-f}{f(\zeta_0)} \in \mathcal{J}.
\] 
Since the trigonometric polynomials are dense in $A(\T)$, we conclude that $\mathcal{J}= A(\T)$, hence $\mathcal{I}_{\zeta_0}$ is a maximal ideal.
\proofpart{2}{Lack of a common zero set:}

We now assume that $E= \emptyset$. This means that for any $\zeta \in \T$, we can find a function $f_\zeta \in A(\T)$ such that $f_\zeta(\zeta)\neq 0$, any by continuity of $f_\zeta$, we can find an open neighborhood $U_\varepsilon\subset \T$ of $\zeta$, such that $f_{\zeta}\neq 0$ on $U_\varepsilon$. Since $\{U_{\zeta}\}_{\zeta \in \T}$ forms an open cover of $\T$, we may invoke compactness in order to exhibit a finite sub-cover $U_{\zeta_1}, \dots, U_{\zeta_n}$. But then using the algebra assumption on $A(\T)$, we get that
\[
F(\zeta) = \sum_{j=1}^n f_{\zeta_j}(\zeta) \conj{f_{\zeta_j}(\zeta)} =\sum_{j=1}^n \abs{f_{\zeta_j}(\zeta)}^2 >0, \qquad \zeta \in \T
\]
and is a member of the maximal ideal $\mathcal{M}$. By Wiener's $1/f$--Theorem, $F$ must be cyclic in $A(\T)$, which at its turn forces $\mathcal{M}=A(\T)$.

\end{proof}

We now prove that the closed ideals in $A(\T)$ are essentially  characterized by zero sets. To clarify this point, we introduce the ideal
\[
\mathcal{J}_E := \text{Clos}\{f\in A(\T): f=0 \, \, \text{in some open nbh of} \, \, E \}, \qquad E \subseteq \T \, \, \text{compact}.
\]

\begin{thm}[Ideals in the Wiener algebra] \thlabel{THM:IDEALSWIENER} For any closed ideal $\mathcal{I}$ in $A(\T)$, there exists a unique compact set $E\subseteq \T$ such that
\[
\mathcal{J}_E \subseteq \mathcal{I} \subseteq \mathcal{I}_E.
\]
\end{thm}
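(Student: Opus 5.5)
Take $E$ to be the common zero set of $\mathcal{I}$,
\[
E := \bigcap_{f\in\mathcal{I}} f^{-1}(\{0\}),
\]
which is compact as an intersection of closed sets. The inclusion $\mathcal{I}\subseteq\mathcal{I}_E$ is then immediate from the definition. Uniqueness will follow once the two inclusions are established. If $E'$ is another compact set with $\mathcal{J}_{E'}\subseteq\mathcal{I}\subseteq\mathcal{I}_{E'}$, then every $f\in\mathcal{I}$ vanishes on $E'$, so $E'\subseteq E$. Conversely, if $\zeta_0\in E\setminus E'$, pick a smooth bump $g$ (a $C^\infty$ function lies in $A(\T)$ by \thref{LEM:A1W1inf}) vanishing near $E'$ with $g(\zeta_0)=1$. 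Then $g\in\mathcal{J}_{E'}\subseteq\mathcal{I}$, which contradicts $\zeta_0\in E$. The substantial part is therefore the inclusion $\mathcal{J}_E\subseteq\mathcal{I}$. Since $\mathcal{I}$ is closed, it suffices to show that every $f\in A(\T)$ vanishing on an open neighbourhood $U$ of $E$ lies in $\mathcal{I}$.

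To prove this I will construct an element of $\mathcal{I}$ that is nonvanishing on $K:=\T\setminus U$, mimicking Step 2 of the proof of \thref{THM:MAXIDWIENER}. For each $\zeta\in K$ there is $f_\zeta\in\mathcal{I}$ with $f_\zeta(\zeta)\neq0$, since $\zeta\notin E$, and $f_\zeta$ stays nonzero on an open arc around $\zeta$. By compactness of $K$ finitely many such arcs cover $K$. Since $A(\T)$ is closed under conjugation ($\widehat{\bar g}(n)=\overline{\widehat g(-n)}$) and $\mathcal{I}$ is an ideal, the function
\[
F:=\sum_{j}|f_{\zeta_j}|^2=\sum_j f_{\zeta_j}\overline{f_{\zeta_j}}
\]
belongs to $\mathcal{I}$ and satisfies $F>0$ on $K$.

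Next I need a local version of Wiener's $1/f$-theorem: there is $h\in A(\T)$ with $h=1/F$ on a neighbourhood of $\operatorname{supp} f$. Then $f=f\cdot hF\in\mathcal{I}$, using that $\operatorname{supp} f\subseteq K$ and that $f=0$ wherever $hF\neq1$. To build $h$, choose an open set $V$ with $\operatorname{supp}f\subseteq V$ and $\overline V$ contained in the open set where $F>0$. Let $\chi\in C^\infty(\T)$ satisfy $\chi=1$ on $V$ and $0\le\chi\le1$, with support where $F>0$. Set
\[
G:=\chi F+(1-\chi)\cdot c,
\]
where $c>0$ is a constant. On the support of $\chi$ we have $F>0$, so $G$ is a convex combination of positive numbers, and elsewhere $G=c$. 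Hence $G>0$ everywhere, and in fact $G\ge\min(c,\min_{\operatorname{supp}\chi}F)>0$. Moreover $G\in A(\T)$ because $A(\T)$ is an algebra containing smooth functions. By \thref{THM:WIEN1/f}, $h:=1/G\in A(\T)$, and $G=F$ on $V$, which gives what was needed.

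The main obstacle is this localization step. One has to choose the nested neighbourhoods so that $F>0$ on the support of the cutoff, and confirm that smooth cutoffs and conjugates lie in $A(\T)$, which \thref{LEM:A1W1inf} handles. Once the local inverse exists, the rest is bookkeeping with the ideal property and closedness of $\mathcal{I}$.
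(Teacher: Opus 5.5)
Your proof is correct and follows essentially the same route as the paper: you take $E$ to be the common zero set, build $F=\sum_j |f_{\zeta_j}|^2\in\mathcal{I}$ positive on a compact set containing $\operatorname{supp} f$, and use Wiener's $1/f$-theorem to invert a modification of $F$ that agrees with it near $\operatorname{supp} f$. Your version is slightly more careful than the paper's: the convex-combination cutoff $G=\chi F+(1-\chi)c$ with $\operatorname{supp}\chi\subseteq\{F>0\}$ makes $G>0$ transparent, whereas the paper's $G=F+(1-\chi)$ tacitly needs $0\le\chi\le1$ and $\chi=1$ only where $F>0$. You also prove the uniqueness of $E$, which the paper asserts but does not prove.
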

A surprising and deep result of P. Malliavin is that there are sets $E$ for which $\mathcal{J}_E \neq \mathcal{I}_E$. On the other hand, a complementary result of W. Rudin says that any closed ideal $\mathcal{I}$ in $A(\T)$ gives rise to a unique common zero set:
\[
E(\mathcal{I})=\bigcap_{f \in \mathcal{I}} f^{-1}(\{0\}).
\]
In other words, the map $\mathcal{I} \mapsto E(\mathcal{I})$ is injective. For further details, we refer the reader to \cite[Ch. 7]{RudinFOURIERANALYSIS}.

\begin{proof}[Proof of \thref{THM:IDEALSWIENER}] 
As before, we denote the common zero set of $\mathcal{I}$ by 
\[
E:= \bigcap_{f\in \mathcal{I}} f^{-1}(\{0\}).
\]
The inclusion $\mathcal{I}\subseteq \mathcal{I}_E$ is obvious, hence we only need to show that $\mathcal{J}_E \subseteq \mathcal{I}$. To this end, it suffices to show that any arbitrary $f\in A(\T)$ which vanish in some open neighborhood of $E$, belongs to $\mathcal{I}$. 

Set $S:=\operatorname{supp}(f)$ and note that $S \subset \T \setminus E$. By assumption on $E$, we may for any $\zeta \in S$, find $f_\zeta \in \mathcal I$ such that $f_\zeta(\zeta)\neq 0$. By continuity, each $f_\zeta$ is non-vanishing on some neighborhood $U_\zeta$ of $\zeta$. Since the open sets $\{U_\zeta\}_{\zeta \in S}$ cover $S$, compactness allows us to extract a finitely subcover of $S$
\[
S \subseteq \bigcup_{j=1}^n U_{\zeta_j}.
\]
Set $f_j:=f_{\zeta_j} \in \mathcal I$, and define the function
\[
F:=\sum_{j=1}^n |f_j|^2 \in \mathcal I.
\]
By construction we have that $F>0$ on $S$. Pick a smooth positive function $\chi$ such that $\chi=1$ in a neighborhood of $S$, and set
\[
G:=F+(1-\chi).
\]
It is evident that $G \in A(\T)$, has no zeros on $\T$, and hence by Wiener's $1/f$-Theorem, it follows that $1/G \in A(\T)$. Furthermore, since $\chi=1$ on $S=\operatorname{supp}(f)$, we have $G=F$ on $S$, and thus 
\[
f=\frac{\chi f}{G}\,F, \qquad \T.
\]
Now the algebra assumption implies that $\frac{\chi f}{G} \in A(\T)$, and since $F \in \mathcal M$, we conclude that 
\[
f= F\frac{\chi f}{G} \in \mathcal M.
\]

\end{proof}

We conclude this section by presenting an alternative perspective on Wiener’s $1/f$-Theorem, viewing it as a Tauberian theorem. This interpretation is not immediately apparent from the classical formulation. Broadly speaking, Tauberian theorems aim to identify additional conditions under which a weaker mode of convergence can be strengthened to a stronger one.
Historically, A. Tauber showed that if a sequence of complex numbers ${c_n}$ converges to $0$ in the Cesàro sense and satisfies the Tauberian condition $c_n = o(1/n)$, then in fact $c_n \to 0$. Subsequently, J. E. Littlewood proved that this condition can be weakened to the corresponding big-O estimate.
A general form of Wiener’s Tauberian theorem, formulated in the setting of sequences, can be stated as follows.

\begin{thm}[Wiener Taburian Theorem] \thlabel{THM:WTTHM}Let $f \in A(\T)$ with $f\neq 0$ on $\T$, and assume that $(b_n)_n$ is a bounded sequence of complex numbers, for which the following limit exists:
\[
\lim_{n\to \infty} \sum_{k} \widehat{f}(n-k) b_k = L \sum_k \widehat{f}(k).
\]
Then the limit 
\[
\lim_{n \to \infty} \sum_{k} a_{n-k} b_k = L \sum_k a_k,
\]
exists for all complex-valued sequences $\{a_n\}_n \in \ell^1$.
\end{thm}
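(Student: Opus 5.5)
We study the set $\mathcal{W}$ of all $a=\{a_n\}\in\ell^1$ for which the conclusion holds, that is, for which
\[
\lim_{n\to\infty}\sum_k a_{n-k}b_k = L\sum_k a_k .
\]
The plan is to show that $\mathcal{W}$ is a closed, translation invariant subspace of $\ell^1$ containing $\widehat{f}$. Under the identification $\ell^1\cong A(\T)$, this makes $\mathcal{W}$ (transported to $A(\T)$) a closed ideal containing $f$. By Wiener's $1/f$-Theorem (\thref{THM:WIEN1/f}, direction (i)$\Rightarrow$(ii)), $f$ is cyclic, so $\mathcal{W}$ is all of $\ell^1$.

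\emph{Linearity.} Both sides of the defining identity are linear in $a$, so $\mathcal{W}$ is a linear subspace of $\ell^1$.

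\emph{Translation invariance.} Let $\tau_m a := \{a_{n-m}\}_n$. Then
\[
\sum_k (\tau_m a)_{n-k}b_k=\sum_k a_{(n-m)-k}b_k \longrightarrow L\sum_k a_k = L\sum_k(\tau_m a)_k
\]
as $n\to\infty$, because $n-m\to\infty$. On the Fourier side $\tau_m$ is multiplication by $\zeta^m$, so $\mathcal{W}$ corresponds to a subspace of $A(\T)$ that is invariant under both $M_\zeta$ and $M_{\conj\zeta}$. It contains $\widehat{f}$ by hypothesis, and hence contains the linear span of $\{\zeta^n f\}_{n\in\mathbb{Z}}$.

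\emph{Closedness.} This is the only step that uses boundedness of $(b_n)$. Suppose $a^{(j)}\in\mathcal{W}$ and $a^{(j)}\to a$ in $\ell^1$. Set $\Lambda_n(a):=\sum_k a_{n-k}b_k$. Then
\[
|\Lambda_n(a)-\Lambda_n(a^{(j)})|\le \|b\|_\infty\|a-a^{(j)}\|_{\ell^1}
\]
uniformly in $n$, and
\[
\Big|L\sum_k a_k - L\sum_k a^{(j)}_k\Big| \le |L|\,\|a-a^{(j)}\|_{\ell^1}.
\]
A standard $3\varepsilon$-argument gives
\[
\limsup_{n\to\infty}\Big|\Lambda_n(a)-L\sum_k a_k\Big|\le (\|b\|_\infty+|L|)\,\|a-a^{(j)}\|_{\ell^1}
\]
for every $j$. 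Letting $j\to\infty$ shows $a\in\mathcal{W}$.

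\emph{Conclusion.} $\mathcal{W}$ contains the closed linear span of the translates of $\widehat f$, which is all of $\ell^1$ by cyclicity of $f$. The convergence statement is thus automatic for every $a\in\ell^1$.

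I do not expect a serious obstacle, since the deep input is already packaged in \thref{THM:WIEN1/f}. The point that needs care is translating "cyclic in $A(\T)$" into density in $\ell^1$ of the span of the shifted sequences $\{\widehat f(\cdot-m)\}_m$. The identification is isometric and turns multiplication by $\zeta^m$ into the shift $\tau_m$, so this is immediate once stated explicitly. A minor check is that the hypothesis is correctly read with $a=\widehat f$ and $\sum_k\widehat f(k)=f(1)$. That value may vanish, but this is harmless, since the argument never divides by it.
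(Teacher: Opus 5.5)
Your proposal is correct and is essentially the paper's proof. The set of admissible sequences is a closed, translation-invariant subspace (a closed ideal) of $\ell^1\cong A(\T)$ containing $\widehat f$, so Wiener's $1/f$-Theorem forces it to be everything; you additionally write out the closedness estimate via $\norm{b}_{\infty}$, which the paper only asserts.
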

In other words, the Wiener Tauberian Theorem asserts that if a bounded sequence converges in certain mean with respect to a cyclic element in $A(\T)$, then it can be upgraded to converge in any such mean. For instance, one can take $f(\zeta) = (1-r\zeta)^{-1}$ with $0<r<1$,

\begin{proof}[Proof of \thref{THM:WTTHM}] 
Let $\mathcal{I}$ denote the set of all $g \in A(\T)$ with the property that the limit exists
\[
\lim_{n \to \infty} \sum_{k} \widehat{g}(n-k) b_k = L \sum_k \widehat{g}(k).
\]
Note that $\mathcal{I}$ is a linear space, invariant under translation. Furthermore, it also closed in $A(\T)$, hence it is a closed ideal, which by assumptions contains a function $f\in A(\T)$ with $f\neq 0$ on $\T$. According to Wiener's Theorem we conclude that $\mathcal{I}$ contains a cyclic element $f$, but is easily seen to imply that $\mathcal{I}=A(\T)$.
\end{proof}

\subsection{Lacunary Fourier series and Riesz products}\label{SEC:LACRIESZ}
General Fourier series can behave pretty badly and are somewhat too broad of a class to study in some meaningful way. Here, we restriction our attention to special class of Fourier series with very sparse support. We shall illustrate they exhibit more clear-cut dichotomies in regards to various properties.

Let $\Lambda \subset \mathbb{Z}$ is a subset of lacunary integers, that is, integers with are exponentially sparse:
\[
\kappa(\Lambda) := \inf \left\{ \frac{|n|}{|m|}: m,n \in \Lambda, |n|>|m| \right\}>1.
\]
We now consider an associated lacunary trigonometric series of the form
\[
F(\zeta) := \sum_{n \in \Lambda} \widehat{F}(n) \zeta^{n}, \qquad \zeta \in \T.
\]
In other words, Lacunary series are trigonometric series with exponentially sparse Fourier non-zero Fourier coefficients. A concrete example of an exponentially sparse sequence is $\{\pm 3^j\}_j$, which gives rise to a lacunary trigonometric series (sometimes also called Hadamard gap series). 

A key feature of lacunary series is that they exhibit a local-to-global phenomenon. This can be viewed as a manifestation of the uncertainty principle:
\begin{princ}
If the Fourier spectrum is highly compressed (in the sense of large lacunary spectral gaps), then the function's spatial properties, whether good (regularity) or bad (singularity), cannot remain localized. In fact, they must spread out maximally along the unit circle.
\end{princ}
The following classical results clarifies one aspect of this point.

\begin{thm}[Ostrowski--Hadamard] Let $\{N_j\}_j$ be lacunary sequence of positive integers and let $F$ be an analytic function in the unit-disc $\D$ defined by
\[
F(z) := \sum_{j} a_j z^{N_j}, \qquad z\in \D.
\]
If $F$ extends analytically across a single arc in $\T$, then it extends analytically across $\T$.

\end{thm}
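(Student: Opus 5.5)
The plan is to follow the classical Ostrowski over-convergence route, arguing via Hadamard's polynomial substitution trick. First, two reductions. If the radius of convergence of $F$ exceeds $1$ there is nothing to prove. If it is less than $1$, $F$ is not even analytic in all of $\D$, contrary to the hypothesis. So we may assume the radius is exactly $1$.

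Next we place the good arc. By rotating $z\mapsto e^{i\theta}z$ we may assume $F$ extends analytically across an arc centered at $z=1$, i.e. to a neighbourhood of $\overline{\D}\cap\{|z-1|<\delta\}$ for some $\delta>0$. Replacing $\{N_j\}$ by a subsequence is not allowed, but we may pick an integer $p\ge 1$ such that $N_{j+1}/N_j \geq (p+1)/p$ for all large $j$. This is where lacunarity, $\kappa>1$, enters. Finitely many exceptional terms form a polynomial and can be discarded.

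The key construction is Hadamard's substitution
\[
z=\varphi(w):=\frac{w^p+w^{p+1}}{2}.
\]
This map satisfies $\varphi(\overline{\D})\subset\overline{\D}$ and $|\varphi(w)|<1$ for $|w|\le1$, $w\neq1$, with $\varphi(1)=1$. By continuity and compactness, $\varphi$ maps a disc $|w|<1+\eta$ into $\D\cup\{|z-1|<\delta\}$, where the extension of $F$ lives. Hence $G:=F\circ\varphi$ is analytic in $|w|<1+\eta$, and its Taylor series converges there.

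The main step is to show that expanding $G(w)=\sum_j a_j\varphi(w)^{N_j}$ gives no overlap between consecutive blocks. The term $\varphi^{N_j}$ is a polynomial with nonnegative coefficients, supported exactly on the degrees $pN_j,\dots,(p+1)N_j$. The gap condition gives $(p+1)N_j<pN_{j+1}$, so the blocks are disjoint. Therefore the Taylor series of $G$ is just the block-rearrangement of $\sum_j a_j \varphi^{N_j}$. Convergence of this series in $|w|<1+\eta$ then yields convergence of the block partial sums $\sum_{j\le J} a_j\varphi(w)^{N_j}$, uniformly on compacts of $|w|<1+\eta$.

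This is the over-convergence step. Its consequence is that the partial sums $s_J(z)=\sum_{j\le J}a_jz^{N_j}$ converge uniformly on the image $\varphi(\{|w|<1+\eta'\})$. This image is an open set containing $z=1$, hence a small disc around $1$.

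It remains to conclude that the radius of convergence exceeds $1$, which I expect to be the main obstacle. Convergence of the $s_J$ at a real point $r>1$ near $1$ is not directly enough; a lower bound on radius from convergence at one point only gives $|a_j| r^{N_j}\to0$ along the series. However, convergence at $z=r>1$ of the series $\sum_j a_j r^{N_j}$ forces $|a_j|r^{N_j}$ to be bounded. Then $\limsup|a_j|^{1/N_j}\le 1/r<1$, so by Cauchy--Hadamard the radius of convergence is at least $r>1$. Thus $F$ extends analytically to $|z|<r$, in particular across all of $\T$. The one technical point to check carefully is that $\varphi$ maps a neighbourhood of $\overline{\D}$ into the region of analyticity. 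Near $w=1$ this follows from $\varphi'(1)=(2p+1)/2\neq0$ together with continuity.
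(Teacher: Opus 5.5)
Your proposal is correct and is essentially the paper's proof: the same substitution $\varphi(w)=\tfrac12(w^p+w^{p+1})$, the same disjoint exponent blocks $[pN_j,(p+1)N_j]$ giving over-convergence of the block partial sums, and a final radius-of-convergence step. Your Cauchy--Hadamard finish settles the step you flagged as the main obstacle, and it is a more explicit version of the paper's one-line conclusion. The only slip is that you should choose $p$ with $(p+1)/p<\inf_j N_{j+1}/N_j$, since your non-strict choice does not give the strict inequality $(p+1)N_j<pN_{j+1}$ you then use for disjointness of the blocks.
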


\begin{thm}[Hadamard--Ostrowski] \thlabel{THM:HADAOSTROW}
Let $(N_j)$ be a lacunary sequence and
\[
F(z)=\sum_{j=1}^\infty a_j z^{N_j}
\]
analytic in $\D$. If $F$ extends analytically across a point of $\T$, then it extends analytically across all of $\T$.
\end{thm}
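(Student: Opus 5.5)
By a rotation we may assume that $F$ extends analytically across the point $1\in\T$. Indeed, if $F$ extends across $\zeta_0$, then $z\mapsto F(\zeta_0 z)=\sum_j a_j\zeta_0^{N_j} z^{N_j}$ is again a lacunary series with the same exponents, and it extends across $1$. So $F$ is analytic on $\D\cup B(1,\eta)$ for some $\eta>0$. Since $\kappa>1$, fix an integer $p\geq 1$ with $N_{j+1}/N_j> 1+1/p$ for all $j$ (discarding finitely many terms if necessary, which does not affect the conclusion). Then $(p+1)N_j<pN_{j+1}$ for all $j$.

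The plan is the classical Mordell--Pólya substitution. Set
\[
\varphi(w):=\frac{w^p+w^{p+1}}{2},
\]
and consider $G(w):=F(\varphi(w))$.

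First I check that $|\varphi(w)|<1$ for $|w|\leq 1$, $w\neq 1$, and that $\varphi(1)=1$. This holds because $|\varphi(w)|\leq |w|^p\,|1+w|/2$, and $|1+w|<2$ unless $w=1$. By continuity, $\varphi$ maps a small disc around $w=1$ into $B(1,\eta)$. A compactness argument on $\{|w|\leq 1\}$ minus a small neighbourhood of $1$ shows that $\varphi$ maps that set into $\{|z|\leq 1-c\}$ for some $c>0$, hence maps a slightly larger annular region into $\D$. Together these give $\delta>0$ such that $\varphi$ maps $\{|w|<1+\delta\}$ into $\D\cup B(1,\eta)$. Consequently $G$ is holomorphic on $|w|<1+\delta$, and its Taylor series at $0$ converges absolutely there. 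I expect this geometric step to be the main (if modest) obstacle: one must arrange the neighbourhood of $w=1$ and the compactness argument so that they fit together into a single disc.

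Next, I compute the Taylor coefficients of $G$. Expanding, $a_j\varphi(w)^{N_j}=a_j2^{-N_j}\sum_{k}\binom{N_j}{k}w^{pN_j+k}$, and these exponents lie in $[pN_j,(p+1)N_j]$. By the choice of $p$, these blocks are pairwise disjoint for distinct $j$. Hence the coefficient of each $w^m$ in $G$ is a single term $a_j\cdot(\text{positive number})$, with no cancellation between different $j$.

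Fix $1<r<1+\delta$. Absolute convergence of the Taylor series of $G$ at $r$, together with the positivity of the binomial coefficients, gives
\[
\sum_j |a_j|\,\varphi(r)^{N_j}<\infty .
\]
Since $\rho:=\varphi(r)=(r^p+r^{p+1})/2>1$, it follows that $\sum_j a_jz^{N_j}$ converges absolutely and uniformly on $|z|\leq\rho$. Therefore $F$ extends analytically to the disc $|z|<\rho$, which contains $\T$. This proves the theorem, and also the preceding Ostrowski--Hadamard statement.
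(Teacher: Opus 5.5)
Your proof is correct and follows the same route as the paper: the substitution $\varphi(w)=\tfrac12(w^p+w^{p+1})$, analyticity of $G=F\circ\varphi$ on a disc $|w|<1+\delta$, and the disjointness of the binomial blocks $[pN_j,(p+1)N_j]$. Your compactness argument for the geometric step and your conclusion via $\sum_j|a_j|\varphi(r)^{N_j}<\infty$ with $\varphi(r)>1$ spell out two points that the paper treats only briefly.
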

The proof presented below is a compressed version of Rudin's proof in \cite[Theorem 16.6]{RudinReal&complex}.

\begin{proof}
Without loss of generality, we may assume that $F$ extends analytically across a neighborhood of $\zeta=1$. Since $(N_j)$ is lacunary, we may choose an integer $\lambda\geq 1$ such that
\[
N_{j+1}>\Bigl(1+\frac{1}{\lambda}\Bigr)N_j,
\qquad j=1,2,3,\dots.
\]
Define the analytic polynomial
\[
\varphi(w):=\tfrac12\bigl(w^\lambda+w^{\lambda+1}\bigr), \qquad w \in \C,
\]
and set
\[
G(z):=F(\varphi(z)).
\]

We claim that $G$ extends analytically across the unit circle. To see this, we note that
\[
|\varphi(w)|<1 \quad \text{for } w \in \D, 
\qquad \varphi(1)=1,
\]
and
\[
\varphi'(1)=\tfrac12(2\lambda+1)\neq 0,
\]
it follows that $\varphi$ is conformal near $w=1$, and hence maps a neighborhood of $1$ onto a neighborhood of $1$. Since $F$ extends analytically across $1$, the composition $G=F\circ\varphi$ is therefore analytic in a neighborhood of the closed unit disc. In particular, there exists $\varepsilon>0$ such that
\[
G(z)=\sum_{m=0}^\infty b_m z^m, \qquad |z|<1+\varepsilon.
\]
On the other hand, we also have by definition that
\[
G(z)=\sum_j a_j \varphi(z)^{N_j}, \qquad z\in \D.
\]
For each $j$, a binomial expansion of the polynomial $\varphi(z)^{N_j}$ shows that it only contains monomial terms with exponents in
\[
\lambda N_j\leq m\leq (\lambda+1)N_j.
\]
Invoking the lacunary assumption $(\lambda+1)N_j<\lambda N_{j+1}$, we get that these intervals are pairwise disjoint, hence the following holds:
\[
\sum_{0\leq k\leq j} a_k \varphi(z)^{N_k}
=
\sum_{0\leq m\leq (\lambda+1)N_j} b_m z^m, \qquad j=1,2,3, \dots
\]
As the right-hand side converges uniformly for $|z|<1+\varepsilon$, so does the partial sums of the lacunary series. We conclude that $F$ extends analytically to some disc $|z|>1$.
\end{proof}

Moving forward, we now consider the behavior of lacunary trigonometric series. For simplicity, we shall solely restrict our attention to lacunary sequences $\Lambda$ with $\kappa(\Lambda)\geq 3$, but essentially all results presented here holds in general for $\kappa(\Lambda)\geq 1$, but are more technical. 



We shall now further illustrate that lacunary series are completely determined by their local behavior, where the notion of local depends on the rate of decay of the coefficients.

\begin{thm}[Zygmund] \thlabel{THM:ZYGL2}Let $\Lambda \subset \mathbb{Z}$ be a lacunary set of integers, and $f\in L^1(\T)$ with Fourier spectrum in $\Lambda$. Then there exists a constant $C(\Lambda)>0$ such that 
\[
\left( \sum_{n\in \Lambda} \abs{\widehat{f}(n)}^2 \right)^{1/2} \leq C(\Lambda) \int_{\T} |f| dm.
\]
Furthermore, for any Lebesgue measurable subset $E\subset \T$ of positive Lebesgue measure, there exists a constant $C(\Lambda, E)>0$, and a number $N(E)>0$ such that
\begin{equation}\label{EQ:LACL2Est}
 \sum_{\substack{n\in \Lambda \\ |n|>N(E)}} \abs{\widehat{f}(n)}^2  \leq C(\Lambda, E) \int_E \abs{f}^2 dm.
\end{equation}
\end{thm}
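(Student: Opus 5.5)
Both parts rest on one mechanism. For lacunary spectrum, products of frequencies almost never collide, so for suitable auxiliary functions the cross terms in $|f|^2$ or in $f\cdot(\text{Riesz product})$ are harmless. We restrict to $\kappa(\Lambda)\geq 3$, as announced, and assume without loss of generality that $\Lambda\subset\mathbb{N}$. A two-sided $\Lambda$ can be split as $\Lambda_+\cup(-\Lambda_+)$ and handled with $\zeta^{-n}$ on the negative part; when $\kappa\ge 3$, sums $\pm n_{j_1}\pm\cdots$ still represent integers essentially uniquely.

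\textbf{First inequality, via Riesz products.} Let $f\in L^1$ have spectrum in $\Lambda$, and let $c_n$, $n\in\Lambda$, be finitely many coefficients with $\sum|c_n|^2=1$. Put
\[
R(\zeta)=\prod_{n\in\Lambda}\bigl(1+\Re(c_n\zeta^{n})\bigr)\ \text{ (finite product).}
\]
Each factor is nonnegative because $|c_n|\le1$, so $R\ge0$ and $\int R\,dm=1$. Since $\kappa\geq3$, every frequency of $R$ has a unique representation $\sum\varepsilon_j n_j$ with $\varepsilon_j\in\{0,\pm1\}$, and the only frequencies of $R$ lying in $\Lambda$ are the single terms $\pm n$, with coefficients $\tfrac12 c_n$ and $\tfrac12\bar c_n$. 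This frequency bookkeeping is where lacunarity enters, and it is the main obstacle.

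Nonnegativity alone gives $\|R\|_\infty$ no bound. So I would not pair $f$ with $R$ directly; instead I would use the Riesz product $\prod(1+i\,\Re(c_n\zeta^n)/?)$ with purely imaginary exponent. Concretely, take
\[
Q=\prod_n\bigl(1+i\,\delta\,\Re(\bar c_n\zeta^{-n})\bigr),
\qquad
|Q|^2\le\prod_n\bigl(1+\delta^2|c_n|^2\bigr)\le e^{\delta^2}.
\]
Its $\Lambda$-frequencies (after conjugation) are $\tfrac{i\delta}{2}\bar c_n$. Then
\[
\tfrac{\delta}{2}\Bigl|\sum_n \widehat f(n)\bar c_n\Bigr| = \Bigl|\int fQ\,dm\Bigr|\le e^{\delta^2/2}\|f\|_{L^1},
\]
and taking the supremum over unit vectors $(c_n)$ gives $\|\widehat f\|_{\ell^2}\le C\|f\|_1$.

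\textbf{Second inequality.} First, on all of $\T$, Parseval gives $\sum|\widehat f(n)|^2=\int|f|^2\,dm$ for $f\in L^2$; such $f$ are indeed in $L^2$, by the first part. For a set $E$, write
\[
\int_E|f|^2\,dm=\sum_{n,m}\widehat f(n)\overline{\widehat f(m)}\,\widehat{1_E}(m-n)=m(E)\sum|\widehat f(n)|^2+\sum_{n\neq m}\widehat f(n)\overline{\widehat f(m)}\,\widehat{1_E}(m-n).
\]
For $n\ne m$ in $\Lambda$ with $\max(|n|,|m|)>N$ we have $|m-n|\ge\tfrac23\max(|n|,|m|)$, and each difference occurs at most boundedly often, by lacunarity again. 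I would bound the off-diagonal part by Schur's test, $\sup_n\sum_{m}|\widehat{1_E}(m-n)|$, using only frequencies $>N$. The Riemann--Lebesgue lemma gives $\widehat{1_E}(k)\to0$, but summability of $\widehat{1_E}$ along the set of differences needs more. So I would replace $1_E$ by a function: use the $L^2$ bound $\sum_k|\widehat{1_E}(k)|^2<\infty$ together with the bounded-multiplicity structure of $\{m-n\}$, which makes the off-diagonal form Hilbert--Schmidt with small norm once $N$ is large.

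In detail, let $T_N$ be the off-diagonal form restricted to $\Lambda_{>N}$. Then
\[
\|T_N\|_{HS}^2=\sum_{n\ne m>N}|\widehat{1_E}(m-n)|^2\le C(\Lambda)\sum_{|k|>cN}|\widehat{1_E}(k)|^2\to0.
\]
Choosing $N(E)$ so that this is at most $m(E)^2/4$ gives
\[
\int_E|f|^2\,dm\ge\tfrac{m(E)}{2}\sum_{n>N}|\widehat f(n)|^2,
\]
after handling the finitely many low frequencies: I would subtract the low-frequency trigonometric polynomial, or simply apply the argument to $f-S_N f$ and absorb the resulting cross term. This absorption is the delicate point, and I would handle it by proving the inequality first for $f$ with spectrum in $\Lambda_{>N}$, which is how the statement is naturally read.
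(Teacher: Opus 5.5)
Your proposal follows the paper's proof in both parts. For the first estimate you pair a purely imaginary Riesz product with $\|Q\|_\infty\le e^{\delta^2/2}$ against $f$ (your supremum over unit vectors $(c_n)$ is the duality form of the paper's choice $c_n\propto\widehat f(n)$). For the second you expand $\int_E|f|^2\,dm$ and control the off-diagonal part by Cauchy--Schwarz (your Hilbert--Schmidt norm), using the bounded multiplicity of differences and the tail of $\widehat{1_E}$; as in the paper, this is done only for $f$ with spectrum in $\Lambda\cap\{|n|>N\}$.

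The one loose end is your reduction to $\Lambda\subset\mathbb{N}$, and it is unnecessary. Take one factor $1+i\delta\,\Re(\bar c_m\zeta^{-m})$ for each $m\in\{|n|:n\in\Lambda\}$. This gives $\int fQ\,dm=\tfrac{i\delta}{2}\sum_m\bigl(\bar c_m\widehat f(m)+c_m\widehat f(-m)\bigr)$. Testing real and purely imaginary vectors $(c_m)$ then recovers $\sum_{n\in\Lambda}|\widehat f(n)|^2$ up to an absolute constant, which plays the role of the paper's splitting of $f$ into $\Re f$ and $\Im f$.
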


We now record the following important corollary.

\begin{cor} Let $\Lambda \subset \mathbb{Z}$ be lacunary integers and consider the lacunary series
\[
f(\zeta)= \sum_{n\in \Lambda} \widehat{f}(n)\zeta^n, \qquad \zeta \in \T.
\]
The the following statements hold:
\begin{enumerate}
    \item[(i)] If the series of $f$ converges pointwise on a set of positive Lebesgue measure, then $f\in L^2(\T)$.
    \item[(ii)] If $f\in L^2(\T)$ vanishes on a set of positive Lebesgue measure, then it vanishes identically in $\T$.
\end{enumerate}

\end{cor}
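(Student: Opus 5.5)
Both parts reduce to \thref{THM:ZYGL2}: the global $L^1$--$L^2$ inequality and, more importantly, the local estimate \eqref{EQ:LACL2Est}.

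\emph{Part (ii).} Let $E\subset\T$ have positive measure and suppose $f=0$ $dm$-a.e.\ on $E$. Then \eqref{EQ:LACL2Est} gives $\widehat{f}(n)=0$ for all $n\in\Lambda$ with $|n|>N(E)$, so $f$ is a trigonometric polynomial. A nonzero trigonometric polynomial $\zeta^{-M}Q(\zeta)$, with $Q$ an analytic polynomial, has only finitely many zeros on $\T$. Since $E$ is uncountable, $f\equiv 0$.

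\emph{Part (i).} Let $E_0$ be a set of positive measure on which the partial sums $S_N(\zeta)=\sum_{n\in\Lambda,|n|\le N}\widehat{f}(n)\zeta^n$ converge.
\begin{enumerate}
\item By the Cantor--Lebesgue lemma, $\widehat{f}(n)\to 0$. Convergence also gives $\sup_N|S_N(\zeta)|<\infty$ on $E_0$.
\item By Egorov's theorem and the countable union $E_0=\bigcup_M\{\zeta\in E_0:\sup_N|S_N|\le M\}$, there is a measurable $E\subset E_0$ with $m(E)>0$ and a constant $M$ such that $|S_N|\le M$ on $E$ for every $N$.
\item Each $S_N$ is a trigonometric polynomial with spectrum in $\Lambda$, hence lies in $L^1$. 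Applying \eqref{EQ:LACL2Est} to $S_N$ gives
\[
\sum_{n\in\Lambda,\;N(E)<|n|\le N}|\widehat{f}(n)|^2\le C(\Lambda,E)\int_E|S_N|^2\,dm\le C(\Lambda,E)M^2,
\]
uniformly in $N$. Here it matters that $N(E)$ and $C(\Lambda,E)$ depend only on $E$ and $\Lambda$, not on the function.
\item Letting $N\to\infty$, the tail $\sum_{n\in\Lambda,|n|>N(E)}|\widehat{f}(n)|^2$ is finite. The finitely many remaining coefficients are bounded since $\widehat{f}(n)\to 0$, so $(\widehat{f}(n))\in\ell^2$.
\item By Riesz--Fischer, $\sum_{n\in\Lambda}\widehat{f}(n)\zeta^n$ is then the Fourier series of an $L^2(\T)$ function. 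Lacunary $L^2$ series converge a.e.\ (by Carleson's theorem, or more elementarily from the lacunary structure), and their sum agrees with the pointwise limit on $E_0$.
\end{enumerate}

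The main obstacle is interpretive: in (i), $f$ is a priori only a formal series, so \thref{THM:ZYGL2} cannot be applied to $f$ itself. This is why the argument works through the partial sums $S_N$ and needs the local estimate to be uniform in the function. Obtaining pointwise boundedness on a fixed set of positive measure (step 2) is the delicate measure-theoretic point, and Egorov's theorem handles it.
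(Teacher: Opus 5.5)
Your proof is correct and follows the paper's route. You obtain uniform control of the partial sums on a positive-measure subset (the paper uses Egorov to get uniform convergence, while your countable-union argument already gives the boundedness you need), then apply the local estimate \eqref{EQ:LACL2Est} with constants independent of $N$, and read (ii) off the same estimate. You are more explicit than the paper's ``immediate'' in two places: you apply the estimate to $S_N$ rather than to the formal series, and you finish (ii) by noting that the surviving trigonometric polynomial vanishes at infinitely many points. The Cantor--Lebesgue step is harmless but unnecessary, since finitely many coefficients trivially have finite square sum.
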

\begin{proof}
The proof of $(ii)$ is immediate from the local estimate in \eqref{EQ:LACL2Est}. To see $(i)$, assume $f\notin L^2(\T)$ but yet its Fourier partials sums $(S_n)$ converge pointwise on a set $E$ of positive Lebesgue measure in $\T$. By means of slightly sacrificing the size of $E$, we may upon invoking Egoroff's Theorem assume that the $(S_n)$ converge uniformly on $E$. But this is in violating with \eqref{EQ:LACL2Est}.
\end{proof}

Note that in view of Carleson's Theorem on convergence of Fourier series, the converse of $(i)$ is also true. However, one can give a much simpler proof for lacunary Fourier series, using Littlewood--Paley Theory. We shall not do it here, and refer the reader to Zygmund in \cite{zygmundtrigseries}.

Turning our attention to proving Zygmund's \thref{THM:ZYGL2}, we shall need to introduce another class of objects, which are somewhat the multiplicative counter part of lacunary series, and exhibit at least equally intriguing properties. Let $\{N_j\}_j$ be a lacunary sequence of positive integers with
\[
\kappa:=\inf_{j} \frac{N_{j+1}}{N_j}\geq 3.
\]
Let $(a_j)_j$ be real numbers in the interval $[-1,1]$, and define the following products:
\begin{equation*}\label{EQ:RieszReal}
    P_n(\zeta) := \prod_{j=1}^n \left( 1+ a_j \Re \zeta^{N_j} \right), \qquad \zeta \in \T, \qquad n=1,2,3,\dots
\end{equation*}
Note that the assumptions on $(a_j)_j$ ensure that $P_n$ are positive trigonometric polynomials with average 
\[
\widehat{P_n}(0)= \int_{\T} p_n(\zeta) dm(\zeta)=1, \qquad n=1,2,3,\dots
\]
Appealing to weak-star compactness of norm-bounded sets in the space $M(\T)$, we can extract a weak-star cluster point $\sigma \in M(\T)$, which is again a probability measure on $\T$, which refer to as the \emph{Riesz product} 
\begin{equation*}
d\sigma(\zeta):= \prod_{j=1}^\infty \left( 1+ a_j \Re \zeta^{N_j} \right)dm(\zeta) , \qquad \zeta \in \T \qquad \eqno{(R_1)}
\end{equation*}
associated with the parameters $(a_j)_j$ and $(N_j)_j$. 
A crucial property of these Riesz products is that the Fourier support of $\sigma$ is contained in the union of disjoint blocks of the form 
\begin{equation}\label{EQ:Lambdaj}
\Lambda_j := \left\{ (1-(\kappa-1)^{-1}) N_j \leq |n| \leq (1+(\kappa-1)^{-1}) N_j \right\}, \qquad  j=0,1,2,\dots,
\end{equation}
where $\Lambda_0 := \{1\}$ by default. In other words, the Fourier support of Riesz products live exponentially sparse blocks of integers. To verify this claim, one exploits the following identity in an inductive manner:
\[
P_{n+1}(\zeta) = P_n(\zeta) + a_{n+1} \Re(\zeta^{N_{n+1}}) P_n(\zeta), \qquad \zeta \in \T, 
\]
and in conjunction with the observation that the lacunary assumption on $(N_j)_j$ for $\kappa \geq 3$ ensures that the Fourier spectrum of two above two terms are mutually disjoint. 

Exploiting the property in \eqref{EQ:Lambdaj}, it is straightforward to show that the following Fourier identity on each Fourier block $\Lambda_j$ holds:
\[
\sum_{n\in \Lambda_{j}} \abs{\widehat{\sigma}(n)}^2 = a^2_{j} \prod_{k=1}^j (1+a^2_k), \qquad j=1,2,3,\dots.
\]
From this it follows that 
\[
\sum_j a^2_j < \infty \iff d\sigma = f dm \, \, \text{with} \, \, f\in L^2(\T, dm).
\]
Furthermore, if $\sum_k a^2_k$ diverges, then one can show that $\sigma$ is a probability measure which is singular wrt $dm$ on $\T$, but has full support, see \cite[Ch. V, Theorem 7.6]{zygmundtrigseries}. The curious reader may also attempt to deduce this claim from Zygmund's \thref{THM:ZYGL2}.

In summary, the Lebesgue decompositions of a Riesz product is either purely singular wrt $dm$, or purely absolutely continuous and with a density in $L^2(\T)$.

In a similar way one can also define Riesz products with purely imaginary non-zero frequencies:
\[
Q_n(\zeta) := \prod_{j=1}^n \left( 1+ ia_j \Re \zeta^{N_j} \right), \qquad \zeta \in \T, \qquad n=1,2,3,\dots
\]
where $-1<a_j<1$ for all $j$. This time, we have that 
\[
1\leq \abs{Q_n(\zeta)}^2 \leq \prod_{j=1}^\infty(1+a^2_j), \qquad \zeta \in \T, \qquad n=1,2,3,\dots.
\]
Now if $\sum_j a^2_j < \infty$, we may appeal to weak-star compactness of norm-bounded sets in $L^\infty(\T)$, in order to extract a subsequence which converges to the associated Riesz product:
\[
f(\zeta) := \prod_{j=1}^\infty \left( 1+ ia_j \Re \zeta^{N_j} \right), \qquad \zeta \in \T, \qquad \eqno{(R_2)}
\]
which is an essentially bounded function in $\T$.

We now turn to the proof of Zygmund's Theorem.
\begin{proof}[Proof of \thref{THM:ZYGL2}]
\proofpart{1}{The $L^2$-estimate:}
Let $\{\lambda_k\}_k$ be the enumeration of $\Lambda$ in increasing order. We shall first assume that $f\in L^1(\R)$ is real-valued. Consider the family of partial Riesz products of the form $(R_2)$:
\[
P_N(\zeta) = \prod_{k=1}^N \left(1 + i a_k \Re(\zeta^{\lambda_k} e^{i\theta_k} ) \right), \qquad \zeta \in \T,
\]
where $e^{-i\theta_k} \widehat{f}(\lambda_k) = \abs{f(\lambda_k)}$ and
\[
a_k := \frac{1}{2}\widehat{f}(\lambda_k) \left( \sum_{|k|\leq N}  \abs{\widehat{f}(\lambda_k)}^2 \right)^{-1/2}. 
\]
Note that 
\[
\abs{P_N(\zeta)}^2 \leq \prod_{k=1}^N (1+\abs{a_k}^2 ) \leq \exp \left( \sum_{k=1}^N \abs{a_k}^2 \right)\leq e^{1/4}, \qquad \zeta \in \T.
\]
With this at hand, we get
\begin{align*}
\frac{1}{4} \left(\sum_{|k|\leq N} \abs{\widehat{f}(\lambda_k)}^2\right)^{1/2} = \sum_{|k|\leq N} \widehat{f}(\lambda_k) a_k e^{-i\theta_k} \\
= \int_{\T} f(\zeta) \conj{P_N(\zeta)} dm(\zeta) \leq \norm{f}_{L^1} \norm{P_N}_{L^\infty} \leq e^{1/2} \norm{f}_{L^1}.
\end{align*}
If $f$ is complex-valued, then we simply apply the previous argument to both the real and imaginary parts of $f$.
\proofpart{2}{Local $L^2$-estimates:} Fix a subset $E\subset \T$ of positive Lebesgue measure, and we shall primarily assume that $f$ has Fourier coefficients supported in the lacunary set $\Lambda \cap \{|n|>N_0\}$, where $N_0>0$ is a large integer do be determined momentarily. We start by a straightforward expansion of the $L^2$-norm of $f$ restricted to $E$, which gives
\begin{multline*}
\int_{E} \abs{f}^2 dm = \sum_{k,j} \widehat{f}(\lambda_k) \conj{\widehat{f}(\lambda_j)} \int_{E} \zeta^{\lambda_k-\lambda_j} dm(\zeta) = \\
m(E) \sum_{k} \abs{\widehat{f}(\lambda_k)}^2 + 2 \sum_{(j,k):j<k} \widehat{f}(\lambda_k) \conj{\widehat{f}(\lambda_j)} \widehat{1_E}(\lambda_j-\lambda_k).
\end{multline*}
Applying Cauchy-Schwartz inequality to the last sum, we get 
\begin{multline*}
\big \lvert \sum_{(j,k):j<k} \widehat{f}(\lambda_k) \conj{\widehat{f}(\lambda_j)} \widehat{1_E}(\lambda_j-\lambda_k) \big \rvert \leq \left( \sum_{(j,k):j<k} \abs{\widehat{f}(\lambda_k)}^2 \abs{\widehat{f}(\lambda_j)}^2 \right)^{1/2} \left( \sum_{(j,k):j<k} \abs{\widehat{1_E}(\lambda_j-\lambda_k)}^2 \right)^{1/2} \\
\leq \norm{f}^2_{L^2} \left( \sum_{(j,k):j<k} \abs{\widehat{1_E}(\lambda_j-\lambda_k)}^2 \right)^{1/2} = \norm{f}^2_{L^2} \left( \sum_{|n|>N_0/3} \rho (n) \abs{\widehat{1_E}(n)}^2 \right)^{1/2},
\end{multline*}
where $\rho(n)$ denotes the number of integer pairs $(j,k)$ with $j<k$ and such that $n=\lambda_j-\lambda_k$. Now note that the lacunary assumption implies that $\lambda_k \geq 2(\lambda_1+\dots+ \lambda_{k-1})$, hence every integer $n$ has a unique representation of the form 
\[
n= \varepsilon_1 \lambda_1+ \varepsilon_2 \lambda_2 + \dots + \varepsilon_l \lambda_l,\qquad \varepsilon_s \in \{-1,0,1\},
\]
But this implies that $\rho(n)\leq 1$ for all $n$, and we therefore conclude that 
\[
\int_E \abs{f}^2 dm \geq  \sum_k \abs{\widehat{f}(\lambda_k)}^2 \left( m(E)  - 2\left( \sum_{|n|>N_0/3} \abs{\widehat{1_E}(n)}^2 \right)^{1/2} \right) \geq C(E,N_0) \sum_k \abs{\widehat{f}(\lambda_k)}^2,
\]
if $N_0>0$ chosen large enough. This completes the proof. 
\end{proof}

We mention on the passing that using the theory of Paley--Littlewood square functions, one can actually show that every $f\in L^1(\T)$ with lacunary Fourier support belongs to $L^p(\T)$. In fact, keeping a closed eye on the asymptotics of the embedding-constants in $L^p(\T)$ as $p\to \infty$, one can even show that containment into the Orlicz space $\exp(L^2)$:
\[
\int_{\T} \exp \left( c|f|^2 \right) dm < \infty,
\]
for some constant $c=c(f)>0$. This lies slightly outside the scope of our theme, and will therefore not be treated here. We refer the reader to E. Stein in \cite{stein1993harmonic}.

We shall now consider yet another result of A. Zygmund, which characterizes when lacunary series are essential bounded. It turns out that every essentially bounded function $f$ with Fourier coefficients supported on a lacunary set is automatically continuous on $\T$.

\begin{thm}[Zygmund] \thlabel{THM:ZYGL00}Let $\Lambda \subset \mathbb{Z}$ be a lacunary set of integers, and let $f\in L^\infty(\T)$ with Fourier coefficients spectrum in $\Lambda$. Then there exists a constant $C(\Lambda)>0$ such that 
\[
\sum_{n\in \Lambda} \abs{\widehat{f}(n)} \leq C(\Lambda) \norm{f}_{L^\infty}.
\]
Moreover, if $f\in L^2(\T)$ is essentially bounded on an arc $I\subset \T$, then there exists a constant $C(\Lambda,I)>0$ and a number $N(E)>0$, such that 
\[
\sum_{\substack{n\in \Lambda \\ |n|>N(E)}} \abs{\widehat{f}(n)} \leq C(\Lambda, I) \norm{f}_{L^\infty(I)}.
\]
\end{thm}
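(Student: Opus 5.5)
The plan is to imitate the duality argument from Step 1 of the proof of \thref{THM:ZYGL2}, but with the roles of $L^1$ and $L^\infty$ interchanged. We test $f$ against a Riesz product measure of type $(R_1)$ instead of $(R_2)$.

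\textbf{Global estimate.} Enumerate $\Lambda$ as $\{\lambda_k\}$. Since $\Lambda$ is lacunary, after splitting it into finitely many subsets (and separating positive from negative frequencies) we may assume each piece has ratio at least $3$; this costs only a constant depending on $\Lambda$. For a fixed $N$, choose $\theta_k$ with $e^{-i\theta_k}\widehat{f}(\lambda_k)=\abs{\widehat{f}(\lambda_k)}$ and form the positive trigonometric polynomial
\[
P_N(\zeta)=\prod_{k=1}^N\left(1+\Re\big(e^{-i\theta_k}\zeta^{\lambda_k}\big)\right),
\]
which is nonnegative and satisfies $\int_{\T} P_N\,dm=1$. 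As in the discussion of Riesz products, the ratio condition $\kappa\ge 3$ makes all frequencies of the expansion distinct. Consequently
\[
\widehat{P_N}(\lambda_k)=\tfrac12 e^{-i\theta_k}\ \text{ for } k\le N,
\]
and every other frequency of $P_N$ lies outside $\Lambda$, apart from possibly $0$. Because $f$ has spectrum in $\Lambda$, this gives
\[
\int_{\T} f\,\conj{P_N}\,dm=\tfrac12\sum_{k\le N}\abs{\widehat{f}(\lambda_k)}\le \norm{f}_{L^\infty}\norm{P_N}_{L^1}=\norm{f}_{L^\infty}.
\]
Letting $N\to\infty$ yields the first inequality with $C(\Lambda)=2$ times the number of pieces. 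The one point to check carefully is that no sum $\sum\varepsilon_j\lambda_j$ with at least two nonzero $\varepsilon_j$ lands back in $\Lambda\cup\{0\}$. This is exactly the uniqueness-of-representation fact already used in Step 2 of the proof of \thref{THM:ZYGL2}.

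\textbf{Local estimate.} The approach is to localize with a smooth bump and then apply the global estimate to the product. Fix a nonnegative trigonometric polynomial or smooth function $\phi$ supported in $I$, with $\widehat{\phi}(0)=\int\phi\,dm>0$ and rapidly decaying Fourier coefficients. Then $f\phi\in L^\infty$ with $\norm{f\phi}_{L^\infty}\le\norm{\phi}_\infty\norm{f}_{L^\infty(I)}$. The product is no longer lacunary, so instead of invoking the global estimate directly we pair $f\phi$ with $P_N$ built from high frequencies $\lambda_k$, $N(I)<\abs{\lambda_k}\le M$. In the identity
\[
\int f\phi\,\conj{P_N}\,dm=\sum_{n,m}\widehat{f}(n)\widehat{\phi}(m)\conj{\widehat{P_N}(n+m)},
\]
the diagonal terms $m=0$ give $\tfrac12\widehat{\phi}(0)\sum\abs{\widehat{f}(\lambda_k)}$. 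The off-diagonal terms pair $\lambda_k$ with a frequency $n+m$ of $P_N$ at distance $\abs{m}$. By lacunarity, the frequencies of $P_N$ near $\lambda_k$ lie at distance at least $c\abs{\lambda_k}$ from $\lambda_k$, except for frequencies of the form $\lambda_k+(\text{small combinations of lower }\lambda_j)$, and these have coefficients of size $2^{-\#}$.

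\textbf{Main obstacle.} The off-diagonal bookkeeping is the hard step. I would handle it by bounding the error by
\[
\sum_k\abs{\widehat{f}(\lambda_k)}\sum_{m\ne0}\abs{\widehat{\phi}(m)}\,\abs{\widehat{P_N}(\lambda_k+m)}.
\]
Here we use $f\in L^2$, which \thref{THM:ZYGL2} upgrades to the finiteness of the coefficient sums on the truncated range. We use $\abs{\widehat{P_N}}\le\tfrac12$ together with the gap structure to show that $\sum_{m\neq 0}\abs{\widehat{\phi}(m)}\abs{\widehat{P_N}(\lambda_k+m)}\le \tfrac14\widehat{\phi}(0)$ once $N(I)$ is large. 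The low-frequency factors from $\lambda_j<\lambda_k$ contribute at distances $\ge\lambda_{j}$, and we sum the tails of $\widehat{\phi}$ beyond $\lambda_{k_0}$. The contributions of higher $\lambda_j>\lambda_k$ are at distance $\gtrsim\lambda_j$ and are likewise summable.

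If this direct approach proves too messy, the fallback is to convolve with a Fejér-type kernel instead. Since $f$ is bounded on $I$, the local $L^2$ estimate \eqref{EQ:LACL2Est} controls $\sum_{\abs{n}>N(I)}\abs{\widehat{f}(n)}^2$, and hence the tails. Absorbing the error then gives
\[
\tfrac14\widehat{\phi}(0)\sum_{\abs{\lambda_k}>N(I)}\abs{\widehat{f}(\lambda_k)}\le\norm{\phi}_\infty\norm{f}_{L^\infty(I)},
\]
uniformly in $M$, which is the claimed estimate.
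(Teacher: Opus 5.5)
Your overall architecture is the paper's. The global bound uses the same pairing of $f$ with the nonnegative Riesz product of type $(R_1)$. Your local bound is the paper's Step 2: pair $f\phi$ with a Riesz product over the frequencies $\abs{\lambda_k}>N(I)$, then control $\sum_{m\neq 0}\abs{\widehat{\phi}(m)}\abs{\widehat{P_N}(\lambda_k+m)}$ by the decay of $\widehat\phi$ and the $N(I)$-separation of the spectrum of $P_N$.

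The gap is in your reduction for the global estimate. The identity $\int_{\T} f\,\conj{P_N}\,dm=\tfrac12\sum_{k\le N}\abs{\widehat f(\lambda_k)}$ needs the spectrum of $P_N$ to meet $\Lambda$ only at the $\lambda_k$ (and $0$). This fails in two ways.

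First, $P_N$ always contains the frequencies $-\lambda_k$. If $-\lambda_k\in\Lambda$, as for $\Lambda=\{\pm 3^j\}$ (which already has $\kappa(\Lambda)=3$), then $\widehat f(-\lambda_k)$ enters with an uncontrolled phase. Splitting $\Lambda$ into positive and negative parts does not split $f$, since the Riesz projection is unbounded on $L^\infty$, so $f$ still carries both.

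Second, pieces of ratio $\geq 3$ are not enough. Take $\Lambda=\{3^j\}\cup\{4\cdot 3^{j-1}\}$, so $\kappa(\Lambda)=4/3$. The Riesz product over the piece $\{3^j\}$ has the frequency $3^j+3^{j-1}=4\cdot 3^{j-1}\in\Lambda$, with a coefficient of modulus $\tfrac14$. The uniqueness-of-representation fact from \thref{THM:ZYGL2} concerns sums inside a single ratio-$3$ sequence, so it cannot exclude this.

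The paper sidesteps the second problem by assuming $\kappa(\Lambda)\ge 3$. For general $\kappa>1$ you would need pieces of ratio $Q$ so large that each block $\{\abs{\abs{n}-\lambda}\le \lambda/(Q-1)\}$ contains no other point of $\pm\Lambda$. The paper removes the first problem by proving the bound for real-valued $f$ first. There $\widehat f(-\lambda)=\conj{\widehat f(\lambda)}$, so the contributions at $\pm\lambda_k$ combine to $\abs{\widehat f(\lambda_k)}$. It then applies the bound to $\Re f$ and $\Im f$, whose spectra lie in $\Lambda\cup(-\Lambda)$. Your argument is missing this step. Your phases are also off by a sign: with $\Re(e^{-i\theta_k}\zeta^{\lambda_k})$ you get $\tfrac12\abs{\widehat f(\lambda_k)}e^{2i\theta_k}$.

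In the local step there are two further problems. The absorption only works if the spectrum of $f$ lies in $\{\abs{n}> N(I)\}$, which the paper assumes explicitly. A frequency $n$ of $f$ below the range of the Riesz product meets $\widehat{P_N}(0)=1$ at $m=-n$ in your double sum. It therefore produces the term $\widehat f(n)\widehat\phi(-n)$. This term is not small, since $\abs{\widehat\phi(-n)}$ is comparable to $\widehat\phi(0)$ when $\abs{n}$ is small compared with the reciprocal of the length of $I$. Nor is it a multiple of the sum being estimated. Your bound $\sum_{m\neq0}\abs{\widehat\phi(m)}\abs{\widehat{P_N}(\lambda_k+m)}\le\tfrac14\widehat\phi(0)$ holds only for $\lambda_k$ in the range of the product. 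Finally, the fallback through \eqref{EQ:LACL2Est} cannot replace the off-diagonal estimate. It controls $\sum_{\abs{n}>N(I)}\abs{\widehat f(n)}^2$, and an $\ell^2$ bound on the tail gives no $\ell^1$ bound.
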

\begin{proof}
\proofpart{1}{$\ell^1$-summable coefficients:}
Let $(\lambda_k)_k:=\Lambda$ be an increasing enumeration of the lacunary sequence, and we shall for simplicity assume that $\kappa(\Lambda)\geq 3$. We shall first assume that $f$ is real-valued. In a similar vain as the proof of Zygmund's Theorem, we shall this time use Riesz products of type $(R_1)$:
\[
P_N(\zeta) := \prod_{k=1}^n \left( 1+\Re( e^{i\theta_j} \zeta^{\lambda_j}) \right), \qquad \zeta \in \T, \qquad N=1,2,3\dots
\]
where $e^{-i\theta_k} \widehat{f}(\lambda_k) = \abs{\widehat{f}(\lambda_k)}$. With this at hand, we have 
\[
\frac{1}{2}\sum_{|k|\leq N} \abs{\widehat{f}(\lambda_k)} = \sum_{|k|\leq N} \widehat{f}(\lambda_k)e^{i\theta_k} = \int_{\T}f(\zeta) P_N(\zeta) dm(\zeta) \leq \norm{f}_{L^\infty(\T)}, \qquad N=1,2,3,\dots
\]
Sending $N \to \infty$, we get the conclusion for real-valued $f$, and the extension to complex-valued is now obvious.

\proofpart{2}{Local estimate:}
Fix an arbitrary closed arc $I \subset \T$ and assume that $\supp{\widehat{f}} \subseteq \{|n|\geq n(I)\}$ where $n(I)>0$ large integer to be determined later. Fix a non-negative smooth bump-function $\psi_I$ which is supported on $I$ and satisfies $\widehat{\psi_I}(0)=1$, and note that a similar argument as in the previous step shows that
\[
\frac{1}{2}\sum_{|k|\leq N} \widehat{f}(\lambda_k) \widehat{P_N \cdot \psi_I}(-\lambda_k) =\int_{\T} f(\zeta) \psi_I(\zeta) P_N(\zeta) dm(\zeta) \leq \norm{f}_{L^\infty(I)} \norm{\psi_I}_{L^\infty}.
\]
We claim that it suffices to show that there exists a constant $0<c(I)<1/2$ and a large integer $n(I)>0$ such that 
\begin{equation}\label{EQ:PNPSI}
\abs{\widehat{P_N\cdot \psi_I}(n) - \widehat{P_N}(n)} \leq c(I), \qquad n(I)\leq |n| \leq \lambda_N.
\end{equation}
Indeed, once that task is accomplished, we use the fact that $P_N \psi_I$ is real-valued, and estimate as follows:
\begin{multline*}
\norm{f}_{L^\infty(I)} \norm{\psi_I}_{L^\infty} \geq \frac{1}{2} \abs{\sum_{n(I)\leq |k|\leq N} \widehat{f}(\lambda_k) \conj{\widehat{P_N}(\lambda_k)} } - \frac{c(I)}{2}\sum_{n(I)\leq |k|\leq N} \abs{\widehat{f}(\lambda_k)} \geq \\
\frac{1}{2} \abs{\sum_{n(I)\leq |k|\leq N} \widehat{f}(\lambda_k) e^{-i\theta_k} } - \frac{1}{4} \sum_{n(I)\leq |k|\leq N} \abs{\widehat{f}(\lambda_k)} = \frac{1}{4}\sum_{n(I)\leq |k|\leq N} \abs{\widehat{f}(\lambda_k)}.
\end{multline*}
This proves the desired estimate. Now in order to establish \eqref{EQ:PNPSI}, we use that $\widehat{\psi_I}(0)=1$, in conjunction with the fact that Fourier coefficients of $P_N$ are unimodular. We only give a rough sketch of the argument below:
\begin{multline*}
\abs{\widehat{P_N\cdot \psi_I}(n) - \widehat{P_N}(n)} \leq \sum_{j\neq n} \abs{\widehat{P_N}(j) \widehat{\psi_I}(n-j)} \\ \lesssim \norm{\psi''_I}_{L^\infty} \sum_{\substack{j: j\neq n \\ j \in \supp{\widehat{P_N}}}} \frac{1}{|n-j|^2} \lesssim  \norm{\psi''_I}_{L^\infty} \sum_{j>n(I)} \frac{1}{j^2} =: c(I).
\end{multline*}
This completes the proof.
\end{proof}

We shall now record yet another manifestation of the classical uncertainty principle on the simultaneous smallness of $f$ and its Fourier coefficients. The following corollary can be proved using a simple adaptation of the proof used to establish the second statement of \thref{THM:ZYGL00}, and whose details are left to the reader.
\begin{cor}[Zygmund] Let $\mu \in M(\T)$ with Fourier coefficients supported in a lacunary subset of integers. If $\abs{\mu}(I)=0$ for some arc $I \subset \T$, then $\mu \equiv 0$. 
\end{cor}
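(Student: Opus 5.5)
The plan is to reduce to the function case by smoothing, and then apply the local $L^2$-estimate \eqref{EQ:LACL2Est} from \thref{THM:ZYGL2}; this avoids redoing the Riesz product argument for measures. Shrink $I$ to a closed arc $I'$ of half the length with the same midpoint, and let $\varepsilon>0$ be smaller than a quarter of the length of $I$. Let $\varphi_\varepsilon\geq 0$ be a smooth approximate identity supported in the arc $\{e^{it}:|t|<\varepsilon\}$ with $\widehat{\varphi_\varepsilon}(0)=1$. Set $g_\varepsilon:=\mu*\varphi_\varepsilon$. Then $g_\varepsilon$ is a $C^\infty$ function on $\T$, its Fourier coefficients satisfy $\widehat{g_\varepsilon}(n)=\widehat{\mu}(n)\widehat{\varphi_\varepsilon}(n)$, and so its spectrum lies in the lacunary set $\Lambda$. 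Since $\abs{\mu}(I)=0$ and the support of $\varphi_\varepsilon$ is short, $g_\varepsilon(\zeta)$ only sees the restriction of $\mu$ to a neighbourhood of $\zeta$ contained in $I$ whenever $\zeta\in I'$. Hence $g_\varepsilon\equiv 0$ on $I'$.

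Next I apply \eqref{EQ:LACL2Est} with $E=I'$, which has positive measure. The right-hand side $\int_{I'}\abs{g_\varepsilon}^2\,dm$ vanishes, so $\widehat{g_\varepsilon}(n)=0$ for all $n\in\Lambda$ with $\abs{n}>N(I')$. The number $N(I')$ depends only on $I'$ and $\Lambda$, not on $\varepsilon$. Thus $g_\varepsilon$ is a trigonometric polynomial $\sum_{\abs{n}\leq N(I')}c_n\zeta^n$. Multiplying by $\zeta^{N(I')}$ turns it into an analytic polynomial. That polynomial vanishes on the arc $I'$, which contains infinitely many points, so it is identically zero. Hence $g_\varepsilon\equiv 0$ on $\T$, that is, $\widehat{\mu}(n)\widehat{\varphi_\varepsilon}(n)=0$ for all $n\in\mathbb{Z}$ and all small $\varepsilon$.

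Finally, let $\varepsilon\to 0+$. For each fixed $n$ we have $\widehat{\varphi_\varepsilon}(n)\to 1$, because $\varphi_\varepsilon\,dm\to\delta_1$ weak-star. Hence $\widehat{\mu}(n)=0$ for every $n$, and $\mu\equiv 0$ by the uniqueness corollary for $M(\T)$.

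The step needing the most care is the first one: the support bookkeeping that makes $g_\varepsilon$ vanish on all of a fixed arc $I'$ independent of $\varepsilon$, and the check that \eqref{EQ:LACL2Est} applies to $g_\varepsilon$. The latter holds since $g_\varepsilon\in C^\infty\subset L^1$ with spectrum in $\Lambda$. If one prefers to stay closer to the suggested route, the alternative is to integrate $\psi_I P_N$ against $\mu$ as in Step 2 of \thref{THM:ZYGL00} to get $\sum_k\widehat{\mu}(\lambda_k)\widehat{P_N\psi_I}(-\lambda_k)=0$. Combined with \eqref{EQ:PNPSI}, this forces the tail coefficients of $\mu$ to vanish, and the finite remaining part is killed by the same polynomial argument. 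I regard the smoothing route as cleaner.
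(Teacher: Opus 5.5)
Your argument is correct, and it takes a different route from the one the paper indicates. The paper suggests adapting Step 2 of \thref{THM:ZYGL00}. There one pairs $\mu$ directly against $\psi_I P_N$, with $P_N$ a Riesz product whose phases match those of $\widehat{\mu}(\lambda_k)$ and $\psi_I$ a bump supported in $I$. The pairing vanishes because $\abs{\mu}(I)=0$, and \eqref{EQ:PNPSI} turns this into control of $\sum\abs{\widehat{\mu}(\lambda_k)}$ over the high frequencies. You instead regularize first. Convolution with $\varphi_\varepsilon$ keeps the spectrum in $\Lambda$ and keeps the vanishing on a fixed smaller arc $I'$. So all you need is a uniqueness statement for smooth functions, which you read off from the $L^2$ estimate \eqref{EQ:LACL2Est}, and then you let $\varepsilon\to 0$. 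Your route is more modular: no Riesz-product estimate has to be redone against a measure, and any local uniqueness theorem for smooth functions with spectrum in $\Lambda$ transfers to measures in the same way. The paper's route stays at the level of the measure and gives quantitative $\ell^1$ control of the tail.

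One thing to be aware of: you apply \eqref{EQ:LACL2Est} to $g_\varepsilon$, whose spectrum is all of $\Lambda$. This is how \thref{THM:ZYGL2} is stated, so the citation is legitimate, and the statement is true. However, the proof given for \eqref{EQ:LACL2Est} only treats $f$ with spectrum in $\Lambda\cap\{\abs{n}>N_0\}$. For $E$ an arc, the full-spectrum version already contains the $L^2$ case of the corollary: the whole difficulty is to rule out that the finitely many low frequencies cancel the tail on $E$.

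If you want to rely only on the tail version, remove the low frequencies first. Let $\lambda^{(1)},\dots,\lambda^{(D)}$ be the elements of $\Lambda$ with $\abs{\lambda}\le N(I')$. Replace $\mu$ by $\mu*\sigma$, where $\sigma=\sum_{j=0}^{D}a_j\delta_{e^{ij\tau}}$ and $\sum_j a_j w^j=\prod_{l}(w-e^{-i\lambda^{(l)}\tau})$. Then $\widehat{\mu*\sigma}=\widehat{\mu}\,\widehat{\sigma}$ vanishes at every $\lambda^{(l)}$. Moreover $\widehat{\sigma}(n)\neq 0$ for any prescribed $n\neq\lambda^{(l)}$ once $\tau$ is small, and $\mu*\sigma$ still vanishes on a neighbourhood of $I'$. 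Your argument then gives $\widehat{\mu}(n)=0$ for all $\abs{n}>N(I')$, and the leftover trigonometric polynomial is disposed of as you do.

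The same point affects your closing sketch of the Riesz-product route. The identity $\sum_k\widehat{\mu}(\lambda_k)\widehat{P_N\psi_I}(-\lambda_k)=0$ also contains the low-frequency terms, which \eqref{EQ:PNPSI} does not control. So it only bounds the tail by those terms; it does not force the tail to vanish.
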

In other words, measures with lacunary Fourier spectrum must have full support, while measures with non-full support cannot have sparse Fourier spectrum.

We now close this subsection by clarifying how smoothness properties of lacunary series can also be encoded from their Fourier coefficients, thus we have yet another local-to-global phenomenon is this regime. 
 
\begin{thm}[Paley--Zygmund] Let $\{\lambda_k\}_k \subset \mathbb{Z}$ be a lacunary set, and $f \in L^\infty$ with Fourier coefficients supported in $\Lambda$. Then for any number $0<\alpha<1$, the following statements are all equivalent:
\begin{enumerate}
    \item[(i)] $f$ is $\alpha$-H\"older continuous on $\T$,
    \item[(ii)] $f$ is $\alpha$-H\"older continuous at a single point $\zeta_0 \in \T$,
    \item[(iii)] $\widehat{f}(\lambda_k) = \mathcal{O}(|\lambda_k|^{-\alpha})$, as $|k| \to \infty$.
\end{enumerate}
\end{thm}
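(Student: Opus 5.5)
We prove the cycle of implications $(i)\Rightarrow(ii)\Rightarrow(iii)\Rightarrow(i)$. The first is trivial. For $(iii)\Rightarrow(i)$ we use a standard dyadic splitting, which does not need the single-point hypothesis. For $(ii)\Rightarrow(iii)$ we use a localized Fourier coefficient estimate exploiting lacunarity. As in \thref{THM:ZYGL00}, we reduce to $\kappa(\Lambda)\geq 3$ and, by splitting $\Lambda$ into positive and negative parts if needed, to the case where the frequencies $|\lambda_k|$ are increasing.

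\emph{Step $(iii)\Rightarrow(i)$.} Assume $|\widehat{f}(\lambda_k)|\leq C|\lambda_k|^{-\alpha}$. Then $\sum_k|\widehat{f}(\lambda_k)|<\infty$, so $f$ is continuous. For $|h|$ small, pick $K$ with $|\lambda_K|\leq 1/|h|<|\lambda_{K+1}|$ and write
\[
|f(e^{i(t+h)})-f(e^{it})|\leq \sum_{k\leq K}|\widehat{f}(\lambda_k)|\,|\lambda_k||h| + 2\sum_{k>K}|\widehat{f}(\lambda_k)|.
\]
By lacunarity both sums are geometric and dominated by their extreme terms. The first is $\lesssim |h|\,|\lambda_K|^{1-\alpha}\leq |h|^\alpha$, and the second is $\lesssim |\lambda_{K+1}|^{-\alpha}\leq |h|^\alpha$. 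This gives $f\in C^\alpha(\T)$.

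\emph{Step $(ii)\Rightarrow(iii)$.} Assume $|f(\zeta_0 e^{it})-f(\zeta_0)|\leq C|t|^\alpha$; rotating, we may take $\zeta_0=1$ and, subtracting a constant, $f(1)=0$ (this only alters $\widehat f(0)$). The idea is to test $f$ against a kernel $K_k$ adapted to the frequency $\lambda_k$. Concretely, let $V_k(t)=e^{-i\lambda_k t}\,\phi_k(t)$, where $\phi_k$ has Fourier transform supported in $|n|\leq \delta|\lambda_k|$, satisfies $\widehat{\phi_k}(0)=1$, and is concentrated at scale $1/|\lambda_k|$ around $t=0$. A de la Vallée Poussin or Jackson kernel of degree $\sim\delta|\lambda_k|$ works, giving $|\phi_k(t)|\lesssim |\lambda_k|(1+|\lambda_k t|)^{-2}$. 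The frequencies of $f V_k$ hitting $0$ come only from $n\in\Lambda$ with $|n-\lambda_k|\leq\delta|\lambda_k|$. Choosing $\delta<1/2$ and using $\kappa\geq3$, only $n=\lambda_k$ contributes, so
\[
\widehat{f}(\lambda_k)=\int_\T f(e^{it})\,e^{-i\lambda_k t}\phi_k(t)\,\frac{dt}{2\pi}.
\]
Using $f(1)=0$ and the Hölder bound at $0$,
\[
|\widehat{f}(\lambda_k)|\lesssim \int |t|^\alpha\,|\lambda_k|(1+|\lambda_k t|)^{-2}\,dt\lesssim |\lambda_k|^{-\alpha}.
\]
The integral converges because $\alpha<1$, and the boundedness of $f$ handles the region away from $0$ if we only have a local Hölder bound.

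\emph{Main obstacle.} The delicate step is constructing $\phi_k$ with both properties at once: Fourier support in a window of width $\delta|\lambda_k|$ small enough to isolate $\lambda_k$ within $\Lambda$, and spatial decay $(1+|\lambda_k t|)^{-2}$ fast enough to integrate $|t|^\alpha$ against. I would take $\phi_k$ to be the Fejér kernel $\mathcal{F}_M$ with $M=\lfloor \delta|\lambda_k|\rfloor$. It satisfies $\widehat{\mathcal{F}_M}(0)=1$ and $\mathcal{F}_M(t)\lesssim \min(M,\,M^{-1}t^{-2})$, which suffices for $0<\alpha<1$. Checking that no other lacunary frequency falls into the window $|n-\lambda_k|\leq M$, including the frequencies of opposite sign, is then routine given $\kappa\geq 3$.
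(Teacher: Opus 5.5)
Your proposal is correct and is essentially the paper's proof. For (ii)$\Rightarrow$(iii) the paper also pairs $f-f(\zeta_0)$ with a Fejér kernel of degree $\asymp|\lambda_k|$ modulated by $\zeta^{-\lambda_k}$, uses lacunarity to kill the other coefficients in the window, and applies the bound $\mathcal{F}_N(e^{it})\lesssim\min(N,N^{-1}t^{-2})$ at scale $|\lambda_k|^{-1}$; for (iii)$\Rightarrow$(i) it uses the same dyadic split at $|\lambda_K|\le|h|^{-1}<|\lambda_{K+1}|$. One minor remark: your preliminary reduction by splitting or thinning $\Lambda$ would not preserve the pointwise hypothesis (ii), but you never actually need it there, since a window of width $\delta|\lambda_k|$ with $\delta<1-\kappa(\Lambda)^{-1}$ already isolates $\lambda_k$ for any $\kappa(\Lambda)>1$.
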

\begin{proof}
Let $\{\lambda_k\}_k$ be an increasing enumeration of $\Lambda$. We only need to prove $(ii) \implies (iii)$ and $(iii) \implies (i)$. 

\proofpart{1}{$(ii) \implies (iii)$:} Without loss of generality, we may assume $\zeta_0 =1$. Since $f\in L^\infty(\T)$, Zygmund's \thref{THM:ZYGL00} implies that $f\in C(\T)$ with the property that there exists $C>0$ such that 
\[
\abs{f(\zeta)-f(1)} \leq C \abs{\zeta-1}^{\alpha}, \qquad \zeta \in \T.
\]
Now since $(\lambda_k)_k$ is lacunary, there exists a constant $c>0$ such that 
\[
\widehat{f}(n)=0, \qquad 1\leq \abs{n-\lambda_k}\leq c|\lambda_k|, \qquad k=1,2,3,\dots
\] 
For sufficiently large $\abs{\lambda_k}$, we can pick integers $c\abs{\lambda_k}/2 \leq N_k < c |\lambda_k|$ and note that one may express
\[
\widehat{f}(\lambda_k) = \sum_{n} \widehat{F_{N_k}}(n)\widehat{f}(\lambda_k - n) = \int_{\T} \left( f(\zeta) - f(1) \right) \mathcal{F}_{N_k}(\zeta) \zeta^{-\lambda_k} dm(\zeta),
\]
where $\mathcal{F}_N$ denotes the classical Fej\'er kernel given by the explicit formula
\[
\mathcal{F}_{N}(e^{it}) := \sum_{|k|< N}\left(1 - \frac{\abs{k}}{N} \right) e^{ikt} = \frac{1}{N}\frac{\sin^2(Nt/2)}{\sin^2 (t/2)}, \qquad t\in \R.
\]
Fix a number $0<\delta <1/2$ to be determined later and let $I(\delta)$ be the arc centered at $\zeta=1$ of length $\delta$. The H\"older estimate of $f$ at the point $\zeta_0=1$ implies
\[
\abs{\int_{I(\delta)} \left( f(\zeta) - f(1) \right) \mathcal{F}_{N_k}(\zeta) \zeta^{-\lambda_k} dm(\zeta) } \lesssim \int_{I(\delta)} \abs{\zeta-1}^{\alpha} \abs{\mathcal{F}_{N_k}(\zeta)}dm(\zeta) \leq \delta^{\alpha} \int_{\T} \abs{\mathcal{F}_{N_k}} dm \leq \delta^{\alpha}.
\]
Outside the interval $I(\delta)$, we use the rough estimate $\mathcal{F}_{N}(\zeta) \lesssim N^{-1} \abs{\zeta-1}^{-2}$, which implies
\[
\abs{\int_{\T \setminus I_(\delta)} \left( f(\zeta) - f(1) \right) \mathcal{F}_{N_k}(\zeta) \zeta^{-\lambda_k} dm(\zeta) } \lesssim N_k^{-1} \int_{\T\setminus I(\delta)} \abs{\zeta-1}^{\alpha-2} dm(\zeta) \lesssim N_k^{-1} \delta^{\alpha-1}.
\]
Now choosing $\delta \asymp \abs{\lambda_k}^{-1}$ and combining these estimates, we arrive at 
\[
\widehat{f}(\lambda_k) = \mathcal{O} \left( \abs{\lambda_k}^{-\alpha} \right), \qquad \abs{k} \to \infty.
\]
\proofpart{2}{$(iii) \implies (i)$:} Fix an arbitrary $\zeta, \xi \in \T$ and let $\lambda_n \in \Lambda$ is be the smallest integer with $\abs{\zeta-\xi} \geq |\lambda_n|^{-1}$. Using the lacunary assumption on $(\lambda_n)_n$, we get
\begin{multline*}
\abs{f(\zeta)-f(\xi)} \leq \sum_{k=1}^\infty \abs{\widehat{f}(\lambda_k)}\abs{\zeta^{\lambda_k}- \xi^{\lambda_k} }  \lesssim 
\abs{\zeta- \xi} \sum_{k=1}^n |\lambda_k|^{1-\alpha} + \sum_{k>n} |\lambda_k|^{-\alpha} \\ \lesssim \abs{\zeta-\xi} \abs{\lambda_n}^{\alpha-1}  + \abs{\lambda_n}^{-\alpha} \lesssim \abs{\zeta- \xi}^{\alpha}.
\end{multline*}
This proves that $f$ is $\alpha$-H\"older continuous on $\T$.

\end{proof}
We remark that if $f$ is a general $\alpha$-H\"older continuous function on $\T$, then we have the Fourier decay
\[
\widehat{f}(n) = \mathcal{O}(|n|^{-\alpha} ), \qquad |n| \to \infty.
\]
However, the converse is far from true in this generality. There exists positive measures $\mu$ in $\T$ supported in sets of Lebesgue measure zero, hence singular wrt $dm$, such that
\[
\widehat{\mu}(n) = \mathcal{O}(|n|^{-\alpha}), \qquad |n|\to \infty.
\]
In fact, the Ivashev--Musatov Theorem in Section 6 proves something much stronger.

\subsection{Fourier coefficients of measures} 
We now turn to a systematic study of the behavior of Fourier coefficients of elements in $M(\T)$, the space of finite complex Borel measures on $\T$. Within this framework, our goal is to explore the interplay between localization, as reflected in the smallness of the support, and regularity, as captured by the decay of the Fourier coefficients.

Recall that the classical Lebesgue decomposition from measure theory allows us to decompose any $\mu \in M(\T)$ of the form:
\[
d\mu = f dm + d\mu_c + d\mu_d
\]
where $f\in L^1(\T)$ denotes the Radon-Nikodym derivative of $\mu$ wrt the arc length measure $dm$, $\mu_c$ denotes the continuous singular part of $\mu$, while $\mu_d$ denotes the discrete part, written as a countable sum of atomic masses. As customary in the literature, $fdm$ is the \emph{absolutely continuous} part of $\mu$. The sum $\mu_s := \mu_c + \mu_d$ as the \emph{singular part} of $\mu$. Measures with trivial discrete part $\mu_d$, that is, of the form $d\mu =fdm + d\mu_c$ are typically said to be \emph{continuous measures}.

We shall start out with, perhaps, one of the most basic and classical observation in Fourier analysis. 

\begin{lemma}[Riemann--Lebesgue] \thlabel{LEM:RIELEB}For any $f\in L^1(\T)$ we have 
\[
\lim_{|n|\to \infty} \widehat{f}(n) = 0.
\]
\end{lemma}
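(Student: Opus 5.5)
The plan is a density argument: prove the claim for trigonometric polynomials, where it is trivial, and pass to all of $L^1(\T)$ using the uniform bound $\abs{\widehat{f}(n)}\le \norm{f}_{L^1}$.

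First I record the elementary estimate
\[
\abs{\widehat{f}(n)} = \Big\lvert \int_{\T} f(\zeta)\zeta^{-n}\,dm(\zeta)\Big\rvert \le \int_{\T}\abs{f}\,dm = \norm{f}_{L^1}, \qquad n\in\mathbb{Z}.
\]
It says that each map $f\mapsto \widehat{f}(n)$ is a linear functional of norm at most one on $L^1(\T)$, uniformly in $n$.

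Next I show that trigonometric polynomials are dense in $L^1(\T)$. There are two routes, and either will do:
\begin{itemize}
\item By the discussion following \thref{THM:WEITHM}, the Fej\'er means $\sigma_N(f)$ converge to $f$ in $L^p(\T)$ for every $p\ge1$, in particular for $p=1$, and each $\sigma_N(f)$ is a trigonometric polynomial.
\item Alternatively, approximate $f$ in $L^1$ by a continuous function, using density of $C(\T)$ from measure theory. Then approximate that function uniformly, hence in $L^1$, by a trigonometric polynomial via \thref{THM:WEITHM}.
\end{itemize}

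Given $\varepsilon>0$, choose a trigonometric polynomial $T$ with $\norm{f-T}_{L^1}<\varepsilon$. Since $\widehat{T}(n)=0$ for $\abs{n}>\deg T$, linearity and the estimate above give
\[
\abs{\widehat{f}(n)} \le \abs{\widehat{f-T}(n)} + \abs{\widehat{T}(n)} \le \norm{f-T}_{L^1} < \varepsilon \qquad \text{for } \abs{n}>\deg T.
\]
Hence $\limsup_{\abs{n}\to\infty}\abs{\widehat{f}(n)}\le\varepsilon$, and since $\varepsilon$ is arbitrary the limit is zero.

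There is no real obstacle here. The only point needing care is to make sure the density statement is not circular. The text proves $L^p$-convergence of Fej\'er means via the kernel properties (i)--(iii) and a $3\varepsilon$-argument, and none of that uses Riemann--Lebesgue, so citing it is safe. If one prefers to avoid it altogether, a self-contained alternative is the translation trick
\[
\widehat{f}(n) = -\tfrac12\int_{\T}\bigl(f(e^{it})-f(e^{i(t-\pi/n)})\bigr)e^{-int}\,\tfrac{dt}{2\pi},
\]
combined with continuity of translation in $L^1$, which again follows from density of $C(\T)$.
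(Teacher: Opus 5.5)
Your proof is correct and is essentially the paper's. Both combine the bound $\abs{\widehat{g}(n)}\le\norm{g}_{L^1}$ with density of a class on which the decay is trivial: the paper uses smooth functions, where one integration by parts gives $\mathcal{O}(\abs{n}^{-1})$, and you use trigonometric polynomials, whose high-order coefficients vanish outright. Your check that the Fej\'er-mean density does not depend on Riemann--Lebesgue is also sound.

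One small slip in your optional remark: the coefficient in the translation identity should be $+\tfrac12$. Since the shifted function has $n$-th coefficient $-\widehat{f}(n)$, the identity reads
\[
\widehat{f}(n)=\tfrac12\int_{\T}\bigl(f(e^{it})-f(e^{i(t-\pi/n)})\bigr)e^{-int}\,\tfrac{dt}{2\pi}.
\]
This does not affect the estimate you draw from it.
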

To prove the lemma, we shall use the following simple and observation: if $f$ is $k$-times continuously differentiable function on $\T$, then repeated integration by parts gives:
\[
\widehat{f}(n) = (-in)^k \int_{\T} f^{(k)}(\zeta) \zeta^{-n} dm(\zeta) = \mathcal{O}(|n|^{-k}), \qquad |n|\to \infty.
\]

\begin{proof}
Approximate $f$ by a smooth function $f_\varepsilon$ in the norm of $L^1(\T)$, for instance by taking convolution of $f$ with an approximate of the identity. Integrating the Fourier coefficients of $f_\varepsilon$ by parts (once is enough) and using the triangle inequality, we get
\[
\abs{\widehat{f}(n)} \leq \abs{\widehat{f_\varepsilon}(n)} +\norm{f-f_\varepsilon}_{L^1} \leq C(\varepsilon)|n|^{-1} +\norm{f-f_\varepsilon}_{L^1}.
\]
The claim now follows by sending $|n| \to \infty$ first, and then $\varepsilon \to 0+$.
\end{proof}
Note that if $f\in L^2(\T)$ then its Fourier coefficients must tend to zero, in view of Parseval's identity:
\[
\sum_n \abs{\widehat{f}(n)}^2 =\int_{\T} \abs{f(\zeta)}^2 dm(\zeta)< \infty.
\]
This simple observation, in conjunction with $L^2(\T)$ being a dense subset of $L^1(\T)$, yields yet another (though principally similar) proof of the Riemann--Lebesgue Lemma. Now it is natural to ask whether the Riemann--Lebesgue lemma can be improved in some quantitative way. The following result refutes this in the strongest way possible.

\begin{thm}For any positive numbers $(w(n))_{n=0}^\infty$, there exists a positive function $f\in L^1(\T)$ such that
\[
\abs{\widehat{f}(n)} \geq w(n), \qquad n\neq 0.
\]
\end{thm}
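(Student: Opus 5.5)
The statement only makes sense when $w(n)\to 0$: by the Riemann--Lebesgue \thref{LEM:RIELEB}, every $f\in L^1(\T)$ has $\widehat{f}(n)\to 0$. So I read the hypothesis as "for any positive $w(n)\to 0$" (with $w(-n):=w(n)$ for negative indices). The idea is to produce $f$ whose coefficients are exactly an even, convex, decreasing sequence $c(|n|)$ that majorizes $w$. Such an $f$ will be a positive superposition of Fej\'er kernels.

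\emph{Step 1: a convex majorant.} I choose integers $0=n_0<n_1<n_2<\dots$ such that $w(n)\le 2^{-k}$ for all $n\ge n_k$. This is possible because $w(n)\to 0$. I also require the gaps $n_{k+1}-n_k$ to be nondecreasing and large. Let $c$ be the piecewise linear function through the points $(n_k,2^{-k+1}+C_0\delta_{k0})$, where the constant $C_0$ is chosen so that $c(0)\ge w(0)$. On $[n_k,n_{k+1}]$ we have $c\ge 2^{-k}\ge w$. The slopes are $-2^{-k}/(n_{k+1}-n_k)$, up to adjusting the first segment. Their absolute values decrease because the gaps increase, so $c$ is convex, decreasing, and tends to $0$. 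Making the gaps grow fast enough also absorbs the first segment, whose slope is steep because of $C_0$. This bookkeeping is the only mildly fiddly point, and it is routine.

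\emph{Step 2: Fej\'er superposition.} For $k\ge 0$ let $\Delta^2 c(k):=c(k)-2c(k+1)+c(k+2)\ge 0$, by convexity. Recall the Fej\'er kernel $\mathcal{F}_{N}$ with $\widehat{\mathcal{F}_N}(j)=(1-|j|/N)_+$, which satisfies $\mathcal{F}_N\ge 0$ and $\int_\T \mathcal{F}_N\,dm=1$. Define
\[
g:=\sum_{k\ge 0}(k+1)\,\Delta^2 c(k)\,\mathcal{F}_{k+1}.
\]
Every term is nonnegative. Summation by parts, using that $c$ is decreasing and convex with $c(k)\to 0$ (which forces $k\,(c(k)-c(k+1))\to 0$), gives
\[
\sum_{k}(k+1)\Delta^2 c(k)=c(0)-\lim_{k}c(k)=c(0)<\infty.
\]
Hence the series converges in $L^1(\T)$ and $g\ge 0$. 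The standard telescoping computation then yields, for every $j$,
\[
\widehat{g}(j)=\sum_{k\ge |j|}(k+1-|j|)\,\Delta^2 c(k)=c(|j|),
\]
again using $c(k)\to 0$ and $k\,(c(k)-c(k+1))\to 0$.

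\emph{Step 3: strict positivity.} Set $f:=g+1$. Then $f\ge 1>0$, $f\in L^1(\T)$, and $\widehat{f}(n)=c(|n|)\ge w(n)$ for every $n\neq 0$. The main thing to check carefully is the summation-by-parts identity in Step 2. Its justification rests on the tail fact $k\,(c(k)-c(k+1))\le 2\,(c(\lceil k/2\rceil)-c(k))\to 0$, which is where convexity is used a second time.
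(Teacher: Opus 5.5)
Your proof is correct and is essentially the paper's argument. The paper also builds a convex decreasing majorant, namely the tail sums $\sum_{k>n}a(k)$ of a decreasing summable sequence $a$, whose terms play the role of your $c(k)-c(k+1)$. It then writes $f$ as the positive Fej\'er superposition with weights (index)$\times$(second difference), and uses the same tail bound $n\,a(n)\le 2\sum_{n/2\le k\le n}a(k)\to 0$. Your observation that $w(n)\to 0$ must be assumed is right; the paper uses it tacitly when it replaces $w$ by $\sup_{k\ge n}w(k)$.

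Two cosmetic fixes are needed. First, your choice $n_0=0$ with $w(n)\le 2^{-k}$ for $n\ge n_k$ tacitly needs $\sup_n w(n)\le 1$. Normalize by $\sup_n w(n)$, which is finite since $w(n)\to 0$, rather than using $C_0$. Second, your tail inequality fails as written for odd $k$ (e.g.\ $k=1$), but it holds with $\lfloor k/2\rfloor$ in place of $\lceil k/2\rceil$.
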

Note that this result bears a closed resemble to \thref{THM:LEEUWKAHKATZ} by de Leeuw--Kahane--Katznelson, on behavior of Fourier coefficients of continuous functions. However, the proof below is much simpler, but it still requires some level of clear-headedness. 
\begin{proof}
 Note that by means of substituting the sequence $\{w(n)\}_{n\geq 0}$ by 
    \[
    \omega(n):=\sup_{k \geq n} w(k)
    \]
    we may in addition assume that $w(n) \downarrow 0$. We leave it as an exercise for the reader to construct a decreasing sequence of positive real numbers $(a(n))_{n\geq 0}$ such that
    \[
    \sum_{n} a(n) < \infty, \qquad \sum_{k> n} a(k) \geq w(n), \qquad n=0,1,2,\dots
    \]
With this at hand, consider claim that our candidate is given by the explicit formula
\[
f(\zeta) = \sum_{n=1}^\infty n(a(n-1)-a(n) ) \mathcal{F}_{n-1}(\zeta), \qquad \zeta \in \T,
\]
where $\mathcal{F}_n$ denotes the Fej\'er kernel of order $n$. Set 
\[
f_N(\zeta) =\sum_{k=1}^N k\left(a(k-1)-a(k) \right) \mathcal{F}_{k-1}(\zeta), \qquad N=1,2,3,\dots \qquad \zeta \in \T,
\]
and re-write the sum 
\[
\sum_{k=1}^n k \left( a(k-1)-a(k) \right) = \sum_{k=1}^{n-1} a(k) - na(n), \qquad n=1,2,3,\dots
\]
Using the monotonicity of $(a(n))_{n\geq 0}$, we have
\[
na(n) \leq 2 \sum_{n/2\leq k\leq n} a(k) \to 0, \qquad n\to \infty.
\]
Using these observations in conjunction with $\norm{\mathcal{F}_n}_{L^1}=1$ for all $n\geq 1$, we conclude that 
\[
\int_{\T}f_N dm = \sum_{k=1}^N k\left(a(k-1)-a(k) \right)  \leq \sum_{k=1}^\infty  a(k) < \infty, \qquad N=1,2,3,\dots
\]
By the monotone convergence theorem, we conclude that $f$ is a positive function in $L^1(\T)$. In fact, we also have that $f_N \to f$ in $L^1(\T)$. Furthermore, a straightforward computation of Fourier coefficients shows
\begin{multline*}
\widehat{f}(n) = \sum_{k=1}^\infty k \left( a(k-1)-a(k) \right) \widehat{\mathcal{F}_{k-1}}(n) \\ = \sum_{k\geq 1+|n|} k \left( a(k-1)-a(k) \right) \left( 1- \frac{|n|}{k} \right) = \sum_{k> |n|} a(k) \geq w(|n|).
\end{multline*}
This completes the proof.
\end{proof}

Next, we shall investigate the Fourier coefficients of measures with a non-trivial discrete component. The following lemma quantifies how a single point mass can be detected via Fourier series.  

\begin{lemma} \thlabel{PROP:WILIM}
Let $\mu \in M(\T)$. Then for any $\zeta_0 \in \T$:
\[
\mu ( \{\zeta_0 \}) = \lim_{N\to \infty} \frac{1}{2N+1} \sum_{|k|\leq N} \widehat{\mu}(k) \zeta_0^k
\]
\end{lemma}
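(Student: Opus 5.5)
The plan is to rewrite the Cesàro-type average of Fourier coefficients as the integral of a bounded kernel against $\mu$, and then use dominated convergence. Substituting $\widehat{\mu}(k)=\int_{\T}\zeta^{-k}\,d\mu(\zeta)$ and exchanging the finite sum with the integral gives
\[
\frac{1}{2N+1}\sum_{|k|\leq N}\widehat{\mu}(k)\zeta_0^k=\int_{\T} K_N(\conj{\zeta}\zeta_0)\,d\mu(\zeta), \qquad K_N(\xi):=\frac{1}{2N+1}\sum_{|k|\leq N}\xi^k=\frac{D_N(\xi)}{2N+1},
\]
where $D_N$ is the Dirichlet kernel from the proof of du Bois--Reymond's theorem. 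The point is that, although $D_N$ is badly behaved in $L^1$, dividing by $2N+1$ produces a uniformly bounded family of functions.

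The key steps, in order, are as follows.
\begin{enumerate}
\item[(a)] Uniform bound: from the triangle inequality, $\abs{K_N(\xi)}\leq 1$ for all $\xi\in\T$ and all $N$.
\item[(b)] Pointwise limit: $K_N(1)=1$ for every $N$. For $\xi=e^{it}\neq 1$, the closed formula $D_N(e^{it})=\sin((N+1/2)t)/\sin(t/2)$ gives
\[
\abs{K_N(e^{it})}\leq \frac{1}{(2N+1)\abs{\sin(t/2)}}\to 0, \qquad N\to\infty.
\]
Hence $K_N(\conj{\zeta}\zeta_0)\to 1_{\{\zeta_0\}}(\zeta)$ pointwise on $\T$.
\item[(c)] Passage to the limit: since $\abs{\mu}$ is a finite measure and the constant $1$ is $\abs{\mu}$-integrable, dominated convergence (applied to the densities with respect to $\abs{\mu}$, via the polar decomposition $d\mu=h\,d\abs{\mu}$) yields
\[
\int_{\T}K_N(\conj{\zeta}\zeta_0)\,d\mu(\zeta)\to\int_{\T}1_{\{\zeta_0\}}\,d\mu=\mu(\{\zeta_0\}).
\]
\end{enumerate}

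There is no serious obstacle. The only point needing care is to check that the pointwise convergence in (b) holds for every $\zeta\neq\zeta_0$, and not merely $dm$-a.e. This matters because $\mu$ may be singular, so exceptional null sets for $dm$ cannot be discarded. The explicit bound in (b) handles this, since it gives convergence at every point $\zeta\neq\zeta_0$.
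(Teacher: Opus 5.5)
Your proof is correct and follows essentially the same route as the paper. Both write the average as $\int_{\T} Q_N\,d\mu$ with the normalized Dirichlet kernel $Q_N(\zeta)=K_N(\conj{\zeta}\zeta_0)$, use $\abs{Q_N}\leq 1$, $Q_N(\zeta_0)=1$ and decay away from $\zeta_0$, and pass to the limit by dominated convergence. The only difference is cosmetic: the paper states the decay as uniform on $\{\abs{\zeta-\zeta_0}\geq\delta\}$ and splits the integral, whereas you use the pointwise limit $1_{\{\zeta_0\}}$ directly, which is slightly cleaner.
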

\begin{proof}
Fix $\zeta_0 \in \T$ and consider the family of trigonometric polynomials
\[
Q_N(\zeta) = \frac{1}{2N+1} \sum_{|k|\leq N} \zeta^k \zeta_0^{-k}, \qquad \zeta \in \T, \qquad N=1,2,3,\dots
\]
In other words, $\{Q_N\}_N$ is a family of translated and re-normalized Dirichlet kernels, which satisfy the following properties:
\[
(i) \, \sup_{\T} \abs{Q_N} \leq 1, \qquad (ii) \, Q_N(\zeta_0)=1, \qquad (iii) \, \lim_{N\to \infty}\sup_{|\zeta-\zeta_0|\geq \delta} \abs{Q_N(\zeta)} = 0, \qquad \forall \delta >0.
\]
Exploiting these properties, using the dominated convergence theorem, and splitting the integral into a neighborhood of $\zeta_0$ and its complement, we obtain
\[
\frac{1}{2N+1} \sum_{|k|\leq N} \widehat{\mu}(k)\zeta_0^k =\int_{\T} Q_N(\zeta) d\mu(\zeta) \to \mu (\{\zeta_0\}).
\]

\end{proof}

We remark that an obvious modification of the $\{Q_N\}_n$ utilized in the proof actually shows that for any integers $(M_j)_j$ and $(N_j)_j$ with $N_j \uparrow \infty$, we have 
\[
\mu(\{\zeta\}) = \lim_{j\to \infty} \frac{1}{N_j+1} \sum_{k=M_j}^{M_j + N_j} \widehat{\mu}(k) \zeta^k, \qquad \zeta \in \T.
\]

As an immediate consequence, we obtain the following corollary, which characterizes measures with discrete support solely through their Fourier coefficients.

\begin{cor}[Wiener] \thlabel{COR:WIENERATOMICFOURIER}Let $\mu \in M(\T)$. Then 
\[
\sum_{\zeta \in A(\mu)} \abs{\mu(\{\zeta \})}^2 = \lim_{N\to \infty} \sum_{|n|\leq N} \abs{\widehat{\mu}(n)}^2,
\]
where $A(\mu)$ denotes the (countable) set of atoms of $\mu$. In particular, $\mu$ is a continuous measure if and only if 
\[
\lim_{N\to \infty} \frac{1}{2N+1} \sum_{|n|\leq N} \abs{\widehat{\mu}(n)}^2 =0.
\]
    
\end{cor}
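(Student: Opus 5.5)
The plan is to reduce the statement to \thref{PROP:WILIM}, applied to a single auxiliary measure whose Fourier coefficients are exactly $\abs{\widehat{\mu}(n)}^2$. Note first that the displayed identity should be read with the Ces\`aro normalization, that is,
\[
\sum_{\zeta \in A(\mu)} \abs{\mu(\{\zeta\})}^2 = \lim_{N\to\infty} \frac{1}{2N+1}\sum_{|n|\leq N} \abs{\widehat{\mu}(n)}^2,
\]
as the second assertion of the corollary indicates. (Without the factor $(2N+1)^{-1}$ the right side would be $\norm{\mu}$-type $\ell^2$ mass and could be infinite, e.g.\ for $\mu=\delta_1$.)

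\textbf{Step 1: the auxiliary measure.} Define the reflected conjugate measure $\widetilde{\mu}$ by $\widetilde{\mu}(E) := \conj{\mu(\conj{E})}$, where $\conj{E} = \{\conj{\zeta}: \zeta \in E\}$. Then set $\nu := \mu * \widetilde{\mu}$, the convolution on $\T$. Concretely, $\nu$ is the push-forward of the product measure $\mu \otimes \conj{\mu}$ on $\T\times\T$ under the map $(\zeta,\xi)\mapsto \zeta\conj{\xi}$. A direct computation with Fubini then gives
\[
\widehat{\nu}(n) = \iint (\zeta\conj{\xi})^{-n}\, d\mu(\zeta)\, d\conj{\mu}(\xi) = \widehat{\mu}(n)\,\conj{\widehat{\mu}(n)} = \abs{\widehat{\mu}(n)}^2 .
\]

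\textbf{Step 2: apply \thref{PROP:WILIM}.} Taking $\zeta_0 = 1$ in \thref{PROP:WILIM} applied to $\nu$ yields
\[
\nu(\{1\}) = \lim_{N\to\infty}\frac{1}{2N+1}\sum_{|n|\leq N}\abs{\widehat{\mu}(n)}^2 ,
\]
and in particular the limit exists.

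\textbf{Step 3: identify $\nu(\{1\})$.} From the push-forward description,
\[
\nu(\{1\}) = (\mu\otimes\conj{\mu})(\Delta),
\]
where $\Delta=\{(\zeta,\zeta)\}$ is the diagonal. By Fubini this equals
\[
\int_{\T} \conj{\mu(\{\zeta\})}\, d\mu(\zeta).
\]
The integrand vanishes except on the countable atom set $A(\mu)$. Hence the integral reduces to $\sum_{\zeta\in A(\mu)} \mu(\{\zeta\})\conj{\mu(\{\zeta\})} = \sum_{\zeta \in A(\mu)} \abs{\mu(\{\zeta\})}^2$, and this sum converges since $\sum_{\zeta\in A(\mu)}\abs{\mu(\{\zeta\})} \le \norm{\mu}$.

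\textbf{Step 4: the characterization.} The second statement is now immediate. The left side of the identity vanishes if and only if $\mu$ has no atoms, i.e.\ if and only if $\mu$ is continuous.

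\textbf{Main obstacle.} The only delicate point is Step 3: one must justify applying Fubini to the complex measure $\mu\otimes\conj{\mu}$. I would do this through the total variation $\abs{\mu}\otimes\abs{\mu}$, which is finite, so the bounded function $1_\Delta$ is integrable. One must also check that the diagonal section at $\zeta$ is exactly the singleton $\{\zeta\}$, which is what produces the factor $\conj{\mu(\{\zeta\})}$.
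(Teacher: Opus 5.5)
Your proposal is correct and is essentially the argument the paper has in mind when it calls the corollary an immediate consequence of \thref{PROP:WILIM}. The paper writes out no proof, and your route (apply that lemma at $\zeta_0=1$ to $\nu=\mu*\widetilde{\mu}$, whose Fourier coefficients are $\abs{\widehat{\mu}(n)}^2$ and whose mass at $1$ is $\sum_{\zeta\in A(\mu)}\abs{\mu(\{\zeta\})}^2$) is the standard way to fill it in. You are also right that the displayed identity needs the Ces\`aro factor $(2N+1)^{-1}$: for $\mu=\delta_1$ the unnormalized sum diverges, and the normalized form is the one consistent with the second assertion of the corollary.
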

Roughly speaking, Wiener's Theorem asserts that for any $\varepsilon>0$, the amplitudes $\abs{\widehat{\mu}(n)}\geq \varepsilon$ have zero density:
\[
\lim_{N\to \infty} \frac{\# \left\{ |n|\leq N: \abs{\widehat{\mu}(n) } > \varepsilon \right\} }{2N+1} =0.
\]
However, it turns out that the Fourier coefficients of a continuous measure do not have to be asymptotically small.

\begin{prop}[A continuous measure with non-vanishing amplitudes] \thlabel{PROP:CONTNORAJ} There are continuous measures $\mu \in M(\T)$ with $\widehat{\mu}(n) \nrightarrow 0$ as $|n|\to \infty$.
\end{prop}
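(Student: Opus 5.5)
The natural candidate is a Riesz product of type $(R_1)$ whose coefficients do not tend to zero. Fix a lacunary sequence $N_j = 3^j$, so $\kappa = 3$, and take all $a_j = 1$. The measure is
\[
d\sigma(\zeta) = \prod_{j=1}^\infty \left(1 + \Re \zeta^{N_j}\right) dm(\zeta),
\]
obtained as a weak-star cluster point of the partial products $P_n$. The plan has three steps: first compute the Fourier coefficients of $\sigma$ explicitly, then read off that they do not vanish at infinity, and finally show that $\sigma$ has no atoms.

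\emph{Step 1: the coefficients.} Expanding $1+\Re\zeta^{N_j} = 1 + \tfrac12 \zeta^{N_j} + \tfrac12 \zeta^{-N_j}$, the product $P_n$ is a sum over choices $\varepsilon_j \in \{-1,0,1\}$ of terms $2^{-\#\{j:\varepsilon_j\neq 0\}} \zeta^{\sum_j \varepsilon_j N_j}$. Since $N_{k} > 2(N_1+\dots+N_{k-1})$, every integer has at most one representation $\sum \varepsilon_j N_j$; this is the same uniqueness-of-representation fact used in the proof of \thref{THM:ZYGL2}. Hence $\widehat{P_n}(m)$ is stable in $n$ once $n$ is large relative to $m$. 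It follows that the weak-star limit exists along the full sequence, and that
\[
\widehat{\sigma}(m) = 2^{-\#\{j:\varepsilon_j\neq 0\}} \quad \text{if } m = \sum_j \varepsilon_j N_j,
\]
and $\widehat{\sigma}(m) = 0$ otherwise. In particular $\widehat{\sigma}(\pm N_j) = \tfrac12$ for every $j$, so $\widehat{\sigma}(n) \nrightarrow 0$.

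\emph{Step 2: continuity.} By Wiener's \thref{COR:WIENERATOMICFOURIER} it suffices to show
\[
\frac{1}{2N+1}\sum_{|n|\leq N} \abs{\widehat{\sigma}(n)}^2 \to 0 .
\]
Take $3^{k} \leq N < 3^{k+1}$. Any $n$ with $|n| \leq N$ and $\widehat{\sigma}(n)\neq 0$ must have its representation supported on indices $j \leq k+1$. Summing over all such representations gives
\[
\sum_{|n|\leq N} \abs{\widehat{\sigma}(n)}^2 \leq \sum_{\varepsilon \in \{-1,0,1\}^{k+1}} 4^{-\#\{\varepsilon_j\neq 0\}} = \left(1 + 2\cdot\tfrac14\right)^{k+1} = (3/2)^{k+1}.
\]
Dividing by $2N+1 \gtrsim 3^{k}$ gives a bound of order $(1/2)^{k} \to 0$. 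Hence $\sigma$ is a continuous (positive) measure with $\widehat{\sigma}(\pm 3^j) = 1/2$, which is the claim.

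The main obstacle is Step 1, specifically the bookkeeping showing that the coefficients of the limit measure are exactly the stabilized coefficients of $P_n$. This reduces to the disjointness of frequency sets, which I would handle with the identity
\[
P_{n+1} = P_n + \Re(\zeta^{N_{n+1}})\, P_n .
\]
The spectrum of $P_n$ lies in $|m| \leq (N_{n+1}-1)/2$, while the second term has spectrum in $|m| \geq N_{n+1}/2$, so no cancellation occurs. Step 2 is then a routine counting estimate. As a remark, one could alternatively argue continuity from the general fact that Riesz products with $\sum a_j^2 = \infty$ are singular and continuous, but the direct Wiener-criterion computation above is self-contained.
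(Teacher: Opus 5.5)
Your proposal is correct and follows essentially the same route as the paper. Both use a Riesz product of type $(R_1)$ with $N_j=3^j$ and $a_j\equiv 1$, prove continuity via Wiener's criterion by bounding $\sum_{|n|\le 3^k}|\widehat{\sigma}(n)|^2$ (your count $(3/2)^{k+1}$ is a slightly sharper version of the paper's $2^k$), and read off non-decay from the explicit expansion of the partial products, which gives $\widehat{\sigma}(\pm 3^j)=1/2$. Your extra justification that the coefficients stabilize, so the full sequence $P_n$ converges weak-star, is a useful detail the paper leaves implicit.
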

\begin{proof}
We construct a general recipe using Riesz products. Let $(a_k)_k$ be positive numbers in $(0,1]$ consider an associated Riesz product of type $(R_1)$:
\[
d\mu(\zeta) := \lim_{N\to \infty} \prod_{j=1}^N \left( 1 + a_j \Re(\zeta^{3^j}) \right), \qquad \zeta \in \T,
\]
interpreted as the weak-star cluster point in $M(\T)$. Now if we impose the condition
\[
\sum_j a^2_j = \infty,
\]
then a classical result of A. Zygmund asserts that $\mu$ is singular wrt $dm$ on $\T$. For instance, see \cite{zygmundtrigseries}. Now to see that $\mu$ is continuous, we use the remark prior to \thref{PROP:WILIM}:
\[
3^{-k} \sum_{|n|\leq 3^{k}} \abs{\widehat{\mu}(n)}^2 \leq 3^{-k} \prod_{j=1}^k \left( 1+a^2_j \right) \leq 3^{-k} 2^{k} \to 0, \qquad k\to \infty,
\]
hence we conclude that $\mu$ is a continuous singular probability measure on $\T$. Now a successive expansion of the associated Riesz partial product $P_N$ imply that for each integer $N\geq 1$:
\[
P_N(\zeta) = 1+ \sum_{\varepsilon \neq 0}\left( 2^{-\sum_{j=1}^N \abs{\varepsilon_j} } \prod_{j:\varepsilon_j \neq 0} a_j \right) \zeta^{\left(3\varepsilon_1+3^2 \varepsilon_2+\dots+3^N \varepsilon_N\right)}
\]
where the sum is taken over all non-trivial tuples $\varepsilon:=(\varepsilon_1,\varepsilon_2,\dots, \varepsilon_N)\in \{-1,0,1\}^N$. Now if we impose that $a_j \to 0$, then we see that $\widehat{\mu}(n) \to 0$ as $|n|\to \infty$, while if we pick $a_j\equiv 1$, then $\limsup_{n\to \infty} \widehat{\mu}(n)>0$.

\end{proof}

Nevertheless, it turns out that there is a rigidity phenomenon for Fourier decay of measures. Roughly speaking, if the positive Fourier coefficients of a measure are asymptotically small, then the same must hold for the negative coefficients. This is made precise by the following result.

\begin{thm}[Rajchman, Milicer--Gruzewska] \thlabel{THM:RM-G}
If a measure $\mu \in M(\T)$ satisfies
\[
\lim_{n\to +\infty} \widehat{\mu}(n) = 0,
\]
then it automatically also satisfies
\[
\lim_{n\to -\infty} \widehat{\mu}(n) = 0.
\]
\end{thm}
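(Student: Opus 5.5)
The plan is to show that the class of measures whose positive-frequency Fourier coefficients tend to zero is stable under multiplication by trigonometric polynomials (and conjugated in a suitable way). Then I will use the Riesz--Herglotz/Hilbert space trick with $|\mu|$.

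Let $\mu$ satisfy $\widehat{\mu}(n)\to 0$ as $n\to+\infty$, and write $d\mu = h\, d|\mu|$ with $|h|=1$ $|\mu|$-a.e. The first step is the observation that for any trigonometric polynomial $T$, the measure $T\,d\mu$ again has $\widehat{T\mu}(n)\to 0$ as $n\to+\infty$. This holds because $\widehat{T\mu}(n)$ is a finite linear combination of $\widehat{\mu}(n-k)$. Next I would pass to limits. If $g\in L^1(|\mu|)$ is approximated by trigonometric polynomials $T_j$ in $L^1(|\mu|)$, then
\[
\sup_n \abs{\widehat{g\mu}(n)-\widehat{T_j\mu}(n)}\le \norm{g-T_j}_{L^1(|\mu|)}.
\]
Hence $\widehat{g\mu}(n)\to0$ as $n\to+\infty$ for every $g\in L^1(|\mu|)$, since trigonometric polynomials are dense in $C(\T)$ by \thref{THM:WEITHM}, and therefore dense in $L^1(|\mu|)$.

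Now apply this with $g=\conj{h}$, which is bounded and hence in $L^1(|\mu|)$. We get that the positive measure $|\mu|$ satisfies $\widehat{|\mu|}(n)\to0$ as $n\to+\infty$. For a real (positive) measure, $\widehat{|\mu|}(-n)=\conj{\widehat{|\mu|}(n)}$, so $\widehat{|\mu|}(n)\to0$ as $|n|\to\infty$. Finally, apply the same approximation argument in the other direction, now to $|\mu|$, whose coefficients vanish at $\pm\infty$: the multiples $T\,d|\mu|$ have coefficients vanishing at $-\infty$, and approximating $h\in L^1(|\mu|)$ by trigonometric polynomials gives $\widehat{\mu}(n)=\widehat{h|\mu|}(n)\to0$ as $n\to-\infty$.

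The only delicate point is the density of trigonometric polynomials in $L^1(|\mu|)$ for an arbitrary (possibly singular) finite measure. I would handle it as follows: continuous functions are dense in $L^1(|\mu|)$ by standard measure theory (Lusin's theorem / regularity of finite Borel measures on $\T$), and uniform approximation of these by trigonometric polynomials via \thref{THM:WEITHM} controls the $L^1(|\mu|)$-norm by $|\mu|(\T)$ times the sup-norm. Everything else is bookkeeping with the uniform-in-$n$ bound above.
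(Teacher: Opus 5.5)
Your proposal is correct and follows the paper's route. You show the decay condition is stable under multiplication by trigonometric polynomials, approximate the polar density $\overline{h}$ in $L^1(|\mu|)$ to obtain $\widehat{|\mu|}(n)\to 0$ (the paper does this via Lusin's theorem plus Fej\'er means), and then use the symmetry $\widehat{|\mu|}(-n)=\overline{\widehat{|\mu|}(n)}$. Your final step, approximating $h$ in $L^1(|\mu|)$ to carry the decay at $-\infty$ from $|\mu|$ back to $d\mu=h\,d|\mu|$, is exactly the detail the paper dismisses with ``easily follows'', and you handle it correctly.
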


Measure with the property $\widehat{\mu}(n) \to 0$ as $|n|\to \infty$ are called \emph{Rajchman measures}, and have historically played a tremendous role in studying uniqueness problems of Fourier series.

\begin{proof}[Proof of \thref{THM:RM-G}]
    Let $\mu \in M(\T)$ with the property that $\widehat{\mu}(n) \to 0$ as $n \to \infty$. Then for any trigonometric polynomial $T$, we have also have
    \[
   \sum_k \widehat{T}(k) \widehat{\mu}(n-k) =\int_{\T} T(\zeta) \zeta^{-n} d\mu(\zeta) \to 0, \qquad n\to \infty,
    \]
    hence measures of the form $d\mu_{T} := T d\mu$ also satisfy $\widehat{\mu_T}(n) \to \infty$ as $n\to \infty$. The principal idea is to show that the Fourier coefficients of the total-variation $\abs{\mu}$ of $\mu$ also tends to zero as $n \to \infty$, by means of approximating $\abs{\mu}$ by measures of the form $\mu_T$ in total variation-norm. Once that has been accomplished, we simply utilize the fact that since $\abs{\mu}$ is real-valued, thus  
    \[
    \widehat{\abs{\mu}}(n) = \widehat{\abs{\mu}}(-n) \to 0, \qquad n\to \infty,
    \]
    for which it easily follows that $\lim_{n\to -\infty} \widehat{\mu}(n)=0$. 
    
    To this end, recall that there exists a bounded Borel measurable function $\phi$ on $\T$ with $\abs{\phi}=1$ $\mu$-a.e and such that $d\abs{\mu}= \phi d\mu$. This is essentially the polar decomposition of a measure.
    
    Now, a classical Theorem of Lusin in measure theory asserts that for any $\varepsilon>0$, there exists $\psi_\varepsilon \in C(\T)$ such that $\sup_{\T} \abs{\psi_\varepsilon}\leq 1$ and 
    \[
    \abs{\mu} \left( \{\zeta\in \T: \phi\neq \psi_\varepsilon \} \right) \leq \varepsilon.
    \]
    Passing to appropriate Fej\'er means of $\psi_\varepsilon$, we can may exhibit trigonometric polynomials $T_\varepsilon$ such $\norm{\psi_\varepsilon-T_\varepsilon}_\infty \leq \varepsilon$. It follows that 
    \[
    \norm{\abs{\mu}- \mu_{T_\varepsilon} } \leq \int \abs{\phi-T_\varepsilon} d|\mu| \leq \int_{ \{\phi\neq \psi_\varepsilon \} } \abs{\phi-\psi_\varepsilon} d|\mu| + \norm{\mu} \varepsilon \leq \left(2+ \norm{\mu}\right)\varepsilon.
    \]
    With this estimate in mind, we obtain 
    \[
    \abs{\widehat{\abs{\mu}}(n)} \leq \norm{\abs{\mu}- \mu_{T_\varepsilon} } + \abs{\widehat{\mu_{T_\varepsilon}}(n)} \leq (2+\norm{\mu})\varepsilon + \abs{\widehat{\mu_{T_\varepsilon}}(n)}.
    \]
    Sending $n \to \infty$ first, and then $\varepsilon \to 0+$, the claim follows.
\end{proof}


Actually, a simple refinement of the above argument shows that the class of Rajchman measures are stable under absolute continuity. 

\begin{cor}[Rajchman, Milicer-Gruzewska] Let $\mu, \nu \in M(\T)$, with $\nu << \mu$. Then 
\[
\lim_{|n| \to \infty} \widehat{\mu}(n) =0 \qquad \implies \lim_{|n| \to \infty} \widehat{\nu}(n) =0.
\]
\end{cor}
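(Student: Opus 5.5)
Since $\nu \ll \mu$, the Radon--Nikodym theorem gives $d\nu = h\, d\mu$ for some $h \in L^1(\abs{\mu})$. The plan is to repeat the approximation argument from the proof of \thref{THM:RM-G}, now with $h$ playing the role of the polar density $\phi$. By \thref{THM:RM-G} itself, the hypothesis already gives $\widehat{\mu}(n)\to 0$ in both directions. As observed in that proof, for any trigonometric polynomial $T$ the measure $d\mu_T = T\,d\mu$ satisfies
\[
\widehat{\mu_T}(n) = \sum_k \widehat{T}(k)\widehat{\mu}(n-k) \to 0, \qquad |n|\to\infty,
\]
since it is a finite combination of shifted copies of $\widehat{\mu}$. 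So the task reduces to approximating $\nu$ by such $\mu_T$ in the total variation norm. The error in Fourier coefficients is controlled uniformly in $n$, because $\abs{\widehat{\nu}(n)-\widehat{\mu_T}(n)} \le \norm{\nu-\mu_T} = \int \abs{h-T}\,d\abs{\mu}$.

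The key step is therefore to show that trigonometric polynomials are dense in $L^1(\abs{\mu})$. I would argue in two stages. First, $C(\T)$ is dense in $L^1(\abs{\mu})$ for the finite Borel measure $\abs{\mu}$. This follows either from standard measure theory (regularity of $\abs{\mu}$, or duality: a $g\in L^\infty(\abs{\mu})$ annihilating $C(\T)$ gives $g\,d\abs{\mu}=0$ by the Riesz representation theorem, so $g=0$ $\abs{\mu}$-a.e.). Alternatively, truncate $h$ to the bounded function $h\,1_{\{\abs{h}\le M\}}$ and apply Lusin's theorem exactly as in the proof of \thref{THM:RM-G}, keeping the sup-norm bound by $M$. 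Second, each continuous $\psi$ is approximated uniformly by its Fej\'er means (\thref{THM:WEITHM}), and uniform closeness implies closeness in $L^1(\abs{\mu})$ with constant $\norm{\mu}$.

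Putting this together: given $\varepsilon>0$, choose $T_\varepsilon$ with $\int\abs{h-T_\varepsilon}\,d\abs{\mu}\le\varepsilon$. Then
\[
\abs{\widehat{\nu}(n)} \le \varepsilon + \abs{\widehat{\mu_{T_\varepsilon}}(n)}.
\]
Letting $|n|\to\infty$ and then $\varepsilon\to0+$ gives the claim. The only step needing care is the density of trigonometric polynomials in $L^1(\abs{\mu})$ for a possibly singular measure. The truncation-plus-Lusin route handles it directly: the truncation error $\int_{\{\abs{h}>M\}}\abs{h}\,d\abs{\mu}$ tends to zero by integrability of $h$, and on the bounded part the Lusin error is at most $2M$ times the measure of the exceptional set, which can be made small. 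Everything else is bookkeeping already carried out in the preceding proof.
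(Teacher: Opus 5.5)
Your proposal is correct and is precisely the ``simple refinement'' of the proof of \thref{THM:RM-G} that the paper alludes to. You replace the unimodular polar density $\phi$ there by the Radon--Nikodym density $h=d\nu/d\mu\in L^1(\abs{\mu})$, approximate $h$ in $L^1(\abs{\mu})$ by trigonometric polynomials (truncation, Lusin, Fej\'er means), and conclude via $\abs{\widehat{\nu}(n)-\widehat{\mu_T}(n)}\le\int\abs{h-T}\,d\abs{\mu}$; the opening appeal to \thref{THM:RM-G} is superfluous, since the hypothesis is already stated for $|n|\to\infty$.
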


For further quantitative refinements of the above results, we refer the reader to the work of de Leeuw and Katznelson in \cite{deleeuw1970two}.

On the contrary, it is natural to ask whether there corresponds to any sequence of complex numbers $\{c_n\}_n$ with $\lim_{|n|\to \infty} c_{n} = 0$, as measure $\mu\in M(\T)$ such that
\[
\widehat{\mu}(n)= c_n, \qquad n\in \mathbb{Z}.
\]
It turns out that not all sequences tending to zero arise as the amplitudes of a measure.

\begin{prop}\thlabel{PROP:Distc0} There exists a sequence of positive numbers $\{c_n\}_n$ with $\lim_{|n|\to \infty} c_{n} = 0$, which is not the Fourier coefficients of a complex finite Borel measure on $\T$.
\end{prop}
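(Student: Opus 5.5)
Two routes are available: a soft one via the open mapping theorem, or a concrete example. I would give the concrete one, which is closer in spirit to these notes. The candidate is
\[
c_n := \sum_{k\geq 2,\ k\neq |n|}\ \ldots
\]
or, more simply, a sequence whose formal series behaves like the conjugate of a positive measure. The standard choice is the odd sequence $c_n = \operatorname{sgn}(n)/\log |n|$ for $|n|\geq 2$, with $c_n=0$ otherwise. For positivity I would instead take
\[
c_n = \frac{1}{\log |n|} \quad (n\geq 2), \qquad c_n = 0 \quad (n\leq 1),
\]
which is allowed since $c_n\geq 0$ is all that is required. The aim is to show that a measure $\mu$ with $\widehat{\mu}(n)=c_n$ would have to belong to $H^1$ and have an $L^1$ density whose coefficients behave like $1/\log n$, and this is impossible.

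\emph{Step 1: reduce to $L^1$.} If $\widehat{\mu}(n)=0$ for all $n<0$, the F. and M. Riesz theorem forces $\mu = f\,dm$ with $f\in H^1$. This is a standard Hardy space fact, deferred to the appendix and the Riesz brothers' theorem in Section 3, and I would take it as given. Alternatively I avoid it by symmetrising: use $c_n = 1/\log|n|$ for all $|n|\geq 2$, note that $\widehat{\mu}(n)\to 0$, and apply the Rajchman machinery. I would keep the one-sided version and invoke F. and M. Riesz.

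\emph{Step 2: Hardy's inequality for $H^1$.} For $f\in H^1$ one has $\sum_{n\geq 1} |\widehat{f}(n)|/n \lesssim \|f\|_{H^1}$. Applying this to $c_n=1/\log n$ gives $\sum 1/(n\log n)=\infty$, a contradiction.

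Proving Hardy's inequality is the main obstacle. I would factor $f = gh$ with $g,h\in H^2$ and $\|g\|_2^2=\|h\|_2^2=\|f\|_1$, using the inner--outer factorization recalled in the notes. Then $\widehat{f}(n)=\sum_{k=0}^n \widehat{g}(k)\widehat{h}(n-k)$, so
\[
\sum_n \frac{|\widehat f(n)|}{n+1}\le \sum_{j,k\ge 0}\frac{|\widehat g(j)||\widehat h(k)|}{j+k+1}.
\]
The right side is bounded by $\pi\|g\|_2\|h\|_2$ via Hilbert's inequality for the kernel $1/(j+k+1)$, which can be proved by the standard Schur test with weights $(j+\tfrac12)^{-1/2}$.

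If one prefers to avoid Hardy spaces altogether, the fallback is the abstract route. Suppose the map $M(\T)\to c_0(\mathbb{Z})$, $\mu\mapsto\widehat{\mu}$, were onto. It is injective by the uniqueness corollary of the previous subsection, so by the open mapping theorem it would be an isomorphism. Then $\|\mu\|\lesssim \sup_n|\widehat{\mu}(n)|$ for all measures, and in particular for Dirichlet kernels, where $\|D_N\|_{L^1}\asymp \log N$ while $\sup|\widehat{D_N}|=1$. This contradiction yields a sequence in $c_0$ that is not a Fourier transform.

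Positivity of the $c_n$ can then be arranged by perturbation. Since the range of the transform is a proper dense subspace, and therefore of first category, it cannot contain the nonempty open set $\{c\in c_0: |c_n-(|n|+1)^{-1}|<\tfrac12 (|n|+1)^{-1}\ \forall n\}$ in the relevant weighted sense. Making this last step rigorous needs care, so I would present the explicit $1/\log n$ example as the main proof.
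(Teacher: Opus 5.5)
Your main argument is correct, and it takes a genuinely different route from the notes.

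\textbf{The paper's route.} The paper runs a gliding-hump construction. It uses lacunary blocks $\varepsilon\sum_{n\le N(\varepsilon)}\zeta^{3^n}$, whose coefficients are uniformly small but whose $L^1$-norm is $\gtrsim 1$ by Zygmund's lacunary $L^1$--$L^2$ inequality. These blocks are placed with disjoint spectra and amplified by $\varepsilon_j^{-1/2}$. They are then isolated by modulated de la Vall\'ee Poussin kernels, which are uniformly bounded in $L^1$. A measure with these coefficients would therefore need total variation $\gtrsim\varepsilon_j^{-1/2}\to\infty$. This stays within the Section 2 toolkit (Riesz products, Fej\'er kernels). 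It is essentially a concrete version of your open-mapping fallback: the blocks are explicit witnesses that $\sup_n|\widehat\mu(n)|$ does not control $\|\mu\|$.

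\textbf{Your route.} You take the explicit, monotone, even convex one-sided sequence $c_n=1/\log n$ ($n\ge2$). The F.~and M.~Riesz theorem puts a would-be $\mu$ in $H^1$. You then apply Hardy's inequality $\sum_{n\ge0}|\widehat f(n)|/(n+1)\le\pi\|f\|_{L^1}$. You prove it through the factorization $f=(\Theta\mathcal{O}^{1/2})\cdot\mathcal{O}^{1/2}$ into two $H^2$ functions, where $\mathcal{O}^{1/2}$ is the outer function with modulus $|f|^{1/2}$, combined with Hilbert's inequality. Your Schur weights $(j+\tfrac12)^{-1/2}$ do give the constant $\pi$.

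\textbf{What each buys.} Your argument needs the Hardy-space machinery of Section 3 and the appendix. In return it gives a quantitative necessary condition for analytic measures. It also gives an example that fails purely because of one-sidedness, not irregularity, which is a clean instance of the F.~and M.~Riesz uncertainty principle. Both proofs read ``positive'' as ``non-negative.'' If strict positivity is wanted, add $(1+n^2)^{-1}$, the coefficients of a continuous function.

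\textbf{Side remarks that are wrong.} Neither affects your main proof.

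First, the symmetrised alternative cannot work. The sequence $1/\log|n|$ ($|n|\ge2$) is convex and decreasing. After adjusting finitely many terms, the Fej\'er-kernel construction in the notes (with $a(k)=c_{k-1}-c_k$) exhibits it as the coefficient sequence of a positive integrable function. Rajchman's theorem only transports decay from one side to the other, so it gives no contradiction here. One-sidedness is the whole mechanism.

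Second, in the fallback, $\mu\mapsto\widehat\mu$ does not map $M(\T)$ into $c_0(\mathbb{Z})$; a point mass already fails. You must restrict to the closed subspace of Rajchman measures, after which the Dirichlet-kernel argument is fine.

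Third, the Baire step for positivity fails as written, because the displayed set is not open in $c_0$. Instead, split a bad complex sequence into four non-negative $c_0$-sequences (positive and negative parts of the real and imaginary parts). At least one of them cannot be a Fourier--Stieltjes sequence.
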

\begin{proof}

\proofpart{1}{Trigonometric polynomials with large mass and small amplitudes:}
The proof principally relies on the following construction: for any $\varepsilon>0$ there exists analytic polynomials $Q_\varepsilon$ satisfying
\[
\norm{Q_\varepsilon}_{L^1}\geq 1, \qquad \sup_{n} \abs{\widehat{Q_\varepsilon}(n)}\leq \varepsilon.
\]
Let $N(\varepsilon)$ be the smallest integer exceeding $1/\varepsilon^2$, and consider the lacunary analytic polynomial
\[
Q_\varepsilon(\zeta) := \varepsilon \sum_{n=1}^{N(\varepsilon)} \zeta^{3^n}, \qquad \zeta \in \T.
\]
By definition, we have $\sup_n \abs{\widehat{Q_\varepsilon}(n)} = \varepsilon$, while Parseval's identity implies that 
\[
\norm{Q_\varepsilon}^2_{L^2} \geq N(\varepsilon)\varepsilon^2 \geq 1.
\]
However, since $Q_\varepsilon$ is a lacunary polynomial, it follows from Zygmund's \thref{THM:ZYGL2} that $\norm{Q_\varepsilon}_{L^1} \gtrsim 1$, and the claim follows from a final re-scaling.

\proofpart{2}{Assembling the blocks:}
Let $\varepsilon_j \downarrow 0$, and $Q_j := Q_{\varepsilon_j}$ denote the trigonometric polynomials appearing in the statement above. Let $L_j$ denote the degree of $Q_j$, and choose positive integers $(N_j)_j$, large enough to satisfy
\[
N_{j+1}- N_j > 3L_{j+1}+ 3L_j.
\]
This ensures that the trigonometric polynomials $T_j := \zeta^{N_j}Q_j$ have mutually disjoint Fourier spectrum. Now consider the formal series 
\[
S(\zeta) := \sum_j \varepsilon^{-1/2}_j T_j(\zeta), \qquad \zeta \in \T.
\]
Now if $S_j$ denotes the Fourier spectrum of $T_j$, then we have 
\[
\abs{\widehat{S}(n)} \leq \varepsilon^{-1/2}_j \sup_{k} \abs{\widehat{Q_j}(k)} \leq \varepsilon^{1/2}_j, \qquad n\in S_j,
\]
hence $\lim_{|n|\to \infty} \widehat{S}(n)=0$. Consider the de la Vallée Poussin kernels defined as the trigonometric polynomials
\[
V_{N} := 2\mathcal{F}_{2N+1}-\mathcal{F}_{N}, \qquad N=1,2, 3 \dots
\]
where $\mathcal{F}_N$ denotes the Fej\'er polynomial of degree $N$. Since the Fourier coefficients satisfy $\widehat{V_N}(n)=1$ for $N<|n|\leq 2N$, vanish for $|n|>2N+1$, our choice of $N_j$ implies that
\[
S \ast \zeta^{L_j} V_{N_j}(\zeta) = \varepsilon^{-1/2}_j T_j(\zeta), \qquad j=1,2,3, \dots, \qquad \zeta \in \T.
\]
Now if we assume that $S \in M(\T)$, then 
\[
\varepsilon^{-1/2}_j \leq \varepsilon_j^{-1/2} \norm{T_j}_{L^1} = \norm{S \ast \zeta^{L_j} V_{N_j}}_{L^1} \leq 6\norm{S}_{M(\T)}, \qquad j=1,2,3,\dots
\]
which is incompatible for large enough $j$. This shows that $S$ cannot be a measure.
\end{proof}


It turns out that can much further and prove the following remarkable result, initially proved in \cite{piatetski1952problem}.

\begin{thm}[Piatetski--Shapiro] There exists a compact set $E\subset \T$ of Lebesgue measure zero, which satisfies the following properties:
\begin{enumerate}
    \item[(i)] $E$ support no non-trivial Rajchman measure,
    \item[(ii)] $E$ supports a non-trivial distribution $S$ with 
    \[
    \widehat{S}(n) \to 0, \qquad |n|\to \infty.
    \]
\end{enumerate}
    
\end{thm}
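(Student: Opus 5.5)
We plan a construction in the spirit of Piatetski--Shapiro's original argument, organized around duality. Let $PF(\T)$ denote the \emph{pseudofunctions}, i.e.\ distributions $S$ with $\widehat{S}\in c_0$. Since $A(\T)\cong \ell^1=(c_0)^*$ through Fourier coefficients, $PF(\T)$ sits in duality with $A(\T)$ via the pairing $\langle S,f\rangle=\sum_n \widehat{S}(n)\widehat{f}(-n)$. Under this pairing, a pseudofunction is supported in a compact set $E$ exactly when it annihilates the ideal $\mathcal{J}_E$ from \thref{THM:IDEALSWIENER}. The two requirements then translate as follows.
\begin{enumerate}
\item[] (ii) asks that $E$ support a nonzero element of $PF(\T)$, i.e.\ that $\mathcal{J}_E$ not be weak-star dense in $A(\T)=c_0^*$.
\item[] (i) asks that no nonzero measure in $PF(\T)$ live on $E$.
\end{enumerate}
The difference between the two must be exploited through a family of test functions that are \emph{bounded in $L^1$ but unbounded in $A(\T)$}. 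Functions bounded in $A(\T)$ would pair continuously with pseudofunctions and would therefore also destroy (ii).

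\proofpart{1}{A criterion for (i).} We aim to show that $E$ supports no Rajchman measure as soon as there are trigonometric polynomials $T_j\geq 0$ with the following properties:
\begin{itemize}
\item $\widehat{T_j}(0)=1$, and $\widehat{T_j}(n)=0$ for $0<|n|<N_j$, where $N_j\to\infty$;
\item $T_j\to 0$ uniformly on a neighbourhood of $E$;
\item $\sum_{n\neq 0}|\widehat{T_j}(n)|^2$ stays bounded, or at least the ``tail pairing'' $\sum_{n\neq0}\widehat{T_j}(n)\widehat{\nu}(-n)$ tends to $0$ for each positive Rajchman measure $\nu$.
\end{itemize}
By the Rajchman--Milicer-Gruzewska theorem (\thref{THM:RM-G}) and its corollary, it suffices to treat $\nu=|\mu|\geq 0$, and also the measures $\nu_T=T\,d|\mu|$ with $T$ a trigonometric polynomial. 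Then
\[
0\leftarrow\int T_jT\,d|\mu|=\widehat{\nu_T}(0)+\sum_{n\neq 0}\widehat{T_j}(n)\widehat{\nu_T}(-n),
\]
and if the tail tends to $0$ we obtain $\int T\,d|\mu|=0$ for all $T$, so $\mu=0$. Controlling that tail for an arbitrary positive Rajchman measure is the delicate point. I would first try to build $T_j$ as normalized products of Fej\'er-type kernels evaluated at a very sparse frequency $M_j$, say $T_j(\zeta)=c_j\mathcal{F}_{K}(\zeta^{M_j})$. For such $T_j$ the tail pairing is an average of $\widehat{\nu}$ over multiples of $M_j$, and this average tends to $0$ because $\widehat{\nu}(kM_j)\to 0$ uniformly in $k\neq 0$ as $M_j\to\infty$, while the weights form a probability vector.

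\proofpart{2}{Building $E$.} Define
\[
E=\bigcap_j\{\zeta: T_j(\zeta)\le \delta_j\}, \qquad \delta_j\downarrow 0,
\]
with $\{T_j\le\delta_j\}$ a small union of arcs around the points where the dilated kernel is tiny. Each stage removes most of the circle, so $m(E)=0$. By construction $T_j$ is small on $E$, which is what Step 1 needs.

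\proofpart{3}{The pseudofunction (ii).} We will construct $S=\lim_J S_J$, where the $S_J$ are smooth functions supported on the $J$-th stage $E_J=\bigcap_{j\le J}\{T_j\le\delta_j\}$, with $\widehat{S_J}(0)=1$. Three properties are required:
\begin{itemize}
\item $\sup_n|\widehat{S_{J+1}}(n)-\widehat{S_J}(n)|\le 2^{-J}$;
\item $\widehat{S_{J+1}}-\widehat{S_J}$ is essentially supported in $|n|\gtrsim M_{J+1}$;
\item $\sup_{|n|\ge M_{J}}|\widehat{S_J}(n)|\to 0$.
\end{itemize}
Then $\widehat{S}\in c_0$, $S\neq 0$ since $\widehat{S}(0)=1$, and $\operatorname{supp}S\subseteq E$. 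The natural choice is $S_{J+1}=S_J\cdot\phi_{J+1}(\zeta^{M_{J+1}})$, where $\phi_{J+1}$ is a smooth $2\pi$-periodic bump supported in the allowed arcs, with $\widehat{\phi_{J+1}}(0)=1$ and small $\sup_{n\neq0}|\widehat{\phi_{J+1}}(n)|$. This product structure mirrors the Riesz products of Section~\ref{SEC:LACRIESZ}: composing with $\zeta^{M}$ spreads the Fourier coefficients of $\phi$ onto multiples of $M$, and for $M$ huge these do not interact with the spectrum of $S_J$.

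\proofpart{4}{Main obstacle.} The crux is choosing the bumps $\phi_j$ compatibly with Step 1. We need $\sup_{n\ne0}|\widehat{\phi_j}(n)|$ small, so that $S$ is a pseudofunction. At the same time, the $L^1$-normalized kernels $T_j$ must be concentrated exactly off the support of $\phi_j$, so that any positive Rajchman measure on $E$ is forced to vanish. Small sup-norm of $\widehat{\phi_j}$ together with disjoint supports of $\phi_j$ and $T_j$ is only possible because $\|\phi_j\|_A\to\infty$. This is the same phenomenon as in \thref{PROP:Distc0}: there, lacunary polynomials had large $L^1$-mass but uniformly small coefficients. I would try to realize $\phi_j$ by dilating and normalizing such a lacunary/Riesz-type polynomial and verifying the two estimates by hand. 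I expect the tail estimate in Step 1, uniformly over all positive Rajchman measures, to be the hardest part to make rigorous.
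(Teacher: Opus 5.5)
\paragraph{The scheme as written is self-contradictory.} Steps~1 and~3--4 cannot both succeed.

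For $T_j=\mathcal{F}_K(\zeta^{M_j})$ with $\widehat{T_j}(0)=1$, the off-zero weights $1-|k|/K$ sum to $K-1$, not $1$. So your tail bound is $(K-1)\sup_{k\neq0}|\widehat{\nu}(kM_j)|$. It works only for fixed $K$, that is, only because $\sup_j\|T_j\|_A=K<\infty$. (The $\ell^2$ alternative in your third bullet does not control the pairing, since $\widehat{\nu}\notin\ell^2$ for singular $\nu$.)

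But for fixed $K$ one has $\mathcal{F}_K(e^{is})\geq 4K/\pi^2$ for $|s|\leq \pi/(2K)$. Hence once $\delta_j<4/\pi^2$, every $e^{it}\in E$ has $M_jt\notin(-\frac{\pi}{2K},\frac{\pi}{2K})$ mod $2\pi$. Take $\chi\in C^\infty$ supported in that arc with $\widehat{\chi}(0)=1$. Then $\chi(\zeta^{M_j})$ vanishes near $E$, has Wiener norm $\|\chi\|_A$, and tends weak-star to $1$. So $E$ is a Rajchman $H$-set and supports no nonzero pseudofunction at all. Concretely, any bump $\phi$ supported in $\{\mathcal{F}_K\leq\delta\}$ with $\widehat{\phi}(0)=1$ satisfies $0=\int\phi\chi\,dm=1+\sum_{n\neq0}\widehat{\phi}(n)\widehat{\chi}(-n)$. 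Hence $\sup_{n\neq0}|\widehat{\phi}(n)|\geq 1/\|\chi\|_A$, and the bumps of Step~3 do not exist.

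This is not an artifact of the Fej\'er choice. Suppose $T_j$ is bounded in $A(\T)$, has $\widehat{T_j}(0)=1$ and $\widehat{T_j}(n)\to0$ for $n\neq0$, and is uniformly small on a fixed neighbourhood $V$ of $E$, as Step~1 asks. Then $T_j$ annihilates every pseudofunction $S$ on $E$. Indeed, take $\psi\in C^\infty$ with $\psi=1$ near $E$ and $\operatorname{supp}\psi\subset V$. Then $\langle S,T_j\rangle=\langle S,\psi T_j\rangle\to0$, because $\psi T_j$ is bounded in $A$ and tends to $0$ uniformly. At the same time $\langle S,T_j\rangle\to\widehat{S}(0)$.

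In the other direction, Step~4 asks that $T_j$ essentially vanish on $\operatorname{supp}\phi_j$. The same identity $\int\phi_jT_j\,dm=1+\sum_{n\neq0}\widehat{\phi_j}(n)\widehat{T_j}(-n)$ then forces $\|T_j\|_A\gtrsim 1/\sup_{n\neq0}|\widehat{\phi_j}(n)|\to\infty$. So it is $T_j$, not $\phi_j$, whose Wiener norm must blow up. Once it does, your tail argument, which uses nothing about $\nu$ beyond $\widehat{\nu}\in c_0$, gives no control.

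\paragraph{The missing idea.} Your guiding heuristic (that test functions bounded in $A(\T)$ would destroy (ii)) is inverted. What separates Rajchman measures from pseudofunctions (the idea behind Piatetski-Shapiro's criterion) is a sequence $f_j$ that \emph{is} bounded in $A(\T)$, with $\widehat{f_j}(0)=1$ and $\widehat{f_j}(n)\to0$ for $n\neq0$, but is small only \emph{on $E$ itself}:
\begin{itemize}
\item for $\mu\in M(E)$, $\int f_j\,d\mu\to0$ by dominated convergence;
\item for Rajchman $\mu$, also $\int f_j\,d\mu\to\widehat{\mu}(0)$. This is your Step~1, now with a trivially controlled tail;
\item a pseudofunction on $E$ is blind to smallness that does not hold on a neighbourhood.
\end{itemize}

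Room for (ii) then imposes two requirements, neither of which your sketch supplies.
\begin{enumerate}
\item Let $\eta_j=\sup_{n\neq0}|\widehat{\phi_j}(n)|$. By the identity above, no $\chi$ with $\widehat{\chi}(0)=1$ and $\|\chi\|_A<1/\eta_j$ may have support disjoint from the small set $\{|f_j|\leq\delta_j\}$. Unlike your Fej\'er sets, these small sets cannot miss a fixed arc after dilation.
\item Since $|\int\phi_jf_j\,dm|\leq\delta_j\|\phi_j\|_{L^1}$, the bumps must be signed with $\|\phi_j\|_{L^1}\geq(1-\|f_j\|_A\,\eta_j)/\delta_j\to\infty$. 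This is exactly what prevents the limit $S$ from being a measure.
\end{enumerate}

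Constructing such a pair $(f_j,\phi_j)$ is the heart of the theorem: a mean-one function of bounded Wiener norm, and a signed mean-one bump with uniformly small nonzero coefficients supported where that function is at most $\delta_j$. The lacunary polynomials of \thref{PROP:Distc0} give large $L^1$-mass with small coefficients, but say nothing about living where such a function is small. The notes give no proof of their own (they refer to Kahane's argument as presented in Lev's thesis), so nothing in the text fills this gap.
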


A neat and modern proof of this theorem was later given by J-P. Kahane, and a carefully outlined argument similar to that is contained in the PhD-Thesis of N. Lev, see \cite[Ch. 1]{lev2005piatetski}.

\subsection{Further results}
There is a general principle linking pointwise almost everywhere convergence to weak-type estimates for an associated maximal operator. Suppose $\{T_\alpha\}_\alpha$ is a family of linear operators acting on a Lebesgue space $L^p(X,\mu)$, where $(X,\mu)$ is a measure space. Define the associated maximal (sublinear) operator by
\[
T^* f(x) := \sup_{\alpha} |T_\alpha f(x)|, \qquad x \in X.
\]
If $T^*$ satisfies a weak-type estimate of the form
\[
\mu\big( \{x \in X : |T^* f(x)| > \lambda \} \big)
\leq C \frac{\|f\|_{L^p}^p}{\lambda^p}, \qquad \lambda > 0,
\]
then the set
\[
A := \left\{ f \in L^p(X,\mu) : \lim_{\alpha \to \alpha(0)} T_\alpha f(x) = f(x) \ \text{exists for $\mu$-a.e. } x \in X \right\}
\]
is closed in $L^p(X,\mu)$. The proof is simple, and can be found in \cite[P. 27]{duoandikoetxea2024fourier}.

If, in addition, one knows that $\lim_{\alpha \to \alpha(0)} T_\alpha f(x) = f(x)$ holds $\mu$-a.e. for all $f$ in a dense subset of $L^p(X,\mu)$, it follows that $A = L^p(X,\mu)$.

This principle underlies the proof of Carleson's theorem. In that setting, one studies the maximal partial sum operator
\[
S^* f(\zeta) := \sup_{N \geq 1} |S_N f(\zeta)|, \qquad \zeta \in \T, \quad f \in L^2(\T).
\]
Since pointwise convergence is immediate for smooth functions on $\T$, which form a dense subset of $L^2(\T)$, the central difficulty lies in establishing the corresponding weak-type estimate for the associated maximal operator $S^*$.

\medskip

We note that Beurling's theorem on $M_\zeta$-invariant subspaces extends to the Hardy spaces $H^p$ for all $p>0$, with the same description. In the case $p=\infty$, one must soften the topology on $H^\infty$ (to make it separable) and consider weak-star closed $M_\zeta$-invariant subspaces.

Beurling's theorem naturally raises the question of whether an analogous result holds in other spaces of analytic functions, most notably the Bergman space $L^2_a(\D)$ consisting of analytic functions $f$ on $\D$ with finite area integral
\[
\int_{\D} |f(z)|^2\, dA(z) < \infty.
\]
In this setting, the correspondence with boundary values available in the Hardy space $H^2(\D)$ is lost, and one no longer has access to a simple inner--outer factorization. Nevertheless, H.~Hedenmalm in \cite{HedenmalmFactBerg} and B.~Korenblum in \cite{korenblum1975extension}, showed that there are some forms of factorizations that still persist.

In analogy with Beurling's theorem, one studies closed $M_z$-invariant subspaces of $L^2_a(\D)$, where $M_z f(z) = zf(z)$. It was shown by C.~Apostol, H.~Bercovici, C.~Foiaș, and C.~Pearcy that there exist closed $M_z$-invariant subspaces $\mathcal{V}$ of $L^2_a(\D)$ for which the wandering subspace
\[
\mathcal{V} \ominus M_z \mathcal{V} := \mathcal{V} \cap (M_z \mathcal{V})^\perp
\]
is infinite dimensional. In particular, $M_z$-invariant subspaces of $L^2_a(\D)$ are far from being generated by a single function, in contrast with Hardy space.

Despite this, Aleman, Richter, and Sundberg proved that a meaningful analogue of Beurling's theorem still holds in $L^2_a(\D)$, see \cite{aleman1996beurling}. More precisely, every closed $M_z$-invariant subspace $\mathcal{V}$ is wandering in the sense of Halmos, that is, $\mathcal{V}$ is the smallest $M_z$-invariant subspace containing $\mathcal{V} \ominus M_z \mathcal{V}$.

We also mention the influential work of H.~Hedenmalm on singly generated and zero-based $M_z$-invariant subspaces \cite{hedenmalm1994factoring}, which laid much of the foundation for modern Bergman space theory. For further background on Bergman spaces, see \cite{hedenmalmbergmanspaces}.

Given a subset of integers $\Lambda$, we denote by $C_{\Lambda}(\T)$ the closed subspace of continuous functions $f$ on $\T$ whose Fourier coefficients are supported in the frequencies $\Lambda$:
\[
C_{\Lambda}(\T):= \{f\in C(\T): \widehat{f}(n)=0, \qquad n\neq \Lambda \}.
\]
We say that $\Lambda$ is a \emph{Sidon set} if there exists a constant $C(\Lambda) > 0$ such that
\[
\sum_{n} |\widehat{f}(n)| \leq C(\Lambda)\, \sup_{\zeta \in \T} |f(\zeta)|, 
\qquad \forall f \in C_{\Lambda}(\T).
\]
As we have seen, lacunary sequences $\Lambda = (N_k)_k$ satisfying $\inf_k N_{k+1}/N_k > 1$, are Sidon sets. A striking arithmetic characterization of Sidon sets was obtained by G.~Pisier in \cite{pisier1983arithmetic}.

For an integer $n$, let $R(n,\Lambda)$ denote the number of representations of $n$ as a finite sum
\[
n = \sum_j \varepsilon_j \lambda_j,
\]
where $\lambda_j \in \Lambda$ and $\varepsilon_j \in \{-1,0,1\}$. Pisier showed that $\Lambda$ is a Sidon set if and only if there exists $0 < \gamma < 1$ such that, for every finite subset $\Gamma \subset \Lambda$,
\[
\sup_{n \in \mathbb{Z}} R(n,\Gamma) \leq 3^{\gamma |\Gamma|}.
\]

In other words, additive relations among elements of $\Lambda$ must be exponentially sparse. J.~Bourgain also proved some intimate connections to associated Riesz products, in \cite{bourgain1985sidon}. See also Kronecker’s theorem on independent sets in Chapter~VI of \cite{katznelson2004introduction}.

\medskip

A very deep result which extends \thref{THM:HADAOSTROW} of Hadamard and Ostrowski on natural boundaries for lacunary Taylor series, is the following result:
\begin{thm}[Fabry's gap Theorem] Let $\{N_j\}_{j=1}^\infty$ be an increasing sequence of positive integers satisfying
\[
\lim_{j\to \infty}N_j /j = \infty.
\]
If $R>0$ denotes the radius of convergence of the analytic power series
\[
F(z):=\sum_{j=1}^\infty a_j z^{N_j}, \qquad |z|<R
\]
then $F$ does not extend analytically across any point in $|z|=R$.
\end{thm}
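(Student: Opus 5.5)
We normalize first. Replacing $F(z)$ by $F(Rz)$ we may assume $R=1$. After a rotation $z\mapsto e^{i\theta}z$ (which preserves the gap structure), we may also assume that $F$ extends analytically to a neighborhood of $z=1$; we will derive a contradiction with $R=1$. We pass to the exponential variable by setting
\[
\varphi(s):=F(e^{s})=\sum_j a_j e^{\lambda_j s},\qquad \lambda_j:=N_j,\qquad \Re s<0 .
\]
Then $\varphi$ is analytic in the half-plane $\Re s<0$ and, by assumption, in some disc $|s|<3\eta$. The gap hypothesis $N_j/j\to\infty$ says exactly that the exponents $\{\lambda_j\}$ have density zero. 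Since they are distinct integers, they are also uniformly separated, $\lambda_{j+1}-\lambda_j\geq 1$.

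The plan is Pólya's coefficient-extraction method, which is an uncertainty principle of exactly the kind these notes advertise: a sparse spectrum forces the local behaviour of $\varphi$ near one boundary point to control all coefficients. We introduce the canonical product
\[
C(z):=\prod_{j}\Bigl(1-\frac{z^2}{\lambda_j^2}\Bigr),\qquad C_j(z):=\frac{C(z)}{C'(\lambda_j)(z-\lambda_j)} .
\]
Because the $\lambda_j$ have density zero, $C$ is entire of exponential type $0$. Hence so is each $C_j$, and $C_j(\lambda_k)=\delta_{jk}$ for $k\ge 1$. Let $\gamma_j(w)=\sum_n c^{(j)}_n n!\,w^{-n-1}$ be the Borel transform of $C_j=\sum_n c^{(j)}_n z^n$. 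Since $C_j$ has type $0$, $\gamma_j$ is analytic in $\C\setminus\{0\}$, and $C_j(z)=\frac{1}{2\pi i}\oint_{|w|=r}\gamma_j(w)e^{zw}\,dw$ for every $r>0$. Plugging the Dirichlet series termwise into this representation gives the key identity
\[
a_j e^{\lambda_j s}=\frac{1}{2\pi i}\oint_{|w|=r}\gamma_j(w)\,\varphi(s+w)\,dw,
\]
first for $\Re s+r<0$, where the series converges uniformly on the contour. One checks convergence using $C_j$ of type $0$ together with the coefficient bound $|a_k|\le e^{o(\lambda_k)}$ coming from $R=1$. The left side is entire in $s$ and the right side depends only on the values of $\varphi$ on the circle $|w-s|=r$. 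The identity therefore persists by analytic continuation for all $s$ such that $\varphi$ is analytic on a neighborhood of the closed disc $\{|w-s|\le r\}$. We take $s=\eta$ and $r=\eta$, so that the disc lies in $\{\Re<0\}\cup\{|s|<3\eta\}$. The contour integral is then legitimate and yields
\[
|a_j|\,e^{\lambda_j\eta}\le \eta\,\max_{|w-\eta|=\eta}|\varphi(w)|\cdot\max_{|w|=\eta}|\gamma_j(w)| .
\]

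It remains to bound $\gamma_j$ on the small circle $|w|=\eta$ by $e^{\varepsilon\lambda_j}$ for every $\varepsilon>0$ and large $j$. Granting this, we get $|a_j|\le C_{\varepsilon,\eta}e^{-(\eta-\varepsilon)\lambda_j}$, hence $\limsup|a_j|^{1/N_j}\le e^{-\eta/2}<1$, contradicting $R=1$. The bound on $\gamma_j$ splits in two.
\begin{itemize}
\item \emph{Upper bound on the numerator.} Since $C$ has type zero, $|C(z)/(z-\lambda_j)|\le C_\varepsilon e^{\varepsilon|z|}$, and Cauchy estimates for the Taylor coefficients of this entire function give $\max_{|w|=\eta}\bigl|\gamma\bigl[C(z)/(z-\lambda_j)\bigr](w)\bigr|\le C_{\varepsilon,\eta}$, uniformly in $j$.
\item \emph{Lower bound on the denominator.} One needs $|C'(\lambda_j)|\ge e^{-\varepsilon\lambda_j}$ for large $j$.
\end{itemize}

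I expect the lower bound on $C'(\lambda_j)$ to be the main obstacle. I would estimate
\[
\log|C'(\lambda_j)|=\log\frac{2}{\lambda_j}+\sum_{k\neq j}\log\Bigl|1-\frac{\lambda_j^2}{\lambda_k^2}\Bigr|,
\]
splitting the sum into three ranges: $\lambda_k\le\lambda_j/2$, $\lambda_k\ge 2\lambda_j$, and the window $\lambda_j/2<\lambda_k<2\lambda_j$. The two far ranges are controlled by the counting function $n(t)=o(t)$ via a Stieltjes integral, and each contributes $-o(\lambda_j)$. In the window the separation $\lambda_{k+1}-\lambda_k\ge1$ is decisive. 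It gives
\[
\Bigl|1-\frac{\lambda_j^2}{\lambda_k^2}\Bigr|\gtrsim\frac{|k-j|}{\lambda_j},
\]
and there are only $m=o(\lambda_j)$ such indices. The worst case is therefore about $-\sum_{l\le m}\log(\lambda_j/l)\approx -m\log(e\lambda_j/m)$, which is $o(\lambda_j)$ because $m/\lambda_j\to0$. Thus separation plus zero density gives exactly the $e^{-o(\lambda_j)}$ bound. If this direct estimate proves delicate, I would first try to invoke the standard Fabry--Pólya estimate for sequences of density zero with bounded-below gaps, and verify only the window estimate by hand.
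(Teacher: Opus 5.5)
Your argument is correct. It takes a genuinely different route from what the notes indicate: the notes give no proof of Fabry's theorem, only a pointer to Havin--J\"oricke and in particular to a proof there via Tur\'an's lemma, whereas you give P\'olya's classical proof.

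In your version the gap condition is packaged into the zero-type canonical product $C$. Its interpolating functions $C_j$ have Borel transforms $\gamma_j$ that are biorthogonal to $\{e^{\lambda_k s}\}$ on circles of arbitrarily small radius. That is exactly what lets a single disc of analyticity $\{|s|<3\eta\}$ force $|a_j|\le C e^{-(\eta-\varepsilon)N_j}$.

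Both estimates on $\gamma_j$ hold as you say:
\begin{itemize}
\item The numerator bound is uniform in $j$: trivially off the unit disc around $\lambda_j$, and by the maximum principle inside it.
\item The range $\lambda_k\le\lambda_j/2$ even contributes nonnegatively to $\log|C'(\lambda_j)|$.
\item In the window, the separation $\lambda_{k+1}-\lambda_k\ge 1$ together with $m\le n(2\lambda_j)=o(\lambda_j)$ costs only $O(m\log(\lambda_j/m))=o(\lambda_j)$.
\end{itemize}

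What your route buys is that it uses nothing about integrality beyond separation. It therefore carries over essentially verbatim to Dirichlet series $\sum_j a_j e^{\lambda_j s}$ with real exponents of density zero and $\inf_j(\lambda_{j+1}-\lambda_j)>0$, which is the setting of the Carlson--Landau analogue mentioned in the notes. The price is entire-function machinery (canonical products, Borel transforms).

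The Tur\'an route stays with trigonometric polynomials and fits the local-to-global theme of Section 2 more directly. The constant in Tur\'an's inequality depends on the number of terms rather than the degree. Under the Fabry condition a polynomial with spectrum in $\{N_j\}\cap[0,M]$ has only $o(M)$ terms, so bounds on an arc transfer to all of $\T$ at a cost of $e^{o(M)}$.

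Some such mechanism is genuinely needed here. The composition trick with $\frac12(w^\lambda+w^{\lambda+1})$ used for Hadamard--Ostrowski relies on the blocks $[\lambda N_j,(\lambda+1)N_j]$ being disjoint, and this fails under mere density-zero gaps.

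Two small points to tighten:
\begin{itemize}
\item The termwise integration for $\Re s+r<0$ needs only uniform convergence of the power series in $e^{s+w}$ on the contour, where $|e^{s+w}|\le e^{\Re s+r}<1$. It does not need the coefficient bound you invoke.
\item The continuation to $s=\eta$ should be run along an explicit path, e.g.\ the segment $[-2\eta,\eta]$. For each point $s$ of that segment, the closed disc $\{|w-s|\le\eta\}$ lies in $\{\Re w<0\}\cup\{|w|<3\eta\}$.
\end{itemize}
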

It turns out that the converse of Fabry's Theorem is also true, and is a Theorem of P\'olya. There are several proofs of Fabry's gap Theorem in \cite{havinbook}, where one is based on Turan's lemma, on estimating the supremum norm of polynomials on $\T$ in terms of the supremum norm on subarcs. There is also an analogue of Fabry's gap Theorem for Dirichlet series due to Carlson-Landau, see \cite[Ch. 4, §7]{havinbook}.

\medskip

In connection with the Poisson summation formula, a natural problem is to characterize discrete infinite measures $\mu$ on $\R$ whose Fourier transform $\widehat{\mu}$ is of a similar form, with both measures supported on uniformly discrete sets, that is,
\[
\inf_{\lambda \neq \lambda'} |\lambda - \lambda'| > 0.
\]
Such measures are necessarily of the form
\[
\mu = \sum_{\lambda \in \Lambda} a(\lambda)\,\delta_{\lambda}, 
\qquad 
\widehat{\mu} = \sum_{\xi \in \Xi} \widehat{\mu}(\xi)\,\delta_{\xi},
\]
where $\Lambda$ and $\Xi$ are uniformly discrete subsets of $\R$. Here we regard these objects as tempered distributions. Measures of this type are called \emph{crystalline measures}, and at first glance, one might expect the uncertainty principle to obstruct the existence of such objects. However, a basic example, arising from the Poisson summation formula, is given by the Dirac comb
\[
\mu = \sum_{n \in \mathbb{Z}} \delta_n.
\]
It is therefore of interest to understand crystalline measures and the structure of the sets that support them. These questions are closely related to the theory of quasicrystals, developed to a large extent by Y.~Meyer, whose mathematical framework was later instrumental in interpreting some experimental discoveries of the Nobel Laureate in Chemistry D.~Shechtman. We refer the reader to the survey of P.~Kurasov \cite{kurasov2025quasicrystal} and to Meyer’s work \cite{meyer1995quasicrystals} for further background.

\section{Logarithmic integrals on the unit-circle}
In this section, we shall explore the convergence of the logarithmic integral 
\[
\int_{\T} \log |f(\zeta)| dm(\zeta) >- \infty
\]
and determine its governing role in several approximation and uniqueness problems arising in harmonic analysis and complex function theory.

\subsection{The F. \& M. Riesz Theorem} \label{SUBSEC:RIESZTHM}
We shall begin this story with a theorem of the brothers F. and M. Riesz, on the characterization of measures whose Fourier coefficients are supported on positive frequencies.  

\begin{thm}[F. and M. Riesz] \thlabel{THM:FMRIESZ} Let $\mu$ be a non-trivial complex finite Borel measure on $\T$ with the property that 
\[
\int_{\T} \zeta^n d\mu(\zeta) = 0, \qquad n=0,1,2,\dots
\]
Then $\mu$ is absolutely continuous wrt to $dm$ on $\T$, and its Radon--Nikodym derivative $f:= d\mu /dm$ satisfies 
\[
\int_{\T} \log |f| dm >- \infty.
\]
\end{thm}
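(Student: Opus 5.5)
We will prove the F. and M. Riesz Theorem in three steps. First, we reduce to an analytic function in the disc. Second, we show that this function lies in the Hardy space $H^1$ and that its boundary values coincide with $\mu$. Third, we obtain log-integrability from Jensen's inequality.

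\emph{Step 1: the analytic extension.} Since $\widehat{\mu}(-n)=\int_{\T}\zeta^n d\mu = 0$ for $n\geq 0$, we have $\widehat{\mu}(n)=0$ for $n\leq 0$. The Poisson extension $P(\mu)(z)=\int_{\T}\frac{1-|z|^2}{|\zeta-z|^2}d\mu(\zeta)$ then reduces, via the identity for the Poisson kernel used earlier for $H^2$, to the Cauchy integral $F(z)=\sum_{n\geq 1}\widehat{\mu}(n)z^n$. So $F$ is analytic in $\D$, vanishes at the origin, and satisfies $\sup_{r<1}\int_{\T}|F(r\zeta)|dm \leq \norm{\mu}$, i.e.\ $F\in H^1(\D)$. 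The measures $F(r\zeta)dm(\zeta)=P_r*\mu$ converge weak-star to $\mu$ as $r\to1-$.

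\emph{Step 2: absolute continuity (main obstacle).} We must show that $F_r:=F(r\,\cdot)$ converges in $L^1(\T)$, since the weak-star limit $\mu$ is then absolutely continuous. We will use the inner--outer (Blaschke) factorization for $H^1$, deferred to the appendix. Divide out the Blaschke product and write $F=BG^2$ with $G\in H^2$. More symmetrically, write $F=gh$ with $g,h\in H^2$, $|g|^2=|h|^2=|F|/|B|$ on circles in the limit, and $\norm{g}_{H^2}^2=\norm{h}_{H^2}^2\leq\norm{F}_{H^1}$. Since $g_r\to g$ and $h_r\to h$ in $L^2(\T)$ (by Parseval, as their Taylor coefficients are square summable), Cauchy--Schwarz gives $F_r=g_rh_r\to gh=:f$ in $L^1(\T)$. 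Hence $d\mu=f\,dm$ with $f\in L^1$.

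If one prefers to avoid the factorization theorem for $H^1$, an alternative is a subharmonicity argument: show that $|F|^{1/2}$ has a harmonic majorant and build $g$ as $\exp$ of half the analytic completion of a log-majorant. That route is more technical, however, so the Blaschke-product route is the one I will commit to.

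\emph{Step 3: the logarithmic integral.} Since $\mu\neq0$, $F\not\equiv0$. If $F$ has a zero of order $k$ at the origin, replace $F$ by $F/z^k$, which does not change the modulus on $\T$ and keeps us in $H^1$, so that $F(0)\neq0$. Jensen's inequality (subharmonicity of $\log|F|$) gives
\[
\log|F(0)|\leq \int_{\T}\log|F(r\zeta)|\,dm(\zeta), \qquad 0<r<1.
\]
We let $r\to1$ using $\log^+ x\leq x$ and the $L^1$ convergence $F_r\to f$ from Step 2. The positive parts converge, and Fatou's lemma applied to $\log^-$ along an a.e.-convergent subsequence yields
\[
\int_{\T}\log|f|\,dm\geq \log|F(0)|>-\infty.
\]
Together with $\int\log^+|f|\leq\norm{f}_{L^1}$, this gives $\log|f|\in L^1(\T)$.
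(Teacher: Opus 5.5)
Your proof is correct, but it does not follow the paper's main argument. It is the classical Hardy-space proof, and it supplies exactly the step that the paper's alternative proof outsources to the appendix.

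\emph{The paper's route.} The main proof never uses boundary values of $H^1(\D)$. After ruling out atoms, it combines the polar decomposition, Lusin's theorem and \thref{LEM:APPROX1} to show directly that $|\mu|(E)=0$ for every compact $E$ with $m(E)=0$. That lemma provides analytic polynomials bounded by $1$ on $\cD$ that approximate a continuous function on a compact null set $E$ and are small off a neighbourhood of $E$. Log-integrability then comes from Jensen's lemma for $H^1(\T)$, proved via Fej\'er means.

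\emph{Your route.} You exploit analytic structure instead. The splitting $F=(BG)\cdot G$ into two $H^2$ factors gives $F_r\to f$ in $L^1(\T)$ by Parseval and Cauchy--Schwarz. This both identifies the weak-star limit and shows that $f$ is the $L^1$-boundary function of $F$. Your limit passage in Step 3 ($\log^+$ is $1$-Lipschitz, Fatou for $\log^-$) is a clean substitute for the paper's Fej\'er-mean argument.

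\emph{What each buys.} The paper's approach is independent of factorization: the same approximation lemma is reused for the Rudin--Carleson theorem and to discard the singular part in Szeg\"o's theorem, and it never needs zero-free functions to have square roots. Yours gives a short, self-contained proof and yields the $L^1$ convergence of $F_r$ for free.

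One caution: do not justify Step 2 by citing the appendix's inner--outer factorization. Its derivation there uses the Jensen--Poisson inequality and \thref{THM:NTLHARDY}, whose case $p=1$ is essentially equivalent to the theorem you are proving. You only need the Blaschke step, and it is elementary:
\begin{itemize}
\item Jensen's formula (applied to $F/z^k$) together with $\log^+x\le x$ gives the Blaschke condition for the zeros of $F$.
\item For the finite partial products $B_N$, $|B_N(r\zeta)|\to1$ uniformly as $r\to1$, and the integral means of $F/B_N$ are nondecreasing in $r$. Hence they are bounded by $\norm{F}_{H^1}$.
\item Since $B/B_N\to1$ locally uniformly, this bound transfers to $F/B$.
\end{itemize}
Also, the equality $\norm{g}_{H^2}=\norm{h}_{H^2}$ and the modulus identities you state are not needed. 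The bound $|B|\le1$ already gives $BG\in H^2$, which is all the Cauchy--Schwarz step uses.
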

In the literature, the F. and M. Riesz Theorem is often regarded as a principal result within the theory of Hardy spaces, hence it is commonly phrased in complex analytic terms. To clarify this point, closed subspace of $L^1(\T)$:
\[
H^1(\T) := \{f\in L^1(\T): \widehat{f}(n)=0, \, \, n<0 \},
\] 
One may regard $H^1(\T)$ as the trace space of analytic functions $F$ in the analytic Hardy space $H^1(\D)$:
\[
\sup_{0<r<1} \int_{\T} \abs{F(r\zeta)} dm(\zeta) < \infty.
\]
More precise, every $F\in H^1(\D)$ has finite non-tangential limits equal to $f\in H^1(\T)$, while the Poisson extension to $\D$ of any $f\in H^1(\T)$ belongs to $H^1(\D)$. See appendix on Hardy spaces. In complex analytic terms, the F. and M. Riesz Theorem implies that any measure with vanishing negative Fourier coefficients is actually absolutely continuous wrt $dm$, with Radon-Nikodym derivative in $H^1(\T)$.

Clearly, the conclusion of the F. and M. Riesz Theorem holds if we only assume that the Fourier coefficients $\widehat{\mu}(n)=0$ for $n>M$ or $n<M$, for some $M\in \R$. In fact, the F. and. M. Riesz is yet another manifestation uncertainty principle in harmonic analysis.

\begin{princ} If the Fourier coefficients of a measure does not meet some neighborhood of either $\pm \infty$, then the measure is absolutely continuous with logarithmic integral Radon-Nikodym derivative on $\T$. 

\end{princ}

Neighborhoods at infinity are sets of the form $(-\infty, M)$ and/or $(M,\infty)$, where $M \in \mathbb{R}$. The proof presented below will be based on two key lemmas, which are both important in their own right. At then end, we shall also give the short proof bases on Poisson extensions and boundary behavior of Hardy space functions. Our first lemma is about uniform approximation by holomorphic polynomials on sets of Lebesgue measure zero, whose utility will re-surface on several occasions later.

\begin{lemma}[Uniform approximation on small sets]\thlabel{LEM:APPROX1} Let $E\subset \T$ be a compact set of Lebesgue measure zero, and let $g$ be a continuous function on $\T$ with $\sup_{\zeta \in \T} \abs{g(\zeta)}=1$. For any $\varepsilon>0$, there exists analytic polynomials $(Q_n)_n$ satisfying:
\[
\sup_{|z|\leq 1} \abs{Q_n(z)} \leq 1, \qquad \lim_n \sup_{E} \abs{Q_n-g} = 0, \qquad \lim_n \sup_{\T \setminus E_{\varepsilon}} \abs{Q_n} = 0,
\]
where $E_\varepsilon := \{\zeta \in \T: \dist{\zeta}{E}\leq \varepsilon \}$ denotes $\varepsilon$-neighborhood of $E$.
\end{lemma}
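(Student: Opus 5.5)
The plan is to build the polynomials from outer functions whose modulus is tiny off a small neighbourhood of $E$ and nearly $1$ on $E$, multiplied by a polynomial approximant of $g$, and finally truncated to polynomials.

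\emph{Step 1: an outer function concentrated near $E$.} Since $m(E)=0$ and $E$ is compact, for each $n$ choose an open set $U_n \supset E$, a finite union of arcs contained in the interior of $E_{\varepsilon}$, with $m(U_n)\leq 1/n^3$. Consider the weight
\[
w_n := -n \, 1_{\T\setminus U_n} + 0\cdot 1_{U_n},
\]
but smoothed. That is, take a continuous (say Lipschitz) function $w_n \leq 0$ on $\T$ which equals $0$ on a smaller neighbourhood $V_n$ of $E$ and equals $-n$ off $U_n$. Then set
\[
O_n(z) := \exp\Big( \int_{\T} \frac{\zeta+z}{\zeta-z} w_n(\zeta)\, dm(\zeta)\Big),
\]
which is outer with $|O_n| = e^{w_n}\leq 1$ on $\T$. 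Hence $|O_n|\leq 1$ on $\cD$, $|O_n| = e^{-n}$ on $\T\setminus E_\varepsilon$, and $|O_n|=1$ on $E$. The difficulty is that the argument of $O_n$ on $E$ is uncontrolled. To handle it, note that $O_n(\zeta)=e^{i\widetilde{w_n}(\zeta)}$ on $E$. I would choose $w_n$ so that $\widetilde{w_n}$ is small on $E$. The conjugate function at $\zeta\in E$ is an integral of $w_n$ against $\cot$. Since $w_n$ vanishes on $V_n$, this is roughly $n\int_{U_n\setminus V_n}$ plus a P.V. over the complement of $U_n$. The cleanest fix is to avoid controlling the phase at all: replace $O_n$ by $O_n^{1/2}\conj{O_n^{1/2}}$-type constructions, which are not analytic. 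So instead I will argue via a normal-families and limit argument, as follows.

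\emph{Step 2: remove the phase by a limiting choice.} Keep $w_n=-n$ off $U_n$, but let $m(U_n)$ shrink so fast that $n\cdot m(U_n)\to 0$ and the arcs of $U_n$ are chosen so that the Poisson and conjugate integrals of $w_n$ tend to $0$ locally uniformly near $E$ (relative to $E$). Concretely, let $\widetilde{w_n}$ on $E$ be bounded by $n\int_{\T\setminus U_n}\dots$. Rewrite $w_n = -n + n\,\chi_n$, where $\chi_n$ is a smooth version of $1_{U_n}$. The constant $-n$ contributes only to the modulus and not to the phase. The phase is then $n\,\widetilde{\chi_n}$. On $E$, $\chi_n$ is locally constant equal to $1$ near each point, and $\widetilde{\chi_n}(\zeta)$ is a principal value of $\cot$ over $U_n$ with $\zeta$ deep inside. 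This is bounded by $\log$ of the ratio of arc lengths, which can be tuned by making each component arc of $U_n$ symmetric-enough around the portion of $E$ it covers. This is the main obstacle. If it cannot be made uniformly small, I would fall back to multiplying by a unimodular correction: on $E$ the phase $\theta_n:=n\widetilde{\chi_n}$ is continuous, and I would absorb $e^{-i\theta_n}$ into the target function $g$ in Step 3. Since that step only needs $g e^{-i\theta_n}$ to be continuous on $\T$ and bounded by $1$, this fallback is actually enough, and I would adopt it.

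\emph{Step 3: approximate the target on $E$.} Let $h_n$ be continuous on $\T$ with $|h_n|\leq 1$ and $h_n=g\,\conj{O_n}$ on $E$. This works because $|O_n|=1$ on $E$ and $O_n$ is continuous near $E$, since $w_n$ is smooth there. The remaining issue is that $h_n$ is not analytic. So I use Lemma-type Fej\'er approximation only on $E$ after multiplying by a high power trick: write $h_n \approx T_n$ (a Fej\'er mean) with $\|T_n\|_\infty\leq 1$. Then write $T_n = \zeta^{-d_n} P_n$ with $P_n$ an analytic polynomial of degree $2d_n$. The factor $\zeta^{-d_n}$ is unimodular on $\T$ but not analytic, so I would instead put $\conj{\zeta}^{d_n}$ into the outer weight's phase correction as in Step 2. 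That is, require $O_n$ to approximate $\zeta^{d_n}$-phase on $E$. This is again a phase-control problem, which I regard as the crux. Finally, with $F_n := O_n P_n / \max(1,\|O_nP_n\|_\infty)$, which lies in the disc algebra, use Fej\'er means of $F_n$ to pass to analytic polynomials $Q_n$. This preserves $\sup_{\cD}|Q_n|\leq 1$, and uniform convergence gives both the approximation of $g$ on $E$ and the smallness $\lesssim e^{-n}\|P_n\|_\infty$ off $E_\varepsilon$. Provided $n$ is taken large relative to $\deg P_n$, this smallness tends to $0$.
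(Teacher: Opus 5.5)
There is a genuine gap, and it sits exactly at the step you yourself call the crux. The whole difficulty of the lemma is to pass from a trigonometric approximant $T_n=\overline{\zeta}^{\,d_n}P_n$ of the target on $E$, which has negative frequencies, to an \emph{analytic} function that is still bounded on $\T$ and still close to the target on $E$.

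Multiplying by your outer function $O_n$ does not help with this. $O_n$ prescribes a modulus, with an uncontrolled phase, but it creates no spectral gap. So $O_nT_n$ is no closer to analytic than $T_n$, and $O_nP_n$ approximates $\zeta^{d_n}g$ rather than $g$ on $E$. What you actually need is a bounded analytic function that is close to $\overline{\zeta}^{\,d_n}$ on $E$ and small off $E_\varepsilon$. That is the lemma itself for the continuous target $\overline{\zeta}^{\,d_n}$, so the reduction is circular.

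The choice of parameters is circular as well. $d_n$ is the degree needed to approximate $h_n=g\,\overline{O_n}$, so it depends on $O_n$, while $O_n$ is supposed to have its phase on $E$ adapted to $d_n$. Prescribing the phase of an outer function with $\abs{O_n}=1$ on $E$ is also not a formality. Since $w_n\le 0$ vanishes on $V_n$, the phase $\widetilde{w_n}$ is non-decreasing along every arc of $V_n$, whereas the phase of $\overline{\zeta}^{\,d_n}$ decreases.

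The outer function is not needed for smallness off $E_\varepsilon$ either. Take $\chi$ a continuous cutoff equal to $1$ near $E$ and supported inside $E_\varepsilon$. Fej\'er means of $g\chi$ already give trigonometric polynomials $T$ with $\abs{T}\le 1$, $T\to g$ uniformly on $E$, and $T\to 0$ uniformly off $E_\varepsilon$.

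The missing idea is to multiply by a factor with a \emph{spectral gap} rather than a prescribed modulus.
\begin{itemize}
\item Let $T$ be as above, of degree $N$. Using $m(E)=0$, choose a real trigonometric polynomial $\psi\ge 0$ with $\psi\ge 2k$ on $E$ and $(2N+1)\int_{\T}\psi\,dm\le 1$.
\item Deleting the frequencies of $\psi$ in $[-N,N]$ gives a real trigonometric polynomial $p$ with $p\ge \psi-1$.
\item Set $G:=1-\exp(-2P_+(p))$, where $P_+(p):=\sum_{n\ge 0}\widehat{p}(n)\zeta^n$. Then $\supp{\widehat{G}}\subseteq (N,\infty)$, so $TG$ has no negative frequencies and extends analytically to $\D$.
\item Since $2\Re P_+(p)=p$, you get $\abs{T-TG}=\abs{T}e^{-p}$. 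This is at most $e^{1-2k}$ on $E$ and at most $e\abs{T}$ on all of $\T$.
\end{itemize}
Hence $TG$ approximates $g$ on $E$, stays small wherever $T$ is small, and is uniformly bounded; Fej\'er means of $TG$ then give the analytic polynomials. This is how the paper handles exactly the step your Step~3 leaves open.
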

It is important to remark that the hypothesis $m(E)=0$ is crucial, otherwise taking $g\equiv 1$ we would be able to show that the Hardy space $H^1(\D)$ contains a non-trivial indicator function $1_E$, which violates the fact that such functions are logarithmically integrable on $\T$, see Appendix. The proof of this lemma will require a cocktail of several classical techniques from real analysis.
\begin{proof}
\proofpart{1}{Reduction to trigonometric polynomials:} Fix $0<\varepsilon<1$, and pick a function $\chi_\varepsilon \in C(\T)$ satisfying the properties:
\[
\supp{\chi_\varepsilon} \subseteq E_{2\varepsilon}, \qquad 0\leq \chi_k\leq 1, \qquad \chi_k = 1 \, \, \text{on} \, \, E_{\varepsilon}.
\]
This is basically the classical Urysohn lemma in the setting of $\T$. Now consider the continuous functions $g_\varepsilon := g \cdot \chi_\varepsilon$ which satisfy $\abs{g_\varepsilon} \leq \abs{g}\leq 1$ on $\T$ and $g_\varepsilon = g$ on $E_{\varepsilon}$ and $g_\varepsilon = 0$ off $E_{2\varepsilon}$. By means of passing to appropriate Fej\'er means of $g_\varepsilon$, we can find trigonometric polynomials $(T_k)_k$ which satisfy the following properties:
\[
(i) \, \sup_{\zeta \in \T} \lvert T_k(\zeta) \rvert \leq 1, \qquad (ii) \, \sup_{\zeta \in E} \lvert T_k(\zeta)-g(\zeta) \rvert \leq 1/k, \qquad (iii) \, \sup_{\zeta \in \T \setminus E_{3\varepsilon} } \lvert T_k(\zeta) \rvert \leq 1/k, 
\]
Thus, we have basically constructed trigonometric polynomials satisfying the required properties, hence it remains only to pass to analytic polynomials.
\proofpart{2}{Analytic polynomials:}
Let $N_k>0$ be a positive integer with the property that $\supp{\widehat{T_k}} \subseteq [-N_k, N_k]$. In this step we shall now make use of the crucial hypothesis $m(E)=0$. Indeed, $m(E)=0$ ensures that we can find a sequence of positive continuous functions $(\psi_k)_k$ on $\T$ and positive integers $N_k \uparrow+\infty$ such that 
\begin{equation}\label{EQ:psik}
\inf_{\zeta \in E} \psi_k(\zeta) \geq 2k, \qquad (2N_k +1) \int_{\T} \lvert \psi_k \rvert dm \leq 1, \qquad k=1,2,3, \dots
\end{equation}
Again, by means of passing to appropriate Fej\'er means, we may also assume that $\psi_k$ are real trigonometric polynomials of degree $> 2N_k$, say. This allows us to further define the non-trivial real-valued trigonometric polynomials
\[
p_k(\zeta) := \psi_k(\zeta)- \sum_{|n|\leq N_k} \widehat{\psi_k}(n) \zeta^n, \qquad k=1,2,3, \dots
\]
which satisfy $\supp{\widehat{p}_k} \cap [-N_k, N_k] = \emptyset$. Now consider the continuous functions $G_k = 1- \exp\left( -2P_+(p_k) \right)$ on $\T$, where
\[
P_+(p_k)(\zeta) = \sum_{n\geq0} \widehat{p_k}(n)\zeta^n,\qquad \zeta \in \T
\]
denotes the analytic part of the trigonometric polynomial $p_k$. Observe that the Fourier coefficients of each $P_+(p_k)$ are contained in $(N_k, \infty)$, hence it follows from the Taylor series expansion of the exponential function that also $\supp{\widehat{G_k}} \subseteq (N_k, \infty)$. This implies that $\supp{\widehat{T_k G_k}}\subseteq[0,\infty)$. Furthermore, since $p_k$ is real-valued, we also have
\[
\lvert T_k - T_k G_k \rvert = \lvert T_k \rvert \exp ( - 2\Re P_+(p_k) ) = \lvert T_k \rvert \exp ( - p_k ) \leq \lvert T_k \rvert \exp \left(- \psi_k + \sum_{|n|\leq N_k} |\widehat{\psi_k}(n)|  \right).
\]
Invoking the second estimate in \eqref{EQ:psik}, the last term in the exponent of the above equation as follows:
\[
\sum_{|n|\leq N_k} |\widehat{\psi_k}(n)| \leq  \sum_{|n|\leq N_k} \int_{\T} \abs{\psi_k} dm = (2N_k+1) \int_{\T}\abs{\psi_k} dm \leq 1.
\]
With this estimate at hand, we obtain the following uniform estimate on $E$:
\[
\sup_{\zeta \in E} \lvert T_k(\zeta) - T_k(\zeta) G_k(\zeta) \rvert \leq \exp(-2k + 1)
\]
Furthermore, outside each neighborhood $E_k$ of $E$, we also have
\[
\sup_{\T \setminus E_k} \lvert T_k G_k \rvert \leq \lvert T_k - T_k G_k \rvert + \sup_{\T \setminus E_k} \lvert T_k \rvert \leq (e+1) \sup_{\T \setminus E_k} \lvert T_k \rvert \leq (e+1)/k.
\]
Again, passing to appropriate Fej\'er means of $T_k G_k$, we now obtain holomorphic polynomials $(Q_k)_k$, which approximate $T_k$ uniformly on $E$ and arbitrary well off every neighborhood $E_k$ of $E$. In order to also ensure that $\abs{Q_n}\leq 1$, we just have to re-normalize these Fej\'er means. This completes the proof.

\end{proof}

As a corollary of \thref{LEM:APPROX1}, we also record the following result.

\begin{cor}\thlabel{COR:1Eapprox}
For any compact set $E \subset \T$ of Lebesgue measure zero, and any continuous function $g$ on $\T$ with $\sup_{\zeta \in \T} \abs{g(\zeta)}=1$, there exists analytic polynomials $(Q_n)_n$ with 
\[
\sup_{|z|\leq 1} \abs{Q_n(z)}\leq 1, \qquad \sup_E \abs{Q_n-g} \to 0, \qquad Q_n \to 0, \qquad \text{dm-a.e on} \, \, \T.
\]
\end{cor}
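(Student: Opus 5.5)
We obtain \thref{COR:1Eapprox} from \thref{LEM:APPROX1} by a diagonal argument over a sequence of shrinking neighborhoods $E_{\varepsilon_k}$, and then pass to almost everywhere convergence using $m(E)=0$.

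Fix a sequence $\varepsilon_k \downarrow 0$. For each $k$, apply \thref{LEM:APPROX1} with $\varepsilon=\varepsilon_k$ and the given $g$. This yields analytic polynomials $(Q^{(k)}_n)_n$ with $\sup_{|z|\leq1}\abs{Q^{(k)}_n}\leq 1$. Along $n$, these satisfy $\sup_E\abs{Q^{(k)}_n-g}\to 0$ and $\sup_{\T\setminus E_{\varepsilon_k}}\abs{Q^{(k)}_n}\to 0$. We then choose an index $n_k$ so large that
\[
\sup_E \abs{Q^{(k)}_{n_k}-g}\leq 1/k, \qquad \sup_{\T\setminus E_{\varepsilon_k}} \abs{Q^{(k)}_{n_k}} \leq 1/k,
\]
and set $Q_k := Q^{(k)}_{n_k}$. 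The uniform bound on the closed disc and the uniform convergence to $g$ on $E$ hold by construction.

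It remains to show that $Q_k\to 0$ at $dm$-a.e.\ point of $\T$. Since $E$ is compact, the closed neighborhoods $E_{\varepsilon_k}$ decrease to $\bigcap_k E_{\varepsilon_k}=E$. Hence every $\zeta\in\T\setminus E$ has positive distance to $E$, and so $\zeta\notin E_{\varepsilon_k}$ for all sufficiently large $k$. For such $k$ we get $\abs{Q_k(\zeta)}\leq 1/k$, so $Q_k(\zeta)\to 0$. Because $m(E)=0$, this gives convergence to zero $dm$-a.e.\ on $\T$. In fact the convergence is uniform on each compact subset of $\T\setminus E$.

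The argument has no real obstacle. The only points needing care are, first, that the limits in \thref{LEM:APPROX1} are genuinely uniform, which is what allows the choice of $n_k$ with explicit error $1/k$. Second, the pointwise conclusion uses exactly that $E$ is closed, so that the neighborhoods $E_{\varepsilon_k}$ shrink to $E$ itself rather than to its closure.
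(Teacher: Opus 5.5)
Your proposal is correct and matches the paper's proof, which is the same diagonal selection over $\varepsilon_k \downarrow 0$ applied to \thref{LEM:APPROX1}. The paper leaves the $dm$-a.e.\ convergence implicit, and you supply it correctly: each point off the closed set $E$ eventually lies outside $E_{\varepsilon_k}$, and $m(E)=0$.
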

\begin{proof}
The proof is just a classical diagonalization-argument of \thref{LEM:APPROX1}. Indeed, pick a sequence $\varepsilon_k \to 0$ and consider for each $\varepsilon_k$-neighborhood of $E$ an analytic polynomial $Q_k$ satisfying 
\[
\sup_{|z|\leq 1} \abs{Q_k(z)} \leq 1, \qquad \sup_{E} \abs{Q_k-g} \leq \varepsilon_k, \qquad \sup_{\T \setminus E_{\varepsilon_k}} \abs{Q_k} \leq \varepsilon_k.
\]
\end{proof}

The next lemma is typically referred to as Jensen's lemma. The reader should not confuse it with Jensen's inequality for convex functions, although they are of similar flavor.

\begin{lemma}[Jensen's lemma] \thlabel{LEM:JENSEN} For any $f \in H^1(\T)$, we have 
\[
\log \lvert \widehat{f}(0) \rvert \leq \int_{\T} \log |f| dm
\]
\end{lemma}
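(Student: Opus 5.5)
The plan is to prove Jensen's inequality first for analytic polynomials, using the Fourier-coefficient structure of $H^1(\T)$ together with exponentials of analytic parts, exactly the device used in the proof of \thref{LEM:APPROX1}. We then pass to general $f\in H^1(\T)$ by approximation. The starting observation is that if $u$ is a real trigonometric polynomial and $P_+(u)=\sum_{n\ge0}\widehat u(n)\zeta^n$ its analytic part, then $2\Re P_+(u) = u + \widehat{u}(0)$. Moreover, $e^{-2P_+(u)}$ is an analytic (bounded, uniformly convergent Taylor series) function with zeroth Fourier coefficient $e^{-2\widehat u(0)}$.

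\textbf{Key step.} For $f\in H^1(\T)$ and such $u$, the product $f e^{-2P_+(u)}$ again has vanishing negative Fourier coefficients. Its zeroth coefficient is therefore $\widehat f(0)e^{-2\widehat u(0)}$. This gives
\[
|\widehat f(0)|\, e^{-2\widehat u(0)} \le \int_{\T} |f|\, e^{-u-\widehat u(0)}\,dm ,
\]
that is,
\[
\log|\widehat f(0)| \le \log\int_{\T}|f|e^{-u}\,dm + \int_{\T} u\,dm .
\]
So it suffices to choose real trigonometric polynomials $u$ with $u\approx \log|f|$, so that $|f|e^{-u}\approx 1$ and $\int u\,dm\approx\int\log|f|\,dm$.

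\textbf{Approximation.} Fix $\delta>0$ and $M>0$, and let $v=\log(|f|+\delta)\wedge M$, a bounded function with $v\ge\log\delta$. Take Fejér means $u_N=\sigma_N(v)$. These are real, uniformly bounded, and converge to $v$ in $L^1$ and $dm$-a.e. (a.e. along a subsequence suffices). By dominated convergence, $\int|f|e^{-u_N}dm\to\int|f|e^{-v}dm$, since $|f|e^{-u_N}\le |f|/\delta$ and $f\in L^1$. Now
\[
|f|e^{-v}\le \frac{|f|}{|f|+\delta}+|f|e^{-M}1_{\{|f|+\delta>e^M\}}\le 1+o_M(1).
\]
Letting $N\to\infty$ and then $M\to\infty$ gives
\[
\log|\widehat f(0)|\le \int_{\T}\log(|f|+\delta)\,dm .
\]
Finally we let $\delta\downarrow0$ using monotone convergence. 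The integrands decrease, and they are bounded above by $|f|+1\in L^1$, so the limit is $\int\log|f|\,dm$, possibly $-\infty$. If that value is $-\infty$, the inequality forces $\widehat f(0)=0$, which is consistent with the statement.

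\textbf{Main obstacle.} The delicate step is justifying that $f e^{-2P_+(u)}$ lies in $H^1(\T)$ with the stated zeroth coefficient. This is the one place where analyticity is used. I would handle it by writing $e^{-2P_+(u)}$ as a uniformly convergent series $\sum_{n\ge0} c_n\zeta^n$ (the exponential of an analytic polynomial). Multiplying by $f\in L^1$ and integrating against $\zeta^{-k}$ then gives a convolution of coefficients supported in $[0,\infty)$. Hence the coefficients vanish for $k<0$, and at $k=0$ only $\widehat f(0)c_0$ survives. The interchange is justified by uniform convergence.
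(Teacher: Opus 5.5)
Your proof is correct, but it takes a different route from the paper's. The paper first proves the inequality for analytic polynomials $Q$, using the sub-mean value property of the subharmonic function $\log|Q|$. It then passes to $f\in H^1(\T)$ by approximating $f$ itself in $L^1(\T)$ by Fej\'er means $f_j$, applying the polynomial case to $f_j+\varepsilon$, and letting $j\to\infty$ and then $\varepsilon\to 0+$.

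You never approximate $f$. Instead you approximate the weight $\log|f|$ by real trigonometric polynomials $u$, and test $f$ against the bounded analytic function $e^{-2P_+(u)}$, whose boundary modulus is $e^{-u-\widehat u(0)}$. The only structural input is that $H^1(\T)$ is stable under multiplication by functions with absolutely convergent analytic Fourier series. This identifies the zeroth coefficient of the product as $\widehat f(0)e^{-2\widehat u(0)}$, and the triangle inequality does the rest. In effect you prove the variational bound $|\widehat f(0)|\le e^{\int u\,dm}\int_{\T}|f|e^{-u}\,dm$ for every real trigonometric polynomial $u$, and then optimize over $u$. Here $e^{-2P_+(u)}$ acts as an outer function with prescribed modulus, which is the same mechanism as the lower bound in Szeg\"o's theorem and the device in the uniform approximation lemma.

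What your route buys:
\begin{itemize}
\item It is purely Fourier-analytic and needs no potential theory.
\item It never compares $\log|f_j+\varepsilon|$ with $\log|f+\varepsilon|$. In the paper that comparison must really go through $\log|f_j+\varepsilon|\le\log(|f_j|+\varepsilon)\le\log(|f|+\varepsilon)+\varepsilon^{-1}|f-f_j|$ to be valid for complex-valued $f$.
\item Your truncation $v=\min(\log(|f|+\delta),M)$, together with $|f|e^{-v}\le 1+|f|e^{-M}$, handles every limiting step cleanly via dominated and monotone convergence.
\end{itemize}
The paper's route is shorter once subharmonicity of $\log|Q|$ is taken for granted. It also shows the inequality directly as the boundary form of the sub-mean value property.
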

We remark that in view of Jensen's inequality for convex functions, this is essentially an improvement of the triangle inequality:
\[
\lvert \widehat{f}(0) \rvert \leq \int_{\T} \lvert f \rvert dm,
\]
indicative of the fact that elements in the subspace $H^1(\T)$ exhibit much better behavior than all of $L^1(\T)$. Jensen's lemma is typically proved using Hardy space theory, for instance, see \cite{garnett}, but we shall below give a simple self-contained proof, as in \cite{havinbook}.

\begin{proof}[Proof of \thref{LEM:JENSEN}]
\proofpart{1}{Jensen's lemma for polynomials:} We shall first argue that Jensen's lemma holds for all analytic polynomials $Q$. In that case, $\log |Q|$ is a subharmonic function on $\C$, and since $\widehat{Q}(0) = Q(0)$ Jensen's lemma follows from the sub-mean inequality.

\proofpart{2}{Upgrade to $H^1$:} We shall now upgrade the estimate to all function in $H^1(\T)$. To this end, fix an arbitrary $f \in H^1(\R)$ and consider its Fej\'er means $f_j \in H^1(\T)$, which converge to $f$ in $L^1(\T)$, by Fej\'er's Theorem. Now for any $\varepsilon>0$, the step 1 of proof implies that
\[
\log |\widehat{f_j}(0) + \varepsilon| \leq \int_{\T}\log |f_j+ \varepsilon| dm.
\]
Now using the trivial estimate $\abs{\log |f_j +\varepsilon| - \log |f+\varepsilon|} \leq \frac{1}{\varepsilon}|f-f_j|$, we get 
\[
\log |\widehat{f_j}(0) + \varepsilon| \leq \int_{\T} \log |f+\varepsilon| dm + \frac{1}{\varepsilon} \int_{\T} \abs{f-f_j} dm.
\]
Sending $j\to \infty$, we arrive at 
\[
\log |\widehat{f}(0) + \varepsilon| \leq \int_{\T} \log |f+\varepsilon| dm \leq \int_{\T} \log \left(|f|+\varepsilon \right) dm.
\]
It now only remains to send $\varepsilon\to 0+$ and invoking the monotone convergence Theorem.

\end{proof}

We now prove the main result in this subsection.
\begin{proof}[Proof of \thref{THM:FMRIESZ}]
Let $\mu \in M(\T)$ with the property that
\[
\int_{\T} \zeta^n d\mu(\zeta) =0, \qquad n=0,1,2,\dots. 
\]
We first show that $\mu$ is absolutely continuous with respect to $dm$ on $\T$.

\medskip
\noindent
\emph{Step 1:  Reduction to a continuous measure.}
We primarily note that by the comment following \thref{PROP:WILIM}, used in the proof of Wiener's theorem, $\mu$ cannot have any atoms. Moving forward, we may therefore assume that $\mu$ is a continuous measure on $\T$.

\medskip
\noindent
\emph{Step 2: Polar decomposition and approximation.}
By the polar decomposition, there exists a unimodular function $\phi$ with $|\phi|=1$ $d|\mu|$-a.e.\ such that
\[
d|\mu| = \phi\, d\mu.
\]
By a classical Theorem of Lusin in measure theory, we may for each $\varepsilon>0$, find a continuous function $\psi_\varepsilon \in C(\T)$ with $\|\psi_\varepsilon\|_\infty \le 1$ such that
\[
|\mu|\big( \{ \zeta \in \T : \phi \neq \psi_\varepsilon \} \big) \le \varepsilon.
\]
Fix an arbitrary compact subset $E \subset \T$ with $m(E)=0$, and consider the $\varepsilon$-neighborhood of $E$ defined by
\[
E_\varepsilon := \{ \zeta \in \T : \dist{\zeta}{E} \leq \varepsilon \}.
\]
Invoking \thref{LEM:APPROX1}, we can select analytic polynomials $(Q_n)$ such that
\[
\sup_{\D} |Q_n| \le 1, \qquad
\sup_{\T \setminus E_\varepsilon} |Q_n| \to 0, \qquad
\sup_{E} |Q_n - \psi_\varepsilon| \to 0.
\]
Note that the assumption on $\mu$ implies that for all n:
\[
\int_{\T} Q_n \, d\mu = 0.
\]
Using this assumption and decomposing the above integral in the following way
\[
\int_{\T \setminus E_\varepsilon} Q_n \, d\mu + \int_E Q_n \, d\mu
= - \int_{E_\varepsilon \setminus E} Q_n \, d\mu,
\]
we obtain
\[
\Big| \int_{\T \setminus E_\varepsilon} Q_n \, d\mu + \int_E Q_n \, d\mu \Big|
\le |\mu|(E_\varepsilon \setminus E).
\]
Sending $n \to \infty$, we conclude
\[
\Big| \int_E \psi_\varepsilon \, d\mu \Big|
\le |\mu|(E_\varepsilon \setminus E).
\]

\medskip
\noindent
\emph{Step 3: Absolutely continuity:}
With the previous inequality at hand, we may now estimate as follows:
\begin{align*}
|\mu|(E) =\Big| \int_E \phi \, d\mu \Big| 
&\le \int_{\{\phi \neq \psi_\varepsilon\}} |\phi - \psi_\varepsilon| \, d|\mu|
   + \Big| \int_E \psi_\varepsilon \, d\mu \Big| \\
&\le 2\, |\mu|\big( \{\phi \neq \psi_\varepsilon\} \big)
   + |\mu|(E_\varepsilon \setminus E) \\
&\le 2\varepsilon + |\mu|(E_\varepsilon \setminus E).
\end{align*}
Letting $\varepsilon \to 0$ and using continuity of $\mu$, we conclude that $|\mu|(E)=0$.

By inner regularity of finite Borel measures on $\T$, this implies that $|\mu|(B)=0$ for every Borel set $B$ with $m(B)=0$, and thus we conclude that
\[
d\mu = f\, dm, \qquad f \in L^1(\T).
\]

\medskip
\noindent
\emph{Step 4: Hardy space and logarithmic integrability.}
Since $\widehat{f}(n)=\widehat{\mu}(n)$ for all $n$, we have $f \in H^1(\T)$. It remains to show that $\int_{\T} \log |f|\,dm > -\infty$ whenever $f \in H^1(\T)$ is nontrivial. Suppose for the sake of obtaining a contradiction that
\[
\int_{\T} \log |f|\,dm = -\infty.
\]
Let $n \ge 0$ be the smallest integer such that $\widehat{f}(n)\neq 0$, and define
\[
f_n(\zeta) := \zeta^{-n} f(\zeta).
\]
Then $f_n \in H^1(\T)$ and $\widehat{f_n}(0)=\widehat{f}(n)\neq 0$.

By Jensen's inequality,
\[
\log |\widehat{f}(n)|
= \log \Big| \int_{\T} f_n\, dm \Big|
\le \int_{\T} \log |f_n|\, dm
= \int_{\T} \log |f|\, dm
= -\infty,
\]
a contradiction. Hence $\int_{\T} \log |f|\,dm > -\infty$.
\end{proof}

We also give an alternative proof, which involves Poisson integrals, but requires prior knowledge on boundary behavior of Hardy space functions. 

\begin{proof}[Alternative proof of the F. and M. Riesz Theorem]
Suppose $\mu$ is a complex finite Borel measure on $\T$ with $\widehat{\mu}(n)=0$ for all $n\leq 0$, and consider Poisson extension of $\mu$:
\[
P(\mu)(z) := \int_{\T} \frac{1-|z|^2}{\abs{\zeta-z}^2} d\mu(\zeta), \qquad z \in \D.
\]
Now using the identity
\[
\frac{1-|z|^2}{\abs{\zeta-z}^2} = \Re \frac{2}{1-\conj{\zeta}z} -1 = \frac{2}{1-\conj{\zeta}z} + 2\sum_{n=0}^\infty \conj{z}^n \zeta^n -1, \qquad z\in \D, \qquad \zeta \in \T,
\]
we actually see that
\[
P(\mu)(z) = 2 \int_{\T} \frac{d\mu(\zeta)}{1-\conj{\zeta}z}, \qquad z\in \D,
\]
which defines an analytic function in $\D$. Furthermore, it follows from Fubini's Theorem that
\[
\int_{\T} \abs{P(\mu)(r\xi)}dm(\xi) \leq \int_{\T} \int_{\T} \frac{(1-r^2)}{\abs{\zeta-r\xi}^2} d\abs{\mu}(\zeta) dm(\xi) \leq \norm{\mu}_{M(\T)}, \qquad 0<r<1.
\]
which implies that $P(\mu)\in H^1(\D)$. It follows from the appendix on Hardy spaces that $P(\mu)$ has radial limit $dm$-a.e on $\T$ equals to an $L^1(\T)$-function $f$ with $\widehat{f}(n)=0$ for $n<0$, i.e. $f\in H^1(\T)$, and furthermore $P(\mu)(z) = P(f)(z)$. It follows by uniqueness of Poisson integrals that $d\mu = fdm$ with $f\in H^1(\T)$.

\end{proof}

Let us conclude this subsection with a simple corollary of the F. and M. Riesz theorem.

\begin{cor}
For any compact set $E \subsetneq \T$, the analytic polynomials $\Po$ are dense in $C(E)$.
\end{cor}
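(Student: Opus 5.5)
The plan is a Hahn--Banach duality argument, reducing density to an annihilator condition and then applying the F. and M. Riesz Theorem (\thref{THM:FMRIESZ}). Suppose the closure of $\Po$ in $C(E)$ is a proper subspace. Then by Hahn--Banach and the Riesz representation $C(E)^*\cong M(E)$, there is a non-trivial measure $\mu \in M(E)$ annihilating all analytic polynomials:
\[
\int_{\T} \zeta^n \, d\mu(\zeta) = 0, \qquad n=0,1,2,\dots
\]
We regard $\mu$ as an element of $M(\T)$ supported in $E$.

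By \thref{THM:FMRIESZ}, $\mu$ is absolutely continuous, $d\mu = f\,dm$ with $f \in L^1(\T)$, and $\int_{\T}\log|f|\,dm > -\infty$. (Strictly, $\conj{\zeta}f$, or the conjugate of $f$, lies in $H^1$; the logarithmic integral is unaffected by multiplying by a unimodular factor or conjugating.)

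Since $E \subsetneq \T$ is compact, its complement $\T\setminus E$ is a non-empty open set and therefore contains an arc $I$ with $m(I)>0$. As $\mu$ is supported in $E$, we have $f=0$ $dm$-a.e.\ on $I$. Hence $\log|f|=-\infty$ on a set of positive measure, so $\int_{\T}\log|f|\,dm=-\infty$, contradicting the conclusion of \thref{THM:FMRIESZ}. Therefore $\mu\equiv 0$, and $\Po$ is dense in $C(E)$.

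There is no serious obstacle here, since the heavy lifting is done by \thref{THM:FMRIESZ}. The only points needing care are:
\begin{itemize}
\item matching the sign convention of the vanishing Fourier coefficients, which is handled by conjugating or multiplying by $\zeta$;
\item noting that the theorem's hypothesis concerns measures on all of $\T$, which is fine since $M(E)\subset M(\T)$.
\end{itemize}
Note also that if $m(E)=0$, absolute continuity alone already forces $\mu=0$, which is consistent with \thref{LEM:APPROX1}.
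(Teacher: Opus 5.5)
Your proof is correct and is essentially the paper's argument. You use Hahn--Banach with $C(E)^*\cong M(E)$ to get a non-trivial annihilating $\mu\in M(E)$, then \thref{THM:FMRIESZ} to get $d\mu=f\,dm$ with $\int_{\T}\log|f|\,dm>-\infty$, and then derive a contradiction from $f=0$ a.e.\ on an arc of $\T\setminus E$. One harmless slip: \thref{THM:FMRIESZ} is stated exactly for the hypothesis $\int_{\T}\zeta^n\,d\mu=0$, $n\ge 0$, so no sign adjustment is needed, and under this hypothesis it is $f$ itself (and also $\conj{\zeta}f$) that lies in $H^1$, not $\conj{f}$, which of course does not affect the logarithmic integral.
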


\begin{proof}
Suppose that $\Po$ is not dense in $C(E)$. Then, by duality (Hahn--Banach and Riesz representation Theorem), there exists a non-trivial measure $\mu \in M(E)$ such that
\[
\int_{\T} \zeta^n \, d\mu(\zeta) = 0, \qquad n=0,1,2,\dots.
\]
By the F. and M. Riesz theorem, there exists $f\in L^1(\T)$ such that $d\mu = f\,dm$, and
\[
\int_{\T} \log |f|\,dm > -\infty.
\]
Since $\supp \mu \subseteq E \subsetneq \T$, the function $f$ vanishes on a set of positive Lebesgue measure. This contradicts the logarithmic integrability of $f$, and the result follows.
\end{proof}

\subsection{The Rudin--Carleson Theorem}
Here, we consider a result attributed to the independent efforts of W. Rudin and L. Carleson, which somewhat highlights the sharpness of the brothers Riesz Theorem. 
To this end, we shall need a classical construction which goes back to the work of Fatou, asserting that any compact set of Lebesgue measure zero in $\T$ is an admissible zero set for a function $f\in H^1(\T)$ which has continuous extension to $\T$.

\begin{thm}[Fatou]\thlabel{THM:R-C} For any compact set $E \subset \T$ of Lebesgue measure zero, there exists a non-trivial function $f$ which is continuous on $\T$ with 
\[
\widehat{f}(n)=0, \qquad n<0
\]
such that $f=0$ on $E$.
\end{thm}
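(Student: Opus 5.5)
The plan is to construct $f$ as the boundary values of an outer function with very small modulus near $E$. Concretely, I will build a non-negative function $w\in L^1(\T)$ which is smooth on $\T\setminus E$ and tends to $+\infty$ at every point of $E$. I then set
\[
F(z) := \exp\left( -\int_{\T} \frac{\zeta+z}{\zeta-z}\, w(\zeta)\, dm(\zeta) \right) = \exp\bigl(-P(w)(z) - iQ(w)(z)\bigr), \qquad z\in \D .
\]
Since $w\geq 0$, we have $|F|=e^{-P(w)}\leq 1$, so $F\in H^\infty$. Moreover $F(0)=e^{-\int w\,dm}\neq 0$, so $F$ is non-trivial. The task is then to show that $F$ extends continuously to $\cD$ and vanishes on $E$. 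Its boundary function $f$ is then continuous on $\T$. It satisfies $\widehat{f}(n)=0$ for $n<0$, because $\widehat{f}(n)=\lim_{r\to1}\int_{\T} F(r\zeta)\zeta^{-n}dm(\zeta)$ and each integral vanishes for $n<0$.

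\emph{Construction of $w$.} Since $m(E)=0$ and $E$ is compact, I choose open sets $U_k$ with $E\subset U_k\subseteq E_{1/k}$ and $m(U_k)\leq 2^{-k}$. For each $k$ I pick a smooth function $0\leq \varphi_k\leq 1$ with $\varphi_k=1$ on a neighborhood of $E$ and $\supp{\varphi_k}\subset U_k$; this is the smooth Urysohn lemma, as in the proof of \thref{LEM:APPROX1}. Set $w:=\sum_k \varphi_k$. Then
\[
\int_{\T} w\,dm \leq \sum_k 2^{-k}<\infty .
\]
Fix $\zeta_0\in\T\setminus E$. Since $\dist{\zeta_0}{E}>0$ and the supports $\supp{\varphi_k}\subseteq E_{1/k}$ shrink to $E$, only finitely many $\varphi_k$ are non-zero near $\zeta_0$. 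Hence $w$ is smooth on $\T\setminus E$. Also $w\geq K$ on a neighborhood of $E$ for every $K$.

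\emph{Continuity off $E$.} Fix $\zeta_0\notin E$ and an arc $I\ni\zeta_0$ with $\conj{I}\cap E=\emptyset$. Split $w=w_1+w_2$, where $w_1$ is a finite sum of the $\varphi_k$, hence smooth on $\T$, and $w_2$ vanishes on $I$. For the smooth part, $P(w_1)+iQ(w_1)$ extends continuously to $\cD$, since the Fourier coefficients of $w_1$ are absolutely summable. For the other part, the Herglotz kernel $\frac{\zeta+z}{\zeta-z}$ is uniformly bounded and continuous for $\zeta\in\supp{w_2}$ and $z$ near $\zeta_0$. Hence the Herglotz integral of $w_2$ extends analytically across a sub-arc of $I$. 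Therefore $F$ extends continuously to $\cD\setminus E$.

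\emph{Behaviour at $E$.} This is the step I expect to be the main obstacle, since the harmonic conjugate $Q(w)$ may oscillate wildly near $E$. The remedy is to avoid $Q(w)$ entirely and use only the modulus: $|F(z)|=e^{-P(w)(z)}$, and I will show $P(w)(z)\to\infty$ as $z\to\zeta\in E$ from $\cD$. Given $K$, the function $w$ is at least $K$ on an arc $J$ around $\zeta$. Since $w\geq 0$, we get $P(w)(z)\geq K\,P(1_J)(z)$. As $z\to\zeta$ inside $\D$, the harmonic measure $P(1_J)(z)\to 1$. For boundary points $z\in J\setminus E$ the bound $w\geq K$ holds directly. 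Hence $\limsup |F(z)|\leq e^{-K}$ for every $K$, so $F(z)\to 0$. Setting $f=0$ on $E$ therefore gives a function continuous on $\T$, which completes the argument.
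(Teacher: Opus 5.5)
Your proof is correct, and it takes a genuinely different route from the paper's. Both constructions produce an outer function, but they build it differently.

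\emph{The paper's route.} It decomposes $\T\setminus E$ into Whitney arcs $J_n$ with $|J_n|\asymp \dist{J_n}{E}$. To each arc it attaches the rational function $h_n(z)=\xi_n|J_n|/(r_n\xi_n-z)$, whose pole lies just outside $\cD$, with $\Re h_n\geq 0$ and $\Re h_n\geq c$ on $J_n$. It then sets $f=\exp(-\sum_n \lambda_n h_n)$, where $\lambda_n\uparrow\infty$ and $\sum_n\lambda_n|J_n|<\infty$. Analyticity across $\T\setminus E$ is immediate, because the poles cluster only at $E$. The vanishing on $E$ comes from the explicit bound $|f|\leq e^{-c\lambda_n}$ on $J_n$, together with the observation that approaching $E$ forces passage through infinitely many Whitney arcs.

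\emph{Your route.} You build the outer function directly from a smooth weight $w=\sum_k\varphi_k\in L^1(\T)$ that blows up on $E$. Vanishing then follows from the harmonic-measure bound $P(w)\geq K\,P(1_J)$. This needs no Whitney geometry, only outer regularity of $m$, smooth Urysohn functions, and standard Poisson/Herglotz facts. It also gives continuity of $F$ on all of $\cD$, including approach from inside $\D$. The paper only checks approach along $\T$; that suffices for the statement, and continuity on $\cD$ then follows from the Poisson representation of a bounded analytic function with continuous boundary values.

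\emph{What the paper's construction buys.} It gives more explicit control. Its $f$ is analytic across $\T\setminus E$, whereas yours is only $C^\infty$ there, since the Herglotz integral of the smooth part $w_1$ need not continue analytically. Its rate of decay near $E$ is tied to the Whitney scales. That is exactly what is exploited later in \thref{LEM:CARLOUTER}: there the weights become $\lambda_k|J_k|\log(1/|J_k|)$, which yields $C^\infty$ flatness for sets of finite Beurling--Carleson entropy.
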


The following proof is essentially a modification of an argument due to B. Korenblum, outlined in \cite{hedenmalmbergmanspaces}.  For a slightly different proof, we refer the reader to P. Koosis in \cite{koosis}.

\begin{proof}[Proof of \thref{THM:R-C}] Let $\{I_n\}_n$ denote the connected components of the open set $\T \setminus E$. To each arc $I_n$, we perform a so-called Whitney decomposition $\{I_{n,j}\}_j$, which has the following property 
\begin{equation}\label{EQ:WHITNEY}
\dist{I_{n,j}}{E} = |I_{n,j}| = \frac{1}{3 \cdot 2^{|j|} } |I_n|.
\end{equation}
In other words, their length are comparable to their distance to $E$. We now re-label the joint collection of all Whitney decompositions of $\{I_k\}_k$ by $\{J_n\}_n$. Let $\xi_n$ denote the center of each Whitney arc $J_n$, set $r_n = |J_n| +1$. Now consider the functions
\begin{equation}\label{EQ:hn}
h_n(z) = \frac{\xi_n |J_n|}{r_n\xi_n - z}, \qquad z\in \cD.
\end{equation}
We now record the following properties of \eqref{EQ:hn}, whose verification are left to the reader:
\begin{enumerate}
    \item[(i)] each $h_n$ is analytic in a neighborhood of $\cD$,
    \item[(ii)] $\Re h_n \geq 0$, 
    \item[(iii)] there exists an numerical constant $c>0$, such that $\Re h_n(\zeta) \geq c$ for $\zeta \in J_n$,
    
\end{enumerate}
Now pick a increasing sequence of positive numbers $(\lambda_n)_n$ with $\lambda_n \uparrow +\infty$, such that 
\[
\sum_n \lambda_n \lvert J_n \rvert < \infty,
\]
and consider the (outer) function
\[
f(z) = \exp \left( - \sum_n \lambda_n h_n(z) \right), \qquad z\in \D.
\]
It follows from the property $(ii)$ of $h_n$ that $f: \D \to \D$ is analytic, and since the $\{\xi_n\}$ only accumulate to the compact set $E$, we see that $f$ extends analytically across each compact subarc of $\T \subset E$. For instance, this can easily be verified using Morera's Theorem on a small disc centered at an arbitrary point in $\T \setminus E$. We shall now verify the continuity property of $f$ and its vanishing on the set $E$. To this end, fix a point $\zeta \in \T \setminus E$, and let $J_n$ be the Whitney subarc that contains $\zeta$. It follows from property $(iii)$ of $h_n$ that 
\[
\lvert f(\zeta) \rvert \leq \exp \left( - \sum_n \lambda_n \Re h_n(\zeta) \right) \leq \exp (-c\lambda_n )
\]
Now a key observation is that in order for $\dist{\zeta}{E}\to 0$, it must pass through infinitely many Whitney arcs $\{J_n\}_n$, since their lengths are always comparable to their distance to $E$. Now this observation in conjunction with the fact that $\lambda_n \uparrow \infty$, implies that $f(\zeta) \to 0$ as $\dist{\zeta}{E}\to 0$. We conclude that $f$ is continuous on $\T$, vanishes precisely on $E$, and satisfies $\widehat{f}(n)=0$ for $n<0$, since it is a bounded analytic function in $\D$.

\end{proof}

With this lemma at hand, we shall now turn to the proof of the Rudin-Carleson Theorem, phrased as a problem on boundary interpolation with analytic functions in $\D$.

\begin{thm}[Rudin-Carleson]\thlabel{COR:PEAKINT} Let $E \subset \T$ be a compact set of Lebesgue measure zero. Then for any $g\in C(\T)$ and any $\varepsilon>0$, there exists $\varphi_\varepsilon \in H^\infty(\D) \cap C(\T)$ such that $\varphi_\varepsilon=g$ on $E$, and 
\[
\sup_{\zeta \in \T} \lvert \varphi_\varepsilon (\zeta) \rvert \leq (1+\varepsilon)\sup_{\zeta \in E} \lvert g(\zeta) \rvert .
\]
\end{thm}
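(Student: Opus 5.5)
We normalize $\sup_E|g|=1$ and build $\varphi_\varepsilon$ as a series $\sum_k \varphi_k$. Each term will be in the disc algebra $H^\infty(\D)\cap C(\T)$ and will interpolate the error left by the previous partial sum. Iterating a basic step with geometrically decreasing errors gives the norm bound $1+\varepsilon$.

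\emph{Basic step.} Let $h\in C(\T)$ with $\sup_E|h|\le 1$, and let $\delta>0$. We want $\psi\in H^\infty(\D)\cap C(\T)$ with $\sup_{\T}|\psi|\le 1+\delta$ and $\sup_E|\psi-h|\le\delta$. After replacing $h$ by $h\cdot\min(1,1/|h|)$, which changes nothing on $E$, we may assume $\sup_{\T}|h|\le 1$. Then \thref{LEM:APPROX1} directly supplies an analytic polynomial $\psi=Q_n$ with $\sup_{\D}|Q_n|\le 1$ and $\sup_E|Q_n-h|\le\delta$. So approximate interpolation with norm control is already available from the earlier lemma. The remaining task is to upgrade approximate interpolation to exact interpolation.

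\emph{Iteration.} Fix $\delta_k=\varepsilon 2^{-k-2}$. Set $h_0=g$. Apply the basic step to $h_0$ to get $\psi_0$ with $\|\psi_0\|_\infty\le 1$ and $\sup_E|\psi_0-g|\le\delta_0$. The remainder $g-\psi_0$ has sup at most $\delta_0$ on $E$. Apply the basic step to $(g-\psi_0)/\delta_0$ and rescale: this gives $\psi_1$ with $\|\psi_1\|_\infty\le\delta_0$ and $\sup_E|g-\psi_0-\psi_1|\le\delta_0\delta_1$. Inductively we obtain analytic polynomials $\psi_k$ with $\|\psi_k\|_{L^\infty(\T)}\le\prod_{j<k}\delta_j$. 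The remainder on $E$ satisfies $\sup_E|g-\sum_{j\le k}\psi_j|\le\prod_{j\le k}\delta_j\to0$. The series $\varphi_\varepsilon:=\sum_k\psi_k$ converges uniformly on $\overline{\D}$, so it lies in $H^\infty(\D)\cap C(\T)$. It equals $g$ on $E$, and its norm is at most $1+\sum_{k\ge1}\prod_{j<k}\delta_j\le 1+\varepsilon$.

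\emph{Main obstacle.} All the analytic difficulty sits in \thref{LEM:APPROX1}, which we assume. What must be checked is its normalization: the lemma is stated for $g$ continuous on $\T$ with $\sup_{\T}|g|=1$, and gives $\sup_{\D}|Q_n|\le1$. Our error functions are only controlled on $E$, so before applying it we truncate $h$ radially as above so that its sup over $\T$ equals its sup over $E$. Fatou's \thref{THM:R-C} is not needed here, since the lemma's polynomials are already globally bounded by $1$. Alternatively, one could multiply by peak-type functions $1-f$ built from Fatou's function to enforce exact vanishing, but the geometric series argument avoids this.
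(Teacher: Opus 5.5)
Your argument is correct given \thref{LEM:APPROX1} as stated, but it takes a different route from the paper. You use an open-mapping-type iteration. You approximate data $h$ with $\sup_E|h|\le 1$ on $E$ by analytic polynomials of norm at most $1$ on $\cD$, and apply this to successively rescaled remainders. This gives a geometric series that converges in the disc algebra to an exact interpolant of norm at most $(1+\varepsilon)\sup_E|g|$. The radial truncation handles remainders that are controlled only on $E$.

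The paper instead follows Carleson's constructive scheme. From Fatou's function (\thref{THM:R-C}) it builds a peak function $H$ with $H=1$ on $E$ and $|H|<1$ on $\cD\setminus E$, and sums $H^{N_j}p_j+\sum_{k\ge j}H^{N_k}(p_{k+1}-p_k)$. Since $H=1$ on $E$, the interpolation is exact there, while high powers of $H$ suppress each term off shrinking neighbourhoods of $E$.

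Your route is shorter and avoids Fatou's theorem, but it rests entirely on the constant $1$ in $\sup_{|z|\le 1}|Q_n(z)|\le 1$. With approximants bounded only by some $C>1$, your series gives $C(1+\varepsilon)$, and recovering $1+\varepsilon$ would need an extra idea such as the peak-function localization you set aside. That constant is the delicate point. The estimates in the paper's proof of \thref{LEM:APPROX1} only give $|T_kG_k|\le(1+e)|T_k|$ near $E$. Its closing re-normalization divides by a sup-norm not shown to tend to $1$, which spoils the approximation of $g$ on $E$.

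The peak-function scheme can dispense with this constant, even though the paper quotes it too. Near $E$, continuity controls $|p_j|$ (since $p_j\approx g_0$ on $E$), and away from $E$ the powers $H^{N_j}$ do, so mere density of polynomials in $C(E)$ suffices.

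To make your route self-contained, derive the sharp approximation step by duality. By \thref{THM:FMRIESZ}, every measure annihilating the analytic polynomials is absolutely continuous, hence mutually singular with each $\mu\in M(E)$. So $\dist{\mu}{A^\perp}=\norm{\mu}$ for the disc algebra $A$. By Hahn--Banach, the restrictions to $E$ of the closed unit ball of $A$ are then dense in the closed unit ball of $C(E)$, and Fej\'er means turn them into polynomials. Your iteration is then exactly the second half of the classical duality proof.

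Incidentally, your truncation is also the clean way to choose $g_0$ in the paper's argument. No fixed power $H^{n_0}g$ need attain its maximum on $E$ (only within a factor $1+\eta$), since $|g|-\sup_E|g|$ can tend to $0$ near $E$ more slowly than $1-|H|$.
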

Here, we denote by $H^\infty(\D)$ the Banach space of bounded analytic functions in $\D$, equipped with the supremum $\sup_{|z|<1} \abs{f(z)}$. The word "peak" comes from the fact that the interpolating function $\varphi_\varepsilon$ on $E$, roughly attains its "peak value" on that set, that is, its supremum norm is essentially smaller elsewhere on $\T$. We shall also see in the proof how the notion of peak naturally occurs. The proof below is constructive, and essentially due to L. Carleson, which is outlined on P.129 in \cite{garnett}. There are simpler proofs by means of duality, which are far from constructive, see \cite{garnett} and references therein.

\begin{proof}[Proof of \thref{COR:PEAKINT}] 
\proofpart{1}{Constructing a peak function:}
Fix an arbitrary compact set $E\subset \T$ of Lebesgue measure zero and our goal is first to exhibit a so-called peak function for $E$. To this end, consider the holomorphic function $h = \log f$ on $\D$, where $f$ is as in \thref{THM:R-C}. Note that $\Re h$ is a strictly negative on $\T$, with $\Re h= -\infty$ precisely on $E$ and $h \in C^\infty(\cD \setminus E)$. Now consider the following conformal transformation of $h$;
\[
H(z) := \frac{h(z)}{h(z)-1}, \qquad z\in \D,
\]
which intrinsically gives rise to a holomorphic self-map $H: \D \to \D$, which extends continuously to $\T$, satisfies $H =1$ on $E$, while $\lvert H \rvert <1$ on $\cD \setminus E$. This gives our desired peak function on $E$.

\proofpart{2}{Approximation scheme:} Now fix an arbitrary continuous function $g$ on $\T$, and choose a large integer $n_0>0$, so that the continuous function $g_0 := H^{n_0} \cdot g$ attains its maximum on $E$:
\[
\sup_{\zeta \in \T} \abs{g_0(\zeta)}= \sup_{\zeta \in E} \abs{g(\zeta)}.
\]
Invoking \thref{LEM:APPROX1}, we can find holomorphic polynomials $(p_j)_j$, and open neighborhoods $(E_j)_j$ of $E$ such that $p_j \to g_0$ uniformly on $E$ and
\[
\sup_{E_j} \lvert p_{j+1}-p_j \rvert \leq 2^{-j}, \qquad \sup_{\T} \lvert p_j \rvert \leq \sup_{\zeta \in E} \abs{g(\zeta)}.
\]
Next, we pick positive integers $(N_j)_j$ so large that 
\[
\sup_{\zeta \in \T \setminus E_j} \lvert H^{N_j}(\zeta) \rvert \lvert p_{j+1}(\zeta)-p_j(\zeta) \rvert \leq 2^{-j}, \qquad \sup_{\zeta \in \T \setminus E_j} \lvert H^{N_j}(\zeta)p_j(\zeta) \rvert \leq 2^{-j}
\]
This can indeed be arranged since $\sup_{\T \setminus E_j}|H|<1$, for all fixed $j$. Now consider the functions 
\[
\varphi_j(z) := H^{N_j}(z)p_{j}(z) + \sum_{k\geq j} H^{N_k}(z)\left( p_{k+1}(z)-p_k(z)\right), \qquad z\in \cD, \qquad j=1,2,\dots
\]
The Weierstrass $M$-test readily implies that each $\varphi_j$ extends continuously to $\T$. Since $H=1$ on $E$, and $p_k \to g$ uniformly on $E$, we have $\varphi_j =g$ on $E$ for each $j$. Furthermore,
\[
\lvert \varphi_j \rvert \leq \lvert H^{N_j} p_j \rvert + \sum_{k\geq j} 2^{-k} \leq \sup_{\zeta \in E} \abs{g(\zeta)} + 2^{1-j}.
\]
The proof is complete.
\end{proof}
Invoking a more careful argument, one can actually show that the norm estimate also holds with $\varepsilon=0$, but this is not important for our further developments. 

We complete this section by mentioning that the initial proof of the brothers Riesz Theorem was essentially based on the above construction of peak sets, and the argument is carefully outlined in the book of P. Koosis \cite{koosis}. 

\subsection{Mean polynomial approximation on the unit-circle}

Here we consider the problem of mean polynomial approximation on $\T$. Given a positive finite Borel measure $\mu$ on $\T$, we ask when are the analytic polynomials dense in the space $L^2(\mu)$. Fortunately, this result has a very satisfactory answer which goes back to the work of G. Szeg\"o on orthogonal polynomials.

\begin{thm}[Szeg\"o] \thlabel{THM:SZEGÖ} Let $\mu$ be a positive finite Borel measure on $\T$ with Lebesgue decomposition $d\mu = w dm + d\mu_s$. Then 
\[
\inf_{q \in \mathcal{P}} \int_{\T} \lvert q(\zeta) -\conj{\zeta} \rvert^2 d\mu(\zeta) = \exp \left( \int_{\T} \log w dm \right)
\]
where $\Po$ denotes the set of analytic polynomials.
\end{thm}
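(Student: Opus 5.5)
Since $\abs{q(\zeta)-\conj{\zeta}} = \abs{\zeta q(\zeta) - 1}$ on $\T$, the left-hand side equals
\[
I(\mu) := \inf\Big\{ \int_{\T} \abs{p}^2 \, d\mu : p \in \Po, \ p(0)=1 \Big\}.
\]
So the plan is to compute $I(\mu)$ in three steps. First I remove the singular part, then prove the lower bound $I(w\,dm)\geq \exp(\int\log w\,dm)$, and finally construct near-extremal polynomials for the upper bound.

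\emph{Step 1: $I(\mu)=I(w\,dm)$.} The inequality $\geq$ is trivial since $\mu \geq w\,dm$. For $\leq$, fix $p$ with $p(0)=1$ and $\delta>0$. Since $\mu_s \perp dm$, inner regularity gives a compact $E$ with $m(E)=0$ and $\mu_s(\T\setminus E)\leq \delta$. Apply \thref{COR:1Eapprox} with $g\equiv 1$ to get analytic polynomials $Q_n$ with $\abs{Q_n}\leq 1$, $Q_n\to 1$ uniformly on $E$, and $Q_n \to 0$ $dm$-a.e. By dominated convergence $Q_n(0)=\int Q_n\,dm \to 0$. The polynomials $p_n := p(1-Q_n)/(1-Q_n(0))$ therefore satisfy $p_n(0)=1$. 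Their integral against $w\,dm$ tends to $\int\abs{p}^2w\,dm$ by dominated convergence. Their integral over $E$ against $\mu_s$ tends to $0$, and their integral over $\T\setminus E$ against $\mu_s$ is at most $C\norm{p}_\infty^2\delta$. Letting $n\to\infty$ and then $\delta\to 0$ gives the claim.

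\emph{Step 2: lower bound.} Let $p$ be an analytic polynomial with $p(0)=1$. Jensen's \thref{LEM:JENSEN} applied to $p^2\in H^1$ gives $\int \log\abs{p}^2\,dm \geq 0$. If $\int\log w\,dm=-\infty$ the lower bound is trivial. Otherwise, the convexity form of Jensen's inequality gives
\[
\int \abs{p}^2 w\,dm \geq \exp\Big(\int \log(\abs{p}^2 w)\,dm\Big) \geq \exp\Big(\int\log w\,dm\Big).
\]

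\emph{Step 3: upper bound (main obstacle).} Here I need test functions $g$ with $g(0)=1$ and $\abs{g}^2 w \approx \exp(\int\log w)$, approximately an outer function with modulus $w^{-1/2}$. For $N$ large, set $u_N := \tfrac12 \max(\min(\log w, N), -N)$, which is bounded, and let $\widetilde{u_N}$ be its conjugate function. Define
\[
g_N := \exp\Big(-(u_N + i\widetilde{u_N}) + \int u_N\,dm\Big).
\]
The function $u_N+i\widetilde{u_N}$ lies in $H^2$ and has bounded real part, so $g_N\in H^\infty$ with $\abs{g_N}=e^{-u_N}e^{\int u_N}$ and $g_N(0)=1$. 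Its Fejér means are analytic polynomials with value $1$ at $0$, uniformly bounded by $\norm{g_N}_\infty$, and converging $dm$-a.e. Since $w\in L^1$, dominated convergence gives $I(w\,dm)\leq \int \abs{g_N}^2 w\,dm = e^{2\int u_N}\int w e^{-2u_N}\,dm$.

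It remains to check that the right-hand side tends to $\exp(\int\log w\,dm)$, including the value $0$ when $\log w\notin L^1$. Write $w e^{-2u_N} = 1$ where $\abs{\log w}\leq N$, $=e^{-N}w\cdot e^{0}$-type terms where $\log w>N$, and $= w e^{2N}\leq 1$ where $\log w<-N$. Hence $\int w e^{-2u_N}\,dm \to 1$: the set where $\log w>N$ contributes at most $\int_{\{w>e^N\}} w\,dm\to 0$. Meanwhile $2\int u_N\,dm \to \int \log w\,dm$ by monotone convergence, applied separately to the positive part (dominated by $w$) and the negative part. This handles the divergent case as well. This truncation and limit bookkeeping, together with justifying that $g_N\in H^\infty$ and that its Fejér means converge boundedly a.e., is where I expect the care to be needed.
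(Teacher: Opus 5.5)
Your proof is correct, and Steps 2 and 3 follow a genuinely different route from the paper.

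\emph{Step 1.} This uses the same mechanism as the paper: multiply by polynomials that are nearly $0$ on a Lebesgue-null set carrying most of $\mu_s$ and tend to $0$... more precisely, polynomials that nearly cancel on that null set and converge to the identity factor $dm$-a.e. Your normalization $p(1-Q_n)/(1-Q_n(0))$ in the $p(0)=1$ formulation, combined with inner regularity and $\delta\to 0$, is cleaner than the paper's version, which needs approximating polynomials vanishing at the origin.

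\emph{Lower bound.} The paper builds the outer function $W$ with $|W|^2=w$ and uses $\|G\|_{H^2}^2\ge |G(0)|^2=|W(0)|^2$ for $G=W(\zeta q-1)$. You instead combine Jensen's lemma with the convexity form of Jensen's inequality. This needs no outer function and makes the case $\int\log w\,dm=-\infty$ trivial.

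\emph{Upper bound.} The paper invokes Beurling's theorem, through the cyclicity of outer functions, to get polynomials $q_n$ with $Wq_n\to W(0)$ in $H^2$. It then handles the divergent case separately by passing to $w+\varepsilon$ and using monotone convergence. You write down explicit bounded outer functions $g_N$ built from truncations of $\tfrac12\log w$, turn them into polynomials via Fej\'er means, and let $N\to\infty$. This treats both cases at once and uses only Poisson and conjugate integrals of bounded functions.

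\emph{Trade-off.} The paper's argument is shorter once Beurling's theorem is available, and it ties the extremal problem to the invariant-subspace structure of $H^2$. Yours is more self-contained and is essentially the classical truncation proof.

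\emph{A small correction in the final bookkeeping.} On $\{\log w<-N\}$ you have $2u_N=-N$, so the integrand is $we^{N}\le 1$, not $we^{2N}$. Hence $we^{-2u_N}\le 1$ on all of $\{\log w\le N\}$, and
$\int we^{-2u_N}\,dm\le 1+\int_{\{w>e^N\}}w\,dm$.
So the $\limsup$ of this integral is at most $1$, which is all the upper bound needs. Your claim that the integral tends to $1$ fails when $w$ vanishes on a set of positive measure; the limit is then $m(\{w>0\})$. Nothing depends on that claim, since Step 2 supplies the reverse inequality.
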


As an immediate corollary of Szeg\"o's Theorem, we obtain the following.
\begin{cor} \thlabel{COR:POLL2mu} The analytic polynomials are dense in $L^2(d\mu)$ if and only if 
\[
\log w \notin L^1(\T).
\]
    
\end{cor}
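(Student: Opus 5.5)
The corollary will follow from \thref{THM:SZEGÖ} combined with the invariance of the polynomial closure under multiplication by $\zeta$. Let $[\Po]$ denote the closure of the analytic polynomials in $L^2(\mu)$. Since $\zeta$ is bounded on $\T$, multiplication by $\zeta$ acts continuously on $L^2(\mu)$, so $\zeta[\Po]\subseteq[\Po]$.

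\emph{Sufficiency.} Assume $\log w\notin L^1(\T)$. Since $w\in L^1$, Jensen's inequality gives $\int\log w\,dm<\infty$, so $\int_{\T}\log w\,dm=-\infty$; with the convention $\exp(-\infty)=0$, the right side in \thref{THM:SZEGÖ} vanishes. Hence $\conj{\zeta}\in[\Po]$. Multiplying by $\conj{\zeta}$ is an isometry of $L^2(\mu)$ because $|\conj\zeta|=1$ on $\T$. Since $\conj{\zeta}$ is the limit of polynomials $q_n$, the functions $\conj{\zeta}\zeta^k$ are limits of the polynomials $q_n\zeta^k$, so $\conj{\zeta}\Po\subseteq[\Po]$. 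Taking closures gives $\conj\zeta[\Po]\subseteq[\Po]$. Iterating, $\conj{\zeta}^k\in[\Po]$ for every $k\ge1$, so every trigonometric polynomial lies in $[\Po]$. Trigonometric polynomials are uniformly dense in $C(\T)$ by \thref{THM:WEITHM}, and uniform convergence implies convergence in $L^2(\mu)$ because $\mu$ is finite. Finally $C(\T)$ is dense in $L^2(\mu)$ for a finite Borel measure on $\T$, by standard measure theory (regularity). Hence $[\Po]=L^2(\mu)$.

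\emph{Necessity.} If the polynomials are dense, then in particular $\conj\zeta\in[\Po]$, so the infimum in \thref{THM:SZEGÖ} is $0$. This forces $\exp(\int\log w\,dm)=0$, i.e.\ $\int\log w\,dm=-\infty$, which means $\log w\notin L^1(\T)$. Again we use that the positive part of $\log w$ is always integrable, since $\log^+w\le w$.

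The argument has no real obstacle once Szeg\"o's theorem is granted. The only points requiring care are the convention $\exp(-\infty)=0$ when $\log w$ is not integrable (including the case $w=0$ on a set of positive measure), and the bootstrapping from $\conj\zeta$ to all $\conj\zeta^k$.
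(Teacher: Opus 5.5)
Your proof is correct. It is the ``immediate corollary'' route the paper announces, but not the proof the paper actually writes out.

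The paper deliberately avoids the distance formula and proves the statement only for $d\mu=w\,dm$, by duality:
\begin{itemize}
\item if $\log w\in L^1$, the outer function $W\in H^1$ with $|W|=w$ supplies a unimodular function orthogonal in $L^2(w\,dm)$ to every analytic polynomial;
\item if $\log w\notin L^1$, any $g\in L^2(w\,dm)$ orthogonal to the polynomials makes $\overline{g}w$ an $H^1$ function. By the F.\ and M.\ Riesz theorem its logarithm is integrable, which is incompatible with $\int_{\mathbb{T}}\log w\,dm=-\infty$ unless $g=0$.
\end{itemize}
Duality tests against all of $L^2$ at once, so the paper needs no bootstrap. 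In your approach, the step $\overline{\zeta}\in[\mathcal{P}]\Rightarrow\overline{\zeta}^{k}\in[\mathcal{P}]$ for all $k$ is exactly what turns the single distance statement into density, and you handle it correctly.

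Your route buys generality. Szeg\"o's theorem has already discarded $\mu_s$ via the approximation lemma on Lebesgue-null sets, so you get the corollary for every positive finite $\mu$. The paper's written argument covers only the absolutely continuous case, though it extends easily, since F.\ and M.\ Riesz forces any annihilator to vanish $\mu_s$-a.e.

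The paper's route buys economy of tools. It uses only the F.\ and M.\ Riesz theorem and the existence of outer functions, not the full distance formula, whose proof relies on Beurling's invariant subspace theorem.

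Your argument is not circular, because the paper's proof of Szeg\"o's theorem does not use this corollary.
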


Note that the singular part of $\mu$ is irrelevant. However, we shall below list a short proof of \thref{COR:POLL2mu}, under the assumption that $\mu$ is absolutely continuous wrt $dm$, which does not require the Szeg\"o's beautiful distance formula, and only relies on a duality argument and the F. and M. Riesz Theorem.

\begin{proof}[Proof of \thref{COR:POLL2mu}] If $\log w \in L^1(\T)$, then we can define a an outer function $W \in H^1(\T)$ such that $|W|=w$ a.e on $\T$, and hence $W=Uw$ for some unimodular function $U\in L^\infty(\T)$. But then $U_0(\zeta)= U(\zeta)\conj{\zeta}$ is a non-trivial element in the Hilbert space $L^2(wdm)$ with the property
\[
\int_{\T} \zeta^n \conj{U_0(\zeta)} w(\zeta) dm(\zeta)= \int_{\T} \zeta^{n+1} \conj{W(\zeta)} dm(\zeta) = \conj{\widehat{W}(-n-1)}=0, \qquad n=0,1,2,\dots
\]
In other words, $U_0$ annihilates the analytic polynomials in $L^2(wdm)$, hence they cannot be dense in $L^2(wdm)$ by the Hahn--Banach Theorem. Conversely, assume $\log w \notin L^1(\T)$ and let $g\in L^2(wdm)$ with 
\[
\int_{\T} \zeta^n \conj{g(\zeta)} w(\zeta)dm(\zeta)=0, \qquad n=0,1,2, \dots
\]
Invoking the F. and M. Riesz Theorem, we must have $gw\in H^1(\T)$ and unless $g\equiv 0$, we must also have
\[
\int_{\T} \log |gw| dm >- \infty.
\]
However, this contradicts the assumption $\log w \notin L^1(\T)$, hence $g\equiv 0$ and we conclude that the analytic polynomials are dense in $L^2(wdm)$.
\end{proof}

We now re-direct our attention to the proof of \thref{THM:SZEGÖ} below, which will be based on an Beurling's Theorem, and an approximation lemma on small sets, which allows us to dispense with the singular part of $\mu$. The approximation lemma in question is roughly a slight extension of \thref{LEM:APPROX1}, with some added real analysis flavor. 

\begin{lemma}\thlabel{LEM:APPROXE} Let $\mu$ be a positive finite Borel measure on $\T$ which is singular wrt $dm$. Then there exists analytic polynomials $(p_n)_n$ such that 
\[
(i) \, \sup_{z\in \D} \lvert p_n(z) \rvert \leq 1, \qquad (ii) \, p_n \to \conj{\zeta} \, d\mu-a.e, \qquad (iii) \, p_n \to 0, \, dm-a.e.
\]
    
\end{lemma}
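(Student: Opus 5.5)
The plan is to reduce to \thref{COR:1Eapprox} applied on compact null sets that carry most of the mass of $\mu$, and then pass to a limit by a diagonal argument.

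Since $\mu$ is singular with respect to $dm$, there is a Borel set $B$ with $m(B)=0$ and $\mu(\T\setminus B)=0$. By inner regularity of $\mu$ we can choose compact sets $K_1\subset K_2\subset\cdots\subset B$ with $\mu(B\setminus K_j)\le 2^{-j}$. Each $K_j$ is compact and of Lebesgue measure zero.

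For each $j$, apply \thref{COR:1Eapprox} to the compact null set $K_j$ with $g(\zeta)=\conj{\zeta}$, which is continuous and has $\sup_{\T}|g|=1$. This produces analytic polynomials $Q^{(j)}_n$ with $\sup_{\D}|Q^{(j)}_n|\le 1$, $\sup_{K_j}|Q^{(j)}_n-\conj{\zeta}|\to 0$, and $Q^{(j)}_n\to 0$ $dm$-a.e.

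The a.e. convergence is not uniform, so we pick indices quantitatively. By Egorov's theorem (or dominated convergence, since $|Q^{(j)}_n|\le 1$), we have $\int_{\T}|Q^{(j)}_n|\,dm\to 0$. So we can choose $n_j$ with
\[
\sup_{K_j}\abs{Q^{(j)}_{n_j}-\conj{\zeta}}\le 2^{-j}, \qquad \int_{\T}\abs{Q^{(j)}_{n_j}}\,dm\le 4^{-j}.
\]
Set $p_j:=Q^{(j)}_{n_j}$. Property (i) is then immediate.

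For (iii), Chebyshev's inequality gives $m(\{|p_j|>2^{-j}\})\le 2^{-j}$. Borel--Cantelli then shows $p_j\to0$ $dm$-a.e.

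For (ii), any $\zeta\in\bigcup_j K_j$ lies in $K_j$ for all large $j$, since the $K_j$ are increasing. For such $\zeta$ we get $|p_j(\zeta)-\conj{\zeta}|\le 2^{-j}$ eventually. The complement of $\bigcup_j K_j$ has $\mu$-measure at most $\inf_j\mu(B\setminus K_j)=0$. Hence $p_j\to\conj{\zeta}$ $\mu$-a.e.

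The main obstacle is already absorbed into \thref{LEM:APPROX1}, namely producing bounded analytic polynomials that approximate a function on a null set while staying small elsewhere. Here the only delicate point is the diagonal selection. The a.e. convergence in \thref{COR:1Eapprox} must be converted into a summable quantitative bound, via the $L^1$ norm and Borel--Cantelli, so that the diagonal sequence still tends to $0$ $dm$-a.e. The monotonicity of the $K_j$ is what makes (ii) hold along the whole diagonal sequence.
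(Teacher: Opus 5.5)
Your proof is correct and follows essentially the same route as the paper: you exhaust a Lebesgue-null carrier of $\mu$ by increasing compact null sets, apply \thref{COR:1Eapprox} with $g=\conj{\zeta}$ on each, and select a diagonal sequence. The only difference is cosmetic. The paper passes through convergence in measure and then extracts a subsequence, whereas you choose $n_j$ with summable bounds, so that Borel--Cantelli (for $dm$) and the monotonicity of the $K_j$ (for $d\mu$) give both a.e.\ statements directly.
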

\begin{proof}

Since finite Borel measures are Radon measure, we can find compacts subsets $\{E_n\}_n$ of $\T$ with $E_n \subseteq E_{n+1}$, each of Lebesgue measure zero, such that 
\[
\mu(\T) \leq  \mu  (E_n) + 1/n.
\]
Now applying \thref{COR:1Eapprox}, we can find analytic polynomials $(Q_{n,j})_j$, such that for each $n$:
\[
\sup_{z\in \D} \lvert Q_{n,j}(z) \rvert \leq 1, \qquad \sup_{\zeta\in E_n} \abs{Q_{n,j}(\zeta)-\conj{\zeta}} \to 0, \qquad Q_{n,j} \to 0, \qquad \text{dm-a.e on} \, \, \T,
\]
as $j\to \infty$. Now using the uniform convergence on $E_n$ and the estimates 
\[
\mu \left( \{ |Q_{n,j}- \conj{\zeta} | > \varepsilon \} \right) \leq \mu(\T \setminus E_n) + \mu \left( E_n \cap \{ |Q_{n,j}- \conj{\zeta} | > \varepsilon/2 \} \right),
\]
we may for each $n$ find an integer $j_n\geq 1$, such that $Q_{n,j_n} \to \conj{\zeta}$ in measure $\mu$. By means of modifying $Q_{n, j_n}$ further, if necessary, we may using a similar argument also assume that $Q_{n,j_n} \to 0$ in measure $dm$. Now letting $(p_n)_n$ be an appropriate subsequence of $(Q_{n,j_n})_n$, we make $p_n \to \conj{\zeta}$ $d\mu$-a.e and $p_n \to 0$ $dm$-a.e, which proves the claim.
    
\end{proof}

We are now ready to prove Szeg\"o's Theorem.

\begin{proof}[Proof of \thref{THM:SZEGÖ}]
\proofpart{1}{Removing the singular part:} Let $(p_n)_n$ be the analytic polynomials in \thref{LEM:APPROXE} with $p_n(0)=0$, $\sup_{\T} \lvert p_n \rvert \leq 1$ and $p_n \to 1$ $d\mu_s$-a.e $p_n \to 0$ $dm$-a.e on $\T$. Now since $q_n:= q + p_n(\conj{\zeta}-q)$ are analytic polynomials, whenever $q$ is, we have
\[
\inf_{q \in \mathcal{P}} \int_{\T} \lvert q(\zeta)- \conj{\zeta} \rvert^2 d\mu(\zeta) \leq \int_{\T} \lvert q_n(\zeta) - \conj{\zeta}  \rvert^2 d\mu(\zeta) =   \int_{\T} \lvert 1 - p_n(\zeta) \rvert^2 \lvert q(\zeta) - \conj{\zeta} \rvert^2 d\mu(\zeta).
\]
Letting $n\to \infty$ and applying the dominated convergence theorem, we conclude that 
\[
\inf_{q\in \Po} \int_{\T} \lvert q(\zeta) - \conj{\zeta}\rvert^2 d\mu(\zeta) \leq \inf_{q\in \Po} \int_{\T} \lvert q(\zeta) - \conj{\zeta}\rvert^2 w(\zeta) dm(\zeta).
\]
Since the reverse inequality trivially holds, we conclude that the singular part $\mu_s$ of $\mu$ plays no role in the minimization problem, hence we may wlog assume that $d\mu = w dm$.
\proofpart{2}{log-integrable:} Suppose that $\int_{\T} \log w dm >- \infty$, and consider the outer function $W$ on $\D$ with $\lvert W \rvert^2 = w$ $dm$-a.e on $\T$, which means that 
\[
W(0)^2 = \exp \left( \int_{\T} \log w dm \right).
\]
It follows from Cauchy-Schwartz inequality that for any analytic polynomial $q$, we have
\begin{multline*}
\int_{\T} \lvert q(\zeta) - \conj{\zeta}\rvert^2 w(\zeta) dm(\zeta) = \int_{\T} \lvert \underbrace{W(\zeta)\zeta q(\zeta) -W(\zeta)}_{:=G} \rvert^2 dm(\zeta) \\
\geq \lvert G(0) \rvert^2 = \lvert W(0) \rvert^2 = \exp \left( \int_{\T} \log w dm \right).
\end{multline*}
To prove the reverse estimate, we recall since $W$ is outer, \thref{COR:CYCOUTER} ensures that we can find analytic polynomials $(q_n)_n$ such that $Wq_n \to W(0)$ in the Hardy space $H^2$. Passing to to another subsequence if necessary, we may also assume that $q_n(0)\to 1$. Now considering the analytic polynomials $p_n(\zeta) := \conj{\zeta}(1-q_n(\zeta) /q_n(0))$, we obtain
\[
\inf_{q\in \Po} \int_{\T} \lvert p_n(\zeta) - \conj{\zeta} \rvert^2 w(\zeta) dm(\zeta) = \lvert q_n(0) \rvert^{-2} \int_{\T} \lvert q_n W \rvert^2 dm \to W(0)^2 = \exp \left( \int_{\T} \log w dm \right).
\]
This proves the claim when $\int_{\T} \log w dm >- \infty$.
\proofpart{3}{log-divergence:} Now if $\int_{\T} \log w dm = -\infty$, then we consider the perturbed family of weights $w_{\varepsilon}= w+ \varepsilon$, with $\varepsilon>0$, which are clearly log-integrable for each fixed $\varepsilon>0$. Now applying the previous step, we get 
\[
 \inf_{q\in \Po} \int_{\T} \lvert q(\zeta) - \conj{\zeta}\rvert^2 w(\zeta) dm(\zeta) \leq \inf_{q\in \Po} \int_{\T} \lvert q(\zeta) - \conj{\zeta}\rvert^2 w_{\varepsilon} (\zeta) dm(\zeta) = \exp \left( \int_{\T} \log w_\varepsilon dm \right), \qquad \forall \varepsilon>0.
\]
Sending $\varepsilon \to 0+$ and invoking the monotone convergence theorem, proves the claim in this case. The proof of Szeg\"o's Theorem is complete.
\end{proof}

We now record the following uniqueness result which arises from Szeg\"o's Theorem.

\begin{cor}\thlabel{COR:SZUP} Let $w \in L^1(\T)$ non-negative with $\int_{\T} \log w dm = - \infty$. Then whenever $f\in L^1(\T)$ with 
\[
\int_{\T} \frac{\abs{f(\zeta)}^2}{w(\zeta)} dm(\zeta) < \infty, \qquad \widehat{f}(n) = 0 \qquad n=0,1,2, \dots 
\]
then $f=0$ on $\T$.
\end{cor}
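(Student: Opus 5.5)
The plan is a duality argument: reduce the statement to the density of the analytic polynomials in $L^2(w\,dm)$, which \thref{COR:POLL2mu} guarantees precisely because $\int_{\T}\log w\,dm=-\infty$.

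First I would deal with the zero set of $w$. The hypothesis $\int_{\T}\abs{f}^2/w\,dm<\infty$ is read with the usual convention that the integrand equals $+\infty$ wherever $w=0$ and $f\neq 0$. Under this convention, $f=0$ $dm$-a.e. on $\{w=0\}$. I then define
\[
g:=\frac{f}{w}\ \text{ on } \{w>0\}, \qquad g:=0 \ \text{ on } \{w=0\}.
\]
With this choice $f=gw$ holds $dm$-a.e. on $\T$, and
\[
\int_{\T}\abs{g}^2 w\,dm=\int_{\T}\frac{\abs{f}^2}{w}\,dm<\infty,
\]
so $g$ is an element of the Hilbert space $L^2(w\,dm)$.

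Next I would translate the vanishing of the Fourier coefficients into an orthogonality relation in $L^2(w\,dm)$. Since $gw=f\in L^1(\T)$, all the integrals below converge absolutely. For every $n\geq 0$,
\[
\int_{\T}\zeta^n\,\conj{g(\zeta)}\,w(\zeta)\,dm(\zeta)=\conj{\int_{\T}\zeta^{-n} f(\zeta)\,dm(\zeta)}=\conj{\widehat{f}(n)}=0 .
\]
This says that $g$ is orthogonal in $L^2(w\,dm)$ to every monomial $\zeta^n$ with $n\geq 0$, and hence to the whole set $\Po$ of analytic polynomials.

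To conclude, I note that $w\,dm$ is absolutely continuous and $\log w\notin L^1(\T)$. This is exactly the case of \thref{COR:POLL2mu} whose proof was given above, via the F. and M. Riesz Theorem; equivalently one may invoke Szeg\"o's \thref{THM:SZEGÖ}. It follows that $\Po$ is dense in $L^2(w\,dm)$. An element orthogonal to a dense subspace is zero, so $g=0$ in $L^2(w\,dm)$, meaning $g=0$ $w\,dm$-a.e. Therefore $f=gw=0$ $dm$-a.e. on $\{w>0\}$, and we already know $f=0$ on $\{w=0\}$, so $f=0$ on $\T$.

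The only genuinely delicate step is the first one: making sure the division by $w$ is legitimate and that $f=gw$ holds everywhere up to null sets. This is exactly where the finiteness of the weighted integral is used, namely to force $f$ to vanish on $\{w=0\}$. Once that is settled, the rest is a direct application of the earlier density result.
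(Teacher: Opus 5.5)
Your proposal is correct and follows the paper's argument. The paper identifies the dual of $L^2(w\,dm)$ with $L^2(w^{-1}dm)$ under the unweighted pairing, observes that $f$ annihilates the analytic polynomials, and concludes from their density (via Szeg\"o); your $g=f/w$ is exactly this identification written out explicitly, with the added care of checking that $f=0$ a.e.\ on $\{w=0\}$.
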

\begin{proof}
Let $f\in L^1(\T)$ be a function satisfying the above properties. Note that we can identify the dual space of $L^2(wdm)$ with the weight space $L^2(w^{-1}dm)$ considered in the densely defined classical $L^2$-pairing:
\[
\int_{\T} f(\zeta) \conj{g(\zeta)} dm(\zeta).
\]
The assumptions on $f$ imply that $f\in L^2(w^{-1}dm)$ with the properties that $f$ annihilates the analytic polynomials in $L^2(wdm)$. Now since Szeg\"o's Theorem ensures that they are dense, we conclude that $f\equiv 0$.
\end{proof}

We shall briefly mention an interesting consequence of Szeg\"o's Theorem to the theory of orthogonal polynomials. To this end, let $\mu$ be a probability measure on $\T$ and construct orthogonal (analytic) polynomials $\{\varphi_n\}_{n=0}^\infty$ in $L^2(\mu)$ using the Gram-Schmidt procedure of $\{1,\zeta, \zeta^2, \dots \}$ such that
\[
\int_{\T} \varphi_n(\zeta) \conj{\varphi_m(\zeta)} d\mu(\zeta) = \delta_{m,n}
\]
where $\delta_{m,n}$ are the usual Kronecker deltas. It is not difficult to see that the $\varphi_n$'s can be chosen monic, and more surprisingly, these orthogonal polynomials satisfy the following so-called Szeg\"o recursion:
\[
\varphi_{n+1}(z) = z\varphi_n(z) - \alpha_n z^n\conj{\varphi_n(1/\conj{z})}, \qquad  z\in \C
\]
where $\{\alpha_n\}_n$ is the so-called \emph{Geronimus-Verblunsky sequence} of $\mu$. Now a key structural result in the theory is that every probability measure $\mu$ gives rise to a unique Geronimus-Verblunsky sequence $\{\alpha_n\}_n$ with $|\alpha_n|<1$. Conversely, every sequence of complex numbers $\{\alpha_n\}_n$ with $|\alpha_n|<1$ gives rise to a unique probability measure, for which it is a Verblunsky off. Now if $d\mu = w dm + d\mu_s$, then one can show using Szeg\"o's Theorem that 
\[
\prod_{n=0}^\infty \left( 1-\abs{\alpha_n} \right) = \exp \left( \int_{\T} \log w dm \right),
\]
and the later sum is easily seen to converge if and only if $\{\alpha_n\}_n \in \ell^2$. For great brief survey on these matters, and more, we refer the reader to B. Simon in \cite{simon2005opuc}.

We complete this subsection by highlighting a classical result from 1941 due to A. Kolmogorov, which is essentially a non-analytic version of Szeg\"o's Theorem, involving approximation with real-valued trigonometric polynomials.

\begin{thm}[Kolmogorov] Let $d\mu= w dm + d\mu_s$ be a positive finite Borel measure on $\T$. Then 
\[
\inf_p \int_{\T} \lvert 1 - \Re p(\zeta) \rvert^2 d\mu(\zeta) = \left( \int_{\T} \frac{dm}{w} \right)^{-1},
\]
where the infimum is taken over all analytic polynomials $p$ with $p(0)=0$.
\end{thm}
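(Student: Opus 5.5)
The plan follows the template of the proof of Szeg\"o's Theorem: first remove the singular part, then solve a Hilbert space minimization problem for the absolutely continuous weight $w$. Note that $\Re p = \frac12(p+\conj{p})$ with $p(0)=0$ ranges over all real trigonometric polynomials with zero mean. So the quantity on the left is the squared distance in $L^2(\mu)$ from $1$ to the real span of $\{\Re \zeta^n, \Im \zeta^n\}_{n\geq 1}$. Since $\mu$ is positive, this is the same as the complex distance from $1$ to $\mathcal{T}_0$, the complex trigonometric polynomials with zero mean: for complex $t=t_1+it_2$ one has $|1-t|^2 = |1-t_1|^2+|t_2|^2 \geq |1-t_1|^2$.

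\emph{Step 1: removing $\mu_s$.} I would use \thref{LEM:APPROXE} exactly as in Step 1 of the proof of \thref{THM:SZEGÖ}. Take analytic polynomials $p_n$ with $\sup_{\D}\abs{p_n}\le 1$, $p_n\to 1$ $d\mu_s$-a.e.\ and $p_n\to 0$ $dm$-a.e. Replacing $p_n$ by $\zeta p_n$, composed with a Fej\'er-type correction, one arranges $p_n(0)=0$ while keeping these limits, which is allowed because the lemma gives convergence to $\conj{\zeta}$ and we may multiply by $\zeta$. Given $t\in\mathcal{T}_0$, the function $t_n:=t+p_n(1-t)$ is not of zero mean in general, since $\widehat{p_n t}(0)$ need not vanish. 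I would instead use $\abs{p_n}^2$-type real combinations, or subtract the mean $c_n=\int p_n(1-t)\,dm$. This mean tends to $0$ by dominated convergence, because $p_n\to 0$ $dm$-a.e. Then $\int\abs{1-t_n+c_n}^2d\mu \to \int\abs{1-t}^2 w\,dm$. Thus the infimum for $\mu$ equals the infimum for $w\,dm$.

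\emph{Step 2: the absolutely continuous case via duality.} Let $V$ be the $L^2(w\,dm)$-closure of $\mathcal{T}_0$, and let $g=1-\Pi_V 1$. Then $g\perp \zeta^n$ in $L^2(w)$ for all $n\neq 0$, so $gw$ has only the zero Fourier coefficient and $gw\equiv c$ $dm$-a.e. Hence $g=c/w$ on $\{w>0\}$. If $\int dm/w=\infty$, then $g\in L^2(w)$ forces $c=0$. The distance is then $0$, which matches the formula with the convention $1/\infty=0$.

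If $\int dm/w<\infty$, the candidate is explicit. Set $g=(\int dm/w)^{-1}w^{-1}$. One checks that $1-g\in V$: it has zero mean against $w\,dm$, and $V$ equals the orthogonal complement of the constant $1/w$. For the latter, $L^2(w)\ominus V$ consists of those $h$ with $hw$ constant, namely $\mathrm{span}\{1/w\}$. So $V=\{h:\int h\,dm=0\}$, and the distance squared is
\[
\frac{\abs{\langle 1,1/w\rangle_{L^2(w)}}^2}{\norm{1/w}^2_{L^2(w)}}=\frac{1}{\int dm/w}.
\]
Finally, the complex infimum is attained at real values: $w$ and $1$ are real, so the projection of $1$ commutes with conjugation and the minimizers can be taken real. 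Real zero-mean trigonometric polynomials are exactly the $\Re p$ with $p(0)=0$.

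The main obstacle is Step 1: keeping the perturbed polynomials of the form $\Re p$ with zero mean while killing the singular part. I expect the mean-correction via dominated convergence to handle it. Alternatively, one can avoid Step 1 by running the duality argument directly in $L^2(\mu)$. There the orthogonal complement of $\mathcal{T}_0$ consists of those $h$ with $h\,d\mu$ a constant multiple of $dm$, which forces $h=0$ $\mu_s$-a.e.\ and $hw$ constant. This reaches the same formula more cleanly, and I would try it first.
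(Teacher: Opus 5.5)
Your proof is correct, but it takes a different route from the paper's sketch. The paper mirrors the proof of Szeg\"o's theorem in four moves. It kills $\mu_s$ with the real polynomials $\Re Q_n$ from the small-set approximation lemma. It proves the lower bound in the absolutely continuous case by Cauchy--Schwarz applied to $1=\int_{\T}(1-\Re p)\,dm$. It gets the matching upper bound from Wiener's theorem on translation-invariant subspaces: cyclicity of $w^{1/2}$ yields real $T_n$ with $w^{1/2}T_n\to c\,w^{-1/2}$ in $L^2$, and then $p_n=1-(T_n+i\widetilde{T_n})$. Finally, it handles $\int_{\T}dm/w=\infty$ by perturbing to $w+\varepsilon$.

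You instead pass to the complex distance from $1$ to $\mathcal{T}_0$ using $|1-t|^2\geq|1-\Re t|^2$, which replaces the conjugate-function step. You then compute that distance exactly by identifying the orthogonal complement of $\mathcal{T}_0$ as the span of $1/w$, via uniqueness of Fourier coefficients. This settles the cases $\int_{\T}dm/w<\infty$ and $=\infty$ simultaneously. Your second variant, run directly in $L^2(\mu)$, is the cleanest: $\conj{h}\,d\mu=c\,dm$ forces $h=0$ $\mu_s$-a.e.\ by uniqueness of the Lebesgue decomposition. So you need neither the small-set approximation lemma, nor Wiener's theorem, nor the $\varepsilon$-perturbation.

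The paper's route buys an elementary lower bound that holds for each competitor separately, and it reuses machinery already built for Szeg\"o's theorem. Yours is shorter and self-contained, being in effect the dual extremal computation. If you keep your Step 1, note that it makes explicit the mean correction $c_n=\int_{\T}p_n(1-t)\,dm\to 0$, which the paper's ``similar argument as before'' passes over.

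Two minor slips. First, $1-g$ belongs to $V$ because it has zero mean against $dm$ (equivalently, it is orthogonal to $1/w$ in $L^2(w\,dm)$), not against $w\,dm$. In fact $\int_{\T}(1-g)w\,dm=\int_{\T}w\,dm-(\int_{\T}dm/w)^{-1}$, which is nonzero unless $w$ is constant. Second, no Fej\'er-type correction is needed in Step 1: $\zeta p_n$ already vanishes at the origin, and your mean subtraction makes the condition $p_n(0)=0$ superfluous anyway.
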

Below, we shall give a simpler proof than the original one, which essentially follows the similar route as Szeg\"o's Theorem.
\begin{proof}[Sketch of proof]
\proofpart{1}{Removing the singular part:}
Consider real-valued polynomials $(T_n)_n$ with $\widehat{T_n}(0)=0$ and such that $\sup_{\T} \lvert T_n\rvert \leq 1$, and $T_n \to 1$ $d\mu_s$-a.e, while $T_n \to 0$ $dm$-a.e. Indeed, just take $T_n = \Re Q_n$, where $Q_n$ as in the first step of Szeg\"o's Theorem. A similar argument as before shows that the distance does not depend on $\mu_s$, hence we may without loss of generality assume that $d\mu = w dm$.

\proofpart{2}{$1/w$ integrable:}
We first assume that $\int_{\T} \frac{dm}{w} < +\infty$. Now for any analytic polynomial $p$ with $p(0)=0$, Cauchy-Schwartz gives
\[
1 = \big \lvert \int_{\T}(1-\Re p ) dm \big \rvert^2 \leq \int_{\T} \frac{dm}{w} \cdot \int_{\T}\lvert 1- \Re p \rvert^2 w dm.
\]
Hence we have obtain a lower bound for the distance. To show that this lower bound is actually also an upper bound, consider the smallest translation invariant subspace $\mathcal{V}$ of $L^2(\T)$, which contains $v:=w^{1/2} $. Now since 
\[
\int_{\T} \frac{dm}{w} < +\infty,
\]
$v$ is non-zero $dm$-a.e on $\T$. By Wiener's Theorem, we must have that $\mathcal{V}= L^2(\T)$, hence we can find real-valued trigonometric polynomials $(T_n)_n$ such that
\[
v(\zeta)T_n(\zeta)\to c\,\frac{1}{v(\zeta)}
\qquad\text{in }L^2(\T),
\]
where 
\[
c= \left(\int_{\T} \frac{dm}{w} \right)^{-1}.
\]
Now since
\[
\widehat{T_n}(0)
=\int_{\T} vT_n \cdot \frac{1}{v}\,dm
\to c\int_{\T}\frac{dm}{w}=1,
\]
we may upon normalizing assume that $\widehat{T_n}(0)=1$ for all $n$. Now consider the analytic polynomials
\[
p_n:=1-(T_n+i\widetilde{T_n}),
\]
which satisfy $p_n(0)=0$ and $1-\Re p_n=T_n$. It follows that
\[
\int_{\T}|1-\Re p_n|^2\,w\,dm
=\int_{\T}|T_n|^2 w\,dm
=\int_{\T}|vT_n|^2\,dm
\to \int_{\T}\left|c\,\frac{1}{v}\right|^2dm
=\left(\int_{\T}\frac{dm}{w}\right)^{-1}.
\]
This proves the matching upper bound.

\proofpart{3}{non-integrable $\frac{1}{w}$:} Now if $\int_{\T} \frac{dm}{w} = +\infty$, then we consider the family of perturbed weights $w_\varepsilon := w + \varepsilon$ and repeat the final previous step. The remaining details are left to the reader. 

\end{proof}

\subsection{Simultaneous polynomial approximation in the disc}

In this final part of the section, we shall delve into certain aspects of the problem on polynomial approximation the the closed disc $\cD$. A natural extension of Szeg\"o's problem in this context goes as follows: 

\emph{Let $\mu$ be a  positive finite Borel measures $\mu$ on $\cD$. When are the analytic polynomials $\Po$ dense in $L^2(\mu)$? }

It turns out that this problem is immensely rich and connects to various other interesting problems in complex analysis, harmonic analysis, and operator theory, and several known figures, such as J. Brennan, S. Khrushchev, T. Kriete, A. Volberg, and others have contributed a fair share to this subject. In broad generality, J. Thomson in \cite{thomson1991approximation} found a characterization expressed in terms of analyticity. If $\Po^2(\mu)$ denotes the closure of $\Po$ in $L^2(\mu)$ then it was proved that $\Po^2(\mu) = L^2(\mu)$ if and only if no point-evaluation defines a bounded linear functional on $\Po^2(\mu)$. Now one direction is rather simple, thus if $\lambda \in \cD$ is a bounded point-evaluation functional on $\Po^2(\mu)$, then this means that there exists a constant $C(\lambda)>0$ such that
\[
\abs{p(\lambda)}^2 \leq C(\lambda) \int_{\cD} \, \abs{p(z)}^2 d\mu(z). \qquad p\in \Po
\]
But this and a classical normal family argument implies that all functions in $\Po^2(\mu)$ extend analytically in a neighborhood of $\lambda$, hence $\Po^2(\mu) \subsetneq L^2(\mu)$.

Even in the case when $\Po^2(\mu)$ has a non-trivial bounded point-evaluation functional, it is still of interest to understand the space $\Po^2(\mu)$. A particularly interesting case is when every point in $\D$ is assume to be a bounded point-evaluation functional on $\Po^2(\mu)$. J. Thomson in \cite{thomson1991approximation} proved the following remarkable decomposition: there exists a Borel set $E\subseteq \T \cap \supp{\mu}$ of 
\[
\Po^2(\mu) \cong \Po^2(\mu \lvert_{ \supp{\mu}\setminus E}) \oplus L^2(\mu \lvert_{E}),
\]
interpreted in the sense isometric isomorphism of Hilbert spaces. Here the space 
\[
\Po^2(\mu \lvert_{ \supp{\mu}\setminus E})
\]
is \emph{irreducible}in the sense that it contains no non-trivial characteristic function, which roughly means that it is a genuine space of analytic functions on the part of $\mu$ restricted to $\supp{\mu}\setminus E$, while you can approximate any $L^2$-function on the set $\supp{\mu} \cap E$. This result highlights the occurrence of a certain \emph{simultaneous approximation} phenomenon, which we shall get more aquatinted with. To clarify this point, Thomson's result implies that there exists analytic polynomials $(p_n)_n$ with the properties
\[
\int_{\cD \setminus E} \abs{p_n}^2 d\mu \to 0, \qquad \int_E \abs{p_n-1}^2 d\mu \to 0.
\]
With this perspective in mind, we continue the story from an uncertainty principle perspective, by asking a slightly broader question: Suppose we have a Banach space $X$ of analytic functions in $\D$ which contains $\Po$ as a dense subset. Can we classify the compact subsets $E \subset \T$, for which there exists analytic polynomials $\{Q_n\}_n$ so that the following simultaneous approximation phenomenon occurs:
\[
Q_n \to 0 \, \, \text{in} \, \, X, \qquad Q_n \to 1  \, \, \text{uniformly on} \, \, E?
\]

The following classical results gives essentially puts this question into two different categories.

\begin{thm}[Khinchin-Ostrowski] \thlabel{THM:KO}Let $E \subset \T$ be a set of positive Lebesgue measure and let $(Q_n)_n$ be analytic polynomials with the property that there exists a constant $C>0$ such that
\[
\sup_{0<r<1} \int_{\T} \log^+ \abs{Q_n(r\zeta)} dm(\zeta) \leq C, \qquad Q_n \to 0 \, \, \text{a.e on} \, \, E.
\]
Then $(Q_n)_n$ converges uniformly to $0$ on compacts subsets of $\D$.
\end{thm}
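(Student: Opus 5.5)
The plan is a Jensen-type subharmonicity estimate: we use the \emph{Poisson majorant} of $\log\abs{Q_n}$, and split the boundary integral into a piece over a set where $Q_n$ is uniformly small and a piece controlled by the uniform $\log^+$ bound.

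First, pass to a set where the convergence is uniform. Since $Q_n\to 0$ a.e.\ on $E$ and $m(E)>0$, Egoroff's theorem gives a compact set $F\subseteq E$ with $m(F)>0$ on which $Q_n\to 0$ uniformly. Set $\delta_n:=\sup_F\abs{Q_n}$, so that $\delta_n\to 0$, and hence $\log\delta_n\to-\infty$.

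Second, move the hypothesis to the boundary. Each $Q_n$ is a polynomial, hence continuous on $\cD$. The functions $\log^+\abs{Q_n(r\zeta)}$ are uniformly bounded in $\zeta$ and converge as $r\to1^-$, so dominated convergence gives
\[
\int_{\T}\log^+\abs{Q_n}\,dm\le C
\]
for every $n$.

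Third, use subharmonicity of $\log\abs{Q_n}$ on a neighborhood of $\cD$. It gives, for $z\in\D$, with $P_z(\zeta)=\frac{1-|z|^2}{|\zeta-z|^2}$,
\[
\log\abs{Q_n(z)}\le \int_{\T}P_z\log\abs{Q_n}\,dm\le \log\delta_n\int_F P_z\,dm+\int_{\T\setminus F}P_z\log^+\abs{Q_n}\,dm .
\]
For $n$ large we have $\log\delta_n<0$, which justifies dropping the negative part of $\log\abs{Q_n}$ off $F$ and keeping only $\log\delta_n$ on $F$. For $\abs{z}\le r<1$ the Poisson kernel satisfies
\[
\frac{1-r}{1+r}\le P_z\le\frac{1+r}{1-r}.
\]
Hence, for $\abs{z}\le r$,
\[
\log\abs{Q_n(z)}\le \frac{1-r}{1+r}\,m(F)\log\delta_n+\frac{1+r}{1-r}\,C .
\]
Here we used that $\int_F P_z\,dm\ge \frac{1-r}{1+r}\,m(F)$ and that $\log\delta_n<0$.

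Finally, since $\log\delta_n\to-\infty$ and $m(F)>0$, the right-hand side tends to $-\infty$ uniformly on $\{\abs{z}\le r\}$. This gives $Q_n\to0$ uniformly on compact subsets of $\D$.

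The only delicate points are the Egoroff reduction, which is needed because a.e.\ convergence alone does not control $\int_F P_z\log\abs{Q_n}\,dm$, and the passage $r\to1$ in the $\log^+$ bound. Both are routine for polynomials, so I expect no serious obstacle. The positivity $m(F)>0$ is exactly where the hypothesis $m(E)>0$ enters; by \thref{LEM:APPROX1}, the statement fails for sets of measure zero.
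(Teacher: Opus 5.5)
Your proposal is correct and follows essentially the same route as the paper. Both arguments use Egoroff to get uniform convergence on a positive-measure subset, then the Jensen--Poisson majorant of $\log\abs{Q_n}$ split into the good set plus a $\log^+$ term bounded by $\frac{1+|z|}{1-|z|}C$; your explicit lower bound $P_z\ge \frac{1-r}{1+r}$ on $F$ simply spells out the final limiting step that the paper leaves implicit.
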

The Khinchin--Ostrowksi Theorem roughly asserts that if functions in $X$ have finite Nevanlinna characteristic 
\[
\sup_{0<r<1} \int_{\T} \log^+ \abs{f(r\zeta)} dm(\zeta) < \infty,
\]
then simultaneous approximation can only occur on subsets $E \subset \T$ of Lebesgue measure zero. As we saw earlier from the peak construction of the Rudin-Carleson Theorem, for any compact set $E\subset \T$ of zero Lebesgue measure, there exists analytic functions $(f_n)_n$ which converge uniformly to $1$ on $E$ and uniformly to zero in $\D$. It is not difficult from here to pass to analytic polynomials and show that any compact set of Lebesgue measure zero gives rise to a simultaneous approximation phenomenon on spaces $X$ whose topology is weaker than uniform convergence in $\D$.

\begin{proof}[Proof of \thref{THM:KO}]
By means of sacrificing arbitrary small but positive Lebesgue measure of $E$, we may by means of invoking Egoroff's Theorem assume that $\lim_n Q_n = 0$ uniformly on $E$. Note that by the conformal Jensen lemma (see \thref{COR:logPf} in appendix), we have for any $z\in \D$:
\begin{multline*}
\log \abs{Q_n(z)} \leq \int_{\T} \log \abs{Q_n(\zeta)} P_z(\zeta) dm(\zeta) \\ 
\leq \int_{E} \log |Q_n| P_z dm + \frac{1+|z|}{1-|z|} \int_{\T} \log^+ |Q_n| dm.
\end{multline*}
Since $Q_n(r\zeta) \to Q_n(\zeta)$ as $r\to 1-$, an immediate consequence of Fatou's Lemma yields 
\[
\log \abs{Q_n(z)} \leq \int_{E} \log |Q_n| P_z dm + \frac{1+|z|}{1-|z|}C, \qquad z \in \D.
\]
Sending $n\to \infty$ and using the assumption that $Q_n \to 0$ uniformly on $E$, we conclude that $Q_n \to 0$ uniformly on compact subsets of $\D$.
\end{proof}

Note that the assumption on $E$ having positive Lebesgue measure was crucial. For a slightly stronger statement, we refer to \cite[P. 280]{havinbook}.

We shall now move our attention to class of analytic functions $X$, which do not have finite Nevanlinna characteristic, and exhibit simultaneous approximation on sets of positive Lebesgue measure. In order to maintain our initial perspective of mean polynomial approximation, a convenient setting to phase this problem within is the class of Bergman spaces $L_a^2(dA_\alpha)$ which consists of analytic functions $f$ in $\D$ satisfying
\[
\int_{\D} \abs{f(z)}^2  dA_{\alpha}(z)< \infty, 
\]
where $dA_{\alpha}(z)=(1-|z|)^{\alpha-1}dA(z)$ denotes a weighted Lebesgue area measure on $\D$, wrt to the parameter range $\alpha>0$ to ensure that it is a finite measure. 

The following remarkable result will be the principal focus of this section, which not only provides a geometric characterization of simultaneous approximation in $L^2_a(dA_\alpha)$, but in addition connects it to a unilateral Fourier uniqueness problem.



\begin{thm}[Khrushchev] \thlabel{THM:KHRUSH}
Let $K \subset \T$ be a compact set of positive Lebesgue measure. Then the following statements are equivalent:
\begin{enumerate}
    \item[(i)] \textbf{Unilateral uniqueness.}
    $K$ supports no non-trivial $\mu \in M(\T) $ with
    \[
    \sum_{n\geq 0} (1+n)^{\alpha} |\widehat{\mu}(n)|^2 < \infty,
    \]
    for some/any $\alpha>0$.

    \item[(ii)] \textbf{Simultaneous approximation.}
    For some/any $\alpha>0$, there exist analytic polynomials $(Q_n)_n$ such that 
    \[
    \int_{\D} |Q_n-1|^2 dA_\alpha \to 0, \qquad \sup_{K} |Q_n| \to 0.
    \]

    \item[(iii)] \textbf{Entropic content.}
    $K$ contains no compact subset of positive Lebesgue measure which has finite Beurling–Carleson entropy:
    \[
    \sum |I_n| \log \frac{1}{|I_n|} < \infty,
    \]
    where $\{I_n\}_n$ are the connected components of $\T \setminus K$.
\end{enumerate}
\end{thm}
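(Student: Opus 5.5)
The plan is to prove $(i)\Leftrightarrow(ii)$ by Hilbert space duality, and then to connect both to $(iii)$ through two function-theoretic constructions: one outer function in each direction.

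\emph{Duality, $(i)\Leftrightarrow(ii)$.} Consider the Hilbert space $X=L^2_a(dA_\alpha)\oplus C(K)$, or more conveniently the closure of the pairs $\{(Q,Q|_K): Q\in\Po\}$ inside $L^2_a(dA_\alpha)\times C(K)$. Statement $(ii)$ says exactly that $(1,0)$ lies in this closure. By Hahn--Banach, $(ii)$ fails if and only if there is a functional, given by a pair $(g,\mu)$ with $g\in L^2_a(dA_\alpha)$ and $\mu\in M(K)$, such that
\[
\langle Q,g\rangle_{A_\alpha}+\int_K Q\,d\conj{\mu}=0 \quad\text{for all } Q\in\Po, \qquad \langle 1,g\rangle_{A_\alpha}\neq 0 .
\]
Testing with $Q=\zeta^n$ gives $\widehat{\conj\mu}(-n)=-c_n\conj{\widehat{g}(n)}$, where $c_n=\int_\D |z|^{2n}dA_\alpha\asymp (1+n)^{-\alpha}$. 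Since $\sum |\widehat g(n)|^2 c_n<\infty$, this yields $\sum_{n\ge0}(1+n)^{\alpha}|\widehat\nu(n)|^2<\infty$ for the measure $\nu=\conj{\mu}$ (after reflecting $\zeta\mapsto\conj\zeta$, which maps $K$ to its mirror image; I would either handle the conjugation directly or note that the mirror image satisfies $(iii)$ exactly when $K$ does). The measure $\nu$ is nontrivial precisely because $\langle 1,g\rangle\neq0$ forces $\widehat g(0)\neq 0$. Conversely, a nontrivial $\mu$ as in $(i)$ defines such a $g$ through its coefficients, and after a unimodular rotation (multiplying by $\zeta^{-k}$ to move a nonzero coefficient to the origin) we may take $\widehat\mu(0)\neq0$. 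The ``some/any $\alpha$'' clause will then follow once $(iii)$, which is independent of $\alpha$, is shown equivalent to both.

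\emph{$(iii)$ fails $\Rightarrow$ $(i)$ fails.} Let $E\subset K$ have positive measure and finite entropy. Using the Whitney-type decomposition from the proof of Fatou's \thref{THM:R-C}, I would build an outer function $f$ that is smooth on $\T$ and vanishes on $E$ to infinite order. The modulus is prescribed to be roughly $\exp(-\dist{\zeta}{E}^{-1})$-like near $E$, but capped so that $\log|f|\in L^1$; this is exactly where $\sum|I_n|\log(1/|I_n|)<\infty$ is used, since on each gap $\int_{I_n}\log\dist{\zeta}{E}\,dm\asymp |I_n|\log|I_n|$. A cleaner variant is to take $|f|=\dist{\zeta}{E}^{N}$, which is log-integrable and gives an outer function whose derivatives are controlled. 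I would then set $d\mu=1_E\,\conj{\zeta}^{\,k}\,\conj{f}\,dm$, or more simply $d\mu = 1_E\, h\,dm$ for a suitably chosen smooth function $h$. The positive Fourier coefficients of $1_E h$ have to decay like $n^{-\alpha/2-1/2-\delta}$. To arrange this, write $1_E\,\conj{F}$ as an $H^2$-type orthogonal piece plus a smooth piece, using the identity $\widehat{1_E u}(n)=\widehat{u}(n)-\widehat{1_{\T\setminus E}u}(n)$ with $u$ antianalytic. Choosing $u=\conj{F}$, where $F$ is outer with $|F|$ vanishing to high order at $E$, makes $1_{\T\setminus E}u$ as smooth as we like, because it vanishes to high order at the endpoints. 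At the same time $\widehat{u}(n)=0$ for $n>0$. Hence $\widehat{\mu}(n)$ decays rapidly for $n>0$, which gives $(i)$ failing for every $\alpha$.

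\emph{$(iii)$ $\Rightarrow$ $(ii)$.} This is the main obstacle. Here I would construct polynomial approximants directly. Since $K$ has no finite-entropy subset, for every $\varepsilon>0$ one can cut $K$ into a portion of small measure and a remainder with arbitrarily large entropy. The plan is to build outer functions $F_j=\exp(-\lambda_j(u_j+i\tilde u_j))$, where $u_j\ge0$ is a Poisson-type sum of bumps over the gaps of a finite-entropy-violating cover. The design targets are:

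1. $|F_j|$ is small on $K$;
2. $F_j(0)\to1$;
3. $\|F_j-1\|_{A_\alpha}\to0$.

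For the third target, the $dA_\alpha$-weight only sees $\log|F_j|$ at depth $(1-|z|)$. The quantity $\int_{\D}|F_j-1|^2dA_\alpha$ is controlled by $\sum_{\text{gaps}}|I|\cdot(\text{loss at scale }|I|)$, and divergence of the entropy lets one spend an unbounded logarithmic budget at a bounded $A_\alpha$ cost. I expect the entropy bookkeeping in this step to be the hardest part. If the direct construction resists, my fallback is to prove the contrapositive through duality: assuming $(ii)$ fails, take the measure $\mu$ from the duality step, show that the Cauchy integral of $\mu$ lies in a weighted Dirichlet-type space, and extract a finite-entropy subset of $\supp\mu\subseteq K$ from the level sets where its boundary values are controlled. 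Finally, the polynomials are obtained from the $F_j$ by Fej\'er means, using $\sup_K|F_j|\to0$ together with continuity of $F_j$ near $K$.
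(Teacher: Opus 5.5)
Your duality argument for (i)$\Leftrightarrow$(ii) is sound. Testing with $Q=\zeta^n$ gives $\widehat{\mu}(n)=-c_n\widehat{g}(n)$ directly, so $\mu$ itself has the decay and no reflection is needed. Multiplying by $\zeta^{j}$, $j\in\mathbb{Z}$, preserves the support and the one-sided condition, and this lets you dispense with the F.\ and M.\ Riesz theorem that the paper uses.

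\textbf{The step ``(iii) fails $\Rightarrow$ (i) fails'' is wrong as written, though repairable.} You ask for an outer $f$ that is smooth on $\T$ and vanishes on $E$, for instance with $|f|=\dist{\zeta}{E}^N$. Since $m(E)>0$, this cannot work:
\begin{itemize}
\item no nonzero $H^\infty$ function vanishes on $E$;
\item $\log\dist{\zeta}{E}=-\infty$ on $E$, so this modulus is not log-integrable;
\item in any case $1_E\conj{f}\,dm$ would be the zero measure.
\end{itemize}
What is needed is the opposite behaviour. The modulus $|f|$ must stay bounded below on $E$, so that $d\mu=1_E\conj{f}\,dm\neq 0$, and must decay rapidly only as one approaches $E$ through the gaps. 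Then $1_{\T\setminus E}f$ is smooth while $f$ itself jumps across $\partial E$. Finite entropy is exactly what makes such a modulus log-integrable; your computation $\int_{I_n}\log\dist{\zeta}{E}\,dm\asymp |I_n|\log|I_n|$ belongs to this corrected version. With this $f$, $\Ka(1_E\conj f)=\conj{f(0)}-\Ka(1_{\T\setminus E}\conj f)$ is smooth. The paper builds $f=\exp(-\sum_k h_k)$ from Cauchy kernels attached to Whitney arcs of the gaps. That choice also gives explicit control of the derivatives of the phase, which a prescribed-modulus construction would still have to justify via estimates on the conjugate function.

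\textbf{The step (iii)$\Rightarrow$(ii) has a genuine gap: your construction cannot work.} With $u_j\ge 0$ you have $|F_j|\le 1$. Jensen then gives $\log|F_j(0)|\le\int_K\log|F_j|\,dm\le m(K)\log\sup_K|F_j|\to-\infty$, which contradicts $F_j(0)\to 1$ (and $F_j(0)\to1$ also follows from your third target). This is precisely the Khinchin--Ostrowski obstruction: on a set of positive measure, simultaneous approximation forces the approximants to be unbounded, with controlled growth.

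The missing mechanism is the following.
\begin{itemize}
\item Take $\log|f_n|=n^{-1/2}P[\mu_n]$ for real measures $\mu_n$ with $\mu_n(\T)=0$, $d\mu_n/dm\le -n$ on $K$, positive part on $\T\setminus K$, and $|\mu_n(J)|\le C\kappa(|J|)$ for all arcs $J$, where $\kappa(t)=t\log(e/t)$.
\item The growth condition gives $|f_n(z)|\le C(1-|z|)^{-\alpha/3}$ uniformly in $n$. Together with $f_n\to 1$ locally uniformly, this yields $\int_{\D}|f_n-1|^2\,dA_\alpha\to 0$, while $\sup_K|f_n|\le e^{-\sqrt{n}}$.
\item Producing such $\mu_n$ needs a local quantitative form of (iii), which your global ``small piece plus high-entropy remainder'' does not provide. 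Makarov's lemma shows that (iii) is equivalent to $\mathcal{H}^{\kappa}(I\setminus K)=\kappa(|I|)$ for every arc $I$.
\item Frostman's lemma then gives measures $\nu_I$ on $I\setminus K$ of mass $\tfrac12\kappa(|I\cap K|)$ with $\nu_I(J)\le C\kappa(|J|)$. These are balanced against $1_{I\cap K}\,dm$ on all dyadic arcs $I$ of length $2^{-N}$ with $N\ge 2n$.
\end{itemize}
Your duality fallback, extracting a finite-entropy subset from level sets of the Cauchy integral, points toward Khrushchev's original Privalov-domain route, but as stated it contains no argument.
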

Note that the characterization in $(iii)$ gives a geometric characterization of such compact sets $E$. Notably, it is independent of $\alpha>0$, hence clarifying that if the statements in $(i)$ and $(ii)$ hold for some $\alpha>0$, then also hold for any $\alpha>0$.

We note that the condition in $(i)$ is unilateral, that is, we only require Fourier decay of positive Fourier frequencies. To phrase it differently, it is intrinsically a uniqueness phenomenon for the Cauchy integral in the analytic Dirichlet-Sobolev space $\ell^{2,\alpha}_a$. To see this, note that if $\mu \in M(\T)$, then 
\[
\Ka (\mu)(z) := \int_{\T} \frac{d\mu(\zeta)}{1-\conj{\zeta}z} = \sum_{n=0}^\infty \widehat{\mu}(n) z^n, \qquad z\in \D.
\]
Therefore, the condition $(i)$ can be reformulated as $E$ supports a measure complex finite Borel measure $\mu$ for which its Cauchy integral $\Ka (\mu)$ defines a non-trivial element in the fractional Dirichlet-Sobolev space
\[
 \int_{\D} \abs{\Ka(\mu)'(z)}^2 (1-|z|)^{\alpha-1} dA(z) \asymp \sum_{n\geq 0} (1+n)^\alpha \abs{\widehat{\mu}(n)}^2 < \infty. 
\]
In contrast, the bilateral uniqueness problem on which sets $E$ supports a complex finite Borel measure with $\{\widehat{\mu}(n)\}_{n\in \mathbb{Z}} \in \ell^{2,\alpha}$, the problem in the range $\alpha>1$ has a simple solution due to the classical Sobolev embeddings (Murray's Theorem). Such compact sets must have non-empty interior, whereas Khrushchev's Theorem implies that there are totally disconnected sets $E$, which can support a measure whose Cauchy integral is $C^\alpha(\T)$, for any desirable $\alpha>0$. In the range $\alpha= 1$, the bilateral problem is related to removable sets for analytic Sobolev functions $W^{1,2}(\C)$, which initially appear in the work of  L. Ahlfors and A. Beurling in \cite{ahlfors1950conformal}. 
On the other hand, the simultaneous approximation phenomenon in $(ii)$ can formally be translated to assertion that the following Thomson decomposition holds:
\[
\Po^2(dA+dm\lvert_K) \cong \Po^2(dA) \oplus L^2(dm\lvert_K).
\]
 Below, we show how using a simple Cantor-constructions, one can exhibit sets of finite Beurling--Carleson entropy.

\begin{example}[Fractal sets which lack entropic content] \label{EX:CANTORBC} Let $(\alpha_n)_{n=0}^\infty$ be positive real numbers in the interval $(0,1)$, to be specified in the proceed. Let $E_0$ be the compact subset of $[0,1)$ formed by removing from $[0,1)$ the middle open arc of length $\alpha_0$. Next, we define $E_1$ to be the compact subset of $E_0$, by removing from the $2$ connected components of $E_0$ the middle open arcs of length $\alpha_1/2$. Continuing in this manner, we get at the $n$-stage a compact subset $E_n$ with $2^n$ connected component, and thus we define $E_{n+1}$ be removing from each connected component of $E_n$ the middle open arcs of length $\alpha_{n}/2^n$. This gives a compact subsets $(E_n)_{n\geq 0}$ of $[0,1)$ and we set 
\[
E := \bigcap_{n=0}^\infty \left\{e^{2\pi i t}: t \in E_n \right\}.
\]
Now if we impose the condition
\[
a:=\sum_{n\geq 1} \alpha_n <1,
\]
then $E$ is a compact set of Lebesgue measure $1-a$. Note that the connected components $\{J\}$ of $\T \setminus E$ are the union the removed arcs at each stage. Now if $(I_j(n))_{j=1}^{2^n}$ denotes the arcs removed from $E_n$ at stage of defining $E_{n+1}$, and keeping in mind that $|I_j(n)|=\alpha_n 2^{-n}$ for all $j$, we get 
\[
\sum_J |J| \log \frac{1}{|J|} = \sum_{n=0}^\infty \sum_{j=1}^{2^n} \abs{I_j(n)} \log \frac{1}{|I_j(n)|} \asymp \sum_{n=0}^\infty \alpha_n \left(n + \log \frac{1}{\alpha_n} \right).
\]
Therefore, choosing $\alpha_n$ appropriately, we can ensure that $E$ has finite or infinite Beurling--Carleson entropy.

\end{example}

However, in the statement $(iii)$ of Khrushchev's Theorem, the claim is that $K$ should contain no compact subset of finite Beurling--Carleson entropy, which is substantially stronger than the set $K$ having infinite Beurling--Carleson entropy. However, it turns out that the self-similarity of the above Cantor-constructions actually ensure that $E$ contains no subset of Beurling--Carleson, whenever $E$ itself has infinte have infinite Beurling--Carleson entropy. See \cite[Theorem 5.1]{khrushchev1978problem}.

The proof of Khrushchev's Theorem will be divided into several steps. First, we shall clarify that the one-sided uniqueness problem for Cauchy integral is a dual reformulation in terms of simultaneous polynomials approximation. Observe that a simple expansion into Taylor series and orthogonality yields:
\[
\int_{\D} \abs{f(z)}^2 dA_{\alpha}(z) \asymp \sum_{n\geq 0} \abs{\widehat{f}(n)}^2 (1+n)^{\alpha}.
\]
With this observation in mind, we can identify the Hilbert spaces
\[
L^2_a(dA_{\alpha}) \cong \ell^{2,-\alpha}_a,
\]
to be isometric with equivalent norms. From this point of view, it is not difficult to see that the dual space of $\ell^{2,\alpha}_a$ is $\ell^{2,-\alpha}_a$ and vice versa, considered in the \emph{Cauchy-pairing} borrowed form $\ell^2$:
\[
\sum_{n\geq 0} \widehat{f}(n) \conj{\widehat{g}(n)}, \qquad f\in \ell^{2,\alpha}_a, \qquad g\in \ell^{2,-\alpha}_a.
\]
In addition, whenever $fg \in L^1(\T)$, then we can also manifest the dual pairing as an integral:
\[
\sum_{n\geq 0} \widehat{f}(n) \conj{\widehat{g}(n)} = \int_{\T} f(\zeta) \conj{g(\zeta)} dm(\zeta).
\]

With this observations in mind, we are now ready to settle the equivalence between the $(i)$ and $(ii)$ in Khrushchev's Theorem.
\begin{lemma}[Duality]\thlabel{PROP:SA} Let $\alpha>0$ and $E \subsetneq \T$ be a compact subset of positive Lebesgue measure. Then there exists no non-trivial $\mu \in M(E)$ with
\[
\sum_{n>0} \abs{\widehat{\mu}(n)}^2(1+n)^{\alpha}< \infty,
\]
if and only if there exists polynomials $(Q_n)_n$ with the following properties:
\[
\int_{\D} \abs{Q_n-1}^2 dA_\alpha \to 0, \qquad \sup_{E} \, \abs{Q_n} \to 0.
\]
\end{lemma}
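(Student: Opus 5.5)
The plan is to work in the product Banach space and apply Hahn--Banach. Consider the map
\[
\iota: \Po \to L^2_a(dA_\alpha) \oplus C(E), \qquad \iota(p) = (p, p|_E),
\]
with the norm $\norm{p}_{L^2_a(dA_\alpha)} + \sup_E \abs{p}$. The simultaneous approximation property says exactly that $(1,0)$ lies in the closure of $\iota(\Po)$. Indeed, if $Q_n \to 1$ in $L^2_a(dA_\alpha)$ and $Q_n\to 0$ uniformly on $E$, then $\iota(Q_n)\to(1,0)$, and conversely. Once $(1,0)$ is in the closure, so is $(z^k,0)=z^k\cdot(1,0)$ for every $k\ge 0$, since multiplication by $z^k$ is bounded on both components. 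The same then holds for $(p,0)$ for every polynomial $p$, and subtracting from $\iota(p)$ also gives $(0,p|_E)$. Since $\Po$ is dense in $L^2_a(dA_\alpha)$ and $\Po|_E$ is dense in $C(E)$ (the earlier corollary of the F. and M. Riesz theorem, as $E\subsetneq\T$), we get the equivalence
\[
(1,0)\in \overline{\iota(\Po)} \iff \overline{\iota(\Po)} = L^2_a(dA_\alpha)\oplus C(E).
\]

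The dual of the product is $\ell^{2,\alpha}_a \times M(E)$, pairing $(g,\mu)$ with $(f,h)$ via $\sum_n \widehat f(n)\conj{\widehat g(n)} + \int_E h\,d\mu$. This uses the Cauchy pairing identification $L^2_a(dA_\alpha)^*\cong\ell^{2,\alpha}_a$ recorded above, up to equivalent norms. A functional $(g,\mu)$ annihilates $\iota(\Po)$ exactly when
\[
\conj{\widehat g(n)} + \int_E \zeta^n d\mu(\zeta) = 0, \qquad n\ge0,
\]
that is, when $\widehat g(n) = -\conj{\widehat{\mu}(-n)}$, or $\widehat g(n) = -\widehat{\conj{\mu}}(n)$ with $\conj{\mu}\in M(E)$. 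So $g$ is determined by $\mu$, and a non-trivial annihilator exists iff some non-trivial $\nu := \conj\mu \in M(E)$ has $\sum_{n\ge0}(1+n)^\alpha\abs{\widehat\nu(n)}^2<\infty$. Here one must check that $\mu\neq0$ is forced whenever $(g,\mu)\neq 0$; this holds because $\mu=0$ forces $g=0$. By Hahn--Banach, density of $\iota(\Po)$ in the product is equivalent to the absence of such $\nu$. Combined with the previous paragraph, this proves the lemma. The $n=0$ term in the sum is harmless, which matches the statement's sum over $n>0$.

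I expect the main obstacle to be bookkeeping rather than depth. The first point is getting the conjugations and sign of frequencies consistent in the pairing. The second is justifying that $\ell^{2,\alpha}_a$ really is the full dual of $L^2_a(dA_\alpha)$ under the Cauchy pairing; this follows from the Hilbert space norm equivalence with weighted $\ell^2$ and Riesz representation. The one genuinely analytic input is the density of $\Po$ in $C(E)$, which uses $E\neq\T$ through the F. and M. Riesz theorem. This step is needed to pass from "$(1,0)$ is approximable" to "the whole product is approximable".
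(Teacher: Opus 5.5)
Your proof is correct and is essentially the paper's argument. Both use Hahn--Banach duality in $L^2_a(dA_\alpha)\times C(E)$ against the diagonal copy of $\Po$, with the F.\ and M.\ Riesz theorem forcing the measure component of an annihilator to vanish; in your case it enters through density of $\Po$ in $C(E)$ for $E\subsetneq\T$. The only difference is organizational: for the direction from simultaneous approximation to uniqueness, the paper pairs $\mu$ directly with polynomials tending to $\zeta^N$ on $E$ and to $0$ in $L^2_a(dA_\alpha)$, whereas you route it through the $z$-invariance of the closure. Your bookkeeping of the conjugation ($\nu=\conj{\mu}$) and of the non-triviality of $\mu$ is more careful than the paper's.
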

\begin{proof}
Fix an arbitrary $\mu \in M(E)$ with 
\[
\sum_{n\geq 0} \, \abs{\widehat{\mu}(n)}^2 (1+n)^{\alpha} < \infty,
\]
and suppose there exists for each integer $N\geq 0$, analytic polynomials $(Q_n)_n$ with $Q_n \to \zeta^{N}$ uniformly on $E$ and $Q_n \to 0$ in $L^2_a(dA_\alpha)$. Then by duality we have
\[
\widehat{\mu}(N) = \int_E \zeta^{-N} d\mu(\zeta) = \lim_n \int_{E} \conj{Q_n(\zeta)} \, d\mu(\zeta) = \lim_n \sum_{k=0}^\infty  \conj{\widehat{Q}_n(k)} \, \widehat{\mu}(k)=0.
\]
Hence $\widehat{\mu}(N)=0$ for all $N\geq0$, and by the F. and M. Riesz Theorem $d\mu = h dm$ for some $\conj{h} \in H^1(\T)$. But since $\supp{\mu}\subseteq E$ and $\T \setminus E$ has positive Lebesgue measure, we get that $h=0$, hence $\mu \equiv 0$. We conclude that if simultaneous approximation holds on $E$, then $E$ supports no non-trivial $\mu \in M(E)$ with 
\[
\sum_{n \geq 0} \, \abs{\widehat{\mu}(n)}^2 (1+n)^{\alpha}< \infty.
\]
Conversely, if the simultaneous approximation does not hold, then $\Po \times \Po$ cannot be dense in the Banach space $L^2(dA_{\alpha}) \times C(E)$, equipped with the inherited norm
\[
\norm{f}_{L^2(dA_\alpha)} + \sup_{\zeta \in E} \abs{h(\zeta)}.
\]
Using duality and the Hahn-Banach Theorem, we can find $(g,\mu) \in \ell^{2,\alpha} \times M(E)$ such that 
\[
\int_{E} \zeta^{-n} d\mu(\zeta) + \int_{\T} g(\zeta) \zeta^{-n} dm(\zeta) =0, \qquad n=0,1,2, \dots.
\]
But this readily implies that $\Ka (d\mu) = - g \in \ell^{2, \alpha}$ with $\supp{\mu}\subseteq E$, hence $E$ supports a non-trivial $\mu$ with 
\[
\sum_{n\geq 0} \abs{\widehat{\mu}(n)}^2 (1+n)^\alpha< \infty.
\]

\end{proof}
Having established the equivalence between $(i)$ and $(ii)$ in Khrushchev's Theorem, we shall now turn to proving the necessity and sufficiency of the entropic content in $(iii)$. Khrushchev's original proof of the necessity goes via $(i) \implies (ii)$, and involves a conformal invariant version of classical Khinchin-Ostrowski Theorem, on special Stolz-Privalov domains associated with $E$. Since this approach slightly deviates from the theme in this notes, we shall give a constructive and direct proof of $(i) \implies (iii)$. Our proof is principally based on construction, dating back to the work of L. Carleson in \cite{carlesonuniqueness}, and recently appeared in \cite{limani2021constructions}.

\begin{lemma}\thlabel{LEM:CARLOUTER} Let $E \subset \T$ be a set of finite Beurling--Carleson entropy of positive Lebesgue measure. There exists an analytic function $f: \D \to \D$ such that 
\[
F(e^{it}) := 1_{\T \setminus E}(e^{it}) f(e^{it}), \qquad e^{it} \in \T
\]
defines a $C^\infty$-function on $\T$. 
\end{lemma}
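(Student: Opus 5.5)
The plan is to build $f$ as an outer function $f=\exp(-U-i\widetilde U)$, where $U\geq 0$ is a positive weight on $\T$ adapted to the geometry of $E$. The weight is chosen so that $\abs{f}=e^{-U}$ vanishes to infinite order as one approaches $E$ from $\T\setminus E$, while $U$ is smooth on $\T\setminus E$ and, crucially, integrable. The finite Beurling--Carleson entropy is precisely what makes such a choice possible.

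\emph{Step 1: Choice of the weight.} Let $\{I_n\}_n$ be the connected components of $\T\setminus E$. On each $I_n$ set
\[
U(\zeta) := \lambda\bigl(\dist{\zeta}{E}\bigr), \qquad \zeta\in I_n,
\]
and set $U=0$ on $E$ (or any finite value there, since $m(E)>0$ plays no role in the integral over $E$). Here $\lambda$ is smooth on $(0,1)$ and blows up at $0$, say
\[
\lambda(t)=\log^2\frac{1}{t} \quad \text{or} \quad \lambda(t)=\log\frac{1}{t}\cdot\log\log\frac{1}{t}.
\]
In practice I would smooth $\dist{\zeta}{E}$ on each arc by replacing it with a function comparable to $\abs{\zeta-a_n}\abs{\zeta-b_n}/\abs{I_n}$, where $a_n,b_n$ are the endpoints of $I_n$. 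The requirement is
\[
\int_{I_n}\lambda(\dist{\zeta}{E})\,dm\asymp \abs{I_n}\log\frac1{\abs{I_n}}.
\]
This holds for $\lambda(t)=\log(1/t)$ exactly, and it is the entropy condition $\sum\abs{I_n}\log\frac{1}{\abs{I_n}}<\infty$ that gives $U\in L^1(\T)$.

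However, $e^{-\log(1/t)}=t$ only vanishes to first order, so the weight must be enlarged. I would take $U= \sum_n c_n\log\frac{\abs{I_n}}{\dist{\zeta}{E}}$ on $I_n$ with multiplicative constants $c_n\uparrow\infty$ slowly, chosen so that $\sum_n c_n\abs{I_n}\log\frac1{\abs{I_n}}<\infty$. This is possible for any convergent series. Alternatively, add the term $\log\frac{1}{\abs{I_n}}$ with $c_n$. Then $U\in L^1$, so $f:=\exp(-(U+i\widetilde U))$ is a bounded outer function with $\abs{f}=e^{-U}\leq 1$, and hence $f:\D\to\D$.

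\emph{Step 2: Smoothness of $F$.} On each $I_n$ the function $U$ is smooth. Since $U$ is smooth on the open arc, $\widetilde U$ is also smooth there, so $F=f$ is $C^\infty$ on $\T\setminus E$. Meanwhile $F=0$ on $E$. It therefore suffices to show that every derivative $F^{(k)}(\zeta)\to 0$ as $\dist{\zeta}{E}\to 0$ within $\T\setminus E$. Write $d=\dist{\zeta}{E}$ and consider $\zeta\in I_n$. We have
\[
\abs{F}\leq \Bigl(\frac{d}{\abs{I_n}}\Bigr)^{c_n}.
\]
Derivatives of $f$ are polynomial expressions in derivatives of $U+i\widetilde U$ times $f$. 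The goal is to show
\[
\abs{(U+i\widetilde U)^{(j)}(\zeta)}\lesssim_j d^{-j}\cdot \mathrm{poly}\Bigl(\log\frac1d\Bigr)\cdot c_n.
\]
For $U$ this is immediate. For $\widetilde U$ I would split the conjugate integral into the part coming from the Whitney-type neighborhood $\{\abs{\tau-\zeta}<d/2\}\subset I_n$, where the $C^j$ bounds of $U$ apply, and its complement, where differentiating the Cauchy kernel $j$ times gives a factor $d^{-j-1}$ against $\norm{U}_{L^1}$. This yields a bound of the form
\[
\abs{F^{(k)}(\zeta)}\lesssim d^{-2k}\,\Bigl(\frac{d}{\abs{I_n}}\Bigr)^{c_n}\cdot(\text{powers of } c_n,\ \log 1/d).
\]
Since $c_n\to\infty$ while $d\leq\abs{I_n}$, this tends to $0$ as $d\to 0$ for each fixed $k$, uniformly along all sequences: either $n\to\infty$, so that $c_n\to\infty$ and $\abs{I_n}\to 0$, or $n$ stays fixed and $d\to 0$ within a fixed arc. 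In the fixed-arc case, however, $c_n$ is fixed, so we need $c_n>2k+1$ for all derivatives. This forces a superpolynomial weight on each arc, for instance $U=\log^2\frac{\abs{I_n}}{d}+c_n\log\frac{\abs{I_n}}{d}$, whose integral over $I_n$ is still $O(\abs{I_n}(1+c_n))$. The total is then summable, since $\sum\abs{I_n}c_n<\infty$ follows from the entropy condition when $c_n\leq\log(1/\abs{I_n})$ suitably. With $\log^2$ present, $e^{-U}$ beats every power $d^{-2k}$.

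\emph{Main obstacle.} The delicate step is the uniform derivative estimate for $\widetilde U$ near $E$. Near $\zeta$, the function $U$ itself blows up at nearby endpoints, including endpoints of \emph{other} arcs accumulating at $E$. I expect the local/global splitting above, combined with $\norm{U}_{L^1}<\infty$, to give a bound of order $d^{-j-1}$, which is then absorbed by the superpolynomial decay of $e^{-U}$. I would verify this carefully first, since everything else is bookkeeping.
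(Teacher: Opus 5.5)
Your overall strategy can be made to work: an outer function $f=\exp(-U-i\widetilde U)$ with $U\in L^1(\T)$ blowing up at $E$, plus a local/global splitting to control $\widetilde U$. However, the weight you finally commit to has the wrong size, and the limit claimed in Step 2 is false.

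With $U=\log^2\frac{\abs{I_n}}{d}+c_n\log\frac{\abs{I_n}}{d}$, the normalization is relative to $\abs{I_n}$. So on the middle third of $I_n$, where $d\asymp\abs{I_n}$, you get $U\le C(1+c_n)$ for an absolute constant $C$. Hence $\abs{F}\ge e^{-C(1+c_n)}$ there, and with $c_n\le\log\frac{1}{\abs{I_n}}$ this is $\gtrsim\abs{I_n}^{C}$. Your bound $d^{-2k}(d/\abs{I_n})^{c_n}$ at $d=\abs{I_n}/2$ equals $4^k\,2^{-c_n}\abs{I_n}^{-2k}$. This tends to $0$ for every $k$ only if $c_n/\log\frac{1}{\abs{I_n}}\to\infty$, so ``$c_n\to\infty$ and $d\le\abs{I_n}$'' does not suffice.

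Nor is this just a lossy estimate. Take $E$ perfect, e.g.\ a fat Cantor set of finite entropy. A $C^\infty$ function vanishing on $E$ then has all derivatives vanishing on $E$. Taylor's formula at the nearest endpoint gives $\abs{F(\zeta)}\le C_N\dist{\zeta}{E}^N$ for every $N$, which contradicts $\abs{F}\gtrsim\abs{I_n}^{C}$ at the midpoints of short gaps.

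The warning sign is that with your normalization $\int_{I_n}U\,dm\asymp(1+c_n)\abs{I_n}$. So you only use $\sum_n c_n\abs{I_n}<\infty$, which can be arranged with $c_n\to\infty$ for \emph{every} $E$ of positive measure. Yet the lemma fails without finite entropy. Otherwise, via the implications $(iii)\Rightarrow(ii)\Rightarrow(i)$ of Khrushchev's theorem, which do not use this lemma, no set of positive measure could satisfy $(iii)$.

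The right size lies strictly between your two candidates. The absolute weight $\log^2\frac1d$ is too large: its integral over $I_n$ is $\asymp\abs{I_n}\log^2\frac{1}{\abs{I_n}}$. The relative weight $c_n\log\frac{\abs{I_n}}{d}$ with $c_n\lesssim\log\frac{1}{\abs{I_n}}$ is too small. Take instead $U\approx\lambda_n\log\frac1d+\log^2\frac{\abs{I_n}}{d}$ on $I_n$, suitably smoothed, with $\lambda_n\uparrow\infty$ and $\sum_n\lambda_n\abs{I_n}\log\frac{1}{\abs{I_n}}<\infty$. This is precisely where finite entropy enters, with the extra factor $\lambda_n\to\infty$. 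It is what your passing alternative (adding $c_n\log\frac{1}{\abs{I_n}}$) would give, but Step 2 and the final weight revert to the relative normalization. With this choice, $e^{-U}\le d^{\lambda_n}e^{-\log^2(\abs{I_n}/d)}$ absorbs all the polynomial losses in $\lambda_n$, $\log\frac1d$ and $d^{-1}$ uniformly near $E$. Your local/global estimate for $\widetilde U$ is routine rather than the main obstacle, and it then completes the argument.

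This corrected route differs from the paper's. There the gaps are Whitney-decomposed into arcs $J_k$, and $f=\exp(-\sum_k h_k)$ with $h_k(z)=\lambda_k\abs{J_k}\log\frac{1}{\abs{J_k}}\,\frac{\xi_k}{r_k\xi_k-z}$ and $r_k=1+\abs{J_k}$. Then $\Re h_k>0$ in $\D$, and $f$ continues analytically across $\T\setminus E$. The lower bound $\Re h_k\gtrsim\lambda_k\log\frac{1}{\abs{J_k}}$ on $J_k$ gives the same decay as your corrected $e^{-U}$. The bound $\abs{h_k^{(j)}(\zeta)}\lesssim\lambda_k\abs{J_k}\log\frac{1}{\abs{J_k}}\,\dist{\zeta}{E}^{-j-1}$ is immediate, so no conjugate-function estimates are needed. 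Moreover, $\lambda_k\to\infty$ also along the Whitney arcs of a single gap, so infinite-order vanishing at its endpoints comes for free without any $\log^2$ term.
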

\begin{proof} 
Let $\{I(n)\}_n$ be the connected components of $\T \setminus E$. For each open arc $I(n)n$, we carry out \textit{Whitney decomposition} of $I(n)$ in order to obtain subarcs $\{I_j(n)\}_j$ with disjoint interiors satisfying the properties:
\[
|I_j(n)| \asymp \dist{I_j(n)}{E} \asymp |I_n|2^{-|j|}.
\]
Now since $E$ has finite Beurling-Carleson entropy it follows that 
\begin{multline*}
\sum_{j,n} \abs{I_j(n)} \log \frac{1}{\abs{I_j(n)}} \lesssim \sum_{n} |I(n)| \log \frac{1}{|I(n)|}  \sum_j 2^{-|j|} (1+|j|) \\ \lesssim  \sum_{n} |I(n)| \log \frac{1}{|I(n)|}< \infty.
\end{multline*}
Let $\{J_k\}_k$ be a re-labeling of the joint collection of arcs $\{I_{j}(n)\}_{n,j}$ and pick positive numbers \{$\lambda_k\}_k$ tending to infinity such that 
\[ 
\sum_k \lambda_k |J_j| \log \frac{1}{|J_k|} < \infty. 
\] 
Let $\xi_k$ denote the center of $J_k$, set $r_k =1 + |J_k|$, and consider the analytic function 

\begin{equation} \label{DEF:CARLOUTER} 
f(z) =  \exp \left( -\sum_{k} h_k(z) \right), \qquad z\in \D,
\end{equation} 
where
\[
h_k(z) := \lambda_k |J_k| \log \frac{1}{|J_k|} \frac{\xi_k}{r_k \xi_k - z}, \qquad z \in \D. 
\]
It is not hard to see that the real part of each $h_k$ is positive
\begin{gather*} \Re h_k(z) = |J_k| \log \frac{1}{|J_k|}\frac{\Re (r_k - \conj{z}\xi_k)}{|r_k \xi_k - z|^2} > 0, \qquad z\in \D
\end{gather*} 
keeping in mind that $r_k > 1$ and $|\conj{z}\xi_k| < 1$. This implies that $f: \D \to \D$ is a bounded analytic function. Since the points $\{r_k \xi_k\}_k$ only cluster at the set $E$, it is not difficult to check that $f$ extends analytically across each fixed arc $J_k$, hence $f$ extends analytically to $\T \setminus E$. Furthermore, we have for  $z \in J_k$, that the comparability of the quantities 
\[
|r_k \xi_k - z| \asymp \Re(r_k - \conj{z}\xi_k) \asymp |J_k| 
\]
and thus it follows that
\[ 
|f(z)| \leq \exp( -\Re h_k(z)) \leq \exp( - c \lambda_k \log \frac{1}{|J_k|})  = |J_k|^{c\lambda_k}, \qquad z\in J_k,
\] 
for some numerical constant $c>0$. Recalling that $(J_k)_k$ where Whitney arcs, that is, they satisfy $|J_k| \asymp \dist{J_k}{E}$, we conclude that 
\[ 
|f(z)| \leq C \dist{z}{E}^{c\lambda_k} , \qquad z \in J_k,
\] 
for some positive constant $C > 0$ independent of $k$. Note that as $z$ tends to $E$ along the complement $\T \setminus E$, it needs to pass through infinitely many intervals $J_k$. Since $\lambda_k$ tends to infinity, we obtain that 
\begin{equation} \label{gest}
\abs{F(z)} = |f(z)| = o(\dist{z}{E}^N), 
\end{equation} 
as $z \to E$ along $\T \setminus E$, for any desirable integer $N>0$. Now for $e^{it} \in \T \setminus E$, the derivatives $F^{(n)}(e^{it})$ have the form $G(e^{it})F(e^{it})$, where $G$ is a linear combination of products of derivatives of $\log f(e^{it})$ with respect to $t$. But it is clear that such products cannot blow-up faster than powers of $\dist{e^{it}}{E}^{-L}$ for $e^{it} \in \T \setminus E$, for some positive integer $L=L(n)$ depending only on the number of derivatives $n$ taken. Arguing in this manner readily shows that $F$ is $C^\infty(\T)$, we omit the details.

\end{proof}

\thref{LEM:CARLOUTER} will enough to deduce the implication $(i)\implies (iii)$, hence it remains to gather the preparations for sufficiency of the condition $(iii)$, which entails in proving $(iii) \implies (ii)$. 

To this end, we shall need rephrase the property in $(iii)$ involving a quantitative metric. Given a continuous increasing function $\kappa:[0,1]\to [0,1]$ with $\kappa(0)=0$, we define the $\kappa$-Hausdorff content of a Borel set $K\subset \T$ as 
\[
\hil^\kappa (K) := \inf \left\{ \sum_{j} \kappa(|I_j|): \, \{I_j\}_j \, \, \text{cover of open arcs of} \, \, K \, \, \right\}.
\]
It is straightforward to verify that $\kappa$-Hausdorff content satisfies the following natural properties listed below:
\begin{align*}
\textbf{Monotonicity:}\quad
& \hil^\kappa(K_1) \le \hil^\kappa(K_2)
&& \text{if } K_1 \subseteq K_2, \\
\textbf{Content of arcs:}\quad
& \hil^\kappa(I) = \kappa(|I|)
&& \text{for any arc } I \subseteq \T, \\
\textbf{Sub-additivity:}\quad
& \hil^\kappa\!\left(\bigcup_j K_j\right)
\le \sum_j \hil^\kappa(K_j).
\end{align*}

The following structural theorem goes back to the work of N. G. Makarov in \cite{makarov1991class}, and will play a crucial role the proof of sufficiency of $(iii)$.

\begin{lemma}[Makarov's lemma] Let $E \subsetneq \T$ be a compact set of positive Lebesgue measure and set
\[
\kappa(t):= t \log \frac{e}{t}, \qquad 0<t\leq 1.
\]
Then $E$ contains no subset of finite Beurling-Carleson entropy of positive Lebesgue measure if and only if
\[
\hil^{\kappa} (I \setminus E) = \hil^{\kappa}(I),
\]
for any arc $I \subset \T$. If $E$ satisfies the above condition, then there exists for any arc $I$ a positive finite Borel measure $\nu_I$ supported in $I \setminus E$, such that
\[
\norm{\nu_I} = \frac{1}{2} \kappa \left( \abs{I\cap E} \right), \qquad \nu_I(J) \leq C \kappa (\abs{J}), \qquad J \subset \T,
\]
where $C>0$ is a numerical constant independent of $I$.
\end{lemma}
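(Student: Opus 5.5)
The plan is to prove the two directions of the equivalence separately, and then obtain the measure $\nu_I$ by a Frostman-type construction. Two properties of $\kappa(t)=t\log(e/t)$ will be used throughout. First, $\kappa$ is increasing and concave with $\kappa(0)=0$, so it is subadditive: $\kappa(s+t)\le\kappa(s)+\kappa(t)$. Second, $\kappa(t)/t=\log(e/t)$ is decreasing. In particular, by the content of arcs, $\hil^\kappa(I)=\kappa(|I|)$.

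\emph{Direction ($\Leftarrow$).} I argue by contraposition. Suppose $F\subseteq E$ is compact, $m(F)>0$, and $\sum_J |J|\log(e/|J|)<\infty$, where $J$ runs over the components of $\T\setminus F$.
\begin{enumerate}
\item[(a)] Form the finite positive measure $\sigma:=\sum_J \kappa(|J|)\,\delta_{\xi_J}$, where $\xi_J$ is the centre of $J$.
\item[(b)] By the Lebesgue differentiation theorem for $\sigma$, at $m$-a.e.\ point the quotient $\sigma(I)/|I|$ stays bounded as $I$ shrinks. The singular part of $\sigma$ has derivative zero $m$-a.e., and the absolutely continuous part has finite density. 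Since $\log(e/|I|)\to\infty$, at such points $\sigma(I)=o(\kappa(|I|))$.
\item[(c)] Choose such a point in $F$ that is also a Lebesgue density point of $F$. Take a small arc $I$ around it whose endpoints lie in $F$ (adjust slightly if needed) and for which $\sigma(2I)\le \tfrac12\kappa(|I|)$.
\item[(d)] The components $J$ of $\T\setminus F$ meeting $I$ are then contained in $I$. They cover $I\setminus E\subseteq I\setminus F$, so
\[
\hil^\kappa(I\setminus E)\le \sum_{J\subset I}\kappa(|J|)<\kappa(|I|)=\hil^\kappa(I).
\]
\end{enumerate}

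\emph{Direction ($\Rightarrow$).} Again by contraposition, suppose $\hil^\kappa(I\setminus E)<\kappa(|I|)$ for some arc $I$.
\begin{enumerate}
\item[(a)] Pick a cover of $I\setminus E$ by open arcs $\{I_j\}$ with $\sum_j\kappa(|I_j|)<\kappa(|I|)$. Replace each $I_j$ by $I_j\cap I$; this only decreases the sum, since $\kappa$ is increasing.
\item[(b)] Set $F:=\conj{I}\setminus\bigcup_j I_j$. This is compact and contained in $E$.
\item[(c)] $F$ has positive measure. If instead $\sum_j|I_j|\ge|I|$, then by monotonicity of $\kappa(t)/t$,
\[
\sum_j\kappa(|I_j|)\ge\sum_j|I_j|\log\frac{e}{|I|}\ge\kappa(|I|),
\]
which contradicts the choice of the cover.
\item[(d)] $F$ has finite entropy. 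Each component of $\T\setminus F$ inside $I$ is a union of overlapping arcs $I_j$, so by subadditivity its $\kappa$-value is at most the sum of the corresponding $\kappa(|I_j|)$. The remaining component (the complement of $I$) contributes a single finite term. Hence $F$ has finite Beurling--Carleson entropy, and $E$ contains such a subset.
\end{enumerate}

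\emph{The measure $\nu_I$.} Here I would run the standard dyadic Frostman construction on $I$.
\begin{enumerate}
\item[(a)] By inner regularity of $\hil^\kappa$ along compacts of the open set $I\setminus E$, choose a compact $K\subset I\setminus E$ with $\hil^\kappa(K)\ge\tfrac34\hil^\kappa(I\setminus E)=\tfrac34\kappa(|I|)$. Working with $K$ ensures the final measure is supported in $I\setminus E$, not on the boundary set $E$.
\item[(b)] At a fine dyadic generation $n$ (relative to $I$), put mass $\kappa(|Q|)$ uniformly on each generation-$n$ dyadic arc $Q$ meeting $K$.
\item[(c)] Go up generation by generation. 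Whenever a parent $Q$ has mass exceeding $\kappa(|Q|)$, rescale the measure inside $Q$ down to mass exactly $\kappa(|Q|)$.
\item[(d)] The result $\nu_n$ satisfies $\nu_n(Q)\le\kappa(|Q|)$ for every dyadic $Q$. Moreover, every point of $K$ lies in a maximal dyadic $Q$ where equality holds. These maximal arcs cover $K$, so $\norm{\nu_n}\gtrsim\hil^\kappa_{\mathrm{dyad}}(K)\asymp\hil^\kappa(K)$.
\item[(e)] An arbitrary arc $J$ is covered by at most two dyadic arcs of comparable length, so $\nu_n(J)\le C\kappa(|J|)$.
\item[(f)] Take a weak-star limit in $n$, which is supported on $K$, and normalize its mass to $\tfrac12\kappa(|I\cap E|)\le\tfrac12\kappa(|I|)$. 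This keeps the bound $\nu_I(J)\le C\kappa(|J|)$ with a numerical $C$, provided the lower bound from (d) beats $\tfrac12\kappa(|I|)$ up to that constant.
\end{enumerate}

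\emph{Main obstacle.} I expect the bookkeeping of constants in the Frostman step to be the hardest part. Specifically, one must compare dyadic content with arc content uniformly in $I$, which can be done via the doubling inequality $\kappa(2t)\le 2\kappa(t)+2t\log 2\lesssim\kappa(t)$. One must also check that the inner-regularity passage to $K$ loses only a fixed fraction of $\hil^\kappa(I\setminus E)$, so that the normalized mass $\tfrac12\kappa(|I\cap E|)$ is achievable with a numerical constant $C$.
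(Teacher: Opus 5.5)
Your proof is correct. The implication from the absence of finite-entropy subsets to $\hil^\kappa(I\setminus E)=\kappa(|I|)$ is essentially the paper's Step~1, with one small fix: work with a closed arc $I$. This is harmless, since single points have zero $\kappa$-content. With an open $I$, the endpoints you keep in $F=\conj{I}\setminus\bigcup_j I_j$ need not lie in $E$.

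The converse is where you take a genuinely different route. The paper argues globally. For a finite-entropy compact $K\subseteq E$ of positive measure, it discards finitely many gaps so that $\sum_{k>k_0}\kappa(|J_k|)\le\tfrac12\kappa(|K|)$. It then applies the hypothesis to each of the finitely many closed arcs left over and sums with subadditivity to reach $\kappa(|K|)\le\tfrac12\kappa(|K|)$. You localize instead. Differentiating the singular measure $\sigma=\sum_J\kappa(|J|)\delta_{\xi_J}$ at a suitable density point of $F$ gives one small arc $I$ with endpoints in $F$ and $\hil^\kappa(I\setminus E)\le\sigma(I)\le\tfrac12\kappa(|I|)$. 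The paper's argument uses nothing beyond monotonicity and subadditivity of $\kappa$. Yours needs the differentiation theorem but proves more: the condition fails on arbitrarily small arcs around almost every point of $F$, even with $\hil^\kappa(I\setminus E)=o(|I|)$.

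For $\nu_I$, the paper quotes Frostman's lemma and rescales, appealing only to the boundedness of $\kappa$. That alone does not make $C$ independent of $I$. The constant for a probability measure on $K$ is necessarily at least $1/\hil^\kappa(K)$, so uniformity requires $\hil^\kappa(K)\gtrsim\kappa(|I\cap E|)$. Your dyadic construction supplies exactly this input, since the hypothesis forces Frostman mass $\gtrsim\kappa(|I|)\ge\kappa(|I\cap E|)$.

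The step you flag is a real issue. Under the hypothesis, the components of $I\setminus E$ have $\sum\kappa=\infty$ whenever $m(I\cap E)>0$, so a naive tail estimate cannot produce $K$. The increasing sets lemma for dyadic net content does produce it, at the cost of a numerical fraction (not necessarily $\tfrac34$), which is enough.

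Finally, $\kappa(2t)=2\kappa(t)-2t\log 2\le 2\kappa(t)$, so the doubling bound is simpler than you wrote.
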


Note that the second statement is essentially a localized quantitative version of Frostman's mass distribution principle.
\begin{proof}
\proofpart{1}{Necessity:}
Note that $\hil^{\kappa} (I)= \kappa(|I|)$ for any arc $I\subset \T$. Suppose that $\hil^{\kappa}(I \setminus E) < \kappa(|I|)$, for some closed arc $I$. This implies that we can pick a cover $(I_k)_k$ of open arcs of $I\setminus E$, such that 
\[
\sum_k \kappa (|I_k|) < \kappa (|I|).
\]
Now since $\kappa$ is increasing and sub-additive, we have for any positive numbers $\{\ell_k\}_k$ that
\[
\kappa (\sum_k \ell_k) \leq \sum_k \kappa(\ell_k).
\]
This implies that $K:=I \setminus \cup_k I_k \subseteq E$ is a compact set of positive Lebesgue measure, which has finite Beurling-Carleson entropy. 
\proofpart{2}{Sufficiency:}
Conversely, assume $\hil^{\kappa}(I \setminus E) = \hil^{\kappa}(I)$ for any arc $I \subset \T$. 
Fix an arbitrary compact subset $K \subseteq E$ of positive Lebesgue measure, and let $\{J_k\}_{k\geq 0}$ be the connected components of $\T \setminus K$. We need to show that $\sum_k \kappa(|J_k|) = \infty$, which would imply that no compact subset of $E$ having positive Lebesgue measure is a Beurling-Carleson set. Suppose not, then there exists a number $k_0>0$, such that 
\[
\sum_{k> k_0} \kappa(|J_k|) \leq \kappa(|K|)/2.
\]
Now the complement of the open set $V = \cup_{0\leq k\leq k_0} J_k$ is a compact set consisting of a finite union of closed arcs $I_1, I_2, \dots, I_l$. Using the assumption we get
\[
\kappa(|I_j|)= \hil^{\kappa}(I_j \setminus E) \leq \hil^{\kappa}(I_j \setminus K) \leq \sum_{\substack{k>k_0 \\
J_k \subseteq I_j }} \kappa(|J_k|).
\]
Since $K \subseteq \T \setminus V$, we may sum up in $j$ and use sub-additivity of $\kappa$ in order to obtain
\[
\kappa(|K|) \leq \sum_j \kappa(|I_j|) \leq \sum_j \sum_{\substack{k>k_0 \\
J_k \subseteq J_j }} \kappa(|J_k|) \leq \sum_{k>k_0} \kappa(|J_k|) \leq \kappa(|K|)/2,
\]
which contradicts that $|K|>0$.
\proofpart{3}{Frostman measure:} Recall that Frostman's Lemma (for instance, see Theorem ? in \cite{carleson1967selected}) asserts that if a compact subset $K\subset \T$ satisfies $\hil^{\kappa}(K)>0$, then there exists a probability measure $\mu \in M(K)$ such that $\mu(I) \leq C\kappa(|I|)$ for any arc $I \subset \T$. It now just remains to choose such a \emph{Frostman measure} $\mu_I$ supported in a compact subset of $I \subset E$ and simple re-scale it to $\norm{\nu_I} = \frac{1}{2} \kappa \left( \abs{I\cap E} \right)$, while using the boundedness of $\kappa$ to ensure that we still maintain $\mu(I) \leq C\kappa(|I|)$ for any arc $I \subset \T$, with possibly a slightly larger numerical constant.
\end{proof}

As reader may have noticed, the above argument works equally well for a general increasing gauge-function $\kappa:[0,1] \to [0,1]$, which vanishes at zero. 

We now complete the proof of the remaining part of Khrushchev's Theorem.
\begin{proof}[The remaining proof of \thref{THM:KHRUSH}]
\proofpart{1}{$(i) \implies (iii):$} Arguing by contraposition, assume that $K$ contains a Beurling--Carleson set $E$ of positive Lebesgue measure. The following trick was shown to the author by B. Malman. Appealing to \thref{LEM:CARLOUTER}, we can find a bounded analytic function $f: \D \to \D$ such that 
\[
F(\zeta) := 1_{\T \setminus E}(\zeta) f(\zeta), \qquad \zeta \in \T
\]
defines a smooth $C^\infty$-function in $\T$. With this function at our disposal, we now note that by analyticity of $f$ in $\D$, we have 
\begin{equation}\label{EQ:CAUCHYCONJ}
\Ka(\conj{f})(z) = \conj{f(0)} \qquad \iff \int_{\T} \frac{1_E(\zeta) \conj{f(\zeta)}}{1-\conj{\zeta}z} dm(\zeta) = \Ka(\conj{F})(z) + \conj{f(0)}.
\end{equation}
Now recalling that $F \in C^\infty(\T)$ and in view of the expansion
\[
\Ka(\conj{F})(z) = \sum_{n\geq 0} \widehat{\overline{F}}(n)\, z^n, \qquad z\in \D
\]
also ensures that the right hand side in \eqref{EQ:CAUCHYCONJ} is $C^\infty(\T)$, hence also the left hand side. But this readily means that the measure
\[
d\mu(\zeta) = 1_E(\zeta) \conj{F(\zeta)}dm(\zeta)
\]
satisfies the property that $\Ka(d\mu) \in C^\infty(\T)$, that is, 
\[
\sum_{n\geq 0} \abs{\widehat{\mu}(n)}^2(1+n)^{\alpha} < \infty
\]
for any $\alpha >0$. This establishes $(i) \implies (iii)$, and we now turn to the proof of $(iii) \implies (ii)$, which will be divided into two steps.


\proofpart{2}{Sufficiency-reduction to a real-analysis problem:} Suppose that $E$ contains no Beurling--Carleson subset of positive Lebesgue measure. Following the main ideas of S. Khrushchev, we shall construct finite real-valued measures $\{\mu_n\}_n$ on $\T$, satisfying the following properties:
\begin{enumerate}
    \item[(i)] $\mu_n (\T)=0$,
    \item[(ii)] $\abs{\mu_n(I)} \leq C \kappa(|I|)$, for any arc $I \subset \T$,
    \item[(iii)] $\frac{d\mu_n}{dm}(\zeta) \leq -n$  for $dm$-a.e $\zeta \in E$.
\end{enumerate}
By means of convolving with a smooth approximate of the identity, we may in addition assume that $\mu_n$'s are smooth on $\T$. We now form the sequence of outer functions $(f_n)_n$ by
\[
f_n(z) = \exp \left( \frac{1}{\sqrt{n}} \int_{\T} \frac{\zeta+z}{\zeta-z} d\mu_n(\zeta) \right), \qquad z \in \D.
\]
Fix a number $\alpha>0$ and observe that the corresponding properties $(i)-(iii)$ of $\mu_n$ translate into the following properties on $(f_n)_{n}$, phrased in that same order:
\[
(i) \, f_n(0)=1, \qquad (ii) \, \sup_{z\in \D} \, (1-|z|)^{\alpha/3} \abs{f_n(z)} \leq C(\alpha), \qquad (iii) \, \sup_E \abs{f_n} \leq e^{-\sqrt{n}},
\]
for any $\alpha>0$, where $C(\alpha)>0$ is a constant only depending on $\alpha>0$. 

It readily follows by definition that $f_n \to 1$ uniformly on compact subsets of $\D$. Note that $(ii)$ also ensures that $\{f_n\}_n$ are uniformly bounded in $L^2_a(dA_{\alpha})$, we may appeal to the Banach saks Theorem, in order to pass to an appropriate convex combination of the $f_n$'s (we shall not bother re-labeling), so that
\[
\int_{\D} \abs{f_n-1}^2 dA_{\alpha} \to 0.
\]
Note also that the uniform control in $(ii)$ still ensures that $f_n \to 0$ uniformly on $E$. Passing further to appropriate Cesar\'o means of $f_n$, it is not difficult to exhibit analytic polynomials $(p_n)_n$ such that 
\[
\int_{\D} \abs{p_n(z)-1}^2 dA_{\alpha}(z) \to 0, \qquad \sup_E \abs{p_n} \to 0.
\]
This shows that $E$ satisfies the simultaneous approximation phenomenon in $(ii)$.
\proofpart{3}{Sufficiency-construction of measures:} 
It only remains to construct real-valued measures on $\T$ satisfying the properties $(i)-(iii)$, listed in the previous step. To this end, fix $n\geq 1$, let $N=N_n\geq 1$ be an integer to be determined later, and consider the collection of dyadic arcs $\Dy_N(\T)$ of length $2^{-N}$. Applying to Makarov's Lemma, we can for any $I \in \Dy_N$ find a positive finite Borel measures $\nu_I$ supported in $I \setminus E$, such that
\[
\norm{\nu_I} = \frac{1}{2} \kappa \left( \abs{J\cap E} \right), \qquad \nu_I(J) \leq C \abs{J} \log \frac{1}{|J|}, \qquad J \subseteq \T,
\]
where $C>0$ is a numerical constant independent of $I$. Now for each $I \in \Dy_N$, we set
\[
d\mu_I = \sum_{I \in \Dy_N} \left( d\nu_I - \frac{\norm{\nu_I}}{|I\cap E|} 1_{I\cap E} dm \right).
\]
Note that $\mu_I(I)=0$, and for each arc $J\subset \T$, we have 
\[
\abs{\mu_I(J)} \leq \nu_I(J) + \abs{J \cap I \cap E} \log \frac{1}{\abs{I\cap E}} \leq (C+1)\abs{J}\log \frac{1}{|J|}.
\]
Furthermore, we also have 
\[
\frac{d\mu_n}{dm}(\zeta) \leq -\frac{1}{2} \log \frac{1}{|I\cap E|} \leq - \frac{1}{2} N_n \leq -n, \qquad \zeta \in I \cap E,
\]
provided $N_n \geq 2n$. Now take 
\[
\mu_n = \sum_{I\in \Dy_N} \mu_I
\]
and note that for arc $J\subset \T$, there exists at most two arcs $I_1, I_2 \in \Dy_N$, which contain the end-points on $J$, and such that
\[
\abs{\mu_n(J)} \leq \abs{\mu_{I_1}(J)} + \abs{\mu_{I_2} (J)} \leq 2(C+1) \abs{J} \log \frac{1}{|J|}.
\]
This shows that $(\mu_n)_n$ satisfies $(ii)$, while the properties $(i)$ and $(iii)$ readily follows from the previous observation of $\mu_I$. The proof is now complete.
    
\end{proof}

\subsection{Further results}

We saw in subsection \ref{SUBSEC:RIESZTHM} that Jensen's inequality implies that if $f\in H^1(\T)$ then 
\[
\int_{\T} \log |f|dm >-\infty.
\]
Conversely, it is also natural to ask which $f\in L^1(\T)$ satisfy have convergent logarithmic integral. This problem was solved by J. Bourgain, who proved the following beautiful Theorem.

\begin{thm}[J. Bourgain, \cite{bourgain1986problem}] A function $f \in L^1(\T)$ has finite logarithmic integral on $\T$ if and only if there exists $g,h \in H^1(\T)$, such that $f= g \conj{h}$ $dm$-a.e on $\T$.
\end{thm}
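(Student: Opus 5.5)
The implication ``$\Leftarrow$'' is routine and I would dispose of it first. If $f=g\conj{h}$ with $g,h\in H^1(\T)$ non-trivial, then $\log|f|=\log|g|+\log|h|$ a.e. Shifting by suitable powers of $\zeta$ so that $\widehat{g}(0),\widehat{h}(0)\neq 0$, Jensen's \thref{LEM:JENSEN} bounds $\int\log|g|\,dm$ and $\int\log|h|\,dm$ from below. Since $\log^+|f|\le|f|\in L^1(\T)$, it follows that $\log|f|\in L^1(\T)$.

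For ``$\Rightarrow$'' I would write $f=|f|e^{i\theta}$ with a real measurable argument $\theta$, and separate the modulus from the phase. Since $\log|f|\in L^1$, there is an outer function $F\in H^2(\T)$ with $|F|^2=|f|$, so that $F\conj{F}=|f|$. For the phase, let $\widetilde\theta$ be the conjugate function and $\varphi=\theta+i\widetilde\theta$, analytic in $\D$. Then
\[
e^{i\varphi/2}\,\conj{e^{-i\varphi/2}}=e^{i\theta/2-\widetilde\theta/2}\,e^{i\theta/2+\widetilde\theta/2}=e^{i\theta},
\]
and the moduli $e^{\mp\widetilde\theta/2}$ cancel exactly. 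This suggests the candidates
\[
g=F\,w\,e^{i\varphi/2},\qquad h=F\,w^{-1}e^{-i\varphi/2},
\]
where $w$ is an additional outer function, still to be chosen, with $w\conj{w}^{-1}>0$ absorbed. More precisely, I would take $w=e^{(u+i\widetilde u)/2}$ for a real $u$, and correct the phase of $w$ into the definition of $\varphi$. With these choices $g\conj{h}=f$ a.e., and $g,h$ are analytic of bounded type. It then remains only to show $g,h\in H^1(\T)$, i.e.
\[
\int|f|^{1/2}e^{(u-\widetilde\theta)/2}\,dm<\infty,\qquad \int|f|^{1/2}e^{(-u+\widetilde\theta)/2}\,dm<\infty,
\]
together with a Smirnov-class check, which is automatic once the boundary moduli are integrable and the factors are outer or quotients of outer functions.

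The main obstacle is this integrability. With $u=0$ one would need, via Cauchy--Schwarz, $e^{|\widetilde\theta|}\in L^1$. Zygmund's exponential estimate only gives $e^{\lambda|\widetilde\theta|}\in L^1$ for $\lambda<\pi/(2\|\theta\|_\infty)$, which fails for a phase of range $\approx 2\pi$. It is also incompatible with $|f|^{1/2}$ being merely in $L^2$.

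My first attempt would be to exploit both freedoms simultaneously. First, $\theta$ is only determined mod $2\pi$: on a partition of $\T$ into sets where $e^{i\theta}$ lies in small sectors, I would choose branches so that $\theta=\theta_0+2\pi k$, with $\|\theta_0\|_\infty$ small and $k$ integer-valued. Here $\theta_0$ is harmless by Zygmund. Second, I would choose the real weight $u$ adapted to the level sets of $|f|$ and of $\widetilde{\theta}$, namely $u=\widetilde\theta-\log(1+|\widetilde\theta|)^2$-type truncations, or $u$ large where $|f|$ is small. The aim is that $e^{(u-\widetilde\theta)/2}$ and $e^{(\widetilde\theta-u)/2}$ are balanced against $|f|^{1/2}$ by Cauchy--Schwarz. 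Since $u$ must itself have a well-behaved conjugate, I expect this balancing to require an iterative, stopping-time decomposition in the spirit of Bourgain's original argument. That step is where I anticipate essentially all of the difficulty.
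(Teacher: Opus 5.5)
\paragraph{Gap in the forward direction.} Your ``$\Leftarrow$'' direction is fine (for $f\not\equiv 0$), but the plan for ``$\Rightarrow$'' cannot succeed as set up, for two reasons.

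\emph{The weight $u$ is illusory.} With $w=e^{(u+i\widetilde u)/2}$ one has $w/\conj{w}=e^{i\widetilde u}$. Keeping $g\conj{h}=f$ therefore forces $\Re\varphi=\theta-\widetilde u$ up to an integer multiple of $2\pi$. Then $\Im\varphi=\widetilde\theta+u-\widehat{u}(0)$ (plus the conjugate of that integer-valued correction), and $e^{-\Im\varphi/2}$ cancels $\abs{w}=e^{u/2}$ exactly. So your two displayed integrability conditions are not the right ones. The only genuine freedom left is the integer-valued correction $k$ of the lift of $\theta$. Note also that $\theta=\theta_0+2\pi k$ with $\norm{\theta_0}_{L^\infty}$ small forces $e^{i\theta}=e^{i\theta_0}$ to stay near $1$.

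\emph{Outer factors alone cannot work.} This is the decisive point. $F$ and $w^{\pm1}$ are outer, and $e^{i\varphi/2}e^{-i\varphi/2}=1$. Hence whenever your $g,h$ lie in $H^1$ they are necessarily \emph{outer}, and outer factorizations $f=g\conj{h}$ do not exist in general. Take $f=\conj{\zeta}$ and suppose $G\conj{H}=\conj{\zeta}$ with $G,H\in H^1$ outer. Then $1/H$ is outer with $\abs{1/H}=\abs{G}\in L^1(\T)$, so $1/H\in H^1$. But $\zeta G=\conj{1/H}$ lies in $H^1\cap\conj{H^1}$ and has vanishing mean, which forces $G\equiv 0$. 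Concretely, the lift $\theta(e^{it})=-t$ has $\widetilde\theta(e^{it})=\pm2\log\abs{2\sin(t/2)}$, so one of $e^{\pm\widetilde\theta/2}$ behaves like $\abs{t}^{-1}$ near $t=0$. No stopping-time choice of $k$ or $u$ can repair this.

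\paragraph{The missing idea: inner factors.} You must allow $g=I_1G$ and $h=I_2H$ with $I_1,I_2$ inner. Then the large part of the phase is carried by $I_1\conj{I_2}$, and only a small remainder is handled by conjugate functions. The tool for this is the Douglas--Rudin theorem: quotients of Blaschke products are uniformly dense in the unimodular functions. This is what the paper's remark about ``quotients of Blaschke products'' points to; the paper itself gives no proof. With it, your modulus/phase splitting closes at once:
\begin{itemize}
\item Write $f=\abs{f}U$ with $\abs{U}=1$, and $\abs{f}=\abs{F}^2$ with $F\in H^2$ outer.
\item Choose inner $I_1,I_2$ with $U=I_1\conj{I_2}e^{i\psi}$ and $\norm{\psi}_{L^\infty}<\pi/2$.
\item Set $E=e^{i(\psi+i\widetilde\psi)/2}$, $g=I_1FE$ and $h=I_2F/E$.
\end{itemize}
Then $g\conj{h}=I_1\conj{I_2}\abs{f}E/\conj{E}=\abs{f}U=f$. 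Zygmund's estimate gives $e^{\abs{\widetilde\psi}}\in L^1(\T)$, so the outer functions $E^{\pm1}$ lie in $H^2$, and Cauchy--Schwarz yields $g,h\in H^1$. Bourgain's own theorem is the much harder bounded version, with $f\in L^\infty$ and $g,h\in H^\infty$, where this Zygmund step is unavailable.
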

The proof is based on a deep result on the factorization of unimodular functions on $\T$, expressed in terms of quotient of Blaschke products.

We note that there are smooth version of Fatou's Theorem, which essentially dates back to the work of L. Carleson in \cite{carleson1967selected} in the setting of $C^k(\T)$ for all finite $k>0$, and which was later extended to $C^\infty(\T)$ by B. Taylor and D. Williams \cite{taylor1970ideals}.

\begin{thm}[Carleson, 1967, Taylor-Williams, 1970]\thlabel{THM:TAYLORWILL} Let $E\subset \T$ be a compact set of Lebesgue measure zero, which has finite Beurling--Carleson entropy. There exists an analytic function $f:\D \to \D$ which extends to $C^\infty(\T)$ and satisfies
\[
\bigcap_{n=0}^\infty \left\{\zeta \in \T: f^{(n)}(\zeta) =0 \right\} \subseteq E.
\]
\end{thm}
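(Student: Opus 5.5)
The construction will imitate \thref{LEM:CARLOUTER} and the proof of Fatou's \thref{THM:R-C}. Let $\{I(n)\}_n$ be the components of $\T\setminus E$, Whitney-decompose each into arcs $\{I_j(n)\}_j$ with $|I_j(n)|\asymp\dist{I_j(n)}{E}\asymp 2^{-|j|}|I(n)|$, and relabel them as $\{J_k\}_k$. The finite entropy hypothesis transfers to the Whitney arcs exactly as in the proof of \thref{LEM:CARLOUTER}:
\[
\sum_k |J_k|\log\tfrac{1}{|J_k|}<\infty .
\]
We therefore choose weights $\lambda_k\uparrow\infty$ with $\sum_k\lambda_k|J_k|\log\frac1{|J_k|}<\infty$. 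With $\xi_k$ the centre of $J_k$ and $r_k=1+|J_k|$, we put
\[
h(z)=\sum_k \lambda_k|J_k|\log\tfrac{1}{|J_k|}\,\frac{\xi_k}{r_k\xi_k-z},\qquad f=\exp(-h).
\]
Since $\Re h\geq 0$, $f:\D\to\D$. Moreover $f$ extends analytically across $\T\setminus E$, is zero-free there, and satisfies $|f(\zeta)|=o(\dist{\zeta}{E}^N)$ for every $N$ as $\zeta\to E$. This much is already contained in \thref{LEM:CARLOUTER}, and the new issue is smoothness of $f$ itself up to $E$.

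Since $m(E)=0$, $f$ is now $C^\infty$ on $\T\setminus E$, and we need all derivatives to tend to $0$ at $E$. We write $f^{(n)}=G_n f$ on $\T\setminus E$, with $G_n$ a polynomial in $h',\dots,h^{(n)}$. The key estimate is a polynomial bound
\[
|h^{(m)}(\zeta)|\lesssim_m \dist{\zeta}{E}^{-L(m)},\qquad \zeta\in\T\setminus E,
\]
and it is the main obstacle. We split the sum defining $h^{(m)}$ into near arcs (the $J_k$ adjacent to the Whitney arc containing $\zeta$, of comparable length $\asymp d:=\dist{\zeta}{E}$) and far arcs. A near term contributes $\lesssim\lambda_k\log\frac1{|J_k|}\,|J_k|^{-m}$. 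The logarithmic growth of $\lambda_k$ can be tamed by choosing $\lambda_k$ growing slowly, e.g.\ $\lambda_k\le\log\frac1{|J_k|}$ after discarding finitely many arcs, while keeping the entropy sum finite, so this gives at most $d^{-m-1}$. A far arc satisfies $|r_k\xi_k-\zeta|\gtrsim\max(|J_k|,d)$, hence its term is $\lesssim \lambda_k|J_k|\log\frac1{|J_k|}\,d^{-m-1}$, and the summability gives $\lesssim d^{-m-1}$. Thus $L(m)=m+1$ works, and $|G_n|\lesssim d^{-L'(n)}$.

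Combining this with the superpolynomial decay of $f$ gives $f^{(n)}(\zeta)\to0$ as $\zeta\to E$ for all $n$. An induction with the mean value theorem then shows that $f$, extended by $0$ on $E$, is $C^\infty(\T)$ with $f^{(n)}=0$ on $E$. Here $m(E)=0$ guarantees that the boundary function is determined by its values off $E$; each boundary derivative exists at points of $E$ because the difference quotients are dominated by $\sup|f^{(n+1)}|$ near $E$. Finally, since $f\neq0$ on $\T\setminus E$, every $\zeta\in\T\setminus E$ lies outside the common zero set $\bigcap_n\{f^{(n)}=0\}$. Hence that set is contained in $E$ (in fact it equals $E$), which proves \thref{THM:TAYLORWILL}.
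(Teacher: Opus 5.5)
Your proposal is correct and follows the paper's route. The paper proves this theorem by the same construction as in \thref{LEM:CARLOUTER}: Whitney arcs, weights $\lambda_k\to\infty$ with finite weighted entropy, $f=\exp(-\sum_k h_k)$, and $f^{(n)}=G_nf$ on $\T\setminus E$, so that the polynomial blow-up of $G_n$ in $\dist{\zeta}{E}^{-1}$ is beaten by the superpolynomial decay of $f$. You supply the details the paper omits, namely the bound $|h^{(m)}(\zeta)|\lesssim_m \dist{\zeta}{E}^{-m-1}$ and the induction showing that the extension by zero is $C^\infty$; incidentally, the slow-growth restriction on $\lambda_k$ is unnecessary, since every coefficient $\lambda_k|J_k|\log\frac{1}{|J_k|}$ is bounded by the finite total sum and $|r_k\xi_k-\zeta|\gtrsim\dist{\zeta}{E}$ holds for every $k$.
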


The proof of \thref{THM:TAYLORWILL} is essentially outlined in our proof \thref{LEM:CARLOUTER}, but differs from original one in \cite{taylor1970ideals}. It is worth to remark that the condition of finite Beurling--Carleson entropy is also necessary. Indeed, if $f$ is an analytic function in $\D$ which extends to function of class $C^1(\T)$ (or even of H\"older class), and vanishes on $E$, then 
\[
\abs{f(\zeta)} \leq \sup_{\T} \abs{f'} \cdot \dist{\zeta}{E}, \qquad \zeta \in \T.
\]
Combining this with the fact that $\log |f|$ is integrable on $\T$, one concludes that
\[
\int_{\T} \log \frac{1}{\dist{\zeta}{E}} dm(\zeta) \lesssim \int_{\T} \abs{\log |f|} dm +1 < \infty.
\]
Now the integral on the left hand side is easily shown to be comparable to the Beurling--Carleson entropy of $E$:
\[
\sum_{k} \abs{I_k} \log \frac{1}{\abs{I_k}} \asymp \int_{\T} \log \frac{1}{\dist{\zeta}{E}} dm(\zeta)  
\]
where $\{I_k\}_k$ are the connected components of $\T \setminus E$. With these discussion, we conclude that the boundary zero sets of analytic functions in $\D$, with smooth extensions to $\T$ are characterized as the compact sets of Lebesgue measure zero having finite Beurling--Carleson entropy. These sets were also recently shown to play crucial role in the theory of model spaces, see \cite{dyakonov2006smooth}, \cite{limani2022model}, and references therein.

Another refinement of Szeg\"o's problem on mean polynomial approximation in $\T$, is to consider the angle between the analytic polynomials $\Po_0$ of average zero, and the non-analytic polynomials $\conj{\Po}$ (analytic polynomials in the variables $\conj{\zeta}$). In the framework of stochastic process, this problem has the amusing interpretation of measuring angle between "the past and the future". More precisely, the result goes as follows.

\begin{thm}[Helson-Szeg\"o] Let $d\mu = w dm + d\mu_s$ be a positive finite Borel measure on $\T$ and consider 
\[
\cos (\theta) := \inf_{\substack{g \in \Po_0, f \in \conj{\Po} \\ \norm{g}_{L^2(\mu)} = \norm{f}_{L^2(\mu)}=1 }} \Big\lvert  \int_{\T} f \conj{g} d\mu \Big\rvert .
\]
Then there exists real-valued functions $u,v \in L^\infty(\T)$ with $\norm{v}_{L^\infty(\T)}< \pi/2$, such that 
\[
\log w = u + H(v),
\]
if and only if $\theta= \pi/2$.
\end{thm}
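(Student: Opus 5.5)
The plan is to work with the quantity exactly as it is written, an infimum over normalized pairs, and to treat the two implications separately. One consequence should be said at the outset: read literally, the ``if'' direction appears to be false. The plan below therefore ends with a counterexample attempt rather than a proof of that direction.

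\emph{Step 1: the infimum vanishes.} First I will show that $\cos(\theta)=0$, i.e.\ $\theta=\pi/2$, for every positive $\mu$ whose support contains at least two points. Fix any $g\in\Po_0$ with $\norm{g}_{L^2(\mu)}=1$. Consider the linear functional $f\mapsto \int_{\T} f\conj{g}\,d\mu$ on the span of $\conj{\Po}$ inside $L^2(\mu)$.
\begin{itemize}
\item If the image of $\conj{\Po}$ in $L^2(\mu)$ has dimension at least $2$, the kernel of this functional contains a nonzero $f$. Normalizing $f$ gives $\int_{\T} f\conj{g}\,d\mu=0$, hence $\cos(\theta)=0$.
\item The dimension condition holds once $\supp{\mu}$ has at least two points. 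Indeed, $1$ and $\conj{\zeta}$ are then linearly independent in $L^2(\mu)$, since a relation $a+b\conj{\zeta}=0$ on two distinct points forces $a=b=0$.
\item One should also check that $\Po_0$ contains a nonzero element of $L^2(\mu)$, for instance $\zeta$, which is nonzero because $\abs{\zeta}=1$.
\end{itemize}
Thus $\theta=\pi/2$ holds identically, independently of $w$ and $\mu_s$.

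\emph{Step 2: the ``only if'' direction.} Since $\theta=\pi/2$ is always true, the implication
\[
\log w=u+H(v) \implies \theta=\pi/2
\]
follows immediately from Step 1. No use of the structure of $u$ and $v$ is needed.

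\emph{Step 3: the converse, which is the main obstacle.} Given Step 1, the converse amounts to the claim that every weight $w$ admits a decomposition
\[
\log w=u+H(v), \qquad \norm{v}_{L^\infty}<\pi/2 .
\]
I do not expect this to be true. My first test would be $w(\zeta)=\abs{1-\zeta}^{2}$.
\begin{itemize}
\item Write $\log\abs{1-\zeta}$ as, up to sign, the conjugate function of the piecewise linear argument function of $1-\zeta$. That argument function jumps by $\pi$ at $\zeta=1$.
\item Hence $\log w=2\log\abs{1-\zeta}$ is a bounded function plus $H$ of a function whose oscillation is $2\pi$ across the jump.
\item A bounded $u$ cannot absorb a logarithmic singularity. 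Using the classical fact that $\abs{1-\zeta}^{a}$ is of this form exactly for $-1<a<1$, the exponent $a=2$ falls outside this range. So no admissible $v$ with $\norm{v}_{L^\infty}<\pi/2$ exists.
\end{itemize}
Together with Steps 1 and 2, this would refute the statement as worded.

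The expected repair is to read the defining expression as a supremum, so that $\theta=\pi/2$ means orthogonality of the past and the future. Under that reading one runs the usual Helson--Szeg\"o argument: one rewrites the supremum as the norm of the Riesz projection on $L^2(\mu)$, and then identifies its boundedness via a Cotlar--Sadosky/Nehari-type lifting. That argument, however, proves a different statement from the one printed here.
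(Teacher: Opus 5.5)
Your reading of the printed statement is correct. There is also no proof in the paper to compare with: the theorem is only quoted among the further results of Section~3, with a pointer to \cite{garnett}. With the infimum as written, your kernel argument gives $\cos(\theta)=0$ as soon as $\supp{\mu}$ has two points. The hypothesis $\log w=u+H(v)$ forces $w>0$ a.e., so that condition holds. Hence the ``only if'' direction is vacuous, while the ``if'' direction would say that every absolutely continuous part admits such a decomposition, which is false.

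Your justification of the counterexample $w=\abs{1-\zeta}^2$ should be tightened. The bulleted heuristic proves nothing by itself, because decompositions are not unique: $u-u'=\widetilde{v'-v}$ may be bounded and nonzero. So exhibiting one $v$ with a jump of size $2\pi$ does not exclude others. The actual reason behind the classical fact you quote is Zygmund's estimate
\[
\int_{\T} e^{\abs{\widetilde{v}}}\,dm \leq \frac{2}{\cos \norm{v}_{L^\infty(\T)}}, \qquad \norm{v}_{L^\infty(\T)}<\frac{\pi}{2}.
\]
It follows from $\Re \exp\bigl(\mp i(v+i\widetilde{v})\bigr)=e^{\pm\widetilde{v}}\cos v\geq e^{\pm\widetilde{v}}\cos\norm{v}_{L^\infty(\T)}$, harmonicity in $\D$, and Fatou's lemma. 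Hence every weight of the form $e^{u+\widetilde{v}}$ with $\norm{v}_{L^\infty(\T)}<\pi/2$ satisfies $w^{\pm 1}\in L^1(\T)$, whereas $\abs{1-\zeta}^{-2}\notin L^1(\T)$. A counterexample needing no classical input is $\mu=1_E\,dm$ with $E$ a half-circle: then $\log w\notin L^1(\T)$, while $u+\widetilde{v}\in L^1(\T)$.

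Your proposed repair, however, is wrong. If $\cos(\theta)$ is a supremum, then $\theta=\pi/2$ means $\int_{\T} f\conj{g}\,d\mu=0$ for all $f\in\conj{\Po}$, $g\in\Po_0$. Taking $f=1$, $g=\zeta^n$ gives $\widehat{\mu}(n)=0$ for $n\geq 1$. Positivity gives $\widehat{\mu}(-n)=\conj{\widehat{\mu}(n)}=0$, so $\mu=\mu(\T)\,dm$. Under that reading the ``only if'' direction fails, e.g.\ for $w(e^{it})=e^{\cos t}$ with $u=\log w$, $v=0$, since $\widehat{w}(1)\neq 0$.

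The Helson--Szeg\"o theorem characterizes a \emph{positive} angle. With the supremum, $\cos(\theta)<1$ if and only if $\mu_s=0$ and $\log w=u+\widetilde{v}$ with $u,v$ real and bounded and $\norm{v}_{L^\infty(\T)}<\pi/2$. Absolute continuity must be part of the statement. Suppose $\mu_s\neq 0$, and take analytic polynomials $p_n$ with $p_n(0)=0$, $\abs{p_n}\leq 1$, $p_n\to 1$ $\mu_s$-a.e.\ and $p_n\to 0$ $dm$-a.e., as in the first step of the paper's proof of Szeg\"o's theorem. Testing with $f=\conj{p_n}$, $g=p_n$ gives $\cos(\theta)=1$ whatever $w$ is.

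Finally, the supremum is not itself the norm of the relevant projection. Rather, $\cos(\theta)<1$ exactly when $\sum_n\widehat{T}(n)\zeta^n\mapsto\sum_{n\geq 1}\widehat{T}(n)\zeta^n$ is bounded on trigonometric polynomials in $L^2(\mu)$. Its norm is then $(1-\cos^2\theta)^{-1/2}$.
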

The above so-called Helson-Szeg\"o decomposition of the weight $\log w$ is much stronger merely requiring $\log w$ to be integrable on $\T$. In fact, it turns out that it is also equivalent to the following so-called Muckenhoupt $A_2$-condition:
\[
\sup_{I \subset \T} \left(\frac{1}{m(I)} \int_{I} w dm\right) \left( \frac{1}{m(I)} \int_{I} w^{-1} dm \right) < \infty,
\]
where the supremum is taken over all arcs $I\subseteq \T$. The Muckenhoupt $A_2$-condition plays a crucial role in harmonic analysis, as it characterizes the boundedness on  $L^2(wdm)$ of the Hilbert transform 
\[
H(f)(e^{i\theta}) := \lim_{\varepsilon \to 0+} \int_{|t-\theta|>\varepsilon } \frac{f(e^{it})}{e^{it}-e^{i\theta}} dt, \qquad e^{i\theta} \in \T
\]
and other classical singular integral operators of Calder\'on-Zygmund type. Another curious fact is that there exists no known direct proof showing that either the boundedness of $H$ on $L^2(wdm)$ or the Muckenhoupt $A_2$-condition implies the Helson-Szeg\"o decomposition of $\log w$. Indeed, all known proofs go through via the angle between subspaces. Partial progress and some neat insights to this problems were announced in the short paper  \cite{coifman1983constructive}. For further details on these matters, we refer the reader to the excellent book of J. Garnett in \cite {garnett}.
 
A natural way to generalize Khrushchev's problem is to ask for which positive integrable functions $w$ on $\T$, can we find polynomials $(Q_n)_n$ for which 
\begin{equation}\label{SA:l2w}
\int_{\D}\abs{Q_n -1}^2 dA \to 0, \qquad \int_{\T} \abs{Q_n}^2 w dm \to 0.
\end{equation}
Phrased differently, we are asking for which weights $w$ on $\T$ does the following splitting phenomenon in the Thomson decomposition occur:
\[
\Po^2(dA+ wdm) \cong \Po^2(dA) \oplus L^2(wdm).
\]
A. Volberg showed in \cite[Ch. 12, p. 82] {havin2006linear} that there exists a weight with full support, which is small enough so that the above holds. It was known among experts that if there exists a set $E\subset \T$ (of positive Lebesgue measure) of finite Beurling--Carleson entropy for which
\[
\int_E \log w dm >- \infty,
\]
then it is impossible to exhibit such polynomials. In fact, it was conjectured by T. Kriete and B. MacCluer in \cite{kriete1990mean} that simultaneous approximation phenomenon in \eqref{SA:l2w} occurs if and only if
\[
\int_E \log w dm = - \infty
\]
for all sets of positive Lebesgue measure which have finite Beurling--Carleson entropy. This conjecture was indeed recently confirmed by L. Bergqvist and B. Malman in \cite{bergqvist2024distributing}, and may essentially be regarded as a local version of Szeg\"o's Theorem.

\section{Fourier uniqueness problems in $L^2(\R)$}
Here, we shall visit several manifestations of the uncertainty principle in the most classical framework of $L^2(\R)$. We start with a brief crash-course on the Fourier transform, and immediately move on to some of its most classical results, and continue further specialized Theorems.
\subsection{The Fourier transform on the real line}
The Fourier transform of an integrable function $f \in L^1(\R)$ is defined as
\[
\widehat{f}(\xi) := \int_{\R} f(x) e^{-ix\xi} \, dx, \qquad \xi \in \R.
\]
Some authors adopt different normalizations, but this will not play any essential role for our purposes. Before turning to convergence issues, let us record some basic structural properties, which hold whenever both sides are well-defined:

\begin{itemize}
\item Linearity: $\widehat{af+bg} = a\widehat{f} + b\widehat{g}$.
\item Translation: $\widehat{f(\cdot - x_0)}(\xi) = e^{-ix_0\xi}\widehat{f}(\xi)$.
\item Modulation: $\widehat{e^{ix\xi_0}f(x)}(\xi) = \widehat{f}(\xi-\xi_0)$.
\item Convolution: $\widehat{f*g} = \widehat{f}\,\widehat{g}$.
\item Differentiation: $\widehat{f'}(\xi) = i\xi\,\widehat{f}(\xi)$.
\end{itemize}

In contrast with Fourier coefficients, the Fourier transform is defined on an unbounded set, and thus requires integrability at infinity in order to be a well-defined function.

A particularly convenient setting for studying the Fourier transform is the Schwartz class $\mathcal{S}(\R)$, consisting of all smooth functions $f$ on $\R$ such that
\[
\sup_{x\in \R} (1+|x|)^M |f^{(N)}(x)| < \infty,
\]
for all integers $M,N \geq 0$. In other words, functions in $\mathcal{S}(\R)$, together with all their derivatives, decay faster than any polynomial at infinity. 

The Schwartz class enjoys a number of remarkable stability properties: it is closed under differentiation and multiplication, and the Fourier transform preserves it. More precisely, one can show that
\[
f \in \mathcal{S}(\R) \quad \Longleftrightarrow \quad \widehat{f} \in \mathcal{S}(\R),
\]
so that the Fourier transform defines a bijective linear map on $\mathcal{S}(\R)$. In this setting, the Fourier inversion formula takes the form
\[
f(x) = \frac{1}{2\pi} \int_{\R} \widehat{f}(\xi) e^{ix\xi} \, d\xi, \qquad x \in \R.
\]

Moreover, for $f,g \in \mathcal{S}(\R)$, one readily verifies the polarized Plancherel identity
\[
\int_{\R} f(x)\,\overline{g(x)} \, dx
=
\frac{1}{2\pi} \int_{\R} \widehat{f}(\xi)\,\overline{\widehat{g}(\xi)} \, d\xi.
\]
In particular, taking $f=g$ yields the analogue of Parseval's identity:
\[
\int_{\R} |f(x)|^2 \, dx
=
\frac{1}{2\pi} \int_{\R} |\widehat{f}(\xi)|^2 \, d\xi, \qquad f \in \mathcal{S}(\R).
\]

Since $\mathcal{S}(\R)$ is dense in $L^2(\R)$, Plancherel's identity allows us to extend the Fourier transform uniquely to a continuous linear operator on $L^2(\R)$. The above identities continue to hold for all $f \in L^2(\R)$, with convergence understood in the $L^2$-sense. Details of this extension can be found in standard references such as \cite{katznelson2004introduction, KornerFourierAnalysis, RudinReal&complex}.

\medskip

We now turn to a remarkable identity that connects a function with its Fourier transform through discrete sampling.

\begin{thm}[Poisson summation formula]
For any $f \in \mathcal{S}(\R)$, we have
\[
2\pi \sum_{n\in \mathbb{Z}} f(2\pi n)
=
\sum_{n\in \mathbb{Z}} \widehat{f}(n).
\]
\end{thm}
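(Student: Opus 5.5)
Periodize $f$ and compare Fourier coefficients. Define the $2\pi$-periodic function
\[
F(t) := \sum_{n\in \mathbb{Z}} f(t+2\pi n), \qquad t\in \R,
\]
and regard it as a function on $\T$ via $\zeta = e^{it}$. Since $f\in \mathcal{S}(\R)$, we have $|f(x)| + |f'(x)| \lesssim (1+|x|)^{-2}$. The series for $F$ and the differentiated series therefore converge absolutely and uniformly on $[0,2\pi]$, so $F\in C^1(\T)$. The left-hand side of the identity is exactly $2\pi F(0)$, so it suffices to show that $2\pi F(0)=\sum_n \widehat{f}(n)$.

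Next compute the Fourier coefficients of $F$. Uniform convergence allows the interchange of sum and integral, and the translates $[2\pi n, 2\pi(n+1))$ tile $\R$. This gives
\[
\widehat{F}(k) = \frac{1}{2\pi}\int_0^{2\pi} \sum_{n} f(t+2\pi n) e^{-ikt}\,dt = \frac{1}{2\pi}\int_{\R} f(x)e^{-ikx}\,dx = \frac{1}{2\pi}\widehat{f}(k), \qquad k\in \mathbb{Z},
\]
where $e^{-ik(t+2\pi n)}=e^{-ikt}$ was used.

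It remains to justify that the Fourier series of $F$ converges to $F$ at the point $t=0$. The cleanest route is to observe that $\widehat{f}\in\mathcal{S}(\R)$, so $\sum_k |\widehat{F}(k)| = \frac{1}{2\pi}\sum_k |\widehat{f}(k)|<\infty$. Thus $F$ lies in the Wiener algebra $A(\T)$, and its Fourier series converges uniformly to some continuous function $G$ with the same Fourier coefficients as $F$. By the uniqueness corollary (a measure on $\T$ is determined by its Fourier coefficients), $G=F$ everywhere, since both are continuous. Alternatively, $F\in C^1(\T)$ is Lipschitz at $0$, so Dini's criterion gives $S_N(F)(1)\to F(1)$ directly. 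Evaluating at $t=0$ yields $F(0)=\frac{1}{2\pi}\sum_k \widehat{f}(k)$, which is the claim.

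The only step requiring care is the interchange of the sum over $n$ with the integral and the pointwise (rather than merely $L^2$) convergence of the Fourier series at $0$. Both follow from the rapid decay of $f$ and $\widehat{f}$ in the Schwartz class, so no genuine obstacle arises. The main point is to check that the periodization is at least continuous with absolutely summable coefficients.
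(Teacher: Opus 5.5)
Your proposal is correct and follows essentially the same route as the paper: periodize $f$, identify the Fourier coefficients of the periodization with the values $\widehat{f}(k)$ (up to the normalizing factor $2\pi$), and evaluate the Fourier series at $t=0$. The only difference is how you justify convergence at $t=0$. The paper simply invokes smoothness of the periodization, while you use absolute summability of $\widehat{f}(k)$ together with uniqueness of Fourier coefficients, or alternatively Dini's criterion. Both are valid, and your version also keeps the normalization $\widehat{F}(k)=\frac{1}{2\pi}\widehat{f}(k)$ explicit.
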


\begin{proof}
Consider the $2\pi$-periodic function
\[
\phi(t) := 2\pi \sum_{k\in \mathbb{Z}} f(t+2\pi k), \qquad t \in \R.
\]
We may regard $\phi$ as a function on $\T$. Since $f \in \mathcal{S}(\R)$, the series defining $\phi$ converges absolutely and uniformly, and $\phi$ is continuous. Moreover,
\[
\int_{\T} |\phi(\zeta)| \, dm(\zeta)
\leq \int_{\R} |f(x)| \, dx,
\]
so $\phi \in L^1(\T)$.

We now compute its Fourier coefficients:
\[
\widehat{\phi}(n)
=
\int_0^{2\pi} \phi(t) e^{-int} \, dt
=
2\pi \sum_{k\in \mathbb{Z}} \int_0^{2\pi} f(t+2\pi k) e^{-int} \, dt.
\]
By a change of variables,
\[
\widehat{\phi}(n)
=
2\pi \int_{\R} f(t) e^{-int} \, dt
=
\widehat{f}(n).
\]
Thus the Fourier coefficients of $\phi$ coincide with the values of $\widehat{f}$ at the integers.

Since $\phi$ is smooth and periodic, its Fourier series converges uniformly, and therefore
\[
\phi(0)
=
\sum_{n\in \mathbb{Z}} \widehat{\phi}(n)
=
\sum_{n\in \mathbb{Z}} \widehat{f}(n).
\]
On the other hand,
\[
\phi(0)
=
2\pi \sum_{n\in \mathbb{Z}} f(2\pi n),
\]
which completes the proof.
\end{proof}

We note that the Poisson summation formula continues to hold under weaker assumptions, provided the expressions involved are interpreted in a suitable sense.

\subsection{The Heisenberg-Uncertainty principle}

We begin with perhaps the most classical result in the theory, attributed to the physicist Heisenberg and forming one of the cornerstones of quantum mechanics. 

Below, we state the result for functions in the Schwartz class. Although it suffices to assume $f,f' \in L^2(\R)$, we work in $\mathcal{S}(\R)$ to avoid technicalities. In this framework, the uncertainty principle takes the following form.

\begin{thm}[Heisenberg's UP] For any $x_0, \xi_0 \in \R$, we have 
\[
\norm{f}^2_{L^2} \leq 2\left( \int_{\R}(x-x_0)^2 \abs{f(x)}^2 dx \right)^{1/2} \left( \int_{\R}(\xi-\xi_0)^2 \abs{\widehat{f}(\xi)}^2 d\xi \right)^{1/2}, \qquad f\in \mathcal{S}(\R).
\]
\end{thm}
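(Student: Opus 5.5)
The plan is to reduce to the case $x_0=\xi_0=0$ using the translation and modulation rules, and then prove the centred inequality by one integration by parts, Cauchy--Schwarz, and Plancherel.

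\emph{Reduction.} Given $f\in\mathcal{S}(\R)$ and $x_0,\xi_0\in\R$, set $g(x):=e^{-i\xi_0 x}f(x+x_0)$, which again lies in $\mathcal{S}(\R)$. By the translation and modulation rules for the Fourier transform, $|\widehat{g}(\xi)|=|\widehat{f}(\xi+\xi_0)|$, because the extra factor is unimodular. We also have $|g(x)|=|f(x+x_0)|$. Changing variables $x\mapsto x-x_0$ and $\xi\mapsto\xi-\xi_0$ then shows three things:
\[
\norm{g}_{L^2}=\norm{f}_{L^2},\qquad \int_{\R} x^2|g(x)|^2\,dx=\int_{\R}(x-x_0)^2|f(x)|^2\,dx,
\]
\[
\int_{\R}\xi^2|\widehat{g}(\xi)|^2\,d\xi=\int_{\R}(\xi-\xi_0)^2|\widehat{f}(\xi)|^2\,d\xi .
\]
So it suffices to prove the inequality with $x_0=\xi_0=0$.

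\emph{Main step.} For $f\in\mathcal{S}(\R)$, write $\norm{f}_{L^2}^2=\int_{\R}1\cdot|f(x)|^2\,dx$ and integrate by parts against $x$:
\[
\norm{f}_{L^2}^2=-\int_{\R}x\,\frac{d}{dx}|f(x)|^2\,dx=-2\Re\int_{\R}x\,f'(x)\,\conj{f(x)}\,dx .
\]
The boundary terms $x|f(x)|^2$ vanish at $\pm\infty$ because $f$ decays rapidly; this is the only place the Schwartz assumption is used. Applying Cauchy--Schwarz gives
\[
\norm{f}_{L^2}^2\le 2\left(\int_{\R}x^2|f|^2\,dx\right)^{1/2}\norm{f'}_{L^2}.
\]

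\emph{Passing to the Fourier side.} Use the differentiation rule $\widehat{f'}(\xi)=i\xi\widehat{f}(\xi)$ together with Plancherel's identity on $\mathcal{S}(\R)$:
\[
\norm{f'}_{L^2}^2=\frac{1}{2\pi}\int_{\R}\xi^2|\widehat{f}(\xi)|^2\,d\xi .
\]
Substituting this yields the centred inequality with the extra factor $(2\pi)^{-1/2}$ in front of the right-hand side. Since $(2\pi)^{-1/2}<1$, the stated inequality follows, in fact with a better constant.

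The only point needing care is the normalisation constant coming from Plancherel under this convention for $\widehat{f}$, and it works in our favour. The argument is otherwise routine.
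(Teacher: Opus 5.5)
Your proposal is correct and is essentially the paper's proof. The paper writes $\norm{f}^2_{L^2}=\frac{1}{i}\langle [M,D]f,f\rangle_{L^2}=2\Im\langle Df,Mf\rangle_{L^2}$ with $Mf=xf$ and $Df=-if'$; this is your integration-by-parts identity $\norm{f}^2_{L^2}=-2\Re\int_{\R}x f'(x)\conj{f(x)}\,dx$ in commutator language, and it is followed by the same Cauchy--Schwarz and Plancherel steps, with your version slightly more careful in doing the $\xi_0$-reduction explicitly by modulation and in tracking the factor $(2\pi)^{-1/2}$ from Plancherel, which the paper leaves implicit in the constant $2$.
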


\begin{proof}
By means of translating $f$, we may assume that $x_0=0$. Likewise, we may also re-scale $f$ and assume that $\xi_0=0$. According to Plancharel's Theorem, the Heisenberg inequality can be rephrased as the following estimate:
\[
\norm{f}^2_{L^2} \leq 2 \norm{Mf}_{L^2} \norm{Df}_{L^2}
\]
where $Mf(x) = xf$ and $Df(x) = -i f'(x)$, regarded as linear operators. Consider the commutator regarded as the linear map defined by $[M,D ] = MD - DM$. Note that a straightforward calculation gives 
\[
[M,D]f(x) =-ixf'(x) +i (xf(x) )' = -ixf'(x) + if(x) +ixf'(x) = if(x).
\]
Furthermore, viewed as a densely defined linear operators on $L^2$, one can easily show that it is self-adjoint. It follows from these observations that
\begin{multline*}
\norm{f}^2_{L^2} = \frac{1}{i}\langle [M,D] f, f \rangle_{L^2} = \frac{1}{i} \left( \langle M D f, f \rangle_{L^2} - \langle D M f, f \rangle_{L^2} \right) = \\
\frac{1}{i} \left( \langle Df , Mf \rangle_{L^2} - \langle Mf , Df \rangle_{L^2}\right) = 2 \Im \langle Df, Mf \rangle_{L^2} \leq 2 \norm{Mf}_{L^2} \norm{Df}_{L^2}.
\end{multline*}
\end{proof}

More generally, the same argument proves the following claim, which typically appears in quantum physics. 

\begin{cor}[UP for commutators] Let $\hil$ be a complex Hilbert space and let $A, B$ be self-adjoint densely defined linear operators on $\hil$. If $C=[A,B]$ denotes the commutator, then 
\[
\norm{Cu}^2_{\hil} \leq 2 \norm{Au}_{\hil} \norm{Bu}_{\hil}, \qquad u\in D_A \cap D_B.
\]
Here $D_A$ and $D_B$ denotes the domains of $A$ and $B$, respectively.
\end{cor}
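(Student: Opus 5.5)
The plan is to imitate the proof of Heisenberg's inequality above verbatim, with $M$ and $D$ replaced by the abstract operators $A$ and $B$.

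\emph{Caveat about the form of the inequality.} The inequality cannot hold literally in the form $\norm{Cu}^2_{\hil}\leq 2\norm{Au}_{\hil}\norm{Bu}_{\hil}$. Replacing $A$ by $\lambda A$ with $\lambda>0$ multiplies the left side by $\lambda^2$ but the right side only by $\lambda$, so the inequality fails for large $\lambda$ whenever $Cu\neq 0$. I would therefore prove the form that actually specializes to the Heisenberg argument:
\[
\abs{\langle Cu,u\rangle_{\hil}} \leq 2\norm{Au}_{\hil}\norm{Bu}_{\hil},
\]
for all $u$ in a common domain on which $ABu$ and $BAu$ are defined, that is, $u\in D_A\cap D_B$ with $Au\in D_B$ and $Bu\in D_A$. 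When $C=i\,\mathrm{Id}$, as for $[M,D]$, the left side equals $\norm{u}^2$, and we recover exactly the previous theorem.

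\emph{Step 1 (expand the commutator).} Write
\[
\langle Cu,u\rangle=\langle ABu,u\rangle-\langle BAu,u\rangle .
\]

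\emph{Step 2 (move operators across the inner product).} Using self-adjointness of $A$ and $B$ together with the domain assumptions, rewrite this as
\[
\langle Bu,Au\rangle-\langle Au,Bu\rangle=\langle Bu,Au\rangle-\conj{\langle Bu,Au\rangle}=2i\,\Im\langle Bu,Au\rangle .
\]

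\emph{Step 3 (conclude).} Take absolute values, use $\abs{\Im z}\leq\abs{z}$, and apply the Cauchy--Schwarz inequality to get $\abs{\langle Cu,u\rangle}\leq 2\norm{Au}\norm{Bu}$. In the Heisenberg setting one then uses $\langle Cu,u\rangle=i\norm{u}^2$.

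\emph{Main obstacle.} The only delicate point is domain bookkeeping. The identity $\langle ABu,u\rangle=\langle Bu,Au\rangle$ needs $Bu\in D_A$, which is not implied by $u\in D_A\cap D_B$ alone. I would handle this by stating the domain hypothesis explicitly. Alternatively, one can work on a core invariant under $A$ and $B$, such as the Schwartz class $\mathcal{S}(\R)$ for $M$ and $D$, and extend by density, which is how the Heisenberg case was treated.
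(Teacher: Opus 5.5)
Your argument is correct and follows the paper's route: the paper only says that the commutator computation from Heisenberg's inequality carries over. That computation yields exactly your bound $\abs{\langle Cu,u\rangle_{\hil}}\leq 2\norm{Au}_{\hil}\norm{Bu}_{\hil}$ on $\{u\in D_A\cap D_B:\ Au\in D_B,\ Bu\in D_A\}$. Your correction of the statement is also warranted: the printed inequality $\norm{Cu}^2_{\hil}\leq 2\norm{Au}_{\hil}\norm{Bu}_{\hil}$ already fails for $2\times 2$ matrices (with $[\sigma_x,\sigma_y]=2i\sigma_z$ and $u=e_1$ it reads $4\leq 2$), and it coincides with the correct bound only when $C$ is a unimodular multiple of the identity, as in the case $[M,D]=i\,\mathrm{Id}$.
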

Of course, this estimate becomes more interesting if the commutator $C$ is related to the identity operator, in some natural way.

\subsection{Functions with compactly supported Fourier transform}
Next, we shall look a yet another classical manifestation of the uncertainty principle, which stems from the Paley--Wiener Theorem on the relation between compactly supported functions and their Fourier transform.

An entire analytic function $f$ is said to be of exponential-type $a$ with $a\geq 0$, if for any $\varepsilon>0$, there exists a constant $C(\varepsilon)>0$ such that
\[
\abs{f(z)} \leq C(\varepsilon) \exp \left( (a+\varepsilon)|z| \right), \qquad z \in \C.
\]
We denote this set of functions by $\mathcal{E}_a$, and declare that an entire analytic function is of (finite) exponential-type, if $f\in \cup_{a\geq 0} \mathcal{E}_a$. The following classical result characterizes a large class of entire functions of exponential-type in terms of their Fourier transform.

\begin{thm}[Paley-Wiener] Let $a>0$ and $f\in L^2(\R)$. Then $\supp{\widehat{f}}\subseteq [-a,a]$ if and only if there exists $F\in \mathcal{E}_a$ such that $F=f$ a.e on $\R$. In particular, $f$ extends to a function in $\mathcal{E}_a$, whose restriction to the real line belongs to $L^2(\R)$.
\end{thm}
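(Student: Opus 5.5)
The easy direction comes first. Suppose $\supp{\widehat{f}}\subseteq[-a,a]$. Since $\widehat{f}\in L^2(\R)$ has compact support, it also lies in $L^1(\R)$ by Cauchy--Schwarz. I would then define
\[
F(z):=\frac{1}{2\pi}\int_{-a}^{a}\widehat{f}(\xi)e^{iz\xi}\,d\xi, \qquad z\in\C.
\]
Differentiation under the integral sign, or Morera's theorem combined with Fubini, shows that $F$ is entire. The estimate $\abs{e^{iz\xi}}=e^{-\xi\Im z}\le e^{a|z|}$ for $|\xi|\le a$ gives $\abs{F(z)}\le \frac{1}{2\pi}\norm{\widehat{f}}_{L^1}e^{a|z|}$, so $F\in\mathcal{E}_a$. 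By the $L^2$ inversion formula, obtained from Plancherel on $\mathcal{S}(\R)$ and extended by density, we get $F=f$ almost everywhere on $\R$.

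For the converse, assume $F\in\mathcal{E}_a$ with $F|_{\R}=f\in L^2(\R)$. The goal is to show that $\widehat{f}$ vanishes off $[-a,a]$. The plan is first to regularize and then to use a Phragm\'en--Lindel\"of argument in each half-plane.

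For the regularization, fix $\varepsilon>0$ and replace $F$ by $F_\varepsilon(z):=F(z)\,\bigl(\sin(\varepsilon z)/(\varepsilon z)\bigr)^2$. This function lies in $\mathcal{E}_{a+2\varepsilon}$. On $\R$ it is in $L^1\cap L^2$, and the multiplier tends to $1$ pointwise, so $\widehat{F_\varepsilon}\to\widehat{f}$ in $L^2$ as $\varepsilon\to0$. It therefore suffices to treat functions that are integrable on $\R$ and have type slightly larger than $a$. A further smoothing of this kind also makes them bounded on $\R$.

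Next comes the growth bound in half-planes, which is the key step. I would show that an entire $G$ of exponential type $b$ that is bounded on $\R$ satisfies $\abs{G(x+iy)}\le \sup_{\R}\abs{G}\,e^{b|y|}$. To do this, apply Phragm\'en--Lindel\"of to $G(z)e^{ibz}$ in each quadrant of the upper half-plane. On the real axis this function is bounded. On the imaginary axis one has to check boundedness, and for that I would first rotate, applying the principle to $G(z)e^{i(b+\delta)z}$ in sectors of opening less than $\pi$. Finite exponential type is exactly the growth the principle permits in sectors of opening $\pi/2$. I expect this to be the main obstacle, because the bookkeeping of the sector openings and of the auxiliary exponential factor has to be arranged carefully.

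The last step is a contour shift. Take $\xi>b$ with $b=a+2\varepsilon$, and write $\widehat{F_\varepsilon}(\xi)=\int_{\R}F_\varepsilon(x)e^{-ix\xi}\,dx$. By Cauchy's theorem I move the line of integration to $\Im z=-y$. The vertical edges vanish in the limit because $F_\varepsilon$ decays like $|x|^{-2}$ uniformly in strips, which follows from the half-plane bound applied to $z^2F_\varepsilon$ and the sine factor. This gives $\abs{\widehat{F_\varepsilon}(\xi)}\lesssim e^{by}e^{-y\xi}\int_{\R}(1+x^2)^{-1}\,dx$. Letting $y\to\infty$ yields $\widehat{F_\varepsilon}(\xi)=0$; the case $\xi<-b$ is symmetric, shifting upward instead. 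Hence $\supp{\widehat{F_\varepsilon}}\subseteq[-a-2\varepsilon,a+2\varepsilon]$. Passing to the $L^2$ limit as $\varepsilon\to0$ gives $\supp{\widehat{f}}\subseteq[-a,a]$, which completes the converse.
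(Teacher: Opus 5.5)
There is a genuine gap at the sentence ``A further smoothing of this kind also makes them bounded on $\R$.'' Boundedness on $\R$ is used twice in your argument. Your key step, the bound $\abs{G(x+iy)}\le \sup_{\R}\abs{G}\,e^{b|y|}$, needs it. So does the $|x|^{-2}$ decay in strips, because $z^2F_\varepsilon(z)=F(z)\sin^2(\varepsilon z)/\varepsilon^2$ is bounded on $\R$ only if $f$ is. Multiplying by $\bigl(\sin(\varepsilon z)/(\varepsilon z)\bigr)^2$, once or repeatedly, gives only $\abs{F_\varepsilon(x)}\le\abs{f(x)}$: it smooths $\widehat{f}$, not $f$. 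Boundedness of $f$ on $\R$ is a consequence of the theorem (or of a Plancherel--P\'olya type inequality), so it is not available at this stage.

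The fix is to smooth in $x$ instead, as the paper does in its Step 3. For $\varphi\in C_0^\infty(\R)$ put $F_\varphi(z):=\int_{\R}F(z-t)\varphi(t)\,dt$. This function is entire and stays in $\mathcal{E}_a$ because $\varphi$ has compact support. Its restriction to $\R$ is $f\ast\varphi$, which is bounded by $\norm{f}_{L^2}\norm{\varphi}_{L^2}$ and lies in $L^2$. Moreover $\widehat{f\ast\varphi}=\widehat{f}\,\widehat{\varphi}\to\widehat{f}$ in $L^2$ along an approximate identity. Apply your sinc$^2$ multiplier to $F_\varphi$ rather than to $F$, and the rest of your argument goes through.

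With that repair, your route differs from the paper's only in the last step. The paper applies Phragm\'en--Lindel\"of in the two upper quadrants to $G_b=Fe^{ibz}$ with $b>a$ and concludes $G_b\in H^\infty(\C_+)$. It then reads off $\supp{\widehat{f}}\subseteq[-b,\infty)$ from the Fourier-support characterization of Hardy spaces, which it takes for granted. You instead prove $\widehat{F_\varepsilon}(\xi)=0$ for $|\xi|>b$ directly by a contour shift. That is more self-contained, at the price of the extra sinc$^2$ decay and the final $L^2$ limit.

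The step you flagged as the main obstacle is in fact easy. The bound $\abs{G(iy)e^{-(b+\delta)y}}\le C(\delta)$ follows immediately from the exponential-type estimate, so no rotation is needed before applying the quadrant version. This is exactly how the paper handles $G_b$.
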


Note that the statement is symmetric in the sense that we can swap the roles of $f, \widehat{f}$.

In order to prove the Paley-Wiener Theorem, we shall need generalization of the maximum principle to unbounded domain. Results of this type go under the name of the Phragm\'en-Lindel\"of principle, but we shall below phrase half-space version of it, suitable for our purposes. 

\begin{thm}[Phragm\'en-Lindel\"of, \, halfplane] \thlabel{THM:PL-half} Let $f$ be a bounded continuous function in $\conj{\C}_+$ which is analytic in $\C_+$. Suppose that there exists a number $0<\alpha<1$ and a constant $C>0$ such that
\[
\abs{f(z)} \leq C\exp \left( \, |z|^{\alpha} \, \right) \qquad z\in \C_+.
\]
Then $f$ belongs to $H^\infty(\C_+)$ with 
\[
\sup_{z\in \C_+} \, \abs{f(z)} = \sup_{x\in \R} \, \abs{f(x)}.
\]
\end{thm}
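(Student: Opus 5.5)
The plan is the standard auxiliary-function trick: perturb $f$ by a factor that decays at infinity faster than $\exp(|z|^\alpha)$ grows, apply the ordinary maximum principle on large half-discs, and then remove the perturbation. Set $M:=\sup_{x\in\R}\abs{f(x)}$; it is finite because $f$ is bounded on $\conj{\C}_+$. Fix $\beta$ with $\alpha<\beta<1$, and for $\varepsilon>0$ put
\[
g_\varepsilon(z) := f(z)\exp\left(-\varepsilon (-iz)^{\beta}\right), \qquad z\in \conj{\C}_+,
\]
using the principal branch of $(-iz)^\beta$. For $z\in\conj{\C}_+$ the point $-iz$ lies in the closed right half-plane, so $\arg(-iz)\in[-\pi/2,\pi/2]$. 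This gives
\[
\Re(-iz)^\beta = |z|^\beta\cos(\beta\arg(-iz)) \geq |z|^\beta\cos(\beta\pi/2) =: c_\beta|z|^\beta,
\]
with $c_\beta>0$ because $\beta<1$. Consequently $g_\varepsilon$ is continuous on $\conj{\C}_+$, analytic in $\C_+$, and satisfies $\abs{g_\varepsilon}\leq \abs{f}$ there; in particular $\abs{g_\varepsilon(x)}\leq M$ on $\R$.

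Next, on the half-disc $D_R:=\{z\in\C_+: |z|<R\}$ we apply the maximum principle to $g_\varepsilon$. The boundary of $D_R$ consists of the segment $[-R,R]$, where $\abs{g_\varepsilon}\leq M$, and the semicircle $|z|=R$, where
\[
\abs{g_\varepsilon(z)} \leq C\exp\left(R^\alpha - \varepsilon c_\beta R^\beta\right).
\]
Since $\beta>\alpha$, this right-hand side tends to $0$ as $R\to\infty$, so it is at most $M$ once $R$ is large. (If $M=0$, then $f\equiv 0$ by the reflection or identity principle; alternatively, replace $M$ by $M+\delta$ and let $\delta\to0$.) Hence $\abs{g_\varepsilon}\leq \max(M, o(1))$ on $D_R$. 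Letting $R\to\infty$ gives $\abs{g_\varepsilon(z)}\leq M$ for every $z\in\C_+$.

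Finally, fix $z\in\C_+$ and let $\varepsilon\to0+$. Then $\exp(-\varepsilon(-iz)^\beta)\to 1$, so $\abs{f(z)}\leq M$. Thus $f\in H^\infty(\C_+)$ with $\sup_{\C_+}\abs{f}\leq \sup_\R\abs{f}$. The reverse inequality follows from continuity up to $\R$.

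The only delicate point is the choice of the auxiliary factor. A naive damping such as $e^{i\varepsilon z}$ only decays in the direction $\Im z\to+\infty$ and gives no control along the real axis at infinity, where the growth bound $\exp(|z|^\alpha)$ still allows large values on the semicircle near $\R$. For this reason we use the power $(-iz)^\beta$ with $\alpha<\beta<1$. Its real part is uniformly comparable to $|z|^\beta$ on the whole closed half-plane, including near the real axis. This requires $\beta<1$, because at $\beta=1$ the cosine factor $\cos(\beta\pi/2)$ vanishes.
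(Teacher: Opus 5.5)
Your proposal is correct and follows essentially the same route as the paper: the same auxiliary factor $\exp(-\varepsilon(-iz)^{\beta})$ with $\alpha<\beta<1$, the same lower bound $\Re(-iz)^{\beta}\geq \cos(\beta\pi/2)\,|z|^{\beta}$, the maximum principle on large half-discs, and then $\varepsilon\to 0^{+}$. Your explicit handling of the case $M=0$ (or replacing $M$ by $M+\delta$) is a small point the paper skips when it normalizes to $M=1$.
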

We note that the growth restriction on $f$ in $\C_+$ cannot be relaxed to include $\alpha=1$, as seen from the example $f(z)= e^{-iz}$. In similar way, the same conclusion also holds for the lower half-plane $\C_-$.

\begin{proof} Set $M:= \sup_{x\in \R} \abs{f(x)}$, and without loss of generality we may assume that $M=1$. Fix $0<\alpha<\beta<1$. For $\varepsilon>0$, consider the perturb analytic functions
\[
f_\varepsilon (z):= \exp \left(\, -\varepsilon (-iz)^\beta \, \right) f(z), \qquad z\in \C_+.
\]
Let $R>0$ number to be specified in a moment, and consider the semi-circle in $\C_+$ defined by
\[
C_R:=\{z= Re^{it}: \, \, 0\leq t \leq \pi \}.
\]
Note that
\[
\Re \left(\varepsilon (-iRe^{it})^\beta \right) = \varepsilon R^\beta \cos(\beta(t-\frac{\pi}{2})) \geq c(\beta) \varepsilon R^\beta, \qquad 0\leq t\leq \pi,
\]
for some constant $c(\beta)>0$. With this at hand, we obtain the estimate
\[
\abs{f_\varepsilon(Re^{it})} \leq  \abs{f(Re^{it})} \exp \left( - c(\beta)\varepsilon R^\beta \right) \leq C \exp \left( R^\alpha - c(\beta)\varepsilon R^\beta \right) \leq  1,
\]
whenever $R>0$ sufficiently large. If $D_R$ denotes the semi-disc in $\C_+$ with boundary $C_R \cup \R$, then it follows that 
\[
\sup_{z \in \C_+ \setminus D_R} \abs{f_\varepsilon(z)} \leq 1.
\]
Now since
\[
\abs{f_\varepsilon(x)}= \abs{f(x)} \leq 1, \qquad x \in \R,
\]
the maximum principle also ensures that 
\[
\sup_{z \in \conj{D}_R} \, \abs{f_\varepsilon(z)}\leq 1.
\]
Therefore, we conclude that
\[
\sup_{z \in \conj{\C}_+} \, \abs{f_\varepsilon(z)}\leq 1.
\]
Since the bound is independent on $\varepsilon>0$, we can let $\varepsilon\to 0+$ in order to conclude that 
\[
\sup_{z \in \conj{\C}_+} \abs{f(z)} \leq 1.
\]
This proves the claim.
\end{proof}

\begin{cor}[Phragm\'en-Lindel\"of, Quadrants] \thlabel{THM:PLQ} Consider a rotated quadrant
\[
\mathcal{Q}_\theta:=\{z \in \C:  |\arg(z)-\theta| < \frac{\pi}{4} \}, \qquad \theta \in \R.
\]
Let $f$ be an analytic function in $\mathcal{Q}_\theta$ which extends continuously to $\conj{ \mathcal{Q}_\theta}$, and satisfies the growth estimate:
\[
\abs{f(z)} \leq C\exp\left( \, |z|^\alpha \, \right), \qquad z\in \mathcal{Q}_\theta,
\]
for some $0<\alpha<2$, and constant $C>0$. Then 
\[
\sup_{z \in \partial \mathcal{Q}_\theta} \abs{f(z)} = \sup_{ z\in \mathcal{Q}_\theta} \abs{f(z)}.
\]

\end{cor}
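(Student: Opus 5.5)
Rotate and rescale the quadrant onto a half-plane via $z\mapsto w=z^2$, and then quote the half-plane Phragm\'en--Lindel\"of \thref{THM:PL-half}.

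First replace $f$ by $z\mapsto f(e^{i\theta}z)$. This is analytic on $\mathcal{Q}_0=\{|\arg z|<\pi/4\}$, continuous up to the boundary, and obeys the same growth bound. So we may assume $\theta=0$.

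The map $z\mapsto z^2$, with the principal branch, is a conformal bijection of $\mathcal{Q}_0$ onto the right half-plane $\{\Re w>0\}$. It extends to a homeomorphism of the closures, and it sends the two boundary rays $\arg z=\pm\pi/4$ onto the imaginary axis. Composing with $w\mapsto iw$ gives the upper half-plane $\C_+$. So we set
\[
g(w):=f\bigl((-iw)^{1/2}\bigr),\qquad w\in\conj{\C}_+,
\]
with the principal square root. Then $g$ is analytic in $\C_+$, continuous on $\conj{\C}_+$, and its values on $\R$ are exactly the values of $f$ on $\partial\mathcal{Q}_0$. The growth hypothesis transforms into
\[
|g(w)|\le C\exp\bigl(|w|^{\alpha/2}\bigr),
\]
and $\alpha/2<1$, which is exactly the exponent range allowed in \thref{THM:PL-half}.

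The one gap is that \thref{THM:PL-half} as stated assumes $g$ is bounded in $\conj{\C}_+$, and we do not know a priori that $f$ is bounded in $\mathcal{Q}_\theta$. We may assume $M:=\sup_{\partial\mathcal{Q}_\theta}|f|<\infty$, otherwise there is nothing to prove. The proof of \thref{THM:PL-half} never actually uses boundedness of $g$. It uses only the boundary bound $\sup_{\R}|g|\le M$, the growth estimate, and the maximum principle on semi-discs applied to the damped functions $g_\varepsilon(w)=\exp(-\varepsilon(-iw)^\beta)g(w)$ with $\alpha/2<\beta<1$. So I will rerun that argument verbatim for $g$ instead of quoting the theorem as a black box. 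This yields $\sup_{\C_+}|g|\le\sup_{\R}|g|$, that is, $\sup_{\mathcal{Q}_\theta}|f|\le\sup_{\partial\mathcal{Q}_\theta}|f|$.

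The reverse inequality holds by continuity of $f$ up to $\partial\mathcal{Q}_\theta$, which gives the claimed equality. The only delicate step is this observation that the damping argument supplies boundedness rather than requiring it; the rest is bookkeeping of the conformal change of variables.
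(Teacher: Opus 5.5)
Your proof is correct and follows the paper's route. The paper maps $\C_+$ onto $\mathcal{Q}_\theta$ directly by $z\mapsto e^{i(\theta-\pi/4)}\sqrt{z}$, which is exactly your rotation composed with $w\mapsto(-iw)^{1/2}$, and then applies \thref{THM:PL-half} to $g=f\circ\varphi$ with exponent $\alpha/2<1$. Your remark that the damping argument never uses the boundedness hypothesis of \thref{THM:PL-half} is a real gain in care: the paper quotes that theorem without checking that $g$ is bounded, and that boundedness is precisely what the corollary is later used to prove in the Paley--Wiener argument.
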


\begin{proof}
    The proof is an immediate consequence of the Phragm\'en--Lindel\"of Theorem for half-spaces, by means of mapping $\C_+$ conformally onto $\mathcal{Q}_\theta$ via the map 
    \[
    \varphi(z) := e^{i\theta_0} \sqrt{z} \qquad z\in \C_+
    \]
    with $\theta_0= \theta - \frac{\pi}{4}$. Now let $f:\mathcal{Q}_\theta \to \C$ analytic with continuous extension to $\conj{\mathcal{Q}}_\theta$ and satisfies the growth condition
    \[
    \abs{f(z)} \leq C \exp \left( \, |z|^\alpha \, \right), \qquad z\in \mathcal{Q}_\theta,
    \]
    for some $0<\alpha<2$. Then the composition $g= f \circ \phi$ defined an analytic function in $\C_+$, which extends to continuously up to the boundary $\R$ and satisfies:
    \[
    \abs{g(z)} \leq C \exp \left( \,|z|^{\alpha/2} \, \right) \qquad z\in \C_+. 
    \]
    It follows from \thref{THM:PL-half} that 
    \[
    \sup_{z\in \partial \mathcal{Q}_\theta} \abs{ f(z)} = \sup_{z\in \partial \C_+} \abs{g(z)} = \sup_{z\in \C_+} \abs{g(z)} = \sup_{z\in \mathcal{Q}_\theta} \abs{ f(z)}.
    \]
\end{proof}

We now turn to the proof the Paley--Wiener Theorem, which is fortunately not too long.
\begin{proof}[Proof of the Paley-Wiener Theorem]
    \proofpart{1}{Necessity:} First, we note that if $f\in L^2(\R)$ with $\supp{\widehat{f}}\subseteq[-a,a]$ implies that $\widehat{f}\in L^1(\R)$. Using the inverse formula 
    \[
    f(x) = c\int_{-a}^{a} \widehat{f}(\xi) e^{ix\xi} d\xi
    \]
    for some irrelevant normalizing constant $c\neq 0$, we may invoke Morera's Theorem to conclude that $f$ extends to an entire analytic function. The claim on $f$ being of exponential-type $a$ follows from the Cauchy-Schwarz inequality:
    \[
    \abs{f(z)} \leq \int_{-a}^a \abs{\widehat{f}(x)} e^{x\Im(z)} dx \leq \norm{\widehat{f}}_{L^1} e^{a|\Im z|}, \qquad z\in \C.
    \]
    \proofpart{2}{Sufficiency for bounded $f$:} We shall primarily assume that $f\in L^\infty(\R)$ and continuous on $\R$ with the property that there exists $F\in \mathcal{E}_a$ with $F=f$ a.e on $\R$. For any $b>a$, consider the functions $G_b(z)=F(z)e^{ibz}$, which are bounded and continuous on $\R$. Furthermore, on the imaginary axis, we have
    \[
    \abs{G_b(iy)}= \abs{F(iy)}e^{-by}\leq C(\varepsilon) e^{-y(b-a-\varepsilon)}, \qquad y>0,
    \]
    whenever $0<\varepsilon<b-a$. Furthermore, $G_b$ also satisfies the following estimate on $\C_+$:
    \[
    \abs{G_b(z)}= \abs{F(z)}e^{-by} \leq C(\varepsilon)\exp \left( (\alpha+\varepsilon)|z| \right).
    \]
    Applying the Phragmen--Lindelöf principle for rotated quadrants in \thref{THM:PLQ} on both the first and the second quadrants, we conclude that $G_b\in H^\infty(\C_+)$. Therefore, it follows that $\supp{\widehat{G_b}}\subseteq[0,\infty)$, and hence $\supp{\widehat{f}} \subseteq [-b,\infty)$. A similar argument applied to $G_{-b}$ on the lower-half plane $\C_-$ also implies that $G_{-b} \in H^\infty(\C_-)$, hence $\supp{\widehat{f}} \subseteq (-\infty,b]$. We therefore conclude that $\supp{\widehat{f}}\subseteq[-b,b]$ for all $b>a$. Taking intersects we conclude that the Fourier spectrum of $f$ is contained in $[-a,a]$.
    \proofpart{3}{General case:}
    Let $f\in L^2(\R)$ and assume there exists a function $F \in \mathcal{E}_a$ with $F=f$ a.e on $\R$. Pick a smooth family of compactly supported approximates of identity $\varphi_\varepsilon$ on $\R$, and consider the function 
    \[
    F_\varepsilon(z) := \int_{\R} F(z-x) \varphi_\varepsilon(x) dx, \qquad z\in \C.
    \]
    If $f_\varepsilon$ denotes the restriction of $F_\varepsilon$ to the real line, then it is evident that $f_\varepsilon = f\ast \varphi_\varepsilon \in C_b(\R)$. Using a Phragmen-Lindel\"of argument similar as in step 2, we also deduce that $F_\varepsilon \in \mathcal{E}_a$. According to the second step, it therefore follows that 
    \[
    \supp{\widehat{f_\varepsilon}}\subseteq [-a,a],
    \]
    for all $\varepsilon>0$. But since $f_\varepsilon \to f \in L^2(\R)$, it easily follows from Plancherel's Theorem that $\supp{\widehat{f}}\subseteq [-a,a]$. This completes the proof.
\end{proof}

We shall now record two interesting consequences of the Paley--Wiener Theorem with a uncertainty flavor.

\begin{cor}[UP via Paley--Wiener] \thlabel{COR:UPP-W} Let $f\in L^2(\R)$ non-trivial with compact Fourier spectrum $\supp{\widehat{f}}$. Then $f$ cannot vanish on a set accumulation points in $\C$, and it satisfies the growth restriction 
\[
\log |f(z)| \leq A|z| \qquad |z|>1,
\]
for some $A>0$.
\end{cor}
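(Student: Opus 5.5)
The plan is to reduce everything to the necessity half of the Paley--Wiener Theorem. Since $f\in L^2(\R)$ and $\widehat{f}$ is compactly supported, I first choose $a>0$ with $\supp{\widehat{f}}\subseteq[-a,a]$. Step 1 of the proof of the Paley--Wiener Theorem then says that $f$ agrees a.e.\ on $\R$ with the entire function
\[
F(z)= c\int_{-a}^{a}\widehat{f}(\xi)e^{iz\xi}\,d\xi .
\]
By Cauchy--Schwarz on the bounded interval, $\widehat{f}\in L^1(\R)$, and the same step gives
\[
\abs{F(z)}\le \abs{c}\,\norm{\widehat{f}}_{L^1}\,e^{a\abs{\Im z}}\le C e^{a|z|}, \qquad z\in\C.
\]
Throughout, I identify $f$ with its entire extension $F$.

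\emph{Growth estimate.} Taking logarithms gives $\log|F(z)|\le \log C + a|z|$. For $|z|>1$ the constant can be absorbed: if $C\le 1$ the bound $\log|F(z)|\le a|z|$ holds directly, and otherwise $\log C\le (\log C)|z|$ for $|z|>1$. So the claim holds with $A:=a+\max(\log C,0)$. This part is routine.

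\emph{Zeros.} Here I argue by contradiction. Suppose $F$ vanishes on a set with an accumulation point in $\C$. By the identity theorem for entire functions, $F\equiv 0$ on $\C$. In particular $f=F|_{\R}=0$ a.e., so $f$ is trivial in $L^2(\R)$, which contradicts the hypothesis. Equivalently, by Plancherel's identity $\widehat{f}=0$, again a contradiction.

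The only point needing care is the interpretation of ``cannot vanish on a set with accumulation points''. The argument covers accumulation points anywhere in $\C$, including points of $\R$ and points inside the support region. It also covers a subset of $\R$ of positive Lebesgue measure, since such a set always has accumulation points. I expect no real obstacle: the substantive content, namely entirety and exponential type, is already supplied by the Paley--Wiener Theorem.
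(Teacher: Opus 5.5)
Your proposal is correct and matches the argument the paper intends. The paper gives no separate proof of this corollary and treats it as an immediate consequence of the necessity half of the Paley--Wiener Theorem: the entire extension with the bound $\abs{f(z)}\le \norm{\widehat{f}}_{L^1}e^{a\abs{\Im z}}$, after which the growth estimate follows by absorbing the constant for $\abs{z}>1$ and the zero statement follows from the identity theorem, exactly as you do.
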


In other words, elements in $L^2(\R)$ with compactly supported Fourier spectrum can neither vanish too much, nor grow too fast at infinity. 

Our next observation is a more qualitative and discrete refinement of the above observation, and is attributed to father of information theory, B. Shannon. 

\begin{thm}[Shannon sampling formula] Let $f$ be a smooth function in $\R$ with $\supp{f}\subseteq [-a,a]$ for some $a>0$. Then the following sampling formula holds
\[
\widehat{f}(\xi) = \sum_{n \in \mathbb{Z}} \widehat{f}\left(\frac{\pi n}{a} \right) \frac{\sin(ax-\pi n)}{(ax-\pi n)}, \qquad \xi \in \R.
\]
\end{thm}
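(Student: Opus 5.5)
The plan is to expand $f$ itself in a Fourier series on the interval $[-a,a]$ and then integrate that expansion term by term against $e^{-ix\xi}$. The Fourier coefficients of this expansion are exactly the samples $\widehat{f}(\pi n/a)$, and each term of the integrated series is a sinc kernel. (I read the variable $x$ on the right-hand side of the statement as $\xi$, so that the identity to prove is
\[
\widehat f(\xi)=\sum_n \widehat f(\pi n/a)\,\frac{\sin(a\xi-\pi n)}{a\xi-\pi n}.)
\]

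\textbf{Step 1: Fourier series of $f$.} Regard the restriction of $f$ to $[-a,a]$ as a $2a$-periodic function. After the rescaling $x\mapsto \pi x/a$ this becomes a function on $\T$. For each $n$ its Fourier coefficient is
\[
c_n=\frac{1}{2a}\int_{-a}^{a} f(x)e^{-i\pi n x/a}\,dx .
\]
Since $\supp{f}\subseteq[-a,a]$, the integral over $[-a,a]$ equals the integral over all of $\R$, so
\[
c_n=\frac{1}{2a}\int_{\R} f(x)e^{-i\pi n x/a}\,dx=\frac{1}{2a}\widehat f\!\left(\frac{\pi n}{a}\right).
\]
Hence on $[-a,a]$ we have $f(x)=\frac{1}{2a}\sum_n \widehat f(\pi n/a)\,e^{i\pi n x/a}$.

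\textbf{Step 2: Convergence of the series.} This is the only step needing care. Because $f$ is smooth on $\R$ and vanishes outside $[-a,a]$, all of its derivatives vanish at $\pm a$. The periodic extension is therefore $C^\infty$ on the circle. Integrating by parts, as in the proof of \thref{LEM:RIELEB}, gives $c_n=\mathcal{O}(|n|^{-2})$. So the series converges absolutely and uniformly on $[-a,a]$, for instance by the Weierstrass M-test as in the discussion of $A(\T)$.

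As a fallback that does not need the derivatives to vanish at $\pm a$, one can use $L^2$-convergence of the Fourier series (Parseval). Then one only needs that $g\mapsto \int_{-a}^a g(x)e^{-ix\xi}\,dx$ is a continuous functional on $L^2([-a,a])$, which follows from Cauchy--Schwarz.

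\textbf{Step 3: Term-by-term integration.} Write $\widehat f(\xi)=\int_{-a}^a f(x)e^{-ix\xi}\,dx$ and insert the series from Step 1. The convergence from Step 2 allows the sum and integral to be interchanged. It then remains to compute the elementary integral
\[
\frac{1}{2a}\int_{-a}^{a} e^{ix(\pi n/a-\xi)}\,dx=\frac{\sin(a(\pi n/a-\xi))}{a(\pi n/a-\xi)}=\frac{\sin(a\xi-\pi n)}{a\xi-\pi n},
\]
using that $t\mapsto \sin t/t$ is even, and interpreting the value at $t=0$ as $1$. Summing over $n$ gives the sampling formula, and it holds for every $\xi\in\R$.
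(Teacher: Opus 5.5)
Your proof is correct and follows essentially the same route as the paper. You expand $f$ in its Fourier series on the support interval, recognize the coefficients as the samples $\widehat f(\pi n/a)$, and integrate term by term against $e^{-ix\xi}$ to get the sinc kernels. The differences are cosmetic: you work directly with period $2a$ instead of first rescaling to $[-\pi,\pi]$, and you justify the smoothness of the periodic extension (all derivatives vanish at $\pm a$), which the paper takes for granted. Your reading of $x$ as $\xi$ in the statement is also correct.
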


\begin{proof}
By means of re-scaling and considering the function 
\[
t \mapsto \frac{a}{\pi}f\left(\frac{at}{\pi}\right)
\]
we may assume that $\supp{f}\subseteq[-\pi,\pi]$. Then we have
\[
\widehat{f}(\xi)=\int_{-\pi}^{\pi} f(x)e^{-ix\xi}\,dx, \qquad \xi \in \R.
\]
Now regarding $f$ as a smooth $2\pi$-periodic function on $\R$, we see that the Fourier coefficients satisfy the identity:
\[
c_n:=\frac{1}{2\pi}\int_{-\pi}^{\pi} f(x)e^{-inx}\,dx = \frac{1}{2\pi}\widehat{f}(n), \qquad n\in \mathbb{Z}.
\]
Since $f$ is smooth, its Fourier series converges absolutely to $f$ on $[-\pi, \pi]$:
\[
f(x)=\frac{1}{2\pi}\sum_{n\in\mathbb{Z}} \widehat{f}(n)e^{inx}, \qquad x\in[-\pi,\pi].
\]
Substituting this into the Fourier integral an changing the order of integration, we obtain
\[
\widehat{f}(\xi)
=
\int_{-\pi}^{\pi} \frac{1}{2\pi}\sum_{n\in\mathbb{Z}} \widehat{f}(n)e^{inx}e^{-ix\xi}\,dx
=
\sum_{n\in\mathbb{Z}} \widehat{f}(n)\frac{1}{2\pi}\int_{-\pi}^{\pi}e^{i(n-\xi)x}\,dx = \sum_{n\in\mathbb{Z}} \widehat{f}(n)\frac{\sin \pi(\xi-n)}{\pi(\xi-n)}.
\]
with the usual interpretation $1$ at $\xi=n$. The claim now readily follows by re-scaling back, we omit the straightforward calculations.
\end{proof}

Note that we do not actually need $f$ to be smooth, it suffices only to require that its Fourier series with period $a$ converges uniformly, in order to justify the convergence of the sampling formula at every point $\xi \in \R$. In fact, the same conclusion holds for $f \in L^2(\R)$ if we settle for the weaker notion of convergence in $L^2(\R)$.

A consequence of Shannon's sampling formula is that the Fourier transform of a compactly supported $f\in L^2(\R)$ cannot vanishes on a dense enough lattice, unless it vanishes identically.

\begin{cor} If $0<a\leq b$ and $f \in L^2(\R)$ with 
\[
\supp{f}\subseteq[-a,a], \qquad \widehat{f}\left( \frac{\pi n}{b} \right)=0, \qquad  n\in \mathbb{Z},
\]
then $f\equiv 0$. 
\end{cor}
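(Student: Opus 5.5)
The plan is to reduce the corollary to Shannon's sampling formula, applied with the larger interval $[-b,b]$ in place of $[-a,a]$. Since $\supp{f}\subseteq[-a,a]\subseteq[-b,b]$, the function $f$ is also supported in $[-b,b]$. The sampling formula at scale $b$ then expresses $\widehat{f}(\xi)$ entirely through the samples $\widehat{f}(\pi n/b)$. These samples vanish by hypothesis, so every term of the series is zero and $\widehat{f}\equiv 0$. Plancherel's theorem then gives $f=0$ a.e.

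The only care needed is in the hypotheses of the sampling formula. As stated it is for smooth $f$, whereas here we only know $f\in L^2(\R)$. I would use the remark following its proof: the formula holds for $f\in L^2(\R)$ supported in $[-b,b]$ with convergence in $L^2(\R)$. This can be made explicit as follows.

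\begin{itemize}
\item First rescale, via $t\mapsto \frac{b}{\pi}f(\frac{bt}{\pi})$, so that $\supp{f}\subseteq[-\pi,\pi]$ and the sample points become the integers.
\item Regard $f|_{[-\pi,\pi]}$ as an element of $L^2(\T)$. Its Fourier coefficients are $c_n=\frac{1}{2\pi}\widehat{f}(n)$, and these all vanish.
\item By Parseval's identity on $\T$ (equivalently, uniqueness of Fourier coefficients), $f=0$ a.e. on $[-\pi,\pi]$, and hence a.e. on $\R$.
\end{itemize}

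This route even bypasses the sampling series itself, since it only uses completeness of $\{e^{inx}\}$ in $L^2[-\pi,\pi]$.

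The one point deserving attention is that the values $\widehat{f}(\pi n/b)$ are well defined pointwise. This holds because $f\in L^2$ with compact support lies in $L^1$, so $\widehat{f}$ is given by the integral formula and is continuous; in fact it is entire by the Paley--Wiener theorem. The hypothesis $a\le b$ is used only to guarantee $\supp{f}\subseteq[-b,b]$. No step is a serious obstacle once the problem is reduced to Fourier series on an interval of length $2b$.
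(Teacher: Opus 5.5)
Your argument is correct and complete. The paper gives no separate proof; it presents the corollary as a consequence of Shannon's sampling formula. Since $a\le b$, $f$ is supported in $[-b,b]$, so the formula at scale $b$ writes $\widehat f(\xi)$ as a series in the samples $\widehat f(\pi n/b)$, all of which vanish.

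Your route uses the same rescaling but stops one step earlier. The identity $c_n=\frac{1}{2\pi}\widehat f(n)$ is the heart of the paper's own proof of the sampling formula. Combined with completeness of $\{e^{inx}\}$ in $L^2[-\pi,\pi]$, it already forces $f=0$, so the sinc series is never needed.

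This gives you a self-contained argument valid for every $f\in L^2(\R)$. The paper proves the sampling formula only for smooth $f$ and merely remarks, without proof, that it extends to $L^2$ with $L^2$-convergence. What the paper's route offers in return is an explicit reconstruction of $\widehat f$ from its samples, which is more than uniqueness requires.

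Your supporting checks are also correct:
\begin{itemize}
\item The samples are well defined, since $f\in L^1$ and hence $\widehat f$ is continuous.
\item The rescaling $g(t)=\frac{b}{\pi}f(\frac{bt}{\pi})$ gives $\widehat g(\xi)=\widehat f(\pi\xi/b)$ and support in $[-\pi a/b,\pi a/b]\subseteq[-\pi,\pi]$.
\end{itemize}
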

Note that the density condition on the lattice $\pi n/b$ cannot be improved to any $b<a$. For instance, take the function
\[
g(\xi) = \frac{\sin(\varepsilon \xi)\sin(b\xi)}{\xi}, \qquad \xi \in \R
\]
for $0<\varepsilon< a-b$, which is easily seen to the Fourier transform of a bounded function $f$ supported in $[-a,a]$.

\subsection{Measures with spectral gaps} 

Here, we shall consider investigate some properties of measures with spectral gaps. Let $M(\R)$ denote the space of finite complex Borel measures in $\R$. We say that $\mu \in M(\R)$ has a spectral gap, if 
\[
\supp{\widehat{\mu}} \cap I = \emptyset,
\]
for some finite interval $I\subset \R$. The following simple observation illustrates a clear manifestation of the uncertainty principle. 

\begin{cor}[UP via spectral gaps] If $\mu$ is a compactly supported complex Borel measure in $\R$, whose Fourier $\widehat{\mu}$ has a spectral gap, that is,
\[
\supp{\widehat{\mu}} \cap I = \emptyset,
\]
for some finite interval $I\subset \R$, then $\mu \equiv 0$.
\end{cor}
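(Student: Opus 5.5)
The plan is to show that the Fourier transform of a compactly supported measure extends to an entire function, and then to use the identity theorem. Suppose $\supp{\mu}\subseteq[-R,R]$. Set
\[
F(z):=\int_{[-R,R]} e^{-ixz}\,d\mu(x), \qquad z\in\C .
\]
This integral converges absolutely for every $z\in\C$, since $|e^{-ixz}|= e^{x\Im z}\le e^{R|\Im z|}$ on the support of $\mu$. Differentiation under the integral sign, or Morera's Theorem combined with Fubini, shows that $F$ is entire. The same bound gives $|F(z)|\le \norm{\mu}\,e^{R|z|}$, so $F\in\mathcal{E}_R$. On the real line $F$ agrees with $\widehat{\mu}$, which is therefore a genuine continuous function. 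Consequently its support in the distributional sense coincides with the closure of the set where it is non-zero.

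Next I would use the hypothesis. The condition $\supp{\widehat{\mu}}\cap I=\emptyset$ for a finite interval $I$ means that $\widehat{\mu}$, as a tempered distribution, vanishes on the open interval $\operatorname{int} I$; here I assume $I$ has non-empty interior, which is the meaningful reading of a spectral gap. Because $\widehat{\mu}=F|_{\R}$ is continuous, this gives $F(\xi)=0$ pointwise for all $\xi\in\operatorname{int}I$. That set has accumulation points in $\C$, so the identity theorem forces $F\equiv 0$. In particular $\widehat{\mu}(\xi)=0$ for every $\xi\in\R$.

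The final step is uniqueness of the Fourier transform on $M(\R)$, which is the real-line analogue of the earlier corollary that a measure on $\T$ is determined by its Fourier coefficients. There are two ways to carry this out.

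\emph{First route.} For $\varphi\in\mathcal{S}(\R)$, Fubini and Fourier inversion give
\[
\int\varphi\,d\mu=\frac{1}{2\pi}\int\widehat{\varphi}(\xi)\,\widehat{\mu}(-\xi)\,d\xi=0 .
\]
Since $\mathcal{S}(\R)$ is dense in $C_0(\R)$, the Riesz representation theorem yields $\mu=0$.

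\emph{Second route.} Choose $L>R$ and regard $\mu$ as a measure on a circle of length $2\pi L'$ with $L'$ large, so that $\supp{\mu}$ fits inside one period. Its Fourier coefficients are then values of $\widehat{\mu}$ on a lattice, hence all zero, and the $\T$ uniqueness corollary applies after rescaling.

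The main obstacle is interpretational rather than technical. One has to justify that the distributional support of $\widehat{\mu}$ avoiding $I$ really gives pointwise vanishing on an open set, and this is exactly where continuity of $\widehat{\mu}$ is needed. The rest is standard: entireness from compact support, the identity theorem, and Fourier uniqueness for measures. This is also the measure analogue of Corollary \thref{COR:UPP-W}.
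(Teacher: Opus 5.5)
Your proof is correct and is essentially the paper's argument: compact support makes $\widehat{\mu}$ the restriction of an entire function, which must vanish identically by the identity theorem. You also usefully spell out two steps the paper leaves implicit, namely the continuity of $\widehat{\mu}$ and the final uniqueness of the Fourier transform on $M(\R)$. (Your non-empty-interior caveat is unnecessary: since $\supp{\widehat{\mu}}$ is closed, any point of $I$ outside it already has an open neighbourhood on which $\widehat{\mu}$ vanishes.)
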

\begin{proof} If $\supp{\mu}\subseteq[-a,a]$ for some $a>0$, then the Fourier transform
\[
\widehat{\mu}(\xi) := \int_{-a}^{a} e^{-i\xi x} d\mu(x) \qquad \xi\in \C,
\]
extends to an entire analytic function in $\C$. For instance, this can be proved using Morera's Theorem. It follows that $\widehat{\mu}$ cannot vanish in an arc in $\R$, unless it vanishes identically.
   
\end{proof}

Moving forward, we shall investigate further properties of measures with spectral gaps. Given a number $a>0$, we denote 
\[
M_a(\R)= \left\{\mu \in M(\R): \, \, \supp{\widehat{\mu}} \cap [-a,a] = \emptyset \right\},
\]
which is easily shown to define a closed subspace of $M(\R)$, equipped with the usual total variation norm on $\R$. Define the Bernstein space 
\[
B_a(\R) := \left\{f\in L^\infty(\R): \, \, f \in \mathcal{E}_a \right\}, \qquad a>0
\]
equipped with the supremum norm in $\R$. Note that $B_a(\R)$ consists of bounded continuous functions in $\R$ with extend to entire analytic functions of exponential type $a$.

Our main concern in this subsection is to unravel the following result, and a few of its far-reaching consequences.

\begin{thm}[Pollard] Let $a>0$ and $\mu \in M(\R)$, and consider define the associated Pollard function:
\[
P_{\mu, a}(z) := \sup \left\{\abs{f(z)}: \, \, f \in B_a(\R), \, \, \int_{\R} \abs{f} d\abs{\mu} \leq 1 \right\}, \qquad z\in \C.
\]
Then whenever $\mu \in M_a(\R)$, we have for any $\sigma\in \R$ that
\[
\int_{\R} \frac{\log^+ P_{\mu,a}(x+i\sigma)}{1+x^2}dx < \infty.
\]

\end{thm}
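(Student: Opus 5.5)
The plan is to bound the Pollard function on a horizontal line by the reciprocal of a Cauchy transform of $\mu$, and then use that Cauchy transforms of nontrivial measures have convergent logarithmic integrals. Throughout we assume $\mu\neq 0$; for $\mu=0$ the normalisation $\int|f|\,d|\mu|\le 1$ imposes no constraint and the statement is vacuous.

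\emph{Step 1: an annihilation identity.} For $g\in B_a(\R)$ that is integrable against $|\mu|$, I claim that $\int_{\R} g\,d\mu=0$. Formally, $\widehat{g}$ is a distribution supported in $[-a,a]$, while $\widehat{\mu}$ vanishes on a neighbourhood of $[-a,a]$, since its support is closed and misses $[-a,a]$. To make this rigorous, I would regularise $g$ by replacing it with $g(x)\,\phi(\varepsilon x)$, where $\phi\in C_0^\infty$, $\phi(0)=1$ and $\widehat{\phi}$ is supported in $[-1,1]$. The spectrum of the product stays inside $[-a-\varepsilon,a+\varepsilon]$, and for small $\varepsilon$ this interval still lies inside the gap of $\widehat{\mu}$. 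For such nice functions the pairing with $\mu$ equals the pairing of Fourier transforms, which vanishes. Letting $\varepsilon\to 0$ with dominated convergence recovers $\int g\,d\mu=0$.

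\emph{Step 2: a pointwise bound.} Fix $z\notin\R$ and $f\in B_a(\R)$ with $\int|f|\,d|\mu|\le 1$. The difference quotient $t\mapsto \bigl(f(z)-f(t)\bigr)/(z-t)$ is entire of exponential type $a$, bounded on $\R$, and its $|\mu|$-integrability follows from $|z-t|\ge|\Im z|$ and the normalisation of $f$. Step 1 then gives
\[
f(z)\,G(z)=\int_{\R}\frac{f(t)}{z-t}\,d\mu(t), \qquad G(z):=\int_{\R}\frac{d\mu(t)}{z-t}.
\]
Consequently $|f(z)|\,|G(z)|\le |\Im z|^{-1}$, and taking the supremum over $f$ yields
\[
P_{\mu,a}(z)\le \frac{1}{|\Im z|\,|G(z)|}.
\]

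\emph{Step 3: log-integrability of the Cauchy transform.} $G$ is analytic in $\C_+$ and in $\C_-$, and it lies in the Nevanlinna class, indeed in $H^p$ for $p<1$, of the shifted half-planes $\{\Im z>\sigma\}$ for $\sigma>0$, and analogously below. I expect this to be the main obstacle. The hard part is to show that $G$ is not identically zero in at least one of the half-planes, and to handle the case where it vanishes in one of them. If $G\equiv 0$ in $\C_+$, then $\widehat{\mu}$ is supported on a half-line, and by the real-line analogue of the F. and M. Riesz theorem (\thref{THM:FMRIESZ}) $\mu$ is absolutely continuous. If $G$ also vanished in $\C_-$, then the jump of $G$ across $\R$, which recovers $\mu$, would be zero, forcing $\mu=0$. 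So $G$ is nontrivial in some half-plane, say $\C_+$. The half-plane version of Jensen's lemma (\thref{LEM:JENSEN}, transported by a Cayley map) then gives
\[
\int_{\R}\frac{\log^-|G(x+i\sigma)|}{1+x^2}\,dx<\infty, \qquad \sigma>0.
\]
Combined with Step 2 this proves the claim for every $\sigma>0$.

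\emph{Step 4: transfer to the remaining lines.} For $\sigma\le 0$ (or for the other sign, if $G$ is only nontrivial in $\C_-$), I would use that each $\log|f|$, with $f\in B_a(\R)$, is subharmonic of exponential type at most $a$. By Phragmén--Lindelöf (\thref{THM:PL-half}, applied to $f(z)e^{\pm iaz}$ on a half-plane), $\log|f|$ at points below the line $\Im z=\sigma_0$ is dominated by $a|\Im z-\sigma_0|$ plus the Poisson integral of $\log|f|$ over that line. Taking the supremum over $f$ shows that $\log^+ P_{\mu,a}$ on the line $\Im z=\sigma$ is bounded by the Poisson extension of its values on the line $\Im z=\sigma_0$, plus a constant. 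Poisson extension preserves finiteness of $\int \frac{(\cdot)}{1+x^2}\,dx$, up to constants depending on $|\sigma-\sigma_0|$, which would finish the proof.
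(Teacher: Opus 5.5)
Your proposal is correct and follows the paper's proof essentially step for step. The common chain is the difference-quotient annihilation identity, then the bound $P_{\mu,a}(z)\le (\abs{\Im z}\,\abs{G(z)})^{-1}$, then Jensen's inequality for the bounded Cauchy transform on lines $\Im z=\sigma>0$ in the half-plane where it is nontrivial, and finally a Poisson--Jensen majorization of $\log\abs{f}$ (using that $e^{\pm iaz}f$ is bounded in the appropriate half-plane) to pass to the remaining lines. One slip needs repair: no nonzero $\phi\in C_0^\infty$ can have compactly supported $\widehat{\phi}$, so in Step~1 take instead $\phi\in\mathcal{S}(\R)$ with $\widehat{\phi}\in C_0^\infty([-1,1])$ and $\phi(0)=1$, after which your regularization argument goes through unchanged.
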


Roughly speaking, the Pollard function $P_{\mu,a}$ measures, in a rather indirect way, how large functions in $B_a(\R)$ can become at a given point $z\in \C$, under a normalization imposed by the measure $\mu$. There is no reason to expect that the supremum is attained by a single function $f \in B_a(\R)$, and in general no such maximizer exists. The Pollard function should therefore be viewed as a nonlinear envelope capturing the collective behavior of all admissible functions.

Pollard’s Theorem reveals a subtle phenomenon, namely that the presence of a spectral gap imposes a strong global growth restriction on functions in $B_a(\R)$ along horizontal lines. 

In order to prove Pollard's Theorem, we shall need two lemmas. The first lemma links the Bernstein spaces to measures with spectral gaps.

\begin{lemma}\thlabel{LEM:MaBaperp} For any $a>0$ we have the equality of sets
\[
M_a(\R) = B_a(\R)^\perp :=\left\{\mu \in M(\R): \int_{\R} f(x) \conj{d\mu}(x)=0, \qquad \forall f\in B_a(\R) \right\}.
\]
\end{lemma}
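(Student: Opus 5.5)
We prove the two inclusions separately. Both rest on two observations. First, by Paley--Wiener applied in the distributional sense, every $f\in B_a(\R)$ is a tempered distribution with $\supp{\widehat f}\subseteq[-a,a]$. Second, the pairing $\int f\,\conj{d\mu}$ can be computed on the Fourier side as $\frac{1}{2\pi}\langle \widehat f,\conj{\widehat\mu}\rangle$, at least after suitable regularization.

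\emph{Inclusion $M_a(\R)\subseteq B_a(\R)^\perp$.} Fix $\mu\in M_a(\R)$ and $f\in B_a(\R)$. Since $\supp{\widehat\mu}$ is closed and misses $[-a,a]$, it misses $[-b,b]$ for some $b>a$.

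1. Regularize $f$. Pick an even Schwartz function $\phi$ with $\widehat\phi\in C_0^\infty(-\varepsilon,\varepsilon)$ and $\phi(0)=1$, where $a+\varepsilon<b$, and set $f_\delta(x):=f(x)\phi(\delta x)$.

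2. Locate the spectrum of $f_\delta$. It lies in $L^1\cap L^\infty$, and $\widehat{f_\delta}=\frac{1}{2\pi}\widehat f*\delta^{-1}\widehat\phi(\cdot/\delta)$ is supported in $[-a-\varepsilon\delta,a+\varepsilon\delta]\subseteq[-b,b]$ for $\delta\le1$.

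3. Show $\int f_\delta\,\conj{d\mu}=0$. The function $\widehat{f_\delta}$ is continuous and supported where $\widehat\mu$ vanishes. The cleanest route is to further convolve $\mu$ with an approximate identity $\psi_\eta$ whose Fourier transform is compactly supported. This makes $\mu*\psi_\eta\in L^1\cap L^\infty$ while $\widehat{\mu*\psi_\eta}=\widehat\mu\,\widehat{\psi_\eta}$ still vanishes on $[-b,b]$. Then Plancherel gives $\int f_\delta\,\conj{(\mu*\psi_\eta)}\,dx=\frac{1}{2\pi}\int\widehat{f_\delta}\,\conj{\widehat\mu\widehat{\psi_\eta}}=0$.

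4. Remove the regularizations. Let $\eta\to0$, using that $f_\delta$ is bounded and uniformly continuous. Then let $\delta\to0$ by dominated convergence, since $|f_\delta|\le\|f\|_\infty\|\phi\|_\infty$, $f_\delta\to f$ pointwise, and $\mu$ is finite.

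\emph{Inclusion $B_a(\R)^\perp\subseteq M_a(\R)$.} Let $\mu\perp B_a(\R)$ and $|\xi_0|<a$. Testing against the exponentials $e^{i\xi_0x}\in B_a(\R)$ only yields $\widehat\mu(\xi_0)=0$ on the open interval $(-a,a)$. To handle the endpoints, note that $\widehat\mu$ is continuous, so it vanishes on $[-a,a]$. If the convention requires $\supp{\widehat\mu}$, a closed set, to be disjoint from $[-a,a]$, we use the exponentials $e^{\pm iax}\in B_a$ themselves, giving $\widehat\mu(\pm a)=0$. The support question at the endpoints then becomes a matter of the convention fixed in the definition of $M_a$. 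More robustly, for any Schwartz $g$ with $\supp{\widehat g}\subseteq(-a,a)$ we have $g\in B_a$, so $\langle\widehat\mu,\widehat g\rangle=0$, which shows that $\widehat\mu$ vanishes on $(-a,a)$ as a distribution.

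I expect the main obstacle to be the first inclusion. It requires justifying the Fourier-side pairing between a bounded function with distributional spectrum in $[-a,a]$ and a measure whose transform vanishes only near $[-a,a]$. The double regularization above is the device that makes this rigorous. The endpoint issue at $\pm a$ is a convention point to be flagged rather than a real difficulty.
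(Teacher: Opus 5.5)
Each half of your argument is sound, and your route differs from the paper's. The paper first identifies $B_a(\R)$ with $\{f\in L^\infty(\R):\supp{\widehat f}\subseteq[-a,a]\}$, by multiplying with $\sin(\varepsilon z)/(\varepsilon z)$ and applying Paley--Wiener. It then reads off $M_a(\R)\subseteq B_a(\R)^\perp$ from a polarized Plancherel identity, which is only formal for $f\in L^\infty$ and $\mu\in M(\R)$. For the reverse inclusion it pairs $\mu$ with translates of $\sin(ax)/(ax)$ and applies Fourier uniqueness to $1_{[-a,a]}\widehat\mu$.

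Your regularization makes the first inclusion rigorous, and testing against $e^{i\xi_0x}$, $|\xi_0|\le a$, is a quicker way to get $\widehat\mu=0$ on $[-a,a]$. The first half can also be shortened. The function $f_\delta\in L^1\cap L^2$ is entire of exponential type $a+\varepsilon\delta$, so the $L^2$ Paley--Wiener theorem already gives $\supp{\widehat{f_\delta}}\subseteq[-a-\varepsilon\delta,a+\varepsilon\delta]$, with no distributional version needed. Since $\widehat{f_\delta}$ is continuous with compact support, Fourier inversion and Fubini give $\int f_\delta\,\conj{d\mu}=\frac{1}{2\pi}\int\widehat{f_\delta}\,\conj{\widehat\mu}\,d\xi$ directly, so the $\psi_\eta$ step is unnecessary.

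The endpoint, however, is a genuine gap and not just a convention point. With the paper's definition $\supp{\widehat\mu}\cap[-a,a]=\emptyset$, the inclusion $B_a(\R)^\perp\subseteq M_a(\R)$ is false. Take $d\mu(x)=e^{i(a+1)x}\frac{\sin^2(x/2)}{x^2}\,dx$. Then $\widehat\mu(\xi)=\frac{\pi}{2}(1-|\xi-a-1|)_+$, so $a\in\supp{\widehat\mu}$, yet $\mu\perp B_a(\R)$ by the estimate below. The paper's own proof likewise only reaches $\widehat\mu=0$ on $[-a,a]$.

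What is true is $B_a(\R)^\perp=\{\mu\in M(\R):\widehat\mu=0 \text{ on } [-a,a]\}$. For this larger class your first inclusion breaks, because it relied on a buffer $[-b,b]$ with $b>a$ on which $\widehat\mu$ vanishes. So what you have actually shown is $M_a(\R)\subseteq B_a(\R)^\perp\subseteq\{\mu:\widehat\mu=0\text{ on }[-a,a]\}$, which gives the equality under neither reading of $M_a(\R)$.

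The missing step is a quantitative endpoint estimate. Assume only $\widehat\mu=0$ on $[-a,a]$. Then $\widehat{f_\delta}$ is supported in $[-a-\varepsilon\delta,a+\varepsilon\delta]$, and $\|\widehat{f_\delta}\|_\infty\le\|f_\delta\|_{L^1}\le\|f\|_\infty\|\phi\|_{L^1}\delta^{-1}$. Hence
\[
\Big|\int_{\R} f_\delta\,\conj{d\mu}\Big|\le\frac{\varepsilon\,\|f\|_\infty\|\phi\|_{L^1}}{\pi}\,\sup_{a\le|\xi|\le a+\varepsilon\delta}|\widehat\mu(\xi)|\longrightarrow 0,\qquad \delta\to0,
\]
by continuity of $\widehat\mu$ at $\pm a$. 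Dominated convergence then gives $\int f\,\conj{d\mu}=0$. This proves the corrected lemma, and applied to the example above it shows that the literal statement fails.
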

\begin{proof}
Using Paley--Wiener Theorem, we shall primarily establish the identification
\begin{equation}\label{EQ:BaIDENT}
B_a(\R) = \left\{f\in L^\infty(\R): \, \, \supp{\widehat{f}}\subseteq[-a,a] \right\}.
\end{equation}
Note that if $f\in L^\infty(\R)$ with $\supp{\widehat{f}}\subseteq[-a,a]$, then Plancherel's Theorem ensures that $f \in L^2(\R)$. Invoking the Paley--Wiener Theorem, we get that $f\in \mathcal{E}_a$, and hence $f\in B_a(\R)$. To prove the reverse containment, fix an arbitrary $f\in B_a(\R)$ and consider the family of perturbed functions
\[
f_\varepsilon(z) := f(z) \frac{\sin(\varepsilon z)}{\varepsilon z}, \qquad z \in \C,
\]
for $\varepsilon>0$. One readily checks that $f_\varepsilon \in \mathcal{E}_{a+\varepsilon}$ with $f_\varepsilon \in L^2(\R)$, hence invoking the Paley--Wiener Theorem again implies that $\supp{\widehat{f_\varepsilon}}\subseteq[-(a+\varepsilon),(a+\varepsilon)]$ for all $\varepsilon>0$. Now one can easily verify that 
\begin{equation}\label{EQ:TEMPDIST}
\widehat{f_\varepsilon}(\xi) = \widehat{f}\ast \frac{1}{2\varepsilon}1_{[-\varepsilon, \varepsilon]}(\xi) \to \widehat{f}(\xi), \qquad \xi \in \R
\end{equation}
in the sense tempered distributions (that is, continuous linear functions on the Schwartz space $\mathcal{S}(\R)$), and thus deduce that $\supp{\widehat{f}}\subseteq[-a,a]$.

    Having that settled, we see that the inclusion $M_a(\R) \subseteq B_a(\R)^\perp$ follows immediately from the polarized version of Plancherel's identity that
    \[
    \int_{\R} f(x) \conj{d\mu}(x) = \int_{-a}^{a} \widehat{f}(\xi) \conj{\widehat{\mu}(\xi)} d\xi =0.
    \]
    For the reverse containment, observe that the Bernstein space $B_a(\R)$ is invariant under translations along the real line: $x\mapsto f(t+x)$ for $t\in \R$. Furthermore, the function
    \[
    f_a(x) := \frac{\sin(ax)}{ax}, \qquad x\in \R,
    \]
    is a member of $B_a(\R)$. Applying the polarized Plancherel identity, we get for any $\mu \in B_a(\R)$:
    \[
    0=\int_{\R} f_a(t+x) \conj{d\mu}(x) = \int_{\R} e^{-it\xi} \widehat{f_a}(\xi)\conj{\widehat{\mu}(\xi)} d\xi= \int_{-a}^{a} e^{-it\xi} \conj{\widehat{\mu}(\xi)} d\xi, \qquad t\in \R.
    \]
    But this implies that the Fourier transform of the $L^2(\R)$-function $1_{[-a,a]}\widehat{\mu}$ is zero. Since $\widehat{\mu}$ is a continuous function, it follows that 
    \[
    \widehat{\mu}(\xi)=0, \qquad \xi\in [-a,a],
    \]
    hence $\mu \in M_a(\R)$. This establishes that reverse containment $B_a(\R)^\perp \subseteq M_a(\R)$, and thus completes the proof of the lemma.
\end{proof}
We are now ready to prove Pollard's Theorem.
\begin{proof}[Proof of Pollard's Theorem]
The proof will be divided in 3 steps. Fix $a>0$ and let $\mu \in M_a(\R)$, which we may without loss of generality assume is non-trivial. For simplicity, we only prove the claim for $\sigma=0$, as the general case is readily achieved by a simple adaption of the argument below.

\proofpart{1}{The annihilating trick:} We observe that whenever $f\in B_a(\R)$, then so is the quotient
\[
F_z(\lambda):= \begin{cases}
\frac{f(\lambda)-f(z)}{\lambda-z} \qquad \qquad \lambda\neq z \\
f'(z) \qquad \qquad \qquad  \lambda=z,
    
\end{cases}
\]
for any fixed $z \in \C$. Indeed, using the Cauchy integral formula, we see that 
\[
\abs{f'(\lambda)} \leq \int_{|\lambda-\zeta|=1} \abs{f(\zeta)} \, \frac{\abs{d\zeta}}{2\pi} \leq C(\varepsilon) \exp\left( (a+\varepsilon)|\lambda| \right), \qquad \lambda\in \C.
\]
Now if $|z-\lambda|\leq 1$, then it readily follows from this estimate that
\[
\abs{F_z(\lambda)}=\frac{\abs{f(\lambda)-f(z)}}{\abs{\lambda-z}} \leq \sup_{\zeta \in \C: |\lambda-\zeta|\leq 10} \abs{f'(\zeta)} \leq C(\varepsilon) \exp\left( (a+\varepsilon)|\lambda| \right).
\]
The case $\{\lambda \in \C: |\lambda-x|> 1\}$ is even simpler, hence $F_z(\lambda)\in \mathcal{E}_a$. Furthermore, since $f$ is entire and belongs to $f\in L^\infty(\R)$, it also follows that $F_z \in L^\infty(\R)$. 
Now using this and invoking \thref{LEM:MaBaperp}, we obtain
\[
\int_{\R} F_z(x) \conj{d\mu}(x) =0 \iff \int_{\R} \frac{f(x) \conj{d\mu}(x)}{x-z} = f(z) \int_{\R} \frac{\conj{d\mu}(x)}{x-z}, \qquad z\in \C
\]
for all $f \in B_a(\R)$, which expressed in terms of Cauchy integrals reads as:
\[
\Ka(f\conj{d\mu})(z) = f(z) \Ka(\conj{d\mu})(z), \qquad z\in \C, \qquad f\in B_a(\R).
\]
\proofpart{2}{Estimating the Pollard function with $\sigma=1$:}
Moving forward, we may normalize $\mu \in M_a(\R)$ so that its total-variation satisfies
\[
\norm{\mu}_{M(\R)}:=\int_{\R} \abs{d\mu}\leq 1.
\]
But this implies that the constant function $1$, which is an element of $B_a(\R)$ satisfies $\int_{\R} 1 \abs{d\mu}\leq 1$, and hence 
\[
P_{\mu,a}(z) \geq 1, \qquad z\in \C.
\]
 Since $\mu \neq 0$, we have that the Cauchy integral $\Ka(\conj{d\mu})$ is non-trivial analytic function in either the upper half plane $\C_+$ or in the lower half plane $\C_-$, so say $\C_+$. Now using the annihilating identity in step 1, we have for any $f\in B_a(\R)$ with $\int_\R |f| d|\mu|\leq 1$:
\[
\abs{f(z)}= \frac{\abs{\Ka(f\conj{d\mu})(z)}}{\abs{\Ka(\conj{d\mu})(z)}} \leq \frac{1}{\abs{\Ka(\conj{d\mu})(z)}} \int_{\R} \frac{\abs{f(x)}}{|x-z|} d\abs{\mu}(z) \leq \frac{1}{\abs{\Ka(\conj{d\mu})(z)}}, \qquad \Im(z)\geq 1,
\]
at the points where $\Ka(\conj{d\mu})(z)\neq 0$. Consequently, we conclude that 
\begin{equation}\label{EQ:ESTPOLL1}
\log^+ P_{\mu,a}(z) = \log P_{\mu,a}(z) \leq \log \frac{1}{\abs{\Ka(\conj{d\mu})(z)}}, \qquad \Im(z)\geq 1.
\end{equation}
Now since $z\mapsto \Ka(\conj{d\mu})(z+i)$ is a bounded analytic in $\C_+$, it follows from Jensen's Theorem in the upper-half plane that
\[
\int_{\R} \frac{\log^+ P_{\mu,a}(x+i)}{1+x^2}dx \leq\int_{\R}\log \frac{1}{\abs{\Ka(\conj{d\mu})(x+i)}} \frac{dx}{1+x^2} < \infty.
\]
This proves the claim for $\sigma =1$, but it is evident that the same argument as above also works for any $\sigma>0$.

\proofpart{3}{Estimating the Pollard function with $\sigma=0$:}
In order to prove it for $\sigma=0$, we primarily note that whenever $f \in B_a(\R)$ then $e^{iaz}f(z)$ belongs to $H^\infty(\C_+)$, and hence by Jensen's Theorem in $\C_+$ we have 
\[
\log |f(z)| \leq a \abs{y} + \int_{\R} \log |f(t)| \frac{y}{y^2+(t-x)^2} \frac{dt}{\pi}, \qquad z=x+iy\in \C_+,
\]
for all $f\in B_a(\R)$. Since the same holds true for the translation $z\mapsto f(z+i\alpha)$ with $\alpha\neq 0$, we obtain
\[
\log |f(x)| \leq a + \int_{\R} \log |f(t+i)| \frac{1}{1+(t-x)^2} \frac{dt}{\pi}, \qquad x \in \R.
\]
Taking supremum over $f\in B_a(\R)$ with $\int_{\R} |f| d|\mu|\leq 1$, we conclude 
\[
\log P_{\mu, a}(x) \leq a + \int_{\R} \log P_{\mu,a}(t+i) \frac{1}{1+(t-x)^2} \frac{dt}{\pi}, \qquad x \in \R.
\]
Integrating thus inequality wrt $(1+x^2)^{-1}dx$ and using \eqref{EQ:ESTPOLL1}, we arrive at
\begin{multline*}
\int_{\R} \frac{\log P_{\mu, a}(x)}{1+x^2} dx \leq a + \int_{\R} \log P_{\mu,a}(t+i) \int_{\R} \frac{1}{1+x^2} \frac{1}{1+(t-x)^2} \frac{dx}{\pi} \frac{dt}{\pi} \\
= a +\int_{\R} \frac{\log P_{\mu,a}(t+i) }{1+t^2}\frac{dt}{\pi} < \infty.
\end{multline*}
In the last step, we utilized that $(1+x^2)= - \Im( (i+x)^{-1} )$ and the reproducing property for the Poisson extension of bounded harmonic functions in $\C_+$. This completes the proof.
\end{proof}

With Pollard's Theorem at hand, we shall now record deep result of Beurling, on the incompatibility between measure having a spectral gap and thin support.

\begin{thm}[Beurling] \thlabel{THM:BEURLING} If $\mu \in M(\R)$ supported in a closed set $S\subset \R$ with
\begin{equation}\label{EQ:distPOISSON}
\int_{\R} \frac{\dist{x}{S}}{1+x^2} dx = + \infty.
\end{equation}
If $\mu$ has a spectral gap, then $\mu \equiv 0$.
\end{thm}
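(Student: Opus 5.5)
The plan is to combine Pollard's Theorem with a construction of functions in $B_a(\R)$ that are tiny on $S$ but large at selected points. After translating the spectrum, assume $\mu\in M_a(\R)$ for some $a>0$ and $\mu\not\equiv 0$. We argue by contradiction: we will show that the Pollard function $P_{\mu,a}$ satisfies
\[
\int_{\R}\frac{\log^+ P_{\mu,a}(x)}{1+x^2}\,dx=+\infty,
\]
which contradicts Pollard's Theorem with $\sigma=0$. Since $\int_\R |f|\,d|\mu| \le \sup_S |f|\cdot\norm{\mu}$ and $\supp\mu\subseteq S$, every $f\in B_a(\R)$ with $\sup_{S}|f|\le \norm{\mu}^{-1}$ is admissible. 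Hence $P_{\mu,a}(x)\ge \norm{\mu}^{-1}|f(x)|/\sup_S|f|$ for any such $f$, and it suffices to produce, for each point $x$ in a large set, a function $f_x\in B_a(\R)$ with $|f_x(x)|/\sup_S|f_x|\gtrsim e^{c\,\dist{x}{S}}$ for a fixed $c=c(a)>0$.

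The building block is a fixed bump $\varphi\in B_a(\R)$ with $\varphi(0)=1$ and rapid decay on $\R$. A convenient choice is
\[
\varphi(t)=\left(\frac{\sin(\varepsilon t)}{\varepsilon t}\right)^{2N},
\]
or, more flexibly, a function of type $a$ decaying like $e^{-|t|/\log^2|t|}$. Such decay is allowed by the Beurling–Malliavin/Paley–Wiener criterion, since $\int \frac{|t|/\log^2 |t|}{1+t^2}\,dt<\infty$. A single bump alone is not enough, because polynomial decay gives only $\log^+ P\gtrsim \log\dist{x}{S}$, which is too weak. I would instead use the strip geometry. Decompose $\R\setminus S$ into its component intervals $J_k=(c_k-r_k,c_k+r_k)$, so that \eqref{EQ:distPOISSON} reads $\sum_k r_k^2/(1+c_k^2)=\infty$. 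For each $k$, build an $f_k\in B_a(\R)$ by a Phragm\'en–Lindel\"of/harmonic-measure argument. Concretely, let $f_k=e^{ia z}\cdot\exp\bigl(u_k+i\widetilde{u_k}\bigr)$, where $u_k$ is a Poisson-type integral designed so that $u_k\approx -a\,\dist{t}{S}$ is realized as $\log|f_k|$ on $\R$. Alternatively, and more cleanly, take $\log|f_k(t)|$ to be $-\min(a\,|t-c_k|,\,a r_k)$ truncated. This gives $|f_k(c_k)|/\sup_S|f_k|\ge e^{c a r_k}$, and by continuity the same holds with $e^{c a r_k/2}$ on the middle half of $J_k$.

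Summing the lower bound $\log^+ P_{\mu,a}\ge c a r_k/2-C$ over the middle halves of the $J_k$ gives
\[
\int_\R\frac{\log^+P_{\mu,a}(x)}{1+x^2}\,dx\gtrsim \sum_k \frac{r_k\cdot r_k}{1+c_k^2}-\text{(harmless terms)}\asymp\int_\R \frac{\dist{x}{S}}{1+x^2}\,dx=\infty.
\]
Intervals with $r_k\lesssim 1$ contribute a convergent amount on both sides, so they can be discarded. This yields the desired contradiction.

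The main obstacle is the middle step: constructing $f_k\in B_a(\R)$ with $\sup_S|f_k|\le 1$ and $|f_k(c_k)|\ge e^{cr_k}$, with $c$ independent of $k$, using only results available so far. My first attempt would be the explicit product
\[
f_k(t)=\prod_{j=1}^{m}\frac{\sin\bigl(\delta_j (t-c_k)\bigr)}{\delta_j (t-c_k)},
\]
with $\sum_j\delta_j\le a$, $\delta_j=a/m$, and $m\asymp r_k$. Each factor is at most $(\delta_j r_k)^{-1}$ off $J_k$ and equals $1$ at $c_k$. With $m\approx a r_k/e$, this gives $\sup_S|f_k|\le (e^{-1})^{m}= e^{-c a r_k}$ while $f_k(c_k)=1$, which is exactly the required exponential gain. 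The delicate point is checking the uniformity of the constants and the continuity needed to pass from $c_k$ to a fixed fraction of $J_k$, which follows from Bernstein's inequality $\|f'\|_\infty\le a\|f\|_\infty$.
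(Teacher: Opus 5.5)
Your architecture matches the paper's. You reduce to $\mu\in M_a(\R)$ and contradict Pollard's theorem at $\sigma=0$, using test functions in $B_a(\R)$ that are small on $S\supseteq\supp{\mu}$ and exponentially large in the gaps of $S$. Your final building block $\bigl(\sin(\delta(t-c_k))/(\delta(t-c_k))\bigr)^m$, with $m\delta=a$ and $m\approx ar_k/e$, is a valid substitute for the paper's $\cos\bigl(a\sqrt{(z-c)^2-r^2}\bigr)$: it lies in $B_a(\R)$, equals $1$ at $c_k$, and is at most $e^{-m}$ on $S$. The other candidates in your second paragraph should be dropped. The function $e^{iaz}\exp(u_k+i\widetilde{u_k})$ is analytic only in $\C_+$, not entire. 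Prescribing $\log\abs{f_k}=-\min(a\abs{t-c_k},ar_k)$ would need a multiplier theorem, and it gives no lower bound at $c_k$.

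The step that fails is the passage from $c_k$ to the middle half of $J_k$. Bernstein's inequality with $\norm{f_k}_{L^\infty}=1$ only gives $\abs{f_k(x)}\geq 1-a\abs{x-c_k}$. That is a window of length $\asymp 1/a$, independent of $r_k$. On such windows you collect only about $\sum_k r_k/(1+c_k^2)\lesssim\sum_k\int_{J_k}dP<\infty$, so the contradiction disappears.

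No soft continuity argument can do better. Your $f_k$ genuinely drops to $(\sin(e/2)/(e/2))^m\approx e^{-0.33m}$ at $c_k\pm r_k/2$. What survives is only the comparison with $e^{-m}$ on $S$, and that must be read off from the explicit product using monotonicity of $\sin s/s$ on $[0,\pi)$. The paper's $\cosh\bigl(a\sqrt{r^2-(x-c)^2}\bigr)$ is explicitly large on all of $I$, so it never faces this issue.

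The simplest repair is the reduction announced in your own first paragraph: centre the product at each point. For $x$ with $d=\dist{x}{S}\geq e/a$, take $m=\lfloor ad/e\rfloor$ and $\delta=a/m$. Then $f_x(x)=1$ and $\sup_S\abs{f_x}\leq e^{-m}$, so $\log^+P_{\mu,a}(x)\geq a\,\dist{x}{S}/e-C$ for every $x\in\R$. Integrating against $dP$ then contradicts Pollard directly.

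This pointwise version also removes two further loose ends.
\begin{itemize}
\item Your $J_k=(c_k-r_k,c_k+r_k)$ presupposes that the components of $\R\setminus S$ are bounded. The paper treats the unbounded case separately via the F. and M. Riesz theorem.
\item You claim $\sum_k r_k^2/(1+c_k^2)\asymp\int\dist{x}{S}\,dP$, but this fails termwise. A gap $(\alpha,\beta)$ with $\beta\gg\alpha\geq 1$ contributes $\asymp\log(\beta/\alpha)$ to the integral but only $O(1)$ to the sum. Only the two divergence statements are equivalent, and that needs an argument; this is why the paper works with Whitney intervals.
\end{itemize}
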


The condition \eqref{EQ:distPOISSON} roughly measures how thin the closed set $S$ is 
at neighborhoods of infinity, and the Beurling's Theorem asserts that measures supported in such thin sets $S$ must be full Fourier spectrum in $\R$. For instance, it is not difficult to show that $\{J_k\}_k$ is a disjoint union of open finite Whitney intervals in $\R$, that is, they satisfy
\[
|J_k| \asymp \dist{J_k}{S}
\]
where $S= \R \setminus \cup_k J_k$ denotes the closed complement. It easy to see that $S$ satisfies \eqref{EQ:distPOISSON} if and only if
\[
\sum_k \frac{|J_k|^2}{1+\dist{J_k}{0}^2} =+\infty.
\]
In other words, it is the lack off square summability of relative lengths 
\[
\sum_{k: \dist{J_k}{0}\geq 1} \frac{|J_k|^2}{\dist{J_k}{0}^2} = +\infty,
\]
that determines this phenomenon. 
\begin{proof}[Proof of Beurling's Theorem] By means of multiplying $\mu$ with a modulating factor $e^{iax}$, we may assume that $\mu \in M_a(\R)$ for some $a>0$. Furthermore, upon normalizing $\mu$, we may also assume that 
\[
\int_{\R} d|\mu|\leq 1.
\]
As in the proof of Pollard's Theorem, this ensures that the associated Pollard function $P_{\mu,a}\geq 1$. We shall now divide the proof into  cases, depending on whether $\R \setminus S$ has an unbounded connected component, or not. 

\proofpart{1}{An unbounded connected components:} This case follows from classical Hardy space theory. Indeed, suppose $S\cap (c,\infty)= \emptyset$ for some $c\in \R$. By means of translating $\mu$, we may assume that $\supp{\mu}\subseteq(-\infty,0]$. But then its Fourier transform 
\[
\widehat{\mu}(z) = \int_{-\infty}^0 e^{-izx} d\mu(x), \qquad z\in \C_+
\]
extends to a bounded analytic function in $\C_+$. But then $\mu$ cannot have a spectral gap, that is, $\widehat{\mu}$ cannot vanishes on arc, unless $\mu$ vanishes identically.

\proofpart{2}{Bounded connected components:} 
We now assume that all the connected components of $\R \setminus S$ have finite length. To this end, we consider a Whitney decomposition of $\R \setminus S$ which consists of arcs $\{I_k\}_k$ with 
\[
|I_k| \asymp \dist{I_k}{S}.
\]
Note that the condition in \eqref{EQ:distPOISSON} is equivalent to 
\[
\sum_k \frac{|I_k|^2}{1+ \dist{0}{I_k}^2} =+ \infty.
\]
Fix an arbitrary Whitney arc $I=I_k$, and note that it suffices to show that there exists $f_I \in B_a(\R)$ with $|f_I|\leq 1$ on $S$ such that 
\begin{equation}\label{EQ:F_JPOLL}
\log \abs{f_I(x)} \gtrsim |I| - c, \qquad x \in I,
\end{equation}
for some absolute constant $c>0$, independent of $I$. Indeed, such an estimate implies that the associated Pollard function satisfies the estimate:
\[
\int_{I} \frac{\log P_{\mu,a}(x) }{1+x^2} \, dx \gtrsim \int_I \frac{|I|-c}{1+x^2} dx \gtrsim \frac{|I|^2}{1+\dist{0}{I}^2} - |I|.
\]
Summing up over all (pairwise disjoint) Whitney arcs $(I_k)_k$, we get the condition \eqref{EQ:distPOISSON} is violated, hence Pollard's Theorem implies that $\mu \equiv 0$.

\proofpart{3}{Estimating the Pollard function:} We now construct the required functions in \eqref{EQ:F_JPOLL}. Set $I=(c-r/2,c+r/2)$ for $r>0$ and consider 
\[
f_I(z):=\cos\!\Big(a\sqrt{(z-c)^2-r^2}\Big), \qquad z\in \C.
\]
Using the Taylor expansion of cosine function, we readily see that $f_I$ is an entire analytic function. Furthermore, it has exponential type $a>0$, is uniformly bounded by $1$ in all of $\R$, and thus $f_I \in B_a(\R)$ with 
\[
\int_{\R} \abs{f_I} d|\mu| \leq 1.
\]
To prove the estimate from below, we primarily note that for $x\in I$, the argument inside the square root is negative, hence 
\[
\psi_I(x)=\cosh\!\Big(a\sqrt{r^2-(x-c)^2}\Big), \qquad x\in I.
\]
Now using the simple estimate 
\[
\sqrt{r^2-(x-c)^2} \geq r/\sqrt{2}, \qquad x \in I
\]
in conjunction with $\cosh(t)\geq e^{|t|}/2$, we conclude that 
\[
\psi_I(x) \geq\frac{1}{2} \exp\left( ar/\sqrt{2}\right) =\frac{1}{2} \exp\left( a|J|/\sqrt{2}\right), \qquad x\in I.
\]
This proves the desired claim, hence the proof is complete.

\end{proof}

The last result of this subsection illustrates that measures which are integrable wrt to regular weights that grows too rapidly at $\pm \infty$, must have full Fourier support, unless they vanish identically.

\begin{thm}\thlabel{THM:UPWEIGHEDAPPROX} Let $W$ be a continuous function on $\R$ with $\inf_{\R}W\geq 1$, and such that $\log W$ is Lipschitz continuous in $\R$. Assume $\mu \in M(\R)$ with 
\[
\int_{\R} W d|\mu| <\infty.
\] 
If 
\[
\int_{\R} \frac{\log W(x)}{1+x^2} dx = +\infty,
\]
then $\mu$ cannot have a spectral gap, unless $\mu \equiv 0$.
\end{thm}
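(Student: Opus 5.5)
The plan is to argue exactly as in the proof of Beurling's Theorem and deduce the statement from Pollard's Theorem. After modulating $\mu$ by $e^{iax}$ we may assume $\mu\in M_a(\R)$ for some $a>0$. The spectral gap condition only involves $\widehat{\mu}$, so it is natural to work with the measure $d\nu := W\,d\mu$ instead. Some care is needed, because $\nu$ need not lie in $M_a(\R)$. I would therefore apply Pollard's Theorem to $\mu$ itself, and use $W$ only to produce test functions in $B_a(\R)$ that are large. Normalize so that $\int_{\R} W\,d|\mu|\le 1$.

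The key step is to show that the Pollard function is essentially bounded below by $W$: for some constant $C$ independent of $x$,
\[
P_{\mu,a}(x)\gtrsim W(x)\,e^{-C}, \qquad x\in\R .
\]
Then
\[
\int_{\R}\frac{\log^+P_{\mu,a}(x)}{1+x^2}\,dx=+\infty,
\]
which contradicts Pollard's Theorem unless $\mu\equiv 0$.

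To obtain the lower bound, I will construct for each $x_0\in\R$ a function $f_{x_0}\in B_a(\R)$ with $|f_{x_0}(x)|\le C\,W(x)$ on $\R$ and $|f_{x_0}(x_0)|\ge c\,W(x_0)$. Then $\int|f_{x_0}|\,d|\mu|\le C$, so $P_{\mu,a}(x_0)\ge c\,W(x_0)/C$. The natural candidate is
\[
f_{x_0}(x)=W(x_0)\,\Big(\frac{\sin(\varepsilon(x-x_0))}{\varepsilon(x-x_0)}\Big)^2 ,
\]
with $2\varepsilon\le a$, so that $f_{x_0}\in B_a(\R)$ and $f_{x_0}(x_0)=W(x_0)$. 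The Lipschitz hypothesis gives $W(x_0)\le W(x)\,e^{L|x-x_0|}$, and this is exactly where the main obstacle lies: the decay of $f_{x_0}$ is only polynomial, $(1+|x-x_0|)^{-2}$, so it cannot absorb the exponential factor $e^{L|x-x_0|}$. No function of bounded exponential type on $\R$ decays exponentially along $\R$, so a direct construction fails.

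My first remedy is to mollify. Replace $\log W$ by the Poisson-type average
\[
\omega(x)=\int_{\R}\log W(t)\,\frac{\delta}{\pi((x-t)^2+\delta^2)}\,dt .
\]
Since $\log W$ is Lipschitz, $|\omega-\log W|\le C(\delta)$, and $\int \frac{\omega}{1+x^2}\,dx=\infty$ still holds. The harder goal is to produce $f\in B_a(\R)$ with $|f|\asymp e^{\omega}$ near $x_0$ and $|f|\le e^{\omega}$ globally. The difficulty is that divergence of the logarithmic integral obstructs exactly such majorants.

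So I would change strategy and argue locally, as in Step 3 of Beurling's proof. On dyadic-type intervals $I$ where $\log W\ge \lambda$, the Lipschitz bound shows that $\log W\ge \lambda - L|I|$ on all of $I$. Take
\[
f_I(z)=\cos\Big(a\sqrt{(z-c)^2-r^2}\Big)\,e^{\lambda'} ,
\]
which is bounded by $e^{\lambda'}$ on $\R$, with $\lambda'$ chosen comparable to $\min_I\log W$ minus $O(1)$. The constraint $\int|f_I|\,d|\mu|\le 1$ then follows by comparing with $W$ on $I$.

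What still needs work is controlling $f_I$ outside $I$. There $|f_I|\le e^{\lambda'}$, which may far exceed $W$. This is the step I expect to be hardest. I would try to patch it using the additional integrability $\int W\,d|\mu|<\infty$, for instance by applying Pollard's Theorem to the measure $W\,d\mu$ after showing that its spectrum still avoids a smaller interval. Multiplying by a smooth Lipschitz weight only spreads the spectrum by a controllable amount if $W$ is replaced by a band-limited-comparable weight, and that replacement is precisely what the Lipschitz assumption on $\log W$ is meant to allow.
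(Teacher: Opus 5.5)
Your framework is the right one: reduce to $\mu\in M_a(\R)$, then contradict Pollard's Theorem by forcing $\int_{\R}\log^+ P_{\mu,a}(x)\,\frac{dx}{1+x^2}=+\infty$. But the proposal never produces the test functions this needs, so it does not yet prove the statement. The gap is exactly the step you flag at the end. For each $x_0$ you need some $f\in B_a(\R)$ with $|f|\le W$ on \emph{all} of $\R$, since only then does $\int|f|\,d|\mu|\le\int W\,d|\mu|$ hold ($|\mu|$ may live anywhere), and with $|f(x_0)|$ large.

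Your candidate $e^{\lambda'}\cos\bigl(a\sqrt{(z-c)^2-r^2}\bigr)$ fails the first requirement off $I$, because the factor $e^{\lambda'}$ raises the function everywhere. The suggested patch, applying Pollard's Theorem to $W\,d\mu$, is not viable: there is no mechanism by which $W\,d\mu$ inherits a spectral gap from $\mu$. What pushes you into this corner is the target $P_{\mu,a}\gtrsim W$ with exponent one. That is more than needed: any bound $P_{\mu,a}(x_0)\ge c\,W(x_0)^{\rho}$ with a fixed $\rho>0$ already makes the logarithmic integral diverge. The Poisson mollification detour is also unnecessary.

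The missing idea is to drop the factor $e^{\lambda'}$ and let the radius of the cosine bump carry the size. Let $\Omega=\log W$, let $L$ be a Lipschitz constant of $\Omega$, and set $R=\Omega(x_0)/\sqrt{a^2+L^2}$. Put $\psi_{x_0}(z)=\cos\bigl(a\sqrt{(z-x_0)^2-R^2}\bigr)\in B_a(\R)$.

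For $|x-x_0|\ge R$ the square root is real, so $|\psi_{x_0}(x)|\le 1\le W(x)$; this is where $\inf_{\R}W\ge 1$ enters. For $s=|x-x_0|<R$, Cauchy--Schwarz gives $a\sqrt{R^2-s^2}+Ls\le\sqrt{a^2+L^2}\,R=\Omega(x_0)$, hence
\[
|\psi_{x_0}(x)|=\cosh\bigl(a\sqrt{R^2-s^2}\bigr)\le e^{a\sqrt{R^2-s^2}}\le e^{\Omega(x_0)-L s}\le e^{\Omega(x)}=W(x).
\]
Meanwhile $|\psi_{x_0}(x_0)|=\cosh(aR)\ge\tfrac12 W(x_0)^{\rho}$ with $\rho=a/\sqrt{a^2+L^2}$. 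So $P_{\mu,a}(x_0)\ge W(x_0)^{\rho}/\bigl(2\int W\,d|\mu|\bigr)$ for every $x_0$, the logarithmic integral of $P_{\mu,a}$ diverges, and Pollard's Theorem gives $\mu\equiv 0$.

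In other words, the factor $e^{L|x-x_0|}$ is not absorbed by decay of a band-limited function along $\R$, which, as you correctly observe, is impossible. It is absorbed by a function that is large only on a window of width proportional to $\log W(x_0)$, with a profile lying under the cone $\Omega(x_0)-L|x-x_0|$. The price is the exponent $\rho<1$, which is harmless.
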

\begin{proof}
\proofpart{1}{Reductions and strategy:}
By means of multiplying $\mu$ with a modulating factor $e^{iax}$, we may assume that $\mu \in M_a(\R)$ for some $a>0$. Furthermore, we may upon normalizing $\mu$ also assume that 
\[
\int_{\R} d|\mu|\leq 1.
\]
This ensures that the associated Pollard function satisfies $P_{\mu,a}\geq 1$. The idea is to use Pollard's Theorem. To this end, it suffices to construct, for each $x_0\in\mathbb R$, a function $\psi_{x_0}\in B_a(\mathbb R)$ such that
\[
|\psi_{x_0}(x)|\le W(x)\qquad (x\in\mathbb R),
\]
and
\[
|\psi_{x_0}(x_0)|\ge c\,W(x_0)^\rho
\]
for some constants $c,\rho>0$ independent of $x_0$. Indeed, then
\[
\int_{\mathbb R} |\psi_{x_0}|\,d|\mu|
\le
\int_{\mathbb R} W\,d|\mu|=: C < \infty.
\]
Normalizing by $C>0$ it follows that the associated Pollard function satisfies:
\[
P_{\mu,a}(x_0)\ge \frac{|\psi_{x_0}(x_0)|}{C}\ge \frac{c}{C}\,W(x_0)^\rho, \qquad x_0 \in \R.
\]
Since $x_0 \in \R$ was arbitrary, it follows that
\[
\int_{\mathbb R}\frac{ \log P_{\mu,a}(x) }{1+x^2}\,dx \geq \int_{\R} \frac{\rho\log W(x)+\log \frac{c}{C}}{1+x^2}dx =+\infty,
\]
hence invoking Pollard's theorem, we conclude that $\mu \equiv 0$.
\proofpart{2}{Estimating the Pollard function:}
Fix an arbitrary point $x_0\in\mathbb R$, set $\Omega=\log W$, which is a positive real-valued function, let $L\ge 0$ denote a Lipschitz constant for $\Omega$. Now set
\[
R:=\frac{\Omega(x_0)}{\sqrt{a^2+L^2}} >0
\]
and define the associated
\[
\psi_{x_0}(z):=\cos\!\Big(a\sqrt{(z-x_0)^2-R^2}\Big).
\]
As in the proof of Beurling's Theorem, it follows that $\psi_{x_0}\in B_a(\mathbb R)$. Furthermore, we also have 
\[
|\psi_{x_0}(x_0)|
=
\cosh(aR)
\ge \frac12 e^{aR}
=
\frac12 \exp\!\Big(\frac{a}{\sqrt{a^2+c^2}}\,\Omega(x_0)\Big),
\]
which readily gives the required lower bound with $\rho=\frac{a}{\sqrt{a^2+c^2}}$. It only remains to verify that 
\[
|\psi_{x_0}(x)|\le W(x), \qquad x\in \R.
\]
To this end, we primarily note that if $|x-x_0|\geq R$, then the argument inside the square root is positive, and hence
\[
\psi_{x_0}(x) \leq 1 \leq W(x), \qquad |x-x_0|\geq R.
\]
It therefore only remains to prove the estimate for $|x-x_0|<R$, and it is in this step that the Lipschitz assumption on $\Omega= \log W$ enters the picture. It is straightforward to see that 
\[
\abs{\psi_{x_0}(x)} = \cosh \left(a \sqrt{R^2-(x-x_0)^2} \right) \leq \exp \left( a \sqrt{R^2-(x-x_0)^2} \right), \qquad |x-x_0|<R.
\]
Now it is not difficult to see by means of expanding squares that
\[
a \sqrt{R^2-(x-x_0)^2} \leq \Omega(x_0) - L|x-x_0| \leq \Omega(x), \qquad |x-x_0|<R
\]
where the second inequality actually holds for all $x$ due to the Lipschitz assumption on $\Omega$. With these estimates at hand, we arrive at
\[
\abs{\psi_{x_0}(x)}  \leq \exp \left( a \sqrt{R^2-(x-x_0)^2} \right) \leq \exp \left( \Omega(x) \right) = W(x), \qquad |x-x_0|< R.
\]
The proof is complete.
\end{proof}

One can actually show that \thref{THM:UPWEIGHEDAPPROX} still holds under the weaker assumption that $\log W$ is merely uniformly continuous, but the required modifications of $\psi_{x_0}$ become slightly more tedious, at the costing of making the main ideas behind the proof less transparent. The curious readers may attempt to strengthen \thref{THM:UPWEIGHEDAPPROX} on their own, or consult \cite[P. 215]{havinbook}.

\subsection{Problems related to uniqueness pairs in $L^2(\R)$}
We continue the study of compatibility conditions between the support of a measure on $\mathbb R$ and its Fourier spectrum. In this subsection, we restrict our attention to functions in $L^2(\R)$ and address the problem of when one can exhibit non-trivial functions with prescribed support and Fourier spectrum. 

A function $f\in L^1_{loc}(\R)$ is said to be essentially supported in a Lebesgue measurable subset $E$ of $\R$, if $f=0$ a.e on $\R \setminus E$. We designate this property by $\text{ess supp}(f) \subseteq E$. Given two sets $E,F \subset \R$, the pair $(E,F)$ is a declared to be a \emph{uniqueness pair} if whenever $f \in L^2(\R)$ with 
\[
\text{ess supp}(f) \subseteq E, \qquad \text{ess supp}(\widehat{f}) \subseteq F \implies f\equiv 0.
\]
Note the Plancherel's Theorem ensures that the notion of uniqueness pairs is symmetric in the sense that $(E,F)$ is a uniqueness pair if and only if $(F,E)$ is. Recall that the Paley--Wiener \thref{COR:UPP-W} implies that a pair of closed sets $(E,F)$ is a uniqueness pair if whenever $E\subsetneq \R$ and $F$ is a bounded set, and vice versa. Below, we shall investigate further results of this type. The first principle result goes as follows.

\begin{thm}[Amrein--Berthier] \thlabel{THM:A-B} Let $f\in L^2(\R)$ and $E,F \subset \R$ be sets of finite Lebesgue measure. Then there exists a constant $C(E,F)>0$, such that 
\begin{equation}\label{EQ:A-B}
\norm{f}^2_{L^2} \leq C(E,F) \left( \norm{1_{E^c}f}^2_{L^2} + \norm{1_{F^c} \widehat{f}}^2_{L^2} \right).
\end{equation}
In particular, if both $f$ and $\widehat{f}$ are essentially supported on sets of finite Lebesgue measure, then $f=0$.
\end{thm}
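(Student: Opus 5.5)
The approach is operator-theoretic. Define the projections $P_E f := 1_E f$ and $Q_F f := \mathcal{F}^{-1}(1_F \widehat{f})$ on $L^2(\R)$. Both are orthogonal projections; for $Q_F$ this follows from Plancherel. The composition $Q_F P_E$ is an integral operator. For $f \in L^2$ we have
\[
\widehat{1_E f}(\xi) = \int_E f(x) e^{-ix\xi}\,dx,
\]
so $Q_F P_E$ is unitarily equivalent to the operator with kernel $K(\xi,x) = 1_F(\xi)1_E(x)e^{-ix\xi}$, up to the factor $(2\pi)^{-1/2}$. This kernel satisfies
\[
\int\int |K|^2\,dx\,d\xi = |E|\,|F| < \infty.
\]
Hence $Q_FP_E$ is Hilbert--Schmidt, and in particular compact.

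The key intermediate claim is that $\norm{Q_F P_E} < 1$. Compactness lets us argue by attainment of the norm. If $\norm{Q_FP_E}=1$, the norm of the compact operator $Q_FP_E$ is attained, since the top singular value of a compact operator is an eigenvalue of $(Q_FP_E)^*(Q_FP_E)$. So there is $g$ with $\norm{g}=1$ and $\norm{Q_FP_Eg}=1$. This forces $\norm{P_E g}=1$, so $g = P_E g$. It also forces $Q_F g = g$. Thus $g$ is essentially supported in $E$ and $\widehat g$ is essentially supported in $F$.

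It therefore suffices to prove the qualitative statement: if $g\in L^2$, $\text{ess supp}(g)\subseteq E$ and $\text{ess supp}(\widehat g)\subseteq F$ with $|E|,|F|<\infty$, then $g=0$. This is the main obstacle. Paley--Wiener does not apply directly, because $E$ and $F$ need not be bounded. I would try the following Benedicks-type periodization argument.

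First reduce by scaling $g(x)\mapsto g(\lambda x)$, so that $|E|<2\pi$ and then $|F|$ is still finite. By Fubini,
\[
\int_0^{2\pi}\#\{k: t+2\pi k\in E\}\,dt = |E| < 2\pi,
\]
so for a set of $t$ of positive measure no translate $t+2\pi k$ lies in $E$.

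Second, since $\widehat g \in L^1$ (being $L^2$ on a set of finite measure), for a.e. $\eta$ the periodization $\sum_k \widehat g(\eta+k)e^{ik s}$ is a trigonometric polynomial in $s$. This uses that $\sum_k 1_F(\eta+k)$ is finite a.e., by the same Fubini argument. By the Poisson summation formula in its weak form, this polynomial coincides with $2\pi\sum_n g(s+2\pi n)e^{-i\eta(s+2\pi n)}$. Here $g(s+2\pi n)$ is to be read up to the normalization used.

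Third, this periodic function vanishes for $s$ in a set of positive measure, by the first step. A nonzero trigonometric polynomial has only finitely many zeros, so the polynomial is identically zero. Hence $\widehat g(\eta+k)=0$ for all $k$, for a.e. $\eta$, and so $g=0$. The delicate points are justifying the weak Poisson summation for $L^1$ data and the joint measurability in $(\eta,s)$; I expect to handle these via Fubini applied to $\int_0^1\int_0^{2\pi}$ of the double sum.

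Finally, set $\theta := \norm{Q_FP_E}<1$ and write $f = P_Ef + P_{E^c}f$. Then
\[
\norm{P_E f} \le \norm{Q_F P_E f} + \norm{Q_{F^c}P_E f} \le \theta\norm{P_Ef} + \norm{Q_{F^c} f} + \norm{Q_{F^c}P_{E^c}f}.
\]
Using $\norm{Q_{F^c}P_{E^c}f}\le\norm{1_{E^c}f}$ and the Plancherel normalization for $\norm{Q_{F^c}f}$, this gives
\[
(1-\theta)\norm{P_Ef}\lesssim \norm{1_{E^c}f}+\norm{1_{F^c}\widehat f}.
\]
Adding $\norm{1_{E^c}f}$ and squaring yields \eqref{EQ:A-B} with $C(E,F)\asymp (1-\theta)^{-2}$. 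The final uniqueness assertion follows immediately from \eqref{EQ:A-B}.
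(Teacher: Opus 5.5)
Your proof is correct, but it rules out the extremal function by a different route than the paper.

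\textbf{What is shared.} Both arguments have the same outer skeleton:
\begin{itemize}
\item the localization operator is Hilbert--Schmidt, hence compact;
\item if its norm equals $1$ the norm is attained, which produces a nonzero $g$ with $\text{ess supp}(g)\subseteq E$ and $\text{ess supp}(\widehat g)\subseteq F$;
\item once $\theta<1$, the estimate follows from the triangle-inequality bookkeeping you give (the paper's Step 1).
\end{itemize}

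\textbf{What differs.} The difference is how such a $g$ is excluded.

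The paper stays operator-theoretic and uses compactness a second time. Translates of $g$ keep their spectrum in $F$. Choosing the shifts by an intermediate-value argument, one packs infinitely many of these translates into a single set $E^*$ of finite measure, each contributing new support. They then form an infinite linearly independent family in the eigenspace of the eigenvalue $1$ of the compact operator $P_{E^*}\widehat{P_F}$, which is impossible.

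You instead prove the qualitative uniqueness directly by Benedicks' periodization, which the paper only mentions in a remark. After rescaling so that $|E|<2\pi$, the function $s\mapsto\sum_n g(s+2\pi n)e^{-i\eta(s+2\pi n)}$ is, for a.e.\ $\eta$, a trigonometric polynomial vanishing on a set of positive measure.

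\textbf{What each buys.} The paper's route needs no rescaling, no lattice structure and no Poisson summation. It uses only translation invariance of the spectral condition together with compactness.

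Your route separates uniqueness from compactness entirely: compactness is used only to upgrade uniqueness to $\norm{Q_FP_E}<1$. It also works for $g\in L^1$ with $\abs{\{g\neq 0\}},\abs{\{\widehat g\neq 0\}}<\infty$, with no $L^2$ hypothesis. And it exposes the rigidity behind the phenomenon: a nonzero trigonometric polynomial has only finitely many zeros.

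Neither route controls $C(E,F)$ beyond $(1-\theta)^{-2}$ with $\theta$ inexplicit.

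\textbf{On the points you flag as delicate.} They are milder than you expect.
\begin{itemize}
\item Since $g\in L^1$ (it is $L^2$ with finite-measure support), $\widehat g$ is continuous.
\item For each fixed admissible $\eta$, the periodization lies in $L^1(0,2\pi)$ by Tonelli, and its Fourier coefficients are $\frac{1}{2\pi}\widehat g(\eta+k)$ by Fubini. Uniqueness of Fourier coefficients then identifies it with the polynomial. No joint measurability in $(\eta,s)$ is needed.
\item Take the representative $g1_E$, so the periodization vanishes pointwise on your set of $s$ of positive measure.
\item Discard the null set of $\eta$ for which some $\eta+k$ lands in the null set where $\widehat g\neq0$ off $F$.
\end{itemize}
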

 A pair of Lebesgue measurable subsets $(E,F)$ is declared to be a \emph{strong uniqueness pair} if \eqref{EQ:A-B} holds, hence every strong uniqueness pair is clearly a uniqueness pair. The Amrein--Berthier Theorem asserts that $(E,F)$ is a strong uniqueness pair if they both have finite Lebesgue measure. We remark that M. Benedicks proved a weaker result in \cite{benedicks1985fourier}, namely that $(E,F)$ is a uniqueness pair if $E,F$ both have finite Lebesgue measure. His proof only involves the Poisson summation formula and does require the estimate in \eqref{EQ:A-B}. 

\begin{proof}[Proof of \thref{THM:A-B}] The proof consists of a nice blend of operator theoretical arguments, conveniently divided into 3 steps.
\proofpart{1}{Norm of the localization operators:} Consider the localization operator $Tf= 1_E \mathcal{F}^{-1}(1_F \mathcal{F}f)$ which is a bounded linear operator from $L^2(\R) \to L^2(\R)$ with $\norm{T}_{L^2 \to L^2} \leq 1$. We claim that if $\alpha:=\norm{T}_{L^2\to L^2}<1$, then the proof follows. Indeed, set $P_F (f) = \mathcal{F}^{-1}(1_F \mathcal{F}f)$ and note that $f= P_F(f) + P_{F^c}(f)$, and $T(f)= 1_E P_F(f)$, which implies that
\begin{multline*}
\norm{f}_{L^2} = \norm{1_E f}_{L^2} + \norm{1_{E^c}f}_{L^2} \leq \norm{T(f)}_{L^2(E)} + \norm{1_E P_{F^c}(f)}_{L^2} + \norm{1_{E^c}f}_{L^2} \\ \leq \alpha \norm{f}_{L^2} + \norm{\widehat{f}}_{L^2(F^c)} + \norm{f}_{L^2(E^c)}.
\end{multline*}
Since $\alpha\in (0,1)$, the result follows with $C= (1-\alpha)^{-1}$. 
\proofpart{2}{Compactness:} We primarily note that $L=T \circ \mathcal{F}$ is actually Hilbert-Schmidt operator,
\[
L(f)(x) = 1_{E}(x)\int_{F} f(y) e^{ixy} dy, \qquad x\in \R,
\]
with Hilbert-Schmidt kernel $K(x,y) := 1_E(x) 1_F(y) e^{ixy}$ for $x,y\in \R$, hence $L$ is compact, and thus $T$ is compact on $\T$. Now if $\norm{T}_{L^2 \to L^2}=1$, then by weak compactness of the unit-ball of $L^2$ and compactness of $T$, there exists a non-trivial element $f\in L^2(\R)$ with $\norm{f}_{L^2\to L^2}\leq 1$, such that 
\[
\norm{T(f)}_{L^2}=1.
\]
In fact, since $\norm{T}_{L^2\to L^2} \leq 1$, we actually also assume that $\norm{f}_{L^2}=1$, hence by Plancherel's Theorem
\[
\int_{E} \abs{\mathcal{F}^{-1}(1_F \widehat{f})(x) }^2 dx = \int_{\R} \abs{\widehat{f}(\xi)}^2 d\xi.
\]
It follows that $\text{ess supp}(\widehat{f}) \subseteq F$ and $\text{ess supp}(f) \subseteq E$, which is equivalent to the equation $T(f)=f$, that is, $1$ is an eigenvalue of $T$ with eigenvector $f$. It is well-known that each eigenspace of $T$ is finite dimensional, hence we will reach the desired contradiction once we can produce an infinitude of linearly independent elements in the eigenspace $\mathcal{E}$ corresponding to the eigenvalue $1$.
\proofpart{3}{Shifting measurable sets:}
Here we shall prove that for any $E_0 \subset E$ with $m(E_0)>0$ and any $0<\varepsilon < m(E_0)$, there exists a number $a\in \R$ such that $m(E)+\varepsilon/2 \leq m(E \cup (E_0+a) ) \leq m(E)+ \varepsilon$. To this end, consider the positive function
\[
T(a):= m(E \cup (E_0+a) )= m(E) + m(E_0) - \int_{E_0} 1_{E}(t+a) dt, \qquad a\in \R,
\]
which is easily seen to be continuous on $\R$ since the translations of $L^1$-functions are continuous in the $L^1$-norm. Furthermore, we have $T(0)=m(E)$ while $T(a) \to m(E) + m(E_0)$ as $|a|\to \infty$, hence the claim readily follows from the intermediate value theorem. With this lemma at hand, we shall now produce an eigenspace of a localization operator with infinitely linearly independent vectors. Set $E_1 = \text{ess supp}(f) \subset E$, let $k_0\geq 0$ with $2^{-k_0}< m(E_1)$, and pick $a_1 \in \R$ such that $m(E) +2^{-k_0-1} \leq m(E \cup (E_1 + a_1)) \leq m(E) + 2^{-k_0}$. Recursively and by means of induction, we can find real numbers $(a_n)_{n=0}^\infty$ such that for all $n=1,2,\dots$:
\[
S_n := E_0 + \sum_{j=1}^n a_j, \qquad E_{n+1}:= E_n \cup S_n, \qquad m(E_n) + 2^{-n-k_0-1} \leq m(E_{n+1}) \leq m(E_n) + 2^{-n-k_0}. 
\]
Now set $E^* = \cup_n E_n$ which has finite Lebesgue measure since $E$ has. Now set $f_n(x) = f(z-A_n)$ where $A_n= \sum_{j=1}^n a_j$ and observe that 
\[
\text{ess supp}(\widehat{f_n}) = \text{ess supp}(\widehat{f})\subseteq F, \qquad \text{ess supp}(f_n) = S_n \subseteq E_n  \subseteq E^*, \qquad n=1,2,\dots
\]
It therefore follows that $(f_n)_n$ all satisfy $T'(f_n)=f_n$, where $T'(f)(x) = 1_{E^*} \mathcal{F}^{-1}(1_F \widehat{f})$ again is a compact operator on $L^2(\R)$. Furthermore, the $(f_n)_n$ are linearly independent since $S_{n+1}\setminus S_n$ have positive Lebesgue measure by construction. But this contradicts the fact that eigenspaces of compact operators are finite dimensional. We therefore conclude that $\norm{T}_{L^2 \to L^2}<1$, and proof of the Amrein-Berthier Theorem follows.

\end{proof}

The above proof of the Amrein--Berthier Theorem was almost purely operator theoretical, and this is no coincidence. In fact, it is equivalent to the following statements on projections. 

\begin{prop} \thlabel{PROP:AB} Let $(E,F)$ be a pair of Lebesgue measurable subsets of $\R$ and consider the associated orthogonal projections on $L^2(\R)$ defined by 
\[
P_E (f) = 1_E f, \qquad  \widehat{P_F}(f) = \mathcal{F}^{-1}(1_F \mathcal{F}(f)), \qquad f\in L^2(\R).
\]
Then the following statements are all equivalent:
\begin{enumerate}
    \item[(i)] $(E,F)$ is a strong uniqueness pair.
    \item[(ii)] $\norm{P_E \widehat{P_F}}_{L^2 \to L^2} <1$.
    \item[(iii)] $I-P_E \widehat{P_F}$ is invertible on $L^2(\R)$.
    \item[(iv)] There exists a constant $0<c(E,F)< 1$, such that 
    \[
    c(E,F) \int_{\R} \abs{f(x)}^2 dx \leq \int_{E^c} \abs{f(x)}^2 dx,
    \]
    for all $f \in L^2(\R)$ with $\text{ess supp}(\widehat{f}) \subseteq F$.
\end{enumerate}
    
\end{prop}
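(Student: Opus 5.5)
I plan to prove the equivalences by the chain (i)$\Rightarrow$(iv)$\Rightarrow$(ii)$\Rightarrow$(iii)$\Rightarrow$(ii)$\Rightarrow$(i). Throughout, $P_E$ and $\widehat{P_F}$ are orthogonal projections: $\widehat{P_F}$ is unitarily conjugate to multiplication by $1_F$ via Plancherel, up to the irrelevant factor $2\pi$. Consequently $\norm{P_E\widehat{P_F}}\leq 1$.

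\emph{(ii)$\Rightarrow$(i)} is exactly Step 1 of the proof of \thref{THM:A-B}. We write $f=\widehat{P_F}f+\widehat{P_{F^c}}f$ and bound
\[
\norm{1_Ef}\leq \alpha\norm{f}+\norm{\widehat{P_{F^c}} f},
\]
where $\alpha:=\norm{P_E\widehat{P_F}}<1$. Adding $\norm{1_{E^c}f}$ and absorbing the term $\alpha\norm{f}$ gives \eqref{EQ:A-B} with a constant $C\asymp (1-\alpha)^{-2}$ after squaring.

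\emph{(i)$\Rightarrow$(iv)}: apply \eqref{EQ:A-B} to $f$ with $\text{ess supp}(\widehat f)\subseteq F$. The second term vanishes, so $\norm{f}^2\leq C\norm{1_{E^c}f}^2$, and we may take $c=\min(C^{-1},1/2)$.

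\emph{(iv)$\Rightarrow$(ii)}: for arbitrary $g$, put $h=\widehat{P_F}g$, which has spectrum in $F$. Then
\[
\norm{P_Eh}^2=\norm{h}^2-\norm{1_{E^c}h}^2\leq (1-c)\norm{h}^2\leq (1-c)\norm{g}^2,
\]
so $\norm{P_E\widehat{P_F}}\leq\sqrt{1-c}<1$.

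\emph{(ii)$\Rightarrow$(iii)} is the Neumann series.

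The converse \emph{(iii)$\Rightarrow$(ii)} is where I expect the only real work. Invertibility of $I-T$, with $T=P_E\widehat{P_F}$, says $1\notin\sigma(T)$, but $T$ is not normal, so this does not by itself give a norm bound. I would pass to $T^*T=\widehat{P_F}P_E\widehat{P_F}$, which is self-adjoint with spectrum in $[0,1]$, and note that $\norm{T}^2=\sup\sigma(T^*T)$. So it suffices to show $1\notin\sigma(\widehat{P_F}P_E\widehat{P_F})$. If $1$ were in this spectrum, there would be unit vectors $g_n$ with $\langle P_E\widehat{P_F}g_n,\widehat{P_F}g_n\rangle\to1$. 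This forces
\[
\norm{g_n-\widehat{P_F}g_n}\to0,\qquad \norm{\widehat{P_F}g_n-P_E\widehat{P_F}g_n}\to0,
\]
hence $\norm{(I-T)g_n}\leq\norm{g_n-\widehat{P_F}g_n}+\norm{\widehat{P_F}g_n-P_E\widehat{P_F}g_n}\to0$. That contradicts the boundedness of $(I-T)^{-1}$, so $\norm{T}<1$. The point to check carefully is that both defects tend to zero; this follows from Pythagoras, since $\langle P_Eh,h\rangle=\norm{P_Eh}^2$ and $\norm{h}^2=\norm{P_Eh}^2+\norm{h-P_Eh}^2$, and similarly for $\widehat{P_F}$.
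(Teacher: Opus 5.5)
Your proof is correct, and it departs from the paper's at exactly the two steps where the paper relies on compactness.

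\emph{The paper's route.} It runs the cycle (i)$\Rightarrow$(ii)$\Rightarrow$(iii)$\Rightarrow$(iv)$\Rightarrow$(i). Both (i)$\Rightarrow$(ii) and (iii)$\Rightarrow$(iv) are justified by pointing back to Step 2 of the proof of \thref{THM:A-B}. There, if $\norm{T}=1$ for $T=P_E\widehat{P_F}$, compactness of $T$ yields an exact norming vector, hence a nonzero $f$ with $Tf=f$. This contradicts \eqref{EQ:A-B}, respectively the invertibility of $I-T$. That compactness comes from the Hilbert--Schmidt kernel $1_E(x)1_F(y)e^{ixy}$, so it is only available when $|E|,|F|<\infty$. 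The proposition, however, is stated for arbitrary measurable sets. It is later used with $|F|=\infty$ (the lacunary example) and with $E$ of possibly infinite measure (Logvinenko--Sereda).

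\emph{Your route.} You replace these two steps by compactness-free arguments:
\begin{itemize}
\item (i)$\Rightarrow$(iv) is read off directly by testing \eqref{EQ:A-B} on functions with spectrum in $F$;
\item (iii)$\Rightarrow$(ii) uses approximate eigenvectors, namely that $\norm{T}=1$ makes $I-T$ fail to be bounded below.
\end{itemize}
Your (iv)$\Rightarrow$(ii) and (ii)$\Rightarrow$(i) are the paper's Step 4 and Step 1 of \thref{THM:A-B}. You also get the exponent right: $\norm{T}\leq\sqrt{1-c}$, where the paper writes $1-c$.

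\emph{What each buys.} Your version proves the proposition in the stated generality. In the finite-measure case the compactness argument gives more: an actual extremal function with $Tf=f$. This shows that for such $E,F$, plain uniqueness already implies strong uniqueness.

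One simplification: the passage through $\sigma(T^*T)$ is unnecessary. If $\norm{T}=1$, the definition of the operator norm already gives unit vectors $g_n$ with $\norm{Tg_n}\to1$. Your Pythagoras argument then applies verbatim.
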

\begin{proof} Recall that the localization operator $T$ on $L^2(\R)$ appearing in the proof of the Amrein--Berthier Theorem is simply the composition $T= P_E \widehat{P_F}$. 
\proofpart{1}{$(i)\implies(ii)$:}
Now if the pair $(E,F)$ is a strong uniqueness pair, then the argument in step 2 of \thref{THM:A-B} implies that $\norm{T}_{L^2 \to L^2}<1$, otherwise there exists a non-trivial function $f\in L^2(\R)$ with $\text{ess supp}(f) \subseteq E$ and $\text{ess supp}(\widehat{f}) \subseteq F$, which is in contradiction with \eqref{EQ:A-B}. 
\proofpart{2}{$(ii) \implies (iii)$:} This is simple, if $\norm{T}_{L^2 \to L^2}<1$, then $I-T$ is invertible on $L^2(\R)$ with inverse given by the Von Neumann series $\sum_{k\geq 0} T^k$. 

\proofpart{3}{$(iii) \implies (iv)$:} If $I-T$ is invertible on $L^2(\R)$ then the argument in step 1 above shows that $\norm{T}_{L^2 \to L^2}<1$. Now recall the following simple identity
\begin{equation}\label{EQ:PID}
\norm{\widehat{P_F}f}^2_{L^2} = \norm{P_{E^c}\widehat{P_F}f}^2_{L^2} + \norm{P_{E}\widehat{P_F}f}^2_{L^2}, \qquad f\in L^2(\R).
\end{equation}
Now using the fact that $\widehat{P_F}$ is a projection, we get 
\[
\norm{P_{E}\widehat{P_F}f}^2_{L^2} = \norm{P_{E}\widehat{P_F} \widehat{P_F}f}^2_{L^2} \leq \norm{T}_{L^2\to L^2} \norm{\widehat{P_F}f}^2_{L^2}, \qquad f\in L^2(\R)
\]
hence returning back to \eqref{EQ:PID} we obtain 
\[
(1-\norm{T}_{L^2 \to L^2} ) \norm{g}^2_{L^2} \leq \norm{P_{E^c}g}^2_{L^2}, \qquad g\in \Im(\widehat{P_F}),
\]
which gives the desired claim.
\proofpart{4}{$(iv) \implies (i)$:}
At last, we basically need to reverse the argument in the previous step, with the initial observation that the statement in $(iv)$ can be rephrased as 
\[
c(E,F) \norm{\widehat{P_F}f}^2_{L^2} \leq \norm{P_{E^c}\widehat{P_F}f}^2_{L^2}, \qquad f\in L^2(\R).
\]
Utilizing this estimate in conjunction with the identity \eqref{EQ:PID}, we obtain
\[
\norm{P_{E}\widehat{P_F}f}^2_{L^2} \leq \left( 1- c(E,F) \right) \norm{\widehat{P_F}f}^2_{L^2}\leq \left( 1- c(E,F) \right) \norm{f}^2_{L^2}, \qquad f\in L^2(\R).
\]
This proves that $\norm{T}_{L^2\to L^2} \leq 1-c(E,F)<1$, and it follows from step 1 of the proof of the Amrein--Berthier Theorem that $(E,F)$ is a strong uniqueness pair. 

\end{proof}

The following quantitative manifestation of the uncertainty principle is worthy of recording.

\begin{cor}[UP-Uniqueness Pairs] If there exists a non-trivial function $f\in L^2(\R)$ with $\text{ess supp}(f) \subseteq E$ and $\text{ess supp}(\widehat{f}) \subseteq F$, then $\abs{E}\abs{F}\geq 1$.
\end{cor}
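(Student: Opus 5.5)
The plan is to read off the bound from the Hilbert--Schmidt computation already used in Step 2 of the proof of \thref{THM:A-B}. We may assume that $\abs{E}$ and $\abs{F}$ are both finite, since otherwise the inequality $\abs{E}\abs{F}\geq 1$ is trivial. Let $T=P_E\widehat{P_F}$ be the localization operator from \thref{PROP:AB}. If $f\neq 0$ satisfies $\text{ess supp}(f)\subseteq E$ and $\text{ess supp}(\widehat{f})\subseteq F$, then $\widehat{P_F}f=f$ and $P_Ef=f$. Hence $Tf=f$, so $1$ is an eigenvalue of $T$ and
\[
1\leq \norm{T}_{L^2\to L^2}.
\]

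Next we bound the operator norm by the Hilbert--Schmidt norm. Write $U f:=(2\pi)^{-1/2}\widehat{f}$; by Plancherel's theorem $U$ is unitary on $L^2(\R)$. Then
\[
T=P_E\,U^{-1}P_F\,U,
\]
and since $U$ is unitary, $\norm{T}\leq \norm{P_E U^{-1}P_F}_{HS}$. The operator $P_EU^{-1}P_F$ is an integral operator with kernel
\[
K(x,\xi)=(2\pi)^{-1/2}1_E(x)1_F(\xi)e^{ix\xi}.
\]
Its Hilbert--Schmidt norm is therefore
\[
\Big(\int\!\!\int \abs{K(x,\xi)}^2\,dx\,d\xi\Big)^{1/2}=\big(\abs{E}\abs{F}/2\pi\big)^{1/2}.
\]

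Combining the two displays gives $1\leq \abs{E}\abs{F}/(2\pi)$, that is $\abs{E}\abs{F}\geq 2\pi$. With the normalization $\widehat{f}(\xi)=\int f(x)e^{-2\pi i x\xi}\,dx$, where the transform is itself unitary, the same computation gives exactly $\abs{E}\abs{F}\geq 1$. The stated bound holds with either convention, since $2\pi\geq 1$.

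The only points needing care are the normalization bookkeeping just described and the inequality $\norm{\cdot}_{op}\leq\norm{\cdot}_{HS}$. Both are standard, so I do not expect a genuine obstacle. The substantive step is recognizing that a nontrivial doubly localized function forces $\norm{T}=1$, which is precisely the mechanism of \thref{THM:A-B}. Here it is simply made quantitative through the explicit Hilbert--Schmidt kernel.
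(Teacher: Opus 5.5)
Your proof is correct and is essentially the paper's argument: both compare $\norm{T}_{L^2\to L^2}$ for $T=P_E\widehat{P_F}$ with an explicit Hilbert--Schmidt-type bound. The one difference is that you observe directly that a doubly localized $f$ satisfies $Tf=f$, whereas the paper takes the contrapositive route through \thref{PROP:AB} ($\norm{T}<1$ makes $I-T$ invertible, so $(E,F)$ is a strong uniqueness pair). Your bound $\norm{T}\leq(\abs{E}\abs{F}/2\pi)^{1/2}$ is the correct form of the paper's ``straightforward calculation'': as written, $\norm{T}\leq\abs{E}\abs{F}$ fails when $\abs{E}\abs{F}$ is small, since it is $\norm{T}^2$ that is controlled, though the paper's conclusion is unaffected. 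Your version also yields the sharper $\abs{E}\abs{F}\geq 2\pi$.
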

\begin{proof}
    Let $T= P_E \widehat{P_F}$ be the localization operator and note that a straightforward calculation gives the norm estimate
    \[
    \norm{T}_{L^2 \to L^2} \leq \abs{E}\abs{F}.
    \]
    Now if $\abs{E}\abs{F}<1$, then $I-T$ is invertible with the inverse given by a Von Neumann series as demonstrated in \thref{PROP:AB}, and thus we conclude from the same Proposition that $(E,F)$ is a strong uniqueness pair. This implies that there cannot exists a non-trivial function $f\in L^2(\R)$ with $\text{ess supp}(f) \subseteq E$ and $\text{ess supp}(\widehat{f}) \subseteq F$. 
\end{proof}

It turns out that strong uniqueness pairs $(E,F)$ give rise to the following independence phenomenon: the behavior of $f$ on $E$ has no interference with the behavior of its Fourier transform $\widehat{f}$ on $F$, and vice versa. To make this formal, we record the following corollary of the Amrein--Berthier Theorem, which allows one to solve the following system of equations.

\begin{cor} Let $(E,F)$ be a strong uniqueness pair. Then for any pair of $g,h \in L^2(\R)$, there exists $f\in L^2(\R)$ such that 
\[
f= g \, \, \text{a.e on} \, \, E, \qquad \widehat{f}= h \, \, \text{a.e on} \, \, F.
\]
\end{cor}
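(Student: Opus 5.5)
The plan is to reduce the statement to the invertibility of $I-T$, where $T=P_E\widehat{P_F}$ is the localization operator from \thref{PROP:AB}. First we normalize the data. Put $\tilde h := \mathcal{F}^{-1}(1_F h)$, a function in $L^2(\R)$ whose Fourier transform is $1_F h$ up to the Plancherel constant, which we rescale away. We look for $f$ of the form
\[
f = u + v, \qquad \text{ess supp}(u)\subseteq E, \qquad \text{ess supp}(\widehat{v})\subseteq F .
\]
More precisely, we will take $u = P_E a$ and $v = \widehat{P_F} b$ for suitable $a,b\in L^2(\R)$. The two conditions then become
\[
1_E u + 1_E v = 1_E g, \qquad 1_F\widehat{u} + 1_F \widehat{v} = 1_F h .
\]
Since $1_E u = u$ and $1_F\widehat{v} = \widehat{v}$, this is the coupled linear system
\[
u + P_E v = P_E g, \qquad v + \widehat{P_F} u = \widehat{P_F}\tilde h' ,
\]
where $\tilde h'$ is any $L^2$ function with $1_F\widehat{\tilde h'} = 1_F h$.

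Next we eliminate variables. Substituting $v = \widehat{P_F}\tilde h' - \widehat{P_F}u$ into the first equation gives
\[
u - P_E\widehat{P_F}u = P_E g - P_E\widehat{P_F}\tilde h' ,
\]
that is, $(I-T)u = P_E(g - \widehat{P_F}\tilde h')$.

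By \thref{PROP:AB}, $(i)\Rightarrow(iii)$, the operator $I-T$ is invertible, so this equation has a solution $u\in L^2(\R)$. We must also check that $u$ is supported in $E$. This holds because $u = Tu + P_E(\cdots)$, and both terms on the right are multiplied by $1_E$, so $u = P_E u$ automatically. We then define $v := \widehat{P_F}(\tilde h' - u)$, which has Fourier support in $F$, and set $f := u+v$.

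The last step is to verify the two conditions. On $E$ we have $1_E f = u + P_E v = u + P_E\widehat{P_F}\tilde h' - Tu$, and the defining equation for $u$ turns this into $1_E f = P_E g$. On the Fourier side, $1_F\widehat{f} = 1_F\widehat{u} + \widehat{v}$, and since $\widehat{v} = 1_F\widehat{\tilde h'} - 1_F\widehat{u}$, this gives $1_F\widehat f = 1_F h$.

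The only real content is the invertibility of $I-T$, which \thref{PROP:AB} supplies; everything else is bookkeeping. The main point requiring care is the Plancherel normalization constant $2\pi$ in the definition of $\tilde h'$, together with the observation that the solution $u$ of the operator equation lies in the range of $P_E$. I expect no further obstacle. Alternatively, one can write the same argument via the Neumann series $u=\sum_k T^k P_E(g-\widehat{P_F}\tilde h')$, which converges because $\|T\|<1$.
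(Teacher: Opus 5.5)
Your argument is correct, and it rests on the same fact as the paper's proof, namely the invertibility of $I-T$ with $T=P_E\widehat{P_F}$. It uses a different decomposition of $f$, however.

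The paper writes down the closed formula $f=(I-\widehat{P_F})(I-T)^{-1}g+(I-P_E)(I-T^*)^{-1}h$. The first summand has spectrum off $F$ and the second vanishes on $E$, so each summand is invisible to one of the two constraints and the constraints decouple. Each is then solved by its own inverse: $(I-T)^{-1}$ for the spatial condition, and $(I-T^*)^{-1}$ (invertible as the adjoint of $I-T$) for the spectral one. What this buys is a formula that is symmetric under interchanging the roles of $E$ and $F$.

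You instead take $u$ supported in $E$ and $\widehat{v}$ supported in $F$, so each summand lives exactly where its constraint is imposed and the constraints are coupled. Eliminating $v$ leaves the single equation $(I-T)u=P_E(g-\widehat{P_F}\tilde h')$, so only one inverse is needed. The spectral condition then holds automatically by idempotence of $\widehat{P_F}$.

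Your bookkeeping is also more careful than the paper's. Its verification asserts $P_{E^c}(I-T)^{-1}=0$, whereas in fact $P_{E^c}(I-T)^{-1}=P_{E^c}$, since only the terms $k\ge 1$ of the Neumann series are annihilated. Its formula therefore yields $P_Ef=P_Eg$ rather than $P_Ef=g$, which is still what is needed. Similarly, it yields $\widehat f=\widehat h$ on $F$, implicitly treating $h$ as a space-side function; this is the normalization you handle explicitly through $\tilde h'$.
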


\begin{proof} Set $T= P_E \widehat{P_F}$, and note that the adjoint operator is $T^* = \widehat{P_F}P_E$, and we have the simple identities $I-P_E = P_{E^c}$ and $I-\widehat{P_F}= \widehat{P_{F^c}}$. If $(E,F)$ is a strong uniqueness pair the \thref{PROP:AB} ensures that $I-T$ is invertible on $L^2(\R)$, hence it remains only to verify that the $L^2$-function
\[
f:= (I-\widehat{P_{F}})(I-T)^{-1}g + (I-P_{E})(I-T^*)^{-1} h
\]
satisfies the properties $P_E f =g$ and $\widehat{P_F}f= h$. However, this is essentially pure algebra, as demonstrated below:
\[
P_E (f) = P_E (I-\widehat{P_{F}}) (I-T)^{-1}g = (P_E-T)(I-T)^{-1}g = -P_{E^c}(1-T)^{-1} g + g = g.
\]
In the last step, we use that $P_{E^c} (1-T)^{-1}=0$, which readily follows from the Von Neumann series expansion of $T$. Similarly $\widehat{P_F}f= h$, we omit the details.
    
\end{proof}

One can also establish periodic versions of the Amrein--Berthier Theorem, which involving Fourier coefficients. Theorems of these type are attributed to Mikheyev, and turn our to have some interesting connections to lacunary Fourier series. However, this is beyond the scope of these notes, and we refer the reader to \cite[P. 102]{havinbook} for further details on these matters.

As a consequence of the Paley-Wiener Theorem, and as noted in the beginning of the subsection, any compact set $E \subset \R$ and closed proper set $F\subsetneq \R$ gives rise to a uniqueness pair $(E,F)$. The following example shows that  strong uniqueness pairs with large Fourier spectrum can also arise.

\begin{prop} There exists a strong uniqueness pair $(E,F)$, where $E$ has finite Lebesgue measure, while $F$ has infinite Lebesgue measure.

\end{prop}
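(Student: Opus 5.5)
We will verify condition (ii) of \thref{PROP:AB}, namely $\norm{P_E\widehat{P_F}}_{L^2\to L^2}<1$, for an explicit pair. The choice is $E=[-1/2,1/2]$ (or any compact interval) and
\[
F=\bigcup_{k\in\mathbb{Z}} \big[\, \lambda_k-\delta_k,\ \lambda_k+\delta_k \,\big],
\]
a union of disjoint intervals whose centres $\lambda_k$ are very widely separated and whose lengths $2\delta_k$ are not summable. For instance, take $\delta_k=1$ and $\lambda_k=\pm 2^{|k|}$ (or a faster growing sequence if needed). Then $|E|<\infty$ and $|F|=\infty$.

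The structure of the argument is as follows. Write $f=\sum_k f_k$ with $\widehat{f_k}=1_{F_k}\widehat{f}$, where $F_k$ denotes the $k$-th interval. The aim is to prove (iv) of \thref{PROP:AB} in the form
\[
\int_E|f|^2\le (1-c)\norm{f}^2_{L^2}, \qquad \operatorname{ess\,supp}\widehat{f}\subseteq F.
\]

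\emph{Step 1: a single block.} For one block, the pair $(E,F_k)$ consists of two sets of finite measure. By \thref{THM:A-B} and \thref{PROP:AB}, $\norm{P_E\widehat{P_{F_k}}}\le\alpha<1$. This $\alpha$ is uniform in $k$, because modulation by $e^{i\lambda_k x}$ moves $F_k$ to $[-\delta,\delta]$ and does not change $|f|$ on $E$. Hence
\[
\int_E|f_k|^2\le\alpha^2\norm{f_k}^2_{L^2}.
\]

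\emph{Step 2: cross terms.} Expand
\[
\int_E|f|^2=\sum_k\int_E|f_k|^2+\sum_{j\neq k}\int_E f_j\conj{f_k}.
\]
The off-diagonal terms are controlled by a Zygmund-style almost-orthogonality argument, as in the proof of \thref{THM:ZYGL2}. Replace $1_E$ by a smooth bump $\psi\geq 1_E$, for example a Schwartz function with $\widehat\psi$ compactly supported, or at least rapidly decaying. Then
\[
\int\psi f_j\conj{f_k}=\frac{1}{2\pi}\iint\widehat{f_j}(\xi)\conj{\widehat{f_k}(\eta)}\,\widehat\psi(\eta-\xi)\,d\xi\,d\eta.
\]
Since $|\eta-\xi|\gtrsim|\lambda_j-\lambda_k|$ on $F_j\times F_k$, this term is bounded by
\[
\sup_{|t|\gtrsim|\lambda_j-\lambda_k|}|\widehat\psi(t)|\cdot\delta\,\norm{f_j}\norm{f_k}.
\]
Schur's test then bounds the whole off-diagonal sum by $\eta\sum_k\norm{f_k}^2$, with
\[
\eta=\sup_j\sum_{k\neq j}\sup_{|t|\gtrsim|\lambda_j-\lambda_k|}|\widehat\psi(t)|.
\]
Choosing the gaps $|\lambda_j-\lambda_k|$ large enough makes $\eta$ as small as we like.

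\emph{Main obstacle.} The difficulty is to match the diagonal part with the bump $\psi$ rather than with $1_E$. The diagonal bound must read
\[
\int\psi|f_k|^2\le\beta\norm{f_k}^2 \quad\text{with }\beta<1,
\]
whereas Step 1 gives this bound only for $1_E$. To handle this, use \thref{THM:A-B} for the slightly larger set $\tilde E=[-1,1]$ and take $\psi$ with $1_E\le\psi\le 1$. We cannot have $\psi$ supported exactly in $\tilde E$ and also $\widehat\psi$ of compact support, so we take $\psi$ smooth with support in $\tilde E$. Then $\widehat\psi$ decays faster than any power, and polynomial decay is sufficient provided the $\lambda_k$ grow fast enough. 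This gives
\[
\int\psi|f_k|^2\le\int_{\tilde E}|f_k|^2\le\tilde\alpha^2\norm{f_k}^2.
\]

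\emph{Conclusion.} Combining the steps,
\[
\int_E|f|^2\le(\tilde\alpha^2+\eta)\sum_k\norm{f_k}^2=(\tilde\alpha^2+\eta)\norm{f}^2,
\]
where the last equality is Plancherel together with the disjointness of the $F_k$. Choosing the separation so that $\eta<1-\tilde\alpha^2$ gives (iv), and \thref{PROP:AB} then shows that $(E,F)$ is a strong uniqueness pair.
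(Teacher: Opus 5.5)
Your proof is correct and has the same skeleton as the paper's. You split $\widehat f$ over disjoint blocks and bound each diagonal term by a single-block Amrein--Berthier constant, which is uniform in the block by modulation invariance. You then make the cross terms small by separating the blocks, and conclude with (iv) of \thref{PROP:AB}.

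The difference lies in how the cross terms are handled. The paper keeps the rough indicator $1_S$ and applies Cauchy--Schwarz to the entire off-diagonal sum. This reduces it to $\int_{|\xi|>M}|\widehat{1_S}(\xi)|^2R(\xi)\,d\xi$, where $R$ counts, with overlap weights, the pairs of blocks whose centres differ by roughly $\xi$. Lacunarity is exactly what makes $R$ bounded and zero near the origin, and $\widehat{1_S}\in L^2$ makes the tail small. You instead majorize $1_E$ by a smooth bump $\psi$ and combine the pointwise decay of $\widehat\psi$ with Schur's test. The price is that $E$ must be bounded and the diagonal estimate must be run on the enlarged interval $\tilde E$.

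The trade-off is as follows. The paper's estimate applies to an arbitrary set $S$ of finite measure, with no smoothing or enlargement. It does, however, genuinely need lacunary centres: for an arithmetic progression, $R(\xi)=\infty$ near every nonzero multiple of the step. Your estimate needs $E$ bounded but uses only the separation of the centres. For any $G$-separated sequence one has $\sup_j\sum_{k\neq j}(1+|\lambda_j-\lambda_k|)^{-N}\lesssim G^{-N}$. So even $\lambda_k=Gk$ with $G$ large would work, and lacunarity plays no role in your argument.

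As you anticipate, $\lambda_k=\pm 2^{|k|}$ with $\delta=1$ does not give disjoint blocks for small $|k|$. Disjointness is needed for your identity $\sum_k\norm{f_k}^2=\norm{f}^2$. Taking large gaps fixes this, just as the paper passes to the tail $\Lambda_N$.
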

\begin{proof}

Let $S \subset \R$ of finite Lebesgue measure and let $\Lambda \subset \mathbb{Z}$ be a lacunary series with $\kappa(\Lambda)\geq 3$ (see Chapter 2.3), say, and consider $F:= \cup_{\lambda \in \Lambda} [\lambda-a, \lambda+a]$ for a fixed $a>0$. Clearly, $F$ as infinite Lebesgue measure $\abs{F}= \infty$. Fix a large number $N>0$ to be determined later, and observe that the lacunary assumption on $\lambda$ ensures that the intervals $I_\lambda :=[\lambda-a,\lambda+a]$ eventually become disjoint, for each fixed $a>0$. Therefore, choosing $N>0$ large enough, we can ensure that the intervals $I_\lambda$ with $\lambda \in \Lambda_N := \Lambda \cap \{|\lambda|>N\}$ are all disjoint. Now, set $F_N := \cup_{\lambda \in \Lambda_N} I_\lambda$ and pick $f\in L^2(\R)$ with $\text{ess supp}(\widehat{f})\subseteq F_N$ and write $f= \sum_{\lambda \in \Lambda_N} f_\lambda$ with $\text{ess supp}(\widehat{f_\lambda})\subseteq I_\lambda$. According to part $(iv)$ of \thref{PROP:AB}, it suffices to verify that for sufficiently large $N>1$, there exists a constant $C=C(S,F_N)>0$
\[
\int_{\R} \abs{f(x)}^2 dx \leq C\int_{S^c} \abs{f(x)}^2 dx.
\]
A straightforward expansion and using Plancherel's Theorem implies
\begin{multline*}
\int_S \abs{f}^2 dx = \sum_{\lambda \in \Lambda_N} \int_{S} \abs{f_\lambda}^2 dx + \sum_{\substack{\lambda \neq \nu \\ \lambda, \nu \in \Lambda_N}} \int_{S} \int_{S} f_\lambda(x) \conj{f_{\nu}(y)} dx dy  \\ = \sum_{\lambda \in \Lambda_N} \norm{P_S \widehat{P_{I_\lambda}}f_\lambda}^2_{L^2} +  \sum_{\substack{\lambda \neq \nu \\ \lambda, \nu \in \Lambda_N}} \int_{I_\lambda} \int_{I_\nu} \widehat{f_\lambda}(\xi) \conj{\widehat{f_\nu}(\eta)} \conj{\widehat{1_{S}}(\xi-\eta)} d\xi d\eta =: I + II.
\end{multline*}
By the Amrein--Berthier Theorem, we have that $\rho:=\norm{P_S \widehat{P_{I_\lambda}}}_{L^2 \to L^2}<1$, hence 
\[
I= \sum_{\lambda\in \Lambda_N} \norm{P_S \widehat{P_{I_\lambda}}f_\lambda}^2_{L^2} \leq \rho^2 \sum_{\lambda\in \Lambda_N} \norm{f_\lambda}^2_{L^2} = \rho^2 \norm{f}^2_{L^2}.
\]
To estimate the quantity $II$, we apply triangle-inequality and the Cauchy-Schwarz on the terms:
\[
\abs{II} \leq \sum_{\substack{\lambda \neq \nu \\ \lambda, \nu \in \Lambda_N}} \abs{\int_{I_\lambda} \int_{I_\nu} \widehat{f_\lambda}(\xi) \conj{\widehat{f_\nu}(\eta)} \conj{\widehat{1_{S}}(\xi-\eta)} d\xi d\eta} \leq \norm{f}^2_{L^2} \left( \sum_{\substack{\lambda \neq \nu \\ \lambda, \nu \in \Lambda_N}} \int_{I_\lambda} \int_{I_\nu} \abs{\widehat{1_S}(\xi-\eta)}^2 d\xi d\eta \right)^{1/2}.
\]
We shall now make use of the lacunary assumption of $\Lambda$, in a similar manner as in the proof of the second statement of Zygmund's \thref{THM:ZYGL2}. To this end, we may note that a change of variable gives
\[
\sum_{\substack{\lambda \neq \nu \\ \lambda, \nu \in \Lambda_N}} \int_{I_\lambda} \int_{I_\nu} \abs{\widehat{1_S}(\xi-\eta)}^2 d\xi d\eta = \int_{\R} \abs{\widehat{1_S}(\xi)}^2 \sum_{\substack{\lambda \neq \nu \\ \lambda, \nu \in \Lambda_N}} \abs{I_{\lambda} \cap \left( I_\nu + \xi \right)} d\xi = \int_{\R} \abs{\widehat{1_S}(\xi)}^2 R(\xi) d\xi.
\]
Now since the intervals $(I_\lambda)_{\lambda \in \Lambda_N}$ are disjoint and of length $2a>0$, we note that for each fixed $\xi \in \R$, the function $R(\xi)$ counts the numbers of non-diagonal pairs of $(\lambda, \nu)\in \Lambda_N \times \Lambda_N$ such that $\abs{\lambda - \nu + \xi}\leq 2a$. For any desirable number $M>0$, we can by means of choosing $N=N(M)>1$ large enough, ensure that the quantity $|\lambda-\nu|$ is so large that no $\abs{\xi}\leq M$ gives any contribution to $R(\xi)$. Furthermore, the lacunary assumption also implies that there is a constant $C=C(\kappa(\Lambda))>0$, for which the uniform upper bound $R(\xi)\leq C$, holds for all $\xi\in \R$. Combining these observations, we get 
\[
\sum_{\substack{\lambda \neq \nu \\ \lambda, \nu \in \Lambda_N}} \int_{I_\lambda} \int_{I_\nu} \abs{\widehat{1_S}(\xi-\eta)}^2 d\xi d\eta = \int_{|\xi|>M} \abs{\widehat{1_S}(\xi)}^2 R(\xi) d\xi \leq C\int_{|\xi|>M} \abs{\widehat{1_S}(\xi)}^2 d\xi \leq 1-\rho^2 -\varepsilon,
\]
provided that $0<\varepsilon<1-\rho^2$ and $M>0$ chosen large enough. Combining these estimates of $(I)$ and $(II)$, we obtain
\[
\int_{S} \abs{f}^2 dm \leq \left( \rho^2 + C\int_{|\xi|>M} \abs{\widehat{1_S}(\xi)}^2 d\xi \right)\norm{f}^2_{L^2} \leq (1-\varepsilon) \int_{\R} \abs{f}^2 dx.
\]
Adding $\int_{S^c} |f|^2 dm$ to both sides and subtracting terms, we arrive at the desired estimate
\[
\int_{\R} \abs{f}^2 dx \leq \frac{1}{\varepsilon} \int_{S^c} \abs{f}^2 dx,
\]
which proves that $(S,F_N)$ is a strong uniqueness pair for sufficiently large $N$.
\end{proof}

Restricting our attention to case of bounded spectrum, our last principal result in this section gives a metric characterization of measurable sets $E \subset \R$, which give rise to strong uniqueness pairs $(E,F)$, for any bounded subset $F$. 

\begin{thm}[Logvinenko-Sereda] Let $E$ be a measurable subset of $\R$. Then $(E,F)$ is a strong uniqueness pair for any bounded measurable set $F \subset \R$ if and only if, there exists a number $r>0$ such that 
\[
\delta_r(E^c) := \inf_{|I|= r} \frac{|I\setminus E|}{|I|}>0,
\]
where the infimum is taken of over all intervals $I \subset \R$ of length $r$.

\end{thm}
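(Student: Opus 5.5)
By \thref{PROP:AB}(iv), the statement is equivalent to the following. The set $E^c$ is relatively dense, i.e.\ $\delta_r(E^c)>0$ for some $r$, if and only if for every bounded $F$ there is $c>0$ with
\[
\int_{E^c}|f|^2\,dx\ \ge\ c\int_{\R}|f|^2\,dx
\qquad\text{for all } f\in L^2(\R)\text{ with }\operatorname{ess\,supp}\widehat f\subseteq F .
\]
I would prove the two directions separately. Replacing $F$ by an interval $[-a,a]$ containing it, it suffices to treat $F=[-a,a]$, and by the Paley--Wiener Theorem such $f$ are restrictions of entire functions in $\mathcal{E}_a$.

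\emph{Necessity.} Suppose $\delta_r(E^c)=0$ for every $r$. Take $F=[-1,1]$ and a fixed Schwartz function $\varphi$ with $\widehat\varphi$ supported in $[-1,1]$ and $\varphi\neq 0$. For each $r$ I pick an interval $I_r=[x_r-r/2,x_r+r/2]$ with $|I_r\setminus E|\le \varepsilon_r r$, where $\varepsilon_r\to 0$. I then test with $f_r(x)=r^{-1/2}\varphi((x-x_r)/r)$; its spectrum lies in $[-1/r,1/r]\subseteq F$ for $r\ge 1$. The mass of $|f_r|^2$ outside $I_r$ is $\int_{|t|>1/2}|\varphi|^2$, which can be made small relative to the total only by dilating. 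So instead I use $\varphi((x-x_r)/s)$ with $s=\sqrt r$, say, and choose $r$ so large that the mass outside $I_r$ is negligible. On $I_r\setminus E$ the function is bounded by $\|\varphi\|_\infty s^{-1/2}$, so that contribution is at most $\varepsilon_r r/s$. Choosing $r$ and $s$ so that $s\to\infty$ and $\varepsilon_r r/s\to 0$ is possible along a suitable sequence, since we may take $\varepsilon_r$ to be tiny for each fixed $r$. This forces $\int_{E^c}|f_r|^2/\|f_r\|^2\to 0$, contradicting (iv).

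\emph{Sufficiency.} This is the main obstacle. The plan is to tile $\R$ by intervals $J_k$ of length $r$ and prove a local inequality
\[
\int_{J_k}|f|^2\ \le\ C\int_{J_k\setminus E}|f|^2
\]
on ``good'' intervals, and then sum over $k$. By Bernstein's inequality, $\|f^{(n)}\|_{L^2}\le a^n\|f\|_{L^2}$, which follows from Plancherel. A weighted counting argument then shows that the good intervals, those with $\|f^{(n)}\|_{L^2(J_k)}\le (Aa)^n\|f\|_{L^2(J_k)}$ for all $n$, carry at least half of $\|f\|^2_{L^2}$ once $A$ is large. I would sum over $n$ with weights $A^{-2n}$ and use Chebyshev to do this.

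On a good $J_k$, Taylor expansion (via Sobolev embedding on $J_k$) gives that $f$ extends analytically to the disc $D_k$ of radius about $4r$ around $J_k$, with $\sup_{D_k}|f|\le M\,|J_k|^{-1/2}\|f\|_{L^2(J_k)}$, where $M$ depends only on $a$, $r$ and $A$. I then apply a Remez/Nazarov-type estimate for analytic functions: if $\sup_{D}|f|\le M\sup_{J}|f|$, then for any measurable $\Omega\subset J$,
\[
\sup_J|f|\ \le\ \Big(\tfrac{C|J|}{|\Omega|}\Big)^{\log M/\log 2}\sup_{\Omega}|f| .
\]
This can be proved from the Jensen-type bound on the number of zeros and a polynomial-factorization argument. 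Applying it with $\Omega$ a large-measure subset of $J_k\setminus E$ on which $|f|$ is not too small (obtained by Chebyshev from the $L^2$ norm), with $|\Omega|\ge\delta_r(E^c)r/2$, and converting between sup and $L^2$ norms on $J_k$, yields the local inequality with $C$ depending only on $a$, $r$ and $\delta_r(E^c)$. Summing over good $k$ gives
\[
\|f\|^2_{L^2}\ \le\ 2\sum_{k\ \text{good}}\|f\|^2_{L^2(J_k)}\ \le\ 2C\|f\|^2_{L^2(E^c)} ,
\]
which is (iv). The remez-type lemma is where I expect the genuine work to lie.
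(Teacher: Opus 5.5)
Your proof is correct. The necessity direction is essentially the paper's argument in contrapositive form. The paper fixes a single $f$ with spectrum in $F$, takes an interval $I$ carrying all but $c/2$ of its mass, translates it, and uses only absolute continuity of $\int|f|^2$. Your dilation by $s=\sqrt r$ is harmless but unnecessary: a fixed $\varphi$ translated to $x_r$ already works once $r$ is large and $\varepsilon_r r\to 0$.

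For sufficiency you take a genuinely different route.

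\emph{The paper's route.} It shifts the spectrum into $[0,2a]$, so that $f\in H^2(\C_+)$, and converts relative density of $E^c$ into $\gamma=\inf_x\omega_{x+i}(E^c)>0$. The two-constant lemma then combines the Jensen--Poisson inequality on the line $\Im z=1$ with Jensen's and H\"older's inequalities to give $\int_{\R}|P(f)(x+i)|^2\,dx\le 2\bigl(\int_{E^c}|f|^2\bigr)^{\gamma}\|f\|_{L^2}^{2(1-\gamma)}$. Finally the Poisson multiplier $e^{-|\xi|}$ is inverted on $[0,2a]$ at a cost of $e^{2a}$. This is short and needs no Remez-type lemma. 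However, it yields a constant of order $(Ce^{2a})^{1/\gamma}$, exponential in $1/\gamma$ and hence in $1/\delta_r(E^c)$, and it is tied to the half-plane.

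\emph{Your route.} The tiling argument (Bernstein, good intervals, growth on discs, Remez lemma for analytic functions) is Kovrijkine's proof, which the notes cite. The Remez lemma carries the real weight: Jensen's zero count, factoring out the polynomial of zeros, Harnack for the zero-free quotient, and the polynomial Remez inequality. In return the argument is purely local, gives the essentially sharp constant $(C/\delta_r(E^c))^{C(ar+1)}$, and carries over to several variables.

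One slip to fix. The set $\Omega\subset J_k\setminus E$ must be one on which $|f|$ is not too \emph{large}, namely $|f|^2\le 2|J_k\setminus E|^{-1}\int_{J_k\setminus E}|f|^2$. By Chebyshev this set has measure at least $\tfrac12|J_k\setminus E|$. Since the Remez lemma bounds $\sup_{J_k}|f|$ by $\sup_\Omega|f|$, you need an upper bound for $|f|$ on $\Omega$. A lower bound there, as your wording suggests, would not give the local inequality directly.
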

In the literature, the above condition on $E^c$ is commonly referred to as $E^c$ being relatively $r$-dense in $\R$. The proof presented here will principally rely on reformulating the notion of $r$-density in terms of harmonic measure, in order to deduce a "two-constant" Theorem.

\begin{lemma}[Two-constant Lemma] \thlabel{LEM:2const} A measurable set $S \subset \R$ is relatively $r$-dense for some $r>0$ if and only if the harmonic measure on $\C_+ := \{\Im z>0 \}$ restricted to the horizontal line $y=1$:
\[
\omega_{x+i}(S) := \int_{S} \frac{1}{1+(t-x)^2}\frac{dt}{\pi},\qquad x\in \R
\]
is bounded below: $\gamma := \inf_{x\in \R} \omega_{x+i}(S) >0$. In that case, the following two-constant type estimate holds:
\[
\int_{\R} \abs{P(f)(x+i)}^2 dx \leq 2 \left( \int_{E^c} \abs{f}^2 dx\right)^{\gamma} \norm{f}^{2(1-\gamma)}_{L^2}, \qquad f \in H^2(\R).
\]

\end{lemma}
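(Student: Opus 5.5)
Throughout I read $S=E^c$, so that the right-hand side of the two-constant estimate involves $\int_S\abs{f}^2dx$. The plan has two parts. First I prove the geometric equivalence by comparing the Poisson kernel with indicators of intervals. Then I prove the two-constant estimate with constant exactly $2$. For that I combine the subharmonic (Jensen) bound for $\log\abs{P(f)}$, used as in the proof of the Khinchin--Ostrowski Theorem and in Step 3 of Pollard's Theorem, with concavity of the logarithm and a weighted arithmetic--geometric mean inequality whose free parameter is optimized \emph{after} integrating in $x$.

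\emph{Equivalence.} Suppose $S$ is relatively $r$-dense with density $\delta=\delta_r(S)>0$. Given $x$, let $I$ be the interval of length $r$ centred at $x$. On $I$ the kernel $\frac{1}{\pi(1+(t-x)^2)}$ is at least $\frac{1}{\pi(1+r^2/4)}$. Hence $\omega_{x+i}(S)\ge \frac{\delta r}{\pi(1+r^2/4)}$ uniformly in $x$, so $\gamma>0$.

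Conversely, suppose $\gamma>0$. For every $x$ and every interval $I$ of length $r$ centred at $x$,
\[
\gamma\le \omega_{x+i}(S\cap I)+\omega_{x+i}(\R\setminus I)\le \frac{\abs{S\cap I}}{\pi}+1-\frac{2}{\pi}\arctan\frac r2 .
\]
I choose $r$ so large that the last two terms add up to at most $\gamma/2$. This gives $\abs{S\cap I}\ge \pi\gamma/2$, so $\abs{I\cap S}/\abs{I}\ge \pi\gamma/(2r)$. Every interval of length $r$ is centred at some $x$, so $S$ is relatively $r$-dense.

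\emph{Two-constant estimate.} Fix $f\in H^2(\R)$ and write $F=P(f)$, which is analytic in $\C_+$. Write $d\omega_x(t)=\frac{dt}{\pi(1+(t-x)^2)}$ and $\theta_x=\omega_{x+i}(S)\in[\gamma,1]$. By the half-plane Jensen inequality, i.e.\ subharmonicity of $\log\abs{F}$ together with the boundary values of $H^2$ functions,
\[
\log\abs{F(x+i)}^2\le \int_S\log\abs{f}^2\,d\omega_x+\int_{S^c}\log\abs{f}^2\,d\omega_x .
\]
Apply Jensen's inequality for the concave function $\log$ to each piece, with respect to the normalized measures $\omega_x|_S/\theta_x$ and $\omega_x|_{S^c}/(1-\theta_x)$. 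Setting $A_x=\int_S\abs{f}^2d\omega_x$ and $B_x=\int_{S^c}\abs{f}^2d\omega_x$, this yields
\[
\abs{F(x+i)}^2\le \Big(\frac{A_x}{\theta_x}\Big)^{\theta_x}\Big(\frac{B_x}{1-\theta_x}\Big)^{1-\theta_x}.
\]
Now fix $\lambda\in(0,1]$. Weighted AM--GM gives
\[
\abs{F(x+i)}^2\le \lambda^{-(1-\theta_x)}(A_x+\lambda B_x)\le \lambda^{-(1-\gamma)}(A_x+\lambda B_x).
\]
The second inequality holds because $\lambda\le1$ and $1-\theta_x\le 1-\gamma$; this is exactly where the uniform lower bound $\gamma$ is used. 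Integrating in $x$ and using Fubini together with $\int_\R d\omega_x(t)\,dx=1$, I get
\[
\int_\R\abs{F(x+i)}^2dx\le \lambda^{-(1-\gamma)}(a+\lambda b),
\]
where $a=\int_S\abs{f}^2dx$ and $b=\norm{f}^2_{L^2}\ge a$. Choosing $\lambda=a/b\le1$ produces $2a^{\gamma}b^{1-\gamma}$, which is the claimed bound with constant $2$.

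The degenerate cases need separate treatment. If $f=0$ there is nothing to prove. If $a=0$, then $f=0$ on $S$, which has positive measure, and $f\in H^2$ forces $f\equiv0$ by log-integrability. Otherwise one lets $\lambda\to a/b$ with $\lambda>0$.

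\emph{Main obstacle.} I expect the main difficulty to be obtaining the constant $2$ rather than some larger constant. A pointwise use of AM--GM, or H\"older's inequality applied with the varying exponents $\theta_x$, loses control. The key idea I will rely on is to insert the parameter $\lambda$ before integrating, use monotonicity in the exponent to replace $\theta_x$ by $\gamma$, and only then optimize $\lambda$.
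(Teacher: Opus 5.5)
Your proof is correct. The first half is exactly the paper's argument: subharmonicity of $\log\abs{P(f)}$, followed by Jensen's inequality for the concave $\log$ on the pieces $S$ and $S^c$. You also prove the density/harmonic-measure equivalence that the paper leaves to the reader, and your two Poisson-kernel comparisons there are fine.

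The routes differ in how the $x$-dependent weight $\theta_x$ is removed. The paper does it pointwise, in three moves:
\begin{itemize}
\item it bounds the entropy term $\alpha\log\frac{1}{\alpha}+(1-\alpha)\log\frac{1}{1-\alpha}$ by $\log 2$;
\item it enlarges $\int_{S^c}\abs{f}^2\,d\omega_x$ to $T_x:=\int_{\R}\abs{f}^2\,d\omega_x$;
\item it lowers $\alpha=\theta_x$ to $\gamma$, which is legitimate because $\alpha\mapsto\alpha\log A_x+(1-\alpha)\log T_x$ is non-increasing when $A_x\le T_x$.
\end{itemize}
This gives $\abs{P(f)(x+i)}^2\le 2A_x^{\gamma}T_x^{1-\gamma}$ with a \emph{fixed} exponent. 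H\"older with exponents $1/\gamma$, $1/(1-\gamma)$ plus Fubini then finishes.

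So the obstacle you single out, H\"older with varying exponents, never actually arises. The exponent is frozen at $\gamma$ before integrating, by the same kind of monotonicity you use in $\lambda^{-(1-\theta_x)}\le\lambda^{-(1-\gamma)}$.

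Your parametrized AM--GM is essentially the usual proof of H\"older's inequality carried out inline. It avoids H\"older altogether and absorbs the factor $\theta_x^{-\theta_x}(1-\theta_x)^{-(1-\theta_x)}$ exactly, instead of bounding it by $2$ pointwise. The paper's route is a little shorter. Both yield the constant $2$.
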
 

\begin{proof}
The first statement is just a straightforward estimate involving the Poisson kernel, whose verification is left to the reader, and we therefore only verify the second claim. Suppose $\gamma >0$, and fix $f \in H^2(\R)$, and set $\alpha = \omega_{x+i}(S)\geq \gamma$, which leads to $1-\alpha = \omega_{x+i}(S^c)$. Using subharmonicity and Jensen's inequality, we get
\begin{multline*}
2\log |P f(x+i)| \leq \int_{S} \log |f|^2 d\omega_{x+i} + \int_{S^c} \log |f|^2 d\omega_{x+i} \leq \\ \alpha \log \int_S |f|^2 \frac{d\omega_{x+i}}{\alpha} + (1-\alpha)\log \int_{S^c} |f|^2 \frac{d\omega_{x+i}}{1-\alpha} \\ 
= \alpha \log \frac{1}{\alpha} + (1-\alpha) \log \frac{1}{1-\alpha} + \alpha \log \int_S \abs{f}^2 d\omega_{x+i} + (1-\alpha) \log \int_{S^c} \abs{f}^2 d\omega_{x+i} \\
\leq \log 2 + \gamma \log \int_S \abs{f}^2 d\omega_{x+i} + (1- \gamma) \log \int_{\R} |f|^2 d\omega_{x+i}.
\end{multline*}
Cleaning up these expressions, we arrive at the pointwise estimate 
\[
\abs{P(f)(x+i)}^2 \leq 2 \left( \int_S \abs{f}^2 d\omega_{x+i} \right)^{\gamma} \cdot \left( \int_{\R} \abs{f}^2 d\omega_{x+i} \right)^{1-\gamma}, \qquad x\in \R.
\]
Integrating this inequality, applying H\"older with exponents $1/\gamma$ and $1/(1-\gamma)$, and using the fact that $d\omega_{x+i}$ is a probability measure for each $x\in \R$, we obtain the desired estimate. 
\end{proof}



\begin{proof}[Proof of the Logvinenko-Sereda Theorem] 
\proofpart{1}{Necessity of the density condition:} We shall first establish the easier part, which is the necessity of $E$ being relatively $r$-dense. Fix an arbitrary bounded set $F \subset \R$, and assume that $(E,F)$ is a strong uniqueness pair, which according to part $(iv)$ of \thref{PROP:AB} implies that there exists a constant $c=c(E,F)\in (0,1)$ such that 
\[
c \int_{\R} \abs{f(x)}^2dx \leq \int_{E^c} \abs{f(x)}^2 dx,
\]
for all $f \in \Im \widehat{P_F} := \{f\in L^2(\R): \text{ess supp}(\widehat{f}) \subseteq F \}$. Now fix an arbitrary $f\in \Im \widehat{P_F}$ with $\norm{f}_{L^2}=1$, and choose an interval $I\subset \R$ large enough, so that 
\[
\int_{\R \setminus I} \abs{f(x)}^2 dx \leq c/2.
\]
Recall that $\Im \widehat{P_F}$ is a closed subspace of $L^2(\R)$, which is invariant under translation $f(x) \mapsto f_h(x)=f(x-h)$ for $h\in \R$. Set $I_h := \{x+h: x\in I\}$ denote the translations of $I$ by $h$ and observe that
\begin{multline*}
c = c\norm{f_h}^2_{L^2} \leq \int_{E^c} \abs{f_h(x)}^2 dx \leq \int_{E^c \cap I_h} \abs{f_h(x)}^2 dx + \int_{\R \setminus I} \abs{f(x)}^2 dx \\ \leq c/2 + \int_{E^c \cap I_h} \abs{f_h(x)}^2 dx \leq c/2 + \omega_f (|E^c \cap I_h|),
\end{multline*}
where $\omega_f(t) := \sup_{|S|\leq t} \int_{S} \abs{f}^2 dx$. Now since $\omega_f(t) \downarrow 0$ as $t\downarrow 0$, and the above estimate is independent of $h\in \R$, we conclude that 
\[
\inf_{h\in \R} \frac{\abs{E^c \cap I_h}}{\abs{I_h}} = \delta_{|I|}(E^c) >0.
\]
\proofpart{2}{Sufficiency:} 

Now suppose $E^c \subset \R$ is relatively $r$-dense for some $r>0$ and observe that according to the "two-constant" Theorem, we have that 
\[
\int_{\R} \abs{P(f)(x+i)}^2 dx \leq 2 \left( \int_{E^c} \abs{f}^2 dx\right)^{\gamma} \norm{f}^{2(1-\gamma)}_{L^2}, \qquad f \in H^2(\R),
\]
where $\gamma := \inf_{x\in \R} \omega_{x+i}(E^c) \in (0,1)$. For the sake of abbreviation, we simply set $P(f)(x):=P(f)(x+i)$ and note that a standard computation involving the Fourier transform shows that 
\[
\widehat{P(f)}(\xi) = \frac{1}{2\pi}e^{-|\xi|}\widehat{f}(\xi), \qquad \xi \in \R,
\]
Fix $a>0$ and let $f\in L^2(\R)$ with $\supp{\widehat{f}}\subseteq [-a,a]$, and note that $f_a(x) := f(x) e^{-iax}$ belongs to $H^2(\R)$. With this at hand, we now argue as follows:
\begin{multline*}
\norm{f}^2_{L^2} = \norm{f_a}^2_{L^2} = \int_{0}^{2a} \abs{\widehat{f_a}(\xi)}^2 d\xi \leq 2\pi e^{2a} \int_{0}^{2a} e^{-2|\xi|} \abs{\widehat{f_a}(\xi)}^2 \frac{d \xi}{2\pi} \\ 
\leq 2\pi e^{2a} \int_{\R} \abs{P(f_a)(x)}^2 dx \leq 4\pi e^{2a} \left( \int_{E^c} \abs{f}^2 dx\right)^{\gamma} \norm{f}^{2(1-\gamma)}_{L^2}.
\end{multline*}
Canceling the exponents we arrive at 
\[
\int_{\R} \abs{f}^2 dx \leq \left(4e^{2a}\right)^{1/\gamma} \int_{E^c} \abs{f}^2 dx,
\]
for all $f\in L^2(\R)$ with $\supp{\widehat{f}}\subseteq[-a,a]$. The proof is now complete.
\end{proof}
There are also different proofs of the Logvinenko-Sereda Theorem which essentially only involves real-analysis, based on Bernstein-type estimates, and where essentially sharp constants are obtain. For instance, see the recent work of O. Kovrijkine in \cite{kovrijkine2001some}. Furthermore, there is also an $L^p$-version of the "two-constant" Theorem, which naturally allows one to generalize the Logvinenko-Sereda to this setting, as well. See P. 115 in \cite{havinbook}.

\subsection{Further results}
Let $f$ be a measurable function on $\R$ with the properties that
\[
f(x) = \mathcal{O}( \exp (-ax^2) ), \qquad \widehat{f}(\xi) = \mathcal{O}( \exp (-b\xi^2 ) )
\]
for some positive numbers $a,b>0$. Now if one considers the dilatation $f_{\delta}(x)= f(\delta x)$ with $\delta>0$, then we have 
\[
f_\delta(x) = \mathcal{O}( \exp (-a\delta^2 x^2) ), \qquad \widehat{f}(\xi) = \mathcal{O}( \exp (-b \xi^2 /\delta^2 ), 
\]
Hence such an attempt of improving the decay of $f$ at infinity, worsens the decay of $\widehat{f}$, and vice versa. This suggests the manifestation of an underlying uncertainty principle, which can formally be stated as follows.
 
\begin{thm}[Hardy's uncertainty principle] Let $f$ measurable function on $\R$ with the property that there exists positive numbers $\alpha, \beta>0$, and an integer $N\geq 0$, such that 
\[
\abs{f(x)} \leq C (1+|x|)^N e^{- \alpha \pi x^2}, \qquad \abs{\widehat{f}(\xi)} \leq C (1+|\xi|)^N e^{-\beta \pi \xi^2}
\]
for some constant $C>0$. Then $\alpha \beta =1$, unless $f\equiv 0$. Furthermore, there exists a polynomials $P$ of degree at most $N$, such that $f(x)=P(x)e^{-\alpha\pi x^2}$.
\end{thm}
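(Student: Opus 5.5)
The plan is to follow the classical complex-analytic route: extend $\widehat f$ to an entire function of order two, then strip off a Gaussian factor using a sharpened Phragm\'en--Lindel\"of argument. The statement only makes sense with the Gaussian $e^{-\pi x^2}$ self-dual. So I first pass to the normalization $\widehat f(\xi)=\int f(x)e^{-2\pi i x\xi}\,dx$, which is a harmless rescaling of the paper's convention. The intended content is: if $\alpha\beta>1$ then $f\equiv 0$, and if $\alpha\beta=1$ then $f=Pe^{-\alpha\pi x^2}$ with $\deg P\le N$. Replacing $f$ by a dilate $f(\delta x)$ changes $(\alpha,\beta)$ into $(\alpha\delta^2,\beta/\delta^2)$ and keeps $\alpha\beta$ fixed. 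Hence I may assume $\alpha=1$ when $\alpha\beta=1$.

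\emph{Step 1 (entire extension).} Since $|f(x)|\le C(1+|x|)^N e^{-\alpha\pi x^2}$, the integral $F(z)=\int f(x)e^{-2\pi i xz}\,dx$ converges for all $z\in\C$ and defines an entire function (Morera, as in the Paley--Wiener proof). Completing the square gives
\[
|F(x+iy)|\le \int (1+|t|)^N e^{-\alpha\pi t^2+2\pi t y}\,dt\lesssim (1+|z|)^N e^{\pi y^2/\alpha}.
\]
Thus $F$ has order at most two. On $\R$ it satisfies the hypothesis $|F(\xi)|\le C(1+|\xi|)^Ne^{-\beta\pi\xi^2}$.

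\emph{Step 2 (the case $\alpha=\beta=1$).} Set $G(z)=F(z)e^{\pi z^2}$. On the real axis $|G(x)|\le C(1+|x|)^N$. On the imaginary axis $|G(iy)|\le C(1+|y|)^N e^{\pi y^2}e^{-\pi y^2}$, which is polynomial. Globally $|G(z)|\lesssim(1+|z|)^N e^{c|z|^2}$. I then apply Phragm\'en--Lindel\"of to $G(z)/(z+i)^N$ (or $(z+1+i)^N$, chosen to have no zeros in the quadrant considered) in each of the four coordinate quadrants. This gives $|G(z)|\lesssim(1+|z|)^N$ on $\C$, so by the Cauchy estimates $G$ is a polynomial $P_0$ of degree $\le N$. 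Hence $F=P_0e^{-\pi\xi^2}$. Inverting the Fourier transform, and using that polynomial multiples of the Gaussian transform to polynomial multiples of the Gaussian of the same degree, yields $f=Pe^{-\pi x^2}$ with $\deg P\le N$.

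\emph{Step 3 (the case $\alpha\beta>1$).} After dilation take $\alpha=1$ and $\beta>1$. Then the hypotheses hold a fortiori with $\beta$ replaced by $1$, so Step 2 gives $f=Pe^{-\pi x^2}$ and $\widehat f=\widetilde P e^{-\pi\xi^2}$ with $\deg\widetilde P=\deg P$. The bound $|\widetilde P(\xi)|e^{-\pi\xi^2}\lesssim(1+|\xi|)^Ne^{-\beta\pi\xi^2}$ with $\beta>1$ forces $\widetilde P\equiv 0$, hence $f\equiv 0$.

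\emph{Main obstacle.} The hard step is the Phragm\'en--Lindel\"of step in Step 2. The growth $e^{c|z|^2}$ is exactly critical for a quadrant, and \thref{THM:PLQ} only allows $|z|^{\alpha}$ with $\alpha<2$. I would try the standard perturbation first. In the first quadrant, for a fixed sector $0\le\arg z\le\theta<\pi/2$, consider
\[
G_\varepsilon(z)=G(z)\,(z+i)^{-N}\exp\bigl(i\varepsilon e^{i\theta/2}\dots z^2\bigr),
\]
with the phase chosen so that the extra factor has modulus $e^{-\varepsilon' |z|^2}$ inside the narrower sector. On a sector of opening $\theta<\pi/2$ the critical order is $\pi/\theta>2$, so order-two growth is admissible. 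One bounds the function on the ray $\arg z=0$ directly from the real-axis estimate, and on the ray $\arg z=\theta$ by choosing the free parameter to absorb the growth there. Letting $\varepsilon\to0$ and then $\theta\to\pi/2$ gives the bound; this is the classical Hardy argument. If this becomes messy, the fallback is to apply Phragm\'en--Lindel\"of to the auxiliary function $\exp(i\gamma z^2)G(z)$ on a sector of opening $\pi/2$ rotated slightly, with $\gamma$ tuned so that its modulus is controlled on both boundary rays. This is also the point where the exponent $N$ must be tracked carefully, via division by $(z+i)^N$, so that the conclusion is a polynomial of degree at most $N$.
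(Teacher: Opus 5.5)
Your reformulation (self-dual normalization, reading the statement as $\alpha\beta>1\Rightarrow f\equiv 0$ and $\alpha\beta=1\Rightarrow f=Pe^{-\alpha\pi x^2}$), the dilation, Step 1 and Step 3 are fine. Phragm\'en--Lindel\"of is also exactly the route the paper points to; it gives no proof, only a reference. The gap is in the Phragm\'en--Lindel\"of step itself.

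In Step 2 you keep only $|G(z)|\lesssim(1+|z|)^Ne^{c|z|^2}$ together with polynomial bounds on the two axes. These data do not imply the conclusion. The function $G(z)=e^{-iz^2}$ is entire, $|G(x+iy)|=e^{2xy}\le e^{|z|^2}$, and $|G|=1$ on both axes, yet $G$ is not a polynomial. It comes from a Gaussian $f$ with $\alpha=\pi^2/(\pi^2+1)<1=\beta$.

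Your repair in the last paragraph does not escape this either, as long as only isotropic growth is used. Absorbing $e^{\pi r^2}$ on the ray $\arg z=\theta$ with a factor $e^{isz^2}$, whose modulus is $e^{-2sxy}$, requires $s\ge \pi/\sin 2\theta$. This blows up as $\theta\to\pi/2$, so nothing is gained. The order of limits ``$\varepsilon\to 0$, then $\theta\to\pi/2$'' cannot be right: for fixed $\theta<\pi/2$ the ray $\arg z=\theta$ carries genuine a priori growth, so the absorbing parameter cannot tend to $0$ while $\theta$ is fixed. In a sector of opening $\theta<\pi/2$ no extra $\varepsilon$-damping is needed at all, since order $2$ is subcritical there. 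The fallback on a rotated sector of opening exactly $\pi/2$ is still critical for order $2$, so Phragm\'en--Lindel\"of does not apply.

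The missing input is the anisotropic bound you derived in Step 1 but then discarded. For $\alpha=1$,
\[
|G(x+iy)|=|F(x+iy)|\,e^{\pi(x^2-y^2)}\lesssim (1+|z|)^N e^{\pi x^2}.
\]
On the ray $\arg z=\theta$ this is only $e^{\pi r^2\cos^2\theta}$, which is small when $\theta$ is near $\pi/2$. With it the argument goes as follows.

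\begin{enumerate}
\item[(1)] Set $s(\theta)=\tfrac{\pi}{2}\cot\theta$ and $H_\theta(z)=G(z)e^{is(\theta)z^2}(z+i)^{-N}$ on $S_\theta=\{0\le\arg z\le\theta\}$.
\item[(2)] On $\arg z=0$ one has $|H_\theta|\lesssim 1$ from the $\beta$-bound, using $|z+i|\ge(1+|z|)/\sqrt{2}$ in the closed first quadrant.
\item[(3)] On $\arg z=\theta$ the exponent $\pi r^2\cos^2\theta-s(\theta)r^2\sin 2\theta$ vanishes, so again $|H_\theta|\lesssim 1$, uniformly in $\theta$.
\item[(4)] $H_\theta$ has order $2<\pi/\theta$. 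So Phragm\'en--Lindel\"of in $S_\theta$ (the half-plane version composed with $z\mapsto z^{\pi/\theta}$) gives $|H_\theta|\le K$ on $S_\theta$, with $K$ independent of $\theta$. That is, $|G(z)|\le K|z+i|^Ne^{2s(\theta)xy}$ on $S_\theta$.
\item[(5)] Fix $z$ in the open first quadrant and let $\theta\to\pi/2$, so that $s(\theta)\to 0$. This yields $|G(z)|\le K|z+i|^N$.
\item[(6)] The other quadrants follow by applying the same argument to $G(-z)$, $\overline{G(\bar z)}$ and $\overline{G(-\bar z)}$, which satisfy the same two bounds.
\end{enumerate}

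After that, your Cauchy-estimate and Fourier-inversion steps go through unchanged.
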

The proof of Hardy's uncertainty principal mainly relies on clever applications of the celebrated Phragm\'en-Lindelöf Theorem in complex analysis. For instance, a sketch of proof is outlined by T. Tao in his blog, see \cite{}. In a similar fashion, we also mention the following result of A. Beurling, which is less precise, but whose simplicity makes it rather attractive.

\begin{thm}[Beurling] If $f\in L^1(\R)$ with 
\[
\int_{\R} \int_{\R} \abs{f(x)} \abs{\widehat{f}(\xi)} e^{-|x\cdot \xi|} d\xi dx < \infty,
\]
then $f\equiv 0$.
\end{thm}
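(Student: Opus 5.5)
The plan is to test the statement exactly as worded, with the weight $e^{-|x\cdot \xi|}$, before looking for a proof. Since $e^{-|x\xi|}\le 1$, the hypothesis is weaker than asking $\norm{f}_{L^1}\norm{\widehat{f}}_{L^1}<\infty$. So the first step is a Tonelli bound: for every $f\in L^1(\R)$ we have $\abs{\widehat{f}(\xi)}\le \norm{f}_{L^1}$, and
\[
\int_{\R}\int_{\R} \abs{f(x)}\abs{\widehat{f}(\xi)} e^{-|x\xi|}\,d\xi\,dx \le \norm{f}_{L^1}\norm{\widehat{f}}_{L^1}.
\]
Hence the hypothesis holds for every $f\in L^1(\R)$ with $\widehat{f}\in L^1(\R)$, in particular for all of $\mathcal{S}(\R)$.

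The second step is to exhibit an explicit nontrivial function meeting the hypothesis. I would take the Gaussian $f(x)=e^{-x^2/2}$, whose transform is $\widehat{f}(\xi)=\sqrt{2\pi}\,e^{-\xi^2/2}$. Then the double integral is at most $\sqrt{2\pi}\,\bigl(\int_{\R} e^{-x^2/2}dx\bigr)^2=(2\pi)^{3/2}<\infty$, while $f\not\equiv 0$. The same works for any nonzero compactly supported smooth function, by the Schwartz-class remark in the section on the Fourier transform. So the conclusion $f\equiv 0$ fails, and the statement cannot be proved as printed. A more careful bound does not rescue it. Near the axes $e^{-|x\xi|}$ is close to $1$, so finiteness of the integral already forces essentially the joint integrability above, which places no rigidity on $f$.

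The step I expect to need the most care is only the exact normalization of $\widehat{f}$, which does not affect finiteness. There is no genuine analytic obstacle: the proposal establishes that the displayed hypothesis is satisfied by every Schwartz function, so the theorem is false with $e^{-|x\cdot\xi|}$. The accompanying remark would say that the statement is a misprint of Beurling's theorem, in which the weight must be $e^{+|x\xi|}$, and that the result should be restated in that form before any uniqueness argument is attempted.
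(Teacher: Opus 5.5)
Your disproof is correct. Since $e^{-|x\xi|}\le 1$, the printed hypothesis follows from $\widehat f\in L^1(\R)$. The Gaussian $f(x)=e^{-x^2/2}$, with $\widehat f(\xi)=\sqrt{2\pi}\,e^{-\xi^2/2}$, gives a finite double integral although $f\not\equiv 0$.

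The paper gives no argument for this statement. It only refers to H\"ormander's reconstruction \cite{hormander1991uniqueness}, and the theorem proved there carries the weight $e^{|x\xi|}$ in the normalization $\widehat f(\xi)=\int_{\R} f(x)e^{-ix\xi}\,dx$ used in these notes. So the display is indeed a sign misprint. With the correct sign, your Gaussian is exactly the borderline case, since $e^{-x^2/2-\xi^2/2+|x\xi|}=e^{-(|x|-|\xi|)^2/2}$ is not integrable on $\R^2$.

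One side remark of yours should be corrected. Finiteness of the printed integral does not force anything close to $\widehat f\in L^1(\R)$. Because $\int_{\R}|\widehat f(\xi)|e^{-|x\xi|}\,d\xi\le 2\norm{f}_{L^1}/|x|$, the hypothesis holds for every $f\in L^1(\R)$ with $\int_{\R}|f(x)|\,|x|^{-1}\,dx<\infty$. An example is $f=1_{[1,2]}$, whose transform is not integrable. This only enlarges your family of counterexamples.

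If you want to go past the diagnosis, the corrected statement has a short proof. Let $I:=\iint|f(x)||\widehat f(\xi)|e^{|x\xi|}\,dx\,d\xi<\infty$ and put $\Phi(w):=\iint f(x)\,\overline{\widehat f(\xi)}\,e^{-ixw\xi}\,dx\,d\xi$.
\begin{itemize}
\item Since $|e^{-ixw\xi}|\le e^{|x\xi|}$ for $|\Im w|\le 1$, $\Phi$ is analytic in $S=\{|\Im w|<1\}$ and bounded by $I$ on $\overline{S}$.
\item For real $w\neq 0$ one has $\Phi(w)=\int_{\R}\overline{\widehat f(\xi)}\,\widehat f(w\xi)\,d\xi$. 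The substitution $\xi\mapsto\xi/w$ then yields $|w|\,\Phi(w)=\overline{\Phi(1/w)}$.
\item Hence $\Phi(w)^2$ on $S$ and $\overline{\Phi(1/\bar w)}^{\,2}/w^2$ on $\{w:1/w\in S\}$ agree on $\R\setminus\{0\}$. Each of the two components of the overlap meets $\R$, so they agree on the whole overlap.
\item They therefore glue to a function analytic on $\C\setminus\{\pm i\}$ and bounded by $I^2$. This is a bounded entire function tending to $0$ along $i\R$, so $\Phi\equiv 0$.
\end{itemize}
Finally $\norm{\widehat f}_{L^2}^2=\Phi(1)=0$, which gives $f\equiv 0$.
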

On a historical note, Beurling's result appeared in his collected works in \cite{beurling1989collected}, but the proof was unfortunately not preserved.  In 1990, L. H\"ormander successfully recovered the proof from old notes taken during his conversations with A. Beurling between 1964-1968, where they spent time together at the Institute for Advanced studies. See \cite{hormander1991uniqueness}. We also refer to the more recent work of H. Hedenmalm in \cite{hedenmalm2012heisenberg}, which gives a different take on the matter.

We shall now visit two extensions of the classical Heisenberg uncertainty principle. The first result is due to M. Cowling and J. F. Price, and is essentially an $L^p$-version.

\begin{thm}[Cowling-Price] Let $1\leq p,q\leq \infty$, $0<\alpha<1$, $\theta, \phi >0$ which satisfy the relationship 
\[
\alpha \left(\theta+ \frac{1}{p}- \frac{1}{2}\right) = (1-\alpha) \left( \phi + \frac{1}{q}- \frac{1}{2} \right).
\]
Then there exists a constant $C>0$, possibly depending on all these parameters, such that for any Schwartz function $f$ on $\R$:
\[
\norm{f}_{L^2}\leq C \left( \int_{\R} |x|^{\theta p} \abs{f(x)}^p dx \right)^{\alpha/p}\left( \int_{\R} |\xi|^{\phi q} \abs{\widehat{f}(\xi)}^q d\xi \right)^{(1-\alpha)/q}.
\]
\end{thm}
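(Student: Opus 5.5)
The plan is to prove the inequality first in two endpoint forms and then interpolate between them by optimizing a dilation parameter. By density and homogeneity I would normalize $\norm{f}_{L^2}=1$, and split both $f$ and $\widehat{f}$ at a cutoff scale. For $R>0$ I write
\[
\norm{f}^2_{L^2}=\int_{|x|\le R}|f|^2dx+\int_{|x|>R}|f|^2dx .
\]
\emph{The exterior piece.} I control $\int_{|x|>R}|f|^2$ by the spatial moment $A:=\big(\int|x|^{\theta p}|f|^pdx\big)^{1/p}$. When $p\ge 2$, H\"older's inequality gives
\[
\int_{|x|>R}|f|^2 \le \Big(\int |x|^{\theta p}|f|^p\Big)^{2/p}\Big(\int_{|x|>R}|x|^{-2\theta p/(p-2)}dx\Big)^{(p-2)/p}\lesssim A^2R^{-2(\theta+1/p-1/2)},
\]
which requires $\theta+1/p-1/2>0$. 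When $p<2$ the same power of $R$ should come out of interpolating between $L^p$ and the Schwartz decay of $f$, or out of a dyadic decomposition in $|x|$.

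\emph{The interior piece.} Here I bring in the frequency side. Let $g$ be the Fourier projection of $f$ onto $|\xi|\le \rho$, so that $f=g+h$ with $\widehat{h}$ supported in $|\xi|>\rho$. By Plancherel and the same H\"older argument applied to $\widehat{f}$, with $B:=\big(\int|\xi|^{\phi q}|\widehat{f}|^q\big)^{1/q}$,
\[
\norm{h}^2_{L^2}\lesssim B^2\rho^{-2(\phi+1/q-1/2)} .
\]
For the low-frequency part, $\norm{g}_{L^\infty}\le \frac{1}{2\pi}\int_{|\xi|\le\rho}|\widehat{f}|\lesssim \rho^{1/2}$, so $\int_{|x|\le R}|g|^2\lesssim R\rho$. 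Choosing $R\rho=c$ small, this is at most $\tfrac12\norm{f}^2$; this is a quantitative Amrein--Berthier-type estimate for intervals, with $\norm{P_{[-R,R]}\widehat{P_{[-\rho,\rho]}}}\lesssim R\rho$ as in the corollary on uniqueness pairs. Absorbing it into the left-hand side leaves
\[
1\lesssim A^2R^{-2a}+B^2\rho^{-2b},\qquad a=\theta+\tfrac1p-\tfrac12,\quad b=\phi+\tfrac1q-\tfrac12,\quad R\rho=c .
\]

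\emph{Optimization.} Substituting $\rho=c/R$ gives $1\lesssim A^2R^{-2a}+B^2R^{2b}$. Balancing the two terms, $R^{a+b}\sim A/B$, yields
\[
1\lesssim A^{2b/(a+b)}B^{2a/(a+b)} .
\]
The stated relation $\alpha a=(1-\alpha)b$ means exactly $\alpha=b/(a+b)$, so this is $\norm{f}_{L^2}\lesssim A^{\alpha}B^{1-\alpha}$, which is the claim after undoing the normalization. Equivalently, one can apply the dilation $f\mapsto f(\delta\cdot)$ and note that the exponents are designed so that the right-hand side is scale invariant.

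\emph{Main obstacle.} The hard step is the tail estimate when $p<2$ (or $q<2$), where H\"older against a weight is not available. There I would first try a dyadic decomposition $|x|\sim 2^k$ combined with local Bernstein/Hausdorff--Young bounds on the band-limited part, and fall back on restricting the admissible range of exponents if that fails. A second point to check is that $a,b>0$, which seems to be implicitly required; if $a\le 0$ the weighted norm does not even control the tail and the inequality has to be read accordingly.
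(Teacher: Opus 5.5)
For $p,q\ge 2$ (and $a,b>0$) your argument is complete. The two H\"older tail bounds, the estimate $\int_{|x|\le R}|g|^2\lesssim R\rho\,\norm{f}_{L^2}^2$ for the band-limited part, and the balancing $R^{a+b}\sim A/B$ give exactly $A^{\alpha}B^{1-\alpha}$ with $\alpha=b/(a+b)$. (Minor slip: by the Hilbert--Schmidt bound, $\norm{P_{[-R,R]}\widehat{P_{[-\rho,\rho]}}}$ is of order $(R\rho)^{1/2}$, not $R\rho$. Your displayed estimate is its square, and that is what you actually use.)

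The genuine gap is the range $\min(p,q)<2$, and it cannot be closed the way you suggest. For $p<2$ the exterior bound you aim for, $\int_{|x|>R}|f|^2\lesssim A^2R^{-2a}$, is false. A bump of height $\sigma^{-1/2}$ and width $\sigma$ centred at $x_0=2R$ has $\int_{|x|>R}|f|^2\asymp 1$, while $A\asymp R^{\theta}\sigma^{1/p-1/2}\to 0$ as $\sigma\to0$. So neither the Schwartz decay of $f$ (which only yields $f$-dependent constants) nor a dyadic decomposition in $x$ can produce this bound; the missing $L^2$ control has to come from the frequency side. Your fallback of Bernstein bounds on the band-limited part is the right ingredient. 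However, the weighted $L^p$ information concerns $f$, not $g$, so it must enter through a pairing. The same example on the Fourier side shows that for $q<2$ your bound $\norm{h}_{L^2}^2\lesssim B^2\rho^{-2b}$ fails, and your interior step uses that bound for every $p$.

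Here is how the pairing works. Normalize $\norm{f}_{L^2}=1$ and put $T=\int_{|x|>R}|f|^2$, $S=\norm{h}_{L^2}^2$. Then
\[
T=\langle f1_{\{|x|>R\}},g\rangle+\langle f1_{\{|x|>R\}},h\rangle\le R^{-\theta}A\,\norm{g}_{L^{p'}}+T^{1/2}S^{1/2},\qquad \norm{g}_{L^{p'}}\le\norm{g}_{L^\infty}^{1-2/p'}\norm{g}_{L^2}^{2/p'}\lesssim\rho^{\frac1p-\frac12}.
\]
Hence $T\lesssim AR^{-a}+S$ when $R\rho=c$. Combined with your H\"older bound for $S$, this settles $p<2\le q$, since $\norm{f}_{L^2}^2\lesssim AR^{-a}\norm{f}_{L^2}+B^2R^{2b}$ balances as before; the case $q<2\le p$ follows by symmetry.

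When $p,q<2$, the symmetric estimate $S\lesssim BR^{b}+T$ only feeds the two tails into each other, so one more ingredient is needed, for instance Hausdorff--Young. With $s=\max(p,q)$ and $\frac1s=\frac{\lambda}{p}+\frac{1-\lambda}{2}$,
\[
T\le\norm{f1_{\{|x|>R\}}}_{L^s}\norm{f}_{L^{s'}}\lesssim(R^{-\theta}A)^{\lambda}\,T^{\frac{1-\lambda}{2}}\,\norm{\widehat{f}}_{L^s}.
\]
Split $\norm{\widehat{f}}_{L^s}$ at $|\xi|=\rho$. The low part is at most $\rho^{1/s-1/2}\norm{\widehat{f}}_{L^2}$, and the high part is interpolated between $\norm{\widehat{f}1_{\{|\xi|>\rho\}}}_{L^q}\le\rho^{-\phi}B$ and $\norm{\widehat{f}}_{L^2}$. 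The symmetric bound holds for $S$. Fix $\rho=1$ and $R=c$. Your interior step, which uses only $L^2$ facts, gives $2S+T\ge\frac12$. The two bounds make $T$ and $S$ small when $A+B$ is small, which is a contradiction. Hence $\norm{f}_{L^2}\lesssim A+B$ by homogeneity, and applying this to dilates of $f$ yields $A^{\alpha}B^{1-\alpha}$.

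Finally, the condition $a>0$ that you flag is not something to be read accordingly. It is a missing hypothesis (Cowling and Price assume $\theta>\frac12-\frac1p$ and $\phi>\frac12-\frac1q$), and the statement as given is false without it. The relation forces $a$ and $b$ to have the same sign. Take $p=q=\infty$ and $\theta=\phi=\frac12$, so that $a=b=0$ and every $\alpha\in(0,1)$ is admissible. Let $\eta\in C_0^\infty$ equal $1$ near $0$, and set $f_N(x)=|x|^{-1/2}\bigl(\eta(x/N)-\eta(Nx)\bigr)$. Then $\sup|x|^{1/2}|f_N|\lesssim1$. A routine convolution estimate based on $\widehat{|x|^{-1/2}}=\sqrt{2\pi}\,|\xi|^{-1/2}$ gives $\sup|\xi|^{1/2}|\widehat{f_N}|\lesssim1$ uniformly in $N$. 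Yet $\norm{f_N}_{L^2}^2\asymp\log N$.
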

For further details and other results of similar flavor, we refer the reader to \cite{cowling1984bandwidth}. Next, we mention an entropic version of the Heisenberg uncertainty principle. The entropic uncertainty principle was initially conjectured in 1957 by I. Hirschman in \cite{hirschman1957note} and later proved by W. Beckner in 1975, see \cite{beckner1975inequalities}. The result goes as follows.

\begin{thm}[Entropic UP] \thlabel{THM:ENTUP} For any $f\in L^2(\R)$ the following inequality holds:
\[
\int_{\R} \abs{f(x)}^2 \log \frac{1}{|f(x)|^2} dx+ \int_{\R} \abs{\widehat{f}(\xi)}^2 \log \frac{1}{|\widehat{f}(\xi)|^2} d\xi \geq \log \frac{e}{2}.
\]
\end{thm}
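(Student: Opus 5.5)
The plan is to follow Beckner's route: derive the entropic inequality as the derivative, at the Plancherel endpoint $p=2$, of the sharp Hausdorff--Young inequality. With the normalization $\widehat{f}(\xi)=\int f(x)e^{-ix\xi}dx$, I first rescale to a unitary transform $\mathcal{F}_u f(\xi) := (2\pi)^{-1/2}\widehat{f}(\xi)$. The statement is presumably meant for $\norm{f}_{L^2}=1$ with $\widehat f$ unitarily normalized, since the inequality is not scale invariant, and I would make that normalization explicit at the outset. The constant $\log(e/2)$, rather than $\log(\pi e)$, also depends on the convention, so I would track constants only at the end.

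The key input is Beckner's sharp Hausdorff--Young inequality. For $1\le p\le 2$ and $1/p+1/q=1$,
\[
\norm{\mathcal{F}_u f}_{L^q} \le A_p \norm{f}_{L^p}, \qquad A_p = (2\pi)^{\frac12-\frac1p}\left(\frac{p^{1/p}}{q^{1/q}}\right)^{1/2},
\]
with Gaussians as extremizers. I would take this as a black box, citing \cite{beckner1975inequalities}. Its proof, via Hermite expansions, the Babenko--Beckner two-point inequality and the central limit theorem, is the genuinely hard step and lies well beyond these notes.

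Given the sharp inequality, the argument proceeds in three steps.

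\emph{Step 1.} Take $f$ in the Schwartz class with $\norm{f}_{L^2}=1$. Set
\[
\Phi(p) := \log\norm{\mathcal{F}_u f}_{L^{q(p)}} - \log A_p - \log\norm{f}_{L^p}.
\]
Then $\Phi(p)\le 0$ for $p\in[1,2]$, and $\Phi(2)=0$ by Plancherel together with $A_2=1$. It follows that $\Phi'(2^-)\ge 0$.

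\emph{Step 2.} Differentiate each term at $p=2$, using $q'(p) = -q^2/p^2$ and the identity
\[
\frac{d}{dp}\log\norm{g}_{L^p}\Big|_{p=2} = \frac{1}{4}\int |g|^2\log|g|^2 \qquad \text{when } \norm{g}_2=1.
\]
The $q$-term then contributes $-\tfrac14\int|\mathcal{F}_u f|^2\log|\mathcal{F}_u f|^2$. The $p$-term contributes $-\tfrac14\int|f|^2\log|f|^2$. The constant term $-\frac{d}{dp}\log A_p|_{p=2}$ is computed explicitly.

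\emph{Step 3.} Adding these contributions, $\Phi'(2)\ge 0$ becomes
\[
H(|f|^2)+H(|\mathcal{F}_u f|^2) \ge 4\,\frac{d}{dp}\log A_p\Big|_{p=2},
\]
where $H(\rho) = -\int\rho\log\rho$. Evaluating the right-hand side gives the constant, which equals $\log(\pi e)$ for the unitary convention. Converting back to the notes' $\widehat f$ shifts this by $\log 2\pi$ terms, and I would check that the result matches the stated $\log(e/2)$ or adjust it accordingly.

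It remains to pass from Schwartz functions to all of $L^2$. If either entropy equals $+\infty$, or is undefined, there is nothing to prove. Otherwise I would approximate $f$ by $f*\varphi_\varepsilon$ multiplied by Gaussian cutoffs and use lower semicontinuity of the entropy functional. This step is routine but fiddly, because $\rho\log(1/\rho)$ is not sign-definite, so I would restrict to $f$ with finite second moments on both sides and use Fatou's lemma on the positive part.

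The main obstacle is the sharp constant $A_p$. Everything else is a differentiation-at-the-endpoint argument. The one subtle point there is justifying the exchange of derivative and integral at $p=2$, which for Schwartz $f$ follows from dominated convergence, since $|g|^p\log|g|$ is dominated near $p=2$.
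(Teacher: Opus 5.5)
This is the route the notes have in mind: they give no argument beyond pointing to Beckner's sharp Hausdorff--Young inequality, and your endpoint differentiation is the standard way to extract the entropic bound from it. Your computations check out: $A_p$ is the right constant for $\mathcal{F}_u$, $A_2=1$, $q'(2)=-1$, and $\frac{d}{dp}\log A_p\big|_{p=2}=\frac14\log(\pi e)$. So for Schwartz $f$ with $\norm{f}_{L^2}=1$ you do get $H(|f|^2)+H(|\mathcal{F}_u f|^2)\ge\log(\pi e)$.

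Two points need repair.

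\emph{The final conversion.} It cannot go to the notes' $\widehat f(\xi)=\int f(x)e^{-ix\xi}\,dx$. There $\norm{\widehat f}_{L^2}^2=2\pi$, and the Fourier term equals $2\pi H(|\mathcal{F}_u f|^2)-2\pi\log 2\pi$. The two entropies therefore enter with unequal weights, and the dilations $\lambda^{1/2}f(\lambda x)$ change the left-hand side by $(2\pi-1)\log\lambda\to-\infty$ as $\lambda\to0^+$. The Gaussian $\pi^{-1/4}e^{-x^2/2}$ already gives about $-3.7$. With the constant $\log\frac{e}{2}$, the statement must be read with $\norm{f}_{L^2}=1$ and
$g(\eta)=\int f(x)e^{-2\pi i x\eta}\,dx=\sqrt{2\pi}\,\mathcal{F}_u f(2\pi\eta)$.
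This $L^2$-normalized dilation lowers the Fourier-side entropy by exactly $\log 2\pi$, and $\log(\pi e)-\log 2\pi=\log\frac{e}{2}$.

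\emph{The density step.} What you need is $\int\rho\log\rho\le\liminf_n\int\rho_n\log\rho_n$, i.e.\ upper semicontinuity of $H$. Fatou handles the positive parts. The negative parts are uniformly integrable because $\rho\log\frac{1}{\rho}\,1_{\{\rho<1\}}\le\rho\abs{x}+e^{-\abs{x}/2}$, while your approximants $(f*\varphi_\varepsilon)e^{-\varepsilon x^2}$ have second moments bounded uniformly on both sides. As you say, this only reaches $f$ with $\int x^2|f|^2$ and $\int\xi^2|\widehat f|^2$ finite. A smooth $f$ with $f'\in L^2$ and $|f(x)|^2\asymp(x\log x)^{-2}$ at infinity has finite entropies on both sides but infinite second moment. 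So ``for any $f\in L^2$'' is not reached by your sketch without a further approximation argument; the notes do not address this point either.
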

This theorem is rather deep, and hinges on the sharp Hausdorff-Young inequality, regarding the operatornorm of the Fourier transform $\mathcal{F}: L^p(\R) \to L^q(\R)$, where $1<p<2$ and $q=p/(p-1)$. See \cite{beckner1975inequalities}.

Theorems of two-constant type are classical in analytic function theory, and may somewhat be regarded as integrable version of Hadamards three circle/line Theorem, quantitative versions of the Phragm\'en--Lindel\"of principle, exploiting convexity. We highlight the following classical two-constant Theorem, whose proof can readily be extracted from the argument displayed the two-constant lemma.

\begin{thm}[Two-constant Theorem] \thlabel{THM:TWOCONST} Let $\Omega \subset \C$ be Jordan domain. Suppose there exists a closed set $E\subset \partial \Omega$ and closed curve $\Gamma \subset \Omega$ the property that the harmonic measure on $\Omega$, evaluated at $E$ wrt reference points in $\Gamma$ satisfies:
\[
\alpha:= \inf_{z\in \Gamma} \omega(z,E,\Omega) \in (0,1).
\]
Then for any $f \in C(\partial \Omega)$, the following uniform estimate holds for the harmonic extension $u_f$ to $\Omega$:
\[
\sup_{\lambda \in \Gamma} \abs{u_f(\lambda)} \leq C \sup_E \abs{f}^\alpha \sup_{\partial \Omega \setminus E} \abs{f}^{1-\alpha}
\]
where $C>0$ is a numerical constant.
\end{thm}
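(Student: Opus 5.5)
The plan is to follow the argument of \thref{LEM:2const}, with the Poisson kernel of $\C_+$ replaced by the harmonic measure $\omega(\lambda,\cdot,\Omega)$ of the Jordan domain. Set $A:=\sup_E\abs{f}$ and $B:=\sup_{\partial\Omega\setminus E}\abs{f}$, fix $\lambda\in\Gamma$, and write $\omega_\lambda:=\omega(\lambda,E,\Omega)\geq \alpha$. Since $\Omega$ is Jordan, the Dirichlet problem is solvable for $f\in C(\partial\Omega)$, and $u_f(\lambda)=\int_{\partial\Omega} f\,d\omega(\lambda,\cdot,\Omega)$.

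\emph{Step 1 (the linear bound).} Splitting the representing measure over $E$ and $\partial\Omega\setminus E$ gives
\[
\abs{u_f(\lambda)} \leq \omega_\lambda A + (1-\omega_\lambda) B .
\]
This is the additive analogue of the desired estimate, and it holds for every $f\in C(\partial\Omega)$ without further assumptions.

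\emph{Step 2 (multiplicative form).} To pass from arithmetic to geometric means I would argue exactly as in the proof of \thref{LEM:2const}. The key input is that $\log\abs{u_f}$ is dominated by the harmonic majorant of its boundary values, that is,
\[
\log\abs{u_f(\lambda)} \leq \int_{\partial\Omega}\log\abs{f}\,d\omega(\lambda,\cdot,\Omega).
\]
This holds because $\log\abs{u_f}$ is subharmonic; in the setting of the lemma it comes from $u_f=P(f)$ with $f\in H^2$, so that the extension is holomorphic. Granting this, the estimate gives $\log\abs{u_f(\lambda)}\leq \omega_\lambda\log A+(1-\omega_\lambda)\log B$. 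In the relevant regime $A\leq B$, where the set $E$ carries the small values, the right-hand side is a decreasing function of $\omega_\lambda$, so $\omega_\lambda\geq\alpha$ yields
\[
\abs{u_f(\lambda)}\leq A^{\alpha}B^{1-\alpha}.
\]
Taking the supremum over $\lambda\in\Gamma$ then gives the claim with $C=1$.

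If instead one only wants to use Jensen's inequality on the two pieces, as in \thref{LEM:2const}, the entropy term $\omega_\lambda\log\frac{1}{\omega_\lambda}+(1-\omega_\lambda)\log\frac{1}{1-\omega_\lambda}\leq\log 2$ appears. This is where the numerical constant $C$ (namely $2$, or $\sqrt2$ after taking square roots) comes from. In the remaining regime $A>B$, I would use compactness of $\Gamma\subset\Omega$ to get $\beta:=\sup_\Gamma\omega_\lambda<1$. The same computation then gives $\abs{u_f}\leq B\,(A/B)^{\beta}$, and I would reconcile this with the exponent $\alpha$ by absorbing the discrepancy into $C$ in this regime.

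\emph{Main obstacle.} The crux is the subharmonicity of $\log\abs{u_f}$ in Step 2. It is automatic when $u_f$ is the real trace or modulus bound of a holomorphic function, as in \thref{LEM:2const}. For a general harmonic extension of continuous data, however, only Step 1 is available. For instance, $f$ supported away from $E$ gives $A=0$ while $u_f\not\equiv0$ on $\Gamma$. So the proof must be carried out for the class where $\log\abs{u_f}$ is subharmonic, namely holomorphic extensions, which is how the two-constant lemma is used. I would state this explicitly, and then check the constant bookkeeping in the two regimes $A\leq B$ and $A>B$.
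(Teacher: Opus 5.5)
Your diagnosis of the first defect is correct. For harmonic extensions of arbitrary continuous data the estimate fails, as your example with $\sup_E\abs{f}=0$ shows. The argument needs $f$ to be the boundary trace of a function $F$ holomorphic in $\Omega$ and continuous on $\overline{\Omega}$, so that $\log\abs{F}$ is subharmonic. In that setting, your Step 2 in the regime $A\le B$ is exactly the argument the paper means to extract from \thref{LEM:2const}:
\begin{itemize}
\item dominate $\log\abs{F(\lambda)}$ by $\int_{\partial\Omega}\log\abs{f}\,d\omega(\lambda,\cdot,\Omega)$;
\item split this integral over $E$ and $\partial\Omega\setminus E$;
\item use that $\omega_\lambda\log A+(1-\omega_\lambda)\log B$ is non-increasing in $\omega_\lambda\geq\alpha$ when $A\leq B$.
\end{itemize}
This gives $C=1$. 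The Jensen/entropy term $\log 2$ only arises in the $L^2$ version.

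The step that fails is your treatment of the regime $A>B$. There you correctly obtain $\abs{F(\lambda)}\leq A^{\beta}B^{1-\beta}$ with $\beta=\sup_{\Gamma}\omega(\cdot,E,\Omega)<1$. But the gap to the claimed bound is the factor $(A/B)^{\beta-\alpha}$. This factor is unbounded as $A/B\to\infty$ whenever $\beta>\alpha$, so it cannot be absorbed into a numerical constant.

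In fact the inequality is false in this regime, even for holomorphic $F$. Take $\Omega=\D$, $E$ a closed arc and $\Gamma=\{\abs{z}=1/2\}$, so that $\alpha<\beta$. Choose a smooth $\phi\le 0$ on $\T$ with $\phi=0$ on a slightly smaller closed arc $E''\subset E$ and $\phi=\log\varepsilon$ on $\T\setminus\operatorname{int}E$. Let $F(z)=\exp\big(\int_{\T}\frac{\zeta+z}{\zeta-z}\phi(\zeta)\,dm(\zeta)\big)$ be the corresponding outer function; it is smooth up to $\T$ with $\abs{F}=e^{\phi}$ there. Then $\sup_E\abs{F}=1$ and $\sup_{\T\setminus E}\abs{F}=\varepsilon$, while $\abs{F(\lambda_0)}\geq\varepsilon^{1-\omega(\lambda_0,E'',\D)}$. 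Pick $\lambda_0\in\Gamma$ with $\omega(\lambda_0,E,\D)=\beta$, and pick $E''$ independently of $\varepsilon$ so that $\omega(\lambda_0,E'',\D)>(\alpha+\beta)/2$. The ratio of $\abs{F(\lambda_0)}$ to $C\varepsilon^{1-\alpha}$ is then at least $\varepsilon^{-(\beta-\alpha)/2}/C\to\infty$.

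The missing idea is that the second constant must be the global bound $M=\sup_{\partial\Omega}\abs{f}$, not $\sup_{\partial\Omega\setminus E}\abs{f}$. This is exactly how \thref{LEM:2const} uses the full norm $\norm{f}_{L^2}$, and how \thref{PROP:TCTHM} uses $\abs{G_A}\leq 1$ together with $\varepsilon<1$. Since $M\geq A$ automatically, your monotonicity argument then closes with $C=1$ and there is no second regime. Without this correction the $A>B$ case cannot be repaired, because the statement itself is false there.
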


More refined and technical versions of the Logvinenko-Sereda Theorem were established by C. Shubin, R. Vakilian and T. Wolff in \cite[Theorem 2.1]{shubin1998some}. In the related field of time-frequency analysis, there is also a version of the Amrein--Berthier Theorem for the generator of a Gabor system. For instance, see \cite{nitzan2013quantitative} and references therein.

We complete this chapter by including some remarks in relation recent works on annihilating pairs. In \cite{nazarov1993local}, F. Nazarov obtained a quantitative estimate of the Amrein--Berthier constant 
\[
C(E,F) \leq C e^{C|E| \cdot |F|}
\]
for any sets $E,F$ of finite Lebesgue measure and where $C>0$ is an absolute constant. A neat proof is contained in \cite[Ch. 4, § 11]{havinbook} More recently, A. Kulikov, F. Nazarov, M. Sodin in \cite{kulikov2025fourier} considered uniqueness pairs where both sets $(\Lambda, \Sigma)$ where discrete subsets of $\R$. To give a flavor of their result, we only state a mild version of it.

Let $\Lambda=\{\lambda_j\}_j$ and $\Sigma= \{\sigma_j\}_j$ be real numbers arranged in increasing order and with $\lim_{j\to \pm \infty} \lambda_j =\lim_{j\to \pm \infty} \sigma_j= \pm \infty$. We say that a pair $(\Lambda, \Sigma)$ is \emph{super critical} if 
\[
\limsup_{|j|\to \infty} |\lambda_j|(\lambda_{j+1}-\lambda_j)< \frac{1}{2}, \qquad \limsup_{|j|\to \infty} |\sigma_j|(\sigma_{j+1}-\sigma_j)< \frac{1}{2},
\]
an we call the pair $(\Lambda,\Sigma)$ \emph{subcritical} if 
\[
\limsup_{|j|\to \infty} |\lambda_j|(\lambda_{j+1}-\lambda_j) > \frac{1}{2}, \qquad \limsup_{|j|\to \infty} |\sigma_j|(\sigma_{j+1}-\sigma_j)> \frac{1}{2},
\]
A version of their result goes as follows.
\begin{thm}[Kulikov--Nazarov--Sodin] 
Any supercritical pair $(\Lambda, \Sigma)$ is a uniqueness pair for the Schwartz class $\mathcal{S}(\R)$:
\[
f\in \mathcal{S}(\R) \qquad f\lvert_{\Lambda}=\widehat{f}\lvert_{\Sigma}=0 \qquad \implies f\equiv 0.
\]
Any subcritical pair $(\Lambda, \Sigma)$ is a non-uniqueness pair for $\mathcal{S}(\R)$.
\end{thm}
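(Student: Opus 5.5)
The plan is to reduce the uniqueness half to Hardy's uncertainty principle, stated just above, by a bootstrapping argument that turns dense zero sets into Gaussian decay. Normalize so that $\widehat{f}$ is taken with the convention that makes $e^{-\pi x^2}$ self-dual; the threshold $\tfrac12$ in the definition of supercriticality should correspond exactly to the borderline $\alpha\beta=1$ in Hardy's theorem. The heuristic is as follows. If $f$ vanishes on $\Lambda$ and the gaps near $x$ are at most $c/|x|$ with $c<\tfrac12$, then an interval of length $\ell$ around $x$ contains about $\ell|x|/c$ zeros. Repeated use of Rolle's theorem, or Newton divided differences, then gives
\[
|f(x)|\le \frac{\ell^{n}}{n!}\sup_{|t-x|\le \ell}|f^{(n)}(t)|, \qquad n\approx \ell |x|/c .
\]
The derivatives are controlled by moments of $\widehat{f}$, since $|f^{(n)}|\le \frac{1}{2\pi}\int|\xi|^n|\widehat{f}(\xi)|\,d\xi$, and these moments are in turn controlled by the decay of $\widehat{f}$.

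The key steps, in order, are these. First, from $f\in\mathcal{S}(\R)$ obtain a priori bounds on $f^{(n)}$ from the Schwartz decay of $\widehat{f}$. Second, feed these bounds into the zero-counting estimate on $\Lambda$ to improve the decay of $f$. Third, run the symmetric argument: apply the same reasoning to $\widehat{f}$ using its zeros on $\Sigma$, with derivatives controlled by moments of $f$. Fourth, iterate, optimizing $\ell$ and $n$ against $|x|$ at each stage, which should be $n\asymp x^2$. I expect the iteration to stabilize at bounds of the form
\[
|f(x)|\lesssim (1+|x|)^N e^{-\alpha\pi x^2},\qquad |\widehat f(\xi)|\lesssim (1+|\xi|)^N e^{-\beta\pi\xi^2},
\]
with $\alpha\beta>1$ precisely because $c<\tfrac12$ for both sequences. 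Hardy's uncertainty principle then forces $f\equiv0$. Throughout I would use Stirling's formula for $n!$ and the moment bounds $\int|\xi|^n e^{-\beta\pi\xi^2}\,d\xi\approx \Gamma\bigl(\tfrac{n+1}{2}\bigr)(\beta\pi)^{-(n+1)/2}$.

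The main obstacle is making the bootstrap close with the sharp constant. Naive iterations lose constants at each step and converge only to some Gaussian rate, not necessarily one beating $\alpha\beta=1$. I would first track the rate as a fixed point: define $a_k$ as the Gaussian exponent obtained after $k$ rounds and derive a recursion of the form $a_{k+1}=\Phi_c(b_k)$, $b_{k+1}=\Phi_c(a_k)$. The goal is to show the product $a_kb_k$ increases past $1$ when $c<\tfrac12$. If the real-variable Rolle argument is too lossy, the fallback is complex-analytic. In that approach, each zero set gives a Blaschke-type or canonical-product factor for the analytic extension in a strip. A Phragmén--Lindelöf argument in quadrants (\thref{THM:PLQ}) would then convert vanishing on $\Lambda$ into growth or decay bounds.

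For the non-uniqueness half, with $(\Lambda,\Sigma)$ subcritical, I would try a duality or perturbation construction. Start from a Gaussian $e^{-\pi x^2}$, or a Hermite-type function sitting exactly at Hardy's threshold. Correct it by a small Schwartz function so that it vanishes on $\Lambda$ while its transform vanishes on $\Sigma$. Sparsity makes this plausible: the gaps exceed $\tfrac{1}{2|x|}$ on a subsequence, leaving room for interpolation. Concretely, solve the linear system $g|_\Lambda = -\varphi|_\Lambda$, $\widehat g|_\Sigma=-\widehat\varphi|_\Sigma$ in a weighted Schwartz-type space. Such interpolation bases can be built in the spirit of the Radchenko--Viazovska formulas, or via a contraction argument as in the corollary following \thref{PROP:AB}. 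This part I regard as a sketch only; the rigorous construction is the deepest part of the theorem.
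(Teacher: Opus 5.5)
The notes do not prove this theorem. It is quoted from Kulikov--Nazarov--Sodin in the further-results subsection, with only the remark that the supercritical threshold comes from a sharp Wirtinger inequality. Judged on its own, your uniqueness argument has two genuine gaps.

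\emph{First, the bootstrap never starts.} Membership in $\mathcal{S}(\R)$ gives $\sup\abs{f^{(n)}}\le C_n$, with no control on how $C_n$ grows in $n$. Your zero-counting bound beats Schwartz decay only if $n\to\infty$ with $\abs{x}$. It then needs something like $C_n\lesssim A^n\Gamma(n/2)$, which is essentially equivalent to Gaussian decay of $\widehat f$, i.e.\ to the conclusion. With $n$ fixed you only recover $\abs{f(x)}\lesssim C_n\abs{x}^{-n}$. The symmetric step on the Fourier side has the same defect, so the scheme is circular. The complex-analytic fallback fails for the same reason: a Schwartz function need not extend analytically to any strip.

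\emph{Second, even from a Gaussian start the exchange rate is too poor.} Take the best form of your estimate:
\begin{itemize}
\item use the $n$ zeros nearest to $x$, whose node product is $e^{o(n)}\bigl(cn/(2e\abs{x})\bigr)^n$;
\item use $\sup\abs{f^{(n)}}\le(2\pi)^n\int\abs{\xi}^n e^{-\beta\pi\xi^2}\,d\xi$ and Stirling;
\item choose the optimal $n\asymp x^2$.
\end{itemize}
This turns $\abs{\widehat f(\xi)}\lesssim e^{-\beta\pi\xi^2}$ into $\abs{f(x)}\lesssim e^{o(x^2)}e^{-\alpha\pi x^2}$ with $\alpha=\beta/(\pi c)^2$. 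Now consider $\sin\bigl(\pi x^2/(2c)\bigr)e^{-\alpha\pi x^2}$. Its zeros satisfy $\abs{x}\cdot\text{gap}\to c$, and its transform decays at the rate $\beta=4c^2\alpha/(1+4c^2\alpha^2)$. So the sharp exchange rate is $\alpha\approx\beta/(4c^2)$ for small exponents, and with it a round trip improves exactly when $4cc'<1$. Your round trip instead multiplies the exponent by $(\pi^2cc')^{-2}$. It therefore stalls whenever $cc'\ge\pi^{-2}$, for instance for every $c=c'\in[1/\pi,1/2)$. The loss comes from bounding $f^{(n)}$ by its supremum over a window of length comparable to $\abs{x}$, on which it oscillates.

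\emph{The missing idea} is the one the notes point to: stay in $L^2$ and use only one derivative. If $f(\lambda_j)=f(\lambda_{j+1})=0$, Wirtinger gives $\int_{\lambda_j}^{\lambda_{j+1}}\abs{f}^2\le\pi^{-2}(\lambda_{j+1}-\lambda_j)^2\int_{\lambda_j}^{\lambda_{j+1}}\abs{f'}^2$. Summing over the far gaps, with $\widehat f(\xi)=\int f(x)e^{-2\pi i x\xi}\,dx$, gives
\[
\int_{\abs{x}>R} x^2\abs{f(x)}^2\,dx \le \frac{(c+\varepsilon)^2}{\pi^2}\norm{f'}_{L^2}^2 = 4(c+\varepsilon)^2\norm{\xi\widehat{f}}_{L^2}^2,
\]
and symmetrically with $c'$. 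Chaining the two yields $\bigl(1-16(c+\varepsilon)^2(c'+\varepsilon)^2\bigr)\norm{xf}_{L^2}^2\le C_R\norm{f}_{L^2}^2$. This closes exactly when $4cc'<1$, i.e.\ at the threshold $\tfrac12$. It needs only $xf,f'\in L^2$, and uses neither Gaussian decay nor Hardy's theorem. What remains is to dispose of the bounded region $\abs{x},\abs{\xi}\le R$. The inequality at least shows, via the spectrum of the harmonic oscillator, that the space of such $f$ is finite-dimensional.

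\emph{Non-uniqueness.} Your non-uniqueness half is not a proof. The corollary after \thref{PROP:AB} concerns restrictions to sets of positive measure. Point evaluations on a discrete set are not bounded on $L^2$, so no contraction of that type is available. Moreover, gaps exceeding $1/(2\abs{x})$ only along a subsequence give no room, since the set may be far denser elsewhere. A construction needs a lower bound on all large gaps, i.e.\ a $\liminf$ condition. That is presumably what the $\limsup$ in the notes' definition of a subcritical pair is meant to be.
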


The interesting point regarding their work is that their condition of supercritical pairs naturally arises from the a sharp version of the classical Wirtinger's inequality, that is, the one-dimensional analogue of the Poincar\'e-inequality. Prior, Fourier uniqueness pairs where consider by M. Viazovska and D. Radchenko, and special applications to the sphere-packing problem, see \cite{radchenko2019fourier, viazovska2017sphere}. However, these works were based on modular forms, and a great survey on the connection between Fourier interpolation formulas and optimal tilings of spheres in a Euclidean space can be found in \cite{cohn2024sphere}.

On a related note, we also mention the work of H. Hedenmalm, A. Montes-Rodriguez, A. Bakan, on Heisenberg uniqueness problems in the setting of hyperbolic Fourier series, see \cite{hedenmalm2011heisenberg1,hedenmalm2020klein}.

\section{Logarithmic integrals on the real line}

Passing to the setting of the real line, we now highlight the role of the global size condition
\[
\int_{\R} \frac{\log w(x)}{1+x^2}\,dx = -\infty
\]
in a range of problems. For convenience, we denote by $dP(x) = (1+x^2)^{-1}dx$ the Poisson measure on $\R$, associated with the half-planes
\[
\C_+ := \{ z \in \C : \Im z > 0 \}, \qquad \C_- := \{ z \in \C : \Im z < 0 \},
\]
and denote by $L^1(dP)$ the corresponding space of integrable functions.

\subsection{Some classical results from Hardy spaces}

We begin with the version of the F. and M. Riesz theorem on the real line, as it will be utilized repeatedly.

\begin{thm}[F. and M. Riesz on $\R$]
Let $\mu$ be a finite complex Borel measure on $\R$ such that
\[
\widehat{\mu}(\xi) = 0, \qquad \xi < 0.
\]
Then $d\mu = f\,dx$ with $f \in L^1(\R)$, and moreover
\[
\int_{\R} \frac{\log |f(x)|}{1+x^2}\,dx > -\infty.
\]
\end{thm}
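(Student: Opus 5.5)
The plan is to transfer the problem to the unit circle through the Cayley transform, so that the circle version \thref{THM:FMRIESZ} together with Jensen's lemma does the work. The spectral condition $\widehat{\mu}(\xi)=0$ for $\xi<0$ is not stable under pulling back by a conformal map, so I would first turn it into an analytic condition. Consider the Cauchy integral
\[
F(z) := \frac{1}{2\pi i}\int_{\R} \frac{d\mu(t)}{t-z}, \qquad z\in \C\setminus\R.
\]
For $\Im z<0$ one has $\frac{1}{t-z} = i\int_0^\infty e^{-is(t-z)}\,ds$, and Fubini gives $F(z) = \frac{1}{2\pi}\int_0^\infty \widehat{\mu}(s)e^{isz}\,ds$. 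This vanishes identically on $\C_-$ because $\widehat{\mu}(s)=0$ for $s>0$ under the transform convention $\widehat\mu(s)=\int e^{-ist}d\mu$. The sign has to be matched to the hypothesis $\widehat\mu=0$ on $\xi<0$; otherwise the roles of $\C_\pm$ are interchanged, which is harmless. So the Cauchy integral lives only in one half-plane.

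By the Plemelj/Poisson identity, $\mu$ is the weak-star limit of $F(x+iy)-F(x-iy)=F(x+iy)$ as $y\to0+$. Here $F(\cdot+iy)$ is the Poisson integral $P_y*\mu$, since the two Cauchy pieces combine into the Poisson kernel. Hence $u:=P_y*\mu$ is analytic in $\C_+$ with $\sup_y\|u(\cdot+iy)\|_{L^1}\le\|\mu\|$.

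Next I would pull back by the Cayley map $\phi(w)=i\frac{1+w}{1-w}$ from $\D$ onto $\C_+$. The function $G(w)=F(\phi(w))\,\phi'(w)$ then has boundary measure $\nu=\phi^*\mu$, the pushforward of $\mu$ weighted by the Jacobian. The integrability of the Poisson integrals on horizontal lines shows that $\nu$ is a finite measure on $\T$, and that $\nu$ is the boundary measure of the analytic function $G$, whose Poisson extension is analytic in $\D$. I would verify this by computing $\widehat{\nu}(n)$ for $n<0$ via the Cauchy kernel identity and checking that these coefficients vanish; equivalently, $\int_\R \frac{(t-i)^{k}}{(t+i)^{k+2}}\,d\mu(t)=0$ for $k\ge0$. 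This last identity follows because $\frac{1}{(t+i)^{m}}$ is a Fourier transform supported on the half-line where $\widehat\mu$ vanishes, so pairing it with $\mu$ via Parseval gives zero. This is the step I expect to be the main technical obstacle: getting the signs and the Jacobian bookkeeping right, and justifying Parseval for a measure paired with $L^1$ functions whose Fourier transforms are supported on a half-line. Fubini suffices here, since $\frac{1}{(t+i)^{m}} = c\int_0^\infty s^{m-1}e^{-s}e^{-ist}\,ds$ for $m\ge1$.

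Once the negative coefficients of $\nu$ are known to vanish (plus an irrelevant finite number), \thref{THM:FMRIESZ}, applied after multiplying by a monomial and conjugating as needed, gives $d\nu=g\,dm$ with $g$ in $H^1$ and $\int_\T\log|g|\,dm>-\infty$. Pulling back, absolute continuity is preserved by the smooth diffeomorphism $\T\setminus\{1\}\to\R$, so $d\mu=f\,dx$ with $f\in L^1(\R)$. Moreover $|g(\phi^{-1}(x))|\asymp|f(x)|(1+x^2)$, and $dm$ corresponds to $\frac{dx}{\pi(1+x^2)}$. Therefore
\[
\int_\R\frac{\log|f(x)|}{1+x^2}dx \ge \pi\int_\T\log|g|\,dm - \int_\R\frac{\log(1+x^2)}{1+x^2}dx > -\infty,
\]
because the last integral is finite.
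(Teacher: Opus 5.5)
Your proposal is correct and is essentially the paper's argument: push $\mu$ forward to $\T$ by the Cayley map, kill the relevant moments by writing $(t+i)^{-m}$ as a Laplace integral and applying Fubini, invoke \thref{THM:FMRIESZ}, and change variables back. The Cauchy-integral/Plemelj preamble is not needed, and your Jacobian weight $(t+i)^{-2}$ turns every moment into a combination of $(t+i)^{-m}$ with $m\ge 2$, which spares you the observation $\mu(\R)=\widehat\mu(0)=0$ that the paper uses.

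Fix two things in the bookkeeping. First, the correct representation is $\frac{1}{(t+i)^m}=\frac{(-i)^m}{(m-1)!}\int_0^\infty s^{m-1}e^{-s}e^{+ist}\,ds$, so pairing with $\mu$ yields $\widehat\mu(-s)=0$; with $e^{-ist}$ you would get $\widehat\mu(s)$, which need not vanish. Second, with your weight you get $|g\circ\phi^{-1}|\asymp|f|$ rather than $|f|(1+x^2)$; this changes nothing, since $\int_\R\frac{\log(1+x^2)}{1+x^2}\,dx<\infty$.
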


Equivalently, the density $f = \frac{d\mu}{dx}$ belongs to the Hardy space
\[
H^1(\R) := \{ f \in L^1(\R) : \widehat{f}(\xi)=0 \ \text{for } \xi<0 \}.
\]
More generally, for $1 \le p \le \infty$, we set
\[
H^p(\C_+) := \left\{ F \text{ analytic in } \C_+ : \sup_{y>0} \int_{\R} |F(x+iy)|^p\,dx < \infty \right\}.
\]
Each $f \in H^p(\R)$ arises as the boundary limit of a unique $F \in H^p(\C_+)$,
\[
f(x) = \lim_{y \to 0^+} F(x+iy) \quad \text{in } L^p(\R),
\]
and conversely every $F \in H^p(\C_+)$ is the Poisson extension
\[
F(z) = \int_{\R} \frac{y}{y^2 + (x-t)^2} f(t)\, \frac{dt}{\pi} \qquad z = x+iy \in \C_+
\]
of some $f \in H^p(\R)$. Similarly, one also defines the Banach space $H^\infty(\C_+)$ of bounded analytic functions in $\C_+$, equipped with the norm
\[
\sup_{z \in \C_+} |F(z)| < \infty.
\]
Functions in $H^\infty(\C_+)$ are Poisson extensions of elements in
\[
H^\infty(\R) := \{ f \in L^\infty(\R) : \widehat{f}(\xi)=0 \ \text{for } \xi<0 \},
\]
and each $F \in H^\infty(\C_+)$ admits boundary limits almost everywhere on $\R$, which gives rise to an element $f \in H^\infty(\R)$.

Moving forward, we shall take these results for granted, and refer the reader to \cite[Ch. II]{garnett} for details and proofs.
\begin{proof}[Sketch of proof]
We have already given two proofs in the unit-disc, we roughly demonstrate how we can transport them to $\R$.
Consider the conformal map $\varphi$ from $\C_+$ onto the unit disc $\D$ defined as
\[
\varphi(z)=\frac{z-i}{z+i}, \qquad z\in \C_+
\]
which extends to a homeomorphism from $\R$ onto $\T \setminus \{1\}$. Now define $\nu$ to be the finite Borel measure on $\T$, defined as the push-forward of $\mu$ wrt to $\varphi$, and note that
\begin{equation}\label{EQ:PUSHFORWARD}
\int_{\mathbb T}g(\zeta)\,d\nu(\zeta)
=
\int_{\mathbb R}g(\phi(x))\,d\mu(x), \qquad g \in C(\T)
\end{equation}
Now if we can show that 
\begin{equation} \label{EQ:Phiann}
\int_{\mathbb R}\phi(x)^n\,d\mu(x)=0, \qquad n=0,1,2,\dots
\end{equation}
then it follows from \eqref{EQ:PUSHFORWARD} and the F. and M. Riesz Theorem on $\T$ that $d\nu = hdm$ with $h$ and $\log |h| \in L^1(\T)$. Then $f= h \circ \varphi^{-1} \in L^1(\R)$ and a change of variable shows
\[
\int_{\R} \log |f(x)| \frac{dx}{\pi(1+x^2)} = \int_{\T} \log |h(\zeta)| dm(\zeta) >-\infty.
\]

To prove \eqref{EQ:Phiann}, we primarily note that since $\widehat{\mu}$ is continuous and vanishes on $(-\infty,0)$, we must have $\mu(\R)= \widehat{\mu}(0)=0$. Fix an arbitrary integer $n\geq 1$ and note that the identity 
\[
\phi(x)=\frac{x-i}{x+i}=1-\frac{2i}{x+i},
\]
allows us to rewrite
\[
\int_{\mathbb R}\phi(x)^n\,d\mu(x)
=
\sum_{k=1}^n \binom{n}{k}(-2i)^k
\int_{\mathbb R}\frac{d\mu(x)}{(x+i)^k},
\]
hence it remains only to show that each term vanishes. To this end, the following identity is easily verified for each integer $k \geq 1$:
\[
\frac{1}{(x+i)^k}
=
\frac{(-i)^k}{(k-1)!}\int_0^\infty t^{k-1}e^{-t}e^{itx}\,dt, \qquad x\in \R.
\]
Since $\mu$ is finite, Fubini's theorem yields 
\begin{align*}
\int_{\mathbb R}\frac{d\mu(x)}{(x+i)^k}
=
\frac{(-i)^k}{(k-1)!}\int_0^\infty t^{k-1}e^{-t}
\left(\int_{\mathbb R} e^{itx}\,d\mu(x)\right)\,dt \\
= \frac{(-i)^k}{(k-1)!}\int_0^\infty t^{k-1}e^{-t} \, \widehat{\mu}(-t)\,  dt=0.
\end{align*}
This proves the desired claim.

\end{proof}

The following result will be used frequently, is

\begin{thm}[Outer functions] \thlabel{THM:OUTERFUNC}
Let $1\leq p< \infty$ and $f\in L^p(\R)$ with 
\[
\int_{\R} \frac{\log|f(x)|}{1+x^2} dx >- \infty.
\]
Then there exists an (outer) function $F \in H^p(\C_+)$ such that 
\[
|F(x)|=|f(x)| \qquad \text{a.e} \, \, x \in \R.
\]
\end{thm}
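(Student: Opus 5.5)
The plan is to build $F$ explicitly as the exponential of the Herglotz (Schwarz) integral of $\log|f|$ in the upper half-plane:
\[
F(z) := \exp\left( \frac{1}{\pi i}\int_{\R} \left( \frac{1}{t-z} - \frac{t}{1+t^2}\right) \log|f(t)|\,dt \right), \qquad z \in \C_+ .
\]
The first step is to check that this integral converges absolutely and defines an analytic function in $\C_+$. For fixed $z$, the kernel $\frac{1}{t-z} - \frac{t}{1+t^2}$ is $O((1+t^2)^{-1})$, so convergence requires $\log|f| \in L^1(dP)$.

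That integrability splits into two halves. The negative part, $\int \log^-|f|\, dP < \infty$, is exactly the hypothesis. The positive part comes from $f \in L^p$: we have $\log^+|f| \le \frac1p |f|^p$, hence $\int \log^+|f|\,dP \le \frac1p\|f\|_p^p < \infty$.

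Next I compute the modulus. The real part of the exponent is the Poisson integral
\[
\log|F(x+iy)| = \int_{\R} \frac{y}{y^2+(x-t)^2}\log|f(t)|\,\frac{dt}{\pi}.
\]
This holds because $\Re\frac{1}{\pi i}\big(\frac{1}{t-z}\big) = \frac{1}{\pi}\frac{y}{|t-z|^2}$, and the correcting term $\frac{t}{1+t^2}$ is real, so it contributes only to the imaginary part.

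From here two facts follow. First, by Fatou's theorem on nontangential limits of Poisson integrals of $L^1(dP)$ functions (transported from the disc result quoted in Section 2 via the Cayley map $\varphi$ used in the sketch above), $\log|F(x+iy)| \to \log|f(x)|$ for a.e. $x$ as $y\to0^+$, which gives $|F| = |f|$ a.e. on $\R$. Second, the Poisson kernel $P_y(x-\cdot)\,dt/\pi$ is a probability measure, so Jensen's inequality for the convex function $e^{ps}$ yields
\[
|F(x+iy)|^p \le \int_{\R} P_y(x-t)\,|f(t)|^p \,\frac{dt}{\pi}.
\]
Integrating in $x$ and using Fubini gives $\sup_{y>0}\int |F(x+iy)|^p\,dx \le \|f\|_p^p$, so $F \in H^p(\C_+)$.

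The step I expect to be the main obstacle is the boundary identification, i.e. showing that the boundary function of $F$, in the sense of the $H^p$ theory recalled above, really has modulus $|f|$. The pointwise a.e. convergence of $\log|F|$ only identifies $|F|$ with $|f|$ at the level of nontangential limits. One then has to invoke the fact, stated above, that $H^p(\C_+)$ functions have a.e. boundary limits coinciding with their $L^p$ boundary function; this is where the real-line Hardy space facts are used. An alternative is to do everything on the disc: set $h := |f\circ\varphi^{-1}|$ times a suitable Jacobian power $|x+i|^{2/p}$, note that $\log h\in L^1(\T)$, build the disc outer function, and transport it back. I would use the direct half-plane construction above and cite the disc Fatou theorem through $\varphi$ only for the a.e. limit.
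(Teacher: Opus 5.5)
Your proposal is correct and is essentially the paper's proof: $F$ is the exponential of the Schwarz integral of $\log|f|$, and $\log|F|$ is the Poisson integral of $\log|f|$, which gives $|F|=|f|$ a.e.\ (the identification of nontangential limits with the $H^p$ boundary function, which you flag as the main obstacle, is a standard Hardy space fact that the paper also takes for granted, so nothing more is needed there), and Jensen's inequality with Fubini gives $\sup_{y>0}\int_{\R}|F(x+iy)|^p\,dx\le\|f\|_{L^p}^p$. Your version is a bit more careful than the paper's: you verify $\log|f|\in L^1(dP)$ via $\log^+|f|\le |f|^p/p$, and your kernel $\frac{1}{t-z}-\frac{t}{1+t^2}=\frac{1+tz}{(t-z)(1+t^2)}$ is the correctly normalized one, whereas the kernel $\frac{1}{z-t}-\frac{t}{t^2+1}$ printed in the paper behaves like $-2/t$ at infinity because its correction term has the wrong sign.
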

\begin{proof}
    The proof is simple and follows from the Schwartz integral formula
    \[
    F(z):= \exp \left( i\int_{\R}\left( \frac{1}{z-t}-\frac{t}{t^2 +1} \right)\log |f(t)| \frac{dt}{\pi} \right), \qquad z\in \C_+.
    \]
    Clearly $F$ is analytic in $\C_+$, and we also have the identity:
    \begin{equation}\label{EQ:OUTERCHAR}
    \log \abs{F(z)} = \int_{\R} \frac{y}{y^2+(x-t)^2} \log|f(t)| \frac{dt}{\pi}, \qquad z=x+iy \in \C_+.
    \end{equation}
    This identity and standard properties of the Poisson kernel imply that $|F(x)|=|f(x)|$ for a.e $x\in \R$. Now if $f\in L^p(\R)$ for $1\leq p< \infty$, then applying Jensen's inequality to the identity in \eqref{EQ:OUTERCHAR} yields
    \[
    \int_{\R} |F(x+iy)|^p dx \leq \int_{\R} \int_{\R} \frac{y}{y^2+(x-t)^2} \abs{f(t)}^p dt \frac{dx}{\pi} = \int_{\R} \int_{\R} \abs{f(t)}^p dt,
    \]
    hence $F \in H^p(\C_+)$. The case $f\in L^\infty(\T)$ is even simpler.
\end{proof}
We mention that the identity in \eqref{EQ:OUTERCHAR} actually characterizes outer functions $F$ (up to multiplication with a unimodular constant), and asserts that $\log |F|$ is the Poisson extension of an integrable function in $L^1(dP)$.

\subsection{Bernstein's problem on weighted approximation}
Most of the content in this section is based and intimately related to Bernstein's problem on weighted approximation on the real line. Recall that Weierstrass Theorem ensures that any continuous function on a finite interval of the real line can be approximated by polynomials, and this cannot naively be extend to yield an uniform approximation in all of $\R$.

However, a natural way to modify the problem is to introduce a (non-negative) weight-function $W$ on $\R$, which is large at infinity, and measures the proximity by 
\[
\lvert f(x)-p(x) \rvert W(x) \leq \varepsilon,
\]
where $p$ are polynomials. Given a weight $W$ on $\R$, we denote by $C_0(W)$ the space of continuous functions $f$ on $\R$ with $\lim_{|x| \to \infty} W(x)f(x) =0$ and normed by 
\[
\norm{ f }_{C_0(W)} := \sup_{x\in \R} W(x)\lvert f(x)\rvert.
\]
Now for $C_0(W)$ to contain the set of all polynomials $\Po$, we need to require that $W$ \emph{decays rapidly at infinity}: 
\[
\lim_{|x| \to \infty} W(x) |x|^n=0, \qquad n=0,1,2,\dots
\]
\textit{Polynomials approximation:}\\
The celebrated Bernstein problem asks to characterize the weights $W$ such that the polynomials $\Po$ are dense in $C_0(W)$:
\[
\inf_{p\in \Po} \norm{f-p}_{C_0(W)}=0, \qquad \forall f\in C_0(W).
\]
It is useful to dualize the approximation problem using functional analysis. To this end, we denote by $M(W)$ the space of locally finite complex Borel measures on $\R$, which are summable wrt $W^{-1}$. That is, we consider the Banach space $M(W)$ equipped with the norm 
\[
\lvert \lvert \mu \rvert \rvert_{M(W)} := \int_{\R} W^{-1}(x) d\lvert \mu \rvert(x)< \infty.
\]
Now it is not difficult to see that $C_0(W)^* \cong M(W)$ in the usual $L^2$-pairing in $\R$:
\[
\Big \lvert \int_{\R} f(x) d\mu(x) \Big \rvert \leq \norm{f}_{C_0(W)} \norm{\mu}_{M(W)}.
\]
\textit{Moment problem:}\\
Now given a rapidly decaying weight $W$ at infinity, we have by the Hahn-Banach theorem that the polynomials are not dense in $C_0(W)$ if and only if there exists a non-trivial $\mu \in M(W)$ such that its moments satisfy 
\[
m_n(\mu) := \int_{\R} x^n d\mu(x) =0, \qquad n=0,1,2, \dots
\]
In other words, the dual space $M(W)$ lacks the property that its elements are uniquely determined by their moment sequences $\{m_n(\mu)\}_{n\geq 0}$. Consider the Fourier transform
\[
\widehat{\mu}(\xi) := \int_{\R} e^{-ix\xi} \, d\mu(x), \qquad \xi \in \R
\]
and note that $\mu \in M(W)$ is uniquely determined to its moment sequence if and only if 
\[
\widehat{\mu}^{(n)} (0) = 0 ,\qquad n=0,1,2,\dots \implies \mu \equiv 0.
\]
\textit{Quasi-analyticity:} \\
In other words, the polynomials are dense in $C_0(W)$ if and only if the class of functions 
\[
\mathcal{F}M(W) :=\{\widehat{\mu}: \mu \in M(W) \}
\]
is quasi-analytic in $\R$, that is, if $f\in \mathcal{F}M(W)$ with $f^{(n)}(x_0)=0$ for all $n=0,1,2,\dots$ and some point $x_0 \in \R$, then $f\equiv 0$.

We have thus rephrased Bernstein's problem in three different ways, and it is desirable to identify reasonably explicit conditions on $W$, under which these phenomena occur. The main goal of this subsection is to resolve Bernstein’s problem for a sufficiently rich class of weights. A complete solution lies beyond the scope of these notes, and further comments are deferred to the subsection on Further results. The first theorem applies to a broad yet concrete class of weights. Let $\Po$ denote the set of polynomials.

\begin{thm}\thlabel{THM:BERN} Let $W$ be an even weight which decays rapidly at $\infty$, and is of the form 
    \[
    W(x) = \exp \left( -\int_c^{|x|} \frac{w(t)}{t}dt \right), \qquad |x| \geq c
    \]
    where $c>0$ is a constant and $w$ is a positive non-decreasing function on $[c,\infty)$ with $\lim_{x\to +\infty} w(x) = +\infty$. Then $\Po$ is dense in $C_0(W)$ if 
    \[
    \int_\R \frac{\log W(x)}{1+x^2} dx = - \infty.
    \]
\end{thm}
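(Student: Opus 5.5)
We argue by duality, as in the reformulation preceding the statement. Suppose $\Po$ is not dense in $C_0(W)$. Then there is a non-trivial $\mu\in M(W)$ with $m_n(\mu)=0$ for all $n\ge 0$. Set $f:=\widehat{\mu}$. Since $W$ decays rapidly, the numbers
\[
M_n:=\int_{\R}|x|^n\,d|\mu|(x)\le \norm{\mu}_{M(W)}\,\sup_{x\in\R}|x|^nW(x)=:\norm{\mu}_{M(W)}\,A_n
\]
are finite. Hence $f\in C^\infty(\R)$, with $|f^{(n)}(\xi)|\le M_n$ and $f^{(n)}(0)=(-i)^n m_n(\mu)=0$ for every $n$. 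It then suffices to show that the class $C\{A_n\}$ of smooth functions with $|f^{(n)}|\le C\,B^nA_n$ is quasi-analytic. This gives $f\equiv 0$, hence $\mu\equiv 0$ by uniqueness of the Fourier transform, a contradiction.

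\emph{Step 1: the conjugate majorant.} Write $W(x)=e^{-\Omega(|x|)}$, where $\Omega(r)=\int_c^r w(t)\,dt/t$. Then $s\mapsto\Omega(e^s)=\int^s w(e^u)\,du$ is convex with slope $w(e^s)\uparrow\infty$. Consequently $\log A_n=\sup_s\,(ns-\Omega(e^s))$ is the Legendre transform of a convex function. By Legendre duality, the function $T(r):=\sup_n r^n/A_n$ satisfies
\[
\log T(r)\ \ge\ \Omega(r)-\log r-C \qquad\text{for large } r .
\]
Here the loss comes only from restricting $n$ to integers, and it is controlled because $w(e^s)$ increases. Therefore
\[
\int^\infty \frac{\log T(r)}{r^2}\,dr=\int_\R\frac{-\log W(x)}{1+x^2}\,dx+O(1)=+\infty .
\]

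\emph{Step 2: quasi-analyticity via the logarithmic integral.} This is the main obstacle, and amounts to the Ostrowski form of the Denjoy--Carleman theorem: $\int^\infty \log T(r)\,r^{-2}\,dr=\infty$ forces quasi-analyticity. I would prove it in the spirit of the Hardy-space tools of this section. Suppose $f\in C\{A_n\}$ vanishes to infinite order at $0$. By the Taylor remainder, $|f(\xi)|\le C\,B^n A_n|\xi|^n/n!$ on a neighbourhood of $0$; optimizing in $n$ gives $|f(\xi)|\lesssim 1/T(c/|\xi|)$. Assuming $f\not\equiv 0$, I may cut $f$ off to a compactly supported function vanishing flatly at an endpoint. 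Its Laplace/Fourier transform $F$ is then analytic in a half-plane, and the flatness bound translates into $|F(iy)|$, or $|F|$ along rays, decaying like $1/T(cy)$. On the other hand, a non-trivial function in $H^\infty$ of a half-plane obeys the Jensen/Poisson bound $\int\log|F|\,dP>-\infty$ (Theorem~\ref{THM:OUTERFUNC} and the real-line F.\ and M.\ Riesz theorem). Transported to a half-plane adapted to the imaginary axis, the divergence from Step 1 makes this integral equal to $-\infty$, so $F\equiv0$ and hence $f\equiv 0$ near $0$. Propagating along $\R$, since $0$ can be replaced by any point of vanishing to infinite order, yields $f\equiv 0$.

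I expect the delicate point to be the bookkeeping in Step 2: converting flat vanishing at a point into a lower bound of $-\infty$ for a Poisson-weighted log integral of a bounded analytic function. The convexity from the monotonicity of $w$ is used exactly once, in Step 1, to identify $\log T$ with $-\log W$ up to harmless errors. If the direct argument proves awkward, the fallback is to verify Carleman's criterion $\sum_n A_n^{-1/n}=\infty$ directly: by the convexity, $A_n^{1/n}\asymp r_n$, where $w(r_n)=n$. Comparing $\sum_n 1/r_n$ with $\int \Omega(r)\,r^{-2}\,dr$ by summation by parts then shows that $\sum_n 1/r_n$ diverges.
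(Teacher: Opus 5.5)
Your reduction is sound, and Step~1 is correct, including the size of the loss. Take $n=\lfloor w(r)\rfloor$. Since $w-n<1$ on $[c,r]$, you get $\log A_n\le n\log r-\Omega(r)+\log(r/c)+O(1)$, hence $\log T(r)\ge\Omega(r)-\log r-C$.

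Rounding down is the right choice. With $N(\xi)=\lceil w(\xi)\rceil$, as in \thref{LEM:REGW}, the maximum of $x^{N(\xi)}W(x)$ sits where $w$ first reaches $N(\xi)$. That point can lie far beyond $\xi$ when $w$ has long plateaus, and the written proof of that lemma only bounds $\sup_x x^{w(\xi)}W(x)$. Your $\log r$ loss is the honest bookkeeping, and it is harmless because $\int^\infty r^{-2}\log r\,dr<\infty$.

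Given Step~1, citing the Denjoy--Carleman theorem in Ostrowski's form completes the proof. Carleman's criterion via your fallback also works. There only $A_n^{1/n}\lesssim r_n$ holds, not $\asymp$, since plateaus of $w$ destroy the lower bound. But that is the direction you need, together with $\int_c^\infty\Omega(r)r^{-2}\,dr=\int_c^\infty w(t)t^{-2}\,dt\le\sum_n r_n^{-1}+c^{-1}$.

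What does not work is the self-contained proof you sketch for Step~2. A flatness bound at $0$, such as your Taylor-remainder estimate, cannot give decay of the Laplace transform on the boundary line of the half-plane, and that is where the Poisson--Jensen bound lives.
\begin{itemize}
\item Any smooth $g\not\equiv0$ supported in $[\delta,1]$ satisfies every flatness bound at $0$. Yet its transform $G$ is a non-zero bounded analytic function in the right half-plane. So by Jensen, $\abs{G(iy)}$ cannot decay like $1/T(c\abs{y})$ when $\int^\infty\log T(r)\,r^{-2}\,dr=\infty$.
\item Along interior rays you only get $\abs{G(w)}\le\norm{g}_{L^1}e^{-\delta\Re w}$, which never forces $G\equiv0$.
\end{itemize}
Membership in $C\{A_n\}$ must therefore enter through the derivative bounds on a whole half-line.

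Do not use a smooth cut-off, which takes you out of the class. Instead truncate to $g=f\,1_{[0,\infty)}$; flatness at $0$ serves only to keep $g$ in the class and to kill boundary terms. Integrating by parts $n$ times gives $G(w)=\int_0^\infty g(t)e^{-wt}\,dt=w^{-n}\int_0^\infty g^{(n)}(t)e^{-wt}\,dt$, hence
\[
\abs{G(w)}\le\frac{\norm{\mu}_{M(W)}}{\Re w\cdot T(\abs{w})},\qquad \Re w>0.
\]
Step~1 and the Poisson--Jensen bound on the line $\Re w=1$ then give $G\equiv0$, so $\widehat{\mu}=0$ on $[0,\infty)$. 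The same argument applied to $\xi\mapsto\widehat{\mu}(-\xi)$ handles $(-\infty,0]$.

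For $f=\widehat{\mu}$, the function $G$ equals the Cauchy transform $\Ka(\mu)$ up to a constant factor and the rotation $z=iw$. The $n$-fold integration by parts is exactly the moment identity $\Ka(\mu)(z)=z^{-n}\Ka(x^n\mu)(z)$. So the repaired Step~2 is precisely the paper's proof, with your Step~1 in place of \thref{LEM:REGW}. Your formulation gains generality, since it is the class-level Denjoy--Carleman argument; the paper's version is self-contained using only Hardy-space facts.
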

We remark that if $W$ is assumed to be differentiable, then the above specific form of $W$ is equivalent to the condition that
\[
-\frac{xW'(x)}{W(x)} \uparrow +\infty, \qquad x\to \infty.
\]
Examples of such eligible weights are, for instance, $W(x)= e^{-k|x|}$ with $k>0$, but the theorem becomes simple for such weights as the Fourier transform of elements in $M(W)$ extends analytically in a neighborhood of the origin. A more interesting class of weights consists of those decaying at the rate
\[
W(x) \asymp \exp \left( -\frac{k|x|}{\log |x| \log \log |x| \dots  \log \log \dots \log |x|} \right), \qquad |x| \to \infty,
\]
for any finite iterations of logarithms. It is natural to wonder whether the logarithmic integral divergence is sharp. The following result asserts that it is indeed also necessary.

\begin{thm}\thlabel{THM:NECBERN} Let $W \in L^1(\R)$ be a weight with $\log W \in L^1(dP)$. Then there exists a non-trivial measure $\mu \in M(W)$ with identically vanishing moment sequence.  
\end{thm}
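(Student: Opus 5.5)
Since $\log W \in L^1(dP)$, I will build a measure $d\mu = g\,dx$ whose Fourier transform vanishes on a neighborhood of the origin. Then $\widehat{\mu}^{(n)}(0)=0$ for all $n$, so every moment vanishes. The density $g$ will come from outer functions, via \thref{THM:OUTERFUNC}, multiplied by a factor that kills the low frequencies.

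\textbf{Step 1: an outer function of modulus $W$.} Since $W\in L^1(\R)$ and $\int_{\R}\frac{\log W(x)}{1+x^2}\,dx>-\infty$, \thref{THM:OUTERFUNC} with $p=1$ gives an outer $F\in H^1(\C_+)$ with $|F|=W$ a.e.\ on $\R$. Its boundary function $f\in H^1(\R)$ satisfies $\widehat f(\xi)=0$ for $\xi<0$. However $\mu = \bar f\,dx$ would have spectrum in $(-\infty,0]$, which touches $0$, so this alone does not suffice.

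\textbf{Step 2: spectral shift and the correct weighting.} The measure must lie in $M(W)$, i.e.\ $\int |d\mu|/W<\infty$. The density $W$ itself is too large for that, since it would require $\int 1\,dx<\infty$. I will instead take $G$ outer with $|G| = W^2/(1+x^2)$ a.e. This function belongs to $L^1$ because $W$ decays rapidly and is bounded after an irrelevant normalization. Its logarithm is $2\log W - \log(1+x^2)\in L^1(dP)$. So \thref{THM:OUTERFUNC} again yields $G\in H^1(\C_+)$ with boundary values $g$, and
\[
\int_{\R}\frac{|g|}{W}\,dx=\int_{\R}\frac{W}{1+x^2}\,dx<\infty .
\]
Now set $d\mu(x)= e^{ix}g(x)\,dx$. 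Since $\widehat g$ is supported in $[0,\infty)$, modulation shifts the spectrum, so $\widehat{\mu}$ vanishes on an interval $(-1,0)$ or $(0,1)$ depending on the sign convention (choose the sign of the exponent accordingly). This interval is still not around $0$. To fix this I will multiply by $e^{-ix/2}$ instead, so the spectrum lies in $[1/2,\infty)$ and $\widehat\mu\equiv 0$ on $(-\infty,1/2)\ni 0$.

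\textbf{Step 3: vanishing moments.} All moments of $\mu$ exist because $|x|^n|g| \lesssim |x|^n W^2 \in L^1$, using the rapid decay of $W$. Hence $\widehat{\mu}\in C^\infty$ with $\widehat{\mu}^{(n)}(0)=(-i)^n m_n(\mu)$. Since $\widehat\mu$ vanishes on a neighborhood of $0$, $m_n(\mu)=0$ for all $n$. The measure is non-trivial because $|g|>0$ a.e.

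\textbf{Main obstacle.} The delicate point is the bookkeeping in Step 2. The density must be integrable against $1/W$, must have enough decay for the moments to be defined, and must keep its logarithm in $L^1(dP)$. The factor $W^2/(1+x^2)$ is chosen to meet all three at once. If $W$ is not bounded, I would first replace $W$ by $\min(W,1)$ for the purpose of defining $G$.
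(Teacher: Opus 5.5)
Your skeleton matches the paper's: an outer function with prescribed modulus, half-line spectrum, and finite moments giving $\widehat{\mu}\in C^\infty$, hence $\widehat{\mu}^{(n)}(0)=0$. The problem is your choice of modulus $W^2/(1+x^2)$. It makes Steps 2 and 3 depend on $W$ being bounded and rapidly decaying, and neither is a hypothesis; the statement only assumes $W\in L^1(\R)$ and $\log W\in L^1(dP)$.

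Boundedness you can patch with $\min(W,1)$, as you say. Decay cannot be patched within your construction. The weight $W(x)=(1+x^2)^{-1}$ satisfies both hypotheses, and your density then has $|g|=(1+x^2)^{-3}$, also after the $\min(W,1)$ replacement. So $\int_{\R}|x|^n|g|\,dx=\infty$ for $n\geq 5$, and $\mu$ does not even have a full moment sequence. The factor $1/(1+x^2)$ only buys polynomial decay beyond $W$, whereas you need decay faster than every power. Under the standing Bernstein assumption that $W$ is bounded and rapidly decreasing your argument does work, but the theorem is stated and proved without that assumption.

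The missing idea is to multiply $W$ by a factor that beats every polynomial while its logarithm stays Poisson integrable, for instance $e^{-|x|^{1/2}}$; this is exactly the paper's choice. Take $g$ outer with $|g|=We^{-|x|^{1/2}}$. Then $\log|g|=\log W-|x|^{1/2}\in L^1(dP)$ and $|g|\leq W\in L^1(\R)$. Also $\int_{\R}|g|/W\,dx=\int_{\R}e^{-|x|^{1/2}}dx<\infty$, and $|x|^n|g|\leq C_nW\in L^1(\R)$ for every $n$. None of this needs any further assumption on $W$.

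Two smaller points. First, the modulation is unnecessary: a $C^\infty$ function vanishing on an open half-line ending at $0$ has all derivatives zero at $0$ by continuity. So your remark in Step 1 that the half-line spectrum ``does not suffice'' is mistaken. Second, with the paper's convention $\widehat{e^{ix\xi_0}f}(\xi)=\widehat{f}(\xi-\xi_0)$, it is $e^{+ix/2}$, not $e^{-ix/2}$, that moves the spectrum from $[0,\infty)$ to $[1/2,\infty)$.
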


The take-away from \thref{THM:BERN} in conjunction with \thref{THM:NECBERN} is that for sufficiently regular weights $W$, the density of $\Po$ in $C_0(W)$ is equivalent to the lack of (global) logarithmic integrability:
\[
\int_\R \frac{\log W(x)}{1+x^2} dx = - \infty.
\]
The proof of \thref{THM:NECBERN} is short and self-contained, hence we give it first.

\begin{proof}[Proof of \thref{THM:NECBERN}]
Define the weight
\[
w(x):=W(x)e^{-|x|^{1/2}}, \qquad x\in \R.
\]
Since $W\in L^1(\R)$ and $\log W\in L^1(dP)$, we have $w\in L^1(\R)$ and $\log w\in L^1(dP)$. By \eqref{THM:OUTERFUNC}, there exists a non-trivial outer function $g\in H^1(\R)$ such that
\[
|g|=w
\qquad \text{a.e. on } \R.
\]
Consider
\[
\widehat{g}(\xi):= \int_{\R} g(x)e^{-ix\xi}\,dx, \qquad \xi\in \R,
\]
and recall that $g\in H^1(\R)$, implies that its Fourier transform $\widehat{g}$ vanishes $dm$-a.e on $(0,\infty)$. Moreover, because $|g(\xi)|\leq W(\xi)e^{-|\xi|^{1/2}}$, all moments of $g$ are finite, so $\widehat{g}\in C^\infty(\R)$. But since $\widehat{g}$ is smooth and vanishes on $(0,\infty)$, it follows that we must have
\[
\widehat{g}^{(n)}(0)=0,\qquad n=0,1,2,\dots.
\]
Now define the measure on $\R$ by $d\mu(\xi):=g(\xi)\,d\xi$, which is seen to be an element in $M(W)$:
\[
\int_{\R}\frac{1}{W(\xi)}\,d|\mu|(\xi)
=
\int_{\R}\frac{|g(\xi)|}{W(\xi)}\,d\xi
=
\int_{\R}e^{-|\xi|^{1/2}}\,d\xi
<\infty.
\]
Furthermore, we also have
\[
\int_{\R}\xi^n\,d\mu(\xi)
=
\int_{\R}\xi^n g(\xi)\,d\xi
=
(-i)^n \,\widehat{g}^{(n)}(0)
=
0, \qquad n=0,1,2,\dots
\]
In conclusion, $\mu \in M(W)$ is a non-trivial measure with an identically vanishing moment sequence.
\end{proof}

In order to prove \thref{THM:BERN}, we shall need a lemma, which extracts the precise way in which the additional regularity assumption of $W$ will be utilized.

\begin{lemma}\thlabel{LEM:REGW} Let $W$ be a rapidly decaying weight as in the statement of \thref{THM:BERN}. Then there exists a constant $A=A(W)>0$, such that
    \[
    \max_{x\geq 0} x^{N(\xi)} W(x) \leq  A\xi^{N(\xi)} W(\xi), \qquad \xi \geq \max(c,1)
    \]
    where $N(\xi)$ denotes the smallest integer which $N(\xi) \geq w(\xi)$.
\end{lemma}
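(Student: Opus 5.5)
The plan is to pass to logarithms and compare the function $\Phi(x) := N\log x + \log W(x)$ with its value at $x=\xi$, where $N=N(\xi)$. For $x\geq c$ the weight can be written as $\log W(x) = -\int_c^{x} \frac{w(t)}{t}\,dt$. The inequality to be shown therefore reduces to
\[
\Phi(x)-\Phi(\xi) = \int_\xi^x \frac{N - w(t)}{t}\,dt \leq \log A, \qquad x\geq c,
\]
together with a separate, cruder bound on the compact range $0\leq x\leq c$.

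The first step handles $x\geq \xi$. Since $w$ is non-decreasing, for $t\geq \xi$ we have $w(t)\geq w(\xi)$. Moreover $N-w(\xi)<1$ by the minimality of $N$. Hence the integrand is at most $(N-w(t))/t$, which is non-positive once $w(t)\geq N$. The part where $w(\xi)\leq w(t)<N$ has integrand bounded by $1/t$, but its length in the logarithmic variable is not obviously controlled.

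To avoid this issue, I would first replace $N(\xi)$ by $w(\xi)$. Because $\xi\geq 1$, we have $x^{N}\leq x^{w(\xi)}\cdot x^{N-w(\xi)}$, and the extra factor can be absorbed when $x\leq e\xi$, say. Concretely, the plan is to prove $\max_x x^{w(\xi)}W(x)=\xi^{w(\xi)}W(\xi)$ exactly. This holds because $\frac{d}{d\log x}\bigl(w(\xi)\log x+\log W(x)\bigr) = w(\xi)-w(x)$, which is $\geq 0$ for $x\leq \xi$ and $\leq 0$ for $x\geq\xi$ by monotonicity of $w$. So the maximum over $[c,\infty)$ is attained at $x=\xi$.

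Next I would pass from the exponent $w(\xi)$ to $N$. Write $\delta=N-w(\xi)\in[0,1)$. For $x\leq \xi$ we have $x^{N}W(x)=x^\delta x^{w(\xi)}W(x)\leq \xi^\delta\,\xi^{w(\xi)}W(\xi)=\xi^N W(\xi)$ directly. For $x\geq \xi$ this step is the main obstacle, because the factor $x^\delta/\xi^\delta$ grows. I would control it using the derivative in the logarithmic variable, which is now $N-w(x)$. It is at most $1$ everywhere on $[\xi,\infty)$, and becomes $\leq 0$ once $w(x)\geq N$. Since $w\to\infty$, there is a point $x^*$ beyond which the function decreases. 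The concern is that $\log(x^*/\xi)$ might be large when $w$ grows slowly.

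If the monotonicity argument alone fails to bound this, I would instead exploit rapid decay. One option is to define $N(\xi)$ with a ceiling that is compatible, or to use the fact that the maximum of $x^N W(x)$ sits at the point $x_N$ where $w(x_N)\approx N$. At that point $w(\xi)\leq N\leq w(\xi)+1$, and one estimates $\int_\xi^{x_N}\frac{N-w(t)}{t}dt$ through an enlarged comparison that accepts the resulting constant $A$. If the quantity is genuinely unbounded, a harmless modification of $w$ (a regularization that does not affect the hypotheses or the logarithmic integral) may be needed.

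Finally, the range $0\leq x\leq c$ contributes at most $\max(c,1)^N\sup W\leq \xi^N\sup W$. This must be compared with $\xi^N W(\xi)$. Here I would use that, on $[c,\infty)$, $\xi^N W(\xi)\geq c^N W(c)$ by the monotonicity just established, together with $W$ bounded and $W(c)>0$. Together these give the constant $A$.
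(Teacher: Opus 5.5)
Your treatment of $0\le x\le c$ and of $c\le x\le \xi$ is correct, and your identity $\sup_{x\ge c}x^{w(\xi)}W(x)=\xi^{w(\xi)}W(\xi)$ is exactly the paper's argument. The genuine gap is the range $x\ge\xi$ with exponent $N=N(\xi)$, where $\log\frac{x^NW(x)}{\xi^NW(\xi)}=\int_\xi^x\frac{N-w(t)}{t}\,dt$. None of your fallback options controls this quantity, and no argument can, because the lemma is false as stated.

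Take $c=1$, $T_k=e^{k^2}$ and $w(t)=k+\tfrac12$ on $[T_k,T_{k+1})$. This $w$ is positive, non-decreasing and tends to $\infty$. Yet for $\xi=T_k$ one has $N(\xi)=k+1$ and
\[
\frac{T_{k+1}^{N}\,W(T_{k+1})}{\xi^{N}\,W(\xi)}=\Bigl(\frac{T_{k+1}}{T_k}\Bigr)^{1/2}=e^{k+1/2}\longrightarrow\infty .
\]
Inserting such plateaus sparsely into a rapidly growing $w$ gives counterexamples with $\int\log W\,dP=-\infty$ as well. The paper's proof has the same hole: it bounds $\sup_{x\ge c}x^{w(\xi)}W(x)$ and then only uses $\xi^{w(\xi)}\le\xi^{N}$, which says nothing about $\sup_{x\ge c}x^{N}W(x)$.

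What is true, and suffices for \thref{THM:BERN}, is the version with $n=n(\xi):=\lfloor w(\xi)\rfloor\ge 1$.
\begin{itemize}
\item For $x\ge\xi$ the integrand $n-w(t)$ is non-positive.
\item For $c\le x\le\xi$, let $x_n\le\xi$ be the point where $w$ first reaches $n$. Then $\int_x^\xi\frac{w(t)-n}{t}\,dt\le\log\frac{\xi}{x_n}\le\frac1n\int_{x_n}^\xi\frac{w(t)}{t}\,dt\le\frac1n\log\frac1{W(\xi)}$.
\end{itemize}
Together with your treatment of $[0,c]$ this gives
\[
\sup_{x\ge 0}x^{n}W(x)\le A\,\xi^{n}\,W(\xi)^{1-1/n(\xi)} .
\]
Since $n(\xi)\to\infty$, step 3 of the proof of \thref{THM:BERN} then yields $\log|F(\xi)|\le C+\tfrac12\log W(\xi)$ for large $\xi$. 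This still forces $\int\log|F|\,dP=-\infty$.

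Your idea of modifying the weight also works once made concrete. After enlarging $c$ so that $w\ge1$, replace $w$ by the integer-valued $\lfloor w\rfloor$. The resulting weight $\widetilde W$ satisfies $W\le\widetilde W\le (x/c)\,W$. Hence its logarithmic integral still diverges, and density of polynomials in $C_0(\widetilde W)$ implies density in $C_0(W)$. For integer-valued $w$ one has $N(\xi)=w(\xi)$, so your exact-maximum argument proves the lemma verbatim for $\widetilde W$.
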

\begin{proof}
There is no harm in assuming that $c\geq 1$. Fix $\xi \geq c$, let $N=N(\xi)\geq w(\xi)$ be the smallest such integer, and write
\[
\sup_{x\geq 0} x^N W(x) \leq c^N \sup_{0\leq x\leq c}W(x) + \sup_{x\geq c} x^N W(x).
\]
Note that since $w$ is non-decreasing on $[c,\infty)$, we have
\[
-\log W(\xi) = \int_c^\xi \frac{w(t)}{t} dt \leq w(\xi) \log \frac{\xi}{c} \leq N \log \frac{\xi}{c},
\]
hence it follows that 
\[
c^N = \xi^N (c/ \xi)^N \leq \xi^N W(\xi).
\]
This takes care of first term, hence it remains only to estimate the second term. We use the definition of $W$ as follows:
\[
\log W(x) x^{w(\xi)} = - \int_c^x \frac{w(t)-w(\xi)}{t}dt +  w(\xi)\log c.
\]
Since $w$ is increasing and the integrand changes sign at $t=\xi$, we see that the maximum is attained at $\xi$, therefore
\[
\sup_{x\geq c} x^{w(\xi)} W(x) = \xi^{w(\xi)} W(\xi) \leq \xi^{N(\xi)}W(\xi), \qquad \xi \geq c.
\]
Combining the two estimates completes the proof.
\end{proof}

We now turn to the proof of sufficiency. 

\begin{proof}[Proof of \thref{THM:BERN}]
We argue by duality, so let $\mu \in M(W)$ with the property that $m_n(\mu)=0$ for $n=0,1,2,\dots$. Note that since $W$ is bounded on $\R$, we have 
\[
\int_{\R} d\abs{\mu}(x) \leq \sup_{x\in \R} W(x) \cdot \norm{\mu}_{M(W)},
\]
hence $\mu$ has finite total variation in $\R$, and hence its Fourier transform $\widehat{\mu}$ defines a bounded continuous function on $\R$.  
\proofpart{1}{The Cauchy transform and moments:}
Consider the Cauchy transform of $\mu$, that is, 
\[
\Ka (\mu) (z) := \int_{\R} \frac{d\mu(x)}{x-z}, \qquad z\in \C \setminus \R
\]
and note that a simple expansion into a geometric sum shows that for all integers $n\geq 0$:
\[
\Ka (\mu)(z) = z^{-(n+1)} \Ka( x^{n+1}d\mu) (z) -\sum_{k=0}^n m_n(\mu) z^{-(k+1)}, \qquad z\in \C \setminus \R.
\]
Hence if all the moments $(m_n(\mu))_n$ vanish, the following identity holds:
\begin{equation}\label{EQ:CAUCHYMOM}
\Ka (\mu)(z) = z^{-n} \Ka (x^n \mu)(z), \qquad z\in \C \setminus \R, \qquad n=1,2,3,\dots
\end{equation}
Note that $F(z):=\Ka(\mu)(z+i)$ is bounded and analytic in the upper-half plane $\C_+ := \{z\in \C: \Im(z)>0 \}$. Indeed, this follows from the assumption that $\mu\in M(w)$:
\[
\lvert F(z) \rvert \leq \int_{\R} \frac{d\lvert\mu \rvert (x) }{|x-z-i|} \leq \int_{\R} d\lvert\mu \rvert (x)  \leq \sup_{x\in \R} W(x) \cdot \norm{\mu}_{M(W)}, \qquad z\in \C_+.
\]
\proofpart{2}{Reduction to logarithmic integral divergence:} 
We claim that the principal aim is to prove that
\begin{equation}\label{EQ:LOGDIVF}
\int_{\R} \frac{\log |F(x)|}{1+x^2} dx = -\infty,
\end{equation}
which would imply that $F \equiv 0$, since functions in $H^\infty(\C_+)$ are logarithmically integrable wrt $dP$ on $\R$. But this would at its turn imply that $\Ka (\mu)=0$, and using the identity
\[
\Ka (\mu)(z) = \int_0^\infty \widehat{\mu}(\xi) e^{i\xi z} d\xi, \qquad z\in \C_+,
\]
we would conclude that $\widehat{\mu}=0$ a.e on $[0,\infty)$, which by the F. and M. Riesz Theorem forces $d\mu = \conj{h} dx$ a.e on $\R$ with $h\in H^1(\R)$. Now since $m_n(\mu) = m_n(\conj{\mu})$, we may then repeat the entire argument to the complex conjugate $\conj{\mu}$, and arrive to the conclusion that $d\mu= g dx$ a.e on $\R$ for some $g \in H^1(\R)$. Together, these condition force $\mu \equiv 0$, hence also the desired conclusion that $\Po$ is dense in $C_0(W)$. 
\proofpart{3}{Moment estimates:}
In order to prove \eqref{EQ:LOGDIVF}, we primarily note that using \eqref{EQ:CAUCHYMOM}, we get for any $n=0,1,2, \dots$
\[
\lvert F(\xi) \rvert = \abs{\Ka(\mu)(\xi+i)} \leq |\xi|^{-n} \int_{\R} \frac{|x|^n}{|\xi+i-x|} d\abs{\mu}(x) \leq 2|\xi|^{-n} \norm{\mu}_{M(W)} \sup_{x\geq 0} |x|^n W(x).
\]
It is at this step that the regularity properties of $W$ will enter the picture. Choosing $\xi\geq c$ and $n=N(\xi)\geq w(\xi)$ and invoking \thref{LEM:REGW}, we conclude that 
\[
\abs{ F(\xi) } \leq A(w) \norm{\mu}_{M(W)} W(\xi), \qquad \xi \geq c.
\]
This estimate implies that that $\log |F|$ is not logarithmically integrable wrt $dP$ on $\R$, from the assumption that $\log W \notin L^1(dP)$. This completes the proof.

\end{proof}

We complete this section by recording the following result, which summarizes our discussion in this subsection.

\begin{cor}\thlabel{COR:2SIDMAJ}
Let $W$ be a weight satisfying the hypothesis of \thref{THM:BERN}. Then there exists a non-trivial measure $\mu \in M(W)$ with $m_n(\mu)=0$ for $n=0,1,2,\dots$ if and only if $\log W \in L^1(dP)$.
\end{cor}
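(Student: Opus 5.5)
The corollary combines \thref{THM:BERN} with \thref{THM:NECBERN}, and each direction comes from one of them. Throughout, $W$ satisfies the hypotheses of \thref{THM:BERN}: it is even, rapidly decaying, and has the prescribed form with $w\uparrow+\infty$. By the Hahn--Banach discussion preceding \thref{THM:BERN}, a non-trivial $\mu\in M(W)$ with all moments $m_n(\mu)=0$ exists if and only if $\Po$ fails to be dense in $C_0(W)$.

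\emph{Necessity.} Suppose such a non-trivial $\mu$ exists. Then $\Po$ is not dense in $C_0(W)$. By the contrapositive of \thref{THM:BERN}, the logarithmic integral does not diverge to $-\infty$. We also need the positive part to be integrable against $dP$. Since $W$ is bounded on $\R$, $\log^+ W$ is bounded and so lies in $L^1(dP)$. Hence $\int_\R \frac{\log W}{1+x^2}\,dx>-\infty$ forces $\log W\in L^1(dP)$.

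\emph{Sufficiency.} Suppose $\log W\in L^1(dP)$. To apply \thref{THM:NECBERN} we must check that $W\in L^1(\R)$. Fix any $k\ge 2$. Since $W$ decays rapidly, $|x|^kW(x)\to 0$ as $|x|\to\infty$. Together with boundedness of $W$ on compacts (indeed $W$ is bounded on $\R$), this gives $W(x)\lesssim (1+|x|)^{-2}$, so $W\in L^1(\R)$.

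Measurability and positivity of $W$ are implicit in the setting. \thref{THM:NECBERN} then produces a non-trivial $\mu\in M(W)$ whose moment sequence vanishes identically. This completes the equivalence.

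The main point to watch is the necessity direction. \thref{THM:BERN} gives density only when the integral equals $-\infty$. To convert "not $-\infty$" into genuine $L^1(dP)$-integrability, one must use that $\log^+W$ is harmless because $W$ is bounded. Apart from that, the argument is bookkeeping of the two earlier theorems.
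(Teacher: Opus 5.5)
Your proposal is correct and follows the same route as the paper, which records this corollary simply as the combination of \thref{THM:BERN} (divergence of the logarithmic integral forces density, hence no non-trivial annihilating measure) and \thref{THM:NECBERN} (an integrable weight with $\log W\in L^1(dP)$ admits one). Your extra checks are exactly the details the paper leaves implicit: boundedness of $W$ makes $\log^+W$ harmless, so ``not $-\infty$'' upgrades to $\log W\in L^1(dP)$, and rapid decay plus boundedness gives $W\in L^1(\R)$.
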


We remark this corollary is essentially a real-line analogue of \thref{COR:SZUP}, which was derived from Szeg\"o's Theorem in the unit-circle.

\subsection{Problems on determining majorants}
A slight modification of Bernstein's problem on weighted polynomial approximation, rephrased as a problem on quasi-analyticicity, goes as follows. Describe the weights $W$ on $\R$, for which there exists a non-trivial function $f \in L^1(\R)$ with the following properties:
\begin{enumerate}
    \item[(1.)] $\abs{f(x)}  \leq  W(x)$, for all $|x|\geq R$ sufficiently large,
    \item[(2.)] The Fourier transform $\widehat{f}$ has a zero of infinite order at the origin:
\[
\widehat{f}(\xi) = \mathcal{O}(|\xi|^n), \qquad \xi \to 0.
\]
for all integers $n\geq 0$.
\end{enumerate}

We declare $W$ to be a $d$-majorant (determining majorant) if there exists no such non-trivial function $f\in L^1(\R)$. In other words, we are interested in exhibiting a non-trivial absolutely continuous measure $fdm \in M(W)$. The main result in this subsection goes as follows.

\begin{thm} Let $W$ be a weight in $L^1(\R)$. For $W$ to be a $d$-majorant it is necessary that $\log W \notin L^1(dP)$. If $W$ additionally satisfies the hypothesis of \thref{THM:BERN}, then the condition $\log W \notin L^1(dP)$ is also sufficient.
\end{thm}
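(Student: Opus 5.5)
The plan has two parts: necessity reuses the construction from the proof of \thref{THM:NECBERN}, and sufficiency reruns the Cauchy-transform argument from the proof of \thref{THM:BERN}.

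\emph{Necessity.} Suppose $W\in L^1(\R)$ and $\log W\in L^1(dP)$; we produce a non-trivial $f\in L^1(\R)$ violating the $d$-majorant property. Set $w(x):=W(x)e^{-|x|^{1/2}}$, so $w\in L^1(\R)$, $\log w\in L^1(dP)$ and $w\le W$. By \thref{THM:OUTERFUNC} there is an outer $g\in H^1(\R)$ with $|g|=w$ a.e., hence $|g|\le W$ everywhere, which gives (1.). Because $|g(x)|\le W(x)e^{-|x|^{1/2}}$, all moments of $g$ are finite, so $\widehat g\in C^\infty(\R)$. Since $g\in H^1(\R)$, $\widehat g$ vanishes on a half-line ending at $0$. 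Therefore every derivative of $\widehat g$ vanishes at the origin, and Taylor's formula gives $\widehat g(\xi)=\mathcal O(|\xi|^n)$ for every $n$, which is (2.). Taking $f=g$ shows $W$ is not a $d$-majorant.

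\emph{Sufficiency.} Assume $W$ satisfies the hypotheses of \thref{THM:BERN}, $\log W\notin L^1(dP)$, and $f\in L^1(\R)$ satisfies (1.) and (2.); we show $f\equiv0$. First I would reduce to the moment problem. Condition (2.) should force the moments $m_n(f)=\int x^n f\,dx$ to exist and vanish. The difficulty is that (1.) only bounds $|f|$ for $|x|\ge R$, while (2.) is an asymptotic statement and does not presuppose differentiability of $\widehat f$. Finite moments follow from $f\in L^1$ together with rapid decay of $W$ on $|x|\ge R$, since $|x|^nW(x)\to0$. Hence $\widehat f\in C^\infty$, and (2.) then gives $\widehat f^{(n)}(0)=0$, i.e.\ $m_n(f)=0$ for all $n$.

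Next I would run the proof of \thref{THM:BERN} with $d\mu=f\,dx$. Put $F(z)=\Ka(\mu)(z+i)$. From $\int|f|<\infty$, $F\in H^\infty(\C_+)$, and the vanishing moments give the identity \eqref{EQ:CAUCHYMOM}:
\[
|F(\xi)|\le |\xi|^{-n}\int\frac{|x|^n|f(x)|}{|\xi+i-x|}\,dx .
\]
Split this integral into $|x|\le R$ and $|x|\ge R$. On $|x|\ge R$ use $|f|\le W$ together with \thref{LEM:REGW} with $n=N(\xi)$; this part is bounded by $A\,W(\xi)\int (|x|/\xi)^{-\delta}\ldots$. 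Here I expect the main obstacle: unlike in \thref{THM:BERN}, we no longer have $\int|x|^n W^{-1}d|\mu|$ with an integrable factor. I would handle it by replacing $W$ with $W_1(x)=W(x)(1+x^2)^{-1}$, or by using $n=N(\xi)-2$, so that $\sup_x |x|^{n}W(x)(1+x^2)$ still obeys the estimate of \thref{LEM:REGW} up to a factor $\xi^2$. The compact piece $|x|\le R$ is at most $\|f\|_{L^1}R^n\xi^{-n}$, and $(R/\xi)^{N(\xi)}$ is dominated by $\xi^{N}W(\xi)\xi^{-N}$ for large $\xi$, exactly as the first term in \thref{LEM:REGW} was treated.

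These bounds give $|F(\xi)|\lesssim \xi^{2}W(\xi)$ for large $\xi$. Since $\log W\notin L^1(dP)$, this yields $\int \log|F|\,dP=-\infty$, so $F\equiv0$. The conclusion then follows as in step 2 of \thref{THM:BERN}: $\widehat f=0$ on a half-line, hence $f$ lies in $\conj{H^1}$ by the F.\ and M.\ Riesz Theorem on $\R$. Applying the same argument to $\conj f$ places $f$ in $H^1$ as well, so $f\equiv0$.
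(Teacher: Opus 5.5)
Your proposal is correct and follows the paper's proof essentially step for step. Necessity uses the outer function with modulus $W(x)e^{-|x|^{1/2}}$. Sufficiency deduces vanishing moments, reruns the Cauchy-transform estimate from \thref{THM:BERN}, concludes $F\equiv 0$ from the divergence of $\int \log W\,dP$, and finishes with the F.\ and M.\ Riesz argument applied to $f$ and $\conj{f}$.

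Your version is in fact more careful than the paper's. The paper applies $|f|\le W$ on all of $\R$ and bounds the moment integral of $W$ loosely. Your split at $|x|=R$ and your choice $n=N(\xi)-2$ make those steps rigorous; just discard the unfinished phrase involving $(|x|/\xi)^{-\delta}$.
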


\begin{proof}
\proofpart{1}{Necessity:} The proof is very similar to that of \thref{THM:NECBERN}, we only sketch it. Arguing by contraposition, assume that $\log W \in L^1(dP)$. Set $w(x) := W(x) \exp(-|x|^{1/2})$ and note that $\log w \in L^1(dP)$ and $w \in L^1(\R)$. We can therefore form the outer function $f \in H^1(\R)$ such that $|f|=w$ a.e on $\R$. Now since $x^n w(x)$ is integrable for all integers $n\geq 0$, we conclude that $\widehat{f} \in C^\infty(\R)$, and since $\supp{\widehat{f}}\subseteq[0,\infty)$, we must have that $\widehat{f}^{(n)}(0)=0$ for all integers $n=0,1,2,\dots$. Since $w\leq W$, we conclude that $W$ is not a $d$-majorant.
\proofpart{2}{Sufficiency:} Now suppose that $W$ satisfies the hypothesis of \thref{THM:BERN} and $\log W \notin L^1(dP)$. Let $f\in L^1(\R)$ be an arbitrary element with 
\[
(i) \, \abs{f(x)} \leq W(x),  \qquad |x|>R \qquad (ii) \, \widehat{f}(\xi) = \mathcal{O}(|\xi|^n), \qquad \xi \to 0, \qquad n\geq 0.
\]
Since $\widehat{f}$ is continuous on $\R$ we have $\widehat{f}(0)=0$, and inductively, we can deduce that $\widehat{f}$ is infinitely differentiable at $\xi=0$ with $\widehat{f}^{(n)}(0)=0$ for $n=0,1,2,\dots$. Now since $W$ decreases rapidly at infinity, we can differentiate inside the integral to conclude that $\widehat{f}\in C^\infty(\R)$, hence the moments of $f$: $m_n(fdm)=0$ for all $n=0,1,2,\dots$. Now consider $F(z):= \Ka(f)(z+i)$ which defines bounded analytic function on $\C_+$. Furthermore, the vanishing moments imply that we can use \eqref{EQ:CAUCHYMOM}, which gives 
\[
\abs{F(\xi)} = |\xi|^{-n} \abs{ \Ka(x^n f)(\xi +i)}  \leq |\xi|^{-n} \int_{\R} \frac{|x|^n W(x)}{|\xi+i-x|}dx\leq 2|\xi|^{-n} \int_{0}^\infty x^n W(x)dx,
\]
for all $\xi\neq 0$ and any $n=0,1,2,\dots$. As in the proof of \thref{THM:BERN}, once we show that $\log F \not\in L^1(dP)$ we can conclude the $f=0$ in a similar way. To achieve this, we shall prove an estimate $F$ on for large enough $\xi$ in terms of $W$. In view of the previous display, we shall estimate the moments of $W$ by utilizing the regularity properties of $W$ as in the proof of \thref{LEM:REGW}. To this end, fix $\xi\geq c$ and set $n=n(\xi)\geq w(\xi)$ be the smallest such integer and note that:
\begin{multline*}
\int_0^\infty x^n W(x) dx \leq \frac{c^{n+1}}{n+1} \sup_{0\leq x\leq c} W(x) + \int_{c}^\infty \frac{dx}{x^2} \sup_{x\geq c}x^{w(\xi)} W(x) \\
\leq \frac{c^{n+1}}{n+1} \sup_{0\leq x\leq c} W(x) + \frac{1}{c} \sup_{x\geq c}x^{n} W(x) \leq A(W)\xi^{n+1} W(\xi),
\end{multline*}
where $A(W)>0$ is constant independent of $\xi$. Consequently, it follows that 
\[
\abs{F(\xi)} \leq 2A(w)\xi W(\xi), \qquad |\xi|>c,
\]
hence $\log |F| \notin L^1(dP)$. This completes the proof.

\end{proof}

We shall now consider a unilateral version of the problem on $d$-majorants. Suppose we have a weight $W$ and a non-trivial function $f\in L^1(\R)$ with the property that
\begin{equation}\label{EQ:1EST}
\abs{f(x)} \leq  W(x), \qquad x >c. 
\end{equation}
Let us first examine the simplest scenario when $W\equiv 0$. In that case, the right translation of $f_c$ of $f$ by $c$ is supported in $(-\infty, 0]$, hence by the F. and M. Riesz Theorem we have
\[
\int_{\R} \frac{\log |\widehat{f}(\xi)|}{1+x^2}dx >- \infty.
\]
This implies that $\widehat{f}$ cannot vanish on a set of positive measure in $\R$. On the contrary, the complex conjugate of any $H^1(\R)$-function satisfies such an estimate, hence one can generally not find any better restriction on the zero set of $\widehat{f}$. However, one can easily construct a non-trivial $f\in H^1(\R)$, which is smooth on $\R$ and \emph{flat} at $0\in \R$: 
\[
f^{(k)}(0)=0, \qquad  k=0,1,2,\dots
\]
This implies that the unilateral condition is much weaker than the two-sided. However, it turns out that $\int_{\R} \log W dP=- \infty$ still imposes a restriction on how small $\widehat{f}$ can be.

\begin{thm}[Unilateral $d$-majorant]\thlabel{THM:1SdMAJ}
Let $W$ be an even weight which satisfies $\int_{\R} \log WdP =-\infty$. Then whenever $f\in L^1(\R)$ is a non-trivial function satisfying the unilateral
\[
\abs{f(x)} \leq W(x), \qquad x>c
\]
then its Fourier transform $\widehat{f}$ cannot vanish on a set of positive Lebesgue measure on $\R$.
\end{thm}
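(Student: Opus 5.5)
The plan is to exploit the one-sided smallness of $f$ through a family of truncations and then apply the half-plane Jensen inequality (the logarithmic integrability of $H^\infty(\C_+)$-functions used in the proofs of \thref{THM:BERN} and Pollard's Theorem). Assume $\widehat f=0$ on a set $E\subset\R$ with $|E|>0$; we may take $E$ bounded and $f\not\equiv0$. For $T>c$, split
\[
f=f\,1_{(-\infty,T]}+f\,1_{(T,\infty)}=:f_T^-+f_T^+ .
\]
Since $f_T^-$ is supported in $(-\infty,T]$, the function
\[
H_T(z):=e^{iTz}\int_{-\infty}^{T}f(x)e^{-ixz}\,dx
\]
is analytic in $\C_+$ and bounded there by $\norm{f}_{L^1}$, so $H_T\in H^\infty(\C_+)$. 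On $E$ we have $\widehat{f_T^-}=-\widehat{f_T^+}$, and therefore
\[
|H_T(\xi)|\le \varepsilon(T):=\int_T^\infty W(x)\,dx,\qquad \xi\in E .
\]
Thus $H_T$ is uniformly small on a fixed set of positive measure, while it stays uniformly bounded on all of $\R$.

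Next I would apply the two-constant/Jensen estimate (as in \thref{LEM:2const}) to $H_T$ at points $z=s+iy$. Writing $\omega_{z}(E)$ for the harmonic measure of $E$ at $z$, this gives
\[
\log|H_T(s+iy)|\le \omega_{s+iy}(E)\log\varepsilon(T)+\log\norm{f}_{L^1}.
\]
Since $E$ is bounded, for large $y$ and $|s|\lesssim y$ we have $\omega_{s+iy}(E)\asymp |E|/y$. Unwinding the definition of $H_T$, this becomes a bound on an exponentially weighted Laplace-type integral:
\[
\Big|\int_{-\infty}^{T} f(x)\,e^{y(x-T)}e^{-ixs}\,dx\Big|\le \norm{f}_{L^1}\,\varepsilon(T)^{c|E|/y},
\]
valid uniformly in $T>c$ and in large $y$. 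The point is that the tail $\varepsilon(T)$ is tiny: because $W$ is even, the hypothesis $\int\log W\,dP=-\infty$ forces $\int^\infty \log W(x)\,x^{-2}dx=-\infty$, and the same then holds for $\log\varepsilon(T)$, possibly after the usual regularisation of $W$.

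The final and hardest step is to turn this family of estimates into $f\equiv0$. The plan is to couple $y$ and $T$, say $y=y(T)$, so that the exponent $|E|\log\varepsilon(T)/y(T)$ is unboundedly negative, and to use this to show that the bounded analytic function
\[
G(z):=\int_{-\infty}^{0}f(x)\,e^{-ixz}\,dx
\]
(after a translation placing $c$ at the origin) has divergent logarithmic integral along a horizontal line. The mechanism is the same as in the proof of \thref{THM:BERN}. Comparing $\int_{-\infty}^{T}$ with $\int_{-\infty}^{0}$ costs only $\int_0^T W$ times exponential factors, so one obtains $|G(iy)|\lesssim \exp\big(-\tfrac{c}{y}|\log\varepsilon(T)|+O(Ty)\big)$. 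The first thing I would try is to choose $T\asymp$ the $\log$-scale of $y$ and then use the divergence $\int \log\varepsilon(T)\,T^{-2}dT=-\infty$ together with a Phragm\'en--Lindel\"of/Jensen argument in $\C_+$ to conclude $G\equiv0$. Once $G\equiv0$, the function $\widehat f$ coincides with $\widehat{f\,1_{(0,\infty)}}$, which extends to $\C_-$. The same estimates, now applied there with $W$ dominating $f$ on $(0,\infty)$, force the logarithmic integral of this extension to diverge as well, giving $f\equiv0$. Balancing the exponential loss $e^{O(Ty)}$ against the gain $\varepsilon(T)^{c/y}$ is the main obstacle, and it is exactly where the divergence of the logarithmic integral of $W$ has to be used sharply.
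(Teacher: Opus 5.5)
There is a genuine gap, at the step you pass over with ``possibly after the usual regularisation of $W$''. Evenness and $\int_{\R}\log W\,dP=-\infty$ do \emph{not} force $\int^\infty \log\varepsilon(T)\,T^{-2}\,dT=-\infty$ for $\varepsilon(T)=\int_T^\infty W$. Jensen only gives $\log\varepsilon(T)\ge\int_T^{T+1}\log W$, which is the useless direction. Indeed $W$ may be tiny, or even zero, on a thin set while its tails are not small at all, and nothing in the hypotheses lets you regularise. In fact, with no regularity on $W$ the statement is false. Fix $a>0$ and an interval $I\subset(c,\infty)$, and pick a real $v\in C_c^\infty(\R)$ with $v(x)=\pi/2-ax+2\arg(x+i)$ on $I$. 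Let $\Phi(z)=\frac{1}{\pi}\int_{\R}\frac{v(t)}{t-z}\,dt$; its boundary values satisfy $\Im\Phi=v$, and $\Re\Phi$ is bounded. Set $F(x)=e^{iax}e^{\Phi(x)}(x+i)^{-2}$. Then $\widehat F=0$ on $(-\infty,a)$ and $\arg F=\pi/2$ on $I$. Hence $f:=F+\conj{F}\in L^1(\R)$ is non-trivial, vanishes on $I$, and has $\widehat f=0$ on $(-a,a)$. The even weight $W:=\norm{f}_{L^\infty}1_{\R\setminus(I\cup -I)}$ (or a positive variant with $\int_I\log W=-\infty$) satisfies all the hypotheses. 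The paper's deduction of this statement from \thref{THM:BEURLING1} makes the same wrong-direction Jensen step. What is actually needed is a tail hypothesis such as $\int_0^\infty\log\big(\int_t^\infty W\big)\frac{dt}{1+t^2}=-\infty$. This holds, for example, for $W$ as in \thref{THM:BERN}, where $\int_t^\infty W\le tW(t)$ for large $t$.

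Even under such a hypothesis, your final step fails. The two-constant bound controls $\int_{-\infty}^{T}f(x)e^{(x-T)y}e^{-ixs}\,dx$. Passing to $G(s+iy)$ leaves the additive term $\int_0^T f(x)e^{xy}e^{-ixs}\,dx$, which can be as large as $e^{Ty}\int_0^T W$ and carries no smallness, so your bound on $|G(iy)|$ does not follow. Moreover, bounds on the imaginary axis say nothing about a logarithmic integral along a horizontal line. They could force $G\equiv 0$ only if they were super-exponential in $y$, whereas the gain $\varepsilon(T)^{c|E|/y}$ beats the loss $e^{Ty}$ only when $|\log\varepsilon(T)|\gg Ty^2$, i.e.\ for super-exponentially small tails. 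So the balancing you describe cannot be achieved. The missing idea is Beurling's coupling in the proof of \thref{THM:BEURLING1} via \thref{PROP:TCTHM}. With $d\mu=f\,dx$, your $H_T$ is exactly the function $e^{iAz}\widehat{\mu_A}(z)$ there, with $A=T$. Its two-constant bound is inserted into the Laplace representation $\Ka(\mu)(\xi+i)=i\int_0^\infty\widehat{\mu}(t)e^{it(\xi+i)}\,dt$ of the Cauchy transform, after moving a density point of $E$ to $0$ and shifting the $t$-contour to $[0,i]\cup\{i+s: s\ge 0\}$. Crucially, the truncation level is tied to the point on the horizontal line, $A=\xi/2$. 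This gives $|\Ka(\mu)(\xi+i)|\lesssim\max\big(e^{-\xi/2},V_\mu(\xi/2)\big)^{\alpha}$, so the bounded function $\Ka(\mu)(\cdot+i)$ has divergent logarithmic integral and vanishes. The same argument in $\C_-$ gives $\Ka(\mu)\equiv0$ off $\R$, hence $\mu=0$.
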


We shall consider a slightly different statement, which will be proved in the similar way as \thref{THM:1SdMAJ}, but has the additional upside that it can be generalized further, and does not require any additional regularity conditions. The result in question may be viewed as a generalization of the F. and M. Riesz Theorem, is due to A. Beurling.
Given a measure $\mu \in M(\R)$, we denote by 
\[
V_\mu (t) := \abs{\mu}\left([t,+\infty) \right) \qquad t>0
\]
the total-variation of $\mu$, measured at neighborhoods at $+\infty$. The main result in this subsection goes as follows.

\begin{thm}[Beurling] \thlabel{THM:BEURLING1} Let $\mu \in M(\R)$ with the property that 
\[
\int_0^\infty \frac{ \log V_\mu (t) }{1+t^2}dt =- \infty.
\]
Then $\widehat{\mu}$ cannot vanish on a set of positive Lebesgue measure, unless $\mu$ is identically zero.
\end{thm}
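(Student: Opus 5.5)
The plan is to split $\mu$ at the origin and exploit the two half-plane analytic extensions of the pieces, playing the smallness of $V_\mu$ against the vanishing of $\widehat{\mu}$ on a set $E\subset\R$ of positive measure. We may shrink $E$ to a compact set of positive measure, consisting of Lebesgue density points, that lies in a fixed interval $[\alpha,\beta]$.

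\textbf{Step 1: the pieces.} For $t\geq 0$ write $z=\xi+i\eta$ and set
\[
A_t(z):=\int_{(-\infty,t)} e^{-izx}\,d\mu(x)\quad (\eta>0), \qquad B_t(z):=\int_{[t,\infty)} e^{-izx}\,d\mu(x) \quad (\eta\le 0).
\]
Then $e^{itz}A_t$ is bounded by $\norm{\mu}$ in $\C_+$ and continuous up to $\R$. Moreover $|B_t(z)|\le V_\mu(t)e^{\eta t}$ in $\conj{\C}_-$. Finally $A_t+B_t=\widehat{\mu}$ on $\R$, so $|A_t|\le V_\mu(t)$ on $E$. The two families are linked through the entire functions $C_t(z)=\int_{[0,t)}e^{-izx}d\mu$ by the identities $A_t=A_0+C_t$ and $B_0=C_t+B_t$.

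\textbf{Step 2: two-constant estimate.} Apply the sub-mean-value property of $\log|e^{itz}A_t|$ with respect to harmonic measure of $\C_+$, exactly as in the proof of \thref{LEM:2const}. This gives
\[
\log|A_t(z)| \le t\eta+\log\norm{\mu}+\omega_z(E)\log\frac{V_\mu(t)}{\norm{\mu}},\qquad z\in\C_+ .
\]
Near density points of $E$ and for small $\eta$ we have $\omega_z(E)\geq 1/2$. Optimizing over $t$ then yields the Legendre-type quantity $M(\eta):=\inf_{t>0}e^{t\eta}V_\mu(t)$. A standard computation (which I would carry out via the substitution $\eta\sim 1/t$ on dyadic blocks $t\in[2^k,2^{k+1}]$) shows that the hypothesis $\int_0^\infty \log V_\mu(t)\,dt/(1+t^2)=-\infty$ is equivalent to $\int_0^1 \log M(\eta)\,d\eta=-\infty$.

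\textbf{Step 3: transfer to a single function and conclude.} The estimate in Step 2 concerns $A_{t(\eta)}$, where $t(\eta)$ is the minimizer, and this index varies with $\eta$. The plan is to pass to the lower half-plane, where no index-dependence remains. We have $B_0-C_t=B_t$, which is of size at most $V_\mu(t)e^{\eta t}$ in $\C_-$. By Step 2, $A_0+C_t=A_t$ is of size $\lesssim (e^{t\eta}V_\mu(t))^{1/2}$ near $E$ in $\C_+$. So the same entire function $C_t$ approximates $-A_0$ from above and $B_0$ from below. Using the reflection $z\mapsto\conj{z}$ and the Cauchy estimate, I would try to show that the single bounded analytic function $\Psi(\xi):=A_0(\xi+i\eta)+B_0(\xi-i\eta)$ satisfies $|\Psi|\lesssim M(\eta)^{c}$ on the middle portion of $[\alpha,\beta]$. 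Integrating $\log|\Psi|$ against $d\eta$ over $(0,1)$ and invoking the Jensen-type subharmonicity in the rectangle $(\alpha,\beta)\times(-1,1)$, in the spirit of \thref{THM:KO}, should give the conclusion. Namely, $A_0$ and $-B_0$ are then analytic continuations of each other across $(\alpha,\beta)$ and glue to an entire bounded function vanishing on $E$. Hence this entire function vanishes identically, so $\widehat{\mu_-}\equiv 0$ and $\widehat{\mu_+}\equiv 0$, which gives $\mu\equiv 0$ by uniqueness of the Fourier transform.

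\textbf{Main obstacle.} The hard step is Step 3: making the non-uniformity in $t$ disappear, that is, turning the estimates in Step 2 into a bound on one fixed analytic object whose logarithmic integral diverges. If the reflection route fails, I would fall back on the Pollard-style duality of the previous section. That is, I would bound the Cauchy transform of $\mu_+$ pointwise by $\inf_t e^{t\eta}V_\mu(t)$, and then invoke the fact that a nonzero bounded analytic function cannot have divergent logarithmic integral, as in the proof of \thref{THM:BERN}.
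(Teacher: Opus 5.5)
Your Steps 1 and 2 are correct, and they are the paper's own ingredients. Your $A_t$ is the continuation to $\C_+$ of $\widehat{\mu_t}$, where $\mu_t$ is the restriction of $\mu$ to $(-\infty,t)$, and your two-constant bound is the one used in \thref{PROP:TCTHM}. The gap is Step 3, and the route you propose there fails. Since $\Psi(\xi,\eta)=A_0(\xi+i\eta)+B_0(\xi-i\eta)$ does not depend on $t$, your identities give
\[
\Psi(\xi,\eta)=A_t(\xi+i\eta)+B_t(\xi-i\eta)-2\int_{[0,t)}e^{-i\xi x}\sinh(\eta x)\,d\mu(x).
\]
The only available bound on the last term is $2\norm{\mu}\sinh(\eta t)$. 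At a $t$ making $e^{t\eta}V_\mu(t)$ small one typically has $\eta t\gg1$; for $V_\mu(t)=e^{-t/\log t}$ the optimal $t$ is about $e^{1/\eta}$. If you instead force $\eta t\le 1$, the best these bounds give is $|\Psi|\lesssim \eta\int_0^\infty V_\mu(s)\,ds+V_\mu(1/\eta)^{1/2}$. That is at best of order $\eta$, and $\log\eta$ is integrable on $(0,1)$, so no divergence can come out. There are further problems. $\Psi$ is a holomorphic plus an antiholomorphic function of $\xi+i\eta$, so $\log|\Psi|$ is not subharmonic in the rectangle. Step 2 only controls things near $E$, and since $\Psi(\xi,0^+)=\widehat{\mu}(\xi)$, a bound on a whole subinterval would presuppose that $\widehat\mu$ vanishes there. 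Gluing across $(\alpha,\beta)$ would not produce an entire function anyway. Your fallback names roughly the right target but has no mechanism for turning Step 2 into a Cauchy-transform bound. It also uses the wrong measure: only $\widehat{\mu}$ vanishes on $E$, not $\widehat{\mu_+}$.

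The missing idea is to fix one bounded analytic function on the physical side and let the cut depend on the evaluation point.
\begin{itemize}
\item Normalize $\norm{\mu}\le1$ and modulate $\mu$ so that $0$ is a density point of $E$; this does not change $V_\mu$. Put $F(\xi)=\Ka(\mu)(\xi+i)$.
\item For each $\xi$ and each $A>0$, split $\mu=\mu_A+\nu_A$ at $A$. Then $|\Ka(\nu_A)(\xi+i)|\le V_\mu(A)$, while $\Ka(\mu_A)(\xi+i)$ is, up to a constant factor, $\int_0^\infty\widehat{\mu_A}(s)e^{is(\xi+i)}\,ds$.
\item Deform the path $[0,\infty)$ to the segment from $0$ to $i$ followed by the ray $i+[0,\infty)$. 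On the ray, $|\widehat{\mu_A}(s+i)|\le e^{A}$ gives a contribution at most $e^{A-\xi}$.
\item On the segment, your own Step 2 bound $|\widehat{\mu_A}(iy)|\le e^{Ay}V_\mu(A)^{\alpha}$, with $\alpha=\inf_{0<y<1}\omega_{iy}(E)>0$, gives at most $V_\mu(A)^{\alpha}$ when $A\le\xi$.
\item Taking $A=\xi/2$ yields $|F(\xi)|\lesssim\max\bigl(e^{-\xi/2},V_\mu(\xi/2)\bigr)^{\alpha}$. The hypothesis and the monotonicity of $V_\mu$ then give $\int\log|F|\,dP=-\infty$, so $F\equiv0$.
\item The same argument in $\C_-$ and the Plemelj jump formula give $\mu\equiv0$.
\end{itemize}
This is exactly how the $t$-dependence you identified disappears. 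The Laplace transform converts estimates for the family $\widehat{\mu_t}$ into pointwise bounds for the single function $F$. It is the evaluation point $\xi$, not the height $\eta$, that selects the cut, and the Legendre quantity $M(\eta)$ is not needed.
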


Let us first deduce \thref{THM:1SdMAJ} from Beurling's theorem.

\begin{proof}[Proof of \thref{THM:BEURLING1}]
Set $d\mu(x)=f(x)\,dx$. For $t>c$ large enough, we have
\[
V_\mu(t)=\int_t^\infty |f(x)|\,dx \leq \int_t^\infty W(x)\,dx=:w(t).
\]
According to Beurling's \thref{THM:BEURLING1}, it suffices to show that
\[
\int_0^\infty \frac{\log w(t)}{1+t^2}\,dt=-\infty.
\]

Since $W$ is even and $\int_{\R}\log W\,dP=-\infty$, we have
\[
\int_c^\infty \frac{\log W(x)}{1+x^2}\,dx=-\infty.
\]
Moreover, for all $t>c$ we have
\[
w(t)\geq \int_t^{t+1} W(x)\,dx,
\]
and hence by Jensen's inequality,
\[
\log w(t)\geq \int_t^{t+1} \log W(x)\,dx, \qquad t>c.
\]
Integrating against the Poisson measure $(1+t^2)^{-1}dt$ and applying Fubini's theorem, we obtain
\[
\int_c^\infty \frac{\log w(t)}{1+t^2}\,dt
\gtrsim
\int_c^\infty \log W(x)
\left( \int_{x-1}^{x} \frac{dt}{1+t^2} \right) dx.
\]
Since $(1+t^2)^{-1} \asymp (1+x^2)^{-1}$ for $t \in [x-1,x]$, we get
\[
\int_c^\infty \frac{\log w(t)}{1+t^2}\,dt
\gtrsim
\int_c^\infty \frac{\log W(x)}{1+x^2}\,dx
=
-\infty.
\]

\end{proof}

Now the proof of \thref{THM:BEURLING1} will principally be based on the following proposition, which is essentially a variant of the classical Hadamard two-constant theorem.

\begin{prop}\thlabel{PROP:TCTHM} Let $h$ be a continuous function in $\conj{\C}_+$ and analytic in $\C_+$, and $0<\varepsilon<1$, $A>0$ be constants, and $E\subset \R$ such that $0\in E$ is a Lebesgue point of density. Consider the Laplace-transform of $h$:
\[
H(z) := \int_0^\infty h(t) e^{itz} dx, \qquad z\in \C_+.
\]
If $h$ satisfies the hypothesis
\[ 
(a.) \, \abs{h(t)}\leq \varepsilon \, \, \text{on} \, \, E, \qquad (b.) \, \abs{h(x+iy)} \leq e^{Ay}, \, \, x\in \R, \, \, y>0,
\]
then 
\[
\abs{H(\xi + i)} \leq e^{(A-\xi)} + \varepsilon^{\alpha} \frac{1-e^{(A-\xi)}}{\xi -A}, \qquad \xi >0,
\]
where $\alpha>0$ constant only depending on $E$.
\end{prop}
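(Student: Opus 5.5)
The plan is to move the path of integration in the Laplace transform off the real axis and onto the vertical segment $[0,i]$ followed by the horizontal ray $\{x+i: x\geq 0\}$. The right-hand side of the claimed bound is exactly the sum of the contributions of these two pieces, since
\[
\frac{1-e^{(A-\xi)}}{\xi-A}=\int_0^1 e^{(A-\xi)s}\,ds .
\]

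\emph{Step 1: contour shift.} Put $z=\xi+i$ with $\xi>0$ and apply Cauchy's theorem to $\zeta\mapsto h(\zeta)e^{i\zeta z}$ on the rectangle with vertices $0,R,R+i,i$, then let $R\to\infty$. Hypothesis (b) gives $|h(\zeta)|\leq e^{A}$ on the rectangle. The modulus of $e^{i\zeta z}$ at $\zeta=x+iy$ is $e^{-x-\xi y}$, so the right side $\{R+iy: 0\le y\le 1\}$ contributes at most $e^{A}e^{-R}\to 0$. Taking $y=0$ in (b) gives $|h|\le 1$ on $\R$, so $H(z)$ converges absolutely. We obtain
\[
H(\xi+i)= i\int_0^1 h(is)e^{-s(\xi+i)}\,ds+\int_0^\infty h(x+i)\,e^{i(x+i)(\xi+i)}\,dx .
\]
The modulus of the second integrand is at most $e^{A}e^{-x-\xi}$, so the second term is bounded by $e^{A-\xi}$. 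The first term is bounded by $\int_0^1 |h(is)|e^{-s\xi}\,ds$. It therefore suffices to prove the two-constant estimate
\[
|h(is)|\leq \varepsilon^{\alpha}e^{As},\qquad 0<s\leq 1,
\]
and then integrate it against $e^{-s\xi}$.

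\emph{Step 2: two-constant estimate (main step).} Let $g(\zeta)=h(\zeta)e^{iA\zeta}$. By (b) we have $|g|\leq 1$ on $\conj{\C}_+$, and by (a) we have $|g|\le\varepsilon$ on $E$. Since $\log|g|$ is subharmonic, bounded above and continuous up to $\R$, it is dominated by the Poisson integral of its boundary values, as in the two-constant lemma \thref{LEM:2const}. Using $\log|g|\leq 0$ on $\R$ and $\log|g|\le\log\varepsilon$ on $E$, this gives
\[
\log|g(is)|\leq \omega_{is}(E)\log\varepsilon ,\qquad\text{where } \omega_{is}(E)=\int_E \frac{s}{s^2+t^2}\frac{dt}{\pi}.
\]
Since $\log\varepsilon<0$, it remains to show that $\alpha:=\inf_{0<s\le 1}\omega_{is}(E)>0$. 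This is the only place where the density hypothesis enters, and I expect it to be the delicate point.

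The function $s\mapsto\omega_{is}(E)$ is continuous and positive on $(0,1]$, because $|E|>0$. Near $s=0$, write $1-\omega_{is}(E)=\omega_{is}(\R\setminus E)$. Split this into the contribution from $[-\delta,\delta]$, which is at most $\pi^{-1}s^{-1}|[-s M,sM]\setminus E|$ plus a tail $O(1/M)$, and the contribution from outside $[-\delta,\delta]$, which is $O(s/\delta)$. Because $0$ is a density point of $E$, the first quantity tends to $0$ as $s\to 0$ for each fixed $M$. Hence $\omega_{is}(E)\to 1$ as $s\to 0$, and the infimum is positive. The resulting $\alpha$ depends only on $E$, as required. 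Undoing the substitution, $|h(is)|=|g(is)|e^{As}\le\varepsilon^{\alpha}e^{As}$, and inserting this into Step 1 yields the claimed bound.
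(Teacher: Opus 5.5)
Your proof is correct and follows essentially the same route as the paper: the same rectangle contour with the right side vanishing as $R\to\infty$, the top ray bounded by $e^{A-\xi}$, and a two-constant estimate for $h(\zeta)e^{iA\zeta}$ on the segment $[0,i]$ with $\alpha=\inf_{0<s\le 1}\omega_{is}(E)>0$ coming from the density hypothesis. The differences are minor. You show $\omega_{is}(E)\to 1$ as $s\to 0$, whereas the paper only needs the cruder bound $\omega_{i\delta}(E)\ge 1/(2\pi)$, obtained from $|E\cap(-\delta,\delta)|\ge\delta$. You also spell out the Poisson-domination step for $\log|g|$, which the paper leaves to the reader.
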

\begin{proof}
\proofpart{1}{Lebesgue point of density:} The assumption $0 \in E$ being a Lebesgue point of density readily implies that there exists numbers $0<\delta_0<1$, such that 
\[
\abs{E \cap (-\delta, \delta)}\geq \delta, \qquad 0<\delta < \delta_0.
\]
Let $\omega(E,z)$ denote the harmonic measure of $E\subset \R$ wrt to the domain $\C_+$ and with reference point $z\in \C_+$, explicitly defined by the Poisson formula
\[
\omega(E,z) := \int_E \frac{y}{(x-t)^2 + y^2} \frac{dt}{\pi}, \qquad z= x+iy, \qquad y>0.
\]
The Lebesgue density assumption on $0\in E$ implies that
\[
\omega_{i\delta} (E) \geq \int_{E\cap(-\delta,\delta)} \frac{\delta}{t^2+\delta^2} \frac{dt}{\pi} \geq \frac{|E \cap (-\delta, \delta)|}{2\pi \delta} \geq \frac{1}{2\pi}
\]
whenever $0<\delta \leq \delta_0$. Now since $\delta \mapsto \omega_{i\delta}(E)$ is a positive continuous function on $[\delta_0,1]$, we conclude that $\inf_{0<\delta\leq 1} \omega_{i\delta}(E)>0$. 
\proofpart{2}{Cauchy's Theorem:} Now fix a point $\zeta= \xi +i$ with $\xi \in \R$. Using Cauchy's Theorem, we can transform the Laplace transform $H$ into a sum of integrals of $t \mapsto h(x)e^{it\zeta}$ 
over the line segments $\gamma_1 := \{it: 0\leq t \leq 1\}$, $\gamma_2:= \{t+i: 0\leq t \leq R\}$, $\gamma_3 := \{R+it: 0\leq t \leq 1\}$. Here $R>0$ is a large number, which will eventually be sent to $+\infty$. Now the integral over the right vertical segment is simple to estimate:
\[
\abs{\int_{\gamma_3} h(t) e^{it\zeta} dt} \leq e^{-R} \int_{0}^1 \abs{h(R+it)} e^{-t \xi} dt  \leq e^{-R} \int_0^1 e^{t(A-\xi)}dt  \to 0, \qquad R\to +\infty,
\]
Meanwhile the integral over the horizontal segment $\gamma_2$ can be estimated as follows:
\[
\abs{\int_{\gamma_2} h(t) e^{it\zeta} dt} \leq e^A \int_{0}^R \abs{e^{it\zeta}} dt \leq e^{A-\xi} \int_0^\infty e^{-t} dt =  e^{A-\xi}.
\]
Therefore, it only remains to estimate the integral over $\gamma_1$, and this is where the estimate of the harmonic measure will be utilized.
\proofpart{3}{A two-constant estimate:} Note that $G_A(z):= h(z)e^{izA}$ belongs to $H^\infty(\C_+)$ with $\abs{G_A(z)} \leq e^{Ay} e^{-Ay} \leq 1$ for $z\in \C_+$. Set $\alpha:= \inf_{0<t<1} \omega_{it}(E) \in (0,1)$, and note that $\abs{G_A} = \abs{h}\leq \varepsilon$ on $E$. We claim that another two-constant estimate, as previously encountered in \thref{LEM:2const}, (see also \thref{THM:TWOCONST}) gives
\begin{equation*}
\abs{h(it) e^{-At} }= \abs{G_A(it)} \leq \varepsilon^{\alpha}, \qquad  0\leq t\leq 1.
\end{equation*}
We leave these details for the reader. With this at hand, we may now estimate the integral over the vertical segment $\gamma_1$ on the imaginary axis as:
\[
\abs{\int_{\gamma_1}h(t) e^{it\zeta} dt} \leq \int_0^1 \abs{h(it)} e^{-t\xi} dt \leq \varepsilon^\alpha \int_0^1 e^{t(A-\xi)} dt =  \varepsilon^\alpha \frac{1-e^{(A-\xi)}}{\xi-A}.
\]
The proof is now complete.
    
\end{proof}

We now turn to the proof of Beurling's Theorem, which requires a bit more work.

\begin{proof}[Proof of \thref{THM:BEURLING1}] We may without loss of generality assume that $\norm{\mu}_{M(\R)}\leq 1$. Consider the bounded analytic function $F(z):= \Ka (\mu)(z+i)$ with $z\in \C_+$ and we shall aim to prove that 
\[
\int_1^\infty \frac{\log |F(\xi)|}{1+\xi^2} d\xi = - \infty.
\]
This will again imply that $\Ka(\mu)=0$ on $\C_+$. Applying a similar argument to $G(z):= \Ka(\mu)(-z-i)$ bounded analytic in $\C_+$, we conclude that $\Ka(\mu)=0$ on $\C\setminus \R$. Then, for instance, a Fatou--Plemelj Jump argument involving the classical formula 
\[
\mu([a,b]) = \lim_{\varepsilon \to 0+} \frac{1}{2\pi i} \int_a^b \left( \Ka(d\mu)(x+i\varepsilon)- \Ka(d\mu)(x-i\varepsilon) \right) dx, \qquad a<b
\]
implies that $\mu\equiv 0$. See \cite{cauchytransform}.
To this end, pick $A>0$ and consider the decomposition $\mu= \mu_A + \nu_A$, where $\mu_A(B):= \mu(B \cap (-\infty, A) )$ for all Borel sets $B\subset \R$. Writing $\Ka(\mu)=\Ka(\mu_A) + \Ka(\nu_A)$, we have 
\[
\abs{\Ka (\nu_A)(\xi + i)} \leq \int_{A}^\infty \frac{1}{\abs{\xi+i-t}} d|\mu|(t) \leq V_{\mu}(A).
\]
Now set $H:=\Ka (\mu_A)$ defined by the formula
\[
H(z) := \int_{0}^\infty \widehat{\mu_A}(\xi) e^{i\xi z} d\xi, \qquad z\in \C_+.
\]
Clearly, $\widehat{\mu_A}$ is continuous on $\R$ and extends holomorphically to $\C_+$, and
\[
\abs{\widehat{\mu_A}(x+iy)} \leq \int_{-\infty}^{A} e^{ty} d\abs{\mu}(t) \leq e^{Ay}\norm{\mu}_{M(\R)}\leq  e^{Ay}, \qquad y>0.
\]
Furthermore, on the set $E(\mu):= \{\xi \in \R: \widehat{\mu}(\xi)=0\}$, we have 
\[
\abs{\widehat{\mu_A}(\xi)}= \abs{\widehat{\nu_A}(\xi)} \leq V_{\mu}(A), \qquad \xi \in E.
\]
Applying \thref{PROP:TCTHM}, we get 
\[
\abs{\Ka(\mu_A)(\xi+i)} \leq e^{A-\xi} + V_{\mu}(A)^{\alpha} \frac{1-e^{A-\xi}}{\xi-A}, \qquad \xi>0.
\]
Now since this is true for all $A>0$, we may pick $A= \xi/2$, and combining with the estimate of $\Ka(\nu_A)$, we get 
\begin{multline*}
\abs{F(\xi)} \leq e^{-\xi/2} + V_{\mu}(\xi/2) + 2 V_{\mu}(\xi/2)^{\alpha} \frac{1-e^{-\xi/2}}{\xi} \leq \\  e^{-\xi/2} + 2V_{\mu}(\xi/2)^{\alpha} \leq 2 \max (e^{-\xi/2}, V_{\mu}(\xi/2) )^{\alpha} , \qquad \xi \geq 2,
\end{multline*}
where we used the fact that $\alpha = \inf_{0<\delta<1} \omega_{i\delta}(E) \in (0,1)$ and that $0\leq V_{\mu}(\xi) \leq 1$ for all $\xi$. Consequently, it follows that 
\[
\int_2^{\infty} \frac{\log \abs{F(\xi)}}{1+\xi^2} d\xi \lesssim 1 + \alpha \int_{2}^\infty \frac{\max \left( - \xi/2, \log V_{\mu}(\xi/2) \right)}{1+\xi^2} d\xi.
\]
Using monotonicity of $V_\mu$ it is not difficult to show that the integral on the right-hand-side is divergent by the assumption initial assumption on $V_{\mu}$. This completes the proof. 
\end{proof}

\subsection{On measures exhibiting one-sided rapid Fourier decay}

We gather here some interesting consequences Beurling's Theorem, primarily to the periodic setting of measure on the unit-circle. The first result gives a criterion for when Wiener algebra functions behave like their analytic counterpart. 

\begin{thm}[Beurling] \thlabel{THM:BEU1} Let $f\in A(\T)$ with the property that 
\[
\sum_{n=1}^\infty \frac{\log \rho_n}{n^2}= - \infty,
\]
where $\rho_n := \sum_{k\geq n} \abs{\widehat{f}(k)}$. Then $f$ cannot vanish on a set of positive Lebesgue measure in $\T$, unless $f\equiv 0$.
\end{thm}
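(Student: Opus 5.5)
We plan to transfer the statement to the real line and apply Beurling's \thref{THM:BEURLING1}. Writing $f(e^{it})=\sum_k \widehat{f}(k)e^{ikt}$, we regard $f$ as the Fourier transform (up to the normalization $\xi\mapsto -\xi$ and a constant) of the discrete measure
\[
\mu := \sum_{k\in\mathbb{Z}} \widehat{f}(k)\,\delta_{-k} \in M(\R),
\]
which has finite total variation $\norm{f}_A$. Then $\widehat{\mu}(t)=\sum_k \widehat{f}(k)e^{ikt}=f(e^{it})$ for all $t\in\R$, so $\widehat{\mu}$ is the $2\pi$-periodic extension of $f$. If $f$ vanishes on a set $E\subset\T$ of positive measure, then $\widehat{\mu}$ vanishes on the periodic lift of $E$, a subset of $\R$ of positive (indeed infinite) Lebesgue measure. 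To fit the orientation of \thref{THM:BEURLING1}, which controls the mass near $+\infty$, we use instead $\mu=\sum_k \widehat{f}(k)\delta_k$, whose Fourier transform is $t\mapsto f(e^{-it})$. This still vanishes on a set of positive measure, namely the reflection of the lift of $E$.

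Next we compute the tail variation: for $t>0$,
\[
V_\mu(t)=\abs{\mu}([t,\infty))=\sum_{k\geq \lceil t\rceil}\abs{\widehat{f}(k)}=\rho_{\lceil t\rceil}.
\]
Since $V_\mu$ is a non-increasing step function, we get
\[
\int_1^\infty \frac{\log V_\mu(t)}{1+t^2}\,dt=\sum_{n\geq 2}\log\rho_n\int_{n-1}^{n}\frac{dt}{1+t^2}.
\]
Here $\int_{n-1}^n (1+t^2)^{-1}dt\asymp n^{-2}$, and $\log\rho_n\leq \log\norm{f}_A$ is bounded above. Hence the divergence $\sum_n \log\rho_n/n^2=-\infty$ forces $\int_0^\infty \log V_\mu(t)\,(1+t^2)^{-1}dt=-\infty$. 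The portion over $[0,1]$ is bounded above, so it cannot cancel the divergence. Also, if $\rho_n=0$ for some $n$, then the divergence is trivial and $\log V_\mu=-\infty$ on a tail.

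Applying \thref{THM:BEURLING1}, $\widehat{\mu}$ cannot vanish on a set of positive Lebesgue measure unless $\mu\equiv 0$. Therefore $\mu\equiv0$, i.e.\ all $\widehat{f}(k)=0$, so $f\equiv 0$ by uniqueness of Fourier coefficients.

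The main point to watch is the sign and orientation bookkeeping. We need the rapidly decaying side of the coefficients, namely the positive frequencies measured by $\rho_n$, to correspond to the mass of $\mu$ near $+\infty$ as in the definition of $V_\mu$. We also need to check that the zero set of $\widehat{\mu}$ still has positive measure after the reflection. Both are routine, since reflection and periodization preserve positive measure. The comparison of the sum with the integral only uses monotonicity of $\rho_n$ and the upper bound $\log\rho_n\le\log\norm{f}_A$, so no regularity beyond $f\in A(\T)$ is needed.
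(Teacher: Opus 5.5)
Your proposal is correct and follows the paper's proof. The paper also takes $\mu=\sum_k \widehat{f}(k)\delta_k$, observes that $\widehat{\mu}(t)=f(e^{-it})$ and that $V_\mu$ is given by the tails $\rho_n$, and concludes from \thref{THM:BEURLING1}. Your sum--integral comparison, which uses that $\log\rho_n\le\log\norm{f}_A$ is bounded above and that the weights are comparable to $n^{-2}$, runs in the correct direction; the paper's displayed chain is written with its inequalities reversed.
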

We remark that the same conclusion also holds with $\rho_n := \sum_{k\leq -n} \abs{\widehat{f}(k)}$.
\begin{proof}
Consider the $\mu\in M(\R)$ defined by $\mu = \sum_{k \in \mathbb{Z}} \widehat{f}(k) \delta_k$, where $\delta_k$ denotes the Dirac delta mass at the integer $k$. It readily follows that 
\[
\int_1^\infty \frac{\log V_{\mu}(t)}{t^2} dt = \sum_{n=1}^\infty \int_{n}^{n+1} \frac{\log V_{\mu}(t)}{t^2} dt \geq \sum_{n=1}^\infty \frac{\log V_{\mu}(n)}{n(n+1)} \geq \frac{1}{2}\sum_{n>1} \frac{\log \rho_n}{n^2} = - \infty.
\]
Since $\widehat{\mu}(t) = f(e^{-it})$, the claim is an immediately consequence of Beurling's Theorem.
\end{proof}
Another remarkable consequence of Beurling's Theorem is a yet another classical result by M. Cartwright and N. Levinson, which actually pre-dates it.

\begin{thm}[Cartwright-Levinson] \thlabel{THM:LEVCART} Let $\{w(n)\}_{n=1}^\infty$ be a non-decreasing sequence of positive real numbers satisfying 
\[
\sum_{n=1}^\infty \frac{w(n)}{n^2} = +\infty.
\]
If the Fourier coefficients of a measure $\mu \in M(\T)$ satisfies a rapid unilateral decay of the form:
\[
\widehat{\mu}(n) = \mathcal{O} \left( e^{-w(|n|)} \right), \qquad n\to -\infty \qquad (\text{or} \, \, n\to \infty),
\]
then $\supp{\mu}=\T$, unless $\mu \equiv 0$.
\end{thm}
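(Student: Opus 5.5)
Assume $\mu\not\equiv 0$ and that $\supp{\mu}\neq\T$, so that $\mu$ vanishes on some open arc $I\subset\T$. The plan is to deduce Cartwright--Levinson from \thref{THM:BEU1}, in the same way \thref{THM:BEU1} itself was obtained from Beurling's \thref{THM:BEURLING1}. By symmetry (conjugate, or replace $\zeta$ by $\conj{\zeta}$), assume the decay holds as $n\to+\infty$, so that
\[
|\widehat{\mu}(n)|\le Ce^{-w(n)}, \qquad n\ge 1 .
\]
The obstacle is that $\mu$ is only a measure, while \thref{THM:BEU1} needs an $A(\T)$ function that vanishes on a set of positive measure. To fix this, smooth $\mu$ by convolving with a smooth function $\phi$ supported in a tiny arc around $1$ with $\widehat{\phi}(0)\neq 0$. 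Then $f=\phi*\mu$ is smooth, hence in $A(\T)$. It vanishes on a slightly shrunken subarc of $I$, which has positive Lebesgue measure. Its coefficients are $\widehat{f}(n)=\widehat{\phi}(n)\widehat{\mu}(n)$, so $|\widehat{f}(n)|\le C'e^{-w(n)}$ for $n\ge1$. Also $f\not\equiv0$: otherwise $\widehat{\phi}(n)\widehat{\mu}(n)=0$ for all $n$, and by varying $\phi$ (e.g.\ dilations) one can make each $\widehat{\phi}(n)\ne0$, forcing $\widehat{\mu}\equiv0$. If this fails for all dilations at some particular $n$, use instead a family of approximate identities $\phi_k$ with $\phi_k*\mu\to\mu$ weak-star; some $\phi_k*\mu$ is then nonzero.

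Next we bound the tail sums. Because $w$ is non-decreasing,
\[
\rho_n=\sum_{k\ge n}|\widehat{f}(k)|\le C'\sum_{k\ge n}e^{-w(k)} .
\]
Monotonicity alone does not make this tail comparable to $e^{-w(n)}$; for instance $w$ might grow only logarithmically. Two cases handle this.

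\emph{Case $w(k)\ge 2\log k$ for large $k$.} Write $e^{-w(k)}\le e^{-w(k)/2}k^{-1}$, so that
\[
\rho_n\le C' e^{-w(n)/2}\sum_{k\ge n}k^{-1-\delta}
\]
after a small adjustment, which gives $\log\rho_n\le -w(n)/2+O(\log n)$.

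\emph{General case.} Replace $w$ by $\tilde w=\min(w(n),2\log n)$. Since $\sum\log n/n^2<\infty$, this destroys the divergence, which is why this is the delicate point. A cleaner route avoids the case split entirely: use the trivial bound $\rho_n\le\sum_{k\ge n}e^{-w(k)}$ and pass to a regularized minorant $\omega(k)=\min(w(k),k)$. Divergence of $\sum w(n)/n^2$ implies divergence of $\sum\omega(n)/n^2$, since the terms where $\omega=k$ give a harmonic series. Then sum over dyadic blocks: for $k\in[2^j,2^{j+1})$, the decay gives $\rho_{2^j}\lesssim 2^j e^{-\omega(2^j)}+\rho_{2^{j+1}}$. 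By monotonicity of $\rho_n$ and of $\omega$, it then suffices to show
\[
\sum_j 2^{-j}\log\rho_{2^j}=-\infty,
\]
which follows from $\log\rho_{2^j}\le -\omega(2^{j})/2+O(j)$ together with $\sum_j 2^{-j}\omega(2^j)\asymp\sum\omega(n)/n^2=\infty$. Here the errors $O(j)2^{-j}$ are summable.

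I expect this tail-sum estimate to be the main technical obstacle: absorbing the number of terms in the tail while keeping the logarithmic series divergent, and possibly needing a more careful minorant than $\min(w,k)$. Once
\[
\sum_n\frac{\log\rho_n}{n^2}=-\infty
\]
is established, \thref{THM:BEU1} forces $f\equiv0$, contradicting the choice of $\phi$. Hence $\supp{\mu}=\T$.
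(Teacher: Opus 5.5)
There is a genuine gap, and it sits exactly at the step you flag as the main obstacle. You create it by discarding the decay of $\widehat{\phi}$. After smoothing, you bound $|\widehat{f}(k)|\le C'e^{-w(k)}$ using only $|\widehat{\phi}(k)|\le\|\phi\|_{L^1}$. From that point on nothing you do can work, because this bound alone does not control the tails. For example, take $w(k)=N_j$ for $N_j\le k<N_{j+1}$ with $N_{j+1}\approx e^{2N_j}$. Then $w$ is non-decreasing, and each block contributes $N_j\sum_{N_j\le k<N_{j+1}}k^{-2}\approx 1$ to $\sum w(k)/k^2$, so that series diverges. Yet $(N_{j+1}-N_j)e^{-N_j}\approx e^{N_j}$, so $\sum_{k\ge n}e^{-w(k)}=+\infty$ for every $n$. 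Here $\min(w(k),k)=w(k)$, so your minorant $\omega$ changes nothing. Capping $w$ from above is irrelevant: the danger is $w$ staying constant over very long stretches. Your dyadic recursion only gives $\rho_{2^j}\lesssim\sum_{i\ge j}2^ie^{-\omega(2^i)}$, which is infinite in this example. So the claimed bound $\log\rho_{2^j}\le-\omega(2^j)/2+O(j)$ does not follow, and the case split has the same defect, as you noticed yourself.

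The missing idea is simply to keep the factor $\widehat{\phi}(k)$. Since $\phi$ is smooth, $\sum_k|\widehat{\phi}(k)|<\infty$. Since $w$ is non-decreasing, $e^{-w(k)}\le e^{-w(n)}$ for $k\ge n$. Hence
\[
\rho_n=\sum_{k\ge n}|\widehat{\phi}(k)|\,|\widehat{\mu}(k)|\le Ce^{-w(n)}\sum_{k}|\widehat{\phi}(k)|=C''e^{-w(n)},
\]
and therefore $\sum_n \log\rho_n/n^2\le\sum_n(\log C''-w(n))/n^2=-\infty$ directly. No case distinction or regularized minorant is needed. This is exactly how the paper argues. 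With this replacement the rest of your outline goes through: the reduction to \thref{THM:BEU1}, the vanishing of $\phi*\mu$ on a subarc, and recovering $\mu=0$ from approximate identities $\phi_k*\mu\to\mu$ weak-star.
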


\begin{proof} Suppose for the sake of obtaining a contradiction that there exists $\mu \in M(\T)$ such that $\supp{\mu} \cap I = \emptyset$ for some arc $I \subset \T$, yet its negative Fourier coefficients satisfy the aforementioned unilateral decay. Pick a smooth function $\phi$ which is supported on an arc centered at $\zeta =1$, and whose length is no larger than $|I|/10$, and consider the continuous function $f= \phi \ast \mu$ on $\T$, which vanishes identically on a smaller subarc $J\subset I$. Furthermore, we have by monotonicity of $(w(n))_n$ that:
\[
\rho_n := \sum_{k\leq -n} \abs{\widehat{f}(n)}  = \sum_{k\leq -n}\abs{\widehat{\phi}(k)}  \abs{\widehat{\mu}(k)} \leq  C e^{-w(|n|)} \sum_{k\leq -n} \abs{\widehat{\phi}(k)} \leq C'e^{-w(|n|)}.
\]
It follows from \thref{THM:BEU1} that $f= \phi \ast \mu \equiv 0$ on $\T$. But since this conclusion is true for any smooth approximate of the identity $\phi$ with sufficiently small support, we conclude that $\mu =0$.
    
\end{proof}
We now record the following interesting consequence for finite measures in $\R$ with spectral gaps: $\supp{\widehat{\mu}} \cap I = \emptyset$, for some interval $I\subset \mathbb{R}$.

\begin{cor}Let $\nu \in M(\R)$ with the property that $\text{diam}(\supp{\nu}) < 2a$ for some $a>0$. Assume further that the Fourier transform of $\nu$ satisfies the unilateral decay
\[
\widehat{\nu}(\pi n/a) = \mathcal{O} \left( e^{- w(|n|)} \right), \qquad n\to - \infty, 
\]
for some increasing sequence of positive numbers $\{w(n)\}_{n=1}^\infty$ with 
\[
\sum_{n \geq 1} \frac{w(n)}{n^2}= + \infty.
\]
Then $\nu \equiv 0$.
\end{cor}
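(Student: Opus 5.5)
The plan is to periodize $\nu$ and reduce to the Cartwright--Levinson \thref{THM:LEVCART}. After translating $\nu$, which only multiplies $\widehat{\nu}$ by a unimodular factor and so preserves the decay hypothesis, we may assume $\supp{\nu}\subseteq [-a+\delta, a-\delta]$ for some $\delta>0$; this uses the strict inequality $\text{diam}(\supp{\nu})<2a$. Then define the measure $\mu$ on $\T$ as the push-forward of $\nu$ under $x\mapsto e^{i\pi x/a}$. Since $[-a+\delta,a-\delta]$ is mapped injectively onto a proper closed arc of $\T$, $\mu$ is a finite complex Borel measure, $\mu\equiv 0$ exactly when $\nu\equiv 0$, and $\supp{\mu}$ misses the open arc around $\zeta=-1$ that is the image of $(a-\delta, a+\delta)$.

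Next compute the Fourier coefficients of $\mu$:
\[
\widehat{\mu}(n)=\int_{\T}\zeta^{-n}\,d\mu(\zeta)=\int_{\R} e^{-i\pi n x/a}\,d\nu(x)=\widehat{\nu}(\pi n/a), \qquad n\in\mathbb{Z}.
\]
The hypothesis therefore reads $\widehat{\mu}(n)=\mathcal{O}(e^{-w(|n|)})$ as $n\to-\infty$, with $w$ non-decreasing and $\sum_n w(n)/n^2=+\infty$. This is exactly the unilateral decay assumed in \thref{THM:LEVCART}, so $\supp{\mu}=\T$ unless $\mu\equiv0$. Since $\supp{\mu}$ omits an arc, we get $\mu\equiv 0$, and therefore $\nu\equiv0$.

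The only point needing care is the normalization: the dual pairing on $\T$ uses $dm$ and $\zeta^{-n}$, so the sign convention must be checked so that the negative frequencies of $\mu$ correspond to $\widehat{\nu}(\pi n/a)$ with $n\to-\infty$. If the sign comes out reversed, the remark after \thref{THM:LEVCART} covers the case $n\to+\infty$ as well. The other point is the strict diameter bound, which is needed to get a genuine gap arc after periodization. I expect no real obstacle beyond these two bookkeeping checks.
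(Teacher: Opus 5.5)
Your proposal is correct and follows the paper's proof essentially verbatim: translate so the support sits strictly inside $(-a,a)$, push $\nu$ forward to $\T$ by an exponential map so that the Fourier coefficients of the resulting $\mu$ are samples of $\widehat{\nu}$ on $(\pi/a)\mathbb{Z}$, observe that $\mu$ omits an arc around $-1$, and conclude by the Cartwright--Levinson theorem. In fact your map $x\mapsto e^{i\pi x/a}$ gives exactly $\widehat{\mu}(n)=\widehat{\nu}(\pi n/a)$, whereas the paper's choice $e^{-i\pi x/a}$ actually yields $\widehat{\mu}(n)=\widehat{\nu}(-\pi n/a)$; this harmless sign slip is covered, as you note, by the version of Cartwright--Levinson with decay as $n\to+\infty$.
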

\begin{proof}
Pick a number $0<b<a$ with $\text{diam}(\supp{\nu}) < 2b$. By means translating $\nu$, we may assume that $\supp{\nu}\subseteq [-b,b]$. Consider the homeomorphism $h: [-a,a] \to \T$ defined by $h(x)=e^{-i\pi x/a}$, and define the finite Borel measure $\mu$ on $\T$, via the pull-back
\[
\int_{\T} f(\zeta) d\mu(\zeta) = \int_{-a}^a f(h(x)) d\nu(x), \qquad f\in C(\T).
\]
Note that since $\widehat{\mu}(n)= \widehat{\nu}(\pi n/a)$, and $\mu$ vanishes on the arc $\{e^{ix}: a<x<b\}$, the claim immediately follows from the Cartwright-Levinson Theorem.
\end{proof}

\subsection{A problem with many faces - Completeness, Moments and Zeros} 

We close this chapter by considering a more classical moment-type problem, with the principal intent of illustrating how it relates to several others classical problems. 

Let $\Lambda$ be a sequence of positive real numbers. We ask for which $\Lambda$ does the following moment-type uniqueness property for measures supported in a finite compact interval $I\subset[0,\infty)$: any complex finite Borel measure $\mu$ supported in $I$ satisfies the uniqueness property:
\[
\int_{I} t^{\lambda} d\mu(t) =0, \qquad \forall \lambda \in \Lambda \implies \mu \equiv 0.
\]
Denote by $C(I)$ the Banach space of continuous functions on $I$ equipped with the usual supremum norm. Using that the fact that the dual space of $C(I)$ consists of complex finite Borel measures $\mu$ on $I$, we see that our moment-type uniqueness problem is equivalent to the completeness problem of asking for which sequences $\Lambda$ the linear manifold
\[
\mathcal{P}(\Lambda):= \text{Span}\{t^{\lambda}: \lambda \in \Lambda \} \qquad \text{dense in} \qquad C(I).
\]
For simplicity, and without loss of generality, we may assume that $I=[0,1]$. These problems dates back to the work of H. M\"untz and O. Sz\'asz, and have the following satisfactory answer.

\begin{thm}[M\"untz-Sz\'asz]\thlabel{THM:MUNTZSASZ} $\mathcal{P}(\Lambda)$ is dense in $C(I)$ if and only if $1\in \Lambda$ and
\begin{equation}\label{EQ:1/lambdadiv}
\sum_{\lambda \in \Lambda} \frac{1}{\lambda} = +\infty.
\end{equation}
Furthermore, if 
\[
\sum_{\lambda \in \Lambda} \frac{1}{\lambda} <\infty,
\]
then $t^{\kappa}$ with $\kappa \notin \Lambda$ and $\kappa>0$ does not belong to closure of $\mathcal{P}(\Lambda)$.
\end{thm}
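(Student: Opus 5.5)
The plan is the classical duality argument combined with a zero-counting argument for bounded analytic functions in a half-plane. By the Hahn--Banach theorem and the Riesz representation $C(I)^*\cong M(I)$, the density of $\mathcal{P}(\Lambda)$ in $C(I)$ is equivalent to the statement that every $\mu\in M([0,1])$ with $\int t^\lambda d\mu=0$ for all $\lambda\in\Lambda$ vanishes.

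First a remark on the constant function. The hypothesis "$1\in\Lambda$" should be read as requiring $t^0=1\in\mathcal{P}(\Lambda)$, that is $0\in\Lambda$. This is necessary, since every $t^\lambda$ with $\lambda>0$ vanishes at $t=0$, and so the point mass $\delta_0$ annihilates them. We therefore fix $0\in\Lambda$. After subtracting the mass at the origin, we may consider measures $\mu$ with $\mu(\{0\})=0$, and we must test the positive exponents.

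To such a $\mu$ we attach the Mellin-type transform
\[
F(z)=\int_{(0,1]} t^{z}\,d\mu(t),\qquad \Re z>0 .
\]
This function is analytic in the right half-plane, continuous up to the boundary, and bounded there by $\norm{\mu}$. It vanishes at every $\lambda\in\Lambda\setminus\{0\}$. Transport it to the disc by $w=(z-1)/(z+1)$. The points $\lambda$ go to points $w_\lambda$ with
\[
1-|w_\lambda|\asymp \min(\lambda,1/\lambda).
\]
By Jensen's lemma (\thref{LEM:JENSEN}) applied to the bounded function $F\circ$(inverse map), divided by a suitable power of $w$ if it vanishes at $0$, we obtain the Blaschke condition $\sum(1-|w_\lambda|)<\infty$ for any nontrivial $F$. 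Hence, if \eqref{EQ:1/lambdadiv} holds, we get $F\equiv0$ on $\Re z>0$. It remains to deduce that $\mu=0$. The substitution $t=e^{-s}$ turns $F(n)$ into the moments, or alternatively $F(k)=0$ for all integers $k\geq1$ gives $\int p\,d\mu=0$ for every polynomial $p$ with $p(0)=0$. Since $\mu(\{0\})=0$ and polynomials are dense in $C[0,1]$ by Weierstrass (\thref{THM:WEITHM}), this yields $\mu=0$. One subtlety must be handled here: divergence of \eqref{EQ:1/lambdadiv} could be caused by $\lambda\to0$. In that case $\sum\min(\lambda,1/\lambda)$ may still diverge because of small $\lambda$, which the disc map also handles. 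If the small exponents are only finitely many, they are harmless.

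For the converse, and for the claim about $t^\kappa$, assume $\sum 1/\lambda<\infty$. We must produce a measure annihilating $\mathcal{P}(\Lambda)$ but not $t^\kappa$. The cleanest route is to build the finite Blaschke-type products
\[
B_N(z)=\frac{1}{z+1}\prod_{\lambda\in\Lambda,\,\lambda\le N}\frac{\lambda-z}{\lambda+z+2},
\]
which converge, thanks to $\sum1/\lambda<\infty$, to an analytic function $B$ in $\Re z>-1$. This limit decays like $|z|^{-2}$ on vertical lines, vanishes exactly on $\Lambda$, and satisfies $B(\kappa)\neq0$. Inverting the Laplace/Mellin transform along a vertical line $\Re z=-1/2$, we then aim to show that
\[
B(z)=\int_0^1 t^{z}g(t)\,dt
\]
with $g\in L^1[0,1]$. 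The measure $g\,dt$ then kills every $t^\lambda$ but not $t^\kappa$. Since $0\in\Lambda$, a modification such as multiplying by $z$, or adding a point mass at $0$, also handles the constant function.

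The main obstacle is this representation step: we must verify that the inverse Mellin transform of $B$ is supported in $[0,1]$ and integrable. Here one uses Cauchy's theorem, shifting the contour to the right and exploiting the boundedness of $B$ in $\Re z>-1$ together with the $|z|^{-2}$ decay. This forces the inverse transform to vanish for $t>1$, in the spirit of the Paley--Wiener argument given earlier. A direct estimate then gives the $L^1$ bound on $(0,1]$.
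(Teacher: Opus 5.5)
Your route is essentially the paper's. For sufficiency you use duality, a transform of $\mu$ that is bounded and analytic in a half-plane and vanishes at the exponents, and the Blaschke condition. For the converse you use a half-plane Blaschke product with zeros at the exponents, tamed by a rational factor, written as a Laplace transform and pulled back by $t=e^{-s}$. The paper works in $\C_+$ via $z=i(1+\alpha)$ and simply quotes the $H^2$ Paley--Wiener theorem for $B_\Lambda(z)/(z+i)^{10}$; you rotate to the right half-plane and want to invert the Mellin transform by hand.

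\emph{The decay claim fails.} You assert that $B$ decays like $|z|^{-2}$ on vertical lines; it does not. On $\Re z=c>-1$ one has
\[
1-\Bigl|\frac{\lambda-z}{\lambda+z+2}\Bigr|^2=\frac{4(c+1)(\lambda+1)}{(\lambda+c+2)^2+(\Im z)^2}\lesssim_c \frac{1}{1+\lambda}.
\]
By dominated convergence the infinite product therefore has modulus tending to $1$ as $|\Im z|\to\infty$, so $|B(z)|\,|z+1|\to 1$ there. The decay is only $|z|^{-1}$.

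Hence the inversion integral over $\Re z=-1/2$ is not absolutely convergent. Neither your contour shift nor a direct pointwise bound on $g$ is justified as written.

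The repair is cheap. With $(z+1)^{-2}$ in place of $(z+1)^{-1}$:
\begin{itemize}
\item $|B(z)|\le |z+1|^{-2}$ on $\Re z\ge -1/2$, so the integral converges absolutely;
\item shifting to $\Re z=c\to+\infty$ gives $g=0$ on $(1,\infty)$;
\item $|g(t)|\lesssim t^{-1/2}$ on $(0,1]$.
\end{itemize}
Alternatively, keep your $B$ and argue in $L^2$ as the paper does. The function $B(\cdot-\tfrac12)$ belongs to $H^2$ of the right half-plane, so $B(z)=\int_0^\infty h(s)e^{-s(z+1/2)}\,ds$ with $h\in L^2(0,\infty)$. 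That is, $g(t)=t^{-1/2}h(-\log t)$, and $\int_0^1|g|\,dt=\int_0^\infty |h(s)|e^{-s/2}\,ds\le \norm{h}_{L^2}$.

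\emph{Small exponents.} You say that exponents accumulating at $0$ are also handled by the disc map. This is only correct when $\sum_{\lambda<1}\lambda=\infty$, or when the small exponents accumulate at a point of $(0,\infty)$, where the identity theorem applies.

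Suppose instead they accumulate only at $0$ with $\sum_{\lambda<1}\lambda<\infty$, e.g.\ $\Lambda=\{0\}\cup\{2^{-k}\}_{k\ge1}$. Then $\sum 1/\lambda=\infty$, yet the zeros satisfy the Blaschke condition, and density genuinely fails. To see this, take $G(z)=z^2(z+1)^{-4}\prod_k\frac{z-2^{-k}}{z+2^{-k}}$. Its boundary values on $i\R$ are Lipschitz, and they and their derivative are $O(|y|^{-2})$. Hence $G(z)=\int_0^\infty h(s)e^{-sz}\,ds$ with $h\in L^1(0,\infty)$. Then $d\mu=h(-\log t)\,dt/t$ is a nonzero finite measure annihilating $1$ and every $t^{2^{-k}}$.

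So the theorem must be read, as the paper tacitly does, with $\inf\Lambda>0$; the correct general condition is $\sum\lambda/(1+\lambda^2)=\infty$. Under that reading your sufficiency argument is fine.
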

The surprising feature of this result is that in contrast with complete orthonormal systems for $L^2(I)$, which are extremely sensitive to removal of a single, the system of monomials $t^\lambda$ with $\lambda \in \Lambda$ is rather insensible. As we shall see from the proof below, the completeness result also holds in the framework of Lebesgue spaces $L^p(I)$ for $1\leq p<\infty$. In the proof below, we shall illustrate that the condition \eqref{EQ:1/lambdadiv} intrinsically arises from the characterization of zero sets of functions in the Hardy spaces.

\begin{proof}[Proof of \thref{THM:MUNTZSASZ}] 
Suppose $\Lambda$ is a sequence of positive real numbers satisfying $1 \in \Lambda$ and the condition \eqref{EQ:1/lambdadiv}. Arguing by duality, assume $\mu$ is a complex finite Borel measure on $I$ with 
\[
\int_{I} t^{\lambda} d\mu(t) =0, \qquad \lambda \in \Lambda.
\]
Consider the function 
\[
F(z) := \int_I t^{-iz} d\mu(t), \qquad \Im (z)>0,
\]
which by means of invoking Morera's Theorem is easily seen to be a bounded analytic function in $\Im(z)>0$, hence an element in the Hardy space $H^\infty(\C_+)$. Furthermore, the annihilating assumption of $\mu$ implies that $F(i\lambda)=0$ for all $\lambda \in \Lambda$. Now since the zeros  $\{\lambda_n\}_n \subset \C_+$ of any non-trivial function $f$ in $H^\infty(\C_+)$ must satisfy the Blaschke condition 
\[
\sum_n \frac{\Im \lambda_n}{1+|\lambda_n|^2} < \infty,
\]
contrary to the assumption in \eqref{EQ:1/lambdadiv}, we conclude that $F \equiv 0$, hence $\mu \equiv 0$. This shows that $\{1\} \cap \{t^{\lambda}: \lambda \in \Lambda\}$ has a dense linear space in $C(I)$. \\

Conversely, we shall show that if
\[
\sum_{\lambda \in \Lambda} \frac{1}{\lambda} < \infty,
\]
and $\kappa \notin \Lambda$, then we can construct a non-trivial complex finite Borel measure on $I=[0,1]$ with
\begin{equation}\label{EQ:ANNILMEASURE}
\int_{I} t^{\lambda} d\mu(t) =0, \qquad \lambda \in \Lambda, \qquad \int_{I} t^{\kappa} d\mu(t)\neq 0.
\end{equation}

Indeed, a routine duality argument involving the Hahn--Banach separation theorem ensures that $t^{\kappa}$ cannot belong to the closure of $\mathcal{P}(\Lambda)$ in $C(I)$.

We consider the infinite Blaschke product in the upper-half plane, defined by
\[
B_{\Lambda}(z) = \frac{z-i}{z+i} \prod_{\lambda \in \Lambda} \frac{i(1+\lambda)-z}{i(1+\lambda)+z}, \qquad \Im(z)>0.
\]
Indeed, the condition
\[
\sum_{\lambda\in\Lambda} \frac{1}{\lambda+1} < \infty
\]
is equivalent to imposing the Blaschke condition ensuring that infinite product defining $B_\Lambda$ is a non-trivial bounded analytic function in $\C_+$, which vanishes only along the imaginary axis
\[
\left\{i(1+\lambda): \lambda \in \Lambda \cup \{0\} \right\}.
\]
Now consider the bounded analytic function
\[
F(z):= \frac{B_{\Lambda}(z)}{(z+i)^{10}}, \qquad \Im(z)>0
\]
whose zeros are precisely those of $B_{\Lambda}$ in $\C_+$, and $F$ extends to an element in $H^1(\R)\cap H^2(\R)$. By the Paley--Wiener theorem for the half-plane, there exists a function $f \in L^2(0,\infty)$ such that
\[
F(z) = \int_0^{\infty} f(x) e^{izx} , dx, \qquad \Im z>0.
\]
Now set $z= i(1+\alpha)$ with $\alpha>0$ and note that the change of variable $-\log t = x$ shows that 
\[
F(i(1+\alpha)) = \int_0^\infty f(x) e^{-x(1+\alpha)}dx = \int_0^1 t^{\alpha} f(-\log t) dt.
\]
Defining the complex finite Borel measure $\mu$ on $[0,1]$ by
\[
d\mu(t) := f(-\log t) , dt, \qquad 0<t\le 1,
\]
we conclude that \eqref{EQ:ANNILMEASURE} holds. This completes the proof.

\end{proof}

At last, we end this section by demonstrating that the aforementioned problems also may be reformulated as a certain quasi-analyticity problem of the Fourier transform. This time, we shall restrict our attention of positive integers $\Lambda$. We ask what conditions on $\Lambda \subseteq \mathbb{Z}_+$ ensures that for any $\mu \in M(I)$ we have
\begin{equation}\label{EQ:DEEPZERO}
\widehat{\mu}^{(\lambda)}(0)=0 \qquad \forall \lambda \in \Lambda \implies \mu \equiv 0.
\end{equation}
In some sense, the question boils down to describing how sparse the Taylor coefficients of the Fourier transform of compactly supported measures on $\R$ can be. Now the simple observation that 
\[
\widehat{\mu}^{(n)}(0) = i^n \int_{I} t^n d\mu(t),
\]
in conjunction with the M\"untz-Sz\'asz Theorem shows that \eqref{EQ:DEEPZERO} holds if and only if 
\[
\sum_{\lambda \in \Lambda} \frac{1}{\lambda} = +\infty.
\]
\subsection{Further results}

There is an improvement of the Beurling-Cartwright-Levinson Theorems by A. Volberg, which goes as follows.

\begin{thm}[Volberg] Let $M:[1,\infty)\to [0,\infty)$ be a concave function with $M(t)/\sqrt{t}$ increasing in a neighborhood at infinity, and such that
\[
\int_1^\infty \frac{M(t)}{t^2} dt = + \infty.
\]
Then any non-trivial $f\in L^1(\T)$ with $\widehat{f}(n) = \mathcal{O}(e^{-M(|n|)})$, as $n \to -\infty$, must satisfy
\[
\int_{\T} \log |f| dm >-\infty.
\]
\end{thm}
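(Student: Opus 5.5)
The plan is to transfer the problem to the real line and run the Cauchy-integral scheme used in the proof of Beurling's \thref{THM:BEURLING1}, replacing the hypothesis ``$\widehat{\mu}$ vanishes on a set of positive measure'' with the hypothesis ``$\int_{\T}\log|f|\,dm=-\infty$''. Logarithmic integrability is weaker than vanishing, so a genuinely quantitative estimate is needed.

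\textbf{Setup.} Put $\mu=\sum_k \widehat{f}(k)\delta_k\in M(\R)$, as in the proof of \thref{THM:BEU1}, with the sign convention chosen so that the rapidly decaying coefficients sit at $+\infty$. Then $\widehat{\mu}(t)=f(e^{-it})$ is $2\pi$-periodic, and the tail satisfies $V_\mu(t)\lesssim \sum_{k\ge t}e^{-M(k)}$. Concavity of $M$ together with $M(t)/\sqrt t\uparrow$ gives $M(k+1)-M(k)\ge M'(k)\gtrsim k^{-1/2}$ up to constants. Hence the geometric-type sum is controlled, and $\log V_\mu(t)\le -M(t)+O(\log t)$. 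In particular $\int_1^\infty \log V_\mu(t)\,t^{-2}\,dt=-\infty$.

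\textbf{Main argument, by contradiction.} Assume $f\not\equiv0$ and $\int_{\T}\log|f|\,dm=-\infty$. Following the proof of \thref{THM:BEURLING1}, I set $F(z)=\Ka(\mu)(z+i)$ and split $\mu=\mu_A+\nu_A$ at the level $A=\xi/2$, so that $|\Ka(\nu_A)(\xi+i)|\le V_\mu(\xi/2)$.

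For $\Ka(\mu_A)$, \thref{PROP:TCTHM} used a Lebesgue point of the zero set. Here I replace it by a two-constant estimate that uses the full harmonic measure on $\R$:
\[
\log|\widehat{\mu_A}(it)|\le At+\int_{\R}\log\bigl(|\widehat\mu|+V_\mu(A)\bigr)\,d\omega_{it}.
\]
This holds because $\widehat{\mu_A}e^{iAz}\in H^\infty(\C_+)$ and $|\widehat{\mu_A}|\le|\widehat\mu|+V_\mu(A)$ on $\R$. By periodicity, $\int\log(|f|+\varepsilon)\,d\omega_{it}\approx\int_{\T}\log(|f|+\varepsilon)\,dm$ for $t\asymp1$, and this quantity tends to $-\infty$ as $\varepsilon\to0$ by monotone convergence. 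So the two-constant input becomes a factor $\exp\bigl(\int_{\T}\log(|f|+V_\mu(\xi/2))\,dm\bigr)$, which plays the role of $\varepsilon^\alpha$. Carrying out the Cauchy-contour decomposition as before gives
\[
|F(\xi)|\le e^{-\xi/2}+V_\mu(\xi/2)+\exp\Bigl(\textstyle\int_{\T}\log(|f|+V_\mu(\xi/2))\,dm\Bigr).
\]

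\textbf{Main obstacle.} The third term only tends to $0$; it need not be as small as $e^{-M(\xi)}$. So $\int\log|F|\,d\xi/\xi^2=-\infty$ does not follow directly, and the estimate must be sharpened.

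The first thing I would try is to raise the two-constant bound to height $y$ instead of height $1$. The growth factor becomes $e^{Ay}$, while the harmonic measure $d\omega_{iy}$ on the periodic line still averages to $dm$. I would then optimize $y$ and $A$ against $V_\mu$, aiming at the Jensen-type inequality
\[
\log|\widehat{\mu_A}(iy)|\lesssim Ay+\log V_\mu(A)\,\cdot\, m\{|f|<V_\mu(A)\}+\dots
\]
The divergence of $\int\log|f|\,dm$ means that $m\{|f|<\delta\}\log(1/\delta)$ is not summable over dyadic $\delta$. Combining this with $\int M/t^2=\infty$ should produce divergence of $\int\log|F|\,d\xi/\xi^2$.

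\textbf{Regularity assumptions.} This is where concavity and the condition $M(t)/\sqrt t\uparrow$ must enter: a regularization of $M$, roughly its least concave majorant, is needed so that the choices of $A$ and $y$ can be made uniformly in $\xi$.

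\textbf{Conclusion.} Once $\int\log|F|\,d\xi/\xi^2=-\infty$ is established, $F\equiv0$. The jump formula then gives $\mu\equiv0$, exactly as at the end of the proof of \thref{THM:BEURLING1}, so $f\equiv0$, which is the desired contradiction.
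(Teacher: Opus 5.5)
Your tail estimate for $V_\mu$ and the Jensen--Poisson (two-constant) inequality are fine. There is, however, a genuine gap, and it comes before the ``main obstacle'' you identify.

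\textbf{The three-term bound is not justified.} Your bound for $|F(\xi)|$ does not follow from the contour argument of \thref{PROP:TCTHM}. That argument needs the two-constant estimate along the whole segment $\{iy:0<y\le 1\}$. Because of the factor $e^{-y\xi}$, the heights $y\lesssim 1/\xi$ dominate, and there $\omega_{iy}$ is concentrated on an interval of length $\asymp y$ around the base point. The periodic averaging $\omega_{iy}\approx dm$ is only available for $y\asymp 1$, so raising the height cannot help. The Laplace integral defining $F$ starts at the real point $0$, and for large $\xi$ it only sees $f$ near $1$.

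\textbf{A concrete failure.} Suppose $|f(e^{-it})|\ge\delta$ for $|t|<\theta_0$ and $V_\mu(s)\ge e^{-CM(s)}$; nothing in the hypotheses excludes this. Then $\log(|\widehat\mu|+V)\ge\log V$ everywhere, $\log(|\widehat\mu|+V)\ge\log\delta$ on $(-\theta_0,\theta_0)$, and $\omega_{iy}(\{|t|\ge\theta_0\})\le 2y/(\pi\theta_0)$. So the two-constant bound at height $y$ is at least $\delta e^{-C'yM(\xi)}$. The vertical-segment term of your estimate is therefore $\gtrsim\delta/(\xi+M(\xi))\gtrsim\delta/\xi$, since concavity gives $M(\xi)=O(\xi)$. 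Its logarithm is Poisson-integrable, so no contradiction can result.

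\textbf{Moving the base point does not rescue it.} That only helps at a point where $|f|$ is tiny on a set of positive density at every small scale. This is exactly what the density point of the zero set supplies in \thref{THM:BEURLING1}, and a non-integrable logarithm does not supply it. For example, let $\log|f|\approx-|\theta|^{-\beta}$ ($\beta>1$) near one point, with $|f|$ bounded below elsewhere. Even with the base point at that flat point, the scheme's bound is no smaller than $\exp(-C\,\xi^{1-1/(2\beta)})$, which is again Poisson-integrable.

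\textbf{Two further problems.} First, the regularity of $M$ is used only for the tail of $V_\mu$, where far less suffices, and you leave unspecified where it ``must enter.'' Second, $\mu=\sum_k\widehat f(k)\delta_k$ lies in $M(\R)$ only when $\widehat f\in\ell^1$. The smoothing used for \thref{THM:LEVCART} is unavailable here, because convolution destroys the hypothesis $\int_{\T}\log|f|\,dm=-\infty$.

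\textbf{What is missing.} The notes do not prove this theorem; they refer to Havin--J\"oricke and Koosis and say the proof combines Dyn'kin's pseudo-analytic continuation with intricate potential theory. That is the missing, genuinely global, ingredient. The one-sided decay lets you extend $f$ to a function $\Phi$ on $\D$ with boundary values $f$ and $|\bar\partial\Phi(z)|\le\eta(1-|z|)$. Here $\eta$ comes from $M$ through a Legendre-type transform, and $\int^\infty M(t)t^{-2}\,dt=+\infty$ turns into $\int_0\log\log(1/\eta(s))\,ds=+\infty$. One then proves that such an almost-analytic $\Phi\not\equiv0$ has $\log|\Phi|\in L^1(\T)$, using harmonic-measure estimates that involve the whole circle rather than a neighbourhood of one point. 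This is where the concavity of $M$ and the monotonicity of $M(t)/\sqrt{t}$ are actually needed.
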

We remark that proof involves a mixture of very intricate potential theory and E. M. Dyn'kin's work on pseudo-analytic continuations of functions. A detailed proof can be found in \cite{havinbook} and also P. Koosis \cite{koosis1998logarithmic}. Further refinements and continuations of such results were also carried out by A. Borichev, which also be located in \cite{havinbook}. Now there are local versions of the above results, for $M$ which decay slower than that of Volberg. The following result is recent and due to B. Malman, and shows that when the spectral decay is slightly slower, then one retrieves a local version of Volberg's Theorem.

\begin{thm}[Malman, 2024] Let $\mu \in M(\T)$ whose Fourier coefficients satisfy the following rapid one-sided decay:
\[
\widehat{\mu}(n) = \exp (-c n^{1/2} ), \qquad n\to \infty.
\]
Then $d\mu = f dm$ with $f\in L^1(\T)$, and the $\text{ess supp}(f)$ can be chosen to be an open set $U$, with the property that for any $\zeta \in U$, there exists an arc $I(\zeta)$ such that 
\[
\int_{I(\zeta)} \log |f| dm>- \infty.
\]
\end{thm}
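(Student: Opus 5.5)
The plan has three steps: obtain absolute continuity from the F. and M. Riesz Theorem, convert the one-sided Gevrey-type decay into a pseudo-analytic extension of $\conj{f}$ into $\D$, and then prove local logarithmic integrability on sawtooth subdomains, in the spirit of Khrushchev's Theorem.

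\emph{Step 1: absolute continuity.} Split the Fourier series of $\mu$ as
\[
g(\zeta):=\sum_{n\geq 0}\widehat{\mu}(n)\zeta^n .
\]
The decay $\widehat{\mu}(n)=\mathcal{O}(e^{-cn^{1/2}})$ makes $g\in C^\infty(\T)$. In fact $\sup_{\T}|g^{(k)}|\lesssim C^k (k!)^2$, so $g$ lies in the Gevrey class of order $2$. The measure $\mu-g\,dm$ has vanishing nonnegative Fourier coefficients. By the F. and M. Riesz Theorem, applied after complex conjugation, it follows that $d\mu=f\,dm$ with
\[
f=g+\conj{h}, \qquad h\in H^1(\T).
\]
Equivalently, $\conj{f}=h+\conj{g}$, where $h$ is analytic and $\conj{g}$ is Gevrey-smooth.

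\emph{Step 2: pseudo-analytic extension.} I would use Dyn'kin's construction to extend $\conj{g}$ from $\T$ into $\D$ as a function $G\in C^1(\cD)$ with $G=\conj{g}$ on $\T$ and
\[
|\bar\partial G(z)|\leq C\exp\!\Big(-\frac{c'}{1-|z|}\Big).
\]
This is the standard dictionary between Gevrey-$2$ regularity and this rate of $\bar\partial$-decay. Then $\Phi:=h+G$ is a pseudo-analytic extension of $\conj{f}$. It has finite Nevanlinna-type growth, because $h\in H^1$ and $G$ is bounded, and its $\bar\partial$-derivative is exponentially flat at $\T$.

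\emph{Step 3: the open set $U$.} Define $U$ as the set of $\zeta\in\T$ admitting an arc $I(\zeta)\ni\zeta$ with $\int_{I(\zeta)}\log|f|\,dm>-\infty$. Then $U$ is open by definition, and the task is to prove that $f=0$ a.e.\ on $\T\setminus U$. The argument would be local on a fixed arc $I$ on which $f$ does not vanish a.e. Let $\Omega\subset\D$ be a sawtooth (Privalov star) region over a compact subset $K\subset I$ of positive measure on which $|f|\geq\delta>0$. On $\Omega$, solve $\bar\partial u=\bar\partial\Phi$ with $u$ exponentially small, for example through the Cauchy transform of $\bar\partial\Phi\,1_\Omega$. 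Then $\Phi-u$ is analytic on $\Omega$, and Jensen's inequality on $\Omega$, transported to the disc conformally as in the conformal Jensen lemma used for the Khinchin--Ostrowski Theorem, gives log-integrability of $|f|$ over $K$. One then has to enlarge $K$ to a whole subarc. I would do this with a Beurling--Carleson-entropy/Makarov-type exhaustion: outside $K$ on $I$, either $f$ vanishes on a set of positive measure or the sawtooth can be extended.

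\emph{Main obstacle.} The hardest point is Step 3, namely controlling the error $u$ against the smallness of $\Phi$. The bound $e^{-c'/(1-|z|)}$ is only useful where
\[
|\Phi(z)|\gg \exp\!\big(-c'/(2(1-|z|))\big).
\]
So the sawtooth must be built from the set where $|\Phi|$ is not too small at the corresponding scale. I would first try a stopping-time (Whitney-box) selection of good boxes, and show that the bad boxes project to a null set on $\T$ unless $f$ vanishes there. This is exactly where the $n^{1/2}$ exponent, the borderline for the divergence condition in the Cartwright--Levinson Theorem, enters: it makes $\int\exp(-c'/(1-r))\,\log$-type sums converge locally, but not globally.
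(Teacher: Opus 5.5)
Your Steps 1 and 2 are fine. F.\ and M.\ Riesz after conjugation gives $f=g+\conj{h}$ with $g$ Gevrey-$2$, and Dyn'kin's extension gives $|\bar\partial\Phi|\lesssim e^{-c'/(1-|z|)}$. The reduction to showing $f=0$ a.e.\ on the closed set $\T\setminus U$ is also correct.

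The gap is Step 3, which as written proves nothing. On a compact $K$ where $|f|\geq\delta$, the bound $\int_K\log|f|\,dm>-\infty$ holds trivially, with no analyticity used. The entire theorem lies in the deferred step of ``enlarging $K$ to a whole subarc'', and for that you give no mechanism. Your dichotomy does not help either: $f$ vanishing on a positive-measure part of $I$ is compatible with the situation that must be excluded. That situation is, for instance, $|f|\geq\delta$ on a fat Cantor set $E$, with $f$ vanishing, or $\log|f|$ non-integrable, towards every gap of $E$. A sawtooth over $K\subset E$ only sees the harmless part.

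Nor can your $\bar\partial$-correction resolve the gaps. The Cauchy transform of $\bar\partial\Phi\,1_\Omega$ is bounded and continuous up to $\T$, but it is not small there. Over a gap $J$ of length $\ell$, the natural correction has size about $e^{-c'/\ell}$, so Jensen in a tent over $J$ only controls $\int_J\log\max(|f|,e^{-c'/\ell})\,dm$, never $\int_J\log|f|\,dm$. This loss is intrinsic. Since $\sum_n n^{1/2}/n^2<\infty$, we are in the non-quasianalytic regime, where $f$ can genuinely vanish on arcs: a Gevrey-$2$ bump $\exp(-A/(t(a-t)))$ on $\{e^{it}:0<t<a\}$ has coefficients $O(e^{-c\sqrt{|n|}})$ for suitable constants. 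So no argument local to one gap can distinguish ``tiny'' from ``zero''. For the same reason, $n^{1/2}$ is not borderline for Cartwright--Levinson. Beurling--Carleson entropy and Makarov's gauge $t\log(e/t)$ are the wrong tools here; they belong to the polynomial weights in Khrushchev's theorem.

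The notes do not prove this theorem. They only say Malman's proof goes through a polynomial approximation problem in $\D$ extending Khrushchev's work, and that is the idea you are missing. Concretely, take $w(z)=e^{-c'/(1-|z|)}$ with $c'>0$ small. Then $\int_{\D}|z|^{2n}w\,dA\approx e^{-C\sqrt{c'n}}$. Cauchy--Schwarz and the decay of $\widehat{f}(n)$ for $n\geq 0$ show that $p\mapsto\int_{\T}p\,\conj{f}\,dm=\sum_{n\geq0}\widehat{p}(n)\conj{\widehat{f}(n)}$ is bounded on $\Po$ in the norm of $L^2(w\,dA)$.

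Suppose there are analytic polynomials $Q_k$ with $Q_k\to1$ in $L^2(w\,dA)$ and $Q_k\to 1_U$ in $L^2(|f|\,dm)$. Testing against $p(1-Q_k)$ gives $\int_{\T}p\,\conj{f}\,1_{\T\setminus U}\,dm=0$ for all $p\in\Po$. The F.\ and M.\ Riesz Theorem then forces $f=0$ a.e.\ on $\T\setminus U$. If $U=\emptyset$, the alternative $\int_{\T}\log|f|\,dm>-\infty$ would itself contradict $U=\emptyset$.

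Producing such $Q_k$ is the real work. In the spirit of Step 3 of the notes' proof of Khrushchev's theorem, one builds them from outer functions whose log-modulus oscillates at a fine dyadic scale, with mean zero on each dyadic arc. The log-modulus is very negative on most of $\T\setminus U$, and its positive part is placed where $|f|$ is tiny. This is possible exactly because $\int_I\log|f|\,dm=-\infty$ for every arc $I$ around a point of $\T\setminus U$.

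This is where the exponent $1/2$ really enters. The weight $e^{-c'/(1-|z|)}$ tolerates real measures $\sigma$ with $|\sigma(J)|$ of order one on arcs of every length, since then $|P[\sigma](z)|\lesssim 1/(1-|z|)$. Khrushchev's weights allow only $|J|\log(e/|J|)$. So the gauge in Makarov's lemma degenerates to a constant, and Beurling--Carleson sets play no role.
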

We note that the above result may also be regarded as a generalizing the brothers Riesz Theorem. The proof of Malman's Theorem in \cite{malman2023shift} involves an interesting problem on polynomial approximation in the unit-disc and extends upon the former work of S. Khrushchev in \cite{khrushchev1978problem}.


A more detailed treatment of these problems can be found in \cite{koosis1998logarithmic} by P. Koosis, with many interesting historical notes. It is worth mentioning that L. de Branges introduced a different perspective to the Bernstein problem, which later led to several improvements. we only mention a few great series of recent works in this direction: the work of A. Borichev and M. Sodin in \cite{borichev2001krein}, and also by A. Poltoratski in \cite{poltoratski2014bernstein}.

\section{Small support and amplitudes}
\subsection{Fourier majorants}

Recall that the classical Parseval Theorem implies that if $S$ is a non-trivial distribution on the unit-circle $\T$ with Fourier coefficients:
\[
\widehat{S}(n) := S(\zeta \mapsto \zeta^{-n}), \qquad n=0, \pm 1, \pm 2, \dots
\]
satisfying 
\[
\sum_n \abs{\widehat{S}(n)}^2 < \infty,
\]
then $S$ must be an $L^2$-function on $\T$, and thus be supported on a set of positive Lebesgue measure. From this point of view, one encounters yet another manifestation of the uncertainty principle, asserting that if we can to construct a distribution or measure with small support in the sense of Lebesgue measure, then their Fourier coefficients are bounded to have large $\ell^2$-mass. It is thus natural to further investigate a qualitative threshold for this phenomenon. For instance, we may ask if any positive sequence $(w(n))_{n=0}^\infty$ with $\sum_n w(n)^2 := \infty$ an \emph{admissible Fourier majorant} for a distribution $S$ with support on a set of zero Lebesgue measure in $\T$? In other words, does there exists a distribution $S$ on $\T$, supported on a set of Lebesgue measure zero, such that 
\[
\abs{\widehat{S}(n)} \leq w(|n|), \qquad n= \pm 1, \pm 2, \dots
\]
The following remarkable result originates for the profound work of Ivashev-Musatov \cite{ivavsev1958fourier}, which was later refined and improved by T. W. K\"orner in \cite{korner1977theorem1,korner1977theorem2,korner1986theorem3}.

\begin{thm}[Ivashev-Musatov, K\"orner] \thlabel{THM:IM} Let $(w(n))_{n=0}^\infty$ be a sequence of positive numbers which satisfy the following properties:
\begin{enumerate}
    \item[(i)] $ \sum_n w(n)^2 = + \infty$,
    \item[(ii)] there exists a constant $C>1$ such that for any integer $n\geq 1$:
    \[
    C^{-1} w(n) \leq w(k) \leq C w(n), \qquad n\leq k \leq 2n.
    \]
\end{enumerate}
There there exists a probability measure $\mu$ supported on a set of Lebesgue measure zero on $\T$, such that 
\[
\abs{\widehat{\mu}(n)} \leq w(|n|), \qquad n=\pm1, \pm 2, \dots.
\]
\end{thm}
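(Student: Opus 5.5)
We build $\mu$ as a weak-star limit of an iterated product of nonnegative densities, in the spirit of Körner's proof in \cite{korner1986theorem3}:
\[
d\mu = \lim_{N\to\infty} g_1 g_2 \cdots g_N \, dm,
\]
where each $g_j\geq 0$ is a smooth function with $\widehat{g_j}(0)=1$. Each $g_j$ is concentrated on a subset $E_j$ of measure at most $\tfrac12$ of the previous support, or at least of measure $\leq (1-\delta_j)$ times it with $\prod_j(1-\delta_j)=0$. The factors oscillate at frequencies $M_j$ chosen so large, and so lacunary relative to everything built before, that the spectra essentially do not interact.

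The basic building block is a bump-train. Take $g=\frac{1}{\eta}\sum_{k}\phi\bigl((t-2\pi k/M)/\eta\bigr)$ suitably normalized, with $\phi$ a smooth bump and $\eta/(2\pi/M)=\theta$ the covering fraction. Its support has measure $\approx\theta$, $\widehat g(0)=1$, and its nonzero Fourier coefficients sit at multiples of $M$, with
\[
\widehat g(\ell M)\approx \widehat\phi(\ell\theta) \quad \text{of size} \approx 1 \text{ for } |\ell|\lesssim 1/\theta .
\]
These coefficients are far too large, since the majorant $w$ is small. The key device, due to Ivashev-Musatov and Körner, is to replace the sharp bumps by \emph{chirped} or smeared versions: bumps whose positions carry a slowly varying phase. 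The resulting mass at frequency $\ell M$ is then spread over a whole block of roughly $L$ consecutive frequencies near $\ell M$. Each coefficient becomes $\lesssim L^{-1/2}$ while the $\ell^2$-mass of the block is kept.

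Alternatively, and more cleanly, the block spreading can be done with random signs or random translations. Choose the bump positions $x_k$ randomly in small windows, then use a Salem-type probabilistic estimate (Hoeffding plus a union bound over $\lesssim$ polynomially many frequencies). This gives $\sup_{n\neq 0}|\widehat{g}(n)|\lesssim \sqrt{\log K / K}$, where $K$ is the number of bumps, while $\mathrm{supp}\, g$ still has measure $\approx \theta$.

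Next comes the bookkeeping that converts hypothesis (i) into the measure-zero conclusion. Using (ii), partition the frequencies into dyadic blocks $[2^m,2^{m+1})$ on which $w\asymp w(2^m)$. Since $\sum_m 2^m w(2^m)^2=\infty$, group consecutive dyadic blocks into stages $j$ with $\sum_{m\in\text{stage } j}2^m w(2^m)^2\approx A$, for a fixed large constant $A$. At stage $j$ choose $g_j$ with spectrum spread over the frequencies of that stage, with coefficients $\leq c\,w(n)$, and with support fraction $\theta_j$ governed by Parseval:
\[
\|g_j\|_2^2 \approx 1/\theta_j \approx 1+\sum_{n\in\text{stage } j} w(n)^2 .
\]
Thus each stage shrinks the measure by a fixed factor $(1+cA)^{-1}$, and infinitely many stages force the support to have measure zero. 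Because the supports are nested, $\supp{\mu}\subseteq\bigcap_j \overline{E_j}$. If $w$ is too large to be useful, we simply truncate, replacing $w$ by $\min(w,1)$.

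The Fourier estimate for the product uses $\widehat{P_{N}g_{N+1}}=\widehat{P_N}*\widehat{g_{N+1}}$. Since $P_N=g_1\cdots g_N$ is smooth with rapidly decaying coefficients, and $g_{N+1}$ lives at much higher frequencies, we get
\[
\widehat{P_{N+1}}(n) = \widehat{P_N}(n) + \sum_{k}\widehat{P_N}(k)\widehat{g_{N+1}}(n-k),
\]
where the sum concerns only frequencies in stage $N+1$, up to an error that is negligible once $M_{N+1}$ is large. By (ii), $w(n-k)\leq C w(n)$ for $|k|\ll n$. Hence
\[
|\widehat{P_{N+1}}(n)|\leq |\widehat{P_N}(n)| + C\,\|\widehat{P_N}\|_{\ell^1}\, c\, w(n) .
\]
The constant $c$ at stage $N+1$ is chosen after $P_N$ is fixed, so that $c\|\widehat{P_N}\|_{\ell^1}$ is small.

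The main obstacle is precisely this trade-off. Shrinking $c$ costs $\ell^2$-mass per stage and hence slows the decrease of the measure. It also costs a $\log$ loss from the random construction. I would absorb both by letting the stage sizes $A_j$ grow according to the already fixed $P_{j-1}$, which is possible since $\sum w^2$ diverges. This is where a chirped, deterministic construction à la Körner may be needed if the logarithmic losses prove fatal near the borderline.

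Finally, weak-star compactness gives the probability measure $\mu$. Estimates at fixed $n$ stabilize after finitely many stages, yielding $|\widehat\mu(n)|\leq w(|n|)$ after a harmless rescaling of constants.
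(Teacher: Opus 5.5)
Your outer architecture matches the paper's: an inductive product of nonnegative mean-one factors, each chosen after the previous product is fixed with a constant $c$ small against $\|\widehat{P_N}\|_{\ell^1}$, the convolution estimate via (ii), and a weak-star limit. Your budget count, with stages carrying $\sum 2^m w(2^m)^2\gtrsim c_j^{-2}$, is also the right one.

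The gap is the stage factor itself. The induction needs, for \emph{every} $c>0$, a smooth $g\geq 0$ with $\widehat g(0)=1$, vanishing on a fixed proportion of every sufficiently long arc, and with $|\widehat g(n)|\leq c\,w(|n|)$ for all $n\neq 0$. You only assert that such a $g$ exists ``with support fraction governed by Parseval''. But Cauchy--Schwarz and Parseval give only the \emph{necessary} condition $\|g\|_2^2-1\geq \tau/(1-\tau)$ when $g$ vanishes on a set of measure $\tau$. The block you actually describe cannot meet the requirement:
\begin{itemize}
\item Bumps at spacing $2\pi/M$, chirped within blocks of length $L<M$ or randomly perturbed, put a fixed part of the mass $\|g\|_2^2-1\gtrsim 1$ on the $\asymp M$ frequencies with $|n|\asymp M$. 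So some $|\widehat g(n)|\gtrsim M^{-1/2}$ with $|n|\asymp M$, and by (ii) you need $c^2Mw(M)^2\gtrsim 1$.
\item This fails for all small $c$ whenever $\sup_n nw(n)^2<\infty$, e.g.\ $w(n)=(n+1)^{-1/2}$.
\item For $w(n)=((n+2)\log(n+2))^{-1/2}$, which satisfies (i) and (ii), it fails for every fixed $c$ once $M$ is large.
\item The randomized version, which only guarantees $\sqrt{\log K/K}$, is worse.
\end{itemize}
Multiplying many single-scale factors with small removed proportions $\tau_N$ does not rescue this either. For such $w$ the same count gives $\tau_N\lesssim c_N^2$ up to logarithms. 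Meanwhile your own estimate needs $\sum_N c_N\|\widehat{P_{N-1}}\|_{\ell^1}\lesssim 1$, hence $\sum_N c_N<\infty$, because $\|\widehat{P_N}\|_{\ell^1}\geq\widehat{P_N}(0)\approx 1$. So $\prod_N(1-\tau_N)>0$ and the support never reaches measure zero.

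The missing idea, which is exactly the content of the paper's \thref{LEM:KORSTEP1}, is to spread the required $\ell^2$-mass over many dyadic scales \emph{additively in space} rather than through a product.
\begin{itemize}
\item Partition $\T$ into short arcs $I_k$ and give each arc its own distinct dyadic scale $2^{d_k}$ with $2^{-d_k}\leq m(I_k)\leq 2^{d_k}w(2^{d_k})^2$. Such a partition exists precisely because $\sum_d 2^dw(2^d)^2=\infty$, and it still exists after replacing $w$ by $\varepsilon w$.
\item On $I_k$ place K\"orner's function $f_{I_k,N_k}$ from \thref{LEM:KORNER22}, with $2^{d_k}m(I_k)\leq N_k\leq 2^{d_k+1}m(I_k)$. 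Its spectrum sits at $|n|\asymp 2^{d_k}$, with height $\lesssim m(I_k)N_k^{-1/2}\leq w(2^{d_k})$ and decay $\min(x^{M+1},x^{-M-1})$ in $x\asymp|n|2^{-d_k}$.
\item Because $F=\sum_k f_k$ is a \emph{sum} of functions with disjoint supports, there are no convolution cross terms. Thus $|\widehat F(n)|\leq\sum_k|\widehat{f_k}(n)|\lesssim w(|n|)$ by (ii) and a geometric sum over the distinct $d_k$.
\item Meanwhile $(10+F)/\int_{\T}(10+F)\,dm$ vanishes on at least $1/100$ of every arc of length $\geq\delta$.
\end{itemize}
This yields a fixed-proportion shrink with coefficients $\leq\varepsilon w$ for arbitrarily small $\varepsilon$, which is what your induction needs. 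The local spectral flattening you had in mind is what $f_{I,N}$ supplies. What you are missing is that it must be localized to arcs whose length is matched to the $\ell^2$-budget of one dyadic band.
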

Roughly speaking, the theorem asserts that square divergent sequences whose oscillation between $n$ and $2n$ is controlled are admissible Fourier majorants for measure with small support. The initial proof of Ivashev--Musatov required more restrictive regularity conditions on $(w(n))_n$, but several of these we successively removed by K\"orner. 

Now it is now natural to ask whether the additional regularity condition in $(ii)$ is necessary. Although, it is well-known that the condition $(ii)$ may likely be relaxed, the following result demonstrates that it cannot simply be removed. 

\begin{thm}[K\"orner] \thlabel{THM:KÖRsuff} There exists decreasing sequence of positive numbers $(w(n))_n$ with the property  $\sum_n w^2(n) = \infty$, but any non-trivial distribution $S$ on $\T$ with 
\[
\abs{\widehat{S}(n)} \leq C w(|n|), \qquad n=0,\pm 1, \pm 2, \dots
\]
for some constant $C>0$, must have full support in $\T$.
\end{thm}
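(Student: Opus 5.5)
The plan is to build $w$ as a decreasing step function with extremely long plateaus, and to show that a distribution vanishing on an arc cannot spread its mass evenly over those plateaus. Concretely, I will choose integers $N_1<N_2<\dots$ growing super-exponentially and levels $a_1>a_2>\dots$, and put $w(n)=a_k$ for $N_k\le n<N_{k+1}$. The levels are fixed by
\[
a_k^2\,(N_{k+1}-N_k)=\frac1k ,
\]
so that $\sum_n w(n)^2=\sum_k 1/k=+\infty$, while each single block carries only $\ell^2$-mass $1/k\to 0$. The freedom in choosing $N_{k+1}$ after $N_k$ and $a_k$ is what I will exploit.

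Now let $S$ satisfy $|\widehat S(n)|\le Cw(|n|)$ and vanish on an arc $I$; I must show $S\equiv 0$. Assuming $S\neq 0$, I fix an $m$ with $\widehat S(m)\neq 0$. For each large $k$ I split
\[
S=S^{\mathrm{low}}_k+S^{\mathrm{mid}}_k+S^{\mathrm{tail}}_k
\]
according to whether $|n|<N_k$, $N_k\le |n|<N_{k+1}$, or $|n|\ge N_{k+1}$. By Parseval, the middle piece is an $L^2$ function of norm at most $C k^{-1/2}$. The tail is a distribution with coefficients at most $a_{k+1}$. First I would smooth the tail by convolving $S$ with a fixed bump $\phi$ whose support is much smaller than $|I|$. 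The resulting function still vanishes on a slightly smaller arc $J$, and its tail beyond $N_{k+1}$ has rapidly decaying coefficients. Since $\phi*S=0$ on $J$, on $J$ the trigonometric polynomial $\phi*S^{\mathrm{low}}_k$, of degree $<N_k$, equals minus the middle and tail pieces, which are small in $L^2(J)$. A Remez--Turán type estimate for trigonometric polynomials on the arc $J$ would then bound $\|\phi*S^{\mathrm{low}}_k\|_{L^2(\T)}$ from above. On the other hand this norm is at least $|\widehat\phi(m)\widehat S(m)|>0$, which is the intended contradiction.

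I expect the main obstacle to be the Remez constant. It grows like $(C/|J|)^{N_k}$, whereas the middle piece is only $O(k^{-1/2})$ small. With the bookkeeping above, the product does not tend to zero, and no choice of $a_k$ can make every block exponentially small in $N_k$ while keeping $\sum_n w^2$ divergent. So the naive argument fails, and the real work is a finer use of the long plateaus.

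My first attempt at this step would exploit that $w$ is constant across a block of length $L_k=N_{k+1}-N_k\gg N_k$. I would replace the sharp cutoff at $N_k$ by a smooth frequency cutoff, built from de la Vallée Poussin kernels as in \thref{PROP:Distc0}, that transitions over a window of length $\sim L_k^{1/2}$ inside the plateau. The low part then includes frequencies up to about $N_k+L_k^{1/2}$. I would then compare the $\ell^2$-mass of $\widehat S$ on the first $L_k^{1/2}$ frequencies of the block with its mass on the whole block. The coefficients there obey the same bound $a_k$, so this part has mass at most $a_k^2 L_k^{1/2}\approx k^{-1}L_k^{-1/2}$. This can be made smaller than any $e^{-cN_k}$ by taking $N_{k+1}$ large, while the mass on the remainder of the block is still controlled by Parseval.

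The second ingredient would be a periodic Logvinenko--Sereda/Amrein--Berthier estimate, in the spirit of \thref{PROP:AB} and \thref{THM:A-B}. It should say that a function whose spectrum lies in the remainder of the block, far from the low frequencies, cannot cancel a polynomial of degree $\lesssim N_k$ on $J$ without carrying comparable mass off $J$. If this decoupling works, the contradiction with $\widehat S(m)\neq 0$ follows as above, and $S\equiv 0$, i.e.\ every nontrivial such $S$ has full support.
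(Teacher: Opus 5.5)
There is a genuine gap: the argument is never closed. The final step rests on a ``periodic Logvinenko--Sereda/Amrein--Berthier decoupling'' that you neither state precisely nor prove, and it aims at the wrong target. A trigonometric polynomial of degree $N_k$ can be $O(k^{-1/2})$ on $J$ while its $m$-th coefficient has size one; think of $\zeta^m$ times a translated Fej\'er kernel peaked away from $J$. So the Remez step needs the errors on $J$ to be smaller than $(c\abs{J})^{2N_k}$, and information about where a high-frequency function places its mass off $J$ does not supply that.

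More to the point, the obstacle you diagnose comes from your bookkeeping. You bounded the middle block by Parseval applied to $S$ itself, i.e.\ by its $\ell^2$-mass $a_k^2(N_{k+1}-N_k)=1/k$. But after convolving with $\phi$, the error on $J$ is $\phi*(S^{\mathrm{mid}}_k+S^{\mathrm{tail}}_k)$. Its coefficients are $\widehat\phi(n)\widehat S(n)$, with $\abs{\widehat S(n)}\le Ca_k$ for \emph{all} $\abs{n}\ge N_k$, since $w$ is non-increasing. Hence
\[
\sup_{\T}\abs{\phi*(S^{\mathrm{mid}}_k+S^{\mathrm{tail}}_k)}\le Ca_k\sum_{n}\abs{\widehat\phi(n)},
\]
however long the plateau is. What matters is the \emph{height} $a_k$ of the plateau, not its $\ell^2$-mass. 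The height can be made as small as you like relative to $N_k$ while the mass stays $1/k$, simply by lengthening the plateau.

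So the missing idea is the order of choices. Given $N_k$, pick $a_k\le e^{-N_k^2}$, and only then set $N_{k+1}:=N_k+\lceil 1/(ka_k^2)\rceil$; this keeps $\sum_n w(n)^2=\infty$. Use the Remez--Tur\'an inequality $\sup_\T\abs{p}\le (c/\abs{J})^{2N}\sup_J\abs{p}$ for trigonometric polynomials of degree $N$. For $N_k>\abs{m}$ this gives
\[
\abs{\widehat\phi(m)\widehat S(m)}\le\sup_{\T}\abs{\phi*S^{\mathrm{low}}_k}\le\Big(\frac{c}{\abs{J}}\Big)^{2N_k}Ca_k\sum_n\abs{\widehat\phi(n)}\longrightarrow 0,
\]
a contradiction. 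No smooth cutoffs or decoupling estimates are needed.

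This is the paper's mechanism up to an index shift. There $N_{k+1}$ is fixed first, so that $\sum_{N_k\le n\le N_{k+1}}w(n)^2\gtrsim 1$. Then $\varepsilon_{k+1}$ is chosen depending on $N_{k+1}$ through the K\"orner--Meyer lemma. The split is made at $N_{k+1}$, so the whole current stretch $[N_k,N_{k+1}]$ lies in the low part. Besides nontriviality of the low part, only the uniform bound $\abs{\widehat S(n)}\le C2^{-k-1}\varepsilon_{k+1}$ for $\abs{n}>N_{k+1}$ is used. The K\"orner--Meyer lemma is a qualitative compactness substitute for your explicit Remez constant, resting on the fact that a polynomial of degree $\le N$ vanishing on an arc is zero. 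It works with distributions directly, without convolving by $\phi$.
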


Note that \thref{THM:KÖRsuff} has the additional property that no compact subset $E\subsetneq \T$ can support a non-trivial distribution $S$ with 
\[
\abs{\widehat{S}(n)} \leq Cw(|n|), \qquad n=0,\pm 1, \pm2, \dots
\]
In the proceeding sections, we shall also prove a one-sided analogue of the above result. It will be deduced from the Cartwright--Levinson Theorem. With these perspectives in mind, the following complementary result was recently obtained by the author in \cite{limani2024fourier}.

\begin{thm}[Limani]\thlabel{THM:LIMIMCOMP} Let $(w(n))_{n=0}^\infty$ be a sequence of positive real numbers with $w(n) \downarrow 0$, and $0<\delta<1$ be number. Then there exists a compact set $E=E(\delta)\subset \T$ with $m(E)>1-\delta$ such that any non-trivial $\mu \in M(E)$ satisfies with 
\[
\sup_{n\geq 0} \frac{\abs{\widehat{\mu}(n)}}{w(n)} = +\infty.
\]
\end{thm}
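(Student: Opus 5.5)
The plan is to argue by duality, in the spirit of \thref{PROP:SA}, reducing the theorem to a simultaneous approximation statement for a weighted $\ell^1$-norm. Everything is then settled by one explicit construction of analytic polynomials built from exponentials of high-frequency dilates of a fixed function.

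\emph{Step 1: duality reduction.} For an analytic polynomial $p$, set $\norm{p}_w:=\sum_{n\geq 0}\abs{\widehat{p}(n)}w(n)$. Suppose $\mu\in M(E)$ satisfies $\abs{\widehat{\mu}(n)}\leq Cw(n)$ for all $n\geq 0$. Since $\int_{\T}\zeta^{-n}\,d\mu=\widehat{\mu}(n)$, we have
\[
\Big\lvert\int_{\T}\conj{p}\,d\mu\Big\rvert=\Big\lvert\sum_{n\geq 0}\conj{\widehat{p}(n)}\,\widehat{\mu}(n)\Big\rvert\leq C\norm{p}_w .
\]
Suppose we have analytic polynomials $p_k$ with $\sup_E\abs{p_k-1}\to 0$, $\sup_{\T}\abs{p_k}$ bounded and $\norm{p_k}_w\to 0$. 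Fix $N\geq 0$ and apply the bound to $\zeta^Np_k$. Since $w$ is decreasing, $\norm{\zeta^Np_k}_w\leq\norm{p_k}_w\to 0$. On the other hand, $\conj{\zeta^Np_k}=\zeta^{-N}\conj{p_k}\to\zeta^{-N}$ uniformly on $E\supseteq\supp{\mu}$. Letting $k\to\infty$ therefore gives $\widehat{\mu}(N)=0$ for all $N\geq 0$. By the F. and M. Riesz \thref{THM:FMRIESZ} (applied to the reflected measure), $d\mu=h\,dm$ with $\int_{\T}\log\abs{h}\,dm>-\infty$ unless $h\equiv 0$. We shall also arrange $m(\T\setminus E)>0$. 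Since $h$ vanishes on $\T\setminus E$, this forces $\mu\equiv 0$. So the whole theorem reduces to constructing $E$ and such polynomials $p_k$.

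\emph{Step 2: the building block.} Fix a smooth real function $v$ on $\T$ with $\int_{\T}v\,dm=0$ and $v=-1$ on a closed set $F_\eta$ with $m(F_\eta)\geq 1-\eta$; such $v$ is large and positive on a small arc. For parameters $\lambda>0$ and $N\in\mathbb{Z}_+$, consider the analytic function
\[
P(\zeta):=2\lambda\sum_{j\geq 1}\widehat{v}(j)\,\zeta^{jN}, \qquad \Re P(\zeta)=\lambda\,v(\zeta^N),
\]
and set $G:=e^{P}$ and $p:=1-G$. Then $\widehat{G}(0)=1$ and $\supp{\widehat{G-1}}\subseteq[N,\infty)$. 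By the Banach algebra property of $A(\T)$,
\[
\norm{G-1}_A\leq e^{\norm{P}_A}=e^{2\lambda\sum_{j\geq 1}\abs{\widehat{v}(j)}}=:K(\lambda,v),
\]
and this bound is independent of $N$. Hence $\norm{p}_w\leq w(N)K(\lambda,v)$. Moreover $\abs{p-1}=e^{\lambda v(\zeta^N)}=e^{-\lambda}$ on the set $E(N,\eta):=\{\zeta:\zeta^N\in F_\eta\}$. This set is closed and, by invariance of $dm$ under $\zeta\mapsto\zeta^N$, has measure $\geq 1-\eta$. Finally, truncating the Taylor series of $G$ (or taking Fej\'er means) yields genuine polynomials with the same properties up to an arbitrarily small error.

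\emph{Step 3: assembling $E$.} Take $\lambda_k=k$ and $\eta_k=\delta 2^{-k-1}$, with the corresponding $v_k$. Choose $N_k$ so large that $w(N_k)K(\lambda_k,v_k)\leq 2^{-k}$; this is possible precisely because $w(n)\downarrow 0$, and it is the only place this hypothesis enters. Put $E:=\bigcap_kE(N_k,\eta_k)$. Then $E$ is compact with $m(E)\geq 1-\delta/2>1-\delta$, and its complement has positive measure because $m(\T\setminus E(N_1,\eta_1))>0$ once $v_1$ is chosen with $F_{\eta_1}\neq\T$. The polynomials $p_k$ from Step 2 then satisfy everything required in Step 1.

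The step I expect to need the most care is the uniform bound in Step 2: the weighted norm of $G-1$ must be controlled independently of the dilation $N$, and both the truncation to polynomials and the uniform boundedness $\sup_{\T}\abs{p_k}\leq 1+e^{K}$ (needed for passing to the limit in Step 1) must be checked. All of this rests on the observation that $\norm{P}_A$ is invariant under $\zeta\mapsto\zeta^N$.
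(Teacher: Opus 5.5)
Your proposal is correct and follows the paper's route. The paper's $f_j(z)=F_{\gamma_j,\delta_j}(z^{N_j})$, with $F_{\gamma,\delta}$ equal to $1-\exp$ of the Herglotz integral of a mean-zero $\psi$, is exactly your $1-e^{P}$ with $\psi=\lambda v$, and its bound $\sum_n\abs{\widehat{f_j}(n)}w(n)\le w(N_j)\sum_{n\ge1}\abs{\widehat{F}(n)}$ is your $w(N)K(\lambda,v)$. Your duality step (pairing with $\zeta^Np_k$, using that $w$ decreases, then F.\ and M.\ Riesz together with $m(\T\setminus E)>0$) makes precise what the paper compresses into ``we may assume $\widehat{\mu}(0)\neq0$''.

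One correction: drop the requirement that $\sup_{\T}\abs{p_k}$ be bounded in $k$. Your bound $1+e^{K(\lambda_k,v_k)}$ blows up, and by the Khinchin--Ostrowski theorem applied to $1-p_k$ no uniform bound is possible, since it would force $p_k(0)\to1$ while $\widehat{p_k}(0)=0$. Nor is the bound needed: $\abs{\mu}(\T\setminus E)=0$ and $p_k\to1$ uniformly on $E$, so passing to the limit in Step 1 works without it.
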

Note that $w$ cannot even be the unilateral Fourier majorant of a measure $\mu \in M(E)$.

\subsection{The Ivashev-Musatov Theorem}

This section presents the principal lemma underlying the proof of the Ivashev-Musatov Theorem. The lemma involves the construction of smooth functions that are both highly localized and possess uniformly small amplitudes, which may be of independent interest.

\begin{thm}[Principal IM-Lemma]\thlabel{THM:PRINCIPAL} Let $\{w(n)\}_{n=0}^\infty$ positive numbers satisfying the hypothesis $(i)-(ii)$ of the IM-Theorem. Then for any $0<\varepsilon<1$, there exists $\psi_\varepsilon \in C^\infty(\T)$, such that 
\begin{enumerate}
    \item[(I)] $\psi_\varepsilon \geq 0$ on $\T$,
    \item[(II)] $\int_{\T} \psi_\varepsilon dm =1$,
    \item[(III)] $m(\supp{\psi_\varepsilon dm} )\leq \varepsilon$,
    \item[(IV)] $\abs{\widehat{\psi_\varepsilon}(n)} \leq w(|n|)$, for all $n\neq 0$.
\end{enumerate}
\end{thm}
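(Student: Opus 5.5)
We plan to follow K\"orner's product construction. The guiding idea is that a function of the form $\psi = \prod_{j} (1+\phi_j)$, with mean-zero, highly oscillating factors living on well-separated frequency scales, behaves on the Fourier side like a convolution of nearly independent blocks. Meanwhile, on the physical side the product can be made to vanish on most of $\T$.

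We first reduce to a dyadic block structure. By hypothesis (ii), $w$ is essentially constant on each dyadic block $[2^k,2^{k+1})$, say equal to $w_k$ up to the factor $C$. Hypothesis (i) then reads $\sum_k 2^k w_k^2=\infty$. We group consecutive dyadic blocks into finitely many disjoint ranges $B_1,\dots,B_J$ of frequencies. Each range $B_j=[N_j,M_j]$ is chosen with $\sum_{k\in B_j}2^k w_k^2\ge A$ for a large constant $A=A(\varepsilon)$ fixed at the end, and with $N_{j+1}\gg M_j$.

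For each range we build a real trigonometric polynomial $\phi_j$ with spectrum in $\pm[N_j,2M_j]$ and $1+\phi_j\ge0$. We want $|\widehat{\phi_j}(n)|\lesssim w(|n|)/C'$ and $\int(1+\phi_j)^2\,dm\ge 1+cA$. The natural candidate is a randomized (or Rudin--Shapiro type) sum $\sum_n \eta_n w(n)\cos(n t+\theta_n)$ over the range. Its $L^2$-norm squared is $\tfrac12\sum w(n)^2\approx A$, while its coefficients are exactly of size $w$. To enforce positivity we pass to $(1+\phi_j)=|g_j|^2/\|g_j\|_2^2$ with $g_j$ a polynomial having such coefficients. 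The large $L^2$ mass then forces $1+\phi_j$ to be concentrated: by Chebyshev/Paley--Zygmund, with a flat-modulus choice such as a Rudin--Shapiro or random sign polynomial, it is small except on a set of measure $\lesssim 1/A$.

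We then set $\psi=\prod_{j=1}^J(1+\phi_j)$. Properties (I) and (II) follow from positivity and from the spectral separation, which gives $\widehat{\psi}(0)=1$ as in the Riesz-product computations of Section~\ref{SEC:LACRIESZ}. For (IV), a coefficient $\widehat\psi(n)$ with $n$ in the $j$-th range is a sum of $\widehat{\phi_j}(n-m)\widehat{\Phi_{j-1}}(m)$ with $|m|\le 2M_{j-1}\ll N_j$. Here $\Phi_{j-1}=\prod_{i<j}(1+\phi_i)$. Using (ii), $w(|n-m|)\le C w(|n|)$, so $|\widehat\psi(n)|\le C\,\|\widehat{\Phi_{j-1}}\|_{\ell^1}\sup|\widehat{\phi_j}|$. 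The $\ell^1$ norm of $\widehat{\Phi_{j-1}}$ is large, so this bound alone does not suffice. We therefore scale down the coefficients of $\phi_j$ by the known $\|\widehat{\Phi_{j-1}}\|_{\ell^1}$ and compensate by choosing the range $B_j$ correspondingly longer, which is possible since $\sum 2^kw_k^2$ diverges.

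For (III), we need smallness of the product on most of $\T$. We use independence heuristics: the factors act on separated scales, so the measure where all factors are large is roughly multiplicative. We replace $\psi$ by a truncation, multiplying by a smooth cutoff equal to $0$ where $\psi<\delta$. We then mollify at a scale finer than $1/M_J$ to keep smoothness, and finally renormalize.

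The main obstacle is controlling (IV) after this cutoff-and-renormalize step. Cutting off destroys the exact spectral structure. Our fallback is to make the discarded mass $\int_{\psi<\delta}\psi$ tiny in $L^1$ and to bound its Fourier coefficients by that $L^1$ norm. We then absorb the resulting error into $w(n)$ for $|n|\le 2M_J$, using $\inf_{n\le 2M_J}w(n)>0$. For large $n$, smoothness of the final function handles the tail. Balancing these quantities, by choosing $\delta$ after the ranges and the mollifier scale last, is where we expect the real difficulty.
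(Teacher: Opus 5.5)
Your factors do not have the properties the argument needs. Take $1+\phi_j=|g_j|^2/\norm{g_j}_2^2$ with $g_j=\sum_{n\in B_j}a_n\zeta^n$. Then $|g_j|^2=\sum_{n,m}a_n\conj{a_m}\,\zeta^{n-m}$, so $\phi_j$ has spectrum in $[-(M_j-N_j),M_j-N_j]$, a neighbourhood of the origin, not in $\pm[N_j,2M_j]$. Its coefficients are the autocorrelations $\norm{g_j}_2^{-2}\sum_m a_{m+k}\conj{a_m}$, not numbers of size $w(|k|)$. The spectra of different factors therefore overlap. The Riesz-product bookkeeping behind your (II) is then lost, and so is the representation $\widehat{\psi}(n)=\sum_m\widehat{\phi_j}(n-m)\widehat{\Phi_{j-1}}(m)$ with $|m|\ll N_j$ that your (IV) rests on.

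The concentration claim also fails, since $\int(1+\phi_j)^2\,dm=\norm{g_j}_4^4/\norm{g_j}_2^4$. If $g_j$ has flat modulus, $|g_j|\le C\norm{g_j}_2$, then $1+\phi_j\le C^2$ has mean one and is $\ge 1/2$ on a set of measure at least $1/(2C^2)$. For random signs, $|g_j|^2/\norm{g_j}_2^2$ is distributed roughly like $|Z|^2$ with $Z$ a standard complex Gaussian. Then $\int(1+\phi_j)^2\,dm\approx 2$ rather than $1+cA$, and $\{1+\phi_j<\delta\}$ has measure about $\delta$. Paley--Zygmund bounds from below the set where a function is large; it gives no smallness off a set of measure $\lesssim 1/A$.

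Moreover, a generic $|g_j|^2$ has no zeros on $\T$, so $\supp{\psi\,dm}=\T$ and all of (III) is pushed onto the cut-off. That step has no reason to close. Its error is controlled only by $\delta$, which must stay below $\inf_{|n|\le\deg\psi}w(n)$, and that quantity shrinks every time you add a factor. Yet adding factors is your only source of smallness for $\psi$. The circularity you flag is the heart of the matter, and the proposal contains no mechanism to break it.

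What is missing is a factor whose effect on the support does not weaken as its Fourier coefficients are made small. This is \thref{LEM:KORSTEP1}: for all $\varepsilon>0$ and $0<\delta<1$ there is $\psi_{\varepsilon,\delta}\in C^\infty(\T)$ with the following properties. It is nonnegative with mean one, satisfies $|\widehat{\psi_{\varepsilon,\delta}}(n)|\le\varepsilon w(|n|)$ for $n\neq0$, and vanishes \emph{identically} on at least $1/100$ of every arc of length $\ge\delta$.

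It is assembled from K\"orner's blocks $f_{I,N}$. These take values in $[-10,10]$, equal $-10$ on $1/50$ of $I$, and have coefficients $\lesssim m(I)N^{-1/2}$ concentrated near frequency $N/m(I)$. They are placed on a partition of $\T$ into short arcs with $2^{-d_k}\le m(I_k)\le 2^{d_k}w(2^{d_k})^2$. This is exactly where $\sum_d 2^d w(2^d)^2=\infty$ enters. Since this divergence survives $w\mapsto\varepsilon w$, the coefficients can be made as small as desired at no cost on the physical side.

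The proof then sets $f_{n+1}=f_n\psi_{\varepsilon,\delta}$, with $\delta$ smaller than the arcs of a cover of $\supp{f_n\,dm}$ of total length $\le\frac{101}{100}m(\supp{f_n\,dm})$. The support thus shrinks by the fixed factor $1-100^{-2}$ with no cut-off at all. It chooses $\varepsilon=\varepsilon(n)$ after $f_n$, using $\norm{\widehat{f_n}}_{\ell^1}$ and the rapid decay of $\widehat{f_n}$, so that $|\widehat{f_{n+1}}(k)-\widehat{f_n}(k)|\le 2^{-n-1}w(|k|)$. Your rescaling by $\norm{\widehat{\Phi_{j-1}}}_{\ell^1}$ is the counterpart of this choice of $\varepsilon(n)$. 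What your construction lacks is the exact, scale-uniform vanishing that makes (III) automatic.
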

Note that the regularity condition $(ii)$ of $\{w(n)\}_n$ implies that there exists a number $M=M(w)>1$ such that $w(1) n^{-M} \leq w(n)$ for all $n\geq 1$. This observation implies that any $f\in C^\infty(\T)$ automatically satisfies the condition $(IV)$ for large enough $n$, and therefore the lemma is intrinsically a statement about the existence of smooth functions with uniformly small amplitudes of "lower" frequencies. With the Principal lemma at hand, one can easily obtain the Ivashev-Musatov Theorem as an immediate corollary, as we demonstrate below.

\begin{proof}[Proof of the Ivashev-Musatov Theorem] 
Consider the family of probability measures $d\mu_\varepsilon := \psi_\varepsilon dm$ appearing in the prinicap IM-Lemma, and apply the Helly selection Theorem in order to extract a weak-star cluster point $\mu$, which again is a probability $\mu$ on $\T$ by $(I)$ and $(II)$, and according to $(III)$ must be supported on a set of Lebesgue measure zero. At last, the uniform bound in $(IV)$ in conjunction with the weak-star convergence readily gives

\[
\abs{\widehat{\mu}(n)} \leq \limsup_{\varepsilon \to 0+} \abs{\widehat{\psi_\varepsilon}(n)} \leq w(|n|), \qquad  \forall n\neq 0.
\]

\end{proof}

We shall now comment on the conventional approach in proving the Ivashev-Musatov Theorem, which involves oscillatory integrals. First, observe that if $f\in C^\infty(\T)$ and note that $f_N(\zeta)= f(\zeta^N)$ where $N>0$ is a large integer, then $\supp{f_{N}}= \supp{f}$ remains small, since composition with $\zeta^N$ constitutes to spreading it out the support of $\supp{f}$ among $N$ uniformly spread arcs. Meanwhile, the frequencies of $\widehat{f_N}$ are now only supported on integer multiples of $N$, with 
\[
\widehat{f_{N}}(n) = 
\begin{cases} 
\widehat{f}(k) \qquad ,n=Nk \\
0 \,\, \, \, \, \, \, \qquad ,n\notin N \mathbb{Z}.
\end{cases}
\]
In other words, this procedure annihilates frequencies at a fixed arithmetic scale $N$, and shifts forward the amplitudes of $f$. However, this procedure does not shrink the lower order Fourier coefficients of $f$, thus a natural idea is to substitute $\zeta^N$ by a suitable unimodular function $U(\zeta)$, with the intent further decreasing the amplitudes of small frequencies. More specifically, let $U$ be a unimodular function: $\abs{U}=1$ on $\T$, and consider the composition $f_{U}(\zeta) := f(U(\zeta))$. If we further impose that $U$ has a holomorphic extension to $\{|z|<1\}$ i.e it is an inner function, the $\supp{\widehat{U}}\subseteq [0,\infty)$, hence means of multiplying $U$ by $\zeta$, we may wlog assume that $\int_{\T} U(\zeta) dm(\zeta) =0$. It follows that $f_U$ non-negative if $f$ is, $\widehat{f_U}(0)=\widehat{f}(0)$, and  then L\"owner's lemma on harmonic measures ensures that $m(\supp{f_U})= m(\supp{f})$. In case $U$ is not inner, then one needs to make sure that pre-images of $U$ do not distort the measure too much. Now, assuming such arrangements have been made, we note that since $f$ is smooth on $\T$, the Fourier coefficients of $f_U$ can be computed via
\[
\widehat{f_U}(n) = \sum_k \widehat{f}(k) \int_{\T} U^k(\zeta) \zeta^{-n} dm(\zeta)
\]
which indicated how the framework of oscillatory integrals and stationary phase arguments enter the picture. Now, to cook-up a unimodular function $U$ which "cancels" out the large amplitudes of the lower frequencies $f$, is arguably where the crucial difficulty lies, and cannot comprehensibly be outlined as the previous steps. This is roughly the initial approach taken by Ivashev-Musatov in \cite{ivavsev1958fourier}, and was further simplified by T. W. K\"orner in \cite{korner1977theorem1,korner1977theorem2}, but required stronger additional regularity assumptions on the Fourier majorant $\{w(n)\}_n$. Below, we shall follow a different route suggested by T. W. K\"orner in his further work in \cite{korner1986theorem3}, which involves carefully construction of smooth functions with huge oscillations designed to cancel out lower order frequencies.

The following lemma summarizes how the hypothesis $(ii)$ of \thref{THM:IM} will be used throughout. By means of substituting $w(n)$ with $\min(w(n), 1)$, we may from now on, and onward and without loss of generality, always assume that $w(n) \leq 1$ for all $n$.

\begin{lemma} \thlabel{LEM:DOUBLE}
Let $\{w(n)\}_{n\geq 0}$ be positive real numbers satisfying the hypothesis $(ii)$ of the IM-Theorem. Then there exists a number $M=M(w)>1$ such that:
\[
w(m) \leq \min \left(\frac{m}{n}, \frac{n}{m}\right)^{M} w(n), \qquad m,n\geq 0.
\]

\end{lemma}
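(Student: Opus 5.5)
Before starting, note that the inequality cannot hold as displayed. For $m<n$, applying it to the pair $(m,n)$ and then to $(n,m)$ gives $w(m)\le (m/n)^{2M}w(m)$, which fails since $w>0$. So I read the statement in its intended form: for all integers $m,n\ge 1$,
\[
w(m) \le A \max\left(\frac{m}{n},\frac{n}{m}\right)^{M} w(n),
\]
with constants $A,M$ depending only on $w$. The index $0$ can be handled separately by enlarging $A$. A multiplicative constant $A$ cannot be dropped in general: for $n<m<2n$ hypothesis (ii) only gives a factor $C$, while $(m/n)^M$ may be arbitrarily close to $1$. Constants of this kind are harmless in the later applications, where $w$ only enters up to the implicit constants of the construction.

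\emph{Dyadic chaining.} By symmetry it suffices to treat $1\le n\le m$; the case $m\le n$ follows by swapping the roles of $m$ and $n$, using that (ii) bounds both the ratio and its inverse by $C$. Choose $k\ge 0$ with $2^k n\le m<2^{k+1}n$. Hypothesis (ii), applied successively to the pairs $(2^jn,\,2^{j+1}n)$ for $j=0,\dots,k-1$, gives $w(2^{j+1}n)\le C\,w(2^jn)$. A final application to the pair $(2^kn,\,m)$, which is legitimate since $2^kn\le m\le 2^{k+1}n$, gives $w(m)\le C\,w(2^kn)$. Chaining these bounds, $w(m)\le C^{k+1}w(n)$.

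\emph{Converting to a power law.} Set $M:=\max(1,\log_2 C)$. Since $2^k\le m/n$,
\[
C^{k+1}= C\,\bigl(2^{k}\bigr)^{\log_2 C}\le C\,(m/n)^{M},
\]
and hence $w(m)\le C\,(m/n)^M w(n)$. The same chain run in reverse gives $w(n)\le C^{k+1}w(m)$, which yields the bound for $m\le n$. The indices $0$ and $1$ are covered by replacing $C$ with $A:=\max\bigl(C,\ w(0)/w(1),\ w(1)/w(0)\bigr)$ and using that $w(0)$ is a fixed positive number.

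\emph{Main obstacle.} There is no analytic difficulty here; the argument is purely the dyadic chaining above. The only real point is the one made at the start, namely to state the lemma correctly (with the max and a multiplicative constant) so that it is both true and in the form needed later. In particular it gives the polynomial lower bound $w(n)\ge A^{-1}w(1)\,n^{-M}$ for $n\ge 1$, which is the remark made just after the Principal IM-Lemma.
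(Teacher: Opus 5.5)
Your argument is correct and is essentially the paper's proof: you chain dyadically through hypothesis $(ii)$, take $M=\log_2 C$, and swap the roles of $m$ and $n$ for the other ordering. Your corrected formulation is also the right one, since the displayed $\min$ must be a $\max$, and the paper's chain $w(m)\le C^{\log_2(n/m)}w(n)$ silently drops the extra factor $C$ from the last, incomplete doubling step, a factor that, as you observe, cannot be removed in general.
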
 
In particular, the above lemma implies that $\{w(n)\}_n$ has at most polynomial decay/growth.

\begin{proof}
Let $C(w)>1$ be the constant appearing in the hypothesis $(ii)$ of the IM-Theorem. Repeatedly using the assumption $(ii)$ on $\{w(n)\}_n$, we get
\[
w(m)\leq C(w)^{\log_2(n/m)} w(n) = 2^{\log_2(n/m)\log_2(C(w))} w(n) \leq \left(\frac{n}{m}\right)^M w(n), \qquad m, n \geq 1.
\]
with $M :=\log_2 C(w)$. The lemma now follows by switching the roles of $m,n\geq 1$ in the above argument.
\end{proof}

Our principal tool for proving the principal lemma will be based on this fine building-block by T. W. K\"orner.

\begin{lemma}[T. W. K\"orner, Lemma 2.2 in \cite{korner1986theorem3}]\thlabel{LEM:KORNER22} For any integer $M>1$, there exists a constant $C(M)>0$, such that the following statement holds. For any closed arc $I\subseteq \T$ and for any integer $N\geq 1$, there exists functions $f_{I,N} \in C^\infty(\T)$ satisfying the following properties:
\begin{enumerate}
    \item[(a.)] $\supp{f_{I,N}}\subseteq I$,
    \item[(b.)] $-10\leq f_{I,N}\leq 10$ on $\T$,
    \item[(c.)] $\int_{I} \abs{f_{I,N}} dm = m(I)$,
    \item[(d.)] the set $\{\zeta \in I: f_{I,N}(\zeta)=- 10 \}$ is a finite union of arcs, which occupy a substantial proportion of $I$: $m\left( \{\zeta \in I: f_{I,N}(\zeta)= - 10 \} \right) \geq (50)^{-1}m(I)$.
    \item[(e.)] The following Fourier estimate holds:
    \[
    \abs{\widehat{f_{I,N}}(n)} \leq C(M) m(I) N^{-1/2} \min \left((|n|m(I)/N)^M, (|n|m(I)/N)^{-M} \right), \qquad n\neq 0.
    \]
\end{enumerate}
\end{lemma}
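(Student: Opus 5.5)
We propose to build $f_{I,N}$ as a rescaled, modulated copy of one fixed profile: an $N$-dependent oscillation placed under a smooth envelope adapted to $I$. By rotation invariance we may assume $I$ is centred at $\zeta=1$, so $I=\{e^{it}:|t|\leq \pi\ell\}$ with $\ell=m(I)$. We then work on the real line in the variable $s=t/(2\pi \ell)\in[-1/2,1/2]$. The target is a $C^\infty$ function $g_N$ supported in $[-1/2,1/2]$ with values in $[-10,10]$, and we set $f_{I,N}(e^{it}):=g_N(t/(2\pi\ell))$.

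Properties (a)--(d) are scale invariant, so they reduce to properties of $g_N$. Property (c) asks for $\int|g_N|\,ds=1$. Property (d) asks that $g_N$ equal $-10$ on a finite union of intervals of total length at least $1/50$. The Fourier coefficients rescale as
\[
\widehat{f_{I,N}}(n)=\ell\,\widehat{g_N}(2\pi \ell n).
\]
Hence (e) is equivalent to the real-line estimate
\[
\abs{\widehat{g_N}(\xi)}\leq C(M)\,N^{-1/2}\min\bigl((\abs{\xi}/N)^M,(\abs{\xi}/N)^{-M}\bigr), \qquad \abs{\xi}\gtrsim 1/\ell .
\]
So we need a bump whose spectrum is concentrated near frequency $\asymp N$, has size $N^{-1/2}$ there, and decays polynomially to order $M$ on both sides of $N$.

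The natural candidate is a chirp. Fix a smooth cutoff $\chi\in C_0^\infty(-1/2,1/2)$ and a real phase $\phi_N(s)=N\phi(s)$ with $\phi''\neq 0$, for instance $\phi(s)=s^2$, which sweeps frequencies of order $N$. We then want a shaping function $G:\R\to[-10,10]$, applied to the chirp, of the form $g_N=\chi\cdot G(\cos\phi_N)$, or something like $A\,\chi\cos(\phi_N)$ followed by a smooth clipping at $-10$. The requirements on $G$ are as follows.
\begin{itemize}
\item $G$ has mean zero against the arcsine distribution, so that the oscillating part carries no low frequencies.
\item $G\equiv -10$ near the bottom of its range, which yields the plateaus in (d).
\item The remaining constants are normalised to give (c) and $\abs{g_N}\leq 10$.
\end{itemize}

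For (e) we expand $G(\cos\phi_N)=\sum_k c_k\cos(k\phi_N)$, where $c_k$ decays rapidly because $G$ is smooth. The term $c_0\chi$ vanishes because $G$ has mean zero, so only the oscillatory terms remain, each of the form $\chi e^{\pm i k N\phi}$.

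For $\abs{\xi}\ll kN$ or $\abs{\xi}\gg kN$ there is no stationary point. Integrating by parts $M$ times then gives decay $(\abs{\xi}/kN)^{\pm M}$, since $\phi'$ is bounded above and bounded away from zero on the relevant region. The endpoint $s=0$, where $\phi'$ vanishes, needs care: one should choose $\phi$ with $\phi'\asymp 1$ on $\operatorname{supp}\chi$, e.g.\ $\phi(s)=(s+1)^2$.

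In the range $\abs{\xi}\asymp kN$, van der Corput's lemma with $\abs{\phi_N''}\asymp kN$ gives the bound $(kN)^{-1/2}$. Summing over $k$ against the rapidly decaying $c_k$ yields the $N^{-1/2}$ factor with a constant $C(M)$.

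The main obstacle is reconciling (d) with (e). The plateaus where $g_N=-10$ must not create low-frequency mass, and the clipping nonlinearity must keep the expansion in harmonics $e^{ikN\phi}$ with no zero mode. We would handle this by putting all the shaping inside the periodic function $G$, composed with the phase, as above. Then (d) holds because $\cos\phi_N$ lies near $-1$ on a proportion $\gtrsim 1/50$ of $\{\chi=1\}$, and (e) follows from the stationary-phase bounds applied harmonic by harmonic.
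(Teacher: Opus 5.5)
The gap is in the low-frequency half of (e), and your rescaling step hides it. From $\widehat{f_{I,N}}(n)=\ell\,\widehat{g_N}(2\pi\ell n)$, the relevant frequencies are $\xi=2\pi\ell n$ with $|\xi|\geq 2\pi\ell$, not $|\xi|\gtrsim 1/\ell$. Your $g_N$ does not depend on $\ell$, and $\ell$ ranges over $(0,1]$. So (e) demands $|\widehat{g_N}(\xi)|\leq C N^{-1/2}(|\xi|/N)^M$ for every $\xi\neq 0$, in particular as $\xi\to 0$.

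This already bites at $n=1$. There $\widehat{f_{I,N}}(1)=\ell\,\widehat{g_N}(0)+O(\ell^2)$ must be $O(\ell^{M+1})$ for fixed $N$, so $\widehat{g_N}(0)=0$. The same reasoning at all small $\xi$ forces $\int s^j g_N(s)\,ds=0$ for $0\leq j\leq M-1$.

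Your argument does not deliver this. For $|\xi|\leq kN/2$, integration by parts gives $O_M((kN)^{-M})$ uniformly in $\xi$, not $(|\xi|/kN)^{M}$, so the bound does not tend to zero as $\xi\to 0$. The condition $c_0=0$ only removes the zero harmonic. Each $\int\chi(s)\cos(kN\phi(s))\,ds$ is $O(N^{-\infty})$ but generically nonzero. Hence $\widehat{g_N}(0)\neq 0$ in general, and (e) fails at $n=\pm 1$ once the arc is short enough. This branch is not cosmetic: the $(|n|m(I)/N)^{M}$ decay is exactly what the subsequent lemma in the notes uses for arcs with $2^{d_k}\gg n$.

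The construction can be repaired, but it needs an extra idea: impose the $M$ moment conditions exactly, by a correction that leaves (b) and (d) intact. For instance, add $\sum_{j<M}a_j h_j(s)\,\rho(\cos\phi_N(s))$. Here the $h_j\in C_0^\infty$ have invertible moment matrix $(\int s^i h_j)_{i,j<M}$. The function $\rho$ is supported where $|G|<10-\delta$ and has nonzero arcsine mean.

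For large $N$ the $M\times M$ system for the $a_j$ is solvable with $a_j=O(N^{-L})$, since the moments of $g_N$ are $O(N^{-L})$. The correction vanishes on the plateau and is tiny elsewhere, so (b) and (d) survive. Taylor's formula then gives, for the corrected $\tilde g_N$ and $|\xi|\leq N/2$,
\[
|\widehat{\tilde g_N}(\xi)|\leq \frac{|\xi|^M}{M!}\sup_{|\eta|\leq |\xi|}\bigl|\widehat{s^M\tilde g_N}(\eta)\bigr|\lesssim |\xi|^M N^{-L},
\]
which suffices once $L\geq M+\tfrac12$.

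Two smaller gaps remain.
\begin{itemize}
\item (d) fails for small $N$. For $N=1$ and $\phi=(s+1)^2$ the phase stays in $[1/4,9/4]\subset(0,\pi)$, so $\cos\phi_N$ never approaches $-1$ and there is no plateau. Using the phase $\max(N,N_0)\phi$ fixes this at the cost of a factor $N_0^M$ in $C(M)$.
\item (c) cannot be obtained by scaling, because the plateau must sit exactly at $-10$. You need a continuous one-parameter family $G_\tau$ together with the intermediate value theorem, applied after the moment correction. The family should be compact in $C^\infty$, so that the $c_k$, and hence $C(M)$, stay uniform in $N$.
\end{itemize}
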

This lemma is quite sophisticated and basically allowed T. W. K\"orner to circumvent the previous rather cumbersome approach involving oscillatory integrals. Its proof is not difficult, albeit technical and deviates from the general theme presented here. We do not include it. 

Our first step towards the principal lemma will require the following stepping-stone, which crucially makes use of the assumptions $(i)-(ii)$ of the IM-Theorem.


\begin{lemma}\thlabel{LEM:KORSTEP1}
Let $(w(n))_{n=0}^\infty$ be positive real numbers satisfying the hypothesis $(i)-(ii)$ in the IM-Theorem. Then for any $\varepsilon>0$ and any $0<\delta<1$, we can find functions $\psi_{\varepsilon,\delta} \in C^\infty(\T)$ satisfying the following properties:
\begin{enumerate}
    \item[(i)] $\psi_{\varepsilon,\delta} \geq 0$ on $\T$,
    \item[(ii)] $\int_{\T} \psi_{\varepsilon,\delta} dm =1$,
    \item[(iii)] for any arc $J\subset \T$ with $m(J)\geq \delta$, we have 
    \[
    m\left( \{\zeta \in J: \psi_{\varepsilon,\delta}(\zeta)=0 \}\right) \geq 100^{-1} m(J),
    \]
     and the set $\{\zeta \in J: \psi_{\varepsilon,\delta}(\zeta)=0 \}$ is a finite union of arcs,
    \item[(iv)] the Fourier estimate holds:
    \[
    \abs{\widehat{\psi_{\varepsilon,\delta}}(n)} \leq \varepsilon w(|n|), \qquad n\neq 0.
    \]
\end{enumerate}
\end{lemma}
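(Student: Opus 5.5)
The plan is to build $\psi_{\varepsilon,\delta}$ as $1$ plus a weighted superposition of Körner's building blocks from \thref{LEM:KORNER22}, placed at many different frequency scales. Fix a large integer $K$ and partition $\T$ into $K$ closed arcs $\{I_j\}_{j=1}^K$ of equal length $1/K$, with $1/K \le \delta/10$, so that every arc $J$ with $m(J)\ge\delta$ contains at least $m(J)K/2$ full arcs $I_j$. Let $M=M(w)$ be the exponent from \thref{LEM:DOUBLE}, rounded up to an integer $\ge 2$.

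The candidate is
\[
\psi = 1 + \tfrac{1}{10}\sum_{j} f_{I_j,N_j},
\]
where I expect to use a single common scale $N$ for all $j$. Since the $I_j$ have disjoint interiors, properties (a)--(b) give $1+\tfrac{1}{10}f\in[0,2]$. Moreover $\psi=0$ exactly on the set where $f_{I_j,N}=-10$, which by (d) is a finite union of arcs of measure at least $m(I_j)/50$ inside each $I_j$. This should give (iii) with constant $1/100$ after accounting for the partial arcs at the ends of $J$.

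The mean of $\psi$ is $1+\tfrac1{10}\sum_j\widehat{f_{I_j,N}}(0)$. I would renormalise $\psi$ (and, if needed, adjust the blocks by a small multiple) so that (ii) holds. The mean shift is bounded using (e) in the limit, or by choosing the blocks to have zero mean; I will assume the mean perturbation is small and then divide by the total integral.

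The main obstacle is (iv). By (e) and the triangle inequality,
\[
|\widehat\psi(n)| \le \tfrac{C(M)}{10}\, N^{-1/2}\,\min\bigl((|n|/KN)^M,(|n|/KN)^{-M}\bigr),
\]
a bump of height about $N^{-1/2}$ centred at frequency $KN$. A single scale cannot sit below $\varepsilon w$, since $w(KN)$ may be much smaller than $N^{-1/2}$. This is where hypothesis (i) enters.

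To handle this, I would iterate in the multiplicative fashion: take products
\[
\psi=\prod_{r=1}^{R}\Bigl(1+\tfrac{1}{10}\sum_j f_{I_j^{(r)},N_r}\Bigr)
\]
with rapidly separated dyadic scales $N_r$. Alternatively, I would use an averaged sum $\sum_r a_r g_r$ with weights $a_r\propto w(2^r)^2$ normalised by $A=\sum_r w(2^r)^2\to\infty$. With the averaged sum, the Fourier coefficient near frequency $2^r$ is about $a_r\cdot 2^{-r/2}$-type quantities. Choosing the block amplitude at scale $2^r$ proportional to $w(2^r)$ and dividing by $A$ should yield $|\widehat\psi(n)|\lesssim w(|n|)/\sqrt{A}$ after using \thref{LEM:DOUBLE} to sum the polynomial tails, since the $\min(\cdot)^M$ decay beats the at most $M'$-polynomial variation of $w$ once $M$ is chosen larger than the doubling exponent. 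The zero-set property then requires that the zero arcs survive, so in the multiplicative version the zero set of any single factor suffices.

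Divergence of $A$ makes the bound $\le\varepsilon w(|n|)$ for $R$ large. Very low and very high frequencies are controlled by (e) and by smoothness, since $w(n)\ge w(1)n^{-M}$. I expect the balancing of amplitude against the $N^{-1/2}$ factor at each scale to be the delicate step.
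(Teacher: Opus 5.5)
Your diagnosis that one common scale cannot work is correct. The gap is that neither of your multi-scale variants repairs it: each keeps one of (iii), (iv) and loses the other.

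\emph{Additive version.} For $\psi=\sum_r a_r g_r$ with $a_r>0$ and $g_r\ge 0$, you get $\{\psi=0\}=\bigcap_r\{g_r=0\}$. Nothing in \thref{LEM:KORNER22} relates the sets $\{f_{I,N}=-10\}$ for different $N$, so this intersection need not fill any fixed proportion of $J$, and (iii) fails.

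\emph{Multiplicative version.} Here the zeros survive, but (iv) gains nothing. The other factors have means in $[9/10,11/10]$ and distant spectra, so after the normalisation (ii) the coefficients of $\psi$ near the frequency $L_r$ of $g_r$ are essentially those of $g_r/\widehat{g_r}(0)$. By (c.), (d.) and Parseval these carry $\ell^2$-mass $\gtrsim 1$, whereas (iv) allows only about $\varepsilon^2L_rw(L_r)^2$ there, which may be tiny. Every factor therefore inherits the single-scale obstruction.

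\emph{Wrong divergent series.} $\sum_r w(2^r)^2$ may converge; take $w(n)^2=1/((n+2)\log(n+2))$, which satisfies the IM hypotheses. What those hypotheses actually give is $\sum_d 2^dw(2^d)^2=+\infty$.

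The missing idea is to separate the scales \emph{in space}, giving each dyadic scale its own portion of the circle. The same Parseval count shows that a block on an arc of length $\ell$ costs $\ell^2$-mass $\gtrsim\ell$ near its frequency. So the arc serving frequency $2^d$ must have length $\lesssim\varepsilon^2 2^dw(2^d)^2$, and such arcs can cover $\T$ exactly because $\sum_d 2^dw(2^d)^2=\infty$.

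This is what the paper does. Partition $\T$ into arcs $I_k$ of unequal lengths, with distinct $d_k$ and
\[
2^{-d_k}\le m(I_k)\le \min\bigl(\delta/4,\ \varepsilon^2 2^{d_k}w(2^{d_k})^2\bigr),\qquad 2^{d_k}m(I_k)\le N_k\le 2^{d_k+1}m(I_k).
\]
The paper takes $\varepsilon=1$ here and rescales $w\mapsto\varepsilon w$ at the end. By (e.) with exponent $M+1$,
\[
|\widehat{f_{I_k,N_k}}(n)|\lesssim \bigl(m(I_k)2^{-d_k}\bigr)^{1/2}\min\bigl((|n|2^{-d_k})^{M+1},(|n|2^{-d_k})^{-M-1}\bigr)\le \varepsilon\, w(2^{d_k})\min(\ldots).
\]
Summing over the distinct $d_k$ with \thref{LEM:DOUBLE} gives $|\widehat F(n)|\le C(M)\varepsilon\, w(|n|)$ for $F=\sum_k f_{I_k,N_k}$.

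The blocks have disjoint supports, so $\psi=(10+F)/\int_{\T}(10+F)\,dm$ vanishes on every set $\{f_{I_k,N_k}=-10\}$. Hence (iii) follows from (d.) together with $m(I_k)\le\delta/4$. Also $\int_{\T}|F|\,dm=1$ gives $\int_{\T}(10+F)\,dm\ge 9$, which settles (ii) with no assumption on the means of the blocks. In effect this is your weighted average with the correct weights $2^dw(2^d)^2$, realised by disjoint placement rather than by superposition.
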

\begin{proof}
\proofpart{1}{Use of hypothesis:}
We shall make crucial use of the hypothesis of $(w(n))_n$ in a very clever, but non-canonical way. According to \thref{LEM:DOUBLE}, we can pick a number $M=M(w)>1$, such that 
\[
w(2^m) \leq 2^{-M|m-l|} w(2^l), \qquad m,l\geq 0.
\]
Note that the hypothesis $(i)$ and $(ii)$ ensures that divergence of the series
\[
\sum_{\substack{n \geq 0: \\ 2^{-n} < 2^n w(2^n)^2 }} 2^n w(2^n)^2 =+ \infty.
\]
Therefore, we can for any fixed $0<\delta<1$, find positive integers $(d_k)_k$ and a partition $(I_k)_k$ of $\T$, consisting of closed arcs with mutually disjoint interiors, for which the following statement holds:
\[
\sup_{k} m(I_k) \leq \delta/4, \qquad 2^{-d_k} \leq m(I_k) \leq 2^{d_k} w(2^{d_k})^2.
\]
Now the lower bound on $m(I_k)$ allows us to produce positive integers $(N_k)_k$ with the properties
\[
2^{d_k} m(I_k) \leq N_k \leq 2^{d_k +1}m(I_k).
\]
Our specific choice of parameters will be clarified in the next step, when we shall verify the property $(iv)$.
\proofpart{2}{Constructing the function:}
Applying \thref{LEM:KORNER22}, we can exhibit corresponding functions $f_k := f_{I_k, N_k}$, with parameters $M+1$ and $(I_k, N_k)$ as described in the previous step, and form the function
\[
F(\zeta)= \sum_k f_{k}(\zeta), \qquad \zeta \in \T.
\]
Since $(I_k)_k$ have disjoint interior, we readily see from $(a.)$ of \thref{LEM:KORNER22} that $-10\leq F \leq 10$, while $(c.)$ implies that 
\[
\int_{\T} \abs{F} dm = \sum_k \int_{I_k} \abs{f_k} dm = \sum_k m(I_k) =1.
\]
We now claim that the desired functions are of the form:
\[
\psi_{\varepsilon}(\zeta) := \frac{10+ F(\zeta)}{\int_{\T}(10+F)dm}, \qquad \zeta \in \T.
\]
Indeed, the properties $(i)$ and $(ii)$ readily follow by construction. To verify $(iii)$, we fix an arbitrary arc $J\subset \T$ with $m(J)\geq \delta$. Since $m(I_k)\leq \delta/4$ for all $k$, we can find a finite sub-collection $(I'_j)_j$ of $(I_k)_k$ so that the union $\cup_j I'_j$ is contained in $J$, and $\sum_j m(I'_j) \geq m(J)/2$. Recall that the property $(c.)$ of \thref{LEM:KORNER22} implies that $\{ \psi_{\varepsilon}=0\} = \{ F=-10 \}$, is again a finite union of closed arcs, hence we get 
\begin{multline*}
m(\{\zeta \in \T: \psi_{\varepsilon}(\zeta)=0\}) \geq m(\{ \zeta \in \cup_{j}I'_j : F(\zeta)=-10 \}) \\
= \sum_j m(\{\zeta \in I'_j: f_j(\zeta)=-10 \}) \geq \frac{1}{50}\sum_j m(I'_j) \geq \frac{1}{100} m(J).
\end{multline*}
This proves the claim in $(iii)$. 
\proofpart{3}{The Fourier estimate:}
It only remains to prove $(iv)$. To this end, we primarily note that since $F$ is real-valued and the estimate
\[
\int_{\T}(10+F)dm \geq 10 - \int_{\T}\abs{F} dm \geq 9,
\]
holds, we see that $\abs{\widehat{\psi_{\varepsilon}}(n)} \leq \abs{\widehat{F}(n)}$ for $n\neq 0$, hence it suffices to prove the estimate for the positive Fourier coefficients of $F$. First, we note that the carefully chosen parameters $(N_j, I_j)$ correspond to the following Fourier estimate of each $f_k$: 
\[
\abs{\widehat{f_k}(n)} \leq  C(M) w(2^{d_k}) \min\left( (n2^{-d_k})^{-(M+1)}, (n2^{-d_k})^{(M+1)} \right), \qquad n\neq 0.
\]
Now if $2^{l}\leq n\leq 2^{l+1}$ some integer $l \geq 0$, then using \thref{LEM:DOUBLE} and our initial choice of $M>1$, we get 
\[
\abs{\widehat{f_k}(n)} \leq  C(M) w(2^l) 2^{-|l-d_k|} \leq C(M) w(n) 2^{-|l-d_k|}
\]
Using this and the fact that each $f_k$ have disjoint support, we obtain
\[
\abs{\widehat{F}(n)} \leq \sum_k \abs{\widehat{f_k}(n)} \leq C(M) w(n) \sum_k 2^{-|l-d_k|} \leq C(M) w(n)\sum_k 2^{-|k|} \leq C(M) w(|n|), \qquad n>0.
\]

\proofpart{3}{Re-scaling the majorant:}
Note that we have only managed to derive a big-O version of the desired estimate in $(iv)$, but there is one magical last step, which allows us to obtain the little-o statement. To this end, we simply repeat the same argument with the re-scaled majorant $(\varepsilon \cdot w(n))_n$, for $\varepsilon>0$ small enough. It is easy to see that that all other estimates remain intact. This completes the proof of the lemma.
\end{proof}

With this lemma at hand, we now turn to our main task.

\begin{proof}[Proof of the Principal IM-Lemma]
Arguing by means of induction, our goal is to construct a sequence of smooth functions $\{f_n\}_{n=0}^\infty$ satisfying the following properties:
\begin{enumerate}
    \item[$(1)_n$] $f_n\geq 0$ on $\T$,
    \item[$(2)_n$] $m\left( \supp{f_n dm} \right) \leq (1-100^{-2})^n$, and $\supp{f_{n+1}dm}\subseteq \supp{f_n dm}$,
    \item[$(3)_n$] $1+2^{-n} \leq \widehat{f_n}(0)$,
    \item[$(4)_n$] $\abs{\widehat{f_n}(k)}\leq (1-2^{-n})w(|k|)$, for all $k\neq 0$.
\end{enumerate}
Before carrying out the inductive procedure, we shall highlight a few simple remarks that will be utilized in this process. \\

Take $f_0=2$, and suppose $f_1, \dots, f_n$ have already constructed so that each $f_j$ satisfies $(1)_j-(4)_j$ for $1\leq j \leq n$. Fix $\varepsilon,\delta>0$ to be determined momentarily and set $f_{n+1} := f_n \cdot \psi_{\varepsilon, \delta}$, where $\psi_{\varepsilon, \delta}$ as in \thref{LEM:KORSTEP1}. Now clearly $(1)_{n+1}$ holds, and $\supp{f_{n+1}dm}\subseteq \supp{f_n dm}$. In order to verify $(2)_n$, let set $E_n := \supp{f_n dm}$ and let $(J_k)_k$ be a finite collection of open arcs covering $E_n$:
\[
E_n \subseteq \cup_k J_k, \qquad \frac{101}{100}m(E_n) \geq \sum_j m(J_k).
\]
Now pick $0< \delta(n) < \min_k m(J_k)$ and note that if follows from property $(iii)$ of \thref{LEM:KORSTEP1} that 
\[
m(E_{n+1}) \leq \sum_k m(\left\{ J_k:\psi_{\varepsilon, \delta}>0 \right\}) \leq \left(1-\frac{1}{100}\right) \sum_k m(J_k)  \leq (1-100^{-2}) m(E_n)
\]
where $E_{n+1} := \supp{f_{n+1}dm}$. It remains only to verify $(3)_n$ and $(4)_n$, and it is here we shall make our specific choice of $\varepsilon= \varepsilon(n)>0$. Note that since all functions involved are real-valued, we only need to consider non-negative Fourier coefficients. First, we note that since $\widehat{\psi_{\varepsilon,\delta}}(0)=1$, we have for any integer $k\geq 0$:
\[
\widehat{f_{n+1}}(k) - \widehat{f_n}(k) = \sum_{j\neq k} \widehat{f_n}(j) \widehat{\psi_{\varepsilon,\delta}}(k-j)= \left( \sum_{|j|\leq k/2} + \sum_{\substack{|j|>k/2 \\ j\neq k} }\right)\widehat{f_n}(j) \widehat{\psi_{\varepsilon,\delta}}(k-j).
\]
Now in order to estimate the first sum, we simply argue as
\[
\abs{\sum_{|j|\leq k/2} \widehat{f_n}(j) \widehat{\psi_{\varepsilon,\eta}}(k-j)} \leq \varepsilon \sum_{|j|\leq k/2} \abs{\widehat{f_n}(j)} w(|k-j|)\leq C(w)\varepsilon w(|k|)\sum_j \abs{\widehat{f_n}(j)}.
\]
The second sum is simpler, and we instead estimate as follows:
\begin{multline}\label{EQ:SIGMA2} 
\abs{\sum_{\substack{|j|>k/2 \\ j\neq k} }\widehat{f_n}(j) \widehat{\psi_{\varepsilon,\delta}}(k-j)} \leq \varepsilon \sum_{|j|>k/2} \abs{\widehat{f_n}(k)}  \leq \varepsilon C(n,M) \sum_{|j|>k/2} |j|^{-M-1} \\ \leq  \varepsilon C(n,M) k^{-M} \leq \varepsilon w(|k|) C(n,M).
\end{multline}
Combining these estimates, we get 
\[
\abs{\widehat{f_{n+1}}(k) - \widehat{f_n}(k)} \leq \varepsilon w(|k|) \left( C(w) \norm{\widehat{f}}_{\ell^1} + c(n,N) \right).
\]
Using the induction hypothesis $(4)_n$, we get
\[
\abs{\widehat{f_{n+1}}(k)} \leq (1-2^{-n}) w(|k|) + \varepsilon w(|k|) \left( C(w) \norm{\widehat{f}}_{\ell^1} + c(n,N) \right) \leq (1-2^{-n-1}) w(|k|), \qquad k\neq 0,
\]
provided $\varepsilon=\varepsilon(n)>0$ is chosen sufficiently small. In a similar way, we can also ensure that $(3)_{n+1}$ also holds, hence $f_{n+1}$ satisfies the hypothesis $(1)_{n+1}-(4)_{n+1}$, and the induction step is complete.

\end{proof}


\subsection{The indispensable regularity condition}

Here prove that the condition $(ii)$ in the IM-Theorem cannot simply be removed. To this end, we shall need the following manifestation of the uncertainty principle, which asserts that if a (pseudo)measure $\mu$ attains a substantial portion of its mass from Fourier coefficients up to order $N$, and its coefficients tremendously drop in value, the the support of $\mu$ is bound to spread around $\T$.

\begin{lemma}[K\"orner-Meyer] For any $\gamma, \delta, N>0$, there exists $\varepsilon= \varepsilon(\gamma, \delta, N)>0$, such that the following statement holds: whenever $S$ is a distribution on $\T$ with 
\[
(i)\, \sum_{|n|\leq N} \abs{\widehat{S}(n)}^2 \geq \gamma, \qquad (ii) \, \sup_{|n|>N} \abs{\widehat{S}(n)} \leq \varepsilon,
\]
then $\supp{S}$ meets every $\delta$-neighborhood of $\T$, i.e, it is $\delta$-dense.
    
\end{lemma}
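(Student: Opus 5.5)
We argue by contraposition. Suppose $\supp{S}$ misses some arc $I$ of length $\delta$; after rotating, which changes neither hypothesis, we may take $I$ centered at $\zeta=1$. The plan is to test $S$ against functions supported in $I$, which $S$ annihilates, and to show that this forces the low-frequency mass $\sum_{|n|\le N}|\widehat{S}(n)|^2$ to be small when $\varepsilon$ is small. Then $(i)$ fails.

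The first step is to choose a bump adapted to $I$. Fix once and for all a smooth $\phi\ge 0$ with $\supp{\phi}\subseteq I$ and $\widehat{\phi}(0)=1$; it depends only on $\delta$. For every trigonometric polynomial $T$ of degree at most $N$, the product $\phi T$ is smooth and supported in $I$. Hence
\[
0 = S(\phi T) = \sum_{n} \widehat{S}(n)\,\widehat{\phi T}(-n),
\]
and the series converges absolutely because the coefficients of $\phi T$ decay faster than any power while those of $S$ grow at most polynomially. Split this sum into the ranges $|n|\le N$ and $|n|>N$. Using $(ii)$, the tail satisfies
\[
\Big|\sum_{|n|>N}\widehat{S}(n)\widehat{\phi T}(-n)\Big| \le \varepsilon \norm{\phi T}_{A} \le \varepsilon \norm{\phi}_{A}\norm{T}_{A},
\]
by the algebra property of $A(\T)$.

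The second step, which is the main obstacle, is a finite-dimensional statement. Let $V_N$ be the $(2N+1)$-dimensional space of vectors $a=(a_n)_{|n|\le N}$. Define the linear map $L:V_N\to V_N$ that sends the coefficient vector $t=(\widehat{T}(k))_{|k|\le N}$ to the truncation $(\widehat{\phi T}(-n))_{|n|\le N}$. We want $L$ to be invertible. This is where the uncertainty principle enters. If $L$ has a nontrivial kernel element $t$, then $\widehat{\phi T}(-n)=0$ for $|n|\le N$. Pairing with $\overline{T}$ then gives $\int \phi |T|^2\,dm=0$, so the nonzero polynomial $T$ vanishes on an arc. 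This is impossible. Alternatively, $L$ is the compression of multiplication by $\phi$ to polynomials of degree at most $N$, which is a positive-definite Toeplitz form $\langle \phi T, T\rangle>0$. Invertibility gives $\norm{L^{-1}}\le C(\delta,N)$, with a constant depending only on $\phi$, and hence only on $\delta$ and $N$.

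The third step concludes. Take $a_n=\widehat{S}(n)$ for $|n|\le N$ and choose $T$ so that $Lt=\overline{a}$, up to the sign convention of the pairing. Then the head of the sum equals $\sum_{|n|\le N}|\widehat{S}(n)|^2$. On the other hand, $\norm{T}_A\le \sqrt{2N+1}\,\norm{t}_{\ell^2}\le \sqrt{2N+1}\,C(\delta,N)\big(\sum_{|n|\le N}|\widehat{S}(n)|^2\big)^{1/2}$. Write $X:=\sum_{|n|\le N}|\widehat{S}(n)|^2$. Combining with the first step gives
\[
X \le \varepsilon\,\norm{\phi}_A\sqrt{2N+1}\,C(\delta,N)\,X^{1/2},
\]
so $X\le \varepsilon^2 K(\delta,N)^2$ for a constant $K(\delta,N)$. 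Choosing $\varepsilon<\sqrt{\gamma}/K(\delta,N)$ contradicts $(i)$. Therefore $\supp{S}$ meets every arc of length $\delta$.
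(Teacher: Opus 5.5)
Your proof is correct, and it takes a genuinely different route from the paper's.

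\textbf{The paper's route.} It argues by contradiction and compactness. If the lemma failed for some $(\gamma,\delta,N)$, there would be distributions $S_k$ satisfying (ii) with $\varepsilon_k\to 0$ whose supports miss a $\delta$-arc. Pigeonholing fixes a common arc $I_0$, and a weak-star (coordinatewise) limit of the coefficient sequences is extracted. The limit is a trigonometric polynomial of degree at most $N$ that vanishes on $I_0$ yet keeps low-frequency mass at least $\gamma$, which is impossible.

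\textbf{Your route.} You pair $S$ with $\phi T$, control the tail by (ii), and invert the compression of multiplication by $\phi$ to polynomials of degree at most $N$. Precisely, your matrix $(\widehat{\phi}(-n-k))$ is the positive-definite Toeplitz matrix $(\widehat{\phi}(n-k))$ composed with the reflection $n\mapsto -n$. Your ``alternatively'' needs that caveat, though your kernel argument is already complete on its own.

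\textbf{Comparison.} Both arguments rest on the same rigidity fact: a nonzero trigonometric polynomial cannot vanish on an arc. You turn it into the quantitative bound $\lambda_{\min}(\delta,N):=\min\{\int_{\T}\phi\abs{T}^2\,dm : \deg T\le N,\ \norm{T}_{L^2}=1\}>0$. This gives the explicit threshold $\varepsilon<\sqrt{\gamma}\,\lambda_{\min}/(\norm{\phi}_{A}\sqrt{2N+1})$, whereas the compactness argument says nothing about the size of $\varepsilon$.

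Your estimate $X\le \varepsilon K X^{1/2}$ is also scale-invariant. By contrast, the paper asserts that the coefficients of the $S_k$ are uniformly bounded in $\ell^\infty$, which tacitly requires first rescaling $S_k$ by $(\gamma/X_k)^{1/2}\le 1$, since (i) is only a lower bound. What the paper's route buys is brevity, and a template that works for any qualitative uniqueness statement without isolating the quantitative mechanism.

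\textbf{Housekeeping.} Take $\phi$ with support compactly inside the open arc missed by $\supp{S}$, so that $S(\phi T)=0$ is legitimate. Also note that failure of $\delta$-density only yields such an arc of length comparable to $\delta$. This is harmless, because your constants depend on $\delta$ only through $\phi$.
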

\begin{proof} The proof is neat argument by contradiction, which K\"orner attributes to Y. Meyer. Suppose the conclusion fails for some triple $(\gamma,\delta, N)$, hence we can find distributions $(S_k)_k$ on $\T$ and numbers $\varepsilon_k \downarrow 0$, where each $S_k$ satisfies $(i)-(ii)$ with $\varepsilon_k$, but $\sup_{\zeta \in \T} \dist{\supp{S_k}}{\zeta}\geq \delta$. Considering a finite partition $\{I_j\}_j$ of $\T$ into arcs of length $\delta/10$, say, we see by the pigeon hole principle that there exists at least one arc $I_0$, for which 
\[
\supp{S_{k}} \cap I_0 = \emptyset,
\]
for infinitely many $k$'s. Therefore, by means of passing to such a subsequence, we may assume that 
\[ 
\sum_{|n|\leq N} \abs{\widehat{S_k}(n)}^2 \geq \gamma, \qquad \sup_{|n|>N} \abs{\widehat{S_{k}}(n)} \leq \varepsilon_{k}, \qquad \supp{S_k} \cap I_0 = \emptyset. 
\]
Now since $(\widehat{S}_(n))_n$ with $k=1,2,$ defines a uniformly bounded sequence in $\ell^\infty$, we can apply Helly's selection Theorem in order to extract a weak-star cluster point in $\ell^\infty$, which gives rise to a distributions $S$ on $\T$ with these pre-scribed Fourier coefficients. Since $\varepsilon_k \to 0$, and $N>0$ is fixed, we obtain 
\[
\sum_{|n|\leq N} \abs{\widehat{S}(n)}^2 \geq \gamma, \qquad \widehat{S}(n)= 0, \qquad |n|>N, \qquad \supp{S} \cap I_0 = \emptyset.
\]
But this forces $S$ to be a trigonometric polynomials which vanishes on an arc $I_0$. Recall the simple observation that if $S$ is a trigonometric polynomial of order $N$, then $Q(z) := z^N S(z)$ extends to an analytic polynomial on $z \in \C$, which vanishes at the same points as $S$ does. We therefore conclude that $S$ vanishes identically, which immediately contradicts that a portion of its $\ell^2$-mass exceeded $\gamma>0$.
\end{proof}

We are now ready to complete the proof of K\"orner's Theorem. 

\begin{proof}[Proof of \thref{THM:KÖRsuff}]
We shall construct the intended sequence $\{w(n)\}_n$ by means of induction. First, we set $N_1=\varepsilon_1=1$, and assume that positive integers $N_1 < N_2 < \dots < N_k$, and positive numbers $\varepsilon_1>\varepsilon_2 > \dots > \varepsilon_k>0$ have already been constructed. Now pick $N_{k+1}>N_k$, such that 
\[
\left(N_{k+1}-N_k \right) \varepsilon^2_{k} 2^{-2k} \geq 1.
\]
Invoking the K\"orner-Meyer Lemma, we can find a number $0<\varepsilon_{k+1}< \varepsilon_k$ such that the following statement holds: whenever $S$ is a distribution on $\T$ with 
\[
(i)_k \, \sum_{|n|\leq N_{k+1}} \abs{\widehat{S}(n)}^2 \geq 2^{-k}, \qquad (ii)_k \, \sup_{|n|>N_{k+1}} \abs{\widehat{S}(n)} \leq \varepsilon_{k+1},
\]
then $\sup_{\zeta \in \T} \dist{\zeta}{\supp{S}} \leq 2^{-k}$. We are now ready to define a sequence of positive numbers $\{w(n)\}_n$ as follows: For each $k=0,1,2,\dots$ set $w(N_k) = \varepsilon_k 2^{-k}$, and interpolate the intermediate values of $w(n)$ linear-wise:
\[
w(n) := \frac{\varepsilon_k}{2^{k}} \frac{N_{k+1}-n}{N_{k+1}-N_k}  + \frac{\varepsilon_{k+1}}{2^{k+1}}   \frac{n-N_{k}}{N_{k+1}-N_k}, \qquad N_k \leq n < N_{k+1}, \qquad k=0,1,2, \dots.
\]
Clearly, $\{w(n)\}_n$ is decreasing with $w(n) \downarrow 0$. Furthermore, it follows from the construction of $(N_k)_k$ that 
\[
\sum_{N_k\leq n \leq (N_{k+1}+N_k)/2} w(n)^2 \geq \frac{(N_{k+1}-N_k)}{2}\varepsilon^2_k 2^{-2k} \geq \frac{1}{2}, \qquad k=0,1,2 \dots,
\]
and hence $\sum_n w(n)^2 = + \infty$. Now let $S$ be an arbitrary non-zero distribution on $\T$ with 
\[
\abs{\widehat{S}(n)} \leq Cw(|n|), \qquad n\neq 0.
\]
We claim that both $(i)_k$ and $(ii)_k$ must hold for all sufficiently large $k$. Now the first statement is trivial, while the second property follows from the monotonicity of $\{w(n)\}_n$:
\[
\sup_{|n|>N_{k+1}} \abs{\widehat{S}(n)} \leq C \sup_{|n|>N_{k+1}} w(|n|) \leq C w(N_{k+1}) = C2^{-k-1}\varepsilon_{k+1}\leq \varepsilon_{k+1},
\]
whenever $k$ is sufficient large. Therefore, $\sup_{\zeta \in \T} \dist{\zeta}{\supp{S}} \leq 2^{-k}$ holds for all sufficiently large $k$, and since the support is a closed set, we conclude that $\supp{S}=\T$.

\end{proof}

Below, we shall also mention a one-sided version of the K\"orner's Theorem, which can easily be derived as a consequence of the Cartwright--Levinson Theorem, and goes as follows:
\begin{prop}[Unilateral K\"orner-type Theorem] There exists a decreasing sequence $\{\Phi(n)\}_{n=0}^\infty$ of positive numbers satisfying 
\[
\sum_n \Phi(n)^2 = \infty,
\]
but any non-trivial measure $\mu \in M(\T)$ satisfying
\[
\abs{\widehat{\mu}(n)} \leq \Phi(n), \qquad n=1,2,3,\dots
\]
must have full support in $\T$.
\end{prop}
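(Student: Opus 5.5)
The plan is to take $\Phi$ with long flat plateaus separated by sudden drops. The plateaus make $\sum_n \Phi(n)^2$ diverge. The drops make $-\log \Phi$ grow fast enough, on average against $n^{-2}$, for the Cartwright--Levinson \thref{THM:LEVCART} to apply with $w(n) := \log \frac{1}{\Phi(n)}$. The point is that square-summability only sees how long each plateau is, while the Cartwright--Levinson condition sees $\log\Phi$ weighted by $n^{-2}$, which is small far out. So one can make $\Phi$ extremely small on a block and still gain a unit of $\ell^2$-mass, provided the block is long enough.

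\emph{Construction.} Define integers recursively by $N_0 = 1$ and $N_{k+1} := N_k + \lceil e^{2N_k}\rceil$. Set
\[
\Phi(n) := e^{-N_k}, \qquad N_k \leq n < N_{k+1}, \qquad k=0,1,2,\dots,
\]
and $\Phi(0) := 1$. Since the $N_k$ increase, $\Phi$ is non-increasing and positive. (Strict decrease inside each block can be arranged afterwards by a harmless perturbation that changes none of the estimates below.) Each block then contributes
\[
\sum_{N_k\leq n<N_{k+1}} \Phi(n)^2 \geq e^{2N_k}e^{-2N_k} = 1,
\]
so $\sum_n \Phi(n)^2 = +\infty$. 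On the other hand, $w(n) = \log\frac{1}{\Phi(n)} = N_k$ on the $k$-th block, so $w$ is non-decreasing, and
\[
\sum_{N_k\leq n<N_{k+1}} \frac{w(n)}{n^2} \geq N_k\Big(\frac{1}{N_k} - \frac{1}{N_{k+1}}\Big) = 1 - \frac{N_k}{N_{k+1}} \geq \frac12
\]
for every $k \geq 1$, using $\sum_{a\le n<b} n^{-2} \ge \frac1a-\frac1b$. Summing over $k$ gives $\sum_n w(n)/n^2 = +\infty$.

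\emph{Conclusion.} Let $\mu\in M(\T)$ satisfy $|\widehat{\mu}(n)|\le \Phi(n)$ for $n\geq 1$. Then $\widehat{\mu}(n) = \mathcal{O}(e^{-w(|n|)})$ as $n\to+\infty$, with $w$ non-decreasing and $\sum w(n)/n^2=\infty$. The positive-side version of \thref{THM:LEVCART}, which that theorem explicitly allows, then yields $\supp{\mu}=\T$ unless $\mu\equiv 0$.

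\emph{Main obstacle.} The only step needing care is the choice of plateau lengths and heights, so that the two divergences coexist with monotonicity; I expect the recursion $N_{k+1}-N_k \approx e^{2N_k}$ with height $e^{-N_k}$ to be exactly what is needed. If one prefers to avoid invoking the Cartwright--Levinson theorem as a black box, the same choice works through its proof. Convolve $\mu$ with a smooth bump $\phi$ whose support is small relative to a hypothetical gap arc. The tail $\rho_n=\sum_{k\ge n}|\widehat{\phi}(k)\widehat{\mu}(k)|$ is then at most $C\,\Phi(n)$, since $\widehat{\phi}\in\ell^1$ and $\Phi$ is non-increasing. Hence $\sum_n \log\rho_n/n^2=-\infty$, and Beurling's \thref{THM:BEU1} forces $\phi\ast\mu\equiv 0$; letting $\phi$ run through an approximate identity then gives $\mu\equiv 0$.
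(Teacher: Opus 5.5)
Your proposal is correct and follows the paper's route. Both arguments reduce, via the Cartwright--Levinson theorem with $w=\log(1/\Phi)$, to building a step sequence with long flat plateaus, which is then perturbed to be strictly decreasing.

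Your parameters (height $e^{-N_k}$ on a block of length about $e^{2N_k}$) give exactly the balance $\Phi^2\cdot(\text{block length})\asymp 1$ needed for $\sum_n\Phi(n)^2=\infty$. The paper's literal choice $N_{j+1}=2^{N_j}$, $\varepsilon_j=2^{-N_j}$ gives $\sum_j 2^{-2N_j}(2^{N_j}-N_j)<\infty$, so your version is the one that actually works.
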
 

\begin{proof}
    Note that in view of Cartwright--Levinson Theorem, it suffices to construct a decreasing sequence of positive numbers $(\Phi(n))_n$ with
    \[
    \sum_{n\geq 1} \frac{\log \Phi(n)}{n^2} =- \infty, \qquad \sum_{n\geq 1} \Phi(n)^2 = +\infty.
    \]
   To this end, let $(N_j)_j$ be positive integers and $(\varepsilon_j)_j$ positive numbers shrinking to zero, both to be specified momentarily. Now define the sequence of positive real numbers $(\Psi(n))_n$ by
   \[
   \Psi(n) := \varepsilon_j, \qquad N_j \leq n < N_{j+1}, \qquad j=1,2, 3, \dots
   \]
    Note that
    \[
    \sum_n \Psi(n)^2 \asymp \sum_n \varepsilon^2_j (N_{j+1}-N_j), \qquad \sum_j \frac{\log \Psi(n)}{n^2} \asymp  \sum_j \log \varepsilon_j \left( \frac{1}{N_j} - \frac{1}{N_{j+1}} \right) 
    \]
    For instance, if we pick
    \[
    N_{j+1}=2^{N_j}, \qquad  \varepsilon_j = 2^{-N_j}, \qquad j=1,2,3,\dots
    \]
    then it easily follows that both the above series diverge. A desirable sequence $(\Phi(n))_n$ is obtain by only slightly modifying $(\Psi(n))_n$ so that it becomes strictly decreasing on each blocks $[N_j, N_{j+1})$, but slowly enough that it is almost constant there.
    
    A different but conceptually similar proof is outlined in \cite{havinbook}.
\end{proof}

\subsection{Supporting sets for Fourier majorants}

Here we shall establish \thref{THM:LIMIMCOMP}, which asserts that for any positive numbers $w=(w(n))_{n=0}^\infty$  which shrink to zero, one can construct wild sets $E$ which do not support measures $\mu$ with Fourier majorant $w$. The strengths of the result lies in the facts that $w(n)\downarrow 0$ as slow as possible, and that it is unilateral.

The proof is based on so-called simultaneous approximation argument, which we summarize as a lemma.

\begin{lemma} \thlabel{LEM:SAl1w} Let $w=(w(n))_{n=0}^\infty$ be positive real numbers with $w(n) \downarrow 0$, and $0<\delta<1$. Then there exists a compact set $E\subset \T$ with $m(E)>1-\delta$, and analytic functions $(f_j)_j$ in $\D$ which are smooth in $\cD$ such that the following holds:
\[
\sum_{n\geq 0} \abs{\widehat{f}_j(n)} w(n) \to 0, \qquad \sup_{E} \abs{f_j-1} \to 0.
\]
    
\end{lemma}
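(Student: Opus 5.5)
The plan is to build the functions $f_j$ as outer-type functions $f=\exp(-g)$, where $g$ is analytic and smooth up to $\T$. We want $g\approx 0$ on a large set $E$, so that $f\approx 1$ there. On the small complementary arcs we let $\Re g$ be large, so that $f$ is small. The one constraint we must respect is the weighted norm $\sum_n|\widehat{f}(n)|w(n)$. Since $w(n)\downarrow 0$, this norm is dominated by the tail, roughly $\sum_{n>N}|\widehat f(n)|\,w(n)\le w(N)\norm{f}_{A}$, plus a low-frequency part $\sum_{n\le N}|\widehat f(n)|$. The strategy is to make $f_j-1$ have tiny low-frequency coefficients while keeping its Wiener norm only moderately large, say of order $K_j$ with $K_j w(N_j)\to 0$. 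Equivalently, we approximate $0$ in the dual-weighted space with the functions $1-f_j$ while keeping $f_j\approx1$ uniformly on $E$.

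\textbf{Step 1: choose the set.} Fix a sequence of scales $N_j\uparrow\infty$, chosen so that $w(N_j)$ decays fast enough relative to the constants produced below. At stage $j$, remove from $\T$ a finite union $U_j$ of small open arcs, with $m(U_j)\le \delta 2^{-j-1}$, and set $E=\T\setminus\bigcup_j U_j$. This gives $m(E)>1-\delta$. The arcs in $U_j$ are spaced at distance about $1/M_j$, with $M_j\ll N_j$, so that their centres equidistribute.

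\textbf{Step 2: construct $f_j$.} Take a smooth real function $u_j\ge 0$, supported in $U_j$, with average $\int u_j\,dm=1$. Such a function necessarily has a large sup, since $U_j$ is small. Let $g_j=P_+(u_j)+\overline{P_+(u_j)(0)}$-type correction, i.e.\ the Herglotz integral of $u_j$, and set
\[
f_j=\exp\!\big(-\lambda_j(g_j-\widehat{u_j}(0))\big).
\]
Off $U_j$, the real part $\Re g_j-1$ is $-1$ plus a quantity near zero, so there the factor is $\approx e^{\lambda_j}$. That is the wrong direction, so we swap roles. Instead we use $u_j$ of mean zero: $u_j=v_j-\int v_j\,dm$, with $v_j\ge0$ supported in $U_j$ and $\int v_j\,dm=\eta_j$ tiny. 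Then on $E$ the real part of the exponent is $\lambda_j\eta_j$, which is small, and the imaginary part is the conjugate function $\lambda_j\widetilde{v_j}$. Because $v_j$ is $1/M_j$-periodic-like, $\widetilde{v_j}$ is essentially supported near $U_j$ and has small size on $E$ away from $U_j$.

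To control the points of $E$ close to $U_j$, we instead apply the finite-entropy-free trick of Lemma~\ref{LEM:APPROX1}. By Fejér means and the projection $P_+$, we replace $v_j$ by a trigonometric polynomial whose spectrum lies in $|n|>N_j$. We then use the construction $G=1-\exp(-2P_+(p))$ from that proof. This gives $f_j:=\exp(-2P_+(p_j))$ with
\[
\mathrm{supp}\,\widehat{f_j-1}\subset (N_j,\infty),\qquad |f_j|=e^{-p_j}.
\]
With $p_j$ nonnegative on $\T$ up to error at most $1$, and $p_j\approx0$ on $E$, we get $|f_j|\le e$ everywhere. It remains to force $\arg f_j\approx 0$ on $E$. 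This amounts to making $\Im P_+(p_j)$ small there, which we arrange by taking $p_j$ a lacunary-modulated high-frequency bump. Its conjugate function then has sup-norm on $E$ as small as we like, at a cost of a larger Wiener norm.

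\textbf{Step 3: the key estimate.} Since all nonzero frequencies of $f_j-1$ exceed $N_j$ and $w$ is decreasing,
\[
\sum_{n\ge0}|\widehat{f_j}(n)-\delta_{n0}|\,w(n)\le w(N_j)\,\norm{f_j}_{A}.
\]
The main obstacle is obtaining an \emph{a priori} bound on $\norm{f_j}_A$ that depends only on the parameters chosen before $N_j$. I would get it from Lemma~\ref{LEM:A1W1inf} together with the Levy--Wiener calculus: $\norm{f_j}_A\le C\norm{f_j}_{W^{1,\infty}}\le C(1+\norm{p_j'}_\infty)e^{C\norm{p_j}_A}$. Here the shape of $p_j$ (the arcs, heights and modulation pattern) is fixed first, and only then is it modulated up to frequency $N_j$. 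Modulation multiplies the $W^{1,\infty}$ norm by at most $N_j$. To prevent that factor from appearing, I would instead use the bound $\norm{f_j}_A\le e^{2\norm{p_j}_A}$, which is modulation invariant, and choose $N_j$ so large that $w(N_j)e^{2\norm{p_j}_A}<2^{-j}$. This is possible because $w(n)\downarrow0$ and the shape of $p_j$ is chosen before $N_j$. Finally, the uniform convergence $f_j\to1$ on $E$ comes from $p_j\to0$ and $\Im P_+(p_j)\to0$ uniformly on $E$. This last point is the delicate step, and I would handle it by Fejér-smoothing the truncation and shrinking $U_j$ as in Lemma~\ref{LEM:APPROX1}.
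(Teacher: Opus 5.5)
The proposal aims at the wrong target. The lemma asks for $\sum_{n\geq 0}\abs{\widehat{f_j}(n)}w(n)\to 0$. That sum contains the term $w(0)\abs{\widehat{f_j}(0)}$ with $w(0)>0$ fixed, so you need $f_j(0)=\widehat{f_j}(0)\to 0$, and indeed $f_j\to 0$ in the weighted norm, \emph{simultaneously} with $f_j\to 1$ uniformly on $E$.

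Your Step 3 instead estimates $\sum_{n\geq 0}\abs{\widehat{f_j}(n)-\delta_{n0}}w(n)$, which is the weighted norm of $f_j-1$. Together with $\sup_E\abs{f_j-1}\to 0$, this is satisfied by $f_j\equiv 1$, so it proves nothing. Concretely, every variant in your Step 2 is normalized so that $f_j(0)=1$. For the final one, $f_j=\exp(-2P_+(p_j))$ gives $f_j(0)=e^{-2\widehat{p_j}(0)}=1$, because the spectrum of $p_j$ avoids $[-N_j,N_j]$. Hence $\sum_{n\geq 0}\abs{\widehat{f_j}(n)}w(n)\geq w(0)$ for every $j$.

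This cannot be patched while keeping your bound $\abs{f_j}\leq e$. Suppose $\sup_{\T}\abs{f_j}\leq C$ and $\varepsilon_j:=\sup_E\abs{f_j-1}\to 0$ with $m(E)>0$. Then Jensen's lemma (\thref{LEM:JENSEN}) applied to $f_j-1$ gives
\[
\log\abs{f_j(0)-1}\leq m(E)\log\varepsilon_j+(1-m(E))\log(1+C)\to-\infty,
\]
so $f_j(0)\to 1$. This also rules out the plan in your opening paragraph, where $\abs{f}=e^{-\Re g}\lesssim 1$. Any valid construction must therefore make $f_j$ huge on $\T\setminus E$. That is what forces $\norm{f_j}_{A}\to\infty$, and it is precisely why the decay $w(N_j)\to 0$ is needed. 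Your \emph{delicate step} of controlling $\arg f_j$ on $E$ is a symptom of having the roles reversed.

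The missing idea is to put the exponential on the other side. This is exactly the function $G_k=1-\exp(-2P_+(p_k))$ from \thref{LEM:APPROX1} that you quote but then drop the ``$1-$'' from.
\begin{itemize}
\item Take a real smooth $\psi_j$ with $\int_{\T}\psi_j\,dm=0$ and $\psi_j=\log(1/\gamma_j)$ on a closed arc $I_j$ with $m(I_j)=1-\delta_j$. Then $\psi_j$ is very negative on the short complementary arc.
\item Set $F_j:=1-\exp(-2P_+(\psi_j))$. Since $\psi_j=2\Re P_+(\psi_j)$ on $\T$, you get $F_j(0)=0$ and $\abs{F_j-1}=e^{-\psi_j}=\gamma_j$ on $I_j$. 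Only the modulus matters, and the argument needs no control.
\item Put $f_j(z):=F_j(z^{N_j})$, with $N_j$ chosen after $\psi_j$ so that $w(N_j)\norm{F_j}_{A}\leq\gamma_j$. As $\widehat{f_j}$ lives on $N_j\mathbb{Z}_{>0}$ and $w$ is decreasing, $\sum_{n\geq 0}\abs{\widehat{f_j}(n)}w(n)\leq w(N_j)\norm{F_j}_{A}\leq\gamma_j$.
\item Finally, $E:=\bigcap_j\{\zeta\in\T:\zeta^{N_j}\in I_j\}$ is compact with $m(E)\geq 1-\sum_j\delta_j>1-\delta$ when $\sum_j\delta_j<\delta$, and $\abs{f_j-1}\leq\gamma_j\to 0$ on $E$.
\end{itemize}
This is the paper's proof, with $\psi_j$ replaced by $-\psi_j$ in its Herglotz-integral notation.

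Two further remarks. The set $E$ has to be built from these preimages under $\zeta\mapsto\zeta^{N_j}$ rather than fixed in advance as in your Step 1. And no $W^{1,\infty}$ or Levy--Wiener estimate is needed, since $\norm{F_j}_{A}<\infty$ simply because $F_j$ is smooth.
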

\begin{proof}
Let $I_\delta \subset \T$ be a closed arc of length $1-\delta$. For fixed $0<\gamma <1$, pick a smooth real-valued function $\psi_{\gamma, \delta}$ on $\T$ which satisfies the properties:
\[
\int_{\T} \psi_{\gamma, \delta} dm =0, \qquad \psi_{\gamma, \delta} \equiv \log \gamma, \qquad \text{on} \, \, \, I_\delta.
\]
Form the outer function $F_{\gamma, \delta}$ in $\D$ defined by 
\[
F_{\gamma, \delta}(z) = 1-\exp \left( \int_{\T} \frac{\zeta +z}{\zeta -z} \psi_{\gamma, \delta}(\zeta) dm(\zeta) \right) , \qquad z \in \D.
\]
These functions are bounded holomorphic in $\D$, extend smoothly up to $\T$, and satisfy: 
\begin{enumerate}
    \item[(i)] $F_{\gamma, \delta}(0)=0$.
    \item[(ii)] $\abs{F_{\gamma, \delta}(\zeta)-1} = \gamma$, \qquad $\zeta \in I_{\delta}$.

\end{enumerate}

Let $(N_j)_j$ be an increasing sequence of positive integers with $N_j \to \infty$ to be chosen in a moment, and consider the functions
\[
f_{j,\gamma, \delta}(z) := F_{\gamma,\delta}(z^{N_j}), \qquad z\in \D.
\]
Let $E_{j,\delta}:=\{\zeta\in \T: \zeta^{N_j} \in I_\delta \}$ denote the pre-image of $I_{\delta}$ under the monomial $z^{N_j}$, and note that $m(E_{j,\delta}) = m(I_{\delta})=1-\delta$. It follows from (ii) that 
\begin{equation}\label{EQ:fnE}
\abs{f_{j,\gamma, \delta}(\zeta)-1}= \gamma, \qquad \zeta \in E_{j,\delta}.
\end{equation}
Recalling that $f_{n,\gamma, \delta}(0)=0$, we have the following estimate:
\begin{equation}\label{EQ:l1w}
\sum_{n\geq0} \abs{\widehat{f}_{j,\gamma, \delta}(n)}w(n) = \sum_{n\geq 1} \abs{\widehat{F}_{\gamma, \delta}(n)}w(N_j n) \leq w(N_j) \sum_{n\geq 1} \abs{\widehat{F}_{\gamma, \delta}(n)},
\end{equation}
where the monotonicity assumption on $(w(n))_n$ was utilized in the last step. 

Now, pick $(\delta_j)_j$ of positive numbers with $\sum_j \delta_j \leq \delta$ and let $(\gamma_j)_j$ be any sequence of positive numbers tending to zero. Set $E := \cap_j E_{j, \delta_j}$, and note that the assumption on the $\delta_j$'s imply that $m(E) \geq 1-\delta$. Note that we can choose the positive integers $(N_j)_j$ to be large enough so that 
\[
w(N_j) \sum_{n\geq 1} \abs{\widehat{F}_{\gamma_j, \delta_j}(n)} \leq \gamma_j.
\]
Set $f_j := f_{j,\gamma_j, \delta_j}$. It follows from \eqref{EQ:fnE} that $f_j \to 1$ uniformly on $E$, while \eqref{EQ:l1w} implies that 
\[
\sum_{n\geq0} \abs{\widehat{f}_j(n)}w(n) \leq \gamma_j \to 0.
\]
\end{proof}

We are now ready to prove the main result in this subsection.

\begin{proof}[Proof of \thref{THM:LIMIMCOMP}] Let $E$ and $(f_j)_j$ be as in \thref{LEM:SAl1w}, and pick an arbitrary $\mu \in M(E)$ with 
\[
C(\mu,w):=\sup_{n\geq 0} \frac{\abs{\widehat{\mu}(n)}}{w(n)} < \infty.
\]
By means of substituting $d\mu(\zeta)$ with $\zeta^N d\mu(\zeta)$ for some integer $N\geq 0$, we may without loss of generality assume that $\widehat{\mu}(0)\neq 0$. However, the properties of $(f_j)_j$ yields
\begin{multline*}
|\widehat{\mu}(0)| = \lim_j \Big \lvert \int_{\T} \conj{f_j} d\mu \Big \rvert   = \lim_j \Big \lvert \sum_{n\geq 0} \widehat{\mu}(n) \conj{\widehat{f}_j(n)}\Big \rvert \leq C(\mu,w) \limsup_j \sum_{n\geq 0} \abs{\widehat{f}_j(n)} w(n) =0.
\end{multline*}
    
\end{proof}

\subsection{Further results}
Let $w=(w(n))_{n=0}^\infty$ be a bounded sequence of positive real numbers and denote by $\ell^2(w)$ the Hilbert space of distributions $S$ on $\T$ satisfying
\[
\sum_{n} \abs{\widehat{S}(n)}^2 w(|n|) < \infty.
\]
Recently, the author obtained the following summable analogue of the Ivashev--Musatov Theorem in \cite{limani2025generic2}.

\begin{thm}[Limani] Let $w=\{w(n)\}_{n=0}^\infty$ be positive real numbers with
\begin{enumerate}
    \item[(i)] $\liminf_n w(n)=0$,
    \item[(ii)] either $w$ is non-increasing, or there exists a constant $C>1$ such that for any integer $n\geq 1$:
    \[
    C^{-1} w(n) \leq w(k) \leq C w(n), \qquad n\leq k \leq 2n.
    \]
\end{enumerate}
Then there exists a probability measure $\sigma$ supported on a set of Lebesgue measure zero with $\{\widehat{\sigma}(n)\}_n \in \ell^2(w)$.
\end{thm}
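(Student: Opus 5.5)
The plan is to follow the two-stage scheme used for the Ivashev--Musatov Theorem: first build smooth probability densities with small support and controlled $\ell^2(w)$-mass, then pass to a weak-star limit. The key simplification is that the target is now summability rather than a pointwise majorant, so only a weighted $\ell^2$ bound has to survive the limit. Concretely, I aim for a principal lemma: for every $\varepsilon>0$ there is $\psi_\varepsilon\in C^\infty(\T)$ with $\psi_\varepsilon\geq 0$, $\int_{\T}\psi_\varepsilon\,dm=1$, $m(\supp{\psi_\varepsilon dm})\leq\varepsilon$, and
\[
\sum_{n\neq 0}\abs{\widehat{\psi_\varepsilon}(n)}^2 w(|n|)\leq 1 .
\]
Given this, Helly's selection theorem yields a weak-star cluster point $\sigma$, which is a probability measure. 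Its support lies in $\bigcap_k \overline{\bigcup_{j\geq k}\supp{\psi_{\varepsilon_j}}}$; arranging the supports to be nested, as in the inductive construction of the Principal IM-Lemma, makes this a null set. Since $\widehat{\psi_{\varepsilon_j}}(n)\to\widehat{\sigma}(n)$ pointwise, Fatou's lemma gives $\sum_n\abs{\widehat{\sigma}(n)}^2w(|n|)\leq 2<\infty$.

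The principal lemma itself I would build inductively as $f_{k+1}=f_k\cdot\psi_k$, exactly as in the proof of the Principal IM-Lemma. Each factor $\psi_k$ is nonnegative, has mean one, vanishes on a fixed proportion (say $1/100$) of every sufficiently long arc, and has weighted $\ell^2(w)$-norm of its nonzero frequencies below a prescribed $\eta_k$. The support then shrinks geometrically. The weighted norm is handled by splitting $\widehat{f_k\psi_k}(n)$ into a low-frequency part of $f_k$ convolved with $\psi_k$ and a tail. Because $f_k$ is a fixed smooth function, the error it introduces is bounded by $C(f_k)\,\eta_k$ in the weighted norm; this is immediate if $w$ is bounded, since then it is an $\ell^1*\ell^2$ convolution. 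Choosing $\eta_k$ small relative to $C(f_k)$ keeps the total weighted norm bounded.

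The main obstacle is constructing a single factor $\psi$. The natural candidate exploits $\liminf w=0$ through the lacunary dilation trick from the proof of \thref{LEM:SAl1w}. Take a fixed smooth $g\geq 0$ with mean one that vanishes on half of $\T$, and set $\psi(\zeta)=g(\zeta^N)$. This preserves nonnegativity, the mean, and the proportion of the zero set on arcs longer than $1/N$, while
\[
\sum_{n\neq0}\abs{\widehat\psi(n)}^2w(|n|)=\sum_{k\neq0}\abs{\widehat g(k)}^2w(N|k|).
\]
The task is therefore to choose $N$ so that $w(N|k|)$ is small for all the $k$ carrying most of the mass of $\widehat g$.

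If $w$ is non-increasing, $w(Nk)\leq w(N)\to0$ and the sum is at most $w(N)\norm{g}_{L^2}^2$, so this case is easy. In the doubling case, $w$ is small near some sparse scales $N_j$ with $w(N_j)\to 0$, and doubling propagates this smallness across the range $N_j\leq n\leq K N_j$ with a fixed-factor loss. I would truncate: let $k\leq K$ cover all but $\eta$ of the mass of $g$, bound these terms by $C^{\log_2K}w(N_j)$, and bound the remaining terms by polynomial growth of $w$ (\thref{LEM:DOUBLE}) against the rapid decay of $\widehat g$. This makes the tail as small as desired, uniformly in $j$. This balancing in the doubling case is the step I expect to require the most care.
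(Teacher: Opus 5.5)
The notes only cite this theorem and give no proof, so your route has to stand on its own. Most of it does. Nested supports together with Helly and Fatou handle the limit, and the dilates $\psi(\zeta)=g(\zeta^N)$ are good single factors. In the doubling case the single factor is also easier than you expect. The correct form of \thref{LEM:DOUBLE} has $\max$ and a constant; the ``$\min$'' printed there is a misprint. It gives $w(N|r|)\le C|r|^M w(N)$ for all $r\neq 0$ at once. Hence $\sum_{r\neq0}|\widehat g(r)|^2 w(N|r|)\le C\,w(N)\sum_r |r|^M|\widehat g(r)|^2\to 0$ along $N=N_j$, and no truncation is needed.

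The gap is in the inductive step. The quantity to control is $E_k:=\widehat{f_k}*h_k$ with $h_k:=\widehat{\psi_k}\,1_{\{n\neq 0\}}$, which is exactly $\widehat{f_{k+1}}-\widehat{f_k}$.

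\emph{Why your justification fails.}
\begin{itemize}
\item Your $\ell^1*\ell^2$ argument with bounded $w$ gives $(\sup w)^{1/2}\|\widehat{f_k}\|_{\ell^1}\|h_k\|_{\ell^2}$. But $\|h_k\|_{\ell^2}=\|\psi_k-1\|_{L^2}$ is the \emph{unweighted} norm, and it is bounded below: by Cauchy--Schwarz, a nonnegative mean-one $\psi_k$ has $\|\psi_k\|_{L^2}^2\ge 1/m(\{\psi_k>0\})$. So nothing in this bound is small.
\item Worse, the claimed bound $\|E_k\|_{\ell^2(w)}\le C(f_k)\eta_k$ is false for non-increasing $w$. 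Take $w(n)=e^{-e^n}$, $\widehat g(1)\neq 0$, and any $j_0<0$ with $\widehat{f_k}(j_0)\neq0$; such $j_0$ exists since $f_k$ is real and non-constant.
\item The single frequency $n=N_k+j_0$ then contributes roughly $|\widehat{f_k}(j_0)\widehat g(1)|^2 e^{-e^{N_k-|j_0|}}$ to $\|E_k\|^2_{\ell^2(w)}$. Meanwhile $\eta_k^2\lesssim e^{-e^{N_k}}$.
\item So $\|E_k\|_{\ell^2(w)}/\eta_k\to\infty$ as $N_k\to\infty$, and no constant $C(f_k)$ fixed before $N_k$ is chosen can work.
\end{itemize}

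\emph{How to repair it.} The construction itself survives, but the estimate needs a case split.
\begin{itemize}
\item \emph{Doubling case.} Here $w(|n|)\le C(1+|j|)^M w(|n-j|)$, so Minkowski gives $\|E_k\|_{\ell^2(w)}\le C\sum_j|\widehat{f_k}(j)|(1+|j|)^{M/2}\,\eta_k$. This is finite by smoothness of $f_k$. So your bound holds in this case, but for this reason; boundedness of $w$ plays no role, and $w$ need not be bounded.
\item \emph{Non-increasing case.} You cannot compare $w(|n|)$ with $w(|n-j|)$; you must use the frequency gap of $\psi_k$. Split $\widehat{f_k}=a+b$ with $a$ supported in $[-L,L]$ and $\|b\|_{\ell^1}$ small.
\item Since $h_k$ lives on $N_k\mathbb{Z}\setminus\{0\}$, the piece $a*h_k$ lives on $\{|n|\ge N_k-L\}$. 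It contributes at most $w(N_k-L)^{1/2}\|a\|_{\ell^1}\|g-1\|_{L^2}$.
\item The piece $b*h_k$ contributes at most $w(0)^{1/2}\|b\|_{\ell^1}\|g-1\|_{L^2}$. Fix $L$ first, then take $N_k$ large.
\item The same frequency gap is needed for a point you never address: keeping $\widehat{f_k}(0)$ away from zero, so the final normalization does not inflate the weighted norm. This holds because $|\widehat{f_{k+1}}(0)-\widehat{f_k}(0)|\le\sum_{m\neq 0}|\widehat{f_k}(mN_k)|\to 0$ as $N_k\to\infty$.
\end{itemize}
With these repairs your argument goes through.
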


Under additional regularity condition on $w$, this theorem can essentially be proved using abstract potential theory developed by L. Carleson in \cite{carleson1967selected}, hence the novelty is the minimalistic hypothesis. In a similar vain, it was also proved in \cite{limani2025generic} that the condition $(i)$ alone, is far from enough.

T. W. K\"orner also proved that if one wishes to merely construct a measure $\mu$ which is singular wrt $dm$, then the condition $(ii)$ can substantially relaxed, \cite{korner1987uniqueness}. K\"orner attributes the following result to J-P. Kahane, see \cite[p. 223]{kahane1963ensembles}

\begin{thm}[Kahane-K\"orner] Let $(w(n))_{n=0}^\infty$ be a sequence of positive numbers satisfying the condition
\[
\sum_{n=1}^\infty 2^n \min \left\{ w(k)^2: 2^n\leq k < 2^{n+1} \right\} = +\infty. 
\]
Then there exists a singular probability measure $\mu$ on $\T$ which satisfies $\abs{\widehat{\mu}(n)}\leq w(n)$, for all integers $n\neq 0$.
\end{thm}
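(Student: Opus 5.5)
The plan is a generalized Riesz product. Each factor carries a whole dyadic block of frequencies with flat, small coefficients, instead of the single frequency per factor in the classical Riesz products of Subsection~\ref{SEC:LACRIESZ}.

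\emph{Step 1: reduce to a block-constant minorant.} Set $v_n:=\min\{w(k):2^n\le k<2^{n+1}\}$, so the hypothesis reads $\sum_n 2^n v_n^2=\infty$. Put $c_n:=\min(1,2^{n/2}v_n)$; then $\sum_n c_n^2=\infty$. Fix a large integer $K$ and split $\mathbb{Z}_+$ into residue classes mod $K$. One class, say $\{n_j\}_j$ with $n_{j+1}-n_j=K$, still has $\sum_j c_{n_j}^2=\infty$. Using only these widely spaced blocks is what later keeps the cross frequencies apart.

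\emph{Step 2: choose the factors.} Let $R_L$ be a Rudin--Shapiro polynomial of degree $L-1$: its coefficients are $\pm1$ and $\sup_{\T}|R_L|\le\sqrt{2L}$. With $L_j:=2^{n_j-1}$ and shift $M_j:=2^{n_j}$, the polynomial $\zeta^{M_j}R_{L_j}(\zeta)$ has spectrum in $[2^{n_j},2^{n_j+1})$. Define
\[
g_j(\zeta):=1+c_{n_j}\,\Re\Big(\frac{\zeta^{M_j}R_{L_j}(\zeta)}{\sqrt{2L_j}}\Big).
\]
Then $g_j\ge0$ and $\widehat{g_j}(0)=1$. Every nonzero coefficient of $g_j$ has modulus $c_{n_j}/(2\sqrt{2L_j})\lesssim v_{n_j}$, and rescaling $c_{n_j}$ by a fixed constant gives $\le v_{n_j}$. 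Also $\|g_j-1\|_{L^2}^2\asymp c_{n_j}^2$. Let $\mu$ be a weak-star cluster point of the partial products $P_J:=\prod_{j\le J}g_j$. It is a probability measure.

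\emph{Step 3: Fourier bound.} Expanding $P_J$, each frequency of the product is $\sum_j\varepsilon_j k_j$ with $\varepsilon_j\in\{-1,0,1\}$ and $k_j\in[2^{n_j},2^{n_j+1})$. Since $n_{j+1}-n_j=K$ is large, the top index $j^*$ with $\varepsilon_{j^*}\neq0$ is determined by $n$, and $|n|\in[2^{n_{j^*}-1},2^{n_{j^*}+2})$. I will then bound $|\widehat{P_J}(n)|$ by the coefficient size of $g_{j^*}$ times a sum over the lower factors. That sum is at most a geometric series in $2^{-K}$ against products of the $c$'s, so it is $\le 2$ after enlarging $K$ and rescaling the $c$'s. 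Since $|n|$ lies within a factor $4$ of the block $[2^{n_{j^*}},2^{n_{j^*}+1})$, I need $v_{n_{j^*}}$ to control $w(n)$ there. This is not automatic without regularity. I will fix it by placing the shifted Rudin--Shapiro block strictly inside the dyadic block, and by requiring that lower-order combinations land in the same block as their top term (using $K\ge3$). Then $|\widehat{\mu}(n)|\le w(|n|)$ follows for $n\neq0$.

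\emph{Step 4: singularity, the expected main obstacle.} I will adapt the law-of-large-numbers argument from the classical Riesz product case. Set $X_j:=\Re(\zeta^{M_j}R_{L_j})/\sqrt{2L_j}$, which is bounded by $1$. Because of the dissociation in Step~3, the $X_j$ are orthogonal with mean $0$ and variance $\asymp1$ under $dm$. Under $\mu$ they have mean $c_{n_j}\|X_j\|_2^2\asymp c_{n_j}$ and bounded, almost orthogonal covariances. Let $S_N:=\sum_{j\le N}c_{n_j}X_j$ and $A_N:=\sum_{j\le N}c_{n_j}^2\to\infty$. Chebyshev's inequality gives $S_N/A_N\to0$ in $dm$-measure. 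Under $\mu$, $\mathbb{E}_\mu S_N\asymp A_N$ and $\mathrm{Var}_\mu S_N\lesssim A_N$, so $S_N/A_N\ge c>0$ in $\mu$-measure. Passing to a subsequence yields disjoint sets carrying $dm$ and $\mu$ respectively, so $\mu\perp m$. The delicate point is controlling the $\mu$-covariances $\int X_iX_j\,d\mu$ through the product structure, which needs the same spectral bookkeeping as Step~3.
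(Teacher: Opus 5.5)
Steps 1 and 2 are fine. Moving the Rudin--Shapiro block strictly inside the dyadic block does handle the edge effect. The proof breaks in Step 3, because the blocks are not dissociated.

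In a classical Riesz product each factor carries one frequency. So every $n$ has at most one representation $n=\sum_i\varepsilon_ik_i$, and $\widehat{\mu}(n)$ is a single product of coefficients. Your factors carry intervals of $\asymp 2^{n_i}$ consecutive frequencies. An $n$ in the bulk of block $j$ therefore has exactly $\prod_{i<j}(2L_i+1)$ representations. The gaps $K$ control where these sums land, not how many of them coincide. Concretely,
\[
\widehat{P_J}(n)=\sum_m \widehat{(g_j-1)}(n-m)\,\widehat{P_{j-1}}(m),
\]
where $m$ runs over the whole spectrum of $P_{j-1}$, since the Rudin--Shapiro block is far longer than that spectrum. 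The triangle inequality gives the top coefficient times $\|P_{j-1}\|_{A}$. No geometric series in $2^{-K}$ appears: a single lower factor already has $\|g_i-1\|_{A}=c_{n_i}\sqrt{L_i/2}\asymp c_{n_i}2^{n_i/2}$.

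Cancellation does not rescue the construction with fixed $c_{n_j}$. Since $\widehat{P_j}=\widehat{\mu}$ on the blocks up to $j$ and $\widehat{P_j}=0$ beyond, $\|P_j\|_2^2$ is a partial sum of $\sum_n|\widehat{\mu}(n)|^2$. Singularity therefore forces $\|P_j\|_2\to\infty$. By Parseval, the mass of $\widehat{\mu}$ on block $j$ is $c_{n_j}^2\int_{\T}P_{j-1}^2X_j^2\,dm$. Under the same quasi-independence your Step 4 relies on, this is $\asymp c_{n_j}^2\|P_{j-1}\|_2^2$. It sits on only $2L_j(1+O(2^{-K}))$ frequencies, so some coefficient there is $\gtrsim\|P_{j-1}\|_2\,c_{n_j}2^{-n_j/2}$.

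Take $w\equiv v_n$ on each dyadic block with $2^{n/2}v_n\le 1$, e.g. $v_n=2^{-n/2}n^{-1/2}$, which satisfies the hypothesis. Then $c_{n_j}2^{-n_j/2}=v_{n_j}$. So $|\widehat{\mu}(n)|\le w(n)$ fails once $\|P_{j-1}\|_2$ exceeds a fixed constant, and a fixed rescaling of the $c$'s only postpones this.

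What is missing is a mechanism that stops this accumulation. There are two natural options:
\begin{itemize}
\item tie the size of each new factor to the product built so far, as the choice of $\varepsilon(n)$ against $\|\widehat{f_n}\|_{\ell^1}$ does in the proof of the Principal IM-Lemma (using that the hypothesis survives $w\mapsto\varepsilon w$);
\item prove genuinely uniform, not mean-square, bounds on the correlations above.
\end{itemize}
Either way, singularity then has to be re-proved. The paper only quotes this theorem, so that step is the real content of the Kahane--K\"orner argument.

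The same defect undermines Step 4. We have $X_j^2=\frac{|R_{L_j}|^2}{4L_j}+\frac{\Re(\zeta^{2M_j}R_{L_j}^2)}{4L_j}$, and the autocorrelation part of $|R_{L_j}|^2$ lives on low frequencies that overlap the spectrum of $P_{j-1}$. So $\int X_j\,d\mu=c_{n_j}\int_{\T}X_j^2P_{j-1}\,dm$ differs from $c_{n_j}\|X_j\|_2^2$ by a pairing of those autocorrelations with the large coefficients of $P_{j-1}$. The same happens for the $\mu$-covariances. These correction terms would have to be controlled, and nothing in the construction makes them small.
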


In particular, it is worth noting that the K\"orner-Meyer ensures that there exists a decreasing sequence of positive numbers $w=(w(n))_n$ with 
\[
\sum_n w(n)^2 = \infty,
\]
such that any non-trivial distribution in $\T$ with Fourier majorant $w$ has full support. However, the Kahane-K\"orner Theorem ensures that we can still always exhibit a singular probability measure with Fourier majorant $w$. 

Another beautiful result due to T. W. K\"orner is a topological analogue of the Ivashev--Musatov Theorem, where the notion "small" in the sense of zero Lebesgue measure is substituted by small in topological sense.
\begin{thm}[K\"orner] Let $(w(n))_{n=0}^\infty$ be a sequence of positive numbers which satisfy the following properties:
\begin{enumerate}
    \item[(i)] $ \sum_n w(n) = + \infty$,
    \item[(ii)] there exists a constant $C>1$ such that for any integer $n\geq 1$:
    \[
    C^{-1} w(n) \leq w(k) \leq C w(n), \qquad n\leq k \leq 2n.
    \]
\end{enumerate}
There there exists a positive function $f\in L^\infty(\T)$ such that $\supp{fdm}$ has no interior, but 
\[
\abs{\widehat{f}(n)} \leq w(|n|), \qquad n=\pm1, \pm 2, \dots.
\]
\end{thm}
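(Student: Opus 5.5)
The plan is to imitate the proof of the Ivashev--Musatov Theorem: first build a principal lemma with smooth building blocks, then pass to a limit. The difference is that the Lebesgue-measure smallness is replaced by a topological one. I would construct nonnegative functions $g_n \in L^\infty(\T)$ satisfying four conditions:
\begin{itemize}
\item[(1)] the sets $K_n := \supp{g_n dm}$ are decreasing and closed;
\item[(2)] every arc $J_k$ from a fixed countable base of arcs of $\T$ with $k\le n$ contains a subarc disjoint from $K_n$;
\item[(3)] $1\le \widehat{g_n}(0)\le 2$ and $0\le g_n\le C_n$;
\item[(4)] $\abs{\widehat{g_n}(k)}\le (1-2^{-n})w(|k|)$ for all $k\neq 0$.
\end{itemize}

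To make the limit bounded, I would build the functions multiplicatively, $g_{n+1}=g_n\psi_n$, with $0\le\psi_n\le 1+\eta_n$ and $\prod_n(1+\eta_n)<\infty$, so that $\sup_n\norm{g_n}_{L^\infty}<\infty$. A weak-star cluster point $f\in L^\infty$ of $(g_n)$ is then positive (nontrivial, since $\widehat f(0)\ge 1$). It satisfies $\abs{\widehat f(k)}\le w(|k|)$ by the same argument as in the Ivashev--Musatov proof. Its support lies in $\bigcap_n K_n$, which misses a subarc of every basic arc, so $\supp{f dm}$ has no interior.

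The induction step mirrors the Principal IM-Lemma. Given $g_n$, the arc $J_{n+1}$ either already contains a subarc disjoint from $K_n$, in which case nothing is needed, or it does not. In the latter case I need a smooth $\psi$ with four properties:
\begin{itemize}
\item[(a)] $\psi\ge 0$ and $\widehat\psi(0)=1$;
\item[(b)] $\psi$ vanishes on some subarc of $J_{n+1}$;
\item[(c)] $\norm{\psi}_{L^\infty}\le 1+\eta$;
\item[(d)] $\abs{\widehat\psi(k)}\le \varepsilon w(|k|)$ for $k\ne0$, with $\varepsilon$ as small as desired.
\end{itemize}
The convolution estimates in the Principal IM-Lemma (splitting $|j|\le k/2$ and $|j|>k/2$, and using \thref{LEM:DOUBLE} and the rapid decay of $\widehat{g_n}$) then yield (4) for $g_{n+1}$ when $\varepsilon=\varepsilon(n)$ is small. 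Property (3) is preserved similarly, and (1)--(2) follow from (b). Since the constraint is now only $L^\infty$-closeness to $1$ together with vanishing on one small arc, Lebesgue measure need not shrink at all. This is what allows the weaker hypothesis $\sum w(n)=\infty$ in place of the square divergence.

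The main obstacle is constructing $\psi$ with (b), (c) and (d) simultaneously. A naive bump $1-\phi$ with $\phi$ supported in a small arc $I$ has $\widehat\phi(k)\approx m(I)$ at low frequencies, which is not small relative to $w$. Here I would adapt \thref{LEM:KORNER22} in the spirit of \thref{LEM:KORSTEP1}. On a fine partition of $\T$ into arcs $I_k$, I would place oscillating pieces $\eta f_{I_k,N_k}$, with one exceptional piece on a subarc of $J_{n+1}$ normalized so that it reaches $-1$ there. The parameters would be chosen dyadically, $m(I_k)\asymp 2^{-d_k}$ and $N_k\asymp 2^{d_k}m(I_k)$, so that the Fourier bound (e) gives contributions $\lesssim \eta\, m(I_k)N_k^{-1/2}$ localized near frequency $2^{d_k}$. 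Matching $m(I_k)N_k^{-1/2}\lesssim w(2^{d_k})$ for all pieces of total length about one is exactly the condition $\sum_d 2^{d}w(2^d)=\infty$, equivalent to $\sum w(n)=\infty$ under (ii), in place of $\sum 2^d w(2^d)^2=\infty$. The exceptional piece has amplitude $1$ rather than $\eta$, but its arc is tiny, so its $L^1$ mass can be absorbed. After this, I would renormalize $1+F$ by its mean to obtain $\psi$. I expect the bookkeeping here, reconciling amplitude $1$ on the exceptional arc with the $N^{-1/2}$ gain, to be the delicate point. If it fails, my fallback is to split the exceptional arc further into many Körner blocks, each with smaller amplitude, stacked so that their sum reaches $-1$.
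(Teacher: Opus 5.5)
The notes only state this theorem and give no proof, so I assess your argument on its own. Your outer scheme is sound. The multiplicative induction $g_{n+1}=g_n\psi_n$ with $\prod_n\norm{\psi_n}_{L^\infty}<\infty$, killing a subarc of one basic arc at a time, the convolution estimates from the Principal IM-Lemma, and the weak-star limit with $\supp{f\,dm}\subseteq\bigcap_nK_n$ all go through. The gap is the construction of $\psi$, which carries the whole content of the theorem, and the mechanism you propose for it does not work.

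\textbf{The partition pieces.} The pieces $\eta f_{I_k,N_k}$ serve no purpose here; in \thref{LEM:KORSTEP1} they made $\psi$ vanish on a proportion of \emph{every} arc, which you do not need. Their bookkeeping also does not give $\sum_d2^dw(2^d)=\infty$. With $2^d=N/m(I)$, the bound in (e) peaks at $C(M)m(I)^{1/2}2^{-d/2}$. Matching this with $\varepsilon w(2^d)$ at amplitude $\eta$ forces $m(I)\lesssim(\varepsilon/\eta)^2\,2^dw(2^d)^2$. Pieces at distinct scales of total length one therefore require $\sum_d2^dw(2^d)^2=\infty$, which is the Ivashev--Musatov hypothesis again. (Your stated choice $m(I_k)\asymp2^{-d_k}$, $N_k\asymp2^{d_k}m(I_k)$ even gives $N_k\asymp1$.)

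\textbf{The exceptional block.} More seriously, a single exceptional block cannot reach $-1$. Suppose (e) is to certify $\abs{\widehat g(k)}\le\varepsilon w(\abs{k})$ for $g=f_{I,N}/10$. Then its peak must be $\lesssim\varepsilon w(2^d)$, and
\[
1\le\sup_{\T}\abs{g}\le\abs{\widehat g(0)}+\sum_{k\neq0}\abs{\widehat g(k)}\le\frac{m(I)}{10}+C'\varepsilon\, w(2^d)\sum_{k\neq0}\min\Big(\big(\tfrac{\abs{k}}{2^d}\big)^{M},\big(\tfrac{\abs{k}}{2^d}\big)^{-M}\Big)\le\frac{m(I)}{10}+C''\varepsilon\,2^dw(2^d).
\]
So you need some $d$ with $2^dw(2^d)\gtrsim1/\varepsilon$. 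But (i) only gives $\sum_d2^dw(2^d)=\infty$, not $\sup_d2^dw(2^d)=\infty$. For example, $w(n)=1/((n+2)\log(n+2))$ satisfies (i)--(ii) and has $2^dw(2^d)\to0$, while your induction forces $\varepsilon=\varepsilon(n)\to0$. Nothing controls the positive part of that block either, so it can push $\psi$ up to $2$, against your requirement (c).

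The obstruction is intrinsic. Test $1-\psi$ at the centre of a zero arc $I'$ against the Fej\'er kernel $\mathcal{F}_L$ with $L\asymp1/m(I')$. This gives $\sum_{k<L}w(k)\gtrsim1/\varepsilon$, so the coefficient budget must be spent coherently over all dyadic scales up to $L$. No single-scale block, oscillating or not, can do that.

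\textbf{How the fallback can be made to work.} Your fallback works only in a precise multiscale form that the proposal does not supply.
\begin{enumerate}
\item[(1)] Fix a smooth profile $G$ with values in $[-1,1]$, equal to $-1$ on a subinterval, and with $M>\log_2C$ vanishing moments (in particular mean zero).
\item[(2)] Let $g_d$ be its dilate to width $\asymp2^{-d}$, with $d$ running from $d_0$ to $D$ along an arithmetic progression of large fixed step. Place them so that $\supp{g_{d_0}}\subseteq J_{n+1}$ and each $g_d$ is supported inside $\{g_{d^-}=-1\}$, where $d^-$ is the preceding index.
\item[(3)] Put $\psi=1+\sum_da_dg_d$ with $a_d\le\min\big(\eta,c\,\varepsilon\,2^dw(2^d)\big)$ and $\sum_da_d=1$. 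This is possible precisely because $\sum_d2^dw(2^d)=\infty$ along the progression, which under (ii) is equivalent to (i).
\end{enumerate}
Mean zero gives $\widehat\psi(0)=1$. The nesting gives $0\le\psi\le1+\eta$ and $\psi=0$ on an arc. For $2^j\le\abs{k}<2^{j+1}$ one has $\abs{\widehat{g_d}(k)}\lesssim2^{-d}2^{-M\abs{j-d}}$, and (ii) gives $w(2^d)\le C^{\abs{d-j}}w(2^j)$. Hence $\abs{\widehat\psi(k)}\lesssim\varepsilon\sum_dw(2^d)2^{-M\abs{j-d}}\lesssim\varepsilon w(\abs{k})$.

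The point is that the divergence of $\sum w$ pays for total \emph{amplitude} stacked coherently across scales, not for total length, and no $N^{-1/2}$ gain is involved. For $w(n)=1/n$ this $\psi$ is, morally, $1-\min\big(1,-2\varepsilon\log\abs{1-re^{it}}\big)$. With this $\psi$ the rest of your argument goes through.
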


Note again that in view of the K\"orner-Meyer Theorem, the additional regularity condition in $(ii)$ cannot simple be removed. The topological Ivashev-Musatov Theorem essentially touches on the problem of finding the threshold of Fourier majorants $(w(n))_n$, for which not every function $f$ with $\abs{\widehat{f}(n)}\leq w(|n|)$ is continuous on $\T$. Conversely, this begs the question of how wild the Fourier majorants of continuous functions can be. It turns out that this question can be settled in a surprisingly sharp way.

\begin{thm}[de Leeuw--Kahane--Katznelson]\thlabel{THM:LEEUWKAHKATZ} Let $(a_n)_n$ be a sequence of complex numbers with $\sum_n |a_n|^2 < \infty$. Then there exists $f\in C(\T)$ such that 
\[
\abs{\widehat{f}(n)} \geq |a_n|, \qquad n=0,\pm 1, \pm 2, \dots
\]
\end{thm}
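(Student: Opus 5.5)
The strategy is probabilistic: randomize the signs of the coefficients and show that, for some choice of signs, the series defines a continuous function. The phases of $a_n$ are irrelevant, so I will aim for $\widehat{f}(n)=\varepsilon_n|a_n|$ with $\varepsilon_n=\pm1$ on most indices. That alone is not enough, since the lower bound must hold for every $n$.

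The plan is therefore to build a continuous $f$ with $\widehat{f}(n)=\pm|a_n| + (\text{correction})$, where the correction has the same sign, so that $|\widehat{f}(n)|\ge |a_n|$.

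\textbf{Step 1 (blocks).} First I reduce to finite blocks. Split $\mathbb{Z}$ into dyadic blocks $B_k=\{2^{k}\le |n|<2^{k+1}\}$. Group these blocks so that the tails $\sigma_j^2:=\sum_{n\in \text{group }j}|a_n|^2$ are summable with rapidly decreasing size. Since $\sum|a_n|^2<\infty$, we may choose groups $G_j$ with $\sigma_j\le 2^{-j}$, after discarding finitely many initial indices. Those initial indices are handled separately by a trigonometric polynomial with large coefficients on them.

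\textbf{Step 2 (Salem--Zygmund / Rudin--Shapiro).} The key input is a finite-block estimate. For a trigonometric polynomial $P=\sum_{n\in G}c_n\zeta^n$ with degree at most $N$, there are signs $\varepsilon_n$ such that
\[
\sup_{\T}\Bigl|\sum_{n\in G}\varepsilon_n c_n\zeta^n\Bigr|\le C\Bigl(\sum_{n\in G}|c_n|^2\Bigr)^{1/2}\sqrt{\log N}.
\]
This is the Salem--Zygmund inequality, proved by a Chernoff bound at $O(N)$ points together with Bernstein's inequality.

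The logarithm is the obstruction: summing the bounds $\sigma_j\sqrt{\log N_j}$ need not converge. I would fix this by replacing the target $|a_n|$ on group $G_j$ with a larger but still square-summable majorant $b_n\ge|a_n|$. This means choosing groups and weights so that $\sum_j \beta_j\sqrt{\log N_j}<\infty$, where $\beta_j=(\sum_{G_j}b_n^2)^{1/2}$. That does not help directly, since larger $b_n$ only makes the sum worse.

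The honest route is instead to exploit the fact that the lower bound $|\widehat{f}(n)|\ge|a_n|$ allows spreading. The classical de Leeuw--Kahane--Katznelson idea is to use flat polynomials and a Banach-space (Baire) argument.

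\textbf{Revised plan (duality/Baire).} The cleaner route: it suffices to show that for every $\varepsilon>0$ and every finite sequence $(a_n)_{|n|\le N}$ with $\sum|a_n|^2\le 1$ there is a trigonometric polynomial $P$ with $\|P\|_\infty\le C$ and $|\widehat{P}(n)|\ge |a_n|$ for $|n|\le N$, with $C$ absolute.

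To prove this finite statement, average over random phases $e^{i\theta_n}$ and use the following construction. Set
\[
P=\sum_n a_n e^{i\theta_n}\zeta^n\cdot(\text{smoothing}),
\]
then truncate its large values via $P\mapsto P\cdot\phi(|P|)$ with $\phi$ a cutoff. This keeps $\|\cdot\|_\infty$ bounded and changes the coefficients by an amount controlled in $L^2$ by the measure of the set where $|P|$ is large, which is exponentially small (subgaussian tails). A fixed-point / Brouwer-type adjustment of the moduli $|a_n|\mapsto t_n$ then enforces $|\widehat{\cdot}(n)|\ge|a_n|$ exactly. This is the main obstacle: keeping $\|\cdot\|_\infty$ bounded while preserving every individual coefficient lower bound, rather than only an $\ell^2$-average of them. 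I would attempt it by iterating the truncation and correcting coefficients that fall short, using a geometric series of corrections of decreasing $\ell^2$ size, each bounded in sup norm by the same tail estimate.

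\textbf{Assembly.} Finally, apply the finite statement on the disjoint groups $G_j$ with $\sum_{G_j}|a_n|^2\le 4^{-j}$, scaled to give $\|P_j\|_\infty\le C2^{-j}$. Modulate the pieces so that their spectra stay in $G_j$; this requires the construction to preserve spectral support, so the truncation must be followed by a de la Vall\'ee Poussin-type projection, costing a constant. Then $f=\sum_j P_j$ converges uniformly, and the disjoint spectra give $|\widehat f(n)|\ge|a_n|$ for every $n$.
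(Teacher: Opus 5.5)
The notes do not prove this theorem; they only refer to Katznelson's probabilistic proof. Your sketch therefore has to stand on its own, and its central step is missing. Once you drop the Salem--Zygmund route, everything rests on producing a bounded $g$ with $\|g\|_\infty\le C\|a\|_{\ell^2}$ and $|\widehat{g}(n)|\ge|a_n|$ for \emph{every} $n$. You explicitly leave this as ``the main obstacle''.

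The Brouwer step is not available. The subgaussian estimate that makes truncation cheap holds for a \emph{fixed} amplitude vector with \emph{random} phases. Once the phases are frozen, the moduli $t_n$ can be chosen to produce peaks (large where $\theta_n+nx$ is nearly aligned). So there is no uniform control of the truncation error over the moduli, and hence no self-map.

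The iteration you gesture at meets two concrete obstructions that you do not address. First, a correction carrying fresh random phases can cancel the existing coefficient at an index that fell short, and aligning the phases destroys the tail estimate. Second, the $\ell^2$-small truncation error spills onto all frequencies, including those where $|a_n|$ is tiny, and can push previously good coefficients below $|a_n|$.

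Both obstructions are removed by explicit bookkeeping with a shrinking margin. Call $n$ good at stage $k$ if $|\widehat{f_k}(n)|\ge(1+2^{-k})|a_n|$. Let $B_k$ be the bad set and $m_k=\|a1_{B_k}\|_{\ell^2}$. Add $h_k=\Phi_{\lambda_k}(X_k)$, where $X_k=\sum_{n\in B_k}4|a_n|u_n\zeta^n$ carries fresh phases and $\Phi_\lambda(z)=z\min(1,\lambda/|z|)$. Choose the phases (possible by averaging and the subgaussian tail bound) so that, with $\lambda_k\asymp m_k\sqrt{\log(1/\varepsilon_k)}$, you get $\|X_k-h_k\|_{L^2}\le 4\varepsilon_k m_k$.

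On $B_k$ the overshoot $4|a_n|>2|a_n|>|\widehat{f_k}(n)|$ makes the phase irrelevant. Chebyshev applied to the error sequence $\widehat{X_k}-\widehat{h_k}$ shows that the indices spoiled at this stage carry mass $m_{k+1}\le 2^{k+3}\varepsilon_k m_k$. With $\varepsilon_k=2^{-2k-4}$ this gives:
\begin{itemize}
\item $\sum_k\lambda_k\le C\|a\|_{\ell^2}$;
\item each $n$ is bad only finitely often, since $n\in B_k$ forces $m_k\ge|a_n|$;
\item the uniform limit $g$ of the $f_k$ satisfies $|\widehat{g}(n)|\ge|a_n|$.
\end{itemize}

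Your assembly step also fails as written. De la Vall\'ee Poussin kernels have bounded $L^1$ norm only because their spectrum overshoots the block by an amount comparable to its length. So your pieces $P_j$ do not have disjoint spectra and may cancel one another.

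Avoid blocks altogether. Apply the $L^\infty$ statement to $(\rho_{|n|}a_n)\in\ell^2$, with $\rho_{|n|}\uparrow\infty$ slowly enough that this sequence stays square-summable. Then convolve with some $k\in L^1(\T)$ satisfying $|\widehat{k}(n)|\ge\rho_{|n|}^{-1}$; the Fej\'er-kernel construction earlier in these notes provides one. Now $g*k\in C(\T)$ and $|\widehat{g*k}(n)|\ge|a_n|$.
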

For a short beautiful proof of this result, which differs from its original one in \cite{deleeuw1977surles}, and involves probabilistic techniques, we refer the reader to Y. Katznelson in \cite{katznelson2004introduction}.

\section{The Beurling--Malliavin Theorem(s)}

Here we consider a line of work originating in the profound contributions of A. Beurling and P. Malliavin, which may justifiably be regarded as among the deepest developments in twentieth-century harmonic analysis. The theory is both subtle and far-reaching, and its influence is reflected, for instance, in the extensive treatment given by P. Koosis in his two volumes devoted to the subject, \cite{koosis1998logarithmic,MR1195788}.
In these notes, we shall primarily focus on the so-called Beurling--Malliavin multiplier theorem. As is customary, and for brevity, we will refer to it simply as the BM-1 theorem.

\subsection{The Beurling-Malliavin multiplier Theorem}
Let $w$ be a bounded weight function on $\R$ and $a>0$ be a real number. We say that the weight $w$ is $a$-admissible if there exists a non-trivial element $f \in L^1(\R)$ such that 
\begin{equation}\label{EQ:BMCOND}
(a.) \, \text{diam} \, \supp{\widehat{f}} < 2a, \qquad (b.) \, \abs{f(x)}\leq w(x), \qquad x \in \R.
\end{equation}
In other words, $a$-admissible weights are majorants of integrable functions in $\R$ with Fourier spectrum contained on an interval of length $2a$. If $w$ is $a$-admissible for some $a>0$, we simple refer to it as a \emph{BM-majorant}, short for Beurling--Malliavin majorant. The problem is to find "optimal" conditions of $w$ so that it becomes a BM-majorant. The first observation is that the logarithmic integrability
\[
\int_{\R} \frac{\log w(x)}{1+x^2}dx >- \infty
\]
is necessary in order for $w$ to be a BM-majorant. Indeed, this essentially a consequence of the F. and M. Riesz in the upper-half plane, which characterizes majorants of functions with semi-bounded Fourier spectra.

\begin{prop}[F. and M. Riesz Theorem] There exists a non-trivial $f \in L^1(\R)$ with semi-bounded spectrum satisfying $\abs{f}\leq w$ if and only if $\int_{\R} \log w dP >- \infty$.
\end{prop}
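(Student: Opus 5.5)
Both directions reduce to Hardy space facts on $\R$ recorded in Section 4. Since $w$ is bounded, $\log w$ is bounded above, so $\int_\R \log w\,dP>-\infty$ is the same as $\log w\in L^1(dP)$. By translating frequencies (multiplying $f$ by $e^{-iax}$, which does not change $\abs{f}$), and by complex conjugation if the spectrum is bounded above instead, I may assume "semi-bounded spectrum" means $\supp{\widehat{f}}\subseteq[0,\infty)$.

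\emph{Necessity.} Let $f\in L^1(\R)$ be non-trivial with $\abs{f}\le w$ and $\widehat{f}(\xi)=0$ for $\xi<0$. The measure $d\mu=f\,dx$ then satisfies the hypothesis of the F. and M. Riesz Theorem on $\R$. That theorem gives
\[
\int_\R \frac{\log\abs{f(x)}}{1+x^2}\,dx>-\infty .
\]
Since $\log\abs{f}\le\log w$ pointwise, the same lower bound holds for $\int_\R\log w\,dP$.

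\emph{Sufficiency.} Assume $\int_\R\log w\,dP>-\infty$. I cannot apply \thref{THM:OUTERFUNC} to $w$ directly, because $w$ need not be integrable. So I first damp it:
\[
w_0(x):=\frac{w(x)}{1+x^2}.
\]
This $w_0$ lies in $L^1(\R)$ because $w$ is bounded. It is still log-integrable, since $\int_\R \log(1+x^2)\,dP<\infty$. By \thref{THM:OUTERFUNC} with $p=1$ there is an outer $F\in H^1(\C_+)$ with $\abs{F}=w_0$ a.e. on $\R$. Its boundary function $f$ lies in $H^1(\R)$, so $\widehat{f}=0$ on $(-\infty,0)$, and $\abs{f}=w_0\le w$.

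It remains to check that $f$ is non-trivial. This holds because $\log\abs{f}=\log w_0$ is finite a.e.; if $w$ has zeros on a null set, $f$ still has positive modulus a.e.

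I expect no serious obstacle. The one point needing care is this $L^1$ damping step, since the outer function is only guaranteed to lie in $H^1$ when its modulus is integrable. If the intended reading of "spectrum bounded above" matters, the conjugate $\overline{f}$ covers that case.
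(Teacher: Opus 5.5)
Your proof is correct and follows the same route as the paper: the F.\ and M.\ Riesz theorem on $\R$ (after a frequency shift or conjugation) for necessity, and an outer function for sufficiency. The only difference is your damping $w_0=w/(1+x^2)$, and it improves on the paper's argument. The paper builds the outer function in $H^\infty(\C_+)$ with modulus exactly $w$, which need not lie in $L^1(\R)$ as the statement requires. Your outer function lies in $H^1(\C_+)$, so it directly yields an integrable $f$ with $\widehat{f}=0$ on $(-\infty,0)$.
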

\begin{proof}
First, if $w\in L^\infty(\R)$ and $\log w \in L^1(dP)$, then we can form the outer function $f \in H^\infty(\C_+)$ with $\abs{f}=w$ a.e on $\R$. Conversely, if there exists a non-trivial $f\in L^1(\R)$ with semi-bounded spectrum and such that $\abs{f} \leq w$ a.e on $\R$. Then there exists exists a translate of $f$ which agree with boundary values of an $H^1(\R)$-function a.e on $\R$. But then by the F. and M. Riesz Theorem, we have
\[
\int_{\R} \log w dP \geq \int_{\R} \log |f| dP >- \infty.
\]

\end{proof}
However, it turns out that the log-integrability of $w$ is far from sufficient in order for $w$ to be a BM-majorant. Indeed, the Paley-Wiener Theorem implies that if the Fourier transform $\widehat{f}$ is compactly supported, then $f$ defines an entire analytic function in $\C$, thus any BM-majorant $w$ cannot vanish along a sequence of real numbers which accumulate, nor can it have a zero of infinite order. 

On the other hand, it is not difficult to construct admissible majorants for measures with semi-bounded spectrum that have these properties. Roughly speaking, these observations indicate that BM-majorants cannot be zero too often, but the main result essentially asserts that the principal enemy is the oscillation of $w$. One of the cleanest forms (i.e involving a quantitative condition) of the Beurling--Malliavin multiplier Theorem can phrased as follows.

\begin{thm}[Strong BM1] \thlabel{THM:BM1FULL} Let $w$ be a bounded weight on $\R$ with $\int_{\R} \log w dP >- \infty$ and $\log w $ is uniformly continuous. Then $w$ is $a$-admissible for any $a>0$.
\end{thm}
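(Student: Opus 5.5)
The plan is to pass from the weight to its negative logarithm $\Omega:=\log(1/w)$, which we may take to be $\ge 0$ after rescaling $w$ (a BM-majorant stays one after multiplication by a constant). Our task is then to build a non-trivial entire function $F$ of exponential type $<a$ with $F\in L^1(\R)$ and $|F|\le e^{-\Omega}$ on $\R$. The Paley--Wiener Theorem then gives $\text{diam}\,\supp{\widehat{F}}<2a$. The argument has four steps: a regularization, the multiplier step, the construction of an entire function, and a final correction. We expect the multiplier step to carry all the difficulty.

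\emph{Step 1: regularization.} Since $\Omega$ is uniformly continuous, there is a Lipschitz function $\Omega_0$ with $\Omega\le\Omega_0\le\Omega+C$. One can take, for instance, the sup-convolution $\Omega_0(x)=\sup_t(\Omega(t)-L|x-t|)$, with the constant $L$ supplied by the modulus of continuity. The function $\Omega_0$ still lies in $L^1(dP)$, and replacing $\Omega$ by $\Omega_0$ only costs a multiplicative constant in the majorant. From now on we may therefore assume that $\Omega\ge0$ is Lipschitz and $\Omega\in L^1(dP)$.

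\emph{Step 2: the multiplier step (main obstacle).} We aim to find a majorant $\Omega_1\ge\Omega$, $\Omega_1\in L^1(dP)$, whose Hilbert transform has small derivative:
\[
\norm{(\widetilde{\Omega_1})'}_{L^\infty(\R)}\le a/4 .
\]
Since $\Omega$ is Lipschitz, $(\widetilde\Omega)'=\widetilde{\Omega'}$ is at least locally controlled; the danger comes from large, long-range contributions. The first attempt is a Beurling--Malliavin type stopping-time argument. We collect the maximal intervals on which the averaged slope of $\Omega$ (equivalently, of $\widetilde{\Omega}$ seen at scale $|I|$) exceeds a small multiple of $a$. Using $\Omega\in L^1(dP)$ and a covering ("long intervals") lemma, these intervals satisfy the Beurling--Malliavin-type sparseness condition $\sum_I |I|^2/(1+\dist{0}{I}^2)<\infty$. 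On each such interval we add to $\Omega$ a nonnegative tent or bump $\beta_I$ of height $\asymp a|I|$. The bump flattens the slope of $\Omega$ on that interval, and the sparseness condition keeps $\Omega_1=\Omega+\sum_I\beta_I$ in $L^1(dP)$. The Lipschitz property is used to show that outside the bad intervals the local part of $\widetilde{\Omega_1}'$ is $O(a)$. The delicate part, and the place we expect the real difficulty, is controlling the tails of the Hilbert transforms of the added bumps uniformly; we would try discrete Hilbert transform estimates there.

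\emph{Step 3: from phase to entire function.} Put $\varphi(x):=\tfrac a2x+\widetilde{\Omega_1}(x)$. By Step 2, $\varphi'\ge a/4>0$, so $\varphi$ is increasing with $\varphi'\asymp a$. Choose the points $x_k$ with $\varphi(x_k)=\pi k$; they are uniformly separated with density $\asymp a$. The outer function $G=e^{-\Omega_1-i\widetilde{\Omega_1}}\in H^\infty(\C_+)$ (cf.\ \thref{THM:OUTERFUNC}) satisfies
\[
\Im\big(e^{-iaz/2}\conj{G}\big)(x)=\pm e^{-\Omega_1(x)}\sin\varphi(x)\qquad (x\in\R),
\]
up to sign conventions. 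The sign is immaterial, since only the modulus $e^{-\Omega_1}|\sin\varphi|$ is used. This suggests taking the canonical product $S(z)=\lim_{R\to\infty}\prod_{|x_k|<R}(1-z/x_k)$ and showing that $|S(x)|\asymp e^{\Omega_1(x)-\Omega_1(0)}\cdot|\sin\varphi(x)|^{-1}\dist{x}{\{x_k\}}$ is \emph{false}. Instead, the target is the estimate
\[
|S(x)|\lesssim e^{-\Omega_1(x)}\,(1+|x|)^{2}.
\]
To prove it, we compare $\log|S|$ with the Poisson extension of $-\Omega_1$. The counting function of $\{x_k\}$ equals $\varphi/\pi$ up to $O(1)$, and $\log|S(x)|=\int\log|1-x/t|\,dn(t)$. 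After integration by parts this becomes $-\Omega_1(x)$ plus a harmonic error, which the bound on $\widetilde{\Omega_1}'$ keeps $O(\log(2+|x|))$. The growth of $S$ at complex $z$ is controlled by $\varphi'\le 3a/4$, so $S$ has exponential type at most $3a/4<a$.

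\emph{Step 4: conclusion.} Set $F(z)=S(z)\big/\big((z-x_{1})(z-x_{2})(z-x_3)\big)$, dividing out three zeros of $S$. Then $F$ is entire, of type $<a$, and by Step 3 satisfies $|F|\lesssim e^{-\Omega_1}(1+|x|)^{-1}\le e^{-\Omega}(1+|x|)^{-1}$. Since $\Omega\ge0$, this gives $F\in L^1\cap L^2(\R)$. The Paley--Wiener Theorem yields $\text{diam}\,\supp{\widehat F}<2a$, and rescaling $F$ by a constant gives $|F|\le w$. Because $a>0$ was arbitrary, $w$ is $a$-admissible for every $a$.
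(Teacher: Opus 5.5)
\textbf{The gap is Step 2.} Producing $\Omega_1\ge\Omega$ in $L^1(dP)$ with $\norm{(\widetilde{\Omega_1})'}_{L^\infty}$ small is Nazarov's Majorization Theorem \thref{THM:MAINLEMMABM}, i.e.\ the entire difficulty of the strong BM1 Theorem. The paper does not prove it; it imports it from \cite{mashreghi2006beurling}. Your stopping-time sketch does not establish it, for two reasons.

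First, the selection criterion is wrong. Your parenthetical ``equivalently'' fails: $\widetilde{\Omega}'=H(\Omega')$, and $H$ is unbounded on $L^\infty$, so a small slope of $\Omega$ does not give a small slope of $\widetilde\Omega$. A tent of slope $s$ and half-width $L$ (rounded at unit scale) has $|\widetilde{\Omega}'|\approx\frac{2s}{\pi}\log L$ near its vertex. Placing such tents with $L_j=j$ at centres $c_j=j^3$ gives an $s$-Lipschitz $\Omega\in L^1(dP)$ with $\sup|\widetilde{\Omega}'|=\infty$. Yet for every $a$ large compared with $s$, your criterion selects no interval at all. So $\Omega_1=\Omega$ misses your target precisely in the regime that should be easiest.

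Second, the tails you flag as ``delicate'' are not merely delicate. Far from a nonnegative bump, $(\widetilde{\beta_I})'$ has a fixed sign and size $\asymp a|I|^2/\dist{x}{I}^2$, so these contributions accumulate rather than cancel. The sparseness condition $\sum_I|I|^2/(1+\dist{0}{I}^2)<\infty$ is weighted from the origin and does not bound $\sup_x\sum_I|I|^2/\dist{x}{I}^2$. For example, take intervals of length $2^j$ at distance $\asymp 2^j$ from a far point $x_0$, with $2^j\le\sqrt{x_0}$. They cost $O(1/x_0)$ in the sparseness sum but contribute $\asymp a\log x_0$ at $x_0$. (Tent-shaped $\beta_I$ also create logarithmic singularities of $(\widetilde{\beta_I})'$ at their corners; dilates of a fixed smooth profile would avoid that.) The honest fix is to invoke \thref{THM:MAINLEMMABM} with parameter $a/4$, as the paper does. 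Your Step 1 regularization is fine.

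\textbf{Steps 3--4, granting Step 2.} These are a genuinely different realization of the final step from \thref{PROP:BMFINAL}, though built on the same quantization. Your zeros $x_k$, defined by $\varphi(x_k)\in\pi\mathbb{Z}$, are exactly the jump points of the integer-part function $k(x)$ there. The paper encodes them in an outer function of modulus $e^{-\widetilde u}w$ and gets the spectral condition for free from Dyakonov's Lemma \thref{LEM:ModH2a} and Paley--Wiener. You instead write the entire function down as a canonical product, so you must verify by hand what that machinery supplies. Three points are not yet right.
\begin{enumerate}
\item[(i)] Exponential type of $S$ does not follow from $\varphi'\le 3a/4$. By Lindel\"of's theorem you need $\sup_R\big|\sum_{|x_k|<R}x_k^{-1}\big|<\infty$. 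This reduces to boundedness of $\int_1^R\big(\widetilde{\Omega_1}(t)+\widetilde{\Omega_1}(-t)\big)t^{-2}\,dt$, which must be derived from $\Omega_1\in L^1(dP)$, not from the derivative bound.
\item[(ii)] The discretization term $\int\log|1-x/t|\,d(n-\varphi/\pi)(t)$ is not $O(\log(2+|x|))$; it tends to $-\infty$ at the zeros. Only an upper bound holds. Proving it needs the monotonicity of the counting function $n$ together with $\varphi'\le 3a/4$, exactly as $I_1$ is handled in \thref{PROP:BMFINAL} using that $k$ is increasing.
\item[(iii)] In Step 4, $|F|\lesssim(1+|x|)^{-1}$ gives $F\in L^2(\R)$ but not $F\in L^1(\R)$, which the definition of $a$-admissibility requires. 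Divide out a fourth zero.
\end{enumerate}
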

In the work of V. Havin, J. Mashreghi, F. Nazarov in \cite{mashreghi2006beurling} the BM-theorem is phrased under the weaker and more quantitative assumption that 
\[
\sup_{|x-y|\leq 1} \frac{w(x)}{w(y)} < \infty.
\]
This roughly asserts that $\log w$ has uniformly bounded oscillation on intervals of fixed length. In particular, the condition is satisfied if $\log w$ is uniformly continuous in $\R$. More commonly, the BM-Theorem is stated under the hypothesis that $\log w$ is Lipschitz continuous on $\R$. However, \thref{THM:BM1FULL} can readily be deduced from that assumption, by means of smoothing the majorant with an appropriate approximate of the identity. 

It is well-known that the BM1 Theorems are close to sharp in several respects, and it is therefore not completely surprising that their proofs are quite difficult and long, despite both joint and individual efforts of several mathematicians, see \cite{havinbook,mashreghi2006beurling,koosis1998logarithmic} and references therein. In this section, we shall prove certain milder forms of the BM1-Theorem, under other slightly different regularity conditions on $w$, and briefly explain how the strong BM1 Theorem can be attained from its milder counterparts.

 
\subsection{Non-admissible BM-majorants and a mild BM1 Theorem}
As mentioned in the previous subsection, the Paley-Wiener Theorem infers some additional restrictions for bounded and log-integrable weights to be admissible BM-majorants. Our main objective in this subsection is to prove a mild version of the BM1-Theorem, under the hypothesis that we remove the ability for $w$ to oscillate. In order to increase the readers appetite for these results, we shall strategically first examine two instructive scenarios that illustrate how certain sophisticated oscillatory behavior of $w$ can prevent it from being a $BM$-majorant. Our first example involves a weight which decreases way too fast at infinity. 

\begin{example}[Rapid unilateral decay at infinity] Fix a positive integer $n\geq 1$ and let $(t^n_j)_{j\geq 1}$ be an enumeration of the dyadic points $\pi k 2^{-n}$ for integers $k\geq n2^n$ located on the positive real-axis $(0,\infty)$. Pick any increasing positive function $\theta$ on $[0,\infty)$ with 
\[
\sum_{j=1}^{\infty} \frac{\theta(j)}{j^2} = +\infty.
\]
Define a bounded weight $w$ on $\R$ which interpolates the values
\[
w(t^n_j) = e^{-\theta(j)}, \qquad j=1,2,3,\dots
\]
and such that the graph of $w$ looks like a zig-zag function with spikes at the points $(t^n_j)$, in such a way that the we maintain finite logarithmic integral:
\[
\int_{\R} \log w \, dP >- \infty.
\]
We leave it to the reader so verify that such a construction is possible, recalling that we have the freedom to even further specify $\theta$. Now suppose $f \in L^1(\R)$ with the properties that $\supp{\widehat{f}}\subseteq [-2^n, 2^n]$ and such that $\abs{f} \leq W$ a.e on $\R$. Then $\widehat{f}$ is a continuous compactly supported function on $\R$, thus $f$ is also continuous on $\R$. In particular, this implies that $\mu \in M(\R)$ defined by $d\mu = \widehat{f}dx$ satisfies 
\[
 \text{diam} \, \supp{\mu} < 2^{n+1}, \qquad \widehat{\mu}(\pi t^n_j) = \mathcal{O}\left(e^{-\theta(j)}\right), \qquad j \to \infty.
\]
By the Cartwright--Levinson Theorem, we must have that $\mu \equiv 0$, hence $w$ cannot be $2^n$-admissible. However, nothing forbids $w$ from being $a$-admissible for some $a> 2^n$. This example illustrates that $BM$-majorants may suddenly fail to be $a$-admissible at smaller scales of $a>0$.
\end{example}

Our next example involves the construction of bounded continuous weights with wild oscillations, yet are log-integrable wrt $dP$, but 
are non-admissible at any scale.

\begin{example}[Wild oscillations] We shall construct a positive bounded continuous weight $w$ on $\R$ with $\log w \in L^1(dP)$, which is not a BM-majorant. We primarily note that if $f\in L^1(\R)$ with $\supp{\widehat{f}}\subseteq[-a,a]$, then $f' \in L^\infty(\R)$ with
\[
\abs{f'(x)} \leq \int_{-a}^{a} |\xi| \abs{\widehat{f}(\xi)} d\xi \leq a \norm{\widehat{f}}_{L^1} =: L(a) \qquad x \in \R,
\]
hence $f$ is uniformly Lipschitz on $\R$. Furthermore, if we also assume that $\abs{f}\leq w$ on $\R$, we see that on each interval $[x_k,x_{k+1}] \subset(0, \infty)$ we have
\[
\abs{f(x)} \leq \abs{f(x_k)} + L(a)(x_{k+1}-x_k) \leq w(x_k) + L(a) \ell_k, \qquad x\in [x_k, x_{k+1}]
\]
where $\ell_k := x_{k+1}-x_k>0$. With observation at hand, we get
\[
\int_{x_k}^{x_{k+1}} \frac{\log \abs{f(x)}}{x^2} dx \leq \log \left( w(x_k) + L(a) \ell_k \right) \left( \frac{1}{x_k}- \frac{1}{x_{k+1}} \right) \leq \log \left( w(x_k) + L(a) \ell_k \right) \frac{\ell_k}{x^2_k}.
\]
Expressing these requirements in terms of $(\ell_k)_k$, we see that they point towards the strategy of finding a positive numbers $(\ell_k)_k$ satisfying
\[
0<\ell_k \leq 1, \qquad \sum_k \ell_k = +\infty, \qquad \sum_k \frac{\ell_k \log \ell_k}{\left( \sum_{j=1}^{k} \ell_j \right)^2 } = -\infty.
\]
For instance, one easily checks that $\ell_k = 1/k$ does the job. We now define an even positive bound continuous weight $w$ on $\R$ with $w(x_k)= \ell_k$ and modify the rest of its values in $\R$ so that $\int_{\R} \log w dP>-\infty$. One should think $w$ being very oscillatory, whose graph roughly illustrates a landscape of peaks and hills. It follows from the above observations that any $f\in L^1(\R)$ with $\supp{\widehat{f}}\subseteq [-a,a]$ and $\abs{f}\leq w$ on $\R$ fails to be logarithmically integrable wrt $dP$, hence $f\equiv 0$ by the F. and M. Riesz Theorem. Therefore such oscillatory $w$ can never be BM-majorants, even though they are positive bounded continuous weights, logarithmically integrable wrt $dP$.

\end{example}

With the above examples in mind, we now prove that log-integrability in conjunction with complete lack of oscillation ensures admissible BM-majorants.

\begin{thm}[Mild BM1]\thlabel{THM:BM1Mild} Let $w$ be an even function on $\R$ which is decreasing on $[0,\infty)$, and satisfies $\int_{\R} \log w dP >-\infty$. Then $w$ is $a$-admissible for any $a>0$.
\end{thm}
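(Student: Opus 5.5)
Since $w$ is even and decreasing on $[0,\infty)$, the natural route is to construct an entire function of arbitrarily small exponential type whose modulus on $\R$ is dominated by $w$, using the monotonicity to replace $\log w$ by a concave-type minorant. We may assume $w\le 1$. Write $\Omega(x):=-\log w(|x|)$, which is even, non-decreasing on $[0,\infty)$, and satisfies $\int_{\R}\Omega\,dP<\infty$. The first step is to discretize. Set $\Omega_k:=\Omega(2^{k+1})$ for $k\ge 0$. Monotonicity gives $\Omega(x)\le \Omega_k$ for $2^k\le |x|\le 2^{k+1}$, and the logarithmic integral condition together with monotonicity (comparing $\Omega_k$ with the integral of $\Omega/x^2$ over $[2^{k+1},2^{k+2}]$) yields $\sum_k \Omega_k 2^{-k}<\infty$. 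This dyadic summability is the only way the hypotheses will enter.

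The second step is to build the multiplier as an infinite product of bounded entire functions of small type. For a given $a>0$, choose integers $n_k\ge 1$ (roughly $n_k\asymp \Omega_k$) and let $g_k(z):=\bigl(\frac{\sin(\tau_k z)}{\tau_k z}\bigr)^{n_k}$ with $\tau_k:=c\,2^{-k}$. Then $|g_k|\le 1$ on $\R$, and $g_k$ has type $n_k\tau_k\asymp \Omega_k 2^{-k}$. For $|x|\ge 2^{k}$ we have $|g_k(x)|\le (c\,2^{-k}|x|)^{-n_k}$. Choosing $c$ and $n_k$ suitably, for $2^{j}\le|x|\le 2^{j+1}$ the factor $g_j$ alone gives $|g_j(x)|\le e^{-\Omega_j}\le w(x)$. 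More precisely, I would take $n_k=\lceil \Omega_k/\log 2\rceil+1$, with $c$ small enough that $c\,2^{-k}|x|\ge 2$ on that range, so that $|g_k(x)|\le 2^{-n_k}\le e^{-\Omega_k}$. The remaining factors are bounded by $1$ on $\R$. Setting $G:=\prod_{k\ge K} g_k$, its type is at most $\sum_{k\ge K}n_k\tau_k$, which is smaller than $a/2$ once $K$ is large, by the dyadic summability. For $|x|<2^K$, the weight $w$ is bounded below by a positive constant there, so we can absorb this region with a constant multiple.

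The third step is to make $f\in L^1$ with compact spectrum. Put $f(x):=\varepsilon\,G(x)\bigl(\frac{\sin(ax/4)}{ax/4}\bigr)^2$. Then $f$ is entire of exponential type $<a$, and it is integrable on $\R$ because $|G|\le1$ and the squared sinc factor is integrable. By the Paley--Wiener Theorem (or its Bernstein-space variant in the proof of \thref{LEM:MaBaperp}), $\supp{\widehat f}\subseteq(-a,a)$, so $\text{diam}\,\supp{\widehat f}<2a$. Finally $|f|\le w$: on $|x|\ge 2^K$ this follows from Step 2, and on $|x|<2^K$ it follows by choosing $\varepsilon\le\min_{|x|\le 2^K}w$.

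The main obstacle I expect is convergence of the infinite product $\prod g_k$ to a nontrivial entire function of the claimed type. This needs uniform control of $\sum_k |1-g_k(z)|$ on compacts. Since $|\tau_k z|\to0$, we have $1-g_k(z)=O(n_k\tau_k^2|z|^2)$, and $\sum n_k\tau_k^2\le\sum \Omega_k 4^{-k}<\infty$, so the product converges locally uniformly and is nonvanishing near $0$. The type bound follows from $|g_k(z)|\le e^{n_k\tau_k|\Im z|}$. I would double-check the edge cases where $\Omega_k$ is small, which is where the $+1$ in $n_k$ enters, making sure it does not break summability. Since $\sum_k 2^{-k}$ converges, it does not.
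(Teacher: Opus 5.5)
Your proof is correct and takes a genuinely different route from the paper. There is one slip to fix: in Step 2 the constant $c$ must be \emph{large}, not small. For $|x|\ge 2^k$ you have $c\,2^{-k}|x|\ge c$, so taking $c=2$ gives $|\sin(\tau_k x)/(\tau_k x)|\le 1/2$. Hence $|g_k(x)|\le 2^{-n_k}\le e^{-\Omega_k}$ on all of $|x|\ge 2^k$; a small $c$ would instead make $g_k\approx 1$ on the dyadic block. The change only multiplies the total type $c\sum_{k\ge K}n_k2^{-k}$ by a fixed constant, and this still tends to $0$ as $K\to\infty$. The rest of your argument therefore goes through as written: local uniform convergence via $\sum_k n_k\tau_k^2<\infty$, the bound $|G(z)|\le e^{(\sum_{k\ge K}n_k\tau_k)|\Im z|}$, the squared sinc factor, and Paley--Wiener.

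The paper avoids any product construction. It sets $W(x)=c\,w(2x)e^{-|2x|^{1/2}}\in L^1(\R)$ and takes an outer $F\in H^1(\R)$ with $|F|=W$. Since $\widehat F$ vanishes on a half-line, $g(x)=\widehat F(a-x)\widehat F(a+x)$ is automatically supported in $[-a,a]$. The admissible function is $\widehat g$, controlled by the autocorrelation bound $\int_{\R}W(\xi-t)W(t)\,dt\le 2W(\xi/2)\|W\|_{L^1}\le w(\xi)$. There the logarithmic integral is used only to produce the outer function, and monotonicity only in this convolution estimate. No Paley--Wiener theorem is needed, because the compact spectrum is built in.

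Your argument trades the Hardy-space machinery for an explicit entire function plus the (Phragm\'en--Lindel\"of) direction of Paley--Wiener. Monotonicity enters through the dyadic discretization, and the logarithmic integral appears as $\sum_k\Omega_k2^{-k}<\infty$, which both pays for the decay on each block and bounds the exponential type. What this buys is a fully explicit, quantitative construction: a bounded entire multiplier $G$ with $G(0)=1$, $|G|\le w$ off a compact set, and type bounded by a tail of the dyadic logarithmic sum. This is close in spirit to the multiplier formulation of BM1 later in the notes. The paper's route is shorter given outer functions, and it showcases the trick of turning semi-bounded spectrum into bounded spectrum by multiplying two reflected half-line spectra.
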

\begin{proof}
Set $W(x)= cw(2x) e^{-|2x|^{1/2}}$, where $c>0$ constant to be determined momentarily. Since $\log w$ is Poisson-integrable, so is $\log W$, and the rapid-decay also ensures that $W \in L^1(\R)$. We can therefore find an outer function $F\in H^1(\R)$, such that $\abs{F}=W$ a.e on $\R$. For any given $a>0$, we may by means of multiplying $F$ with $e^{i\sigma x}\in H^\infty(\R)$ for some $\sigma>0$, we may assume that $\widehat{F}(a)\neq 0$. Now put 
\[
g(x) = \widehat{F}(a-x) \widehat{F}(a+x), \qquad x \in \R,
\]
which satisfies $g(0)=(\widehat{F})^2(a)\neq 0$, $\supp{gdm}\subseteq [-a,a]$, and $g \in C^\infty(\R)$. Now, a straightforward calculation involving the Fourier inversion formula, and the monotonicity assumption on $W$ implies
\begin{multline*}
\abs{\widehat{g}(\xi)} \leq \int_{\R} \abs{F(\xi-t)F(-t)} dt= \int_{\R} W(\xi-t) W(t) dt \\ = \left(\int_{t\leq \xi/2} + \int_{t>\xi/2}\right) W(\xi-t) W(t) dt 
\leq 2W(\xi/2)\norm{W}_{L^1} \leq w(\xi), \qquad \xi \in \R,
\end{multline*}
provided $c>0$ small enough. This proves that $w$ is $a$-admissible majorant for any given $a>0$.
\end{proof}

Note that the above proof was fairly simple and required no additional tools than the definition of an outer function. However, rather strong regularity assumptions on $W$ were imposed, which on the upside are fairly practical to verify.

\subsection{The BM-Theorem via subharmoncicity and the $\conj{\partial}$-equation}

Here, we shall prove yet another version of the Beurling--Malliavin multiplier Theorem, which is more accessible than the strong BM1--Theorem. Here, we follow a slightly different route than the proofs presented in \cite{havinbook}, and also in \cite{mashreghi2006beurling}. Instead, we shall here carry out a proof which was recently suggested in the work of A. Cohen in \cite{cohen2025fractal}, whose cental idea is attributed to J. Bourgain. Here we recall if $\mu \in M(\R)$ the Hilbert transform of $\mu$ is defined as 
\[
H(\mu)(x):= \lim_{\varepsilon \to 0+} \frac{1}{\pi}\int_{|t-x|>\varepsilon} \frac{d\mu(t)}{t-x} , \qquad x \in \R.
\]
Our principal effort in this subsection will be to prove the following Theorem.
\begin{thm}[Medium BM1] \thlabel{THM:BM1MED} Let $w$ be a bounded weight on $\R$ with $\int_{\R} \log w dP > -\infty$. If $\log w$ is uniformly Lipschitz continuous on $\R$ and if 
\[
H((\log w)') \in L^\infty(\R),
\]
then $w$ is $a$-admissible for all $a> \norm{H((\log w)')}_{L^\infty(\R)}$.
\end{thm}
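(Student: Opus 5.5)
The plan is to build an entire function of exponential type whose modulus on $\R$ is comparable to $w$, by combining the outer function of $w$ with a phase correction. Set $\Omega:=\log w$, which is Lipschitz and lies in $L^1(dP)$, and let $\widetilde{\Omega}$ denote its harmonic conjugate, so that $\Omega + i\widetilde{\Omega}$ is the boundary value of $\log O$, where $O$ is the outer function in $\C_+$ from \thref{THM:OUTERFUNC}. The hypothesis says exactly that $\widetilde{\Omega}' = H(\Omega')$ is bounded, with $b:=\norm{H(\Omega')}_{L^\infty}$. Hence the phase $\widetilde{\Omega}$ is Lipschitz with constant $b$. This is the key point: $O(x)=w(x)e^{i\widetilde{\Omega}(x)}$ behaves like a function whose local frequency $\widetilde{\Omega}'(x)$ stays inside $[-b,b]$. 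Heuristically, $e^{i\widetilde\Omega}$ is then band-limited to $[-b,b]$ up to an error, and the goal is to make this rigorous with a small loss in the band.

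\emph{Step 1: a subharmonic majorant.} Following Bourgain's idea, I will define on $\C$ the function
\[
u(x+iy) := \Omega(x) + b|y| + \text{(correction)},
\]
or more naturally take $u$ as the harmonic extension $P\Omega$ in both half-planes plus $a|y|$. I then check that, with $a>b$, one can add a small Lipschitz perturbation so that $\Delta u \ge 0$ as a measure. The jump of the normal derivative across $\R$ equals $2a - 2\,\partial_y(P\Omega)|_{y=0^+}$, and since $\partial_y P\Omega = -H(\Omega')$, it is at least $2(a-b)>0$. So $u$ is subharmonic on $\C$, with $\Delta u \ge 2(a-b)\,dx$ on $\R$, and it is Lipschitz near $\R$ because $\Omega$ is.

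\emph{Step 2: the $\conj{\partial}$-equation.} Next I produce an entire function $F$ with $\log|F| \approx u$. Following Cohen's approach, I take a smooth cutoff $\chi$ near $\R$ and a locally chosen holomorphic approximant $g=e^{h}$ with $\Re h = u$ on strips. I then solve $\conj{\partial}v = \conj{\partial}(\chi g)$ via Hörmander's $L^2$ estimate with weight $e^{-2u}$, using the strict positivity $\Delta u \ge 2(a-b)$ near $\R$ (after smoothing $u$ by mollification at unit scale). The correction gives $F=\chi g - v$ entire, with $\int |F|^2 e^{-2u}(1+|z|^2)^{-2}\,dA<\infty$. The mean-value property and the Lipschitz bound on $\Omega$ then give $|F(x)|\lesssim (1+|x|)^2 w(x)$ and $|F(z)|\lesssim e^{a|y|+C\log(2+|z|)}$ off $\R$.

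\emph{Step 3: cleanup.} Finally I remove the polynomial losses. I multiply by $\bigl(\sin(\varepsilon z)/(\varepsilon z)\bigr)^{4}$ and, if needed, by another small multiplier $\mathcal{O}((1+|x|)^{-2})$ of type $\varepsilon$, checking via the Lipschitz property that the factor lowers $w$ by at most a constant. This yields $f\in L^1(\R)$ with $|f|\le w$ and exponential type $a+4\varepsilon$. By Paley--Wiener, $\supp{\widehat f}\subseteq[-a-4\varepsilon,a+4\varepsilon]$, and since $a>b$ and $\varepsilon$ are arbitrary, every $a>b$ is admissible. Non-triviality of $f$ follows because $\chi g$ has $L^2(e^{-2u})$-mass near a point that $v$ cannot cancel; this can be arranged by adding a small point-mass term to $\Delta u$ or by prescribing $F(0)\neq0$.

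The main obstacle I expect is Step 2: choosing the local holomorphic approximant and the cutoff so that $\conj{\partial}(\chi g)$ lies in the weighted $L^2$ space, while keeping the Hörmander curvature bound. Strict subharmonicity holds only along $\R$ and not in the open half-planes, so I would first try replacing $a|y|$ by $a\sqrt{1+y^2}$ plus $\delta\log(1+|z|^2)$ to get a global positive lower bound on $\Delta u$.
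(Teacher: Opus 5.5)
Your architecture is the paper's. Step 1 is exactly its exact subharmonic solution $u=P_{|y|}\Omega+a|y|$ (no perturbation is needed once $a>b$), and it is followed by H\"ormander's $\conj{\partial}$-method.

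One point in Step 1 needs care. Your final mean-value step uses a Lipschitz bound for $u$ at every $\lambda\in\C$, not just near $\R$. That bound comes from $|\partial_y P_{|y|}\Omega|=|P_{|y|}(H(\Omega'))|$, so it uses $H(\Omega')\in L^\infty(\R)$ and not only the Lipschitz property of $\Omega$.

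The genuine gap is Step 2.
\begin{itemize}
\item A holomorphic $h$ with $\Re h=u$ on a strip containing $\R$ cannot exist, because $\Delta u\ge 2(a-b)\,dx$ on $\R$ is precisely what Step 1 arranged.
\item If you take $h$ separately on the two half-strips (e.g.\ $\log O-iaz$ above), the two branches of $g$ have equal modulus on $\R$. But their ratio is, up to a unimodular constant, $e^{2i(\widetilde{\Omega}(x)-ax)}$, which is nonconstant since $\widetilde{\Omega}'\le b<a$. So $\conj{\partial}(\chi g)$ has a singular part on $\R$ and lies in no weighted $L^2$ space.
\item Your fallback makes things worse. Replacing $a|y|$ by $a\sqrt{1+y^2}$ deletes the jump $2a\,dx$ that absorbs the possibly negative singular measure $-2H(\Omega')\,dx$ in $\Delta P_{|y|}\Omega$, so $u$ stops being subharmonic. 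In any case neither $a(1+y^2)^{-3/2}$ nor $4\delta(1+|z|^2)^{-2}$ is bounded below.
\item ``Mass that $v$ cannot cancel'' is not an argument: nothing a priori prevents $v=\chi g$.
\end{itemize}

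The missing idea, which is how the paper proceeds, is that no global approximant of $e^{u}$ is needed: use data of compact support.
\begin{itemize}
\item Take $\psi\in C_0^\infty(\C)$ with $\psi\equiv1$ near $0$. Solve $\conj{\partial}v=\conj{\partial}\psi$ in $L^2(e^{-\varphi})$ with $\varphi=2u+2\log(1+|z|^2)+2\log|z|$.
\item This weight is subharmonic on $\C$. Its smooth summand $2\log(1+|z|^2)$ already has Laplacian bounded below on the compact set $\supp{\conj{\partial}\psi}$, which is all H\"ormander's theorem requires.
\item Since $v$ is holomorphic near $0$ and $\int_{|z|<\delta}|v|^2|z|^{-2}\,dA<\infty$, we get $v(0)=0$. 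This is your point-mass device, but it must be the full $2\log|z|$; a small multiple does not force vanishing.
\item Then $F=\psi-v$ is entire with $F(0)=1$. The mean-value inequality on unit discs, together with the Lipschitz bound on $u$, gives $\log|F(\lambda)|\le u(\lambda)+C+3\log(2+|\lambda|)$ on $\C$.
\end{itemize}
A strip approximant with $\Re h=u+O(1)$ can be built from the Dirichlet problem for a strip with data $\Omega$, so your route is repairable, but it buys nothing.

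Your Step 3 then finishes correctly, and it is genuinely needed. The paper's sketch leaves this implicit: a compactly supported $\chi$ cannot be subharmonic, so polynomial losses appear, and admissibility requires $f\in L^1$. Taking $f=F\,(\sin(\varepsilon z)/(\varepsilon z))^{k}$ with $k\ge5$ gives $|f|\le C_\varepsilon w$, $f\in L^1(\R)$, $f(0)=1$ and type $a+k\varepsilon$, so Paley--Wiener applies. No extra multiplier or Lipschitz check is needed there, since $|\sin(\varepsilon x)/(\varepsilon x)|\le 1$.
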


The proof proceeds in three principal steps:
\begin{enumerate}
\item[1.)] Reformulate the problem as an obstacle-type problem in complex analysis,
\item[2.)] Solve a subharmonic version of the BM1-Theorem,
\item[3.)] Upgrade the solution using Hörmander's $\conj{\partial}$-method.
\end{enumerate}
In the present setting, the first step is straightforward, owing to the availability of the Paley–Wiener Theorem. However, in attempting to extend the Beurling--Malliavin Theorem to contexts where no Theorem of Paley–Wiener-type is available, one encounters a substantial obstruction. This roughly explain why such generalizations are notoriously difficult beyond the classical framework of functions of finite bandwidth. We return to this point in the subsection on further discussions.
Once the problem has been recast via the Paley–Wiener Theorem, the second step becomes almost canonical. Variants of this approach already appear in \cite{koosis1998logarithmic}. Having established a subharmonic version of the BM1-Theorem, it is then natural to seek a holomorphic solution via $\conj{\partial}$-methods. The implementation of Hörmander’s $\conj{\partial}$-theory, however, requires some care and constitutes the main technical difficulty.

\proofpart{1}{Reformulation with Paley-Wiener:}
To thid end, let us invoke the Paley-Wiener Theorem, in order to reformulate the BM1-Theorem. For any number $a>0$, we would like to find an entire analytic function $f_a$ which satisfies the following properties:
\begin{enumerate}
    \item[(i)] $\abs{f_a} \leq w$ on $\R$,
    \item[(ii)] $\abs{f_a(x+iy)} \leq C e^{a|y|}$ for some $C>0$ and all $x,y \in \R$,
    \item[(iii)] $f_a(0)\neq 0$.
\end{enumerate}
Note that the third condition ensures that $f_a$ is non-trivial, and is indeed by a simple translation argument seen to be both necessary and sufficient. Technically, the first condition only needs to hold a.e on $\R$, but since $f_a$ is continuous and we can apply a convolution of $w$ with an appropriate smooth approximate of the identity, in order to pass to a smooth weight $w$, which is pointwise comparable to $w$. 

\proofpart{2}{A subharmonic $BM$-Theorem:}
Note that the above growth conditions on $f_a$ readily translate to the corresponding subharmonic function $u_a := \log \abs{f_a}$ on $\C$. This makes it rather natural to consider the following weaker problem on subharmonic functions, often referred to as the subharmonic BM-problem. More precisely, we would like to find a subharmonic function $u_a$ on $\C$ with the following properties:
\begin{enumerate}
    \item[(i)] $\abs{u_a} \leq \log w$ on $\R$,
    \item[(ii)] $\abs{u_a(x+iy)} \leq C + a|y|$ for some $C>0$ and all $x,y \in \R$,
    \item[(iii)] $u_a(0)>-\infty$.
\end{enumerate}
It turns out that this problem can be solved rather exactly, under certain additional regularity assumption(s) on $w$.

\begin{prop}[Exact subharmonic BM-Problem] \thlabel{PROP:SUBHARBM} Let $w$ be a bounded weight on $\R$ with $\int_{\R} \log w dP >- \infty$. Assume that $\log w$ is Lipschitz continuous on $\R$ and $H((\log w)')\in L^\infty(\R)$. Then for any $a>\norm{H((\log w)')}_{L^\infty}$, there exists a subharmonic function $u_a$ on $\C$, which is Lipschitz continuous in $\C$, and satisfies  
\[
u \lvert_{\R} = \log w, \qquad u(x+iy) \leq C + a|y|, \qquad x,y\in \R.
\]
    
\end{prop}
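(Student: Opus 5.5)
The natural candidate is to build $u$ from the Poisson extension of $\log w$ plus a correction that is linear in $|y|$. Set $\phi := \log w$, which is Lipschitz, bounded above, and lies in $L^1(dP)$. Let $P\phi$ denote its Poisson extension to $\C_+$, and reflect it evenly: put
\[
v(x+iy) := P\phi(x+i|y|).
\]
Then $v$ is harmonic off $\R$, continuous on $\C$, equal to $\phi$ on $\R$, and Lipschitz, because Poisson extension preserves the Lipschitz constant. The even reflection creates a kink along $\R$, so $\Delta v$ is a measure carried by $\R$. By Green's formula its density is the jump in normal derivatives,
\[
\Delta v = 2\,\partial_y P\phi(x+i0^+)\,dx .
\]
Since $\partial_y P\phi = P(\partial_y \cdot)$ and the Cauchy--Riemann relations for $P\phi + iQ\phi$ give $\partial_y P\phi = -\partial_x Q\phi$, the boundary value of $\partial_y P\phi$ equals $\mp H(\phi')$, the sign depending on the normalisation of $H$. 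The hypothesis $\|H(\phi')\|_{L^\infty} =: b < a$ therefore yields $\Delta v \geq -2b\,dx$ on $\R$.

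To remove the negative part, define
\[
u(x+iy) := v(x+iy) + a|y|.
\]
The function $a|y|$ is convex, and $\Delta(a|y|) = 2a\,dx$ on $\R$. Hence $\Delta u \geq 2(a-b)\,dx \geq 0$ in the sense of distributions, and $u$, being continuous, is subharmonic. Clearly $u|_{\R} = \phi = \log w$, and $u$ is Lipschitz as a sum of Lipschitz functions.

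For the growth bound, boundedness of $w$ gives $\phi \leq \log\|w\|_\infty$, so $P\phi \leq \log\|w\|_\infty$ and hence $u(x+iy) \leq C + a|y|$. Conditions (i)--(iii) then hold: $u(0) = \log w(0) > -\infty$ takes care of (iii).

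The main obstacle is justifying the identity $\Delta v = 2\,\partial_y P\phi(\cdot + i0)\,dx$, together with the identification of $\partial_y P\phi$ with a Hilbert transform of $\phi'$, when $\phi$ is only Lipschitz and merely $P$-integrable rather than bounded. I would handle this in three steps.
\begin{enumerate}
\item Since $\phi' \in L^\infty$, $P(\phi')$ and its conjugate are well defined. Work with $\partial_x P\phi = P(\phi')$ and use the harmonic-conjugate relation on each horizontal line $y > 0$. This gives $\partial_y P\phi(\cdot + iy) = -Q(\phi')(\cdot + iy)$, which is bounded by $b$ uniformly in $y$ (modulo constants; the $H\phi'$ is only BMO-defined up to constants, but here it is assumed $L^\infty$).
\item Test $\Delta v$ against $\psi \in C_0^\infty(\C)$. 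Integrate by parts on $\{|y| > \varepsilon\}$ and let $\varepsilon \to 0$. Weak-star convergence of $\partial_y P\phi(\cdot + i\varepsilon)$ to its boundary value, which the uniform bound from the first step permits, yields the claimed measure.
\item If the constant ambiguity in $H(\phi')$ causes trouble, absorb it by adding a linear term $cx$. This is harmonic, but it would spoil $u|_{\R} = \log w$, so instead I would check directly that the normalisation of $Q$ vanishing at $i$ is the correct one.
\end{enumerate}
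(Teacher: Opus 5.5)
Your construction is the paper's own: the even reflection of the Poisson extension of $\phi=\log w$, plus $a|y|$. In both, $\Delta u$ is twice the jump of $\partial_y$ across $\R$, identified with a Hilbert transform through Cauchy--Riemann.

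The gap is in how you resolve the obstacle you flagged. Your step 1 suggests that in step 3 you mean the conjugate Poisson integral $Q_i(\phi')$ of $\phi'$, normalised by $Q_i(\phi')(i)=0$. The check you propose would then fail. No normalisation chosen from $\phi'$ alone can be right, because the correct constant depends on $\phi$. Integrate by parts in the Schwarz integral of $\phi$. The kernel $\frac{1}{z-t}+\frac{t}{1+t^2}$ is $O(t^{-2})$ and $\phi$ grows at most linearly, so the boundary terms vanish, and you get
\[
\partial_y P\phi=-Q_i(\phi')-\kappa,\qquad \kappa=\frac{1}{\pi}\int_{\R}\frac{1-t^2}{(1+t^2)^2}\,\phi(t)\,dt ,
\]
with $\kappa\neq 0$ in general.

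Take $w=(1+x^2)^{-1}$, so $\phi=-\log(1+x^2)$:
\begin{itemize}
\item $P\phi=-\log|z+i|^2$, hence $\partial_yP\phi(x+i0^+)=-\frac{2}{1+x^2}$.
\item $Q_i(\phi')$ has boundary values $\frac{2}{1+x^2}-1$, of sup norm $1$; here $\kappa=1$.
\item Under your reading every $a>1$ would be allowed. Yet for $a\in(1,2)$ your $u$ has $\Delta u=2\big(a-\frac{2}{1+x^2}\big)dx<0$ near $0$.
\item In fact no admissible $u$ exists. Phragm\'en--Lindel\"of in each half-plane forces $u(x+iy)\le P\phi(x+i|y|)+a|y|$, and the circle means of the right side about $0$ are $\frac{2r}{\pi}(a-2)+O(r^2)<0=u(0)$.
\end{itemize}

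The fix is not to conjugate $P(\phi')$ at all. Build $P\phi+iQ\phi$ from $\phi$ itself; the Schwarz integral converges because $\phi\in L^1(dP)$. Then $\partial_yP\phi=-\partial_xQ\phi$ holds exactly. Writing $H\phi$ for the boundary values of $Q\phi$, this equals $-P\big((H\phi)'\big)$, with boundary values $-(H\phi)'$. The hypothesis must therefore be read as $\|(H\log w)'\|_{L^\infty}<a$, which is what the paper's Cauchy--Riemann step tacitly uses. The additive constant in $H\log w$ disappears under differentiation, so nothing is left to normalise.

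A smaller error: Poisson extension does not preserve Lipschitz bounds in the vertical direction. For $\phi(x)=\min(|x|,1)$ one gets $P\phi(iy)=\frac{2}{\pi}\,y\log\frac{1}{y}+O(y)$, so $\partial_yP\phi$ is unbounded near $0$. Only $\partial_xP\phi=P(\phi')$ is automatically bounded. The bound $|\partial_yP\phi|\le\|(H\phi)'\|_{L^\infty}$ is a second use of the hypothesis, exactly as in the paper. Your step 1 already supplies it, so cite that instead of the general claim.
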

\begin{proof}
The proof is essentially a matter of verification, once a suitable candidate for $u_a$ has been found. Indeed, we shall look for a solutions of the form
\[
u(z):= P_{|y|}(\log w)(x) + C |y| = \int_{\R} \log w(t) \frac{|y|}{(x-t)^2 + y^2} \frac{dt}{\pi} + C|y|, \qquad z= x+iy \in \C,
\]
where $P_y$ denotes the Poisson kernel in $\C_{\pm}$ and $C>0$ is a constant to be determined later. Note that $P_{|y|}(\log w)(x)$ is well-defined since $\log w\in L^1(dP)$, and it is harmonic in $\C \setminus \R$. The perturbed term $C|y|$ is also trivially harmonic in $\C \setminus \R$, hence $u$ is harmonic on $\C \setminus \R$. Furthermore, since $P_{|y|}(\log w)(x)$ is symmetric about the real line, extends continuously to $\C$ and equals to $\log w$ on $\R$, we conclude that $u$ is continuous on $\C$. We can therefore define its Laplacian in the sense of distributions on $\C$. Note that $\supp{ \Delta u} \subseteq \R$. Now pairing with test functions and applying Green's Theorem, one can show that
\[
\Delta u = 2\left(\partial_y P_{|y|}(\log w) \lvert_{y=0+}\, + \, C \right) \delta_{\R}.
\]
A noteworthy remark related to term appearing above is that the map 
\[
\omega \mapsto \partial_y E(\omega) \lvert_{y=0+}
\]
is in the language of PDE typically refereed to as the Dirichlet-to-Neumann operator on $\C \setminus \R$, as it transforms Dirichlet data for the harmonic extensions to Neumann data. Moving forward, we simply utilize the Cauchy Riemann equations in order to rewrite
\begin{equation}\label{EQ:IBPCR}
\partial_y E(\log w) \lvert_{y=0+} = - H ((\log w)')(x).
\end{equation}
Therefore, we obtain
\[
\Delta u = 2 \left( - H((\log w)') + C \right) \delta_{\R}.
\]
Now this where the subtle assumption $H((\log w)') \in L^\infty(\R)$ enters the picture, allowing us to choose the number $C>0$ so that it exceed its supremum norm, and thus ensuring that $u$ is subharmonic on $\C$ with 
\[
u=\log w \ \, \text{on}  \, \, \R \qquad \text{and} \qquad  u(x+iy)\leq a|y| + A, \qquad x,y\in \R,
\]
whenever $a$ exceeds the supremum norm of $H((\log w)')$ in $\R$, and with $A= \sup_{\R}\log w$, since $w$ is assumed to be bounded. To see that $u$ is Lipschitz, it suffices to show that the partial derivatives 
\[
\partial_x P_{|y|}(\log w)(x),\, \, \partial_y  P_{|y|}(\log w)(x) \in L^\infty(\C).
\]
To this end, an integration by parts yields
\[
\partial_x P_{|y|}(\log w)(x)= P_{|y|}\left( (\log w)'\right)(x), \qquad z=x+iy\in \C
\]
while the same calculation as in \eqref{EQ:IBPCR} shows
\[
\partial_y P_{|y|}(\log w)(x) = - P_{|y|}\left( H((\log w)') \right)(x), \qquad z=x+iy\in \C.
\]
Since both $(\log w)'$ and its Hilbert transform $ H((\log w)')$ belong to $L^\infty(\R)$, it follows that $E(\log w)$ has uniformly bounded gradient, thus $u$ is uniformly Lipschitz continuous in $\C$.
\end{proof}

Let us now gather a few remarks. The good thing with the above approach is that we could actually solve the subharmonic BM-problem exactly, using a fairly explicit ansatz, instead of merely finding a sub-solution: $u\leq \log w$ on $\R$. However, a drawback is that we had to crucially rely on the rather obscure and impractical additional assumption $H((\log w)')\in L^\infty(\R)$, but which appears rather intrinsically. In classical works of Beurling and Malliavin \cite{beurling1962fourier}, and Koosis \cite{koosis1998logarithmic}, a sub-solution to the subharmonic BM-problem was found. Beurling--Malliavin used a variational argument similar to the solution of the Dirichlet problem, while Koosis developed a Perron-type method to exhibit a sub-solution. In the next subsection, we shall provide a different proof of the Medium BM1-Theorem following the work of Havin-Mashreghi-Nazarov in \cite{mashreghi2006beurling}, and see that the same condition $H((\log w)')\in L^\infty(\R)$ enters the picture, but from a different perspective. 

\proofpart{3}{Upgrading subharmonic BM1:}
Having solved a subharmonic BM-problem, it now essentially remains to find an entire analytic function $f_a$ such that $\log \abs{f_a} \leq u_a + C$, on $\C$ with $f_a(0)=1$, say. Historically, problems of this type have been approached by looking for $f$ expressed as am infinite Weierstrass product formula over its zeros, which gives
\[
\log \abs{f(x)} = \sum_k c_k\log (a_k- z), \qquad z\in \C.
\]
Now since our $u$ already satisfies $\Delta u = \mu$ with data $\mu = (-H((\log w)') + C) \delta_{\R}$, we can write 
\[
u(z) = \int_{\R} \log |t-z| d\mu(t), \qquad z\in \C.
\]
These observations suggest that one may attempt to approximate $\mu$ by a sum of Dirac deltas. Indeed, it turns out that one can carry out such an approach in detail, as was done in \cite{?}.

However, we shall here instead follow a different route which was suggested by J. Bourgain, mentioned in \cite{cohen2025fractal}. The proof will instead rely on the following solution to the $\conj{\partial}$-equation in weighted $L^2$-spaces by L. H\"ormander. 

\begin{thm}[H\"ormander] \thlabel{THM:HÖRM} Let $f \in L^2_{loc}(\C)$ and let $\varphi: \C \to \R$ be a continuous function which is strictly subharmonic function in $\supp{f}$, and satisfies
\[
A:=\int_{\C} \abs{f(z)}^2 \frac{e^{-\varphi(z)}}{\Delta \varphi(z)} dA(z) < \infty.
\]
Then there exists $F \in L^2_{loc}(\C)$ which solves $\conj{\partial} F= f$ in distributional sense on $\C$, and satisfies 
\[
\int_{\C} \abs{F(z)}^2 e^{-\varphi(z)} dA(z) \leq 4A.
\]
\end{thm}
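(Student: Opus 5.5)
The plan is to prove Hörmander's estimate by Hilbert space duality: first establish an a priori $L^2$-estimate for the formal adjoint of $\conj{\partial}$, then obtain $F$ from the Riesz representation theorem. Throughout I work in $L^2(e^{-\varphi}dA)$ and first assume $\varphi\in C^2(\C)$ with $\Delta\varphi>0$ on a neighbourhood of $\supp{f}$. The general continuous case is handled at the end by mollification, $\varphi_\varepsilon=\varphi\ast\chi_\varepsilon$, which stays subharmonic, together with a weak-compactness limit.

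For a test function $\phi\in C_0^\infty(\C)$, the adjoint of $\conj{\partial}$ in the weighted pairing $\langle g,h\rangle_\varphi=\int g\conj{h}e^{-\varphi}dA$ is
\[
\vartheta\phi=-e^{\varphi}\partial(e^{-\varphi}\phi)=-\partial\phi+(\partial\varphi)\phi .
\]
The key identity, proved by integrating by parts twice and using the commutator $[\conj{\partial},\vartheta]\phi=(\partial\conj{\partial}\varphi)\phi=\tfrac14(\Delta\varphi)\phi$, is
\[
\int_{\C}\abs{\vartheta\phi}^2e^{-\varphi}dA=\int_{\C}\abs{\conj{\partial}\phi}^2e^{-\varphi}dA+\frac14\int_{\C}\Delta\varphi\,\abs{\phi}^2e^{-\varphi}dA .
\]
This is the same commutator mechanism as in the Heisenberg proof earlier in the notes. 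Dropping the first term on the right, and using $\Delta\varphi\ge 0$ everywhere, gives
\[
\int_{\C}\Delta\varphi\,\abs{\phi}^2e^{-\varphi}\leq 4\norm{\vartheta\phi}^2_{\varphi}.
\]

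Next define the functional $\ell(\vartheta\phi):=\langle f,\phi\rangle_\varphi$ on the subspace $\{\vartheta\phi:\phi\in C_0^\infty\}$. By Cauchy--Schwarz, splitting $f=(f/\sqrt{\Delta\varphi})\cdot\sqrt{\Delta\varphi}$ on $\supp{f}$, we get
\[
\abs{\ell(\vartheta\phi)}^2\le A\int\Delta\varphi\abs{\phi}^2e^{-\varphi}\le 4A\norm{\vartheta\phi}_\varphi^2 .
\]
Hence $\ell$ is well defined and bounded with norm at most $2\sqrt{A}$. Extend $\ell$ by zero on the orthogonal complement, and use the Riesz representation theorem to find $F\in L^2(e^{-\varphi})$ with $\norm{F}_\varphi^2\le 4A$ and $\langle F,\vartheta\phi\rangle_\varphi=\langle f,\phi\rangle_\varphi$ for all test functions $\phi$. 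Unwinding the definition of $\vartheta$, this says exactly that $\conj{\partial}F=f$ in the sense of distributions. Since $\varphi$ is continuous, $e^{-\varphi}$ is locally bounded below, so $F\in L^2_{loc}$.

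The main obstacle is the approximation step when $\varphi$ is merely continuous and the strict subharmonicity holds only on $\supp{f}$. Here $\Delta\varphi$ is a measure, and $A$ should be read with its absolutely continuous density. For the mollified weights $\varphi_\varepsilon$, I would verify that
\[
\limsup_{\varepsilon\to0}\int\abs{f}^2e^{-\varphi_\varepsilon}/\Delta\varphi_\varepsilon\le A.
\]
The factor $e^{-\varphi_\varepsilon}$ converges locally uniformly, but the lower semicontinuity of $1/\Delta\varphi_\varepsilon$ is delicate. If needed, I would first truncate $f$ to compact sets and replace $\varphi$ by $\varphi+\delta\abs{z}^2$, which is harmless in the limit $\delta\to0$. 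With that bound in hand, the solutions $F_\varepsilon$ are uniformly bounded in $L^2_{loc}$, so a weakly convergent subsequence exists, and its limit solves $\conj{\partial}F=f$ with the stated bound by Fatou's lemma.
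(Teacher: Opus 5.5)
The notes give no proof of this theorem (they refer to Berndtsson's lecture notes), and your argument is the standard H\"ormander duality proof. The smooth case is correct as written: the weighted adjoint $\vartheta\phi=-\partial\phi+(\partial\varphi)\phi$, the commutator identity, the bound $\abs{\ell(\vartheta\phi)}\le 2\sqrt{A}\,\norm{\vartheta\phi}_\varphi$, and the constant $4$ coming from $\partial\conj{\partial}=\tfrac14\Delta$ all check out.

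What you must make explicit is that you use a hypothesis which the statement, read literally, does not contain: that $\varphi$ is subharmonic on all of $\C$. You need $\Delta\varphi\ge 0$ off $\supp{f}$ to pass from $\int_{\supp{f}}\Delta\varphi\abs{\phi}^2e^{-\varphi}\,dA$ to $4\norm{\vartheta\phi}_\varphi^2$. You use it again when you say the mollified weights stay subharmonic.

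This hypothesis cannot be dropped. Take $f=1_{\D}$ and the continuous weight $\varphi=\min(\abs{z}^2,4)$. It is strictly subharmonic on a neighbourhood of $\supp{f}=\conj{\D}$, and $A=\frac{\pi}{4}(1-e^{-1})$. Every distributional solution of $\conj{\partial}F=f$ has the form $F=\mathcal{C}f+h$ with $h$ entire, where $\mathcal{C}f(z)=\frac1\pi\int_{\D}\frac{dA(w)}{z-w}=\frac1z$ for $\abs{z}>1$. By orthogonality of $e^{-i\theta}$ to $e^{in\theta}$, $n\ge0$, on circles,
\[
\int_{\C}\abs{F}^2e^{-\varphi}\,dA\;\ge\;e^{-4}\int_{\abs{z}>2}\frac{dA(z)}{\abs{z}^2}=+\infty .
\]
So the result you prove (subharmonic on $\C$, strictly so on $\supp{f}$) is the correct version, and you should state it that way. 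This is also relevant for the notes. The weight $u+\chi$ with $\chi$ compactly supported, used later to upgrade the subharmonic solution, fails to be subharmonic wherever $\Delta\chi<0$ off $\R$. Such points exist, since $\int_{\C}\Delta\chi\,dA=0$.

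Your approximation step is only sketched, but the remedy you mention does close it, and the reason is worth recording. Truncate $f$ to a compact set $K$, write $\Delta\varphi=g\,dA+\mu_s$ with $\mu_s\ge 0$, and use the weights $\varphi_\varepsilon+\delta\abs{z}^2$. Then:
\begin{enumerate}
\item[(a)] $\Delta(\varphi_\varepsilon+\delta\abs{z}^2)\ge g\ast\chi_\varepsilon+4\delta$;
\item[(b)] $g\ast\chi_\varepsilon\to g$ a.e.\ by Lebesgue differentiation;
\item[(c)] $e^{-\varphi_\varepsilon}\to e^{-\varphi}$ uniformly on $K$.
\end{enumerate}
Dominated convergence with majorant $C_K\abs{f}^2 1_K/\delta$ then gives
\[
\limsup_{\varepsilon\to0}\int_{\C}\frac{\abs{f}^2e^{-\varphi_\varepsilon-\delta\abs{z}^2}}{\Delta\varphi_\varepsilon+4\delta}\,dA\;\le\;\int_{\C}\frac{\abs{f}^2e^{-\varphi}}{g+4\delta}\,dA\;\le\;A .
\]
For the solutions themselves, pass to the limit using weak lower semicontinuity of $\int_{K'}\abs{\cdot}^2e^{-\varphi}\,dA$ on compact sets $K'$. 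Combine this with $\sup_{K'}\abs{\varphi_\varepsilon-\varphi}\to0$, which is where continuity of $\varphi$ enters. Then let $\delta\to0$ and $K'\uparrow\C$, and remove the truncation by the same weak-compactness argument.
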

A surprising fact is that the above result seems to have been noticed first only in several complex variables. For a detailed proof, and additional historical remarks on H\"ormander's Theorem, we refer the reader to the excellent notes of B. Berndtsson \cite{berndtsson1995l2}.

We are now ready to solve the final piece of the problem.

\begin{prop}[Upgrading to an analytic solution] \thlabel{PROP:ANSUB} 
Let $u: \C \to \R$ be a non-trivial subharmonic function satisfying the hypotheses of \thref{PROP:SUBHARBM}. Then there exist a constant $A>0$ and an entire function $f$ such that 
\[
\log |f(z)| \le A + u(z), \qquad z \in \C, \qquad f(0)=1.
\]
\end{prop}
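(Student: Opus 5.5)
The plan is to follow Bourgain's $\conj{\partial}$ scheme. First build a smooth function $G$ with $|G|\lesssim e^{u}$ that is holomorphic off a horizontal strip. Then correct it with H\"ormander's \thref{THM:HÖRM}, and turn the resulting weighted $L^2$ bound into a pointwise one using the Lipschitz continuity of $u$. The polynomial loss this produces will be removed by forcing zeros and dividing them out.

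\textbf{Step 1: the approximate solution.} From \thref{PROP:SUBHARBM}, $u$ is harmonic in $\C_+$ and in $\C_-$, namely $u=P_{|y|}(\log w)+C|y|$. So $u=\Re H_\pm$ with $H_\pm$ holomorphic in $\C_\pm$. Near the real line I cover the strip $S=\{|y|<2\}$ by unit discs $D_j$. On each $3D_j$, the Lipschitz bound makes $u$ differ by at most a fixed constant $L$ from the Poisson integral $h_j$ of $u$ on $\partial(3D_j)$. I write $h_j=\Re H_j$. Let $\{\chi_+,\chi_-,\chi_j\}$ be a smooth partition of unity with $|\conj\partial\chi|\lesssim 1$, where $\chi_\pm$ live in $\{\pm y>1\}$ and the $\chi_j$ live in $S$, and set
\[
G=\chi_+e^{H_+}+\chi_-e^{H_-}+\sum_j\chi_je^{H_j}.
\]
Then $|G|\le e^{L}e^{u}$. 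Moreover $\conj\partial G=\sum\conj\partial\chi\,e^{H}$ is supported in $S$ with $|\conj\partial G|\lesssim e^{u}$. Normalizing the $H$'s by imaginary constants, I also arrange $G\equiv 1$ near $0$; this uses $u(0)=\log w(0)$ and a constant rescaling absorbed into $A$.

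\textbf{Step 2: H\"ormander correction with forced zeros.} Fix points $z_1,\dots,z_M$ in $\{y>3\}$ where $\conj\partial G=0$, and also the point $0$. I take the weight
\[
\varphi=2u+2N\log(1+|z|^2)+2\log\tfrac{|z|^2}{1+|z|^2}+2\sum_{k=1}^M\log\tfrac{|z-z_k|^2}{1+|z|^2}+2\delta\,\tilde s(z),
\]
where $\tilde s$ is a bounded-gradient subharmonic function with $\Delta\tilde s\ge 1$ on $S$. A candidate is $\tilde s=\sqrt{1+y^2}$, or the unit-disc average of $u$, which satisfies $\Delta\ge c\,(C-\|H((\log w)')\|_\infty)>0$ on $|y|<1$ and stays within $L$ of $u$.

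The logarithmic terms contribute point masses, plus $-2(M+1)\Delta\log(1+|z|^2)$. I need
\[
\Delta\varphi\gtrsim (1+|z|)^{-4}\quad\text{on } S,
\]
and $\varphi$ subharmonic everywhere; both hold if $N>M+1$, or if the strip positivity from $\delta\tilde s$ dominates on $S$.

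With $\conj\partial G$ supported in $S$, the H\"ormander constant is
\[
\int_S e^{2u}e^{-\varphi}(\Delta\varphi)^{-1}\,dA\lesssim\int_\R(1+|x|)^{4-2(N-M-1)}\,dx<\infty
\]
once $N$ is large. This yields $F$ with $\conj\partial F=\conj\partial G$ and $\int|F|^2e^{-\varphi}<\infty$. Non-integrability of $|z|^{-2}$ and $|z-z_k|^{-2}$ forces $F(0)=0$ and $F(z_k)=0$, since $F$ is holomorphic near those points. Put $f_0=G-F$, which is entire with $f_0(0)=1$.

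The sub-mean-value property on $D(z,1)$, combined with $|u(\zeta)-u(z)|\le L$ there, gives
\[
|f_0(z)|\lesssim e^{u(z)}(1+|z|)^{N'}e^{\delta|y|}
\]
for an explicit $N'$.

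\textbf{Step 3: removing the overshoot.} Let $Q(z)=\prod_k(1-z/z_k)$. The aim is to choose $M\ge N'$ and to make $f_0$ vanish at the $z_k$, so that $f=f_0/Q$ is entire with $f(0)=1$. For that I replace $G$ by $G\cdot R$, where $R$ is a fixed rational factor holomorphic off the $z_k$, bounded at infinity, and having simple zeros at the $z_k$. Then $f_0$ vanishes at the $z_k$, and $|Q(z)|\gtrsim(1+|z|)^{M}$ outside discs around the $z_k$, with the maximum principle on those discs controlling $f$ there. This gives $\log|f|\le A+u$ as required, provided the $e^{\delta|y|}$ factor is absent.

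\textbf{Main obstacle.} The degree bookkeeping is the crux. Each forced zero costs one power of $(1+|z|^2)$ of subharmonicity, while the H\"ormander integral needs excess positivity $N-M-1>5/2$. So a naive count leaves $N'>M$. What I would try first is to take all of the positivity of $\Delta\varphi$ from the strip term $\delta\tilde s$, using the genuine measure $\Delta u\ge(C-a_0)\,dx$ on $\R$ after mollification, instead of from $\log(1+|z|^2)$. Then $N$ can be taken equal to $M+1$ exactly, and the $x$-integrability comes from placing the $z_k$ so that $\prod|z-z_k|^2/(1+|z|^2)^{M}$ supplies the decay along $S$. The term $e^{\delta|y|}$ would be absorbed by first applying the construction to $u$ with $C$ replaced by $C-\delta$, which is still admissible because $a>\|H((\log w)')\|_\infty$ strictly.
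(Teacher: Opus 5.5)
The obstacle you flag under ``Main obstacle'' is a genuine gap. The polynomial loss from Step 2 is never removed, and forcing zeros and then dividing them out cannot remove it.

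\textbf{Why Step 3 fails.} Since $\frac{|z-z_k|^2}{1+|z|^2}\to1$, your weight satisfies $\varphi/2=u+2N\log|z|+\delta\tilde s+o(1)$ at infinity. Hence $|f_0|\lesssim e^{u}|z|^{2N}e^{\delta|y|}$ with $N>M+1$, and $|f_0/Q|$ is still of order $e^{u}|z|^{2N-M}$.

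Your remedy does not change this. With $N=M+1$ the weight is exactly $2u+2\log|z|^2+2\sum_k\log|z-z_k|^2+2\delta\tilde s$. That gives $|f_0/Q|\lesssim e^{u}|z|^{M+2}e^{\delta|y|}$. The finite product $\prod_k|z-z_k|^2/(1+|z|^2)^M$ tends to $1$, so it supplies no decay along $S$.

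Even with the minimal singularities $\log|z-z_k|^2$, you get $|z|^{M+1}$ against $|Q|\asymp|z|^M$. The point mass at $0$ is needed for $F(0)=0$ and cannot be divided out, so it always leaves one power. The mechanism is general: positive Riesz mass of $\varphi$ that is not cancelled by negative mass reappears as growth at infinity, and adding $\log(1+|z|^2)$ only adds mass.

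Separately, the factor $R$ in Step 3 does not exist. A rational function without poles that vanishes at the $z_k$ and is bounded at infinity is $\equiv0$. Any poles of $R$ would survive in $f_0=GR-F$, since $F$ is holomorphic near them. That part can be repaired by inserting factors $z-z_k$ with cutoffs on small discs; the counting cannot.

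\textbf{What is missing.} Negative Riesz mass must be paid for by the positive mass of $\Delta u$, which is the only source available. You already have the tool: a fixed-scale average $\tilde u$ of $u$, with $|\tilde u-u|=O(1)$ and $\Delta\tilde u\gtrsim C-\norm{H((\log w)')}_{L^\infty}>0$ on a strip. You only use it as positivity, not to cancel mass.

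Also drop your global $G$, whose $\conj{\partial}G$ along the whole strip is what forces polynomial weights. Use instead the paper's compactly supported cutoff $\psi\equiv1$ near $0$. Then $\conj{\partial}\psi$ lives on $\{1\le|z|\le2\}$, and nothing has to be integrated along $\R$. Take
\[
\varphi=2\tilde u+\log|z|^2-\int_{\C}\log|z-\tau|^2\,d\nu(\tau),
\]
with $\tilde u$ averaged at scale $3$. Let $\nu$ be a compactly supported unit mass in $\{|y|\le2\}\cap\{|z|\ge3\}$ with density at most $\Delta\tilde u/(2\pi)$. Then:
\begin{itemize}
\item $\varphi$ is subharmonic, with $\Delta\varphi\gtrsim1$ on the annulus;
\item $\varphi=2u+O(1)$ on $\{|z|\ge1\}$;
\item $e^{-\varphi}\asymp|z|^{-2}$ near $0$, which forces $F(0)=0$.
\end{itemize}
So $f=\psi-F$ satisfies $f(0)=1$ and $\log|f|\le A+u$, with no division at all.

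Alternatively, keep your Steps 1--2 with $M=0$; their loss $(1+|z|)^{N'}$ has $N'$ independent of $w$. Run them for $w\,(l^2+x^2)^{-N'/2}$, as in \thref{LEM:PROPgamma}. The associated $u_l=u-\frac{N'}{2}\log\big(x^2+(|y|+l)^2\big)$ is admissible with the same $C$ once $N'/l+\delta<C-\norm{H((\log w)')}_{L^\infty}$, and $e^{u_l}(1+|z|)^{N'}\lesssim_l e^{u}$.

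For comparison, the paper's own proof has related problems. A compactly supported $\chi$ with $\Delta\chi\ge1$ on the annulus cannot keep $u+\chi$ subharmonic. The weight should be $2u$, not $u$. The constant $F(0)$ in $f=\psi-F+F(0)$ spoils the bound wherever $u\to-\infty$. Your choices of forcing $F(0)=0$ through the weight and using $2u$ are the right corrections there.
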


\begin{proof}
The proof is based on solving a non-homogeneous $\conj{\partial}$-equation and applying Hörmander’s $L^2$-method. Our data will be conveniently expressed in terms of a smooth compactly supported function $\psi \in C^\infty(\C)$ which satisfies 
\[
\psi \equiv 1 \qquad  \text{on} \qquad \{|z|\le 1\}, \qquad  \supp \psi \subseteq \{|z|\le 2\}.
\]
Below, we shall form simplicity allow the constants to change from line to line.
\smallskip

\proofpart{1}{A non-homogeneous $\conj{\partial}$-equation:}
Consider a smooth compactly supported function $\psi$ on $\C$ with 
\[
\Delta \chi (z) \geq 1, \qquad 1\leq |z|\leq 2.
\]
Now consider the following perturb function of $u$:
\[
v(z) := u(z) + \chi(z), \qquad z \in \C.
\]
Note that the property on $\chi$ ensures that $v$ is strictly subharmonic in the annulus 
\[
\left\{z\in \C: 1\leq |z|\leq 2 \right\}.
\]
We now seek a solution to the equation
\[
\conj{\partial} F(z) = \conj{\partial} \psi(z), \qquad z\in \C.
\]
Since $\conj{\partial}\psi$ is supported in $\{1 \le |z| \le 2\}$ and $\Delta v \gtrsim 1$, Hörmander’s \thref{THM:HÖRM} allows us to exhibit a solution $F \in L^2_{\mathrm{loc}}(\C)$ which satisfies the weighted estimate:
\[
\int_{\C} |F(z)|^2 e^{-v(z)}\, dA(z)
 \leq
C \int_{1\leq |z|\leq 2} |\conj{\partial}\psi(z)|^2 \frac{e^{-v(z)}}{\Delta v(z)}\, dA(z) \lesssim 1.
\]
We shall now argue that the desired candidate is simply given by the difference
\[
f := \psi - F + F(0),
\]
Indeed, since $\conj{\partial}f=0$, Weyl's Lemma ensures that $f$ is an entire analytic function in $\C$. Moreover, since $\psi$ and $f$ are smooth, so is $F$. In particular, this ensures that $F(0)$ is well-defined. Furthermore, we also have 
\[
f(0)=\psi(0)-F(0)+F(0) =1.
\]
Thus the required normalization holds.

\proofpart{2}{The estimate on $f$:}
It now only remains to show that for all $|\lambda| \gg 1$:
\[
\abs{f(\lambda)} \leq C + u(\lambda).
\]
Let $B(\lambda)$ denote the disc of radius $1$ centered at point $\lambda \in \C$, and choose $\lambda$ so that $B(\lambda)$ does not meet $\supp{\chi}\cup \supp{\psi}$. Since $|f|^2$ is subharmonic, and $f=-F + F(0)$ on $B(\lambda)$, we have
\[
|f(\lambda)|^2 \leq C \int_{B(\lambda)} |F(z)|^2\, dA(z) + C|F(0)|^2.
\]
Now since $u$ appearing in \thref{PROP:SUBHARBM} was assume to be Lipschitz in $\C$, we have 
\[
u(z) \leq u(\lambda) + L, \qquad z\in B(\lambda).
\]
Using this estimate on $u$, the fact that $\chi=0$ in $B(\lambda)$, we get 
\begin{multline*}
|f(\lambda)|^2 \leq \exp\left(C+u(\lambda) \right) \int_{B(\lambda)} |F(z)|^2 e^{-v(z)} dA(z) \\
\leq  \exp\left(C+u(\lambda) \right) \int_{\C} |F(z)|^2 e^{-v(z)} dA(z).
\end{multline*}
Since the integral on the right-hand side in view of H\"ormander's weighted estimate, hence we conclude that 
\[
\log \, \abs{f(\lambda)} \leq C + u(\lambda), \qquad \lambda \in \C.
\]

\end{proof}

The proof of \thref{THM:BM1MED} is now an immediate consequence of \thref{PROP:SUBHARBM} and \thref{PROP:ANSUB}, in that order. 

\subsection{A complex analytic perspective to the BM-Theorem}
In this subsection, we shall outline a somewhat more canonical approach towards proving the Medium BM1 Theorem, which involves the notion of \emph{outer functions} in Hardy spaces. But first, we shall attempt to describe the reason why it is typically referred to as a \emph{multiplier} Theorem. To this end, let $a>0$ be a number, and $\mathcal{C}_a$ denote the so-called Cartwright class of order $a$, consisting of entire analytic functions $f$ satisfying the following conditions:
\begin{enumerate}
    \item[(i)] $f$ is of exponential-type $a$: for any $\varepsilon>0$ there exists a constant $C(\varepsilon)>0$ such that 
\[
\abs{f(z)} \leq C(\varepsilon) \exp \left( (a+ \varepsilon) |z| \right), \qquad z \in \C.
\]
    \item[(ii)] 
    \[
    \int_{\R} \frac{\log^{+} \abs{f(x)}}{1+x^2} dx < \infty,
    \]
where $\log^+ x := \max\left(0, \log x\right)$.
\end{enumerate}
We shall refer to $\textbf{Cart} := \cup_{a>0} \mathcal{C}_a$ as the \emph{Cartwright class}, which forms an algebra under pointwise multiplication. A deep result by M. G. Krein (See Ch. 1.6 in \cite{mashreghi2006beurling}) asserts that $f \in \textbf{Cart}$ if and only if the restrictions $f_{\pm} := f \lvert_{\C_{\pm}}$ belong to the Nevanlinna classes $\mathcal{N}(\C_{\pm})$. In other words, functions in the Cartwright class are entire functions belonging to Nevanlinna classes of the upper and lower half-planes, and thus one actually has that $\log |f| \in L^1(dP)$ whenever $f \in \textbf{Cart}$. Now a classical result in the theory of Hardy spaces asserts that Nevanlinna class functions can always be factored as quotients of two bounded analytic functions, where the numerator is zero free. It turns out that in the symmetric framework of $\textbf{Cart}$, much more can be said. The following result is somewhat clarifies why the term \emph{multiplier} appears in the statement of the BM1 Theorem, and the original result of Beurling and Malliavin. 

\begin{thm}[The BM-multiplier Theorem] For any $f\in \textbf{Cart}$ and any $\varepsilon>0$, there exists a multiplier $m_\varepsilon \in B_\varepsilon$ such that $fm_\varepsilon$ is bounded on $\R$.
\end{thm}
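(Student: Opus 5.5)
Given $f\in\textbf{Cart}$ and $\varepsilon>0$, we want an entire function $m_\varepsilon$ of exponential type at most $\varepsilon$, bounded on $\R$ (so $m_\varepsilon\in B_\varepsilon$), with $|f m_\varepsilon|$ bounded on $\R$. The plan is to reduce this to an instance of the BM1 Theorem for a weight built from $f$. Put
\[
w := \min\bigl(1, |f|^{-1}\bigr) \quad \text{on } \R .
\]
Then $w$ is bounded, and $\log w = -\log^+|f|$. Condition $(ii)$ of the Cartwright class gives $\int_{\R}\log w\, dP>-\infty$. If $w$ is $\varepsilon/2$-admissible, the Paley--Wiener reformulation (Step 1 of the previous subsection) yields an entire function $m$ of exponential type at most $\varepsilon/2$ with $|m|\le w$ on $\R$ and $m(0)\neq 0$. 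Hence $m\in B_{\varepsilon/2}$, $|fm|\le |f|\min(1,|f|^{-1})\le 1$ on $\R$, and $m$ is nontrivial, which is exactly the claim.

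The main obstacle is that $\log w$ need not be uniformly continuous, so \thref{THM:BM1FULL} does not apply directly. For example, $f$ may have real zeros, where $\log|f|$ has logarithmic singularities; these are harmless for an upper bound, but the oscillation of $\log^+|f|$ is still uncontrolled. I would regularize $f$ on the majorant side. For $f$ of exponential type $a$, I expect a Bernstein/Plancherel--P\'olya-type estimate: $\log^+|f|$ on $\R$ is dominated by a uniformly Lipschitz function $\Omega$ with $\int_{\R} \Omega \, dP<\infty$.

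To build $\Omega$, I first reduce to the case with no real zeros, by shifting $f(z)\mapsto f(z+i)$ if necessary. Then I use the subharmonicity of $\log|f|$ on discs of radius $1$: $\log|f(x)|$ is bounded above by its average over the disc $B(x,1)$. Next I bound $\log|f|$ in the strip $|\Im z|\le 2$ by its Poisson representation, which is available because $f_\pm\in\mathcal{N}(\C_\pm)$ by Krein's theorem quoted above:
\[
\log|f(x+iy)| \le a|y| + P_{|y|}\bigl(\log^+|f|\bigr)(x), \qquad |y|\le 2 .
\]
Define $\Omega$ as the disc average of $a|y| + P_{|y|}(\log^+|f|)$, evaluated along the line $y=2$ and shifted back. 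This function is Lipschitz, since Poisson extensions at height $y\asymp 1$ of $L^1(dP)$ data have bounded gradient up to the $dP$ weight. Controlling that gradient uniformly is where I expect the difficulty to lie. If a uniform bound fails, I would instead use a bounded-oscillation condition $\sup_{|x-y|\le1}\Omega(x)-\Omega(y)<\infty$, which is the Havin--Mashreghi--Nazarov hypothesis mentioned after \thref{THM:BM1FULL}.

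The function $\Omega$ also satisfies $\int_{\R} \Omega\,dP<\infty$ by Fubini and the reproducing property of the Poisson kernel, as in Step 3 of Pollard's proof. Finally, I apply \thref{THM:BM1FULL} with $a=\varepsilon/2$ to the weight $W:=e^{-\Omega}$. This gives $m$ with $|m|\le e^{-\Omega}\le \min(1,|f|^{-1})$ on $\R$, up to a constant factor, which completes the argument.
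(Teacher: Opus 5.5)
Your reduction is sound. The paper itself only states this theorem and gives no proof; it proves a medium BM1 and defers the strong BM1 to Nazarov's majorization theorem. Your last step also works: once you have $\Omega\ge\log^+|f|-C$ on $\R$ with $\Omega$ uniformly continuous and $\int_{\R}\Omega\,dP<\infty$, the Strong BM1 Theorem plus Paley--Wiener gives a nonzero $m\in B_\varepsilon$ with $|fm|$ bounded.

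\textbf{The gap.} Everything rests on that majorant lemma, and your construction of $\Omega$ does not produce it. Your justification is false: for $\phi\in L^1(dP)$ one only has $|\partial_x P_y(\phi)(x)|\le C y^{-1}P_y(|\phi|)(x)$, which is not uniformly bounded.

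Here is a concrete counterexample. Take $f(z)=\prod_{n\ge1}(1-z/e^n)^n$. It has order zero, so zero exponential type, and $\log^+|f(x)|\lesssim\log^3(2+|x|)$; hence $f\in\textbf{Cart}$. Near $x_n=e^n$ one computes $\log|f(x_n+s)|=A_n+n\log|s|+o(1)$ for $1\le|s|\le e^{n/2}$, with $A_n\sim n^3/6$. Using $P_2(\log|\cdot|)(s)=\log|s+2i|$ and the decay of the difference kernel, this gives
\[
P_2(\log^+|f|)(x_n+3)-P_2(\log^+|f|)(x_n+1)=\tfrac n2\log\tfrac{13}{5}+o(1).
\]
The same growth of order $n$ occurs for the disc average of $a|y|+P_{|y|}(\log^+|f|)$, or for any smoothing at a fixed unit scale. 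So your $\Omega$ is neither uniformly continuous nor of bounded oscillation on unit intervals. Neither the Strong BM1 Theorem nor the Havin--Mashreghi--Nazarov variant applies.

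The cause is structural. Near a zero of multiplicity $n$, $\log|f|$ legitimately has slope of order $n$ at unit distance, and every fixed-scale average inherits it. In this example $C\log^3(2+|x|)$ is a good majorant, so the strategy survives; the majorant just has to be built differently.

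\textbf{What is missing.} You need a majorant that fills in such wells, for instance the least $L$-Lipschitz majorant
\[
\Omega_L(x)=\sup_t\bigl(\log^+|f(t)|-L|x-t|\bigr),
\]
together with a proof that $\int_{\R}\Omega_L\,dP<\infty$.

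This cannot follow from $\log^+|f|\in L^1(dP)$ alone. Take the function equal to $2^n$ on $[2^n,2^n+1]$ and $0$ elsewhere. It lies in $L^1(dP)$, but any uniformly continuous majorant of it contains tents of height $2^n$ and width $\asymp 2^n$. Each tent contributes $\gtrsim 1$ to the Poisson integral, so the integral diverges.

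The proof therefore has to use the exponential type at every scale. Subharmonicity on discs of radius $\rho$, combined with your Poisson bound, gives
\[
\log|f(x)|\le a\rho+C\,P_\rho(\log^+|f|)(x)\qquad\text{for all }\rho>0,
\]
and this excludes tall narrow peaks of $\log|f|$. It then has to be combined with an argument, presumably of covering or maximal-function type, that controls $\int\Omega_L\,dP$. Until that lemma is supplied, the proof is incomplete.

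A minor point: the shift $f(z)\mapsto f(z+i)$ is not a legitimate reduction, since $|f(x+i)|$ does not dominate $|f(x)|$ on $\R$. It is also unnecessary, because real zeros are harmless for an upper bound.
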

Here $B_\varepsilon$ denotes the functions of exponential type $\varepsilon$, which are essentially bounded in $\R$, thus contained in $\mathcal{C}_\varepsilon$. The terms multiplier comes from the fact that multiplication of $f$ with $m_\varepsilon$ multiplies a function $f \in \mathcal{C}_a$ into the space $B_{a+ \varepsilon}$. Phrased differently, whenever $f\in \mathcal{C}_a$ and $\varepsilon>0$, one can always factorize 
\[
f= \frac{g}{m}, \qquad g \in B_{a+\varepsilon}, \qquad m \in B_\varepsilon. 
\]
Moving forward, we shall outline a second proof of the BM-1 \thref{THM:BM1MED}, which is somewhat more canonical and utilizes the factorization of functions in the Hardy spaces. The proof is principally taken from the work of V. Havin, J. Mashreghi and F. Nazarov in \cite{mashreghi2006beurling}, and further details and historical notes related to it can be found there.

\proofpart{1}{Reformulating using Paley-Wiener:} We shall first use the notion of Paley--Wiener spaces in order to reformulate the BM1-Problem. 

We start with a weight $w \in L^2(\R)$ with $\log w \in L^1(dP)$. This allows us to construct an outer function $W \in H^2(\C_+)$ with $\abs{W}=w$ a.e on $\R$. The main goal is to find a non-trivial function $f\in L^2(\R)$ with $\supp{\widehat{f}}\subseteq[0,a]$ such that $f/w \in L^\infty(\R)$, under the additional assumption on $w$.

Define the translated Paley-Wiener space by
\[
H^2_a :=\{f\in L^2(\R): \supp{\widehat{f}}\subseteq[0,a] \}, \qquad a>0.
\]
It is not difficult to show that $f \in H^2_a$ if and only if $f(x), \conj{f(x)}e^{2\pi i ax}$ are boundary elements of $H^2(\C_+)$-functions. In order to move forward, we shall need practical sufficient condition that allows us to identify the modulus of functions in $H^2_a$. This simple observation is a special case of a general Theorem by K. Dyakonov in \cite{dyakonov1990moduli}.

\begin{lemma}[Dyakonov's Lemma]\thlabel{LEM:ModH2a} Let $\psi \in L^2(\R)$ positive with $\int_{\R}\log \psi dP >- \infty$, and such that $\psi^2 e^{2\pi i ax}$ is outer. Then $\psi$ is the modulus of a function in $H^2_a$.
\end{lemma}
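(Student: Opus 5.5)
The plan is to build the desired $H^2_a$ function directly out of an outer function whose square has modulus $\psi^2$. Since $\psi\in L^2(\R)$ and $\int_\R \log\psi\, dP>-\infty$, \thref{THM:OUTERFUNC} gives an outer function $\Psi\in H^2(\C_+)$ with $|\Psi|=\psi$ a.e. on $\R$. This is the natural candidate. By the characterization recorded just before the lemma, it suffices to show that both $\Psi(x)$ and $\conj{\Psi(x)}e^{2\pi i a x}$ are boundary values of $H^2(\C_+)$-functions. The first holds by construction, so everything reduces to the second.

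To handle $\conj{\Psi}e^{2\pi i a x}$, I will use the hypothesis that $\psi^2 e^{2\pi i a x}$ is outer; more precisely, I read it as: $\psi^2 e^{2\pi i a x}$ agrees a.e. with the boundary values of an outer function $G$, which lies in $H^1(\C_+)$ since $\psi^2\in L^1$. Then $|G|=\psi^2=|\Psi|^2$ a.e. An outer function is determined by its modulus up to a unimodular constant, and $\Psi^2$ is outer with the same modulus, so $G=c\Psi^2$ with $|c|=1$. On the boundary this gives
\[
\conj{\Psi}\, e^{2\pi i a x} \;=\; \frac{\psi^2 e^{2\pi i a x}}{\Psi} \;=\; \frac{G}{\Psi} \;=\; c\,\Psi \qquad \text{a.e. on } \R .
\]
This identity is exactly where the outer hypothesis enters: it lets us divide by $\Psi$ without leaving the Hardy space. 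Thus $\conj{\Psi}e^{2\pi i a x}$ is (a constant multiple of) the boundary function of $\Psi\in H^2(\C_+)$, hence $\Psi\in H^2_a$ with $|\Psi|=\psi$, which is the claim.

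The main obstacle is interpreting the hypothesis precisely enough to justify the identification $G=c\Psi^2$. Specifically, one needs that $\psi^2e^{2\pi iax}$ is the boundary trace of an outer function in $H^1(\C_+)$, as opposed to merely some function with that modulus, and that the uniqueness of outer functions with a prescribed modulus applies in $H^1$.

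If one instead prefers to avoid the characterization, the same conclusion can be checked on the Fourier side. Since $\Psi\in H^2$, $\widehat{\Psi}$ is supported in $[0,\infty)$. The relation $\conj{\Psi}e^{2\pi iax}\in H^2$ says that $\widehat{\Psi}(\xi)$, reflected about the point corresponding to frequency $a$, is also supported in $[0,\infty)$. Combining the two support conditions gives $\supp\widehat{\Psi}\subseteq[0,a]$, matching the normalization used to define $H^2_a$.
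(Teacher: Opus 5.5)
Your proof is correct and follows the paper's argument essentially verbatim: take the outer $\Psi\in H^2(\C_+)$ with $|\Psi|=\psi$, identify the outer function $\psi^2e^{2\pi i ax}$ with $c\Psi^2$ by uniqueness of outer functions with prescribed modulus, and divide by $\Psi$ (nonzero a.e.) to obtain $\conj{\Psi}e^{2\pi i ax}=c\Psi\in H^2(\C_+)$. Your optional Fourier-side check is also fine, but note that with the paper's convention $\widehat{f}(\xi)=\int f(x)e^{-ix\xi}\,dx$ it literally gives $\supp{\widehat{\Psi}}\subseteq[0,2\pi a]$ rather than $[0,a]$; this $2\pi$ mismatch is already present in the paper's own definition of $H^2_a$.
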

\begin{proof}
Let $F$ be an outer function in $H^2(\C_+)$ with $\abs{F}=\psi$ a.e on $\R$. Then $F^2$ is also an outer function whose modulus equals $\psi^2$ a.e on $\R$. Now by assumption, $\psi^2(x) e^{2\pi i ax}$ is also outer, and with modulus equal to $\psi^2$ a.e on $\R$, therefore there exists a unimodular constant $c$ such that
\[
cF^2(x) = F(x) \conj{F(x)} e^{2\pi i ax}, \qquad \text{a.e} \, \, x\in \R.
\]
Dividing by $F$, which is non-zero a.e on $\R$, we see that $F(x), \conj{F(x)} e^{2\pi i ax}=cF(x)  \in H^2(\C_+)$, hence we conclude that $F \in H^2_a$ with $\abs{F}= \psi$ a.e on $\R$. 
\end{proof}

With this observation at hand, we recall that our goal is not necessarily to exhibit a function $f\in H^2_a$ with $\abs{f}=w$ a.e on $\R$, as we settle for 
\[
\abs{f(x)} = m(x) w(x), \qquad \text{a.e} \, \,  x\in \R, 
\]
for some non-negative bounded function $m \in L^\infty(\R)$. Note that since both $f, w$ are assumed to have integrable logarithm wrt $dP$ on $\R$, we can reformulate the BM-problem by using Dyakonov's Lemma with $\psi= mw$, as follows: 

\textbf{Reformulating the BM1-Problem:} \\
Find a positive function $m \in L^\infty(\R)$ with $\log m \in L^1(dP)$, such that 
\[
mw \in L^2(\R), \qquad \text{and} \qquad  m^2 w^2 e^{2\pi i ax} \qquad \text{is outer.}
\]
\proofpart{2}{Reducing to a problem on conjugate transform:}
Now recall that $F$ is an outer function if and only if 
\[
\log F = \log |F| + i \widetilde{\log |F|},
\]
where $\widetilde{f}$ denotes the harmonic conjugate transform of $f$ (see explicit definition below). Using Dyakonov's lemma, it suffices to find $\log m \in L^1(dP)$ with $w \in L^\infty(\R)$ and positive, such that
\begin{multline}\label{EQ:BMFACT}
\log m^2(x) w^2(x) + 2\pi i ax = \log m^2(x) w^2(x) + i \widetilde{\log m^2 w^2}(x) + \, \text{constant} \qquad (\text{mod} \, \, 2\pi ) \\ \iff \\
\widetilde{\log m}(x) + \widetilde{\log w}(x) = \pi ax + \, \text{constant}  \qquad (\text{mod} \, \, \pi ). 
\end{multline}
It is natural to attempt removing the slightly "annoying" periodicity which is built-in in the above equation, and simple ask whether it is possible to solve the equation with one single constant in all of $\R$:
\begin{equation}\label{EQ:NAIVEQS}
\widetilde{\log m}(x) + \widetilde{\log w}(x) = \pi ax + \, \text{constant}, \qquad x\in \R.
\end{equation}
However, a classical result of A. Kolmogorov presents a significant obstacle to this approach.

\begin{thm}[Kolmogorov] For any $f\in L^1(dP)$, we have 
\[
\lambda \cdot P \left( \{ x\in \R: \abs{\widetilde{f}(x)} > \lambda \} \right) \to 0, \qquad \lambda \to +\infty.
\]
\end{thm}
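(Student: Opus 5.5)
The plan is to transfer the problem to the unit circle, prove a weak-type $(1,1)$ bound there by a harmonic-measure argument, and then upgrade $\mathcal{O}(1/\lambda)$ to $o(1/\lambda)$ by density.

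\emph{Transfer.} I would use the conformal map $\varphi(z)=\frac{z-i}{z+i}$ from the sketch of the F.\ and M.\ Riesz theorem on $\R$. The measure $dP$ pushes forward to $\pi\, dm$, and $f\in L^1(dP)$ corresponds to $u=f\circ\varphi^{-1}\in L^1(\T)$. The normalization $\frac{1}{z-t}-\frac{t}{1+t^2}$ in the Schwarz integral of \thref{THM:OUTERFUNC} shows that $\widetilde f=\widetilde u\circ\varphi$ up to an additive real constant $c_f$, since both are boundary values of harmonic conjugates of the same Poisson integral. Because $\lambda\to+\infty$, the shift only replaces $\lambda$ by $\lambda-|c_f|$ and is harmless. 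So it suffices to prove, for $u\in L^1(\T)$,
\[
\lambda\, m\bigl(\{\zeta\in\T : |\widetilde u(\zeta)|>\lambda\}\bigr)\to 0,\qquad \lambda\to+\infty .
\]

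\emph{Weak type $(1,1)$.} By splitting $u$ into real and imaginary parts and then positive and negative parts, I may assume $u\ge 0$. Normalize $\widetilde u(0)=0$; otherwise an extra constant appears, again harmless. Then $F=P(u)+iQ(u)$ is analytic in $\D$ with $\Re F>0$. In the right half-plane, let $\omega(w)$ be the harmonic measure of the two boundary rays $\{iy:|y|\ge\lambda\}$. Two elementary facts are needed:
\begin{itemize}
\item[(a)] $\omega(w)\ge \tfrac12$ whenever $\Re w\ge 0$ and $|\Im w|>\lambda$, by an explicit arctan formula.
\item[(b)] $\omega(x)\le C x/\lambda$ for $x>0$ real, which follows since $\omega(x)=\tfrac{2}{\pi}\arctan(x/\lambda)$.
\end{itemize}
The function $\omega\circ F$ is a bounded positive harmonic function in $\D$. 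Its mean value property, together with Fatou's lemma and non-tangential limits (applied to $\omega(F(r\zeta))$ as $r\to1$), gives
\[
\tfrac12\, m(\{|\widetilde u|>\lambda\}) \le \int_\T \omega(F)\,dm = \omega(F(0))=\omega(\widehat u(0))\le C\,\frac{\|u\|_{L^1}}{\lambda}.
\]

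\emph{From $\mathcal{O}$ to $o$.} Given $\varepsilon>0$, I pick a trigonometric polynomial $g$ with $\|u-g\|_{L^1}<\varepsilon$, for instance a Fej\'er mean of $u$. Then $\widetilde g$ is again a trigonometric polynomial, hence bounded. For $\lambda>2\|\widetilde g\|_\infty$ the set $\{|\widetilde g|>\lambda/2\}$ is empty, so $\{|\widetilde u|>\lambda\}\subseteq\{|(u-g)^\sim|>\lambda/2\}$. The weak bound then gives $\lambda\, m(\{|\widetilde u|>\lambda\})\le 2C\varepsilon$ for all large $\lambda$. Letting $\varepsilon\to0$ finishes the argument, and pulling back through $\varphi$ gives the statement on $\R$.

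\emph{Main obstacle.} The step needing most care is the transfer: verifying that the real-line conjugate transform associated with $L^1(dP)$ really corresponds, up to a constant, to the circle conjugate function. The harmonic-measure estimate in (a)--(b) is classical and routine. If the transfer is awkward, I would instead run the same harmonic-measure argument directly in $\C_+$, evaluating at $z=i$ against the Poisson measure.
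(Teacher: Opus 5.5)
The paper gives no proof of this statement. It quotes Kolmogorov's theorem as a classical fact, only to show that the naive equation \eqref{EQ:NAIVEQS} is unsolvable: a linear function $\pi a x+c$ satisfies $\lambda P(\{\abs{\pi a x+c}>\lambda\})\to 2\pi a\neq 0$, so the little-$o$ is exactly what is needed there. There is therefore no argument in the paper to compare with.

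Your proposal is a correct and essentially complete proof, and it is the standard one. You get the weak type $(1,1)$ bound from the harmonic measure $\omega$ of $\{iy:\abs{y}\ge\lambda\}$ in the right half-plane, composed with the Herglotz function $P(u)+iQ(u)$. You then upgrade $\mathcal{O}(1/\lambda)$ to $o(1/\lambda)$ by approximating with trigonometric polynomials, whose conjugates are bounded.

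The individual steps check out:
\begin{itemize}
\item Fact (b) holds because $\omega(x)=\frac{2}{\pi}\arctan(x/\lambda)$ on the positive axis.
\item Fact (a) holds because, for $\abs{\Im w}>\lambda$, both vectors from $w$ to $\pm i\lambda$ lie in a single quadrant, so the subtended angle is less than $\pi/2$.
\item At boundary points $w_0$ with $\abs{\Im w_0}>\lambda$, $\omega$ is continuous, so your Fatou argument is legitimate.
\item The push-forward $\varphi_*(dP)=\pi\,dm$ is correct.
\end{itemize}
Beyond elementary computation, the only inputs are the a.e.\ non-tangential convergence of $Q(u)$ to $\widetilde u$ and its transfer under the M\"obius map. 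The paper itself takes both for granted.

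One cosmetic point concerns the kernel you quote from \thref{THM:OUTERFUNC}. The expression $\frac{1}{z-t}-\frac{t}{1+t^2}$ carries a sign slip inherited from the paper: its conjugate part behaves like $-2/t$, which is not integrable against $dP$. The correction term has to enter as $\frac{1}{z-t}+\frac{t}{1+t^2}$, equivalently $-\bigl(\frac{1}{t-z}-\frac{t}{1+t^2}\bigr)$. This is the combination the paper later uses in its explicit formula for $\widetilde u$ in the Beurling--Malliavin section. With it the conjugate kernel is $\mathcal{O}(t^{-2})$, and the real-line harmonic conjugate vanishes at $z=i=\varphi^{-1}(0)$, just as $Q(u)$ vanishes at $0$. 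So in fact $c_f=0$. Your argument only uses that the two conjugates differ by a constant, so nothing in it needs to change.
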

Now if \eqref{EQ:NAIVEQS} where to hold, then invoking Kolmogorov's Theorem with $f= \log m + \log w$, we get 
\[
\lambda  \cdot P\left( \{x\in \R: \abs{\pi a x+ \, \text{const} } > \lambda  \} \right) = \lambda \cdot P \left( \{ x\in \R: \abs{\widetilde{f}(x)} > \lambda  \} \right) \to 0, \qquad \lambda \to + \infty
\]
but the corresponding quantity of the linear function $\pi a x + \text{constant}$ is explicitly seen to be only big-$O$ of $1/\lambda$ as $\lambda \to +\infty$, which makes \eqref{EQ:NAIVEQS} unattainable. Therefore, in order to solve the equation \eqref{EQ:BMFACT}, we also have to makes sure that we can choose a non-trivial function $k: \mathbb{R} \to \mathbb{Z}$ such that 
\[
\widetilde{\log m}(x) = \widetilde{\Omega}(x) + \pi ax  -\pi k(x), \qquad x\in \R,
\]
where $\Omega = \log \frac{1}{w}$. In order to compensate for the growth of $\widetilde{\Omega}(x) + \pi ax $, it is natural to choose $k(x)$ as the integer part of $\widetilde{\Omega}(x)/\pi + ax $, thus the problem reduces to finding a positive bounded function $m$ on $\R$ which satisfies the equation:
\begin{equation}\label{EQ:BMEQREDUC}
\widetilde{\log m}(x) = u(x) \iff 
m(x) = e^{-\widetilde{u}(x) }
\end{equation}
where $u(x) := \widetilde{\Omega}(x) + \pi a x -\pi k(x) - \pi/2$ satisfying $\norm{u}_{L^\infty(\R)}\leq \pi/2$. 

\proofpart{3}{Explicit estimates:}
We have now arrived at a stage where we can explicitly consider what the equation really entails, using the explicit formula for the conjugate transform:
\begin{multline*}
\log m(x)  = -\widetilde{u}(x) = \int_{\R} u(t) \left( \frac{1}{t-x} - \frac{t}{1+t^2}\right) \frac{dt}{\pi} = \\
 \int_{\{t:|x-t|<1\}} u(t) \frac{1}{t-x} \frac{dt}{\pi} - \int_{\{t:|x-t|<1\}} u(t)\frac{t}{1+t^2} \frac{dt}{\pi} + \int_{\{t:|x-t|\geq 1\}} u(t) \left( \frac{1}{t-x} - \frac{t}{1+t^2}\right) \frac{dt}{\pi} = I_1 + I_2 + I_3.
\end{multline*}
Now it simple to see that the integral $I_2$ is uniformly bounded on $\R$:
\[
\abs{I_2} \leq \norm{u}_{L^\infty(\R)} \int_{\{t: |t-x|\leq 1\} } \frac{\abs{t}}{1+t^2} \frac{dt}{\pi} \leq C_2, \qquad x\in \R.
\]
Although, the quantity $I_3$ is not necessarily bounded, one can still show that its growth is not too rapid:
\[
\abs{I_3} \leq \frac{\norm{u}_{L^\infty(\R)}}{\pi} \int_{\{t:|x-t|\geq 1\}} \abs{\frac{1}{x-t} + \frac{t}{1+t^2}} dt \leq C_3 \log (1+|x|), \qquad x\in \R.
\] 
Now the integral $I_1$ is the piece that will require imposing an additional regularity assumption on $u$, as we can rewrite it as follows:
\[
I_1 = \int_{\{t: |t-x|<1 \}} \frac{u(t)-u(x)}{t-x} \frac{dt}{\pi}
\]
A natural condition on $u$ which makes $I_1$ bounded is that $u$ is (uniformly) Lipschitz continuous on $\R$. The following proposition summarizes the technical aspects of this discussion, and the translates these conditions to $\Omega$.

\begin{prop}\thlabel{PROP:BMFINAL} Let $0<w\leq 1$ be weight with $\Omega :=-\log w \in L^1(dP)$ and which satisfies $\sup_{x\in \R} \abs{\widetilde{\Omega}'(x)} \leq \pi a$. Then the function $m$ defined in \eqref{EQ:BMEQREDUC} satisfies the estimate
\[
\log m(x) \leq 4a + C_2+ C_3 \log (1+|x|), \qquad x\in \R,
\]
and $m^2 w^2 e^{2\pi i a x}$ is an outer function.
\end{prop}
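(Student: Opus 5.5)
The proof has two parts: a pointwise bound on $\log m = -\widetilde{u}$ via the splitting $I_1+I_2+I_3$ above, and then the outer-function claim via \thref{LEM:ModH2a}.

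\textbf{Bounding $I_1$.} The bounds $|I_2|\le C_2$ and $|I_3|\le C_3\log(1+|x|)$ are already in hand, since $\norm{u}_{L^\infty}\le \pi/2$. What remains is $I_1$. Here I will not try to prove that $u$ is Lipschitz, because it is not: $u = \widetilde{\Omega}+\pi a x-\pi k(x)-\pi/2$ has jumps of size $\pi$ wherever $k$ jumps. The useful observation is that $k(x)$ is the integer part of $\widetilde{\Omega}(x)/\pi + ax$, and the hypothesis gives
\[
\bigl(\widetilde{\Omega}/\pi + a x\bigr)' \ge -a + a = 0 .
\]
So $\widetilde{\Omega}/\pi + ax$ is nondecreasing, and $k$ is nondecreasing. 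Consequently $u$ splits as
\[
u = g - \pi k ,
\]
where $g := \widetilde{\Omega}+\pi a x - \pi/2$ satisfies $|g'| \le 2\pi a$. Splitting $I_1$ the same way:
\begin{itemize}
\item[(1)] The $g$-part contributes at most $\frac{1}{\pi}\int_{|t-x|<1}\frac{|g(t)-g(x)|}{|t-x|}\,dt \le 4a$, since $g$ is $2\pi a$-Lipschitz.
\item[(2)] The $k$-part contributes
\[
-\int_{|t-x|<1}\frac{k(t)-k(x)}{t-x}\,dt \le 0 ,
\]
because $k$ is nondecreasing, so the integrand is nonnegative.
\end{itemize}
This one-sidedness is exactly why only an upper bound on $\log m$ is claimed. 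Adding up gives $\log m(x)\le 4a+C_2+C_3\log(1+|x|)$. I need to check carefully that the sign conventions for $\widetilde{\cdot}$ make the $k$-term come out with the favourable sign; if not, I would redefine $k$ as a ceiling instead of a floor.

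\textbf{The outer-function claim.} By construction, $\widetilde{\log m} = u$ up to the additive constant coming from the kernel normalisation. Then
\[
\widetilde{\log(m^2w^2)} = 2u - 2\widetilde{\Omega} = 2\pi a x - 2\pi k - \pi + \text{const}.
\]
So the outer function $G$ with $|G| = m^2w^2$ has boundary argument congruent to $2\pi a x + \text{const}$ modulo $2\pi$. Hence $m^2w^2e^{2\pi i a x}$ equals $cG$ a.e.\ on $\R$, with $c$ a unimodular constant, and is therefore outer.

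For this step to be legitimate I must also check that $\log m\in L^1(dP)$. This holds because $\widetilde{u}$, the conjugate of a bounded function, lies in $L^p(dP)$ for $p<\infty$. Together with $\Omega\in L^1(dP)$, this gives $\log(m^2w^2)\in L^1(dP)$, so $G$ is defined. Finally, for \thref{LEM:ModH2a} the function $mw$ must lie in $L^2$; if the logarithmic growth of $m$ obstructs this, I will absorb it by multiplying by a harmless outer factor such as $(1+x^2)^{-N}$.

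\textbf{Main obstacle.} I expect the delicate point to be making the identification "argument $\equiv 2\pi a x$ mod $2\pi$" rigorous at the level of boundary values. Boundary harmonic conjugates are defined only a.e., and $k$ is discontinuous, so care is needed there.
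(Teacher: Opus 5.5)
Your proof is the paper's: you write $u=(\widetilde\Omega+\pi a x-\pi/2)-\pi k$, bound the $2\pi a$-Lipschitz part of $I_1$ by $4a$, and drop the $k$-part because $k$ is nondecreasing; the paper's kernel $\frac{1}{t-x}-\frac{t}{1+t^2}$ for $-\widetilde u$ does give the favourable sign (a floor-to-ceiling switch is unnecessary and could not have helped, since both are nondecreasing); and you read off outerness from \eqref{EQ:BMFACT}. Your closing remark on $mw\in L^2$ is not part of this proposition, and the suggested post hoc factor $(1+x^2)^{-N}$ would destroy the outerness of $m^2w^2e^{2\pi i a x}$, because $\widetilde{\log(1+x^2)}=-2\arctan x+\mathrm{const}$ is not constant. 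The paper instead builds $\gamma=A\log(l^2+x^2)$ into $\Omega$ before constructing $m$, using \thref{LEM:PROPgamma} to preserve the Lipschitz bound on the conjugate.
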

\begin{proof}
Note that from the above estimates, we have 
\[
\log m(x) \leq \int_{\{t: |t-x|<1 \}} \frac{u(x)-u(t)}{x-t} \frac{dt}{\pi} + C_2 + C_3 \log (1+|x|), \qquad x\in \R.  
\]
Now the estimate $\sup_{x\in \R} \abs{\widetilde{\Omega}'(x)} \leq \pi a$ ensures that $u(x):= \widetilde{\Omega}(x) + \pi ax $ is a non-decreasing uniformly Lipschitz function on $\R$, hence 
\[
k(x) := \text{integer part} \, \, \frac{\widetilde{\Omega}(x)}{\pi} + ax
\]
is also increasing, and thus recalling that $u(x) = \widetilde{\Omega}(x) + \pi ax - \pi k(x) - \pi/2$, we obtain 
\begin{multline*}
\int_{\{t: |t-x|<1 \}} \frac{u(x)-u(t)}{x-t} \frac{dt}{\pi} = \int_{\{t: |t-x|<1 \}} \frac{\widetilde{\Omega}(x) -\widetilde{\Omega}(t)}{x-t} \frac{dt}{\pi} + 2a - \int_{\{t: |t-x|<1 \}}  \frac{k(x)-k(t)}{x-t} \frac{dt}{\pi} \leq 4a.
\end{multline*}
This establishes the pointwise estimate on $\log m$. The claim that $m^2 w^2 e^{2\pi a x}$ is outer follows form \eqref{EQ:BMFACT}.
\end{proof}
Now comparing \thref{PROP:BMFINAL} with Dyakonov's Lemma, we also need to ensure that $mw \in L^2(\R)$ in order to conclude that there exists an outer function $f\in H^2_a$ with $\abs{f}=mw$ on $\R$, and that $m$ is bounded in order to conclude that $\abs{f}\leq w$, thus showing that $w$ is $a$-admissible. However, it turns out that these requirements are not difficult to rectify, and one should simply apply the previous argument to the modified weight
\[
w_1(x) = \frac{w(x)}{(x^2+l^2)^A}
\]
where $A,l>0$ are constants to be specified shortly. The following lemma is simple, and contains the necessary ingredients for continuing with the proof.

\begin{lemma} \thlabel{LEM:PROPgamma} The function $\gamma(x)= A\log(l^2+x^2)$ satisfies the following properties:
\begin{enumerate}
    \item[(i)] $\gamma \in L^1(dP)$,
    \item[(ii)] $\widetilde{\gamma} \in C^1(\R)$,
    \item[(iii)] $\sup_{x\in \R} \abs{\widetilde{\gamma}'(x) } \leq  2A/l $
\end{enumerate}
    
\end{lemma}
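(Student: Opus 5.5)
The plan is to compute everything explicitly from the fact that $\gamma$ is, up to an additive constant, the logarithm of the modulus of an outer function in the upper half-plane. The natural candidate is $G(z) := (z+il)^{2A}$, defined with the principal branch; it is analytic and zero-free in $\C_+$ (indeed in $\Im z > -l$). Its boundary modulus is $|G(x)| = (x^2+l^2)^{A}$, so $\log|G| = \gamma$ on $\R$. Since $\log G = 2A\log(z+il)$, we get $\log|G(z)| = A\log|z+il|^2$, which is harmonic in $\Im z > -l$. Thus the harmonic conjugate of $\gamma$ should be $\widetilde{\gamma}(x) = 2A\arg(x+il)$, up to an additive constant fixed by the normalization of the conjugate transform.

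\textbf{Property (i).} This is immediate. Since $\log(l^2+x^2) \lesssim 1+\log(1+|x|)$, it suffices to note that
\[
\int_{\R}\frac{\log(1+|x|)}{1+x^2}\,dx < \infty .
\]

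\textbf{Properties (ii) and (iii).}
\begin{enumerate}
\item First justify the identification $\widetilde{\gamma}(x) = 2A\arg(x+il) + \text{const}$. One route is to check that $\log|z+il|^2$ equals the Poisson extension $P_y(\gamma)$ in $\C_+$. This follows from harmonicity together with the growth bound $O(\log|z|)$ and uniqueness of the Poisson representation for functions with such logarithmic growth; equivalently, $G$ is outer by \thref{THM:OUTERFUNC}. Once $\log G$ is the Schwarz integral of $\gamma$, its imaginary part is the conjugate transform up to a constant.
\item As a direct alternative, write $\log(l^2+x^2) = \log(l^2+x^2) - \log(1+x^2) + \log(1+x^2)$ and compute the conjugates term by term. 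This is only a backup.
\item With the identification in hand, compute $\arg(x+il) = \frac{\pi}{2} - \arctan(x/l)$. Then $\widetilde{\gamma}$ is $C^\infty$, which gives (ii), and
\[
\widetilde{\gamma}'(x) = -\frac{2A\,l}{l^2+x^2}, \qquad\text{so}\qquad \sup_{x\in\R}|\widetilde{\gamma}'(x)| = \frac{2A}{l},
\]
attained at $x=0$. This gives (iii).
\end{enumerate}

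\textbf{Main obstacle.} The only delicate point is rigorously identifying $\widetilde{\gamma}$ with $2A\arg(x+il)$. The issue is that $\gamma$ is unbounded, so the conjugate transform must be taken with the regularized kernel $\frac{1}{t-x}-\frac{t}{1+t^2}$.

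I would handle this by verifying the boundary behaviour of $\Im \log G$. Because $G$ is analytic across $\R$, its boundary values are smooth. The constant term coming from the regularization does not affect derivatives, so (ii) and (iii) are unaffected by it.
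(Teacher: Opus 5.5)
Your proof is correct, but it takes a different route from the paper. The paper works directly with the principal-value integral. It sets $L(x)=\log(1+x^2)$ and integrates by parts to move the derivative onto $L$. It then evaluates the singular integral of $L'(t)=2t/(1+t^2)$ by partial fractions and symmetry, obtaining $\widetilde{L}'(x)=-2/(1+x^2)$. Finally it passes to $\gamma(x)=2A\log l+A\,L(x/l)$ by dilation, tacitly using that the regularized conjugate transform commutes with dilations modulo constants.

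You instead write $\gamma=\log|G|$ with $G(z)=(z+il)^{2A}$ and read off the closed form $\widetilde{\gamma}(x)=2A\arg(x+il)+c=c'-2A\arctan(x/l)$. This has several advantages:
\begin{itemize}
\item it gives (ii) and (iii) at once;
\item it avoids both the P.V.\ manipulation and the rescaling step;
\item it shows that the bound $2A/l$ is attained at $x=0$;
\item it explains why this $\gamma$ appears in the BM argument, since $e^{-\gamma}$ is the modulus of the outer function $(z+il)^{-2A}$.
\end{itemize}
In exchange, the paper's computation is purely real-variable and needs no uniqueness theorem for harmonic functions.

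For the identification step, use your harmonic-uniqueness argument rather than \thref{THM:OUTERFUNC}. That theorem only produces some outer function with a prescribed modulus in $L^p$, and $(x^2+l^2)^A\notin L^p$. It also does not certify that a given function $G$ is outer.

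The uniqueness argument does go through:
\begin{itemize}
\item Substituting $t=x+ys$ in the Poisson integral shows $|P_y(\gamma)(z)|=O(\log(2+|z|))$ on $\C_+$.
\item Hence $h=A\log|z+il|^2-P_y(\gamma)$ is harmonic in $\C_+$, continuous up to $\R$, zero on $\R$, and of logarithmic growth.
\item Its odd reflection is an entire harmonic function of sublinear growth, so $h\equiv 0$.
\item Consequently $2A\log(z+il)$ and the Schwarz integral of $\gamma$ differ by an imaginary constant.
\item Letting $y\to 0^+$ in the imaginary parts gives $\widetilde{\gamma}(x)=2A\arg(x+il)+c$. This is valid at every point because $\gamma$ is smooth.
\end{itemize}
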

\begin{proof}
The claim in $(i)$ is obvious to verify, hence we restrict our attention to $(ii)$ and $(iii)$. Set $L(x) = \log (1+x^2)$ and note that since $L$ is an even function, we have
\[
\widetilde{L}(x) = \lim_{ \varepsilon \to 0+} \int_{\{t:|x-t|>\varepsilon\}} \frac{L(t)}{x-t} \frac{dt}{\pi}, \qquad x \in \R.
\]
Integrating by parts, applying a partial fraction decomposition, and using symmetry, we obtain
\begin{multline*}
\frac{d}{dx}\widetilde{L}(x)=   \lim_{ \varepsilon \to 0+} \int_{\{t:|x-t|>\varepsilon\}} \frac{l(t)}{(x-t)^2} \frac{dt}{\pi} =  \lim_{ \varepsilon \to 0+} 2\int_{\{t:|x-t|>\varepsilon\}}  \frac{2t}{(x-t)(1+t^2)} \\
= -\frac{2}{1+x^2} \int_{\R} \frac{1}{1+t^2} \frac{dt}{\pi} = - \frac{-2}{1+x^2}.
\end{multline*}
Now since $\gamma(x)= 2A\log l + A\cdot L(x/l)$, we get 
\[
\left(\widetilde{\gamma}\right)'(x) = \frac{A}{l} \cdot \frac{d}{dx}\widetilde{L}(x/l) = - \frac{2A}{l} \frac{1}{(1+(x/l)^2)}.
\]
This proves the claim.
\end{proof}

\proofpart{4}{Finalizing the proof:}
We can now complete the second proof of the medium version of the BM-1 Theorem.

\begin{proof}[Second proof of \thref{THM:BM1MED}]
Assume that $0<w \leq 1$ is a weight which satisfies the conditions $\Omega:=\log w \in L^1(dP)$ and $\norm{(\widetilde{\Omega})'}_{L^\infty(\R)} < \pi a$, for some $a>0$. Set
\[
\Omega_1(x) := - \log w_1(x) = \Omega(x) + \gamma(x), \qquad x\in \R.
\]
where $\gamma(x) = A \log(l^2 + x^2)$, and note that according to \thref{LEM:PROPgamma}, we may for any $A>0$, pick a number $l=l_A>0$ large enough so that 
\[
\norm{(\widetilde{\Omega_1})'}_{L^\infty(\R)} \leq \norm{(\widetilde{\Omega})'}_{L^\infty(\R)} + \norm{(\widetilde{\gamma})'}_{L^\infty(\R)} \leq \pi a.
\]
Invoking \thref{PROP:BMFINAL}, we can exhibit a function $m_1 \geq 0$ on $\R$, such that $m^2_1 (x) w^2_1(x) e^{2\pi i a x}$ is outer and 
\[
\log m_1(x) \leq 4 a + C_3 \log (1+|x|), \qquad x \in \R.
\]
Now this implies that 
\[
w_1(x)m_1(x) \leq e^{4a +C_2} \frac{w(x)(1+|x|)^{C_3}}{(x^2+l^2)^A} \leq w(x) e^{4a +C_2} \frac{1}{(1+x^2)^{A-C_3}} \leq  \frac{w(x)}{1+x^2} \in L^2(\R),
\]
if $A>0$ large enough. Invoking Dyakonov's lemma, we can find $f\in H^1_a$ such that  
\[
\abs{f(x)} = w_1(x)m_1(x) \leq w(x), \qquad x\in \R,
\]
which readily implies that $w$ is $a$-admissible, hence the second proof of the BM1-Theorem is complete.
\end{proof}

As was also mentioned in the previous subsection, the statement of \thref{PROP:BMFINAL} has the drawback that $a$-admissibility of $w$ depends on the supremum norm of the (uniform) Lipschitz constant of $\widetilde{\Omega}$, a quantity that is extremely difficult to confirm in practice. The reader may therefore object that this result sits rather far from the strong BM-Theorem, whose hypotheses involve only the qualitative Lipschitz continuity of $\Omega$, and for which admissibility actually holds for every $a>0$. 

It was proved in \cite{mashreghi2006beurling} (see Theorem 2 therein) that the strong BM1 Theorem can actually be obtained from \thref{PROP:BMFINAL}, relying on the following striking assertion, one whose plausibility is far from obvious and demands some courage to even conjecture.
\begin{thm}[Nazarov's Majorization Theorem] \thlabel{THM:MAINLEMMABM} Let $0<\omega\leq 1$ with $\log \omega \in L^1(dP)$ which is uniformly Lipchitz continuous on $\R$. For any $a>0$, there exists a function $0<\omega_a \leq1$ with the following properties:
\begin{enumerate}
    \item[(i)] $\omega_a \leq \omega$,
    \item[(ii)] The function $\Omega_a := \log \frac{1}{\omega_a} \in L^1(dP)$,
    \item[(iii)] $\Omega_a$ and $\widetilde{\Omega_a}$ are both uniformly Lipschitz continuous on $\R$ with  
    \[
    \sup_{x\in \R} \, \abs{\Omega_a '(x)}\leq a, \qquad \sup_{x\in \R}\,  \abs{\widetilde{\Omega_a} '(x)}\leq a.
    \]
\end{enumerate}

\end{thm}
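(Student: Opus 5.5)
We will build $\Omega_a$ as a large majorant of $\Omega:=\log\frac{1}{\omega}$. By the Lipschitz hypothesis, $\Omega$ is controlled from below by its averages over unit intervals, so by rescaling, condition (i) only has to hold up to an additive constant. The plan is to work on a partition of $\R$ into intervals $\{I_k\}$ that are \emph{adapted} to $\Omega$ and to define $\Omega_a$ as a superposition of smooth ``tents'' placed over these intervals.

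The guiding model is the function $\gamma(x)=A\log(l^2+x^2)$ from \thref{LEM:PROPgamma}. Its conjugate has derivative of size $A/l$. More generally, a tent of height $h$ and width $\ell$ that satisfies $h\lesssim a\ell$ contributes $O(a)$ to $\widetilde{\Omega_a}'$ near its support. Away from the support, its contribution is $O(h\ell/\mathrm{dist}^2)$.

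Concretely, the first step is to select intervals by a stopping-time (Whitney/Beurling--Malliavin-type) decomposition. We declare an interval $I$ \emph{long} if $\sup_I \Omega \geq \varepsilon a |I|$, and choose a maximal disjoint family $\{I_k\}$ of such intervals. The second step is to check that the total mass satisfies
\[
\sum_k \frac{|I_k|^2}{1+\mathrm{dist}(0,I_k)^2}<\infty.
\]
This should follow from $\Omega\in L^1(dP)$ combined with the Lipschitz property. Once we know $\Omega\geq c\varepsilon a|I_k|$ on a fixed fraction of $I_k$, we get $|I_k|^2/(1+x^2)\lesssim \int_{I_k}\Omega\,dP$ up to constants.

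In the third step we set
\[
\Omega_a = C+\sum_k \varepsilon a|I_k|\,\phi\bigl((x-c_k)/|I_k|\bigr),
\]
where $\phi$ is a fixed smooth bump equal to $1$ on $[-1,1]$, and then verify the required properties:
\begin{itemize}
\item Condition (ii) is exactly the summability obtained in the second step.
\item The bound $|\Omega_a'|\leq a$ is immediate once the doubled intervals have bounded overlap.
\item Condition (i), namely $\Omega_a\geq\Omega$, uses the fact that, off the long intervals, $\Omega$ is below $\varepsilon a$ times the local scale. By the Lipschitz property this keeps $\Omega$ bounded by the bump heights plus a constant.
\end{itemize}

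The main obstacle is the fourth step, the bound $\|\widetilde{\Omega_a}'\|_\infty\leq a$. For each bump we have $\widetilde{(\text{bump})}'=H(\text{bump}')$. The local term is $O(\varepsilon a)$, but the tails add up to
\[
\sum_k \frac{\varepsilon a|I_k|^2}{\mathrm{dist}(x,I_k)^2+|I_k|^2},
\]
and this sum need not be bounded if large intervals cluster near $x$.

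The first remedy we will try is to \emph{regularize the family}: replace $\{I_k\}$ by a family in which neighbouring intervals have comparable lengths, that is, a Whitney-type smoothing of the length function $\ell(x)$ so that it is $1$-Lipschitz. Then $\sum_k |I_k|^2/(\mathrm{dist}^2+|I_k|^2)$ is comparable to $\int \ell(x)\,\ell(t)\,dt/((x-t)^2+\ell(t)^2)$, which is bounded by a geometric sum over dyadic scales. Enlarging intervals in this way only increases $\Omega_a$, so (i) is preserved.

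We must also check that (ii) survives the regularization. This is where we expect the genuine difficulty to lie, because it requires showing that the Lipschitz envelope of $\ell$ still has finite Poisson mass. If this fails, we will fall back on Nazarov's iterative approach: alternately replace $\Omega_a$ by $\max(\Omega_a,\cdot)$ corrections, at each stage absorbing the uncontrolled part of $\widetilde{\Omega_a}'$ into a new family of tents with a geometrically decreasing budget $\varepsilon 2^{-j}a$.
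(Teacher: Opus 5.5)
There is a genuine gap. It sits in your fourth step, and the remedy you propose there does not work. With the paper's normalisation $\widetilde f(x)=\frac1\pi\,\mathrm{p.v.}\int f(t)\bigl(\frac{1}{x-t}+\frac{t}{1+t^2}\bigr)dt$ one has
\[
\widetilde{\Omega_a}'(x)=-\frac{1}{\pi}\,\mathrm{p.v.}\!\int_{\R}\frac{\Omega_a(t)-\Omega_a(x)}{(t-x)^2}\,dt .
\]
So every non-negative bump supported away from $x$ contributes to $\widetilde{\Omega_a}'(x)$ with the same, negative, sign, and there is no cancellation to exploit. Consequently any majorant with a \emph{valley} at $x$ that persists over many scales has very negative $\widetilde{\Omega_a}'(x)$, however small its Lipschitz constant: a valley of slope $c$ between scales $r$ and $R$ contributes a term of order $-c\log(R/r)$. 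This lower bound is exactly the one the paper uses, since it needs $\widetilde\Omega+\pi a x$ non-decreasing. The upper bound, by contrast, is automatic for smooth non-negative bumps with boundedly overlapping supports.

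Your Whitney-type regularisation with comparable neighbouring lengths does not remove such valleys. It still allows one interval of length comparable to its distance from $x$ at every dyadic scale, and each of these contributes $\asymp 1$ to your tail sum. So the claimed bound by a geometric sum over dyadic scales is false. Concretely, around $x_m=4^m$ place intervals of length $2^{j-1}$ at distance $2^j$ from $x_m$, for $1\le j\le m$, and let $\Omega$ consist of tents of slope $\varepsilon a$ over them. Then:
\begin{itemize}
\item $\Omega$ is Lipschitz and its Poisson mass is $\lesssim\sum_m 4^m16^{-m}<\infty$;
\item neighbouring lengths are comparable, and the Lipschitz envelope of the length function is $O(1)$ at $x_m$;
\item nevertheless $\sum_k |I_k|^2/(\mathrm{dist}(x_m,I_k)^2+|I_k|^2)\asymp m$, so your $\Omega_a$ has $\widetilde{\Omega_a}'(x_m)\to-\infty$.
\end{itemize}
Conversely, the point you worry about is harmless. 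The envelope $\sup_k(|I_k|-\mathrm{dist}(x,I_k))^+$ is dominated by a sum of tents of mass $\asymp |I_k|^2$, so (ii) survives any such regularisation.

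What is missing is a mechanism that \emph{fills} valleys. In the example, this means replacing the $m$-th cluster by a single plateau of height $\asymp \varepsilon a 2^m$ over an interval of length $\asymp 2^m$ containing $x_m$. You also need a proof that this merging, which itself creates new valleys at larger scales, can be organised so that the total Poisson mass stays finite. That is where $\Omega\in L^1(dP)$ and the Lipschitz hypothesis must be used quantitatively, and it is the real content of the theorem; the notes do not prove it and defer to \cite{mashreghi2006beurling}. Your fallback, an ``iterative approach with budget $\varepsilon 2^{-j}a$'', only names this step. Nothing shows that the set where the tails are large can be covered by new intervals of geometrically decreasing Poisson mass, or that the new tents do not regenerate tails of the same size.

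Two smaller points. First, every short interval is long wherever $\Omega>0$, so a ``maximal disjoint family'' needs a Vitali-type selection. After that, (i) only holds with larger bumps over enlarged intervals such as $5I_k$. Second, nothing in your selection makes these enlarged intervals, or their transition zones, overlap boundedly, so even $|\Omega_a'|\le a$ is not immediate for a sum. Taking a maximum of the bumps fixes the Lipschitz bound, but not the Hilbert transform bound.
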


As indicated, this result is admittedly technical and requires some intricate properties of the Hilbert transform, which we shall not include here. We therefore refer the reader to \cite{mashreghi2006beurling}. However, the reader should have no problem on reducing the strong BM1 Theorem to that of \thref{PROP:BMFINAL}, using Nazarov's Majorization Theorem.

    



\subsection{The Second BM-Theorem on completeness of exponentials} 
The second Beurling-Malliavin Theorem bears a crucial piece of the puzzle of the ambitious program of determining Fourier uniqueness pairs, as we encountered in Chapter 2, namely when do we have the following manifestation of the uncertainty principle:
\[
f\in L^2(\R), \qquad \text{ess supp}(f) \subseteq[-a,a], \qquad \widehat{f}\lvert_{\Lambda} = 0 \qquad \implies f\equiv 0,
\]
where $\Lambda$ is a subset of $\R$ and $a>0$. Note that the compact support of $f$ makes $\widehat{f}$ into an entire analytic functions, hence to avoid redundancy, one restricts the attention to discrete subsets of real numbers $\Lambda$. Of course, problem remains unaltered if the roles of $f$ and $\widehat{f}$ are interchanged. We shall now illustrate two classical reformulations of this problem. Fix $a>0$ and note that since $f\in L^2(\R)$, the Plancherel's Theorem allows us to rephrase the problem as for which discrete sets $\Lambda \subset \R$ do we have
\[
\widehat{f}(\lambda) = \int_{-a}^{a} \widehat{f}(\xi) e^{-i\xi \lambda} d\xi = 0 \qquad \forall\lambda \in \Lambda \qquad \implies \widehat{f} \equiv 0?
\]
In other words, we are asking for which pairs of $a>0$ and discrete subsets of real numbers $\Lambda$ is the linear span of the associated family of exponentials 
\[
\mathcal{E}(\Lambda)=\text{Span}\{e^{i\lambda t}: \lambda \in \Lambda \}
\]
dense in $L^2([-a,a])$? This implies that the problem is equivalent to a certain \emph{completeness problem} of exponentials in $L^2$ of a segment on the real line. \\

Using the Paley-Wiener Theorem, one can easily show that the Fourier transform maps $f\in L^2(\R)$ with $\text{ess supp}(f) \subseteq[-a,a]$ unitarily on the space of entire analytic functions $\widehat{f} \in L^2(\R)$ of exponential-type at most $a>0$. Formally, we identity 
\[
\textbf{PW}_{a} := \{f \in L^2(\R): \text{ess supp}(\widehat{f}) \subseteq[-a,a] \}
\]
as a closed subspace of $L^2(\R)$, but which according to Paley-Wiener Theorem consists of entire analytic functions of exponential-type at most $a$. In this framework, our problem can be reformulated as characterizing the discrete subsets $\Lambda \subset \R$ for which the following holds:
\[
\widehat{f}\in \textbf{PW}_a \qquad \widehat{f}\lvert _{\Lambda} = 0 \qquad \implies \qquad f \equiv 0.
\]
This is readily a \emph{uniqueness problem} for analytic functions in $\textbf{PW}_a$, and these two seemingly different perspectives makes the initial problem structurally rich. It turns out that the problem becomes simpler if we drop the strict prescription of $a>0$ and instead consider the quantity:
\[
\rho(\Lambda) := \sup \{ a>0: \, \mathcal{E}(\Lambda) \, \, \text{dense in} \, \, L^2([-a,a]) \}.
\]
We shall now introduce a more geometric quantity, which is intimately related to this problem. A sequence of disjoint intervals $\{I_n\}_n$ on the real line is said to be \emph{long} if 
\[
\sum_n \frac{\abs{I_n}^2}{1+ (\dist{I_n}{0})^2} = +\infty.
\]


For instance, if a system $\{I_n\}_n$ of disjoint intervals are uniformly bounded away from the origin, then one can show that $\{I_n\}_n$ is long if and only if their length relative to their distance to the origin fails to be square-summable:
\[
\sum_n \left( \frac{|I_n|}{\dist{I_n}{0}}\right)^2 = +\infty.
\]
On the contrary, a sequence of disjoint interval $\{I_n\}_n$ can never be long if their total lengths is too small
\[
\sum_n \abs{I_n}^2 < \infty.
\]
This somewhat justifies their label. With this notion at hand, we introduced the so-called \emph{upper BM-density} of a discrete sequence $\Lambda \subset \R$ as the quantity
\[
D^*(\Lambda) := \sup \left\{ d \geq 0: \, \exists \, \text{long}  \, \{I_n\}_n \, \text{such that} \, \#(\Lambda \cap I_n ) \geq d \abs{I_n}, \, \, \forall n \right\}.
\]
Roughly speaking, the upper BM-density of a discrete sequence $\Lambda \subset \R$ measures the largest possible density of $\Lambda$ among the family of long intervals on the real line. 

The second Beurling--Malliavin Theorem relates the completeness problem on exponentials with frequencies in $\Lambda$, or equivalently the uniqueness problem of in Paley-Wiener spaces, to a more metric/geometric density condition of $\Lambda$.
\begin{thm}[BM2-Theorem] For any discrete sequence $\Lambda \subset \R$ the following identity holds:
\[
\rho(\Lambda) = \pi D^*(\Lambda).
\]
\end{thm}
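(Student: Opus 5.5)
The identity $\rho(\Lambda)=\pi D^*(\Lambda)$ splits into two inequalities, and the plan is to prove them separately. Throughout I work in the uniqueness formulation: $\mathcal{E}(\Lambda)$ fails to be dense in $L^2([-a,a])$ exactly when there is a non-trivial $F\in\textbf{PW}_a$ with $F|_\Lambda=0$. The assertion therefore says that such an $F$ exists for $a>\pi D^*(\Lambda)$, and does not exist for $a<\pi D^*(\Lambda)$.

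\emph{Step 1: $\rho(\Lambda)\geq \pi D^*(\Lambda)$.} Fix $d<D^*(\Lambda)$ and a long family $\{I_n\}_n$ with $\#(\Lambda\cap I_n)\geq d|I_n|$. Suppose $F\in\textbf{PW}_a$ is non-trivial, vanishes on $\Lambda$, and $a<\pi d$. I would divide out a few zeros so that $F(0)\neq 0$, and use that $F$ lies in the Cartwright class. On each $I_n$, $F$ has at least $d|I_n|$ real zeros. Take the Hadamard/Blaschke factorization in $\C_+$, or compare $\log|F|$ with $\log|S|$ for $S(z)=\sin(\pi d z)$. The excess zeros on $I_n$ force $\log|F|$ to dip on a substantial part of $I_n$ by an amount $\gtrsim (\pi d-a)|I_n|$ relative to the type. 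This is a Jensen/Poisson-type estimate, in the spirit of the $\cosh$-functions used in the proof of Beurling's \thref{THM:BEURLING}, but run in reverse.

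Summing $\int_{I_n}\log|F|\,dP\lesssim -(\pi d-a)|I_n|^2/(1+\operatorname{dist}(I_n,0)^2)+O(|I_n|/(1+\operatorname{dist}(I_n,0)^2))$ over the long family then contradicts $\log|F|\in L^1(dP)$. The delicate point here is making the local zero-counting estimate uniform. I would try Jensen's formula on discs of radius comparable to $|I_n|$ centered at the midpoint of $I_n$, combined with the Poisson representation of $\log|F(x+iy)|-a|y|$ in each half-plane.

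\emph{Step 2: $\rho(\Lambda)\leq\pi D^*(\Lambda)$.} This is the hard direction. Given $a>\pi D^*(\Lambda)$, I must construct a non-trivial $F\in\textbf{PW}_a$ vanishing on $\Lambda$. The plan is to first build an entire function $G$ of small exponential type $b$ with $\pi D^*(\Lambda)<b<a$, vanishing exactly on $\Lambda$, for instance a regularized canonical product $G(z)=\lim_R\prod_{|\lambda|<R}(1-z/\lambda)$. One then needs $\log|G|\in L^1(dP)$, so that $G\in\textbf{Cart}$. Next apply the BM-multiplier theorem, in the form of the Strong BM1 \thref{THM:BM1FULL}, to the weight $w=\min(1,1/|G|)$ regularized to have uniformly continuous logarithm. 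This produces $m\in\textbf{PW}_{a-b}$ with $|m|\leq w(1+x^2)^{-1}$, and then $F=mG$ lies in $\textbf{PW}_a$, is non-trivial, and vanishes on $\Lambda$.

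The main obstacle is that $\log|G|$ is not uniformly continuous: it has logarithmic singularities at the points of $\Lambda$. One also needs its oscillation to be controlled, which is exactly where the hypothesis that no long family carries density exceeding $D^*$ must enter. I would first replace $\Lambda$ by a ``regularized'' sequence $\Lambda'$ that agrees with $\Lambda$ up to $\ell^2$-relative perturbations. The absence of long families of density greater than $b/\pi$ should allow a redistribution of $\Lambda$ (the Beurling--Malliavin lemma on short intervals) so that the counting function $n_\Lambda(x)-bx/\pi$ is bounded above up to a short family. Its Hilbert transform then has bounded derivative off short intervals, and one can invoke \thref{THM:BM1MED} via Nazarov's Majorization \thref{THM:MAINLEMMABM} in place of the uniform continuity assumption. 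This redistribution lemma is the genuinely combinatorial step where I expect most of the difficulty to lie.
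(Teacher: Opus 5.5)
The paper does not prove BM2; it only remarks that the proof rests on the multiplier theorem and refers to the literature. Your proposal therefore has to stand on its own. Its architecture is the standard one, but the hard direction has a genuine gap, located exactly where BM2 goes beyond BM1.

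\textbf{The gap in Step 2.} Step 2 presupposes an entire $G$ of type $b<a$ vanishing on $\Lambda$ with $\log^+|G|\in L^1(dP)$, i.e.\ $G\in\textbf{Cart}$. Once you have such a $G$, the multiplier theorem for $\textbf{Cart}$ stated in the notes finishes at once: $F=G\,m_\varepsilon\,\frac{\sin\varepsilon z}{z}$ is a non-trivial element of $\textbf{PW}_{b+2\varepsilon}$ vanishing on $\Lambda$. Two side remarks follow.
\begin{itemize}
\item The detour through \thref{THM:BM1MED} and \thref{THM:MAINLEMMABM} is unnecessary. Note also that \thref{THM:MAINLEMMABM} \emph{assumes} a uniformly Lipschitz $\log\omega$; it does not replace that hypothesis.
\item The obstacle you name is not the real one, since $\log\min(1,1/|G|)=-\log^+|G|$ is harmless at the points of $\Lambda$.
\end{itemize}
The real obstacle is that $D^*(\Lambda)<b/\pi$ does not yield such a $G$ with zero set $\Lambda$, nor with zero set any relative-$\ell^2$ perturbation of $\Lambda$. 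By Cartwright--Levinson, the zeros of a function in $\textbf{Cart}$ have a density: the number of zeros in $\{|z|<r,\ \Re z>0\}$ divided by $r$ converges. By contrast, $D^*$ only limits excess concentration on long families and says nothing about gaps.

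For example, take $\Lambda=\mathbb{Z}\setminus\bigcup_{k\geq1}\pm[4^k,2\cdot 4^k]$. It has $D^*(\Lambda)=1$, but $\#(\Lambda\cap(0,r))/r$ oscillates between about $1/3$ and $2/3$. Consequently:
\begin{itemize}
\item $G(z)=z\prod_{\lambda\in\Lambda,\,\lambda>0}(1-z^2/\lambda^2)$ has exponential type, yet $\log^+|G|\notin L^1(dP)$, so no version of BM1 applies to $\min(1,1/|G|)$.
\item Any $\Lambda'$ with $|\lambda'-\lambda|=o(|\lambda|)$ has the same oscillating density. In any case, a function vanishing on $\Lambda'$ need not vanish on $\Lambda$.
\item For one-sided sets such as $\mathbb{Z}_{>0}$, your product $\lim_R\prod_{|\lambda|<R}(1-z/\lambda)$ even diverges, and by Lindel\"of's theorem no function of exponential type has exactly these zeros.
\end{itemize}
What is needed is to \emph{enlarge} $\Lambda$ by filling its gaps. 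You must find $\Lambda'\supseteq\Lambda$ that still carries no long family of density above $b/\pi$, and whose counting function $n_{\Lambda'}(x)-bx/\pi$ is controlled from \emph{both} sides well enough that its generating function lies in $\textbf{Cart}$ with type at most $b$. Roughly, the point is that the upward excursions of $n_\Lambda(x)-bx/\pi$ are confined to a short family of intervals and can be absorbed. Proving that $D^*(\Lambda)<b/\pi$ permits this enlargement is the combinatorial core of the theorem. Your proposal only names this step, and the version you sketch (perturbing $\Lambda$, bounding the counting function from above only) would not suffice even if proved.

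\textbf{Step 1.} This is sound in outline, but the tool you name is not sharp. Jensen's formula on a disc of radius $R$ centred at the midpoint $c_n$ of $I_n$ weights a zero $\lambda\in I_n$ by $\log(R/|\lambda-c_n|)$. If the zeros crowd toward the ends of $I_n$, this weight is only about $\log(2R/|I_n|)$. Optimizing in $R$ then gives a contradiction only when $\pi d>e\,a$, not when $\pi d>a$.

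The sharp local estimate comes from Jensen's formula for the confocal ellipses with foci at the endpoints $\alpha_n<\beta_n$ of $I_n$, in which every zero on the segment counts fully. With $\ell=|I_n|/2$ and $|F(z)|\leq Ce^{a|\Im z|}$, it gives
\[
\frac{1}{\pi}\int_{\alpha_n}^{\beta_n}\frac{\log|F(x)|\,dx}{\sqrt{(x-\alpha_n)(\beta_n-x)}}\leq \frac{2a\ell}{\pi}\sinh s-\#(\Lambda\cap I_n)\,s+\log C\leq 2\ell\Big(\frac{a}{\pi}\sinh s-ds\Big)+\log C, \qquad s>0.
\]
For small fixed $s$ this is at most $-c\ell+\log C$ precisely when $d>a/\pi$. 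Averaging over concentric subintervals of half-length in $[(1-\eta)\ell,\ell]$ removes the arcsine weight and yields $\int_{I_n}\log|F|\,dx\leq -c|I_n|^2+C|I_n|$. After the usual normalization $|I_n|\lesssim 1+\operatorname{dist}(I_n,0)$, your summation over the long family then goes through.
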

The proof of the BM2-Theorem hinges on the BM-Multiplier Theorem in the form of a multiplier, but it is beyond the scope of these notes. Besides the original source in \cite{beurling1967closure}, a careful treatment of the matter can be found in the book of P. Koosis in \cite{MR1195788}, and in V. Havin and B. J\"oricke in \cite[Ch. 4]{havinbook}. We also refer the reader to a survey of N. Makarov and A. Poltoratski in \cite{makarov2005meromorphic}, where a different approach to the BM2-Theorem, using the kernels of Toeplitz operators is provided.

\subsection{Further results}

There are also versions of the BM1-Theorem where uniform admissibility is substituted by admissibility in $p$-th mean. More specifically, we for which weights $w$ does there exists a non-trivial compactly supported $f\in L^2(\R)$ such that
\begin{equation}\label{EQ:BMpth}
\int_{\R} \frac{\abs{\widehat{f}(\xi)}^p}{w(\xi)^p} d\xi < \infty, \qquad p\geq 1.
\end{equation}
We declare such weights $w$ to be $BM(p)$-majorant. From this perspective, one can view the BM1-Theorem as the $p=\infty$ case of the above problem. It turns out that admissibility in $p$-th mean is slightly different than uniform admissibility. This is somewhat visible from the simple observation that while $w(\xi_0)=0$ forces any $\widehat{f}(\xi_0)$ in the uniform setting, while this is not necessarily the case in regime of $p$-th mean. It is not difficult to show that any BM-majorant $f$ is also a $BM(p)$-majorant, for any $p\geq 1$. Indeed, if $f\in L^2(\R)$ is non-trivial a compact supported function with
\[
\abs{f(\xi)} \leq w(\xi), \qquad \xi \in \R,
\]
then the function $f_\varepsilon (x) = f\ast 1_{[0,\varepsilon]} \ast 1_{[0,\varepsilon]}(x)$, with $\varepsilon>0$ has only slightly larger supported than $f$, and computing the Fourier transform one easily obtains

\[
\int_{\R} \frac{\abs{\widehat{f_\varepsilon}(\xi)}^p}{w(\xi)^p} d\xi = \int_{\R} \frac{\abs{\widehat{f}(\xi)}^p}{w(\xi)^p} \cdot \frac{\sin^{2p}(\xi/\varepsilon)}{\xi^{2p}} d\xi \leq \int_{\R} \frac{\sin^{2p}(\xi/\varepsilon)}{\xi^{2p}} d\xi < \infty.
\]
Now the main result of the original paper of Beurling and Mallivian in \cite{beurling1962fourier} goes as follows.

\begin{thm}[BM1-Mean] Let $w$ be a bounded weight in $\R$ with $\log w \in L^1(dP)$. Set $W(x) = (\log w(x) )/ x$ and assume that the following energy estimate holds:
\[
\tag{E} \int_{\R} \int_{\R} \frac{|W(x)-W(y)|^2}{|x-y|^2} dx dy < \infty.
\]
Then for every $p\geq 1$ and $a>0$, there exists a non-trivial function $f$ in $\R$ with
\[
\int_{\R} \frac{\abs{\widehat{f}(\xi)}^p}{w(\xi)^p} d\xi < \infty,
\]
such that $\supp{fdm}\subseteq[0,a]$.
\end{thm}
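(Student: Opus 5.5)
Since $\supp{\widehat{f}}$ is required to be compact, by the Paley--Wiener Theorem the task is to produce a non-trivial $f\in L^2(\R)$ with spectrum in $[0,a]$ (after a modulation) such that $\int_{\R}|f|^p w^{-p}\,d\xi<\infty$. (The roles of $f$ and $\widehat{f}$ in the displayed statement should be swapped accordingly.) The plan is to reduce to a uniform-type majorant and then invoke the machinery of the previous subsections.

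\textbf{Step 1: reduction to a uniform majorant.} It suffices to find a non-trivial $f\in H^2_a$ with $|f|\le w_1:=w\,(1+x^2)^{-1}$ up to a bounded factor. Indeed, $|f|/w\le (1+x^2)^{-1}$ then lies in every $L^p$. We apply the reformulation of the BM1-problem via Dyakonov's \thref{LEM:ModH2a}. We seek $m\ge 0$ with $\log m\in L^1(dP)$ and $mw_1\in L^2$ such that $m^2w_1^2e^{2\pi iax}$ is outer. Following \eqref{EQ:BMFACT} and \eqref{EQ:BMEQREDUC}, $m$ is given by $m=e^{-\widetilde{u}}$ with $u=\widetilde{\Omega}_1+\pi ax-\pi k(x)-\pi/2$ and $\Omega_1=-\log w_1$. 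Thus everything reduces to controlling $\widetilde u$ from above, i.e.\ to bounding $\log m\le C+C'\log(1+|x|)$ as in \thref{PROP:BMFINAL}. That proposition, however, needs $\|\widetilde{\Omega}'\|_\infty\le\pi a$, which (E) does not give.

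\textbf{Step 2: use (E) to regularize.} The energy condition (E) says precisely that $W=\Omega/x$ lies in the homogeneous Sobolev space $\dot H^{1/2}(\R)$. Equivalently, its harmonic extension to $\C_+$ has finite Dirichlet integral, and $\widetilde W\in\dot H^{1/2}$ as well, since the conjugate transform is an isometry there. I would use this as follows.
\begin{itemize}
\item[(a)] Express $\widetilde{\Omega}(x)=x\widetilde W(x)+c$, using that $\Omega\in L^1(dP)$ to control the constant.
\item[(b)] Replace $\Omega$ by a majorant $\Omega_a\ge\Omega$, obtained by averaging $W$ over intervals of length comparable to $|x|$ (a Whitney-type smoothing). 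The finiteness of the $\dot H^{1/2}$-energy should imply that the local oscillation of $x\widetilde W$ on such intervals tends to zero in an $\ell^2$-sense.
\end{itemize}
This is exactly what is needed to make the step function $k(x)$ track $\widetilde\Omega_a/\pi+ax$ with bounded error on most intervals. The key estimate in the Lipschitz case was $I_1\le 4a$, which used monotonicity of $\widetilde\Omega+\pi ax$. Here I would instead only obtain $\int |I_1|^p$-type control, which suffices because we only need $mw_1\in L^p$ rather than uniform boundedness.

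\textbf{Step 3: conclude.} With $m$ satisfying $\int m^p(1+x^2)^{-N}<\infty$, we pass to $w_2=w/(x^2+l^2)^A$ as in \thref{LEM:PROPgamma}, with $A$ large so that the polynomial loss is absorbed. Dyakonov's lemma then yields $f\in H^2_a$ with $|f|=mw_2$, and $\int |f|^pw^{-p}<\infty$ follows.

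The main obstacle is Step 2: converting the $\dot H^{1/2}$ energy of $W$ into control of $\log m=-\widetilde u$. The key difficulty is that the integer-part correction $k(x)$ is no longer monotone. My first attempt would be to estimate $\int|I_1(x)|^2\,dP(x)$ by the energy (E) plus $a$, via Plancherel for the truncated Hilbert kernel. This would give $\log m\in L^2(dP)$ and therefore $m$ in a sufficiently large exponential class for the $L^p$ integrability.
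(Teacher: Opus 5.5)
The notes do not prove this theorem (they refer to the literature), so your outline has to stand on its own, and its decisive step fails. The Havin--Mashreghi--Nazarov construction you import works only because $\sup_{\R}\lvert\widetilde\Omega'\rvert\le\pi a$ makes $\widetilde\Omega/\pi+ax$ non-decreasing. Then $k$ is non-decreasing, the $k$-part of $I_1$, namely $-\int_{\lvert t-x\rvert<1}\frac{k(t)-k(x)}{t-x}\,dt$, is $\le 0$, and each jump of $u$ produces a zero of $m$.

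Condition (E) gives no such one-sided control. You have $\widetilde\Omega=x\widetilde W+c$ with $\widetilde W$ merely in $\dot H^{1/2}$, which need not be continuous or even locally bounded. So $\widetilde\Omega/\pi+ax$ will in general cross integer levels downward, possibly infinitely often near one point. At such a crossing $t_0$ the function $u$ jumps by $+\pi$, and (if $\widetilde\Omega$ is smooth near $t_0$) $\log m(x)=\log\frac{1}{\lvert x-t_0\rvert}+O(1)$, i.e.\ $m\asymp\lvert x-t_0\rvert^{-1}$: a pole instead of a zero. Then $\int m^p(1+x^2)^{-N}\,dx=\infty$ for every $p\ge1$, which is exactly the quantity you need to be finite. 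Even $mw_1\in L^2$, needed for Dyakonov's lemma, fails unless $w$ vanishes at $t_0$.

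Your proposed remedy cannot fix this. A bound $\log m\in L^2(dP)$ says nothing about $\int m^p$, since $\log\frac{1}{\lvert x\rvert}\in L^2_{\mathrm{loc}}$ while $\frac{1}{\lvert x\rvert}\notin L^1_{\mathrm{loc}}$. Integrability of $m$ needs an upper bound on $\log m$, or exponential integrability with a small constant, and a Plancherel estimate for a truncated Hilbert kernel yields neither. So the actual use of the energy condition, which is the whole content of the theorem, is missing.

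The uniform reductions also aim at something false under these hypotheses. This concerns Step 1 and the majorant $\Omega_a\ge\Omega$ in Step 2(b); note also that averaging $W$ over Whitney intervals never gives a pointwise majorant. Take $\phi(t)=\big(\log\log\frac{1}{\lvert t\rvert}-1\big)_+$, which has finite energy in the sense of (E): its radial extension to $\C_+$ has Dirichlet integral $\pi/e$. The energy in (E) is invariant under translations and dilations. Hence $W(x)=-\sum_k 2^{-k}\phi\big((x-x_k)/\delta_k\big)$ with $x_k=1+2^{-k}$ and $\delta_k=2^{-k-2}$ satisfies (E), while $w(x)=e^{xW(x)}\le 1$ and $\log w\in L^1(dP)$.

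Since $w(x)\to 0$ as $x\to x_k$, any entire $f$ with $\lvert f\rvert\le Cw$ a.e.\ vanishes at every $x_k$, and $x_k\to 1$ forces $f\equiv 0$. So this $w$ is not a BM-majorant, and no bounded $m$ exists, nor any regular majorant $\Omega_a\ge\Omega$ that could feed the uniform construction. Yet the mean statement is trivial for this $w$, because $w^{-p}\in L^1_{\mathrm{loc}}$. This is the paper's remark that a $BM(p)$-majorant need not be a BM-majorant. A proof has to use the energy condition together with the integral form of the conclusion from the outset. The uniform machinery cannot be reused, because its key input, monotonicity of $\widetilde\Omega/\pi+ax$, is exactly what (E) does not supply.
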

Note that the energy condition in $(E)$ is a global condition, which accounts for integrability at infinity, and is independent of $p$. The proof is, again, long, and we refer the reader to \cite{koosis1998logarithmic} and \cite{havin2006linear} for details. 

Despite the fact that a $BM(p)$-majorant may not necessarily be a $BM$-majorant, one still obtains the following relation.

\begin{thm}[Beurling--Malliavin, \cite{beurling1962fourier}] Let $w\in L^\infty(\R)$ be a weight satisfying the conditions of BM1-Mean. For any $\varepsilon>0$, the regularized weight $w_\varepsilon$ defined by 
\[
w_\varepsilon(x) := \exp \left( (P_{\varepsilon} \ast \log w) (x) \right), \qquad x\in \R,
\]
is a BM-majorant.
\end{thm}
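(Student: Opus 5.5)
The plan is to derive the statement directly from BM1-Mean with $p=1$. The regularization $\exp(P_\varepsilon\ast\log w)$ comes out naturally if we push the real-line function a distance $\varepsilon$ into the complex plane and use subharmonicity together with Jensen's inequality. Here I read $P_\varepsilon$ as the Poisson kernel of $\C_+$ at height $\varepsilon$, so that $(P_\varepsilon\ast\log w)(x)$ is the harmonic extension of $\log w$ evaluated at $x+i\varepsilon$.

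\emph{Step 1 (input function).} By BM1-Mean with $p=1$ and some $a>0$, there is a non-trivial $f$ with $\supp{f\,dm}\subseteq[0,a]$ and $\int_{\R}|\widehat f|/w\,d\xi<\infty$. Put $F:=\widehat f$. By the Paley--Wiener Theorem, $F$ is entire of exponential type at most $a$. Since $w$ is bounded, $F\in L^1(\R)$. An entire function of exponential type that is integrable on $\R$ is also bounded on $\R$; this is the Plancherel--P\'olya fact, which can be obtained from the Phragm\'en--Lindel\"of results (\thref{THM:PL-half}, \thref{THM:PLQ}) as in the proof of the Paley--Wiener Theorem.

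\emph{Step 2 (key estimate).} Multiply $F$ by a suitable exponential $e^{\pm iaz}$ so that the product $h$ lies in $H^\infty(\C_+)$; this is possible by Phragm\'en--Lindel\"of, since $F$ has type at most $a$ and is bounded on $\R$. Jensen's inequality in the half-plane (already used in the proof of Pollard's Theorem) gives
\[
\log|h(x+i\varepsilon)|\le \int_{\R}\log|h(t)|\,\frac{\varepsilon}{(x-t)^2+\varepsilon^2}\frac{dt}{\pi}.
\]
Since $|h|=|F|$ on $\R$, this reads
\[
\log|F(x+i\varepsilon)|\le a\varepsilon+P_\varepsilon\ast\log|F|(x).
\]
Now split $\log|F|=\log(|F|/w)+\log w$. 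The Poisson kernel is a probability density, so the convex-function Jensen inequality gives
\[
P_\varepsilon\ast\log(|F|/w)(x)\le\log P_\varepsilon\ast(|F|/w)(x)\le \log\Bigl(\tfrac{1}{\pi\varepsilon}\,\bigl\|F/w\bigr\|_{L^1}\Bigr).
\]
Hence $|F(x+i\varepsilon)|\le C(\varepsilon)\,w_\varepsilon(x)$ for all $x\in\R$.

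\emph{Step 3 (assembling the majorant).} Set $G(z):=F(z+i\varepsilon)\,\frac{\sin(\delta z)}{\delta z}$ with a small $\delta>0$, and normalize by the constant $C(\varepsilon)$.
\begin{itemize}
\item $G$ is entire of exponential type at most $a+\delta$.
\item On $\R$ we have $|G|\le |F(\cdot+i\varepsilon)|\le C(\varepsilon)\,w_\varepsilon$, because the sinc factor is bounded by $1$.
\item $G\in L^1(\R)$, since $F(\cdot+i\varepsilon)$ is bounded on $\R$ and the sinc factor contributes $O(1/x^2)$ decay. Multiplying by a second sinc factor would additionally place $G$ in $L^2$.
\item $G\not\equiv0$, since $F\not\equiv 0$.
\end{itemize}
By the Paley--Wiener Theorem (the half-plane analogue used in \thref{LEM:MaBaperp}), $\widehat G$ is supported in an interval of length at most $2(a+\delta)$. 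Hence $w_\varepsilon$ is admissible for this $a$, i.e.\ $w_\varepsilon$ is a BM-majorant.

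The main obstacle is Step 2: justifying the half-plane Jensen/Poisson inequality for $F$, which requires the boundedness of $F$ on $\R$ and the correct choice of the exponential factor so that $h\in H^\infty(\C_+)$. The remaining steps are routine bookkeeping.
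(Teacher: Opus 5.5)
The paper gives no proof of this statement; it only quotes it from Beurling--Malliavin. So there is no route to compare against, and your argument has to be judged on its own. Your reduction to BM1-Mean with $p=1$ is sound, and Step 2 is the right core. Half-plane Jensen for $h=e^{iaz}F$, then the convex Jensen inequality for the probability density $t\mapsto P_\varepsilon(x-t)$ together with $P_\varepsilon\le 1/(\pi\varepsilon)$, gives $|F(x+i\varepsilon)|\le C(\varepsilon)\,w_\varepsilon(x)$. What you call the main obstacle is in fact immediate. From $F(z)=\int_0^a f(t)e^{-itz}\,dt$ you get $|e^{iaz}F(z)|\le\norm{f}_{L^1}$ on $\C_+$ and $|F|\le\norm{f}_{L^1}$ on $\R$, so neither Plancherel--P\'olya nor Phragm\'en--Lindel\"of is needed.

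The step that is actually wrong is the integrability claim in Step 3. The factor $\sin(\delta x)/(\delta x)$ decays only like $1/|x|$, not $1/x^2$. A bounded function times one such factor is therefore in $L^2(\R)$ but need not be in $L^1(\R)$; you have the effect of one versus two factors reversed. Admissibility requires an $L^1$ function, so this must be repaired, and there are two easy ways:
\begin{itemize}
\item use $\bigl(\sin(\delta z)/(\delta z)\bigr)^2$ instead, which decays like $O(x^{-2})$;
\item drop the factor altogether. Since $F\in L^1(\R)$ and $h\in H^\infty(\C_+)$, $h$ is the Poisson integral of its boundary values, so $\int_{\R}|F(x+i\varepsilon)|\,dx=e^{a\varepsilon}\int_{\R}|h(x+i\varepsilon)|\,dx\le e^{a\varepsilon}\norm{F}_{L^1}$.
\end{itemize}
In the second version, $F(\cdot+i\varepsilon)$ is the Fourier transform of $f(t)e^{\varepsilon t}$, which lives on $[0,a]$. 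Its spectrum then has diameter at most $a$. Since $a>0$ in BM1-Mean is arbitrary, you get that $w_\varepsilon$ is $a$-admissible for every $a>0$, which is more than the statement asks.

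One smaller point: with your bound ``length at most $2(a+\delta)$'', the correct conclusion is $a'$-admissibility for $a'>a+\delta$, not ``for this $a$''. This does not affect $w_\varepsilon$ being a BM-majorant.
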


Several dimensional analogues of the BM-multiplier theorem where obtained by I. Vasilyev in \cite{vasilyev2025beurling}. More recently, A. Bergman gave a simpler proof, which essentially reduced the  problem to the classical one-variable setting. The paper is short and elementary, see \cite{bergman2025remark}. We also mention the work of A. Cohen in \cite{cohen2025fractal}, where we utilizes a several variable BM-multiplier Theorem, in order to extend to aforementioned fractal uncertainty principle by J. Bourgain and S. Dyatlov, \cite{cohen2025fractal}. 

Other extensions to the BM-1 Theorem pertain to viewing the Paley-Wiener spaces $H^2_a(\R)$ as a special case of so called Model space. Given an inner function $\theta$ in $\C_+$, the model space $K^2_\theta$ in $\C_+$ is defined as the closed subspace of $f\in H^2(\C_+)$ with the property that 
\[
f, \theta \conj{f} \in H^2(\C_+).
\]
The generalized BM-problem then turns into describing admissible pairs $(\theta,w)$ for which one can exhibit a non-trivial function $f\in K^2_\theta$ with
\[
\abs{f(x)} \leq w(x), \qquad x\in \R.
\]
Note the classical BM-problem corresponds to the inner function $\theta(z)= e^{iaz}$ with $a>0$.

In this generality, the BM-problem becomes very difficult, as the model spaces are no longer characterized by a growth condition of Paley--Wiener type. However, several partial result special classes of meromorphic inner functions $\theta$ have been obtain. We refer the reader to the work of Y. Belov, V. Havin, and J. Mashreghi in \cite{belov2009model,belov2015beurling,havin2003admissible,havin2003admissible2}. A more novel approach involving Toeplitz kernels was realized by N. Makarov and A. Poltoratski in \cite{makarov2005meromorphic}, which was later surveyed in \cite{makarov2010beurling}.

We mention some recent important developments in harmonic analysis which stem from the BM-1 Theorem. A new refined version of the uncertainty principle for Fourier transforms supported on fractal sets was proved by J. Bourgain and S. Dyatlov in \cite{bourgain2018spectral}. Recall that a manifestation of the classical uncertainty principal asserts that for any pair of closed sets $X,Y \subset \R$ and any $f\in L^2(\R)$, we have
\[
\norm{1_Y \mathcal{F}(f 1_X) }_{L^2} \lesssim \abs{X}^{1/2} \abs{Y}^{1/2} \norm{f}_{L^2}.
\]
This estimate roughly quantifies the assertion that if a function is spatially localized in a small set $X$, then no small set $Y$ in the frequency domain can capture a large portion of its Fourier energy. This bound can in general not be improved, and is easily deduced from Cauchy-Schwarz inequality applied to the inner integral. 

Given a number $0<\delta<1$ and a closed set $X \subset \R$ is called $\delta$-regular on scales $a$ to $b$, if there is a probability measure $\mu_X$ supported in $X$ such that 
\[
\mu_X(I)\leq C|I|^\delta, \qquad a \leq \abs{I} \leq b, 
\]
and if in addition $I$ is centered in $X$, the opposite inequality also holds, $\mu_X(I)\geq C^{-1} |I|^\delta$. In view of Frostman's mass distribution principle, this roughly means that the set $X$ has Hausdorff dimension $\delta$ between scales $a$ to $b$. For instance, one can show that the any small $h$-neighborhood of the classical Cantor $1/3$-set is a $\log 2 / \log 3$-regular set on scales $h$ to $1$. 

\begin{thm}[The Fractal UP] Let $\delta \in (0,1)$ and assume $X \subseteq [−1, 1]$ be a $\delta$-regular set on scales $1/N$ to $1$, while $Y \subseteq [-N,N]$ is a $\delta$-regular set on scales $1$ to $N$. Then there is a number $\beta =\beta(\delta)>0$, such that 
\[
\norm{1_Y \mathcal{F}(1_X f)}_{L^2} \lesssim N^{-\beta} \norm{f}_{L^2}, \qquad \forall f\in L^2(\R).
\]
\end{thm}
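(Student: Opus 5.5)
The plan follows the Bourgain--Dyatlov strategy, with the Beurling--Malliavin multiplier theorem in the form of \thref{THM:BM1FULL} (or a several-scale version of \thref{THM:BM1MED}) as the key analytic input. The first step is a reduction. By duality and a rescaling, and since the operator norm of $1_Y\mathcal{F}1_X$ equals that of its adjoint, it suffices to show the following: there is $c=c(\delta)>0$ such that every $f\in L^2(\R)$ with $\supp{\widehat f}\subseteq Y$ satisfies
\[
\norm{f1_X}_{L^2}\le (1-c)\norm{f}_{L^2}
\]
at a single scale. The reason is that $\delta$-regularity is stable under restriction to dyadic scales. Splitting $[1,N]$ into roughly $\log N/\log K$ blocks of ratio $K$, and iterating the single-scale gain across these blocks by an induction-on-scales argument, gives a loss $(1-c)^{\log N/\log K}=N^{-\beta}$.

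The second step is the porosity input. Since $\delta<1$, a $\delta$-regular $X$ on scales $1/N$ to $1$ is porous: every interval $I$ of length between $1/N$ and $1$ contains a subinterval of length comparable to $\abs{I}/K$ that is disjoint from $X$. I plan to verify this directly from the measure bounds: if $X$ met every subinterval, the lower bound $\mu_X\ge C^{-1}\abs{J}^\delta$ summed over roughly $K$ disjoint subintervals would exceed the upper bound $C\abs{I}^\delta$ once $K^{1-\delta}\gg C^2$. The same argument applies to $Y$ on scales $1$ to $N$.

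The third step, which is the heart of the matter, is to construct a damping function adapted to $Y$. The aim is a weight $w$ on $\R$ with the following properties:
\begin{enumerate}
\item[(a)] $w$ is very small, say $w\le e^{-cK}$, on the holes of $Y$ and roughly $1$ elsewhere;
\item[(b)] $\int_{\R}\log(1/w)\,dP<\infty$, uniformly after rescaling, and $\log w$ is Lipschitz with a small constant.
\end{enumerate}
The natural choice is $\log(1/w)=\sum_J \abs{J}\,\phi\!\left((x-c_J)/\abs{J}\right)$, summed over the holes $J$ of $Y$ at all scales, with $\phi$ a bump. Porosity together with the upper regularity bound should then make $\int\log(1/w)\,dP$ comparable to a sum of the form $\sum \abs{J}^2/\dist{J}{0}^2$ over scales, which converges because of the upper $\delta$-bound. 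Applying \thref{THM:BM1FULL}, I obtain a nontrivial $\psi$ with $\supp{\widehat\psi}$ of small diameter (size comparable to $1/K$ relative to the $X$-scale) and $\abs{\psi}\le w$.

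The fourth step uses the multiplier. For $f$ with $\supp{\widehat f}\subseteq Y$, consider the convolution $\widehat f *\widehat\psi$ on the frequency side, or equivalently the product $f\psi$ in space. Because $\abs{\psi}\le w$ is tiny on the gaps of $Y$ (after swapping the roles of $x$ and $\xi$ as appropriate), while $\widehat\psi$ is localized, the function $f$ is forced to have a positive proportion of its $L^2$ mass on the holes of $X$ in each interval of the relevant scale. This is precisely the single-scale gain from the first step. The localization of $\widehat\psi$ is what lets one work interval-by-interval via an almost-orthogonal decomposition.

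I expect the third step to be the main obstacle. The difficulty is to produce the BM-majorant with constants uniform in $N$ and in the position of the relevant interval, and to check that the logarithmic integral is controlled purely by the regularity constants. A further complication is that \thref{THM:BM1FULL} is stated qualitatively, so a quantitative lower bound on $\psi$ is needed, for instance $\abs{\psi}\gtrsim 1$ on a fixed interval. I would first try to extract this lower bound by a compactness argument over the family of all $\delta$-regular sets with fixed constants, which is closed under Hausdorff limits; failing that, I would make the construction of \thref{THM:BM1MED} explicit.
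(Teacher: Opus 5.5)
The paper does not prove this theorem; it only quotes it from \cite{bourgain2018spectral}. I can therefore only assess your outline on its own terms.

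The architecture is the right one: porosity from regularity (your Step 2 is fine), a Beurling--Malliavin damping function, and a single-scale gain iterated over $\sim\log N/\log K$ scales. However, the single-scale estimate, which carries all the content, is never actually obtained.

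\textbf{Steps 3 and 4.} In Step 3 your weight is small on the holes of $Y$ and $\approx 1$ on $Y$. So the BM function $\psi$ is small precisely where $\widehat f$ already vanishes. Whether you let $\psi$ act on $f$ through $\widehat f\,\psi$ or through the pairings $\langle f,\check\psi(\cdot-y)\rangle=\frac1{2\pi}\int_Y\widehat f\,\overline{\psi}\,e^{iy\xi}\,d\xi$ (where $\widehat{\check\psi}=\psi$), only the values of $\psi$ on $Y$ enter. On $Y$, condition (a) gives no information. Swapping $x$ and $\xi$ does not help either, since $w$ encodes $Y$ and not $X$. So Step 4 restates the conclusion instead of proving it.

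What must be shown is a Logvinenko--Sereda type bound
$\norm{f}_{L^2}\le C\norm{f}_{L^2(\Omega)}$, where $\Omega$ is a relatively dense family of holes of $X$ at one scale. It must hold for all $f$ with spectrum in the \emph{unbounded} porous set $Y$, with $C$ independent of $N$. The two-constant argument behind the Logvinenko--Sereda theorem (\thref{LEM:2const}) gives only $C\approx e^{cN}$ for spectrum in $[-N,N]$.

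For this the weight has to be large \emph{on} $Y$. One needs functions supported in a short interval, which can be placed inside a hole of $X$, whose Fourier transforms are very small on $Y$ outside a prescribed window. Porosity is exactly what lets such a weight grow on $Y$ while staying Lipschitz, with Poisson integral bounded independently of $N$.

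Your weight also fails (b) uniformly. On the middle of a hole $J$ one has $\log(1/w)\gtrsim\abs{J}$. Porosity (with some $\nu>0$ depending on the regularity data) puts a hole of length $\nu 2^k$ in $[2^k,2^{k+1}]$. Hence every dyadic shell up to $N$ contributes $\gtrsim\nu^2$, and $\int\log(1/w)\,dP\gtrsim\nu^2\log N$.

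\textbf{Step 1.} There is a second gap here. A global bound $\norm{f1_{X'}}_{L^2}\le(1-c)\norm{f}_{L^2}$ for $\supp{\widehat f}\subseteq Y$ cannot simply be iterated. The function $1_{X'}f$ no longer has spectrum in $Y$. Moreover, bounds for two single-scale approximations $X_1,X_2$ taken separately give nothing better than $1-c$ for $X_1\cap X_2$.

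To run an induction on scales you need three things, none of which is addressed:
\begin{enumerate}
\item[(i)] localize $f$ at scale $K^{-k}$ with cutoffs whose Fourier transforms are compactly supported, so that each piece has spectrum in a slightly thickened, still regular copy of $Y$;
\item[(ii)] show that the tails of these cutoffs do not eat the gain $1-c$;
\item[(iii)] formulate the statement scale-invariantly, so that the regularity constants do not deteriorate from step to step.
\end{enumerate}

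The quantitative form of BM that you flag is indeed needed as well, but it is not where the present argument breaks.
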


Roughly speaking, the fractal uncertain principle asserts that a function cannot be both localized in position and frequency, near a fractal set. We also mention a gentle survey on the matter by S. Dyatlov, see \cite{dyatlov2019introduction}. Its known that the number $\beta$ has a rather wild dependence on $\delta$, and recent accounts on these matters were investigated in \cite{jin2020fractal}. \\

Then general completeness problem on exponentials 
\[
\mathcal{E}(\Lambda):= \{e^{ix\lambda}: \lambda \in \Lambda \}, \qquad \Lambda \subset \C
\]
in $L^2(\mu)$ for a general finite Borel measure $\mu$ on $\R$, was surprisingly solved by A. Poltoratski in \cite{poltoratski2013problem}. This problem and a certain related spectral gap problem of measures were long thought to be "transcendental", that is, not having  characterization. We also mention a recent interesting paper on connecting this problem to a uniqueness problem by Polya and Levinson, \cite{mitkovski2010polya}. We mention that an equivalence of the classical BM2 Theorem and the Mitkovski--Poltoratski Theorem was recently announced in \cite{baranov2017gap} for a class of separates sequences $\Lambda \subset \R$:
\[
d(\Lambda) := \inf_{\substack{\lambda,\lambda' \in \Lambda \\ \lambda \neq \lambda'}} \abs{\lambda- \lambda'}>0.
\]
For general discrete sequence $\Lambda \subset \R$, the work of A. Poltoratski in \cite{poltoratski2013problem} requires the different notion of energy.

\section{Appendix}

\subsection{Hardy spaces}

We summarize some key features of Hardy spaces. We begin with the unit disc $\D$, and later explain how the corresponding theory on the upper half-plane $\C_+$ is obtained via conformal mapping.

For $1 \leq p < \infty$, we denote by $H^p(\D)$ the space of analytic functions $f$ on $\D$ with
\[
\|f\|_{H^p}^p := \sup_{0<r<1} \int_{\T} |f(r\zeta)|^p \, dm(\zeta) < \infty.
\]
For $p=\infty$, we define $H^\infty(\D)$ as the Banach space of bounded analytic functions on $\D$ equipped with
\[
\|f\|_{H^\infty} := \sup_{z\in \D} |f(z)|.
\]

A central feature of Hardy spaces is that they admit both an analytic and a boundary-value interpretation. The following result clarifies on aspect of it.

\begin{prop}
Let $1 \leq p \leq \infty$. Then every function in $H^p(\D)$ arises as the Poisson extension of a function in
\[
H^p(\T) := \{f \in L^p(\T) : \widehat{f}(n)=0 \ \text{for } n<0\}.
\]
Conversely, for any $f \in H^p(\T)$, the Poisson integral
\[
P(f)(z) := \int_{\T} \frac{1-|z|^2}{|\zeta-z|^2} f(\zeta)\, dm(\zeta), \qquad z\in \D
\]
defines a function in $H^p(\D)$, and we have
\[
\|P(f)\|_{H^p}^p = \int_{\T} |f(\zeta)|^p dm(\zeta).
\]
\end{prop}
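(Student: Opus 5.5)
The plan is to prove the two directions separately, treating $1\le p<\infty$ first and then $p=\infty$.

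\emph{From $H^p(\T)$ to $H^p(\D)$.} Take $f\in H^p(\T)$. As in the discussion of $H^2$ in Section 2, the identity
\[
\frac{1-|z|^2}{|\zeta-z|^2}=\frac{1}{1-\conj{\zeta}z}+\frac{1}{1-\zeta\conj{z}}-1,
\]
together with $\widehat f(n)=0$ for $n<0$, gives the expansion $P(f)(z)=\sum_{n\ge0}\widehat f(n)z^n$. Hence $P(f)$ is analytic in $\D$.

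The dilates $f_r(\zeta):=P(f)(r\zeta)$ are the convolutions of $f$ with the Poisson kernel $P_r$. Since $P_r\ge 0$ and $\int_{\T} P_r\,dm=1$, Minkowski's integral inequality gives $\norm{f_r}_{L^p}\le\norm{f}_{L^p}$, so $\norm{P(f)}_{H^p}\le \norm{f}_{L^p}$. For $p<\infty$, the Abel--Poisson means converge to $f$ in $L^p(\T)$; this was shown earlier using properties (i)--(iii) of approximate identities. Therefore $\norm{f_r}_{L^p}\to\norm{f}_{L^p}$, and the supremum over $r$ equals $\norm{f}^p_{L^p}$. For $p=\infty$, the bound $\sup_{\D}|P(f)|\le\norm f_{\infty}$ is immediate. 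The reverse inequality follows because $f_r\to f$ weak-star, and $\norm{\cdot}_{L^\infty}$ is weak-star lower semicontinuous.

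\emph{From $H^p(\D)$ to $H^p(\T)$.} Take $F\in H^p(\D)$, so the dilates $F_r(\zeta)=F(r\zeta)$ are bounded in $L^p(\T)$.
\begin{itemize}
\item[(a)] For $1<p\le\infty$, Banach--Alaoglu (reflexivity, or the weak-star topology when $p=\infty$) gives a sequence $r_j\to1$ with $F_{r_j}\to f$ weakly in $L^p$.
\item[(b)] For $p=1$, the $F_r$ are bounded in $M(\T)$, and we instead extract a weak-star limit measure $\mu$.
\end{itemize}
In every case, Fourier coefficients pass to the limit. Since $\widehat{F_r}(n)=a_nr^n$, where $F(z)=\sum a_nz^n$, the limit has coefficients $a_n$ for $n\ge0$ and $0$ for $n<0$. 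Moreover, $F_r=P(F_{r'})$ on the circle of radius $r/r'$. Letting $r'=r_j\to1$ and using continuity of the Poisson kernel in $\zeta$ for fixed $z\in\D$, we get $F=P(f)$ (respectively $F=P(\mu)$). So $f\in H^p(\T)$ and $F$ is its Poisson extension.

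\emph{Main obstacle.} The only delicate point is $p=1$: we must show the limit measure $\mu$ is absolutely continuous. Here I would invoke the F.\ and M.\ Riesz Theorem (\thref{THM:FMRIESZ}). The measure $\mu$ has $\widehat\mu(n)=0$ for $n<0$, so $\conj{\zeta}\,d\conj{\mu}$ annihilates $\zeta^n$ for all $n\ge0$. The theorem then gives $d\mu=f\,dm$ with $f\in L^1(\T)$, hence $f\in H^1(\T)$. Once this is in hand, the norm identity follows from the first direction.
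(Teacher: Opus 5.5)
Your argument is sound overall. It is also considerably more complete than the paper, which only checks analyticity of $P(f)$ via the kernel identity (exactly your first step) and defers the norm identity and the converse to Garnett. However, the reduction to the F.\ and M.\ Riesz Theorem at $p=1$ is wrong as written: the measure $\conj{\zeta}\,d\conj{\mu}$ does not annihilate $\zeta^n$ for $n\geq 1$. Indeed,
\[
\int_{\T}\zeta^n\,\conj{\zeta}\,d\conj{\mu}(\zeta)=\conj{\int_{\T}\zeta^{1-n}\,d\mu(\zeta)}=\conj{\widehat{\mu}(n-1)} .
\]
This vanishes for $n=0$, but not in general for $n\geq 1$. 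For example, $\mu=dm$ satisfies your hypothesis $\widehat{\mu}(n)=0$ for $n<0$, yet the integral equals $1$ at $n=1$.

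What you actually know is $\int_{\T}\zeta^k\,d\mu=\widehat{\mu}(-k)=0$ for $k\geq 1$. So the correct choice is simply $\nu:=\zeta\,d\mu$, which satisfies $\int_{\T}\zeta^n\,d\nu=\widehat{\mu}(-n-1)=0$ for all $n\geq 0$. Then \thref{THM:FMRIESZ} gives $\nu=g\,dm$ with $g\in L^1(\T)$. Hence $d\mu=\conj{\zeta}\,g\,dm$ is absolutely continuous, its density lies in $H^1(\T)$, and the rest of your argument goes through unchanged.

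When you cite \thref{THM:FMRIESZ} here, make sure you rely on its first proof (via \thref{LEM:APPROX1} and \thref{LEM:JENSEN}). The paper's alternative proof of that theorem invokes exactly the $p=1$ case of this proposition, so using it would be circular.
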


The Poisson extension solves the Dirichlet problem in $\D$ with boundary data $f \in L^p(\T)$, producing a harmonic function. The additional condition $\widehat{f}(n)=0$ for $n<0$ forces analyticity. Indeed, using the identity
\[
\frac{1-|z|^2}{|\zeta-z|^2}
=
\frac{1}{1-\overline{\zeta}z} + \frac{1}{1-\zeta \overline{z}} - 1,\qquad \zeta \in \T, \qquad z\in \D,
\]
one obtains
\[
P(f)(z)
= \int_{\T} \frac{f(\zeta)}{1-\conj{\zeta}z} dm(\zeta)=
\sum_{n=0}^\infty \widehat{f}(n) z^n, \qquad z\in \D.
\]

The following result shows that functions in $H^p(\D)$ admit well-defined boundary traces almost everywhere, and that $H^p(\T)$ may be viewed as the natural trace space of $H^p(\D)$.

\begin{thm}\thlabel{THM:NTLHARDY}
Let $F \in H^p(\D)$. Then for $dm$-almost every $\zeta \in \T$, the limit
\[
\lim_{\substack{z \to \zeta \\ |z-\zeta| < \alpha (1-|z|)}} F(z)
\]
exists (for any $\alpha>0$). This is called the \emph{non-tangential limit} of $F$ at $\zeta$. Furthermore, the resulting boundary function $f$ belongs to $H^p(\T)$, and $F$ is recovered as its Poisson extension:
\[
F(z) = P(f)(z).
\]
\end{thm}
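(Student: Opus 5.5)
The plan is to prove the non-tangential limit statement of \thref{THM:NTLHARDY} by first treating $1<p\le\infty$ through Poisson representation and then reducing $p=1$ to $p=2$ via a Blaschke-type factorization.

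\textbf{Step 1: Poisson representation for $1<p\leq\infty$.} Take $F\in H^p(\D)$ with $1<p\le\infty$ and put $F_r(\zeta):=F(r\zeta)$. These functions are bounded in $L^p(\T)$, so weak-star compactness (reflexivity when $p<\infty$) gives a sequence $r_j\to1$ with $F_{r_j}\to f$ weakly in $L^p(\T)$. For fixed $z\in\D$ the kernel $P_z$ is continuous on $\T$. Hence
\[
F(r_jz)=\int_{\T}P_z F_{r_j}\,dm\to P(f)(z),
\]
while the left side tends to $F(z)$, so $F=P(f)$. Pairing $F_{r_j}$ with $\zeta^{-n}$ shows that $\widehat{f}(n)=\lim_j \widehat{F_{r_j}}(n)=0$ for $n<0$, so $f\in H^p(\T)$.

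\textbf{Step 2: non-tangential convergence.} I will use the maximal function estimate: inside the cone $\{|z-\zeta|<\alpha(1-|z|)\}$,
\[
|P(g)(z)|\leq C_\alpha\, Mg(\zeta),
\]
where $M$ is the Hardy--Littlewood maximal operator on $\T$. This holds because $P_z$ is dominated by a sum of averages over dyadic arcs centred at $\zeta$ with summable weights. The weak $(1,1)$ bound for $M$ then gives the standard density argument. Approximate $f$ in $L^1$ by a continuous $g$, for which non-tangential convergence is elementary. Then control the oscillation of $P(f-g)$ at $\zeta$ by $2C_\alpha M(f-g)(\zeta)+|f-g|(\zeta)$, which is small outside a set of small measure. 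This proves that $P(f)\to f$ non-tangentially $dm$-a.e., and it works for every $f\in L^1(\T)$.

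\textbf{Step 3: the case $p=1$.} This is the main obstacle, because $L^1$ is not a dual space and weak limits of $F_r$ can only be taken in $M(\T)$. Weak-star compactness in $M(\T)$ gives $F=P(\mu)$ with $\widehat{\mu}(n)=0$ for $n<0$. One could now invoke the F.\ and M.\ Riesz Theorem (\thref{THM:FMRIESZ}, applied to $\conj{\zeta}\mu$ after conjugation) to get $d\mu=f\,dm$ with $f\in H^1(\T)$, and then Step 2 finishes. To stay self-contained and avoid circularity with the alternative proof of \thref{THM:FMRIESZ}, I would rather factor $F=BG$, where $B$ is the Blaschke product of the zeros of $F$ and $G$ is zero-free with $\|G\|_{H^1}\le\|F\|_{H^1}$. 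Then $G^{1/2}\in H^2(\D)$. Applying Steps 1--2 with $p=2$ to $G^{1/2}$, and to the bounded function $B$, gives non-tangential limits a.e. Their product yields the boundary function $f$ of $F$, and $F=P(f)$ follows by applying Step 1 to $G^{1/2}$.

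The delicate points in Step 3 are the factorization itself (convergence of the Blaschke product and the bound $\|G\|_{H^1}\le\|F\|_{H^1}$, proved via finite products and Fatou's lemma) and checking that $f\in L^1$ with vanishing negative coefficients. The latter follows from $F_r\to f$ in $L^1$, obtained by Vitali's theorem applied to $|G_r^{1/2}|^2$, whose $L^2$-convergence comes from Step 1.
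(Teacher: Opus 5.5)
The notes do not prove this theorem. It is recorded in the appendix as background, and the proof is deferred to Garnett's book (Ch.~II). So there is no in-paper argument to compare against. Your proof is correct, and it is essentially the textbook proof found there:
\begin{itemize}
\item For $1<p\le\infty$, weak(-star) compactness of the dilates $F_r$ gives $F=P(f)$ with $f\in H^p(\T)$.
\item The cone estimate $\sup_{z\in\Gamma_\alpha(\zeta)}|P(g)(z)|\le C_\alpha\,Mg(\zeta)$, the weak $(1,1)$ bound and density of $C(\T)$ give non-tangential convergence of $P(g)$ for every $g\in L^1(\T)$.
\item For $p=1$, the factorization $F=BG$ with $G^{1/2}\in H^2$ reduces everything to the cases $p=2$ and $p=\infty$.
\end{itemize}

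A few comments.

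(i) The circularity you want to avoid only concerns the \emph{alternative} proof of the F.\ and M.\ Riesz Theorem in the notes. The first proof (approximation on null sets, Lusin's theorem, and Jensen's lemma) never uses boundary values of Hardy space functions, so the shortcut through it is legitimate. It should be applied to $\zeta\mu$, because $\int_{\T}\zeta^n\,\zeta\,d\mu=\widehat{\mu}(-n-1)=0$ for $n\ge 0$ is exactly the hypothesis stated there. Conjugating instead puts the vanishing coefficients on the wrong side.

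What the Blaschke route buys, at the price of more work at $p=1$, is that it survives for $0<p<1$ (using $G^{p/2}\in H^2$). There the dilates have no weak limits at all, so the compactness argument is unavailable.

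(ii) Step 1 as written only gives weak subsequential convergence. The $L^2$-convergence of $G^{1/2}(r\,\cdot)$ to its boundary function $g$ comes from $G^{1/2}=P(g)$ together with the $L^2$-convergence of Abel--Poisson means (or simply Parseval). The resulting $L^1$-convergence $F_r\to f$ then follows from $|F_r|\le|G^{1/2}(r\,\cdot)|^2$, radial convergence a.e., and Vitali's theorem. It is this $L^1$-convergence that yields both $\widehat{f}(n)=0$ for $n<0$ and $F=P(f)$, not Step 1 applied to $G^{1/2}$ directly.

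(iii) The cones increase with $\alpha$. Restricting to $\alpha\in\mathbb{N}$ and taking the union of the corresponding null sets gives a single exceptional set that works for every $\alpha$.
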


It it is customary to identify $H^p(\D)$ and $H^p(\T)$ and simply write $H^p$ when no confusion arises. The following simple estimate is crucial in theory of Hardy spaces.

\begin{thm}[Jensen--Poisson inequality]\thlabel{COR:logPf}
If $f \in H^1(\T)$, then
\[
\log |P(f)(z)|
\leq
\int_{\T} \log |f(\zeta)| \frac{1-|z|^2}{|\zeta-z|^2} dm(\zeta),
\qquad z \in \D.
\]
\end{thm}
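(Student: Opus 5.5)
The plan is to reduce to the case $z=0$ and then prove that case by combining Jensen's lemma with a conformal change of variables.

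\emph{Base case $z=0$.} Here $P(f)(0)=\widehat{f}(0)$, and the Poisson kernel at the origin is identically $1$. The inequality therefore reads
\[
\log|\widehat{f}(0)|\leq\int_{\T}\log|f|\,dm,
\]
which is exactly Jensen's lemma (\thref{LEM:JENSEN}). That lemma was proved for all $f\in H^1(\T)$ by first treating analytic polynomials and then using Fej\'er means together with an $\varepsilon$-regularization.

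\emph{Transport to a general point $z=\lambda$.} Fix $\lambda\in\D$ and take the disc automorphism
\[
\varphi_\lambda(w)=\frac{\lambda+w}{1+\conj{\lambda}w},
\]
which maps $0$ to $\lambda$ and restricts to a diffeomorphism of $\T$. Set $g:=f\circ\varphi_\lambda$ on $\T$. We need two facts about $g$.

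\begin{itemize}
\item[(a)] The change of variables $\zeta=\varphi_\lambda(\xi)$ satisfies $|\varphi_\lambda'(\xi)|\,dm(\xi)\to$ the standard identity that the push-forward of $dm$ under $\varphi_\lambda$ is $P_\lambda\,dm$, where $P_\lambda(\zeta)=\frac{1-|\lambda|^2}{|\zeta-\lambda|^2}$. Consequently $\int_{\T}|g|\,dm=\int_{\T}|f|P_\lambda\,dm<\infty$, and $\int_{\T}\log|g|\,dm=\int_{\T}\log|f|\,P_\lambda\,dm$.
\item[(b)] $g\in H^1(\T)$ and $\widehat{g}(0)=P(f)(\lambda)$. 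To see this, note that $P(f)\circ\varphi_\lambda$ is analytic on $\D$. Its $H^1$ means are controlled because $|P(f)|$ is dominated by the Poisson integral of $|f|$, which is harmonic, and harmonicity is preserved under composition with $\varphi_\lambda$. By \thref{THM:NTLHARDY}, its non-tangential boundary values are $f\circ\varphi_\lambda=g$, using that $\varphi_\lambda$ preserves non-tangential approach regions. Hence $P(f)\circ\varphi_\lambda=P(g)$, and evaluating at $0$ gives $\widehat{g}(0)=P(g)(0)=P(f)(\varphi_\lambda(0))=P(f)(\lambda)$.
\end{itemize}

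Applying Jensen's lemma to $g$ and using (a) and (b) yields
\[
\log|P(f)(\lambda)|=\log|\widehat{g}(0)|\leq\int_{\T}\log|g|\,dm=\int_{\T}\log|f|\,P_\lambda\,dm,
\]
which is the claimed inequality.

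\emph{Alternative route.} One can also avoid composition with $\varphi_\lambda$. For an analytic polynomial $Q$, $\log|Q|$ is subharmonic and continuous into $[-\infty,\infty)$ on $\cD$, so it lies below its Poisson integral; this gives the inequality for polynomials directly. To pass to general $f\in H^1$, repeat the Fej\'er-mean and $\log|\cdot+\varepsilon|$ argument from Step 2 of the proof of Jensen's lemma, now integrating against the kernel $P_\lambda$, which is bounded above by $\frac{1+|\lambda|}{1-|\lambda|}$. This boundedness is what makes the $L^1$ error term $\frac1\varepsilon\int|f-f_j|P_\lambda\,dm$ tend to zero. The limit $\varepsilon\to0+$ is then handled with monotone convergence applied to the negative parts, together with integrability of $\log^+|f|$ against $P_\lambda$.

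\emph{Expected obstacle.} In the conformal route, the main difficulty is step (b): justifying rigorously that $f\circ\varphi_\lambda$ belongs to $H^1(\T)$ and is the boundary trace of $P(f)\circ\varphi_\lambda$. In the alternative route, the corresponding difficulty is the limiting argument when $\log|f|$ is not integrable. I would use the alternative route, since it reuses an argument already written out in the paper almost verbatim.
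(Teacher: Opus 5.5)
Your conformal route (a)--(b) is correct and is the paper's own argument: apply Jensen's lemma at the origin, then transport it to $\lambda$ by a disc automorphism using $(\varphi_\lambda)_*m=P_\lambda\,dm$. Your justification of (b) works. The bound $|P(f)|\le P(|f|)$ and the mean-value property of the positive harmonic function $P(|f|)\circ\varphi_\lambda$ give the $H^1$ bound, and $\varphi_\lambda$ is a diffeomorphism of a neighbourhood of $\cD$ that preserves Stolz angles and null sets of $\T$. The obstacle you anticipate can also be sidestepped entirely. Take analytic polynomials $f_j\to f$ in $L^1(\T)$; then $f_j\circ\varphi_\lambda$ is analytic near $\cD$, and by (a), $\int_{\T}|f_j\circ\varphi_\lambda-g|\,dm=\int_{\T}|f_j-f|\,P_\lambda\,dm\to0$. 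Since $H^1(\T)$ is closed in $L^1(\T)$, you get $g\in H^1(\T)$ and $\widehat{g}(0)=\lim_j f_j(\lambda)=P(f)(\lambda)$.

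The route you say you would actually use has a genuine gap as written. Your error term $\frac{1}{\varepsilon}\int_{\T}|f-f_j|\,P_\lambda\,dm$ rests on the estimate $\bigl|\log|f_j+\varepsilon|-\log|f+\varepsilon|\bigr|\le\varepsilon^{-1}|f-f_j|$. That estimate is only guaranteed where $|f+\varepsilon|$ and $|f_j+\varepsilon|$ are at least $\varepsilon$, for instance when the real parts are nonnegative. For a complex-valued $f\in H^1(\T)$ it fails: at a point where $f=-\varepsilon$ the left side is infinite. Even the one-sided bound you really need, $\log|f_j+\varepsilon|\le\log|f+\varepsilon|+\varepsilon^{-1}|f-f_j|$, breaks down wherever $|f+\varepsilon|$ is small. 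The same slip appears in Step 2 of the paper's proof of Jensen's lemma, so reusing that argument verbatim imports the gap.

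The fix is to put $\varepsilon$ outside the modulus. For the polynomials you have $\log|f_j(\lambda)|\le\int_{\T}\log|f_j|\,P_\lambda\,dm\le\int_{\T}\log(|f_j|+\varepsilon)\,P_\lambda\,dm$. Since $t\mapsto\log(t+\varepsilon)$ is $\varepsilon^{-1}$-Lipschitz on $[0,\infty)$ and $P_\lambda\le\frac{1+|\lambda|}{1-|\lambda|}$, letting $j\to\infty$ gives $\log|P(f)(\lambda)|\le\int_{\T}\log(|f|+\varepsilon)\,P_\lambda\,dm$. Now $\varepsilon\downarrow0$ is a genuine decreasing limit, with integrands bounded above by $\log(1+|f|)\in L^1(P_\lambda\,dm)$, so monotone convergence applies. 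Alternatively, pass to an a.e.\ convergent subsequence and apply Fatou's lemma to $(|f_j|-\log|f_j|)P_\lambda\ge0$, which needs no $\varepsilon$ at all. With either repair the alternative route becomes a valid proof, and it avoids any boundary theory for $f\circ\varphi_\lambda$.
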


In particular, it follows that functions in $H^1$ have finite logarithmic integral on $\T$. More generally, one can show that this property holds for functions in the Nevanlinna class $\mathcal{N}$, defined by
\[
\sup_{0<r<1} \int_{\T} \log^+ |f(r\zeta)| dm(\zeta) < \infty.
\]
Another classical result asserts that every $f \in \mathcal{N}$ can be written as $f=g/h$ with $g,h \in H^\infty$ and $h$ zero-free in $\D$.

\begin{proof}[Sketch of proof of \thref{COR:logPf}]
Applying Jensen's lemma at the origin gives
\[
\log |P(f)(0)|
\leq
\int_{\T} \log |f(\zeta)| dm(\zeta).
\]
Composing with the automorphism
\[
\varphi_z(\lambda) = \frac{z-\lambda}{1-\overline{\lambda}z}
\]
and using invariance of the Poisson kernel yields the general estimate.
\end{proof}

The Jensen--Poisson inequality naturally leads to the inner--outer factorization. Given $f \in H^p$, define the associated \emph{outer factor} of $f$
\[
\mathcal{O}_f(z)
=
\exp\left(
\int_{\T} \frac{\zeta+z}{\zeta-z} \log |f(\zeta)|\, dm(\zeta)
\right), \qquad z\in \D.
\]
Then $\mathcal{O}_f$ is analytic and zero-free in $\D$, satisfies
\[
\log |\mathcal{O}_f(z)| = P(\log|f|)(z), \qquad z\in \D.
\]
Using Jensen's inequality for convex functions, and Fubini's Theorem we have 
\begin{multline*}
\int_{\T} \abs{\mathcal{O}_f(r\xi)}^p dm(\xi) = \int_{\T} \exp\left( p\int_{\T} \log|f(\zeta)| \frac{1-r^2}{\abs{\zeta-r\xi}^2} dm(\zeta) \right) dm(\xi) \\ 
\leq \int_{\T} \int_{\T} \frac{1-r^2}{\abs{\zeta-r\xi}^2} dm(\xi) \abs{f(\zeta)}^p dm(\zeta) = \norm{f}^p_{L^p}, \qquad 0<r<1.
\end{multline*}
This implies that $\mathcal{O}_f$ belongs to the Hardy space $H^p(\D)$, hence has finite non-tangential limits equal to $F\in H^p(\T)$. However, standard properties of Poisson kernels imply that 
\begin{equation}\label{EQ:INNERfF}
\abs{F(\zeta)} = \abs{f(\zeta)}, \qquad dm-\text{a.e} \, \, \zeta \in \T.
\end{equation}
Invoking the Jensen--Poisson Inequality, we get 
\[
\abs{f(z)} \leq \abs{\mathcal{O}_f (z)} \qquad z \in \D.
\]
This means that the quotient 
\[
\Theta_f (z) := \frac{f(z)}{\mathcal{O}_f(z)} \qquad z\in \D
\]
defines a bounded analytic function on $\D$ with $\sup_{z\in \D} |\Theta_f(z)|\leq 1$,  and whose boundary values are unimodular:
\[
\lim_{r\to 1-} \abs{\Theta_f(r\zeta)}=1, \qquad dm-\text{a.e} \, \, \, \zeta \in \T.
\]
Such functions are called \emph{inner functions}. We record the following result, which is essentially a polar decomposition of functions in $H^p$.

\begin{thm}[Inner--outer factorization]
Every $f \in H^p(\D)$ admits a factorization
\[
f = \Theta_f \cdot \mathcal{O}_f
\]
on $\D$, where $\Theta_f$ is inner and $\mathcal{O}_f$ is outer. This factorization is unique up to multiplication by a unimodular constant.
\end{thm}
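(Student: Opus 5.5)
Existence follows from the construction already sketched above the statement; the real work is uniqueness, which I would do by comparing moduli and then using the inner/outer dichotomy. Let $f\in H^p(\D)$ be non-trivial (for $f\equiv 0$ there is nothing to prove).

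\emph{Existence.} Since $f\in H^p(\D)\subseteq H^1(\D)$ for $p\geq 1$, \thref{THM:NTLHARDY} gives non-tangential boundary values $f\in H^p(\T)$, and $f=P(f)$. By \thref{COR:logPf} we have $\log|f|\in L^1(\T)$ and $\log|f(z)|\leq P(\log|f|)(z)$. So the outer factor $\mathcal{O}_f$ is a well-defined, zero-free element of $H^p(\D)$ with $\log|\mathcal{O}_f|=P(\log|f|)$.

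Hence $\Theta_f:=f/\mathcal{O}_f$ is analytic in $\D$ and satisfies $|\Theta_f|\leq 1$ there. Its radial limits equal $f/F$ a.e., where $F$ is the boundary function of $\mathcal{O}_f$. By \eqref{EQ:INNERfF}, $|F|=|f|$ a.e., and $f\neq 0$ a.e. because $\log|f|\in L^1$. Therefore $|\Theta_f|=1$ a.e. on $\T$, so $\Theta_f$ is inner.

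\emph{Uniqueness.} Suppose $\Theta_1 O_1=\Theta_2 O_2$, with each $\Theta_j$ inner and each $O_j$ outer. By outer I mean $O_j=c_j\exp\!\big(\int_{\T}\frac{\zeta+z}{\zeta-z}\log|O_j|\,dm\big)$ with $|c_j|=1$.

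\begin{enumerate}
\item[(1)] On $\T$ both inner factors are unimodular a.e., so $|O_1|=|O_2|$ a.e.
\item[(2)] An outer function is determined by its boundary modulus up to the unimodular constant, so $O_1=cO_2$ with $|c|=1$.
\item[(3)] Since $O_2$ is zero-free, we can divide to get $\Theta_1=\bar c\,\Theta_2$.
\end{enumerate}

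\emph{Main obstacle.} The delicate point is the convention in step (2). If ``outer'' is defined intrinsically, for instance as a cyclic element of $H^p$ or via the equality $\log|O(0)|=\int\log|O|\,dm$ as in the earlier section, then I must first show that such an $O$ coincides with $c\,\mathcal{O}_O$.

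I would do this by applying the existence part to $O$ itself, which gives $O=\Theta\,\mathcal{O}_O$. Then compare values at the origin:
\[
\log|O(0)| \;=\; \log|\Theta(0)| + \int_{\T}\log|O|\,dm .
\]
The defining equality for an outer function makes the left side equal $\int_{\T}\log|O|\,dm$, so $|\Theta(0)|=1$. Since $|\Theta|\leq 1$ on $\D$, the maximum principle forces $\Theta$ to be a unimodular constant.
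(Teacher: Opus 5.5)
Your argument is correct, and the existence part is exactly the construction the paper gives before the statement. You build $\mathcal{O}_f$ from the Schwarz integral of $\log|f|$, use the Jensen--Poisson inequality to get $|\Theta_f|\le 1$ in $\D$, and use the equality of the boundary moduli of $\mathcal{O}_f$ and $f$ (with $f\neq 0$ a.e.) to conclude that $\Theta_f$ is inner.

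The paper never writes out the uniqueness claim, and your argument correctly supplies it. Equal boundary moduli force the outer factors to agree up to a unimodular constant. When outer is defined by $\log|O(0)|=\int_{\T}\log|O|\,dm>-\infty$, as in the earlier section, your reduction via the value at the origin and the maximum principle handles that case too.
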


Inner functions admit a further canonical decomposition which separates their “zero structure” from their “boundary singularities.” More precisely, any inner function $\Theta$ on $\D$ can be written uniquely (up to a unimodular constant) as
\[
\Theta(z) = B_\Lambda(z)\, S_\mu(z), \qquad z \in \D,
\]
where the two factors encode distinct phenomena.

The first factor $B_\Lambda$ is the \emph{Blaschke product}, determined by the zeros $\Lambda \subset \D$ of $\Theta$. It is given by
\[
B_\Lambda(z)
=
\prod_{\lambda \in \Lambda}
\frac{|\lambda|}{\lambda}
\frac{z-\lambda}{1-\overline{\lambda}z} \qquad z\in \D.
\]
Each factor is a Möbius transformation mapping $\D$ to itself and having modulus one on $\T$. The infinite product converges to a non-trivial analytic function if and only if the zeros satisfy the Blaschke condition
\[
\sum_{\lambda \in \Lambda} (1-|\lambda|) < \infty.
\]
This Blaschke condition expresses that the zeros do not accumulate too rapidly toward the boundary. The function $B_\Lambda$ is itself inner, and it is precisely the part of $\Theta$ that accounts for its zeros inside $\D$.

The second factor $S_\mu$ is the \emph{singular inner function}, which is zero-free in $\D$ and captures the remaining boundary behavior. It is given by the formula
\[
S_\mu(z)
=
\exp\left(
- \int_{\T} \frac{\zeta+z}{\zeta-z}\, d\mu(\zeta)
\right), \qquad z\in \D
\]
where $\mu$ is a positive finite Borel measure on $\T$ that is singular with respect to Lebesgue measure.

The terminology reflects the fact that $S_\mu$ arises from a “singular distribution of mass” on the boundary. In comparison with the outer factor $\mathcal{O}_f$ of $f$, the singular inner factor $S_\mu$ is also zero-free on $\D$, but now $\log|S_\mu|$ is the Poisson extension of a positive singular measure, while $\log |\mathcal{O}_f|$ was the Poisson extension of an integrable function $f\in L^1(\T)$.

Thus, the factorization
\[
\Theta = B_\Lambda \cdot S_\mu
\]
separates the contributions of interior zeros and singular boundary behavior. In particular, an inner function is completely determined by the location of its zeros in $\D$ together with a singular measure on $\T$.

\medskip

We now turn to the upper half-plane $\C_+= \{z\in \C: \Im(z)>0 \}$. For $1 \leq p < \infty$, we define $H^p(\C_+)$ as the space of analytic functions $F$ on $\C_+$ such that
\[
\sup_{y>0} \int_{\R} |F(x+iy)|^p dx < \infty.
\]
Functions in $H^p(\C_+)$ arise as Poisson extensions
\[
P(f)(x+iy)
=
\int_{\R} f(t)\, \frac{y}{(x-t)^2+y^2} \frac{dt}{\pi}, \qquad x+iy\in \C_+
\]
of an element in $f$ in
\[
H^p(\R):= \{f\in L^p(\R): \, \widehat{f}(\xi)=0 \, \, \text{a.e} \,  \, \, \xi<0 \, \}.
\]
If $p>2$, the Fourier transform may need to be interpreted in the sense of tempered distributions. Conversely, every $F\in H^p(\C_+)$ has limit along every cone in $\C_+$ with vertex at a.e $x\in \R$, and the limit gives rise to an unique element $f\in H^p(\R)$, for which $P(F)=f$. Hence, as in the disc, one has non-tangential boundary limits almost everywhere, and an isometric and isomorphic identification $H^p(\C_+) \cong H^p(\R)$.

The Hardy space theory on $\C_+$ is easily obtained from that on $\D$ via the conformal map
\[
\varphi: \D \to \C_+ \qquad \phi(z) = i\,\frac{1+z}{1-z}, \qquad z\in \D.
\]

In this setting, non-trivial functions $f$ in $H^p$ satisfy the logarithmic integrability condition:
\[
\int_{\R} \log \abs{f(t)} \frac{dt}{1+t^2} >- \infty.
\]

Under this correspondence, the inner and outer factorizations transfer as follows. Every $F \in H^p(\C_+)$ admits a factorization
\[
F(z) = B(z)\, S(z)\, \mathcal{O}(z), \qquad z\in \C_+
\]
where:

- $B$ is the associated Blaschke product over the zeros $\{z_k\} \subset \C_+$ of $F$ 
\[
B(z)= \prod_k \frac{z-z_k}{z-\conj{z_k}} \qquad z\in \C_+,
\]
satisfying the Blaschke condition:
\[
\sum_k \frac{\Im z_k}{|z_k|^2+1} < \infty.
\]
- $S$ is the associated singular inner function
\[
S(z)
=
\exp\left(
- i \int_{\R} \frac{1+tz}{t-z}\, d\mu(t)
\right), \qquad z\in \C_+
\]
where $\mu$ is a positive Borel measure singular wrt $dx$ and finite wrt the Poisson measure:
\[
\int_{\R} \frac{d\mu(x)}{1+x^2}< \infty.
\]
- $\mathcal{O}$ is an outer function of the form
\[
\mathcal{O}(z)
=
\exp\left(
\frac{1}{\pi i}
\int_{\R}
\frac{1+tz}{t-z}
\log |f(t)|\, dt
\right), \qquad z\in \C_+.
\]

In summary, the inner--outer factorization persists in $\C_+$, with the geometry of the boundary $\T$ replaced by $\R$ and the Poisson kernel adapted accordingly. For further details, we refer the reader to \cite{garnett,koosis}.

\newpage

\bibliographystyle{siam}
\bibliography{References.bib}


\end{document}